\def\DateTime{06/December/2010, 8:00 (Kyoto)}
\def\Version{Version 1.80}
\def\yes{\if00}
\def\no{\if01}
\def\iftenpt{\no}
\def\ifelevenpt{\no}
\def\iftwelvept{\yes}
\def\ifusepdf{\no}
\def\ifpsfont{\no}
\def\ifpxfont{\yes}
\def\ifquery{\no}
\theoremstyle{plain}
\newtheorem{Theorem}{Theorem}[section]
\newtheorem{Proposition}[Theorem]{Proposition}
\newtheorem{Lemma}[Theorem]{Lemma}
\newtheorem{PropDef}[Theorem]{Proposition-Definition}
\newtheorem{Claim}{Claim}[Theorem]
\theoremstyle{definition}
\newtheorem{Remark}[Theorem]{Remark}
\def\rom{\textup}
\newcommand{\ZZ}{{\mathbb{Z}}}
\newcommand{\QQ}{{\mathbb{Q}}}
\newcommand{\RR}{{\mathbb{R}}}
\newcommand{\CC}{{\mathbb{C}}}
\newcommand{\PP}{{\mathbb{P}}}
\newcommand{\OO}{{\mathscr{O}}}
\newcommand{\Proj}{\operatorname{Proj}}
\newcommand{\aPic}{\widehat{\operatorname{Pic}}}
\newcommand{\Rat}{\operatorname{Rat}}
\newcommand{\Hom}{\operatorname{Hom}}
\newcommand{\Spec}{\operatorname{Spec}}
\newcommand{\Supp}{\operatorname{Supp}}
\newcommand{\length}{\operatorname{length}}
\newcommand{\mult}{\operatorname{mult}}
\newcommand{\acherncl}{\widehat{{c}}}
\newcommand{\adeg}{\widehat{\operatorname{deg}}}
\newcommand{\zeros}{\operatorname{div}}
\newcommand{\Mv}{\operatorname{Mv}}
\newcommand{\Fx}{\operatorname{Fx}}
\newcommand{\Nef}{\operatorname{Nef}}
\newcommand{\aAmp}{\widehat{\operatorname{Amp}}}
\newcommand{\aNef}{\widehat{\operatorname{Nef}}}
\newcommand{\Num}{\operatorname{Num}}
\newcommand{\dist}{\operatorname{dist}}
\newcommand{\Div}{\operatorname{Div}}
\newcommand{\aDiv}{\widehat{\operatorname{Div}}}
\newcommand{\avol}{\widehat{\operatorname{vol}}}
\newcommand{\aH}{\hat{H}^0}
\newcommand{\ah}{\hat{h}^0}
\newcommand{\aew}{({\rm a.e.})}
\newcommand{\Tpsh}{\operatorname{PSH}}
\newcommand{\TT}{\mathscr{T}}
\newcommand{\esssup}{\operatorname{ess\, sup}}
\newcommand{\rest}[2]{\left.{#1}\right\vert_{{#2}}}
\def\query#1{\setlength\marginparwidth{65pt} %80pt for other size
\marginpar{\raggedright\fontsize{7.81}{9} 
\selectfont\upshape\hrule\smallskip 
#1\par\smallskip\hrule}} 
\def\query#1{}
\begin{document}

%%%%%%%%%%%
%% Title %%
%%%%%%%%%%%
\title[Zariski decompositions on arithmetic surfaces]%
{Zariski decompositions on arithmetic surfaces}
\author{Atsushi Moriwaki}
\address{Department of Mathematics, Faculty of Science,
Kyoto University, Kyoto, 606-8502, Japan}
\email{moriwaki@math.kyoto-u.ac.jp}
\date{\DateTime, (\Version)}
%\keywords{}
\subjclass{Primary 14G40; Secondary 11G50}
\begin{abstract}
In this paper, we establish the Zariski decompositions of arithmetic $\RR$-divisors of continuous type on arithmetic surfaces
and investigate several properties.
We also develop the general theory of arithmetic $\RR$-divisors on arithmetic varieties.
\end{abstract}

%%%%%
% 1st Draft
%%%%%
%\vskip -1cm
%\hfill\fbox{{\large\bf Draft for \Version}} %
%\footnote{I welcome any comments and suggestions. I will change several materials in the official version.}\par
%\vskip .5cm
%%%%%

\maketitle

\setcounter{tocdepth}{1}
\tableofcontents

%\newpage

\section*{Introduction}
\renewcommand{\theTheorem}{\Alph{Theorem}}
Let $S$ be a non-singular projective surface over an algebraically closed field and let
$\Div(S)$ be the group of divisors on $S$.
An element of $\Div(S)\otimes_{\ZZ} \RR$
\query{$\otimes \RR$\quad
$\Longrightarrow$\quad $\otimes_{\ZZ} \RR$
(17/October/2010)}
is called an $\RR$-divisor on $S$.
In addition,
it is said to be effective if
it is a linear combination of curves with non-negative real numbers.
The problem of the Zariski decomposition for an effective $\RR$-divisor $D$ is to find a decomposition
$D = P + N$ with the following properties:
\begin{enumerate}
\renewcommand{\labelenumi}{(\arabic{enumi})}
\item
$P, N \in \Div(S)\otimes_{\ZZ} \RR$.
\query{$\otimes \RR$\quad
$\Longrightarrow$\quad $\otimes_{\ZZ} \RR$
(17/October/2010)}

\item
$P$ is nef, that is,
$(P \cdot C) \geq 0$ for all reduced and irreducible curves $C$ on $S$.

\item
$N$ is effective.

\item
Assuming $N \not= 0$,
let $N = c_1 C_1 + \cdots + c_l C_l$ be the decomposition such that
$c_1, \ldots, c_l \in \RR_{>0}$ and
$C_1, \ldots, C_l$ are reduced and irreducible curves on $S$.
Then the following (4.1) and (4.2) hold:
\begin{enumerate}
\renewcommand{\labelenumii}{(\arabic{enumi}.\arabic{enumii})}
\item $(P \cdot C_i) = 0$ for all $i$.

\item The $l \times l$ matrix given by $\left((C_i \cdot C_j)\right)_{\substack{1 \leq i \leq l \\ 1 \leq j \leq l}}$ is negative definite.
\end{enumerate}
\end{enumerate}
In 1962, Zariski \cite{Zariski} established the decomposition in the case where $D \in \Div(S)$.
By the recent work due to Bauer \cite{Bauer} (see also Section~\ref{sec:Zariski:decomp:vec:space}),
$P$ is characterized by the greatest element  in
\[
\{ M \in \Div(S)\otimes_{\ZZ} \RR \mid \text{$D - M$ is effective and $M$ is nef} \}.
\]
\query{$\otimes \RR$\quad
$\Longrightarrow$\quad $\otimes_{\ZZ} \RR$
(17/October/2010)}
In this paper, we would like to consider an arithmetic analogue of the above problem on
an arithmetic surface. In order to make the main theorem clear,
we need to introduce a lot of concepts and terminology.

\subsection*{$\bullet$ Green functions for $\RR$-divisors}
Let $V$ be an equidimensional smooth projective variety over $\CC$.
An element of $\Div(V)_{\RR} := \Div(V) \otimes_{\ZZ} \RR$
\query{$\otimes \RR$\quad
$\Longrightarrow$\quad $\otimes_{\ZZ} \RR$
(17/October/2010)}
is called an {\em $\RR$-divisor} on $V$.
For an $\RR$-divisor $D$ on $V$,
we would like to introduce several types of Green functions for $D$.
We set $D = a_1 D_1 + \cdots + a_l D_l$,
where $a_1, \ldots, a_l \in \RR$ and $D_i$'s are reduced and irreducible divisors on $V$.
Let $g : V \to \RR \cup \{ \pm \infty \}$ be a  locally integrable function on $V$.
We say $g$ is a {\em $D$-Green function of $C^{\infty}$-type} (resp
a {\em $D$-Green function of $C^0$-type}) on $V$ if, for each point $x \in V$,
there are a small open neighborhood $U_x$ of $x$, 
local equations  $f_1, \ldots, f_l$ of $D_1, \ldots, D_l$ over $U_x$ respectively and
a $C^{\infty}$-function (resp. continuous function) $u_x$ over $U_x$ such that
\[
g = u_x + \sum_{i=1}^l (-a_i) \log \vert f_i \vert^2\quad\aew
\]
holds on $U_x$.
These definitions are counterparts of $C^{\infty}$-metrics and
continuous metrics.
Besides them,
it is necessary to introduce
a degenerated version of semipositive metrics.
We say $g$ is a {\em $D$-Green function of $\Tpsh_{\RR}$-type} on $V$ if the above
$u_x$ is taken as a real valued plurisubharmonic function on $U_x$ (i.e., $u_x$ is a plurisubharmonic function
on $U_x$ and $u_x(y) \in \RR$ for all $y \in U_x$).
To say more generally, let $\mathcal{L}^1_{\rm loc}$ be the sheaf consisting of locally integrable functions, that is,
\[
\mathcal{L}^1_{\rm loc}(U) = \{ g : U \to \RR \cup \{\pm \infty \} \mid \text{$g$ is locally integrable} \}
\]
for an open set $U$ of $V$, and let us fix  a subsheaf $\TT$ of $\mathcal{L}^1_{\rm loc}$ satisfying the following conditions
(in the following (1), (2) and (3), $U$ is an arbitrary open set of $V$):
\begin{enumerate}
\renewcommand{\labelenumi}{(\arabic{enumi})}
\item
If $u, v \in \TT(U)$ and $a \in \RR_{\geq 0}$,
then $u + v \in \TT(U)$ and $a u \in \TT(U)$.

\item
If $u, v \in \TT(U)$ and $u \leq v$ almost everywhere, then $u \leq v$.

\item
If $\phi \in \OO_V^{\times}(U)$
\query{$\mathcal{O}$\quad
$\Longrightarrow$\quad $\mathscr{O}$ throughout the paper
(20/October/2010)}
(i.e., $\phi$ is a  nowhere vanishing holomorphic function on $U$),
then 
$\log \vert \phi  \vert^2 \in \TT(U)$.
\end{enumerate}
This subsheaf $\TT$ is called a {\em type for Green functions on $V$}.
Moreover, $\TT$ is said to be {\em real valued} if
$u(x) \in \RR$ for any open set $U$, $u \in \TT(U)$ and $x \in U$.
Using $\TT$,
we say $g$ is a {\em $D$-Green function of $\TT$-type} on $V$
if the above $u_x$ is an element of  $\TT(U_x)$ for each $x \in V$.
The set of all $D$-Green functions of $\TT$-type on $V$ is denoted by $G_{\TT}(V;D)$.
If $x \not\in \Supp(D)$, then, by using (2) and (3) in the properties of $\TT$, we can see that the value
\[
u_x(x) + \sum_{i=1}^l (-a_i) \log \vert f_i(x) \vert^2
\]
does not depend on the choice of the local expression
\[
g = u_x + \sum_{i=1}^l (-a_i) \log \vert f_i \vert^2\quad\aew
\]
of $g$, so that
$u_x(x) + \sum_{i=1}^l (-a_i) \log \vert f_i(x) \vert^2$ is called the {\em canonical value of $g$ at $x$} and
it is denoted by $g_{\rm can}(x)$. Note that $g_{\rm can} \in \TT(V \setminus \Supp(D))$ and
$g = g_{\rm can}\ \aew$ on $V \setminus \Supp(D)$. Further, if $\TT$ is real valued,
then $g_{\rm can}(x) \in \RR$.

\subsubsection*{$\star$ $H^0(V, D)$ for an $\RR$-divisor $D$ and its norm arising from a Green function}
Let $D$ be an $\RR$-divisor.
If $V$ is connected, then $H^0(V,D)$ is defined by
\[
H^0(V, D) := \left\{ \phi \ \left|\ 
\begin{array}{l} \text{$\phi$ is a non-zero rational function} \\
\text{on $V$ with $(\phi) + D \geq 0$} 
\end{array}
\right\}\right. 
\cup \{ 0 \}.
\]
In general, let $V = V_1 \cup \cdots \cup V_{r}$ be the decomposition into
connected components of $V$.
Then
\[
H^0(V, D) := \bigoplus_{i=1}^r H^0(V_i, \rest{D}{V_i}).
\]
Let $g$ be a $D$-Green function of $C^0$-type on $V$.
For $\phi \in H^0(V, D)$, it is easy to see that
$\vert \phi \vert_g := \exp(-g/2)\vert \phi \vert$ coincides with a continuous function almost everywhere on $V$, so that the supremum norm
$\Vert \phi \Vert_g$ of $\phi$ with respect to $g$ is defined by
\query{${\operatorname{ess\ sup}}$ $\Longrightarrow$ $\esssup$ (2010/12/06)}
\[
\Vert \phi \Vert_g :=\esssup \left\{ \vert \phi \vert_g(x) \mid x \in V \right\}.
\]

\subsection*{$\bullet$ Arithmetic $\RR$-divisors}
Let $X$ be a $d$-dimensional generically smooth normal projective arithmetic variety.
Let $\Div(X)$ be the group of Cartier divisors on $X$.
As before, an element of $\Div(X)_{\RR} := \Div(X) \otimes_{\ZZ} \RR$
\query{$\otimes \RR$\quad
$\Longrightarrow$\quad $\otimes_{\ZZ} \RR$
(17/October/2010)}
is called an {\em $\RR$-divisor} on $X$.
It is said to be {\em effective} if it is a linear combination of prime divisors with non-negative real numbers.
In addition, for $D, E \in \Div(X)_{\RR}$,
if $D - E$ is effective, then it is denoted by $D \geq E$ or $E \leq D$.

Let $D$ be an $\RR$-divisor on $X$ and let $g$ be a locally integrable function on $X(\CC)$.
A pair $\overline{D} = (D, g)$ is called an {\em arithmetic $\RR$-divisor on $X$}
\query{Add ``on $X$''
(31/October/2010)}
if $F_{\infty}^*(g) = g\ \aew$,
where $F_{\infty}$ is the complex conjugation map on $X(\CC)$.
Moreover, $\overline{D}$ is said to be {\em of
$C^{\infty}$-type} (resp. {\em of $C^0$-type}, {\em of $\Tpsh_{\RR}$-type})
if $g$ is a $D$-Green function of $C^{\infty}$-type (resp.
of $C^0$-type, of $\Tpsh_{\RR}$-type).
More generally, for a fixed type  $\TT$ for Green functions,
$\overline{D}$ is said to be {\em  of
$\TT$-type}
if $g$ is a $D$-Green function of $\TT$-type.
For arithmetic $\RR$-divisors $\overline{D}_1 = (D_1, g_1)$ and $\overline{D}_2 = (D_2, g_2)$,
we define $\overline{D}_1 = \overline{D}_2$ and $\overline{D}_1 \leq \overline{D}_2$ as follows:
\[
\begin{cases}
\overline{D}_1 = \overline{D}_2\quad\overset{\text{def}}{\Longleftrightarrow}\quad\text{$D_1 = D_2$ and $g_1 = g_2\ \aew$},\\
\overline{D}_1 \leq \overline{D}_2\quad\overset{\text{def}}{\Longleftrightarrow}\quad\text{$D_1 \leq D_2$ and $g_1 \leq g_2\ \aew$}.
\end{cases}
\]
If $\overline{D} \geq (0,0)$, then $\overline{D}$ is said to be {\em effective}.
Further, the set 
\[
\{ \overline{M} \mid \text{$\overline{M}$ is an arithmetic $\RR$-divisor on $X$ and $\overline{M} \leq \overline{D}$} \}
\]
is denoted by $(-\infty, \overline{D}]$.

\subsubsection*{$\star$ Volume of arithmetic $\RR$-divisors of $C^0$-type}
Let $\aDiv_{C^0}(X)_{\RR}$ be the group of arithmetic $\RR$-divisors of $C^0$-type on $X$.
For $\overline{D} \in \aDiv_{C^0}(X)_{\RR}$, we define
$H^0(X, D)$, $\aH(X, \overline{D})$, $\ah(X, \overline{D})$ and $\avol(\overline{D})$ as follows:
\[
\begin{cases}
H^0(X, D)  :=  
\left\{ \psi \ \left|\ 
\begin{array}{l} \text{$\psi$ is a non-zero rational function} \\
\text{on $X$ with $(\psi) + D \geq 0$} 
\end{array}
\right\}\right. 
\cup \{ 0 \}, \\
\aH(X, \overline{D})  := \{ \psi \in H^0(X, D) \mid \Vert \psi \Vert_g \leq 1 \}, \\
\ah(X, \overline{D})  := \log \# (\aH(X, \overline{D})),\\
{\displaystyle \avol(\overline{D})  := \limsup_{n\to\infty} \frac{\ah(X, n\overline{D})}{n^d/d!}}.
\end{cases}
\]
Note that
\[
\aH(X, \overline{D})  =  
\left\{ \psi \ \left|\ 
\begin{array}{l} \text{$\psi$ is a non-zero rational function} \\
\text{on $X$ with $\widehat{(\psi)} + \overline{D} \geq (0,0)$} 
\end{array}
\right\}\right. \cup \{ 0 \}.
\]
The continuity of 
\[
\avol : \aPic(X)_{\QQ}  \to \RR
\]
is proved in \cite{MoCont}, where $\aPic(X)_{\QQ} := \aPic(X) \otimes_{\ZZ} \QQ$.
\query{$\otimes \QQ$\quad
$\Longrightarrow$\quad $\otimes_{\ZZ} \QQ$
(17/October/2010)}
Moreover,
in \cite{MoContExt}, we introduce $\aPic_{C^0}(X)_{\RR}$ as a natural extension of
$\aPic(X)_{\QQ}$ (for details, see \cite{MoContExt} or Subsection~\ref{subsec:def:arithmetic:R:divisor}) 
\query{add a comment : ``(for details, see \cite{MoContExt} or Subsection~\ref{subsec:def:arithmetic:R:divisor})''
(17/October/2010)}
and prove that $\avol : \aPic(X) _{\QQ} \to \RR$ has the continuous
extension 
\[
\avol : \aPic_{C^0}(X)_{\RR} \to \RR.
\]
Theorem~\ref{thm:aDiv:aPic:R} shows that there is a natural surjective homomorphism
\[
\overline{\OO}_{\RR} : \aDiv_{C^0}(X)_{\RR} \to \aPic_{C^0}(X)_{\RR}
\]
such that $\avol(\overline{D}) = \avol(\overline{\OO}_{\RR}(\overline{D}))$ for all $\overline{D} \in \aDiv_{C^0}(X)_{\RR}$.
In particular, by using results in \cite{HChen}, \cite{HChenFujita}, \cite{MoCont}, \cite{MoContExt}, \cite{MoArLin} and \cite{YuanVol},
we have the following properties of $\avol : \aDiv_{C^0}(X)_{\RR} \to \RR$ 
(cf. Theorem~\ref{thm:aDiv:aPic:R} and Theorem~\ref{thm:gen:Hodge:index}):
\begin{enumerate}
\renewcommand{\labelenumi}{(\arabic{enumi})}
\item
$\avol : \aDiv_{C^0}(X)_{\RR} \to \RR$ is positively homogeneous of degree $d$, that is,
$\avol(a\overline{D}) = a^d \avol(\overline{D})$ for all $a \in \RR_{\geq 0}$ and $\overline{D} \in \aDiv_{C^0}(X)_{\RR}$.

\item
$\avol : \aDiv_{C^0}(X)_{\RR} \to \RR$ is continuous in the following sense:
\query{Change the statement of (2)
(11/November/2010)}
Let $\overline{D}_1, \ldots, \overline{D}_r$, $\overline{A}_1, \ldots, \overline{A}_{r'}$ be arithmetic $\RR$-divisors of $C^0$-type.
For a compact set $B$ in $\RR^r$ and a positive number $\epsilon$, there are positive numbers $\delta$ and $\delta'$ such that,
for all $a_1, \ldots, a_r, \delta_1, \ldots, \delta_{r'} \in \RR$ and $\phi \in C^0(X)$
with $(a_1, \ldots, a_r) \in B$, $\sum_{j=1}^{r'} \vert \delta_j \vert \leq \delta$ and $\Vert \phi \Vert_{\sup} \leq \delta'$,
we have
\[
\hskip3em
\left\vert \avol\left(\sum_{i=1}^r a_i \overline{D}_i + \sum_{j=1}^{r'} \delta_j \overline{A}_j + (0, \phi) \right) -
\avol\left(\sum_{i=1}^r a_i \overline{D}_i \right) \right\vert
\leq \epsilon.
\]
Moreover, if  $\overline{D}_1, \ldots, \overline{D}_r$, $\overline{A}_1, \ldots, \overline{A}_{r'}$ are $C^{\infty}$, then
there is a positive constant $C$ depending only on $X$ and
$
\overline{D}_1, \ldots, \overline{D}_r, \overline{A}_1, \ldots, \overline{A}_{r'}
$
such that
\begin{multline*}
\hskip3em
\left\vert \avol\left(\sum_{i=1}^r a_i \overline{D}_i + \sum_{j=1}^{r'} \delta_j \overline{A}_j + (0, \phi) \right) -
\avol\left(\sum_{i=1}^r a_i \overline{D}_i \right) \right\vert \\
\leq C\left( \sum_{i=1}^r \vert a_i \vert + \sum_{j=1}^{r'} \vert \delta_j \vert \right)^{d-1} \left(
\Vert \phi \Vert_{\sup} + \sum_{j=1}^{r'} \vert \delta_j \vert \right)
\end{multline*}
for all $a_1, \ldots, a_r, \delta_1, \ldots, \delta_{r'} \in \RR$ and $\phi \in C^0(X)$.

\item
$\avol(\overline{D})$ is given by ``$\lim$'', that is,
\[
\avol(\overline{D}) = \lim_{t \to\infty} \frac{\ah(t \overline{D})}{t^d/d!},
\]
where $\overline{D} \in \aDiv_{C^0}(X)_{\RR}$ and $t \in \RR_{>0}$.

\item
$\avol(-)^{1/d}$ is concave, that is, for arithmetic $\RR$-divisors $\overline{D}_1, \overline{D}_2$ of $C^0$-type, 
if $\overline{D}_1$ and $\overline{D}_2$ are pseudo-effective 
(for the definition of pseudo-effectivity, see SubSection~\ref{subsec:def:positivity:arith:div}),%
\query{$\ah(X, n_1\overline{D}_1) \not= 0$ and $\ah(X, n_2\overline{D}_2) \not= 0$ for some $n_1, n_2 \in \ZZ_{>0}$\quad
$\Longrightarrow$\quad
$\overline{D}_1$ and $\overline{D}_2$ are pseudo-effective (for the definition of pseudo-effectivity, see SubSection~\ref{subsec:def:positivity:arith:div})\\
(28/February/2010)}
then
\[
\avol(\overline{D}_1 + \overline{D}_2)^{1/d} \geq \avol(\overline{D}_1)^{1/d} + \avol(\overline{D}_2)^{1/d}.
\]

\item
\rom{(}Fujita's approximation theorem for $\RR$-divisors\rom{)}
If $\overline{D}$ is an arithmetic $\RR$-divisor of $C^0$-type and
$\avol(\overline{D}) > 0$, then, for any positive number $\epsilon$, there are a birational morphism $\mu : Y \to X$ of
generically smooth and normal projective arithmetic varieties and an ample arithmetic $\QQ$-divisor $\overline{A}$ of $C^{\infty}$-type on $Y$ \rom{(}cf. Section~\rom{\ref{sec:positivity:arith:div}}\rom{)}
such that $\overline{A} \leq \mu^*(\overline{D})$ and $\avol(\overline{A}) \geq \avol(\overline{D}) - \epsilon$.

\item \rom{(}The generalized Hodge index theorem for $\RR$-divisors\rom{)} If $\overline{D}$ is an arithmetic $\RR$-divisor
of $(C^0 \cap \Tpsh)$-type and $D$ is nef on every fiber of $X \to \Spec(\ZZ)$,
\query{$(\Tpsh \cap C^0)$-type\quad$\Longrightarrow$\quad
$(C^0 \cap \Tpsh)$-type,\quad
the same rules are applied for
$(\Tpsh \cap C^{\infty})$-type as below\\
(28/February/2010)}
then 
$\avol(\overline{D}) \geq \adeg(\overline{D}^d)$  \rom{(}see
descriptions in ``Positivity of arithmetic $\RR$-divisors'' below or Proposition~\rom{\ref{prop:intersection:Div:nef:C:0}}
for the definition of $\adeg(\overline{D}^d)$\rom{)}.
\end{enumerate}

\subsubsection*{$\star$ Intersection number of an arithmetic $\RR$-divisor with a $1$-dimensional subscheme}
Let $\TT$ be a real valued type for Green functions such that $C^0 \subseteq \TT$ and $-u \in \TT$ whenever $u \in \TT$.
Let $\overline{D} =(D, g)$ be an arithmetic $\RR$-divisor of $\TT$-type. 
Let $C$ be a $1$-dimensional closed  integral subscheme of $X$.
Let $D = a_1 D_1 + \cdots + a_l D_l$ be a decomposition
such that $a_1, \ldots, a_l \in \RR$ and $D_i$'s are Cartier divisors.
For simplicity, we assume that $D_i$'s are effective,
$C \not\subseteq \Supp(D_i)$ for all $i$ and that $C$ is flat over $\ZZ$.
In this case, $\adeg(\rest{\overline{D}}{C})$ is defined by
\[
\adeg(\rest{\overline{D}}{C}) := \sum_{i=1}^l a_i \log \#(\OO_C(D_i)/\OO_C) + \frac{1}{2} \sum_{x \in C(\CC)} g_{\rm can}(x).
\]
In general, see Section~\ref{subsec:intersection:arithmetic:R:divisor:curve}. 
Let $Z$ be a $1$-cycle on $X$ with coefficients in $\RR$, that is,
there are $a_1, \ldots, a_l \in \RR$ and $1$-dimensional closed integral subschemes $C_1, \ldots, C_l$ on $X$ such that
$Z = a_1 C_1 + \cdots + a_l C_l$. Then $\adeg\left(\overline{D} \mid Z\right)$ is defined by
\[
\adeg\left(\overline{D} \mid Z\right) := \sum_{i=1}^l a_i \adeg \left( \rest{\overline{D}}{C_i} \right).
\]

\subsubsection*{$\star$ Positivity of arithmetic $\RR$-divisors}
An arithmetic $\RR$-divisor $\overline{D}$ is called {\em nef}  
if $\overline{D}$ is of $\Tpsh_{\RR}$-type and
$\adeg(\rest{D}{C}) \geq 0$ for all  $1$-dimensional closed integral subschemes $C$ of $X$.
The cone of all nef arithmetic $\RR$-divisors on $X$ is denoted by $\aNef(X)_{\RR}$.
Moreover, the cone of all nef arithmetic $\RR$-divisors of $C^{\infty}$-type (resp. $C^0$-type) on $X$ is denoted by $\aNef_{C^{\infty}}(X)_{\RR}$
(resp. $\aNef_{C^{0}}(X)_{\RR}$).
Further, we say $\overline{D}$ is {\em big}
\query{big \quad$\Longrightarrow$\quad{\em big}\\ (28/February/2010)}
if $\avol(\overline{D}) > 0$.

Let $\aDiv_{C^0}^{\Nef}(X)_{\RR}$ be the vector subspace of $\aDiv_{C^0}(X)_{\RR}$ generated by
$\aNef_{C^0}(X)_{\RR}$. Then, by Proposition~\ref{prop:intersection:Div:nef:C:0},  
\[
\aDiv_{C^{\infty}}(X)_{\RR} + \aDiv_{C^0 \cap \Tpsh}(X)_{\RR}\subseteq \aDiv_{C^0}^{\Nef}(X)_{\RR}
\]
and
the symmetric multi-linear map 
\[
\aDiv_{C^{\infty}}(X)_{\RR} \times \cdots \times  \aDiv_{C^{\infty}}(X)_{\RR} \to \RR
\]
given by
$(\overline{D}_1, \ldots, \overline{D}_d) \mapsto \adeg(\overline{D}_1 \cdots \overline{D}_d)$ (cf. Proposition-Definition~\ref{propdef:intersection:Div:C:infty})
extends to a unique symmetric multi-linear map 
\[
\aDiv_{C^0}^{\Nef}(X)_{\RR} \times \cdots \times  \aDiv_{C^0}^{\Nef}(X)_{\RR} \to \RR
\]
such that
$(\overline{D}, \ldots, \overline{D}) \mapsto \avol(\overline{D})$ for $\overline{D} \in \aNef_{C^0}(X)_{\RR}$.

\subsection*{$\bullet$ Zariski decompositions on arithmetic surfaces}
Let $X$ be a regular projective arithmetic surface.
The main theorem of this paper is the following:

\begin{Theorem}[cf. Theorem~\ref{thm:Zariski:decomp:arith:surface} and Theorem~\ref{thm:Zariski:decomp:big}]
Let $\overline{D}$ be an arithmetic $\RR$-divisor of $C^{0}$-type on $X$ such that the set
\[
(-\infty, \overline{D}] \cap \aNef(X)_{\RR} = \{ \overline{M} \mid \text{$\overline{M}$ is a nef arithmetic $\RR$-divisor on $X$ and $\overline{M} \leq \overline{D}$} \}
\]
is not empty.
Then there is a nef arithmetic $\RR$-divisor $\overline{P}$ of $C^0$-type such that 
$\overline{P}$ gives the greatest element of
$(-\infty, \overline{D}] \cap \aNef(X)_{\RR}$,
that is,  $\overline{P} \in (-\infty, \overline{D}] \cap \aNef(X)_{\RR}$ and $\overline{M} \leq \overline{P}$ for all 
$\overline{M} \in (-\infty, \overline{D}] \cap \aNef(X)_{\RR}$.
Moreover, if we set $\overline{N} = \overline{D} - \overline{P}$, then the following properties hold:
\begin{enumerate}
\renewcommand{\labelenumi}{(\arabic{enumi})}
\item $\avol(\overline{D}) = \avol(\overline{P}) = \adeg(\overline{P}^2)$.

\item
$\adeg (\rest{\overline{P}}{C}) = 0$ for all $1$-dimensional closed integral  subschemes $C$ with $C \subseteq \Supp(N)$.

\item
If $\overline{L}$ is an arithmetic $\RR$-divisor of $\Tpsh_{\RR}$-type on $X$ such that
$0 \leq \overline{L} \leq \overline{N}$ and $\deg(\rest{\overline{L}}{C}) \geq 0$ 
for all $1$-dimensional  closed integral subschemes $C$ with $C \subseteq \Supp(N)$,
then $\overline{L} = 0$.
\end{enumerate}
\end{Theorem}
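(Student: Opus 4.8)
The plan is to build $\overline{P}$ as the least upper bound of $\Gamma := (-\infty,\overline{D}]\cap\aNef(X)_{\RR}$, to verify that $\overline{P}$ itself lies in $\Gamma$ (so that it is automatically the greatest element and of $C^{0}$-type), and then to deduce (1)--(3) mostly from the maximality of $\overline{P}$. The single genuinely new ingredient will be a regularity statement for an envelope at the archimedean place, and that is where I expect the real difficulty to lie.

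First I would set up the boundedness and directedness that make the supremum legitimate. Fix $\overline{M}_{0}\in\Gamma$ and choose an auxiliary ample arithmetic $\RR$-divisor $\overline{A}$. Using $\overline{M}\leq\overline{D}$, monotonicity of intersection with $\overline{A}$, and nef-ness of $\overline{M}$, one bounds $\adeg((\overline{D}-\overline{M})\cdot\overline{A})$ and $\adeg(\overline{M}\cdot\overline{A})$; since every prime divisor meets $\overline{A}$ with a uniformly positive degree, this forces the divisorial parts of the elements of $\Gamma$ to lie in a bounded subset of the finite-dimensional space of $\RR$-divisors supported on a fixed finite set $\Sigma$ of primes (components of positive coefficient lie in $\Supp(D)$ because $D-M\geq 0$; components of negative coefficient, horizontal or vertical, are controlled using the negative semi-definiteness of the fibre intersection forms of the arithmetic surface $X$), while the Green functions are dominated by $g$. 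Next I would show $\Gamma$ is directed upward: given $\overline{M}_{1},\overline{M}_{2}\in\Gamma$, form $\overline{M}'$ with divisor $\max(M_{1},M_{2})$ and Green function the upper semicontinuous regularization of $\max(g_{M_{1}},g_{M_{2}})$, which is of $(C^{0}\cap\Tpsh_{\RR})$-type with $\overline{M}_{i}\leq\overline{M}'\leq\overline{D}$ but possibly not nef, and then correct $\overline{M}'$ by an arithmetic Zariski process: repeatedly subtract suitable combinations of fibre components over the finitely many primes where nef-ness fails, and at the archimedean place replace the Green function by the largest subharmonic-type function with the prescribed singularities lying below it (the solution of a Laplace obstacle problem on $X(\CC)$). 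Negative-definiteness of the fibre forms and the maximum principle make this terminate in a nef $\overline{M}_{3}\leq\overline{D}$ with $\overline{M}_{3}\geq\overline{M}'$, so $\Gamma$ is directed.

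Since $\Gamma$ is directed and bounded above, its supremum is attained along an increasing sequence $\overline{M}^{(k)}$; set $P:=\sup_{k}M^{(k)}$ in the lattice of $\RR$-divisors supported on $\Sigma$, and let $g_{P}$ be the upper semicontinuous regularization of $\sup_{k}g_{M^{(k)}}$. Then $g_{P}$ is a $P$-Green function of $\Tpsh_{\RR}$-type (a regularized increasing limit of $\Tpsh_{\RR}$-type Green functions dominated by $g$ and bounded below by $g_{M^{(0)}}$), and $\overline{P}:=(P,g_{P})$ satisfies $\overline{P}\leq\overline{D}$ and $\adeg(\rest{\overline{P}}{C})=\lim_{k}\adeg(\rest{\overline{M}^{(k)}}{C})\geq 0$ for every $1$-dimensional closed integral subscheme $C$, by monotone convergence in $\sum_{i}a_{i}\log\#(\OO_{C}(D_{i})/\OO_{C})+\tfrac12\sum_{x\in C(\CC)}g_{\mathrm{can}}(x)$. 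The one step that requires real work --- the main obstacle --- is that $g_{P}$ is of $C^{0}$-type: this is the continuity of the envelope of $\Tpsh_{\RR}$-type Green functions dominated by the continuous obstacle $g$, and in complex dimension one it reduces to classical regularity for the obstacle problem for $dd^{c}$, yielding continuity (indeed local $C^{1,1}$ away from $\Supp(P)$). Granting this, $\overline{P}$ is a nef arithmetic $\RR$-divisor of $C^{0}$-type with $\overline{P}\leq\overline{D}$, hence $\overline{P}\in\Gamma$, and being $\sup\Gamma$ it is the greatest element.

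Put $\overline{N}=\overline{D}-\overline{P}$. For (3): if $\overline{L}$ is of $\Tpsh_{\RR}$-type with $0\leq\overline{L}\leq\overline{N}$ and $\adeg(\rest{\overline{L}}{C})\geq 0$ for all $C\subseteq\Supp(N)$, then $\Supp(L)\subseteq\Supp(N)$, so for $C\not\subseteq\Supp(N)$ one has $C\not\subseteq\Supp(L)$ and $\adeg(\rest{\overline{L}}{C})\geq 0$ automatically (each $\log\#(\OO_{C}(L_{i})/\OO_{C})\geq 0$ as $L_{i}\geq 0$, and the canonical value at $x$ of the local potential of the effective $\overline{L}$ is $\geq 0$ by the sub-mean-value inequality); since intersection with a curve is additive and $\overline{P}$ is nef, $\overline{P}+\overline{L}$ is nef, of $\Tpsh_{\RR}$-type, and $\leq\overline{P}+\overline{N}=\overline{D}$, so $\overline{P}+\overline{L}\in\Gamma$ and maximality of $\overline{P}$ forces $\overline{L}\leq 0$, hence $\overline{L}=0$ (this does not even use (2)). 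For (2): if $\adeg(\rest{\overline{P}}{C_{0}})>0$ for some prime $C_{0}\subseteq\Supp(N)$ appearing in $N$ with coefficient $c_{0}>0$, choose a nonzero effective $\overline{E}$ of $(C^{\infty}\cap\Tpsh_{\RR})$-type with divisor part $\tfrac{c_{0}}{2}C_{0}$ and $\overline{E}\leq\overline{N}$ (take $\overline{E}=(\tfrac{c_{0}}{2}C_{0},\max(v-\kappa,0))$ with $v$ a $\tfrac{c_{0}}{2}C_{0}$-Green function of semipositive curvature and $\kappa\gg 0$, or $\overline{E}=(\tfrac{c_{0}}{2}C_{0},0)$ when $C_{0}$ is vertical), so that $\adeg(\rest{\overline{E}}{C})\geq 0$ for all $C\neq C_{0}$; then for small $t>0$ the divisor $\overline{P}+t\overline{E}$ is nef (the only term that could be negative, at $C=C_{0}$, stays $\geq 0$ because $\adeg(\rest{\overline{P}}{C_{0}})>0$) and $\leq\overline{D}$, so it lies in $\Gamma$ and $t\overline{E}\leq 0$ by maximality, contradicting $\overline{E}\gneq 0$; this is the arithmetic analogue of the classical relation $P\cdot C_{i}=0$. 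For (1): $\overline{P}$ nef of $C^{0}$-type gives $\avol(\overline{P})=\adeg(\overline{P}^{2})$ by the extension of the intersection pairing to $\aNef_{C^{0}}(X)_{\RR}$ recalled above, and $\avol(\overline{P})\leq\avol(\overline{D})$ since $\aH(X,n\overline{P})\subseteq\aH(X,n\overline{D})$; for the reverse, if $\avol(\overline{D})=0$ there is nothing to prove, and if $\avol(\overline{D})>0$ then Fujita's approximation theorem for $\RR$-divisors yields a birational $\mu:Y\to X$ and an ample arithmetic $\QQ$-divisor $\overline{A}$ on $Y$ with $\overline{A}\leq\mu^{*}(\overline{D})$ and $\avol(\overline{A})\geq\avol(\overline{D})-\epsilon$, while applying Step 2 on $Y$ together with the projection formula and the negativity lemma shows that the positive part commutes with pull-back, so $\overline{A}\leq\mu^{*}(\overline{P})$ and $\avol(\overline{A})\leq\avol(\mu^{*}(\overline{P}))=\avol(\overline{P})$; letting $\epsilon\to 0$ gives $\avol(\overline{D})=\avol(\overline{P})=\adeg(\overline{P}^{2})$.
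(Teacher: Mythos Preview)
Your construction of the greatest element is on the right track but over-engineered in two places. First, the key directedness lemma --- that the max of finitely many nef arithmetic $\RR$-divisors (with divisor $\max(M_i)$ and Green function $\max(g_i)$, no regularization needed) is again nef --- is a short direct check (Lemma~9.1.2 in the paper); your ``correction by an arithmetic Zariski process'' is unnecessary, and as written it would produce something \emph{smaller} than $\overline{M}'$, not larger. Second, once max preserves nefness, boundedness of supports and coefficients is immediate: replacing each $\overline{M}\in\Gamma$ by $\max(\overline{M}_{0},\overline{M})\in\Gamma$ forces $M_{0}\le M\le D$ and hence $\Supp(M)\subseteq\Supp(D)\cup\Supp(M_{0})$, so no intersection-theoretic bound is needed. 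The paper then builds $\overline{P}$ in two stages: an increasing sequence of nef divisors converging in the divisor part to $P$ (via the limit-of-nef Theorem~7.1), followed by the greatest $P$-Green function of $\Tpsh$-type below $g$ (Proposition~4.4). The $C^{0}$-regularity of this envelope, which you correctly flag as the main obstacle, is handled via Berman--Demailly (Theorem~4.6). Your arguments for (2) and (3) essentially coincide with the paper's.

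The substantive gap is in (1). You invoke Fujita approximation to get $\mu:Y\to X$ and ample $\overline{A}\le\mu^{*}\overline{D}$ on $Y$ with $\avol(\overline{A})\ge\avol(\overline{D})-\epsilon$, and then assert $\overline{A}\le\mu^{*}\overline{P}$ because ``the positive part commutes with pull-back''. This step is not justified: even granting a projection formula so that $\mu_{*}\overline{A}$ is nef on $X$ and hence $\le\overline{P}$, the inequality $\overline{A}\le\mu^{*}\mu_{*}\overline{A}$ fails in general (take $\overline{A}$ with a positive coefficient along an exceptional curve), so one cannot conclude $\overline{A}\le\mu^{*}\overline{P}$ from a negativity lemma alone. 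A characterization argument via (2)--(3) runs into the difficulty that $\overline{P}_{Y}-\mu^{*}\overline{P}$ is only of $(\Tpsh_{\RR}-\Tpsh_{\RR})$-type, not $\Tpsh_{\RR}$-type as (3) requires. The paper bypasses this entirely: it proves $\avol(\overline{D})=\avol(\overline{P})$ via the $\sigma$-decomposition of Section~8, which uses the distortion-function estimates (Theorem~3.2.3 and Proposition~8.3) to show, for effective $\overline{D}$ of $C^{\infty}$-type, that $\ah(X,n\overline{P})\le\ah(X,n\overline{D})\le\ah(X,n\overline{P})+O(n\log n)$; the $C^{0}$ case then follows by Stone--Weierstrass approximation and continuity of $\avol$.
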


The above decomposition $\overline{D} = \overline{P} + \overline{N}$
is called the {\em Zariski decomposition of $\overline{D}$} and we say $\overline{P}$ (resp. $\overline{N}$) is the {\em positive part} (resp.
the {\em negative part}) of the decomposition.
For example,
let $\PP^1_{\ZZ} = \Proj(\ZZ[x,y])$, $C_0 = \{ x = 0 \}$, $z = x/y$ and
$\alpha, \beta \in \RR_{>0}$ with $\alpha > 1$ and $\beta < 1$.
Then the positive part of an arithmetic divisor
\[
(C_0, -\log \vert z \vert^2 + \log \max \{ \alpha^2 \vert z \vert^2, \beta^2\})
\]
of $(C^0 \cap \Tpsh)$-type on $\PP^1_{\ZZ}$ is
\[
(\theta C_0, -\theta \log \vert z \vert^2 + \log \max \{ \alpha^2 \vert z\vert^{2\theta}, 1 \}),
\]
where $\theta = \log\alpha/(\log\alpha - \log \beta)$ (cf. Subsection~\ref{subsec:Zariski:decomp:P:1}).
This example shows that an $\RR$-divisor is necessary for the arithmetic Zariski decomposition.
In addition, an example in Remark~\ref{rem:Zariski:Faltings} shows that 
the Arakelov Chow group consisting of admissible metrics due to Faltings is insufficient to get
the Zariski decomposition.

We assume that $N \not= 0$. Let $N = c_1 C_1 + \cdots + c_l C_l$ be the decomposition of $N$ such that
$c_1, \ldots, c_l \in \RR_{>0}$ and $C_i$'s are $1$-dimensional closed integral subschemes on $X$.
Let $(C_1, g_1), \ldots, (C_l, g_l)$ be effective arithmetic divisors of $\Tpsh_{\RR}$-type
such that 
\[
c_1 (C_1, g_1) + \cdots + c_l (C_l, g_l) \leq \overline{N},
\]
which is possible by Proposition~\ref{prop:green:irreducible:decomp} and Lemma~\ref{lem:good:psh:arith:surface}.
Then, by using Lemma~\ref{lem:negative:definite}, the above (3) yields an inequality
\[
(-1)^l \det \left( \adeg\left( \rest{(C_i, g_i)}{C_j} \right)\right) > 0.
\]
This is a counterpart of the property (4.2) of the Zariski decomposition on an algebraic surface.
On the other hand, our Zariski decomposition
is a refinement of Fujita's approximation theorem due to Chen \cite{HChenFujita} and Yuan \cite{YuanVol}
on an arithmetic surface.
Actually Fujita's approximation theorem on an arithmetic surface is a consequence of the above theorem
(cf. Proposition~\ref{prop:Fujita:app}).

Let $\overline{D}$ be an effective arithmetic $\RR$-divisor of $C^0$-type.
For each $n \geq 1$, we set $F_n(\overline{D})$ and $M_n(\overline{D})$ as follows:
\[
\begin{cases}
{\displaystyle F_n(\overline{D}) = \frac{1}{n} \sum_{C} \min \left\{ \mult_C((\phi) + nD) \mid \phi \in \aH(X, n\overline{D}) \setminus \{ 0 \} \right\} C}, \\
M_n(\overline{D}) = D - F_n(\overline{D}).
\end{cases}
\]
Let $V(n\overline{D})$ be a complex vector space generated by $\aH(X, n\overline{D})$.
It is easy to see that 
\[
g_{M_n(\overline{D})} := g + \frac{1}{n} \log \dist(V(n\overline{D}); ng)
\]
is an $M_n(\overline{D})$-Green function
of $C^{\infty}$-type (for the definition of distorsion functions, see Subsection~\ref{subsec:distorsion:R:div}).
Then we have the following:

\begin{Theorem}[Asymptotic orthogonality]
If $\overline{D}$ is big, then
\[
\lim_{n\to\infty} \adeg \left.\left( \left(M_n(\overline{D}), g_{M_n(\overline{D})}\right) \ \right| \ F_n(\overline{D}) \right) = 0.
\]
\end{Theorem}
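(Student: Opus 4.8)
\smallskip
\noindent\emph{Proof proposal.}
Let $\overline{D}=\overline{P}+\overline{N}$ be the Zariski decomposition from the main theorem, put $\overline{L}_n:=\left(M_n(\overline{D}),\,g_{M_n(\overline{D})}\right)$, and let $N=c_1C_1+\cdots+c_lC_l$. The key structural point is that $\overline{L}_n$ is nef, hence $\overline{L}_n\leq\overline{P}$; setting $\overline{E}_n:=\overline{P}-\overline{L}_n\geq0$, the theorem will follow from $\adeg(\overline{E}_n^{2})\to0$ together with the negative definiteness of the intersection form on $\Supp(N)$, once one also knows $\tfrac1n F_n(\overline{D})\to N$. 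The main difficulty will be the archimedean half of ``$\overline{E}_n\to0$''.

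\smallskip
\noindent\emph{Step 1 ($\overline{L}_n$ is nef, $\overline{L}_n\leq\overline{P}$, and $\adeg(\overline{L}_n\mid F_n(\overline{D}))\geq0$).}
Since $\vert\psi\vert_{ng}^{2}(x)\leq\Vert\psi\Vert_{ng}^{2}\leq1$ for $\psi\in\aH(X,n\overline{D})$, one has $\dist(V(n\overline{D});ng)\leq1$, so $g_{M_n(\overline{D})}\leq g$ and $\overline{L}_n\leq\overline{D}$; moreover $\aH(X,n\overline{D})\subseteq\aH(X,n\overline{L}_n)$. By construction $g_{M_n(\overline{D})}$ is of $C^{\infty}$-type and, being the Green function attached to a linear system, of $\Tpsh$-type, and $M_n(\overline{D})$ — having no fixed component — is nef on every fibre of $X\to\Spec(\ZZ)$. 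For any $1$-dimensional closed integral subscheme $C$, choose $\psi\in\aH(X,n\overline{D})\setminus\{0\}$ attaining the minimum that defines the coefficient of $C$ in $F_n(\overline{D})$, so $\mult_C\!\left((\psi)+nM_n(\overline{D})\right)=0$ and $(\psi)+nM_n(\overline{D})\geq0$; representing $\rest{\overline{L}_n}{C}$ by $\tfrac1n\!\left(nM_n(\overline{D})+(\psi),\ ng_{M_n(\overline{D})}-\log\vert\psi\vert^{2}\right)$, the finite part is $\geq0$ and each archimedean canonical value equals $\tfrac1n\log\!\left(\dist(V(n\overline{D});ng)(x)/\vert\psi\vert_{ng}^{2}(x)\right)\geq0$ (up to an $O(1/n)$ coming from the normalization of the distorsion function). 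Hence $\adeg(\rest{\overline{L}_n}{C})\geq0$ for all $C$, so $\overline{L}_n$ is nef; being $\leq\overline{D}$, it is dominated by the greatest element $\overline{P}$, and $\overline{E}_n:=\overline{P}-\overline{L}_n\geq0$. Summing the displayed inequality against the coefficients of $F_n(\overline{D})$ (whose total is $\leq\sum_C\mult_C(D)$) gives the lower bound $\adeg(\overline{L}_n\mid F_n(\overline{D}))\geq0$.

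\smallskip
\noindent\emph{Step 2 ($\adeg(\overline{E}_n^{2})\to0$).}
Monotonicity gives $\avol(\overline{L}_n)\leq\avol(\overline{D})$, while $\aH(X,n\overline{D})\subseteq\aH(X,n\overline{L}_n)$ and the arithmetic Fujita approximation theorem (Chen, Yuan) give $\liminf_n\avol(\overline{L}_n)\geq\avol(\overline{D})$; thus $\avol(\overline{L}_n)\to\avol(\overline{D})=\adeg(\overline{P}^{2})$ by the main theorem. As $\overline{L}_n$ is nef, $\avol(\overline{L}_n)=\adeg(\overline{L}_n^{2})\to\adeg(\overline{P}^{2})$, and in the extended multilinear pairing on $\aDiv_{C^{0}}^{\Nef}(X)_{\RR}$ the nefness of $\overline{L}_n,\overline{P}$ and the effectivity of $\overline{E}_n$ give
\[
\adeg(\overline{L}_n^{2})\leq\adeg(\overline{L}_n\cdot\overline{P})\leq\adeg(\overline{P}^{2}),
\]
whose two ends tend to the same limit; therefore $\adeg(\overline{E}_n^{2})=\adeg(\overline{P}^{2})-2\adeg(\overline{L}_n\cdot\overline{P})+\adeg(\overline{L}_n^{2})\to0$, and likewise $\adeg(\overline{E}_n\cdot\overline{P})\to0$.

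\smallskip
\noindent\emph{Step 3 (conclusion; the main obstacle).}
Write $F_n(\overline{D})=N+E_n$ with $E_n\geq0$, the divisorial part of $\overline{E}_n$. Using $\tfrac1n F_n(\overline{D})\to N$ — so the coefficients of $F_n(\overline{D})$ along curves outside $\Supp(N)$ tend to $0$ while the $\adeg(\rest{\overline{L}_n}{C})$ stay bounded — and the fact that $\adeg(\rest{\overline{P}}{C})=0$ for $C\subseteq\Supp(N)$ (property (2) of the main theorem), we get $\adeg(\overline{P}\mid F_n(\overline{D}))\to\adeg(\overline{P}\mid N)=0$; since $\overline{L}_n=\overline{P}-\overline{E}_n$,
\[
\adeg(\overline{L}_n\mid F_n(\overline{D}))=\adeg(\overline{P}\mid F_n(\overline{D}))-\adeg(\overline{E}_n\mid F_n(\overline{D})),
\]
so it suffices to show $\adeg(\overline{E}_n\mid F_n(\overline{D}))\to0$, i.e. that the divisorial coefficients of $\overline{E}_n$ and its Green function become negligible. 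For the coefficients along $\Supp(N)$ this follows from $\adeg(\overline{E}_n^{2})\to0$ and the negative definiteness of the arithmetic intersection matrix $\left(\adeg(\rest{(C_i,g_i)}{C_j})\right)$ on $\Supp(N)$ (Lemma~\ref{lem:negative:definite}), just as classically. The Green-function part is the real obstacle: there is no such definiteness, and one must squeeze $g-g_{M_n(\overline{D})}$ (equivalently $g_P-g_{M_n(\overline{D})}\geq0$, from $\overline{L}_n\leq\overline{P}$) to $0$ uniformly, combining $\adeg(\overline{E}_n\cdot\overline{P})\to0$ with the $C^{0}$-continuity of $\avol$ recalled in the Introduction; a secondary technical point is to absorb the $O(1/n)$ of Step~1 so that $\overline{L}_n$ is honestly nef. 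Granting this, $\adeg(\overline{L}_n\mid F_n(\overline{D}))\to0$, which with Step~1 finishes the proof.
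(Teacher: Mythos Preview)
Your proposal has a genuine error and an unfilled gap. The error is in Step~1: the claim $\dist(V(n\overline{D});ng)\le 1$ is false. The distorsion function is $\sum_i|\phi_i|_{ng}^2$ for an \emph{$L^2$-orthonormal} basis of $V(n\overline{D})$, not for elements of $\aH(X,n\overline{D})$; Gromov's inequality (Theorem~\ref{thm:dist:dist:dist:ineq}) only gives $\dist\le C(n+1)^3$. So $g_{M_n(\overline{D})}\le g$ fails, $\overline{L}_n\le\overline{D}$ fails, and with it the crucial inequality $\overline{L}_n\le\overline{P}$ and the effectivity of $\overline{E}_n$. This undermines the sandwich $\adeg(\overline{L}_n^2)\le\adeg(\overline{L}_n\cdot\overline{P})\le\adeg(\overline{P}^2)$ in Step~2 and the whole framework of Step~3. (The nefness of $\overline{L}_n$ is fine and needs no ``$O(1/n)$'' correction; the problem is the comparison with $\overline{D}$, not with $0$.) Separately, you take $F_n(\overline{D})\to N$ for granted and leave the Green-function part of Step~3 as ``the real obstacle''; neither is minor.

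The paper's route is quite different and avoids all of this by working through the \emph{limit} $\overline{M}_\infty(\overline{D})$ instead of the individual $\overline{L}_n$. For the limit one does have $\overline{M}_\infty(\overline{D})\le\overline{D}$ (Proposition~\ref{prop:sigma:decomp:surface}(4)), hence $\overline{M}_\infty(\overline{D})\le\overline{P}$. The key new input is the vanishing of asymptotic multiplicity for nef and big divisors (Proposition~\ref{prop:vanish:mu:nef:big}): from $\mu_C(\overline{D})\le\mu_C(\overline{P})+\mult_C(N)=\mult_C(N)$ one gets $F_\infty(\overline{D})=N$, i.e.\ the divisorial parts $M_\infty(\overline{D})$ and $P$ coincide. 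Then $\overline{P}-\overline{M}_\infty(\overline{D})=(0,\phi)$ with $\phi\ge 0$, so for $C\subseteq\Supp(N)$ one has the squeeze
\[
0\ \le\ \adeg\bigl(\rest{\overline{M}_\infty(\overline{D})}{C}\bigr)\ \le\ \adeg\bigl(\rest{\overline{P}}{C}\bigr)\ =\ 0.
\]
One returns to finite $n$ via $\limsup_n\adeg(\rest{\overline{M}_n(\overline{D})}{C})\le\adeg(\rest{\overline{M}_\infty(\overline{D})}{C})$ (Proposition~\ref{prop:sigma:decomp:surface}(3), resting on Theorem~\ref{thm:limit:nef:div:surface}) together with $\mult_C(F_n(\overline{D}))\to\mult_C(N)$. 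No control of $\adeg(\overline{E}_n^2)$, no negative-definiteness argument, and no Fujita approximation are used.
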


\subsection*{$\bullet$ Technical results for the proof of the arithmetic Zariski decomposition}
In order to get the greatest element of $(-\infty, \overline{D}] \cap \aNef(X)_{\RR}$,
we need to consider  the nefness of the limit of a convergent sequence of nef arithmetic $\RR$-divisors.
The following theorem is our solution for this problem:

\begin{Theorem}[cf. Theorem~\ref{thm:limit:nef:div:surface}]
Let $X$ be a regular projective arithmetic surface.
Let $\{ \overline{M}_n = (M_n, h_n) \}_{n=0}^{\infty}$ be a sequence of
nef arithmetic $\RR$-divisors on $X$ with the following properties:
\begin{enumerate}
\renewcommand{\labelenumi}{(\alph{enumi})}
\item
There is an arithmetic divisor $\overline{D} = (D, g)$ of $C^0$-type such that
$\overline{M}_n \leq \overline{D}$ for all $n \geq 1$.

\item
There is a proper closed subset $E$ of $X$ such that $\Supp(D) \subseteq E$ and
$\Supp(M_n) \subseteq E$ for all $n \geq 1$.

\item
$\lim_{n\to\infty} \mult_{C}(M_n)$ exists for all $1$-dimensional closed integral  subschemes $C$ on $X$.

\item
$\limsup_{n\to\infty} (h_n)_{\rm can}(x)$ exists in $\RR$ for all $x \in X(\CC) \setminus E(\CC)$.
\end{enumerate}
Then there is a nef arithmetic $\RR$-divisor $\overline{M} = (M, h)$ on $X$ such that $\overline{M} \leq \overline{D}$,
\[
M = \sum_{C}\left( \lim_{n\to\infty} \mult_{C}(M_n)\right) C
\]
and that $\rest{h_{\rm can}}{X(\CC) \setminus E(\CC)}$ is the upper semicontinuous regularization of the function
given by 
$x \mapsto  \limsup_{n\to\infty}(h_n)_{\rm can}(x)$
over $X(\CC) \setminus E(\CC)$.
\end{Theorem}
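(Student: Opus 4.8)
The plan is to construct $M$ from the limiting multiplicities and the Green function $h$ by potential theory on the compact Riemann surface $X(\CC)$, the Brelot--Cartan theorem on families of subharmonic functions producing exactly the prescribed upper semicontinuous regularization; the nefness of $(M,h)$ is then obtained by passing to the limit in the defining inequalities. First, since $X$ is an arithmetic surface and $E\subsetneq X$ is closed, only finitely many $1$-dimensional closed integral subschemes $C_1,\dots,C_r$ are contained in $E$. By (b) these are the only $C$ with $\mult_C(M_n)\neq 0$ for some $n$, and by (c) the numbers $a_i:=\lim_{n\to\infty}\mult_{C_i}(M_n)$ exist, so we put $M:=\sum_i a_iC_i$. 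From $\overline M_n\le\overline D$ we get $\mult_C(M_n)\le\mult_C(D)$ for every $C$ and all $n$, hence $M\le D$ in the limit. It remains to produce $h$: a Green function of $\Tpsh_{\RR}$-type for $M$ with $F_{\infty}^{*}(h)=h$ a.e., with $h\le g$ a.e., with $\rest{h_{\rm can}}{X(\CC)\setminus E(\CC)}$ equal to the prescribed regularization, and with $\adeg(\rest{(M,h)}{C})\ge 0$ for every $C$.

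Work on $X(\CC)$, a finite disjoint union of compact Riemann surfaces, noting that $E(\CC)$ is a finite set of points. On $X(\CC)\setminus E(\CC)$ each $(h_n)_{\rm can}$ coincides, locally, with the sum of the plurisubharmonic local potential of $h_n$ and a pluriharmonic function, hence is subharmonic there; and $\overline M_n\le\overline D$ gives $(h_n)_{\rm can}\le g_{\rm can}$, with $g_{\rm can}$ continuous. So $\{(h_n)_{\rm can}\}_n$ is locally uniformly bounded above, and by (d) the function $\phi(x):=\limsup_{n}(h_n)_{\rm can}(x)$ is finite at every point of $X(\CC)\setminus E(\CC)$, in particular not identically $-\infty$ on any connected component. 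The Brelot--Cartan theorem then shows that the upper semicontinuous regularization $\phi^{*}$ is subharmonic on $X(\CC)\setminus E(\CC)$, satisfies $\phi^{*}\ge\phi>-\infty$ everywhere (so it is real-valued there) and $\phi^{*}\le g_{\rm can}$, and equals $\phi$ off a polar, hence measure-zero, set. Symmetry $F_{\infty}^{*}(\phi^{*})=\phi^{*}$ is inherited from the $h_n$, since $F_{\infty}$ is a homeomorphism carrying polar sets to polar sets.

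Next, extend $\phi^{*}$ across the finitely many points of $E(\CC)$ and check the $\Tpsh_{\RR}$-type condition there; this is the crux. Near $x\in E(\CC)$ pick a coordinate disk with coordinate $z$, $z(x)=0$, meeting $\Supp(D)(\CC)$ only in $x$, and write $g=\widetilde v_x-b\log\vert z\vert^{2}$ with $\widetilde v_x$ continuous and $b=\mult_x(D)$, and $h_n=\widetilde u_{x,n}-a_n\log\vert z\vert^{2}$ with $\widetilde u_{x,n}$ plurisubharmonic and $a_n=\mult_x(M_n)\to a:=\mult_x(M)\le b$. From $(h_n)_{\rm can}\le g_{\rm can}$ and the maximum principle, $\widetilde u_{x,n}$ is bounded above on a slightly smaller disk by a constant independent of $n$; hence $w_x:=(\limsup_n\widetilde u_{x,n})^{*}$ is subharmonic on that disk and, on the punctured disk, equals $\phi^{*}+a\log\vert z\vert^{2}$ up to a pluriharmonic term, and we set $h:=w_x-a\log\vert z\vert^{2}$ locally near $x$, glued with the $\phi^{*}$-prescription away from $E(\CC)$. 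The delicate point is that $\Tpsh_{\RR}$-type requires $w_x$ to be \emph{real-valued}, whereas a regularized limsup of subharmonic functions is a priori only subharmonic and could take the value $-\infty$ at $x$ precisely when $a=\mult_x(M)>0$ --- there $(h_n)_{\rm can}(x)$ is not even defined. To exclude this we use the nefness of the $\overline M_n$: for a horizontal integral curve $C$ with $x\in C(\CC)$, the inequality $\adeg(\rest{\overline M_n}{C})\ge 0$, combined with the uniform bounds on the multiplicities of $M_n$ and on $(h_n)_{\rm can}$ at the other (finitely many) points of $C(\CC)$, yields a uniform lower bound for the local contribution of $h_n$ at $x$, forcing $w_x(x)>-\infty$; with the upper bound this makes the singularity of $w_x$ along $\{x\}$ removable, so $w_x$ is a genuine real-valued plurisubharmonic function on the disk. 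I expect this finiteness issue to be the main obstacle.

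Finally, $h=\phi^{*}\le g$ a.e.\ on $X(\CC)\setminus E(\CC)$, and the local comparison above extends this across $E(\CC)$, so $\overline M:=(M,h)\le\overline D$. For nefness, $\overline M$ is of $\Tpsh_{\RR}$-type by the previous two steps, and for any $1$-dimensional closed integral subscheme $C$ we pass to the limit in $\adeg(\rest{\overline M_n}{C})\ge 0$: writing each $\adeg(\rest{\overline M_n}{C})$ as its finite part (which converges by (c) and continuity of the finite intersection pairing) plus the archimedean part $\tfrac12\sum_{x'\in C(\CC)}(h_n)_{\rm can}(x')$, using $h_{\rm can}(x')\ge\limsup_n(h_n)_{\rm can}(x')$ at the points $x'\in C(\CC)$ --- which holds by construction off $E(\CC)$ and by the local analysis above at the points of $C(\CC)\cap E(\CC)$ --- together with $\sum_{x'}\limsup_n\ge\limsup_n\sum_{x'}$ for finite sums, we get $\adeg(\rest{\overline M}{C})\ge\limsup_n\adeg(\rest{\overline M_n}{C})\ge 0$ when $C\not\subseteq\Supp(M)$; the case $C\subseteq\Supp(M)$ follows identically after representing $M$ and $D$ by effective Cartier divisors not containing $C$ and invoking the general definition of $\adeg(\rest{\cdot}{C})$ from Section~\ref{subsec:intersection:arithmetic:R:divisor:curve}. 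Thus $\overline M=(M,h)$ is a nef arithmetic $\RR$-divisor with $\overline M\le\overline D$, $M=\sum_C(\lim_n\mult_C(M_n))C$, and $\rest{h_{\rm can}}{X(\CC)\setminus E(\CC)}=\phi^{*}$, as required.
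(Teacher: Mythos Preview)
Your argument is correct and follows the same route as the paper: build $M$ from the limiting multiplicities, take the upper semicontinuous regularization of the $\limsup$ of the local subharmonic potentials (the paper's $\tilde u$, your $w_x$ and $\phi^*$), and exclude $w_x(x)=-\infty$ at $x\in E(\CC)$ by applying nefness of $\overline{M}_n$ along the horizontal component $C_1$ of $E$ through $x$ (which is indeed the unique horizontal curve with $x\in C(\CC)$). The one imprecision is your phrase ``uniform bounds on $(h_n)_{\rm can}$ at the other points of $C(\CC)$'': those other points also lie in $C_1(\CC)\subseteq E(\CC)$, so $(h_n)_{\rm can}$ is undefined there; the paper fixes this by twisting by a principal divisor $(\psi)$ with $C_1+(\psi)=A-B$ and $C_1\not\subseteq\Supp(A)\cup\Supp(B)$, after which the moved Green function $(h'_n)_{\rm can}$ is defined on $C_1(\CC)$, your upper bound on the $\tilde u_{x,n}$ bounds it from above at every $y\in C_1(\CC)$, and the degree inequality then forces $\limsup_n(h'_n)_{\rm can}(x)>-\infty$, i.e.\ $w_x(x)>-\infty$.
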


Moreover, for the first property $\avol(\overline{P}) = \avol(\overline{D})$ of the arithmetic Zariski decomposition,
it is necessary to observe the following behavior of distorsion functions, 
which is a consequence of Gromov's inequality for an $\RR$-divisor (cf. Proposition~\ref{prop:Gromov:ineq}).

\begin{Theorem}[cf. Theorem~\ref{thm:dist:dist:dist:ineq}]
Let $V$ be an equidimensional smooth projective variety over $\CC$ and
let $D$ be an $\RR$-divisor on $V$.
Let $R = \bigoplus_{n\geq 0} R_n$ be a graded subring of $\bigoplus_{n\geq 0} H^0(V, nD)$.
If $g$ is a $D$-Green function of $C^{\infty}$-type, then
there is a positive constant $C$ with the following properties:
\begin{enumerate}
\renewcommand{\labelenumi}{(\arabic{enumi})}
\item $\dist(R_n;ng) \leq C(n+1)^{3\dim V}$ for all $n \geq 0$.

\item
${\displaystyle \frac{\dist(R_n;ng)}{C(n+1)^{3\dim V}} \cdot \frac{\dist(R_m;mg)}{C(m+1)^{3\dim V}} \leq \frac{\dist(R_{n+m};(n+m)g)}{C(n+m+1)^{3\dim V}}}$
for all $n, m \geq 0$.
\end{enumerate}
\end{Theorem}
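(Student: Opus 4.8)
The plan is to obtain both inequalities from Gromov's inequality for $\RR$-divisors (Proposition~\ref{prop:Gromov:ineq}) together with the extremal (Bergman-kernel) description of the distorsion function recalled in Subsection~\ref{subsec:distorsion:R:div}. Fix the volume form $\Omega$ and the resulting inner products $\Vert s\Vert^2_{L^2}=\int_V|s|^2_{ng}\,\Omega$ on $H^0(V,nD)$ used there; for a finite-dimensional subspace $W\subseteq H^0(V,nD)$ one has $\dist(W;ng)=\sum_i|s_i|^2_{ng}$ for an $L^2$-orthonormal basis $(s_i)$ of $W$, equivalently
\[
\dist(W;ng)(x)=\sup\{\, |w(x)|^2_{ng}/\Vert w\Vert^2_{L^2} \mid w\in W\setminus\{0\}\,\},
\]
and this supremum is attained by the reproducing section $K^W_x=\sum_i\overline{s_i(x)}\,s_i\in W$, which satisfies $\Vert K^W_x\Vert^2_{L^2}=\dist(W;ng)(x)$ and $|K^W_x(x)|^2_{ng}=\dist(W;ng)(x)^2$. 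I will prove the asserted inequalities of canonical values over $V\setminus\Supp(D)$, which is the convention in force.

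\textbf{Step 1: inequality (1).} First I would apply Proposition~\ref{prop:Gromov:ineq} to $g$ to get a constant $C_0>0$ — which we may enlarge so that $C_0\geq1$ and $C_0\geq\dist(R_0;0)$ — with $\Vert s\Vert^2_{\sup,ng}\leq C_0(n+1)^{3\dim V}\Vert s\Vert^2_{L^2}$ for all $n\geq0$ and $s\in H^0(V,nD)$. Since $R_n\subseteq H^0(V,nD)$, the extremal description immediately gives
\[
\dist(R_n;ng)(x)\leq\dist(H^0(V,nD);ng)(x)\leq C_0(n+1)^{3\dim V}\qquad(x\in V\setminus\Supp(D)).
\]
Hence (1) holds with $C:=C_0\cdot 2^{3\dim V}$, the constant I will also use in (2).

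\textbf{Step 2: inequality (2).} If $n=0$ or $m=0$, then $R_0$ lies in the space of locally constant functions and (2) reduces to $\dist(R_0;0)\leq C$, which holds by the choice of $C_0$; so assume $n,m\geq1$, and, by the symmetry of the claimed inequality in $n$ and $m$, assume $n\leq m$. Fix $x\in V\setminus\Supp(D)$, and assume $\dist(R_n;ng)(x)>0$ and $\dist(R_m;mg)(x)>0$, since otherwise the left side of (2) already vanishes at $x$. Put $s:=K^{R_n}_x\in R_n$ and $t:=K^{R_m}_x\in R_m$. By Cauchy--Schwarz applied to $s=\sum_i\overline{s_i(x)}s_i$ in an orthonormal basis, $|s(y)|^2_{ng}\leq\dist(R_n;ng)(x)\,\dist(R_n;ng)(y)$ for every $y$, so by Step~1
\[
\Vert s\Vert^2_{\sup,ng}\leq\dist(R_n;ng)(x)\cdot\sup_{y}\dist(R_n;ng)(y)\leq C_0(n+1)^{3\dim V}\,\dist(R_n;ng)(x).
\]
Since $R$ is a graded ring, $st\in R_{n+m}$, and since $(n+m)g=ng+mg$ we have $|st|^2_{(n+m)g}=|s|^2_{ng}|t|^2_{mg}$ pointwise; therefore, using $\Vert t\Vert^2_{L^2}=\dist(R_m;mg)(x)$,
\begin{multline*}
\Vert st\Vert^2_{L^2}=\int_V|s|^2_{ng}|t|^2_{mg}\,\Omega\leq\Vert s\Vert^2_{\sup,ng}\,\Vert t\Vert^2_{L^2}\\
\leq C_0(n+1)^{3\dim V}\,\dist(R_n;ng)(x)\,\dist(R_m;mg)(x).
\end{multline*}
Applying the extremal description of $\dist(R_{n+m};(n+m)g)(x)$ to $st\in R_{n+m}$ and using $|s(x)t(x)|^2_{(n+m)g}=\dist(R_n;ng)(x)^2\dist(R_m;mg)(x)^2$,
\[
\dist(R_{n+m};(n+m)g)(x)\geq\frac{|s(x)t(x)|^2_{(n+m)g}}{\Vert st\Vert^2_{L^2}}\geq\frac{\dist(R_n;ng)(x)\,\dist(R_m;mg)(x)}{C_0(n+1)^{3\dim V}}.
\]
Finally, $n\leq m$ gives $n+m+1\leq2(m+1)$, hence $(n+m+1)^{3\dim V}\leq 2^{3\dim V}(m+1)^{3\dim V}$, i.e.
\[
\frac{1}{C_0(n+1)^{3\dim V}}\geq\frac{(n+m+1)^{3\dim V}}{C(n+1)^{3\dim V}(m+1)^{3\dim V}}.
\]
Combining the last two displays and dividing by $C(n+m+1)^{3\dim V}$ yields exactly (2) at $x$, and hence on all of $V\setminus\Supp(D)$.

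\textbf{Expected main difficulty.} The only substantial ingredient is Gromov's inequality for $\RR$-divisors, which is already established (Proposition~\ref{prop:Gromov:ineq}); it supplies both the polynomial bound in (1) and the sup-estimate on the reproducing section $K^{R_n}_x$ used in Step~2, and everything else is the peak-section computation above. The point that needs care — the reason a single exponent $3\dim V$ works uniformly in (2) — is the asymmetry: one must form the product $st$ with the reproducing section of the factor of \emph{smaller} degree (here $s\in R_n$, as $n\leq m$), so that the loss incurred in bounding $\Vert st\Vert_{L^2}$ is only $(n+1)^{3\dim V}$, which is within the harmless factor $2^{3\dim V}$ of $(n+m+1)^{3\dim V}/(m+1)^{3\dim V}$; the naive symmetric estimate instead leaves a factor $\bigl((n+m+1)/(n+1)\bigr)^{3\dim V}$ that is unbounded as $m\to\infty$ and would force the constant to depend on $n$.
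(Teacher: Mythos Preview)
Your argument is sound, but there is a systematic bookkeeping slip, and your route differs from the paper's.

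\textbf{The slip.} In the paper's normalization $|\phi|_{ng} = |\phi|\,e^{-ng/2}$, your reproducing section $K^W_x := \sum_i \overline{s_i(x)}\,s_i$ actually has $\|K^W_x\|^2_{L^2} = \sum_i |s_i(x)|^2 = e^{ng(x)}\dist(W;ng)(x)$ and $|K^W_x(x)|^2_{ng} = e^{ng(x)}\dist(W;ng)(x)^2$, not the values you wrote; likewise your Cauchy--Schwarz bound should read $|s(y)|^2_{ng} \le e^{ng(x)}\dist(R_n;ng)(x)\,\dist(R_n;ng)(y)$. Fortunately the stray factor $e^{ng(x)}$ (resp.\ $e^{mg(x)}$) propagates into both $\|st\|^2_{L^2}$ and $|s(x)t(x)|^2_{(n+m)g}$ and cancels in the ratio, so your key inequality
\[
\dist(R_{n+m};(n+m)g)(x)\ \ge\ \frac{\dist(R_n;ng)(x)\,\dist(R_m;mg)(x)}{C_0(n+1)^{3\dim V}}
\]
and the remaining arithmetic in Step~2 survive unchanged.

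\textbf{Comparison with the paper.} The paper proves the same submultiplicativity (its Claim~2) by working with full orthonormal bases rather than a single peak section: for each $t_j$ in an orthonormal basis of $R_m$ it chooses an orthonormal basis $s_1,\dots,s_r$ of $R_n$ so that the products $s_it_j$ are mutually orthogonal in $R_{n+m}$, then extends them to an orthonormal basis of $R_{n+m}$ and bounds $\langle s_it_j,s_it_j\rangle \le \|t_j\|^2_{\sup,mg} \le C'(m+1)^{2\dim V}$ by Gromov; summing over $i$ and then over $j$ (picking up a factor $\dim R_m = O((m+1)^{\dim V})$) yields the inequality with $(m+1)^{3\dim V}$ on the smaller index. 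Your peak-section argument bypasses this diagonalization entirely, testing $\dist(R_{n+m})$ against the single product $K^{R_n}_x\cdot K^{R_m}_x$ via the extremal description. Both approaches rest on the same asymmetric use of Gromov on the factor of \emph{smaller} degree, and both finish with the same arithmetic $\bigl((n+m+1)/(m+1)\bigr)^{3\dim V}\le 2^{3\dim V}$; the paper takes $C=\max\{C_1,8^{\dim V}C_2\}$ where your $C=2^{3\dim V}C_0$ plays the analogous role. Your route is more conceptual; the paper's makes the $\dim R_m$ contribution explicit.
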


The most difficult point for the proof of the arithmetic Zariski decomposition is to check the continuous property of the positive part.
For this purpose, the following theorem is needed:

\begin{Theorem}[cf. Theorem~\ref{thm:cont:upper:envelope}]
Let $V$ be an equidimensional  smooth projective variety over $\CC$.
Let $A$  and $B$ be $\RR$-divisors on $V$ with $A \leq B$.
If there is an $A$-Green function $h$ of $C^{\infty}$-type
such that $dd^c([h]) + \delta_A$ is represented by either a positive $C^{\infty}$-form or the zero form,
then, for a  $B$-Green function $g_B$ of $C^{0}$-type,
there is an $A$-Green function $g$ of $(C^0 \cap \Tpsh)$-type such that
$g$ is the greatest element of the set 
\[
G_{\Tpsh}(V;A)_{\leq g_B} := \{ u \in G_{\Tpsh}(V;A) \mid u \leq g_B\ \aew \}
\]
modulo null functions, that is, $g \in G_{\Tpsh}(V;A)_{\leq g_B}$ and
$u \leq g\ \aew$  for all $u \in G_{\Tpsh}(V;A)_{\leq g_B}$.
\end{Theorem}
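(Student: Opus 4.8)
The plan is to translate the assertion into a classical upper-envelope problem in pluripotential theory on $V$, and then to isolate the one genuinely hard point, the continuity of the envelope. Write $A = \sum_i a_i A_i$ and set $\omega := dd^c([h]) + \delta_A$, a semipositive $C^{\infty}$ form on $V$ (possibly the zero form) by hypothesis. From the local description of Green functions one sees that a locally integrable $u$ is an $A$-Green function of $\Tpsh_{\RR}$-type if and only if $\phi := u - h$ is, a.e., a real-valued $\omega$-plurisubharmonic function on $V$, i.e. $dd^c([\phi]) + \omega \geq 0$. Put $\psi := g_B - h$. Since $B \geq A$, the function $\psi$ is a $C^{0}$-type Green function for the effective divisor $B - A$; hence $\psi$ is real-valued and continuous on $V \setminus \Supp(B-A)$, bounded below on $V$ by some constant $-C_0$, integrable, and $+\infty$ along $\Supp(B-A)$. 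Under this dictionary $G_{\Tpsh}(V;A)_{\leq g_B}$ corresponds to $\mathcal{F} := \{\, \phi \mid \text{$\phi$ is $\omega$-psh on $V$ and } \phi \leq \psi\ \aew \,\}$, and it suffices to produce $P \in \mathcal{F}$ that dominates every member of $\mathcal{F}$ everywhere and is continuous; then $g := h + P$ is the required $A$-Green function of $(C^0 \cap \Tpsh)$-type. (If $V$ is disconnected we argue componentwise.)

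First I would construct the envelope. $\mathcal{F}$ is nonempty: the constant $-C_0$ is $\omega$-psh (as $\omega \geq 0$) and $\leq \psi$, so $h - C_0 \in G_{\Tpsh}(V;A)_{\leq g_B}$. And $\mathcal{F}$ is uniformly bounded above: fixing a volume form $\mu$, compactness of the set of $\omega$-psh functions normalized by $\sup_V = 0$ gives $C_1$ with $\sup_V \phi \leq C_1 + \int_V \phi\, d\mu$ for all $\omega$-psh $\phi$, hence $\sup_V \phi \leq C_1 + \int_V \psi\, d\mu =: C_2 < \infty$ for $\phi \in \mathcal{F}$, the integral being finite since $\psi$ has only logarithmic singularities. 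By Choquet's lemma and the Brelot--Cartan theorem, $P := (\sup_{\phi \in \mathcal{F}} \phi)^{*}$ is $\omega$-psh, agrees with $\sup_{\phi \in \mathcal{F}} \phi$ off a pluripolar (hence Lebesgue-null) set, and satisfies $-C_0 \leq P \leq C_2$; being bounded it is real-valued, so $g := h + P$ is of $\Tpsh_{\RR}$-type, and $P \geq \phi$ everywhere for each $\phi \in \mathcal{F}$ gives $g \geq u$ everywhere for each $u \in G_{\Tpsh}(V;A)_{\leq g_B}$. It remains to see $g \leq g_B\ \aew$, for which it is enough that $P \leq \psi$ on $V \setminus (\Supp A \cup \Supp(B-A))$, where $\psi$ is continuous: near such a point $x$, the function $h + P$ is plurisubharmonic and $h$ is $C^{\infty}$, so by the sub-mean value property for $h + P$, continuity of $h$ and of $\psi$ at $x$, and $P \leq \psi\ \aew$, one gets $P(x) \leq \psi(x)$. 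Thus $g$ is the greatest element of $G_{\Tpsh}(V;A)_{\leq g_B}$ modulo null functions.

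The remaining point — continuity of $P$ on $V$ — is the heart of the matter. First reduce to a continuous obstacle: since every member of $\mathcal{F}$ is $\leq C_2\ \aew$, replacing $\psi$ by $\chi := \min\{\psi,\, C_2 + 1\}$ changes neither $\mathcal{F}$ nor its envelope, and $\chi$ is a genuinely continuous real-valued function on all of $V$ (it is the constant $C_2 + 1$ near $\Supp(B-A)$). Next reduce to the K\"ahler case: $V$ being projective, fix a K\"ahler form $\omega_0$ and put $\omega_\epsilon := \omega + \epsilon \omega_0 > 0$ for $\epsilon \in (0,1]$; the envelopes $P_\epsilon := (\sup\{\, \phi \mid \text{$\phi$ is $\omega_\epsilon$-psh and } \phi \leq \chi \,\})^{*}$ decrease as $\epsilon$ decreases (the family of $\omega_\epsilon$-psh functions shrinks), each $P_\epsilon$ is $\omega_{\epsilon_0}$-psh for $\epsilon < \epsilon_0$, so their limit is $\omega_{\epsilon_0}$-psh for every $\epsilon_0 > 0$, hence $\omega$-psh, and one checks it equals $P$. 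For each fixed $\epsilon$, $\omega_\epsilon$ is K\"ahler and $\chi$ continuous, so $P_\epsilon$ is continuous by the classical continuity theorem for the Perron--Bremermann upper envelope bounded above by a continuous function. Hence $P$ is a decreasing limit of continuous functions, and the hard step is to upgrade this to a \emph{uniform} limit: one also approximates $\chi$ from above by $C^{\infty}$ functions $\chi_\eta \downarrow \chi$, analyses the corresponding smooth degenerate complex Monge--Amp\`ere problems, and runs a Dini-type / modulus-of-continuity argument that does not deteriorate as $\epsilon \to 0$, using the orthogonality relation that $(\omega + dd^c([P]))^{\dim V}$ is carried by the contact set $\{P = \chi\}$ to control $P$ off it. The degenerate case $\omega \equiv 0$ is disposed of first and trivially: then every $\omega$-psh function is locally constant, so $P$ is the constant $\min_V \chi$ (attained by lower semicontinuity).

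I expect the genuine difficulty to be concentrated in that last step. The confluence of a possibly degenerate (even identically zero, or non-big) semipositive form $\omega = dd^c([h]) + \delta_A$ with the effective part $B - A$ — the latter forcing the true obstacle $\psi$ to be merely lower semicontinuous before truncation — is precisely where the hypothesis on $h$ is indispensable and where essentially all the work lies; the existence and maximality of $P$, by contrast, are soft consequences of standard pluripotential theory.
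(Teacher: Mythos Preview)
Your overall strategy --- translate to an envelope problem for $\omega$-plurisubharmonic functions via $\phi = u - h$, build $P$ as the upper semicontinuous regularization of $\sup_{\mathcal F}\phi$, then establish continuity --- is exactly the paper's, and your execution of the first two steps is correct. Your truncation $\chi = \min\{\psi, C_2+1\}$ is a clean alternative to the paper's Lemma~\ref{lem:Green:A:B:}, which instead produces an $A$-Green function $g_A \leq g_B$ of the same regularity with $G_{\Tpsh}(V;A)_{\leq g_A} = G_{\Tpsh}(V;A)_{\leq g_B}$; both devices serve to make the obstacle genuinely continuous on all of $V$.

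The gap comes from a misreading of the hypothesis. You describe $\omega = dd^c([h]) + \delta_A$ as ``a semipositive $C^{\infty}$ form'' and later as ``possibly degenerate (even identically zero, or non-big)'', and on that basis you introduce $\omega_\epsilon = \omega + \epsilon\omega_0$ to ``reduce to the K\"ahler case'', leaving the uniform convergence $P_\epsilon \to P$ as an unfinished ``hard step'' to be attacked via degenerate Monge--Amp\`ere methods. But the hypothesis says $\omega$ is either a \emph{positive} $C^\infty$-form --- hence already a K\"ahler form --- or identically zero; there is no intermediate semipositive case, and your entire approximation scheme is superfluous. Once that detour is dropped, your argument is complete modulo the ``classical continuity theorem'' you invoke for a continuous obstacle under a K\"ahler form. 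Here the paper is more explicit: after reducing to $A=B$ it cites the regularity result of Berman--Demailly for the envelope under the zero obstacle to get continuity when $g_B$ is of $C^\infty$-type, and for a merely $C^0$ obstacle it runs a short sandwich argument via Stone--Weierstrass --- if $u_n \to 0$ uniformly with $g_A + u_n$ of $C^\infty$-type and $g_n$ the corresponding envelope, then $g - \|u_n\|_{\sup} \leq g_n \leq g + \|u_n\|_{\sup}$ a.e., so the local potentials of $g$ are uniform limits of those of the continuous $g_n$.
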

\query{$\Tpsh_{\RR}$ $\Longrightarrow$ $\Tpsh$
(21/September/2010)}

For the proof, we actually use a recent regularity result due to Berman-Demailly \cite{BD}.
Even starting from an arithmetic divisor $\overline{D}$ of $C^{\infty}$-type,
it is not expected that the positive part  $\overline{P}$ is of $C^{\infty}$-type again (cf \cite{MoBig}).
\query{it seems to be not expected that the positive part  $\overline{P}$ is of $C^{\infty}$-type again
\quad$\Longrightarrow$\quad
it is not expected that the positive part  $\overline{P}$ is of $C^{\infty}$-type again  (cf \cite{MoBig})\\
(28/February/2010)}
It could be that $\overline{P}$ is of $C^{1,1}$-type.

\subsection*{Acknowledgement}
I would like to thank Prof. Bauer, Prof. Caib\u{a}r and Prof. Kennedy for sending me their wonderful paper concerning
Zariski decompositions in vector spaces, which were done independently.
I also express my hearty thanks to Prof. Yuan for his questions and comments.

\renewcommand{\theTheorem}{\arabic{section}.\arabic{subsection}.\arabic{Theorem}}
\renewcommand{\theClaim}{\arabic{section}.\arabic{subsection}.\arabic{Theorem}.\arabic{Claim}}
\renewcommand{\theequation}{\arabic{section}.\arabic{subsection}.\arabic{Theorem}.\arabic{Claim}}

\section{Zariski decompositions in vector spaces}
\label{sec:Zariski:decomp:vec:space}
Logically the contexts of this section are not necessary  except Lemma~~\ref{lem:negative:definite}.
They however give an
elementary case for our considerations and provide a good overview of our paper without any materials.

\subsection{}
\label{subsec:appendix:intro}
\setcounter{Theorem}{0}
In the paper \cite{Bauer},
Bauer presents
a simple proof of the existence of Zariski decompositions on an algebraic surface.
Unfortunately, he uses liner series on the algebraic surface to show
the negative definiteness of the negative part of the Zariski decomposition.
In this section, we would like to give a linear algebraic proof without
using any materials of algebraic geometry.
The technical main result for our purpose is Lemma~\ref{lem:negative:definite}. 
After writing the first draft of this paper,
Bauer, Caib\u{a}r and Kennedy kindly informed me that, in the paper \cite{BCK},
they had independently obtained several results similar to the contexts of this section.
Their paper is written for a general reader.

Let $V$ be a vector space over $\RR$. Let
$\pmb{e} = \{ e_{\lambda} \}_{\lambda \in \Lambda}$ be a basis of $V$ and let
$\pmb{\phi} = \{ \phi_{\lambda} \}_{\lambda \in \Lambda}$ be a family of elements of
$\Hom_{\RR}(V, \RR)$ such that $\phi_{\lambda}(e_{\mu}) \geq 0$ for $\lambda \not= \mu$.
This pair $(\pmb{e}, \pmb{\phi})$ of $\pmb{e}$ and $\pmb{\phi}$ is called a {\em system of Zariski decompositions} in $V$.

Let us fix several notations which work only in this section.
For $\lambda \in \Lambda$, the coefficient of $x$ at $e_\lambda$ in
the linear combination of $x$ with respect to the basis $\pmb{e}$ is
denoted by $x(\lambda; \pmb{e})$, that is,
$x = \sum_{\lambda} x(\lambda; \pmb{e}) e_{\lambda}$.
Let $\leq_{\pmb{e}}$ be an order relation of $V$ given by
\[
x \leq_{\pmb{e}} y \quad \overset{\text{def}}{\Longleftrightarrow} \quad \text{$x(\lambda; \pmb{e}) \leq  y(\lambda; \pmb{e})$ for all $\lambda \in \Lambda$}.
\]
We often use $y \geq_{\pmb{e}} x$ instead of $x \leq_{\pmb{e}} y$.
$\Supp(x; \pmb{e})$,
$[x, y]_{\pmb{e}}$,
$(-\infty, x]_{\pmb{e}}$,
$[x, \infty)_{\pmb{e}}$, $\Nef(\pmb{\phi})$ and
$\Num(\pmb{\phi})$ are defined as follows:
\[
\begin{cases}
\Supp(x; \pmb{e}) := \{ \lambda \in \Lambda \mid x(\lambda; \pmb{e}) \not= 0 \}, \\
[x, y]_{\pmb{e}} := \{ v \in V \mid x \leq_{\pmb{e}} v \leq_{\pmb{e}} y \}, \\
(-\infty, x]_{\pmb{e}} := \{ v \in V \mid v \leq_{\pmb{e}} x \}, \\
[x, \infty)_{\pmb{e}} := \{ v \in V \mid v \geq_{\pmb{e}} x \}, \\
\Nef(\pmb{\phi}) := \left\{ v \in V  \mid \text{$\phi_\lambda(v) \geq 0$ for all $\lambda \in \Lambda$} \right\}, \\
\Num(\pmb{\phi}) := \{ v \in V \mid \text{$\phi_{\lambda}(v) = 0$ for all $\lambda \in \Lambda$} \}.
\end{cases}
\]

For an element $x$ of $V$, a decomposition $x = y + z$
is called a {\em Zariski decomposition} of $x$ with respect to $(\pmb{e}, \pmb{\phi})$ if
the following conditions are satisfied:
\begin{enumerate}
\renewcommand{\labelenumi}{(\arabic{enumi})}
\item
$y \in \Nef(\pmb{\phi})$ and $z \geq_{\pmb{e}} 0$.

\item
$\phi_{\lambda}(y) = 0$ for all $\lambda \in \Supp(z; \pmb{e})$.

\item
$\left\{ x \in \sum_{\lambda \in \Supp(z; \pmb{e})} \RR_{\geq 0} e_{\lambda} \mid \text{$\phi_{\lambda}(x) \geq 0$ for all $\lambda \in \Supp(z; \pmb{e})$} \right\} = \{ 0 \}$.
\end{enumerate}
We call $y$ (resp. $z$) the {\em positive part} of $x$ (resp. {\em negative part} of $x$).

The purpose of this section is to give the proof of the following proposition.

\begin{Proposition}
\label{prop:zariski:decomp:in:vector:space}
For an element $x$ of $V$, we have the following:
\begin{enumerate}
\renewcommand{\labelenumi}{(\arabic{enumi})}
\item The following are equivalent:
\begin{enumerate}
\renewcommand{\labelenumii}{(\arabic{enumi}.\arabic{enumii})}
\item
A Zariski decomposition of $x$ with respect to $(\pmb{e}, \pmb{\phi})$ exists.

\item
$(-\infty, x]_{\pmb{e}} \cap \Nef(\pmb{\phi}) \not= \emptyset$.
\end{enumerate}

\item
If a Zariski decomposition exists,
then it is uniquely determined.

\item
If  a  Zariski decomposition of $x$ with respect to $(\pmb{e}, \pmb{\phi})$ exists and the negative part $z$ of $x$ is non-zero,
then $z$ has the following properties:
\begin{enumerate}
\renewcommand{\labelenumii}{(\arabic{enumi}.\arabic{enumii})}
\item
Let $Q$ be the matrix given by $(\phi_{\lambda}(e_{\mu}))_{\lambda, \mu \in \Supp(z; \pmb{e})}$.
Then 
\[
(-1)^{\#(\Supp(z;\pmb{e}))} \det Q > 0.
\]
Moreover, if $Q$ is symmetric, then $Q$ is negative definite.

\item
$\{ e_{\lambda} \}_{\lambda \in \Supp(z; \pmb{e})}$ is linearly independent on $V/\Num(\pmb{\phi})$.
\end{enumerate}
\end{enumerate}
\end{Proposition}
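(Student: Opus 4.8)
The plan is to establish the three assertions in order, with part (1) being the substantive step and the rest following from a careful analysis of the support of the negative part. For part (1), the direction (1.1)$\Rightarrow$(1.2) is immediate: if $x = y+z$ is a Zariski decomposition then $y \in \Nef(\pmb{\phi})$ and $y = x - z \leq_{\pmb{e}} x$ since $z \geq_{\pmb{e}} 0$, so $y \in (-\infty,x]_{\pmb{e}} \cap \Nef(\pmb{\phi})$. For the converse, I would construct $y$ as a maximal element of the set $(-\infty,x]_{\pmb{e}} \cap \Nef(\pmb{\phi})$ with respect to $\leq_{\pmb{e}}$. The key point is that this set is closed under the operation $(v,w)\mapsto \sup_{\pmb{e}}(v,w)$ (coordinatewise max): it is obviously compatible with $\leq_{\pmb{e}} x$, and for nefness one uses the sign hypothesis $\phi_\lambda(e_\mu)\geq 0$ for $\lambda\neq\mu$ — if $v,w$ are nef then at each coordinate $\lambda$ the vector $\sup_{\pmb{e}}(v,w)$ differs from whichever of $v,w$ achieves the max at $\lambda$ only by adding non-negative multiples of $e_\mu$ for $\mu\neq\lambda$, hence $\phi_\lambda$ does not decrease. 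Then I would take a suitably chosen increasing (in a finite set of relevant coordinates — only those in $\Supp(x;\pmb{e})$ can be non-zero) sequence or net and extract $y = \sup$, checking $y$ stays in the set and is the greatest element. Set $z = x - y$; then $z \geq_{\pmb{e}} 0$ by construction.

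The remaining conditions (2) and (3) in the definition of Zariski decomposition must then be verified for this $y,z$. For condition (2), suppose $\phi_\lambda(y) > 0$ for some $\lambda \in \Supp(z;\pmb{e})$; then for small $\epsilon > 0$ the element $y + \epsilon e_\lambda$ is still nef (the diagonal entry $\phi_\lambda(e_\lambda)$ could be negative, but $\phi_\lambda(y)>0$ absorbs a small perturbation; off-diagonal $\phi_\mu(e_\lambda)\geq 0$ only helps) and still $\leq_{\pmb{e}} x$ because $z(\lambda;\pmb{e}) > 0$ gives room — contradicting maximality of $y$. For condition (3), if there were a non-zero $w = \sum_{\lambda\in\Supp(z;\pmb{e})} c_\lambda e_\lambda$ with all $c_\lambda \geq 0$ and $\phi_\lambda(w)\geq 0$ for $\lambda\in\Supp(z;\pmb{e})$, then again $y + \epsilon w$ would be nef for small $\epsilon$ (using condition (2) already proved, the off-diagonal signs, and $\phi_\lambda(w)\geq 0$) and $\leq_{\pmb{e}} x$, contradicting maximality. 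This also yields uniqueness in part (2): if $x = y'+z'$ is another Zariski decomposition, then $y' \in (-\infty,x]_{\pmb{e}}\cap\Nef(\pmb{\phi})$ so $y' \leq_{\pmb{e}} y$; writing $y = y' + (z'-z+ (y-y'))$ and using condition (3) for the decomposition $x=y'+z'$ applied to the non-negative combination $y-y'$ supported in $\Supp(z';\pmb{e})$ forces $y = y'$.

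For part (3), assume $z \neq 0$ and write $\sigma = \Supp(z;\pmb{e})$, $Q = (\phi_\lambda(e_\mu))_{\lambda,\mu\in\sigma}$. The crucial claim is that $Q$ is invertible. If not, choose $0 \neq w = \sum_{\mu\in\sigma} c_\mu e_\mu$ in its kernel, so $\phi_\lambda(w) = 0$ for all $\lambda\in\sigma$; then both $w$ and $-w$ satisfy $\phi_\lambda(\pm w)\geq 0$, and scaling so that the most negative coordinate is controlled, one of $\pm w$ plus a suitable element gives a non-zero non-negative combination — more directly, $w$ itself is a non-zero element of the set in condition (3) after possibly replacing $w$ by $-w$ and noting the set in (3) as literally stated requires non-negative coefficients, so one must argue that $Q$ nonsingular follows because condition (3) rules out sign-definite kernel vectors and an application of a Perron–Frobenius / Stiemke-type alternative to the matrix $Q$ (whose off-diagonal entries are $\geq 0$) forces the kernel to be trivial. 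This is exactly the content of Lemma~\ref{lem:negative:definite}, which I would invoke. Granting invertibility, the sign statement $(-1)^{\#\sigma}\det Q > 0$ and, in the symmetric case, negative definiteness, again follow from Lemma~\ref{lem:negative:definite} applied to $Q$: the hypotheses there are precisely that $Q$ has non-negative off-diagonal entries and that the associated system has only the trivial non-negative solution, which is condition (3). Finally (3.2), linear independence of $\{e_\lambda\}_{\lambda\in\sigma}$ in $V/\Num(\pmb{\phi})$, is equivalent to: no non-trivial combination $\sum_{\lambda\in\sigma} c_\lambda e_\lambda$ lies in $\Num(\pmb{\phi})$, i.e. has $\phi_\mu(\cdot) = 0$ for \emph{all} $\mu\in\Lambda$; but such a combination would in particular be a kernel vector of $Q$, contradicting its invertibility.

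The main obstacle I anticipate is the sup-stability argument and the extraction of the greatest element in part (1): one must make sure the relevant coordinates form a finite set (bounded by $\Supp(x;\pmb{e})$, since any $v \leq_{\pmb{e}} x$ in the nef cone has $v(\lambda;\pmb{e}) \leq x(\lambda;\pmb{e})$ and — via nefness and the off-diagonal sign condition applied to test functionals — is also bounded below), so that a maximal element genuinely exists rather than merely a supremum of a chain that might escape the set. Everything after that is a clean application of Lemma~\ref{lem:negative:definite} together with the perturbation arguments above.
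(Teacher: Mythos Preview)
Your approach mirrors the paper's closely: the sup-stability of $\Nef(\pmb{\phi})$ under coordinatewise maximum (the paper's Lemma~\ref{lem:nef:max}), the perturbation arguments establishing conditions (2) and (3) of the decomposition from maximality of $y$, the uniqueness argument via condition (3), and the appeal to Lemma~\ref{lem:negative:definite} for part (3) are all exactly what the paper does.

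The one genuine gap is in your construction of the greatest element. Your claim that for $v \in (-\infty,x]_{\pmb{e}} \cap \Nef(\pmb{\phi})$ only the coordinates in $\Supp(x;\pmb{e})$ can be non-zero is false: $v(\lambda;\pmb{e})$ is bounded above by $x(\lambda;\pmb{e})$ but nothing prevents it from being very negative, and the set itself need not be bounded. For instance, with $V = \RR^2$, $\phi_1(a,b) = b - a$, $\phi_2 = 0$ (off-diagonal entries $\phi_1(e_2)=1$, $\phi_2(e_1)=0$ are non-negative) and $x = 0$, the set $(-\infty,x]_{\pmb{e}} \cap \Nef(\pmb{\phi})$ contains the unbounded ray $\{(t,t) : t \leq 0\}$. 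So your proposed lower bound ``via nefness and the off-diagonal sign condition'' does not follow, and a net argument alone will not produce the maximum. The paper's remedy is simple and worth absorbing: fix any $x' \in (-\infty,x]_{\pmb{e}} \cap \Nef(\pmb{\phi})$ and work instead in $[x',x]_{\pmb{e}} \cap \Nef(\pmb{\phi})$, which sits in the finite-dimensional affine slice $x' + \bigoplus_{\lambda \in \Supp(x-x';\pmb{e})} \RR e_\lambda$ and is a compact convex polytope there; taking $\max_{\pmb{e}}$ of its finitely many vertices gives its greatest element $y$ by sup-stability. Then for arbitrary $v$ in the original set, $\max_{\pmb{e}}\{v, x'\}$ already lies in the polytope, hence $v \leq_{\pmb{e}} \max_{\pmb{e}}\{v,x'\} \leq_{\pmb{e}} y$, so $y$ is greatest in the full set as well.
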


\subsection{Proofs}
\setcounter{Theorem}{0}
Here let us give the proof of Proposition~\ref{prop:zariski:decomp:in:vector:space}.

For $x_1, \ldots, x_r \in V$, $\max_{\pmb{e}} \{ x_1, \ldots, x_r \} \in V$ is given by
\[
\max\nolimits_{\pmb{e}} \{ x_1, \ldots, x_r \} := \sum_{\lambda \in \Lambda} \max \{ x_1(\lambda;\pmb{e}), \ldots, x_r(\lambda;\pmb{e}) \} e_{\lambda}.
\]
Let us begin with the following lemma.

\begin{Lemma}
\label{lem:nef:max}
If $x_1, \ldots, x_r \in \Nef(\pmb{\phi})$, then
$\max \{ x_1, \ldots, x_r \} \in \Nef(\pmb{\phi})$.
\end{Lemma}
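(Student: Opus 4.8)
The plan is to reduce the general case to the two-element case and then verify $\max_{\pmb{e}}\{x_1,x_2\}\in\Nef(\pmb\phi)$ by a direct coefficient-wise computation. First I would observe that $\max_{\pmb{e}}\{x_1,\ldots,x_r\}$ can be built up by iterating the binary operation, since $\max\{t_1,\ldots,t_r\}=\max\{\max\{t_1,\ldots,t_{r-1}\},t_r\}$ holds coefficient-by-coefficient in each coordinate $\lambda$; hence by induction on $r$ it suffices to treat $r=2$. So the core claim is: if $x,y\in\Nef(\pmb\phi)$ then $z:=\max_{\pmb{e}}\{x,y\}\in\Nef(\pmb\phi)$, i.e. $\phi_\mu(z)\ge 0$ for every $\mu\in\Lambda$.

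To prove this, fix $\mu\in\Lambda$ and compare $z$ with $x$ (or with $y$). By definition of $\max_{\pmb{e}}$ we have $z(\lambda;\pmb{e})\ge x(\lambda;\pmb{e})$ for all $\lambda$, so $w:=z-x$ satisfies $w\ge_{\pmb{e}}0$, and moreover in the coordinate $\mu$ itself we can arrange the comparison to go the favorable way: either $z(\mu;\pmb{e})=x(\mu;\pmb{e})$ or $z(\mu;\pmb{e})=y(\mu;\pmb{e})$. Suppose first $z(\mu;\pmb{e})=x(\mu;\pmb{e})$. Then writing $\phi_\mu(z)=\phi_\mu(x)+\phi_\mu(w)$ and expanding $\phi_\mu(w)=\sum_{\lambda}w(\lambda;\pmb{e})\,\phi_\mu(e_\lambda)$, the term $\lambda=\mu$ drops out because $w(\mu;\pmb{e})=0$, and every remaining term has $w(\lambda;\pmb{e})\ge 0$ and $\phi_\mu(e_\lambda)\ge 0$ (the latter by the defining hypothesis $\phi_\lambda(e_\mu)\ge 0$ for $\lambda\neq\mu$, here with the roles of $\lambda,\mu$ as needed). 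Hence $\phi_\mu(w)\ge 0$, and since $\phi_\mu(x)\ge 0$ we get $\phi_\mu(z)\ge 0$. If instead $z(\mu;\pmb{e})=y(\mu;\pmb{e})$, run the identical argument with $y$ in place of $x$. Either way $\phi_\mu(z)\ge 0$, and as $\mu$ was arbitrary, $z\in\Nef(\pmb\phi)$.

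The only subtlety to watch, and the step I would be most careful about, is the handling of the sum $\sum_\lambda w(\lambda;\pmb{e})\phi_\mu(e_\lambda)$ when $\Lambda$ is infinite: a priori $V$ is an arbitrary real vector space with basis $\pmb{e}$, so each element is a \emph{finite} linear combination of the $e_\lambda$, which means $w$ has finite support and the sum is genuinely finite — no convergence issue arises, but it is worth stating explicitly that $\phi_\mu$ is linear and $w\in V$ so the expansion is legitimate. Apart from that, the proof is entirely elementary: it is just the observation that the ``off-diagonal signs are nonnegative'' hypothesis is exactly what makes subtracting off a nonnegative (in $\leq_{\pmb{e}}$) vector that vanishes in the $\mu$-th slot preserve nonnegativity under $\phi_\mu$. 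I would present it as the short induction plus this one-coordinate estimate, with the $r=2$ case written out and the reduction to it noted in a sentence.
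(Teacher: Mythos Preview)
Your proof is correct and follows essentially the same idea as the paper's: for each index $\mu$, pick an $x_i$ that realizes the maximum in the $\mu$-th coordinate, so that $z-x_i\ge_{\pmb e}0$ with vanishing $\mu$-component, and then use the off-diagonal nonnegativity $\phi_\mu(e_\lambda)\ge 0$ for $\lambda\neq\mu$ to conclude $\phi_\mu(z)\ge\phi_\mu(x_i)\ge 0$. The only difference is cosmetic: the paper treats general $r$ directly by observing $\bigcap_i\Supp(z-x_i;\pmb e)=\emptyset$ rather than first reducing to $r=2$ by induction, but the content is the same.
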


\begin{proof}
It is sufficient to see that if
$\phi_\lambda(x_i) \geq 0$ for all $i$,
then $\phi_\lambda(\max_{\pmb{e}} \{ x_1, \ldots, x_r  \}) \geq 0$. We set $z = \max_{\pmb{e}} \{ x_1, \ldots, x_r \}$.
Note that $\Supp(z - x_1; \pmb{e}) \cap \cdots \cap \Supp(z - x_r; \pmb{e}) = \emptyset$.
Thus there is $i$ with $\lambda \not\in \Supp(z - x_i; \pmb{e})$.
Then $\phi_{\lambda}(z - x_i) \geq 0$, and hence 
\[
\phi_{\lambda}(z) = \phi_{\lambda}(z - x_i) + \phi_{\lambda}(x_i) \geq 0.
\]
\end{proof}

\begin{Lemma}
\label{lem:greatest:element}
Let $x$ be an element of $V$ such that 
$(-\infty, x]_{\pmb{e}} \cap \Nef(\pmb{\phi}) \not= \emptyset$.
Then there is the greatest element $y$ in $(-\infty, x]_{\pmb{e}} \cap \Nef(\pmb{\phi})$, that is,
$y \in \Nef(\pmb{\phi}) \cap (-\infty,x]_{\pmb{e}}$ and
$y \geq_{\pmb{e}} v$ for all $v \in \Nef(\pmb{\phi}) \cap (-\infty,x]_{\pmb{e}}$.
This greatest element $y$ is denoted by 
\[
\max (\Nef(\pmb{\phi}) \cap (-\infty,x]_{\pmb{e}}).
\]
Further, $y$ and $z := x - y$ satisfy the following properties:
\begin{enumerate}
\renewcommand{\labelenumi}{(\alph{enumi})}
\item $y \in \Nef(\pmb{\phi})$, $z \geq_{\pmb{e}} 0$ and $x = y + z$.

\item $\phi_\lambda(y) = 0$ for all $\lambda \in \Supp(z;\pmb{e})$.

\item
$\left\{ v \in \sum\nolimits_{ \lambda \in \Supp(z; \pmb{e})} \RR_{\geq 0} e_\lambda \mid \text{$\phi_\lambda(v) \geq 0$ for all $\lambda \in \Supp(z; \pmb{e})$} \right\} = \{ 0 \}$.
\end{enumerate}
\end{Lemma}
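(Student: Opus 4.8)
The plan is to build $y$ as the coordinatewise supremum of the set $S := (-\infty, x]_{\pmb{e}} \cap \Nef(\pmb{\phi})$ and then to read off (a), (b) and (c) from the maximality of $y$ by elementary one‑parameter perturbations. First I would observe that $S$ is directed upward for $\leq_{\pmb{e}}$: if $v, v' \in S$, then $\max_{\pmb{e}}\{v, v'\} \leq_{\pmb{e}} x$ directly from the definition of $\max_{\pmb{e}}$, and $\max_{\pmb{e}}\{v, v'\} \in \Nef(\pmb{\phi})$ by Lemma~\ref{lem:nef:max}, so $\max_{\pmb{e}}\{v, v'\} \in S$. Fix some $v_0 \in S$ and set $S' := \{ v \in S \mid v_0 \leq_{\pmb{e}} v \}$, which is cofinal in $S$. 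For $v \in S'$ one has $v_0(\lambda;\pmb{e}) \leq v(\lambda;\pmb{e}) \leq x(\lambda;\pmb{e})$ for every $\lambda$, whence $\Supp(v;\pmb{e}) \subseteq \Lambda_0 := \Supp(v_0;\pmb{e}) \cup \Supp(x;\pmb{e})$, a fixed finite set. Thus every element of $S'$ lies in the finite-dimensional subspace $V_0 := \sum_{\lambda \in \Lambda_0}\RR e_{\lambda}$, and for $\lambda \in \Lambda_0$ the number $\sup \{ v(\lambda;\pmb{e}) \mid v \in S'\}$ is real, bounded above by $x(\lambda;\pmb{e})$. Define $y \in V_0$ by $y(\lambda;\pmb{e}) := \sup\{ v(\lambda;\pmb{e}) \mid v \in S'\}$ for $\lambda \in \Lambda_0$ and $y(\lambda;\pmb{e}) := 0$ otherwise. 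Then $y \leq_{\pmb{e}} x$, and $y \geq_{\pmb{e}} v$ for every $v \in S$ (apply this to $\max_{\pmb{e}}\{v_0, v\} \in S'$, using $v \leq_{\pmb{e}} \max_{\pmb{e}}\{v_0,v\}$).

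The one genuinely delicate step is to check $y \in \Nef(\pmb{\phi})$. Since $\Lambda_0$ is finite and $S'$ is directed, for each $k \geq 1$ I can pick, for each $\mu \in \Lambda_0$, some $v^{(k,\mu)} \in S'$ with $v^{(k,\mu)}(\mu;\pmb{e}) > y(\mu;\pmb{e}) - 1/k$, and put $w^{(k)} := \max_{\pmb{e}}\{ v^{(k,\mu)} \mid \mu \in \Lambda_0 \} \in S'$; then $w^{(k)}(\mu;\pmb{e}) \to y(\mu;\pmb{e})$ for all $\mu \in \Lambda_0$, i.e. $w^{(k)} \to y$ in $V_0$. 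As each $\phi_\lambda$ restricts to a linear, hence continuous, form on the finite-dimensional space $V_0$, we get $\phi_\lambda(y) = \lim_k \phi_\lambda(w^{(k)}) \geq 0$ for every $\lambda \in \Lambda$. Therefore $y \in S$, $y$ is the greatest element of $S$, and, setting $z := x - y$, property (a) follows since $y \leq_{\pmb{e}} x$ gives $z \geq_{\pmb{e}} 0$.

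For (b), suppose $\lambda_0 \in \Supp(z;\pmb{e})$, so $y(\lambda_0;\pmb{e}) < x(\lambda_0;\pmb{e})$, and suppose toward a contradiction that $\phi_{\lambda_0}(y) > 0$. For $t > 0$ small, $y + t e_{\lambda_0} \leq_{\pmb{e}} x$; moreover $\phi_\mu(y + t e_{\lambda_0}) = \phi_\mu(y) + t\,\phi_\mu(e_{\lambda_0}) \geq 0$ for $\mu \neq \lambda_0$ because $\phi_\mu(e_{\lambda_0}) \geq 0$, and $\phi_{\lambda_0}(y + t e_{\lambda_0}) = \phi_{\lambda_0}(y) + t\,\phi_{\lambda_0}(e_{\lambda_0}) > 0$ for $t$ small since $\phi_{\lambda_0}(y) > 0$. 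Hence $y + t e_{\lambda_0} \in S$ while $y + t e_{\lambda_0} >_{\pmb{e}} y$, contradicting maximality; so $\phi_{\lambda_0}(y) = 0$. For (c), take $v = \sum_{\lambda \in \Supp(z;\pmb{e})} c_\lambda e_\lambda$ with all $c_\lambda \geq 0$ and $\phi_\lambda(v) \geq 0$ for every $\lambda \in \Supp(z;\pmb{e})$. For $s > 0$ small, $y + s v \leq_{\pmb{e}} x$ because $z(\lambda;\pmb{e}) > 0$ on the finite set $\Supp(z;\pmb{e})$; and $y + s v \in \Nef(\pmb{\phi})$: for $\mu \in \Supp(z;\pmb{e})$ use $\phi_\mu(y) = 0$ from (b) and $\phi_\mu(v) \geq 0$, while for $\mu \notin \Supp(z;\pmb{e})$ use $\phi_\mu(y) \geq 0$ together with $\phi_\mu(v) = \sum_{\lambda} c_\lambda \phi_\mu(e_\lambda) \geq 0$ (every index $\lambda$ occurring is distinct from $\mu$). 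Maximality of $y$ forces $y + sv \leq_{\pmb{e}} y$, i.e. $c_\lambda \leq 0$ for all $\lambda$, and combined with $c_\lambda \geq 0$ this gives $v = 0$.

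The step I expect to be the main obstacle is precisely $y \in \Nef(\pmb{\phi})$: the cone $\Nef(\pmb{\phi})$ is cut out by a possibly infinite family of linear inequalities and need not be closed in any natural topology on $V$, so stability under the coordinatewise supremum is not automatic. The device that rescues the argument is the reduction to a single finite-dimensional subspace $V_0$ containing all competitors dominating $v_0$, on which every $\phi_\lambda$ is continuous and on which the supremum defining $y$ is realized as a genuine limit of elements of $S'$; everything else reduces to one-parameter perturbations that exploit the sign hypothesis $\phi_\lambda(e_\mu) \geq 0$ for $\lambda \neq \mu$.
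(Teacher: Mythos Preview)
Your proof is correct, but it takes a somewhat different route from the paper's. Both arguments begin by reducing to a fixed finite-dimensional subspace (you via $S' \subseteq V_0$, the paper via the box $[x',x]_{\pmb{e}}$ for a chosen $x' \in S$), and both use Lemma~\ref{lem:nef:max} to make the candidate set upward-directed. The divergence is in how the greatest element is produced: you take the coordinatewise supremum and then verify $y \in \Nef(\pmb{\phi})$ by an approximation/continuity argument on $V_0$; the paper instead observes that $\Nef(\pmb{\phi}) \cap [x',x]_{\pmb{e}}$ is a bounded polyhedron in a finite-dimensional space, hence a convex polytope with vertices $\gamma_1,\ldots,\gamma_l$, and takes $y = \max_{\pmb{e}}\{\gamma_1,\ldots,\gamma_l\}$. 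Your approach is arguably more elementary in that it avoids invoking the finite basis theorem for polytopes, while the paper's approach yields $y$ in one stroke as a finite maximum without any limiting process. The perturbation arguments you give for (b) and (c) are essentially identical to those in the paper.
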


\begin{proof}
We choose $x' \in (-\infty, x]_{\pmb{e}} \cap \Nef(\pmb{\phi})$.
Let us see the following claim.

\begin{Claim}
There is the greatest element $y$ of $\Nef(\pmb{\phi}) \cap [x',x]_{\pmb{e}}$.
\end{Claim}

\begin{proof}
Note that $[x', x]_{\pmb{e}} = x' + [0, x-x']_{\pmb{e}}$.
Moreover, 
it is easy to see that
\begin{multline*}
\Nef(\pmb{\phi}) \cap [x',x]_{\pmb{e}} \\
= x'
+ \left\{ v \in [0, x-x']_{\pmb{e}} \mid
\text{$\phi_\lambda(v) \geq -\phi_\lambda(x')$ for all $\lambda \in \Supp(x-x'; \pmb{e})$} \right\}.
\end{multline*}
Therefore, $\Nef(\pmb{\phi}) \cap [x',x]_{\pmb{e}}$ is a translation of a bounded convex polyhedral set in
a finite dimensional vector space
$\bigoplus\nolimits_{\lambda \in \Supp(x-x'; \pmb{e})} \RR e_\lambda$.
Hence $\Nef(\pmb{\phi}) \cap [x',x]_{\pmb{e}}$ is a convex polytope, that is,
there are $\gamma_1, \ldots, \gamma_l \in \Nef(\pmb{\phi}) \cap [x',x]_{\pmb{e}}$ such that
$\Nef(\pmb{\phi})\cap [x',x]_{\pmb{e}} = \operatorname{Conv}\{ \gamma_1, \ldots, \gamma_l \}$ 
(cf. \cite[Theorem~3.2.5 or Finite basis theorem]{Web}).
If we set $y = \max\{ \gamma_1, \ldots, \gamma_l \}$, then, by Lemma~\ref{lem:nef:max},
$y \in \Nef(\pmb{\phi})\cap [x',x]_{\pmb{e}}$. Moreover, 
for $v = a_1 \gamma_1 + \cdots + a_l \gamma_l \in \Nef(\pmb{\phi})\cap [x',x]_{\pmb{e}}$ ($a_1, \ldots, a_l \in \RR_{\geq 0}$ and
$a_1 + \cdots  + a_l = 1$), we have 
\[
y = a_1 y + \cdots + a_l y \geq_{\pmb{e}} a_1 \gamma_1 + \cdots + a_l \gamma_l = v.
\]
\end{proof}
This $y$ is actually the greatest element of $(-\infty, x]_{\pmb{e}} \cap \Nef(\pmb{\phi})$.
Indeed, if $v \in (-\infty, x]_{\pmb{e}} \cap \Nef(\pmb{\phi})$, then
$\max \{ v, y \} \in  [x', x]_{\pmb{e}} \cap \Nef(\pmb{\phi})$ by lemma~\ref{lem:nef:max}, and hence 
\[
v \leq \max \{ v, y \} \leq y.
\]

\medskip
Let us check the properties (a), (b) and (c).
First of all, (a) is obvious. In order to see (b) and (c),
we may assume that $z \not= 0$.

(b)  We assume that $\phi_\lambda(y) > 0$ for $\lambda \in \Supp(z; \pmb{e})$.
Let $\epsilon$ be a sufficiently small positive number. Then $y + \epsilon e_\lambda \leq_{\pmb{e}} x$ and
\[
\phi_\mu(y + \epsilon e_\lambda) = \phi_\mu(y) + \epsilon \phi_\mu(e_\lambda) \geq 0
\]
for all $\mu \in \Lambda$ because $0 < \epsilon \ll 1$. Thus $y + \epsilon e_\lambda \in \Nef(\pmb{\phi})$, which contradicts to the maximality of $y$.
Therefore, $\phi_\lambda(y) = 0$ for $\lambda \in \Supp(z; \pmb{e})$.

(c)  Next we assume that there is $v \in \left( \sum\nolimits_{ \lambda \in \Supp(z; \pmb{e})} \RR_{\geq 0} e_\lambda \right) \setminus \{ 0 \}$ such that
$\phi_\lambda(v) \geq 0$ for all $\lambda \in \Supp(z; \pmb{e})$. Then there is a sufficiently small positive number $\epsilon'$ such that
$y + \epsilon' v \leq_{\pmb{e}} x$. Note that $\phi_\mu(y + \epsilon' v) \geq 0$ for all $\mu$, which yields a contradiction, as before.
\end{proof}

\begin{Lemma}
\label{lem:negative:definite}
Let $W$ be a vector space over $\RR$.
Let $e_1, \ldots, e_n \in W$ and
$\phi_1, \ldots, \phi_n \in \Hom_{\RR}(W, \RR)$ with the following properties:
\begin{enumerate}
\renewcommand{\labelenumi}{(\alph{enumi})}
\item
$\{ (a_1,\ldots,a_n) \in \RR_{\geq 0}^n \mid a_1 e_1 + \cdots + a_n e_n = 0 \} = \{ (0,\ldots , 0) \}$.

\item $\phi_i(e_j) \geq 0$ for all $i \not=j $.

\item
$\{ x \in \RR_{\geq 0} e_1 + \cdots + \RR_{\geq 0} e_n \mid \text{$\phi_i(x) \geq 0$ for all $i$} \} = \{ 0 \}$.
\end{enumerate}
Then we have the following:
\begin{enumerate}
\renewcommand{\labelenumi}{(\arabic{enumi})}
\item
Let $Q$ be the $(n \times n)$-matrix given by $(\phi_i(e_j))$.
Then there are $(n \times n)$-matrices $A$ and $B$ with the following properties:
\begin{enumerate}
\renewcommand{\labelenumii}{(\arabic{enumi}.\arabic{enumii})}
\item
$A$ \rom{(}resp. $B$\rom{)} is a lower \rom{(}resp. upper\rom{)} triangle matrix consisting of non-negative numbers.

\item
$\det A > 0$, $\det B > 0$ and
\[
A Q B = \begin{pmatrix}-1 & \cdots & 0 \\ \vdots &  \ddots & \vdots \\ 0 & \cdots & -1 \end{pmatrix}.
\]

\item If $Q$ is symmetric, then $B = {}^t A$.
\end{enumerate}

\item
The vectors
$e_1, \ldots, e_n$ are linearly independent in 
\[
W/\{ x \in W \mid \phi_1(x) = \cdots = \phi_n(x) = 0 \}.
\]
\end{enumerate}
\end{Lemma}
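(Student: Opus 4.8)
The plan is to prove (1) and (2) together by induction on $n$ via a single step of Gaussian elimination; in the language of matrix theory the hypotheses make $-Q$ a nonsingular $M$-matrix, and the conclusion of (1) is an $LDU$-type factorization with non-negative triangular factors. For $n=1$: by (a) we have $e_1\neq 0$, and then (b) (vacuous) together with (c) forces $\phi_1(e_1)<0$, so one may take $A=B=(1/\sqrt{-\phi_1(e_1)})$. For general $n$, the first remark---valid for every $n$---is that every diagonal entry $\phi_i(e_i)$ of $Q$ is strictly negative: by (a) we have $e_i\neq 0$, and by (b) $\phi_j(e_i)\geq 0$ for $j\neq i$, so if $\phi_i(e_i)\geq 0$ then the nonzero element $e_i\in\RR_{\geq 0}e_1+\cdots+\RR_{\geq 0}e_n$ would contradict (c).

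For the inductive step I would pivot on the \emph{first} index---pivoting on the first rather than the last index is what puts the triangularity on the correct sides. Since $\phi_1(e_1)<0$ one may set $\phi_i':=\phi_i-\frac{\phi_i(e_1)}{\phi_1(e_1)}\phi_1$ for $i=2,\dots,n$, so that $\phi_i'(e_1)=0$, and put $Q'':=(\phi_i'(e_j))_{2\leq i,j\leq n}$. At the level of matrices this reads $LQU=\operatorname{diag}(\phi_1(e_1))\oplus Q''$, where $L$ (resp.\ $U$) is the elementary lower- (resp.\ upper-) triangular matrix with unit diagonal and $L_{i1}=-\phi_i(e_1)/\phi_1(e_1)$ (resp.\ $U_{1j}=-\phi_j(e_1)/\phi_1(e_1)$); both $L$ and $U$ have non-negative entries and determinant $1$ because $\phi_i(e_1)\geq 0$ for $i\neq 1$ while $\phi_1(e_1)<0$, and $U={}^tL$ when $Q$ is symmetric. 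One then checks that $e_2,\dots,e_n$ and $\phi_2',\dots,\phi_n'$ again satisfy (a), (b) and (c): (a) is inherited directly from (a) for the original data, and (b) holds since $\phi_i'(e_j)=\phi_i(e_j)-\frac{\phi_i(e_1)}{\phi_1(e_1)}\phi_1(e_j)$ is the sum of the non-negative number $\phi_i(e_j)$ and a non-negative multiple of the non-negative number $\phi_1(e_j)$. Granting (c) for the reduced data, the inductive hypothesis furnishes $(n-1)\times(n-1)$ matrices $A'',B''$ with $A''Q''B''=-I_{n-1}$ and all the asserted properties, and one sets
\[
A:=\begin{pmatrix}1/\sqrt{-\phi_1(e_1)} & 0\\ 0 & A''\end{pmatrix}L,\qquad B:=U\begin{pmatrix}1/\sqrt{-\phi_1(e_1)} & 0\\ 0 & B''\end{pmatrix}.
\]
These are lower, resp.\ upper, triangular with non-negative entries, have positive determinant, and satisfy $AQB=-I_n$; and when $Q$ is symmetric, $U={}^tL$ together with $B''={}^tA''$ gives $B={}^tA$, whence $AQ\,{}^tA=-I_n$ shows at once that $Q$ is negative definite.

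The step I expect to be the main obstacle is showing that the reduced system inherits (c); the remaining verifications are purely formal. Given $x=\sum_{i=2}^n a_ie_i$ with $a_i\geq 0$ and $\phi_i'(x)\geq 0$ for all $i\geq 2$, the idea is to lift $x$ to a witness for (c) of the original system. Since $\phi_1(e_i)\geq 0$ for $i\neq 1$ we get $\phi_1(x)=\sum_{i\geq 2}a_i\phi_1(e_i)\geq 0$, hence $t:=-\phi_1(x)/\phi_1(e_1)\geq 0$; then $x':=x+te_1$ lies in $\RR_{\geq 0}e_1+\cdots+\RR_{\geq 0}e_n$, satisfies $\phi_1(x')=0$, and satisfies $\phi_i(x')=\phi_i(x)+t\phi_i(e_1)=\phi_i'(x)\geq 0$ for $i\geq 2$, so $\phi_j(x')\geq 0$ for every $j$. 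By (c) for the original system $x'=0$, and then $t=0$ and all $a_i=0$ by (a), i.e.\ $x=0$. Finally, part (2) is a corollary of (1): if $c_1e_1+\cdots+c_ne_n$ lies in $\{x\in W\mid \phi_1(x)=\cdots=\phi_n(x)=0\}$ then $Q\,{}^t(c_1,\dots,c_n)=0$, and $AQB=-I_n$ forces $Q$ to be invertible, so $(c_1,\dots,c_n)=0$.
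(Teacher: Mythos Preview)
Your argument is correct and follows the same inductive Gaussian-elimination scheme as the paper, including the preliminary observation $\phi_i(e_i)<0$, the pivot on the first index, and the derivation of (2) from the invertibility of $Q$. One slip to fix: your $U$ should have $U_{1j}=-\phi_1(e_j)/\phi_1(e_1)$, not $-\phi_j(e_1)/\phi_1(e_1)$; as written, the first row of $LQU$ is not cleared unless $Q$ happens to be symmetric. With the correct $U$ the entries are still non-negative (since $\phi_1(e_j)\geq 0$ and $\phi_1(e_1)<0$), and $U={}^tL$ holds precisely when $Q$ is symmetric, as you want.

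The only substantive difference from the paper is in how the elimination is organized. The paper modifies \emph{both} the functionals and the vectors, setting $\phi'_i=-\phi_1(e_1)\phi_i+\phi_i(e_1)\phi_1$ and $e'_j=-\phi_1(e_1)e_j+\phi_1(e_j)e_1$; since each $e'_j$ already lies in $\sum_{k\geq 1}\RR_{\geq 0}e_k$ and satisfies $\phi_1(e'_j)=0$, condition (c) for the reduced system follows immediately from (c) for the original one. You instead keep $e_2,\dots,e_n$ and only modify the $\phi_i$'s, which forces the extra ``lifting'' step $x\mapsto x'=x+te_1$ to recover (c). Both routes are valid and amount to the same Schur-complement computation; yours is a one-sided row reduction plus a separate column step, the paper's is a symmetric two-sided reduction.
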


\begin{proof}
(1) Let us begin with the following claim.

\begin{Claim}
\label{claim:prop:negative:definite:0}
$\phi_i(e_i) < 0$ for all $i $.
\end{Claim}

\begin{proof}
If $\phi_i(e_i) \geq 0$, then $e_i \in \{ x \in \RR_{\geq 0} e_1 + \cdots + \RR_{\geq 0} e_n \mid \text{$\phi_j(x) \geq 0$ for all $j$} \}$.
This is a contradiction because $e_i \not= 0$.
\end{proof}

The above claim proves (1) in the case where $n=1$. Here
we set 
\[
\phi'_i = -\phi_1(e_1) \phi_i + \phi_i(e_1) \phi_1\ (i \geq 2),
\quad
e'_j = -\phi_1(e_1) e_j + \phi_1(e_j) e_1\ (j \geq 2).
\]
We claim the following:

\begin{Claim}
\label{claim:prop:negative:definite:2}
\begin{enumerate}
\renewcommand{\labelenumi}{(\roman{enumi})}
\item
$\phi_i(e_1) =  0$ and $\phi_1(e'_j) = 0$ for all $i \geq 2$ and $j \geq 2$.

\item
$e'_2,\ldots, e'_n$ and $\phi'_2, \ldots, \phi'_n$ satisfy 
all assumptions \rom{(a)} $\sim$ \rom{(c)} of the lemma.

\item 
Let $Q'$ be the matrix given by $(\phi'_i(e'_j))_{2 \leq i, j \leq n}$. Then
\[
 A_1 Q B_1 = \begin{pmatrix} \phi_1(e_1) & 0 \\ 0 & Q' \end{pmatrix},
\]
where 
$A_1$ and $B_1$ are matrices given by
{\tiny
\[
\hspace{5em}
\begin{pmatrix}
1 & 0 & 0 & \cdots & 0 \\
\phi_2(e_1) & -\phi_1(e_1) & 0 & \cdots & 0 \\
\phi_3(e_1) & 0 & -\phi_1(e_1) & \cdots & 0 \\
\vdots & \vdots & \vdots & \ddots & \vdots \\
\phi_n(e_1) & 0 & 0 & \cdots & -\phi_1(e_1)
\end{pmatrix},
\begin{pmatrix}
1 & \phi_1(e_2) & \phi_1(e_3) & \cdots & \phi_1(e_n) \\
0 & -\phi_1(e_1) & 0 & \cdots & 0 \\
0 & 0 & -\phi_1(e_1) & \cdots & 0 \\
\vdots & \vdots & \vdots & \ddots & \vdots \\
0 & 0 & 0 & \cdots & -\phi_1(e_1)
\end{pmatrix}
\]}
respectively. Note that if $Q$ is symmetric,
then $B_1 = {}^t A_1$ and $Q'$ is also symmetric.
\end{enumerate}
\end{Claim}

\begin{proof}
(i) is obvious.

(ii) It is easy to see (a) for $e'_2, \ldots, e'_n$ by using Claim~\ref{claim:prop:negative:definite:0}.
For $i, j \geq 2$ with $i \not= j$, by Claim~\ref{claim:prop:negative:definite:0},
\[
\phi'_i(e'_j) = \phi_1(e_1)^2\phi_i(e_j) + (-\phi_1(e_1))\phi_i(e_1)\phi_1(e_j) \geq 0.
\]
Finally let $x \in \sum_{j\geq 2} \RR_{\geq 0} e'_j$
such that $\phi'_i(x) \geq 0$ for all $i \geq 2$.
Note that $\phi'_i(x) = (-\phi_1(e_1))\phi_i(x)$ for $i \geq 2$.
Therefore, $\phi_i(x) \geq 0$ for all $i \geq 1$, and hence 
$x = 0$ because $\sum_{j\geq 2} \RR_{\geq 0} e'_j  \subseteq \sum_{j \geq 1} \RR_{\geq 0} e_j$.

(iii) is a straightforward calculation.
\end{proof}

We prove (1) by induction on $n$. 
By hypothesis of induction, there are matrices $A'$ and $B'$ satisfying (1.1), (1.2) and (1.3)
for $Q'$, that is,
\[
A' Q' B' = \begin{pmatrix}-1 & \cdots & 0 \\ \vdots &  \ddots & \vdots \\ 0 & \cdots & -1 \end{pmatrix}.
\]
Therefore,
\[
\begin{pmatrix} \frac{1}{\sqrt{-\phi_1(e_1)}} & 0 \\ 0 & A' \end{pmatrix}
A_1 Q B_1 \begin{pmatrix} \frac{1}{\sqrt{-\phi_1(e_1)}} & 0 \\ 0 & B' \end{pmatrix} =
\begin{pmatrix}-1 & \cdots & 0 \\ \vdots &  \ddots & \vdots \\ 0 & \cdots & -1 \end{pmatrix}.
\]
Thus (1) follows.

\bigskip
(2) Let $a_1 e_1 + \cdots + a_n e_n = 0$ be a linear relation on 
\[
W/\{ x \in W \mid \phi_1(x) = \cdots = \phi_n(x) = 0 \}.
\]
Then there is $x \in W$ such that
$x = a_1 e_1 + \cdots + a_n e_n$ and $\phi_1(x) = \cdots = \phi_n(x) = 0$.
Thus $0 = \phi_i(x) = \sum \phi_i(e_j) a_j$. Hence (1) yields (2).
\end{proof}

\begin{proof}[Proof of Proposition~\ref{prop:zariski:decomp:in:vector:space}]
(1) Clearly (1.1) implies (1.2). If we assume (1.2), then
(1.1) follows from Lemma~\ref{lem:greatest:element}.

(2) Let $x = y+ z$ be a Zariski decomposition of $x$ with respect to $(\pmb{e}, \pmb{\phi})$ and
$y' = \max (\Nef_{\pmb{\phi}} \cap (-\infty, x]_{\pmb{e}})$.
Then $y \leq_{\pmb{e}} y'$.
As $\phi_{\lambda}(y) = 0$ for all $\lambda \in \Supp(z; \pmb{e})$,
\[
y' - y \in \left\{ x \in \sum\nolimits_{\lambda \in \Supp(z; \pmb{e})} \RR_{\geq 0} e_{\lambda} \mid \text{$\phi_{\lambda}(x) \geq 0$ for all $\lambda \in \Supp(z; \pmb{e})$} \right\},
\]
and
hence $y' = y$.

(3) follows from Lemma~\ref{lem:negative:definite}.
\end{proof}

\begin{Remark}
We assume that $\phi_{\lambda}(e_{\mu}) \in \QQ$ for all $\lambda, \mu \in \Lambda$.
Let $x \in \bigoplus\nolimits_{\lambda} \QQ e_{\lambda}$ such that  $(-\infty, x]_{\pmb{e}} \cap \Nef_{\pmb{\phi}} \not= \emptyset$.
Let $x = y+z$ be the Zariski decomposition of $x$ with respect to $(\pmb{e}, \pmb{\phi})$.
Then $y, z \in \bigoplus\nolimits_{\lambda} \QQ e_{\lambda}$.
Indeed, if we set
$\Supp(z; \pmb{e}) = \{ \lambda_1, \ldots, \lambda_n \}$ and
$z = \sum a_i e_{\lambda_i}$, then
\[
\sum \phi_{\lambda_i}(e_{\lambda_j}) a_j = \phi_{\lambda_i}(x) \in \QQ.
\]
On the other hand, by our assumption and (3.1) in Proposition~\ref{prop:zariski:decomp:in:vector:space},
$(\phi_{\lambda_i}(e_{\lambda_j}))_{1 \leq i, j \leq n} \in \operatorname{GL}_n(\QQ)$.
Thus $(a_1, \ldots, a_n) \in \QQ^n$.
\end{Remark}

\section{Green functions for $\RR$-divisors}
\label{sec:Green:function:R:divisor}

\subsection{Plurisubharmonic functions}
\label{subsec:pluri:subharmonic}
Here we recall plurisubharmonic functions and the upper semicontinuous regularization of a function
locally bounded above.

Let $T$ be a metric space with a metric $d$.
A function $f : T \to \{ -\infty\} \cup \RR$ is said to be {\em upper semicontinuous} if
$\{ x \in T \mid f(x) < c \}$ is open for any $c \in \RR$. In other words,
\[
f(a) = \limsup_{x\to a} f(x) \left(:= \inf_{\epsilon > 0} (\sup \{ f(y) \mid d(a, y) \leq \epsilon \} )\right)
\]
for all $a \in T$.
Let $u : T \to \{ -\infty\} \cup \RR$ be a function such that
$u$ is locally bounded above.
The {\em upper semicontinuous regularization} $u^*$ of $u$ is given by
\[
u^*(x) = \limsup_{y \to x} u(y). 
\]
Note that $u^*$ is upper semicontinuous and $u \leq u^*$.

Let $D$ be an open set in $\CC$.
A function $u : D \to \{ -\infty \} \cup \RR$ is said to be {\em subharmonic} if
$u$ is upper semicontinuous and
\[
u(a) \leq \frac{1}{2\pi} \int_{0}^{2\pi} u\left(a + r e^{\sqrt{-1}\theta}\right) d\theta
\]
holds for any $a \in D$ and $r \in \RR_{>0}$ with
$\{ z \in \CC \mid \vert z - a \vert \leq r \} \subseteq D$.

Let $X$ be a $d$-equidimensional complex manifold.
A function $u : X \to \{ - \infty\} \cup \RR$ is said to be {\em plurisubharmonic} if
$u$ is upper semicontinuous and $u \circ \phi$ is subharmonic for
any analytic map $\phi : \{ z \in \CC \mid \vert z \vert < 1 \} \to X$.
We say $u$ is a  {\em real valued plurisubharmonic function} if
$u(x) \not= -\infty$ for all $x \in X$.
If $X$ is an open set of $\CC^d$, then an upper semicontinuous function $u : X \to \RR \cup \{ -\infty \}$
is plurisubharmonic if and only if
\[
u(a) \leq \frac{1}{2\pi} \int_{0}^{2\pi} u(a + \xi \exp(\sqrt{-1} \theta)) d\theta
\]
holds
for any $a \in X$ and $\xi \in \CC^d$ with
$\{ a + \xi \exp(\sqrt{-1}\theta) \mid 0 \leq \theta \leq 2\pi \} \subseteq X$.
As an example of plurisubharmonic functions, we have the following:
if $f_1, \ldots, f_r$ are holomorphic functions on $X$,
then 
\[
\log (\vert f_1 \vert^2 + \cdots + \vert f_r \vert^2)
\]
is a plurisubharmonic function on $X$.
In particular, if 
\[
x \not\in \{ z \in X \mid f_1(z) = \cdots = f_r(z) = 0 \},
\]
then $dd^c(\log (\vert f_1 \vert^2 + \cdots + \vert f_r \vert^2))$ is semipositive around $x$.

Let $\{ u_{\lambda} \}_{\lambda \in \Lambda}$ be a family of plurisubharmonic functions on $X$ such that
$\{ u_{\lambda} \}_{\lambda \in \Lambda}$ is locally uniformly bounded above.
If we set $u(x) := \sup_{\lambda \in \Lambda} \{ u_{\lambda}(x) \}$ for $x \in X$, then
the upper semicontinuous regularization $u^*$ of $u$ is plurisubharmonic and
$u = u^*\ \aew$ (cf. \cite[Theorem~2.9.14 and Proposition~2.6.2]{MK}).
Moreover, let $\{ v_{n} \}_{n=1}^{\infty}$ be a sequence of plurisubharmonic functions on $X$ such that
$\{ v_{n} \}_{n=1}^{\infty}$ is  locally uniformly bounded above.
If we set $v(x) := \limsup_{n\to\infty} v_n(x)$ for $x \in X$, then
the upper semicontinuous regularization $v^*$ of $v$ is plurisubharmonic and
$v = v^*\ \aew$ (cf. \cite[Proposition~2.9.17 and Theorem~2.6.3]{MK}).

\subsection{$\RR$-divisors}
\setcounter{Theorem}{0}
\label{subsec:R:divisor}
Let $X$ be either a $d$-equidimensional smooth algebraic scheme over $\CC$, or a $d$-equi\-dimensional complex manifold.
Let $\Div(X)$ be the group of divisors on $X$.
An element $D$ of $\Div(X)_{\RR} :=  \Div(X) \otimes_{\ZZ} \RR$ is called an {\em $\RR$-divisor} on $X$.
Let $D = \sum_{i=1}^n a_i D_i$ be the irreducible decomposition of $D$, that is,
$a_1, \ldots, a_n \in \RR$ and $D_i$'s are reduced and irreducible divisors on $X$.
For a prime divisor  $\Gamma$ on $X$ (i.e., a reduced and irreducible divisor on $X$),
the coefficient of $D$ at $\Gamma$ in the above irreducible decomposition is denoted by $\mult_{\Gamma}(D)$, that is,
\[
\mult_{\Gamma}(D) = \begin{cases}
a_i & \text{if $\Gamma = D_i$ for some $i$}, \\
0 & \text{if $\Gamma \not= D_i$ for all $i$},
\end{cases}
\]
and $D = \sum_{\Gamma}\mult_{\Gamma}(D) \Gamma$.
The support $\Supp(D)$ of $D$ is defined by $\bigcup_{\mult_{\Gamma}(D) \not= 0} \Gamma$.
If $a_i \geq 0$ for all $i$, then $D$ is said to be {\em effective} and it is denoted by $D\geq 0$.
More generally, for $D_1, D_2 \in \Div(X)_{\RR}$,
\[
\text{$D_1 \leq D_2$ (or $D_2 \geq D_2$)}\quad\overset{\rm def}{\Longleftrightarrow}\quad
D_2 - D_1 \geq 0.
\]
The {\em round-up} $\lceil D \rceil$ of $D$ and the {\em round-down} $\lfloor D \rfloor$ of $D$
are defined by
\[
\lceil D \rceil = \sum_{i=1}^n \lceil a_i \rceil D_i\quad\text{and}\quad
\lfloor D \rfloor = \sum_{i=1}^n \lfloor a_i \rfloor D_i,
\]
where $\lceil x \rceil = \min \{ a \in \ZZ \mid x \leq a \}$ and $\lfloor x \rfloor = \max \{ a \in \ZZ \mid a \leq x \}$ 
for $x \in \RR$.

We assume that $X$ is algebraic.
Let $\Rat(X)$ be the ring of rational functions on $X$.
Note that $X$ is not necessarily connected, so that
$\Rat(X)$ is not necessarily a field.
In the case where $X$ is connected,
$H^0(X, D)$ is defined to be
\[
H^0(X, D) := \{ \phi \in \Rat(X)^{\times} \mid (\phi) + D \geq 0 \} \cup \{ 0 \}.
\]
In general, let $X = \coprod_{\alpha} X_{\alpha}$ be the decomposition into
connected components, and let $D_{\alpha} = \rest{D}{X_{\alpha}}$.
Then $H^0(X, D)$ is defined to be
\[
H^0(X, D) := \bigoplus_{\alpha} H^0(X_{\alpha}, D_{\alpha}).
\]
Note that if $D$ is effective, then
$H^0(X, D)$ is generated by
\[
\{ \phi \in \Rat(X)^{\times} \mid (\phi) + D \geq 0 \}.
\]
Indeed, for $\phi_{\alpha} \in H^0(X_{\alpha}, D_{\alpha})$,
if we choose $c \in \CC^{\times}$ with $\phi_{\alpha} + c \not= 0$, then
\[
(0,\ldots, 0, \phi_{\alpha},0, \ldots,0) \\
= (1,\ldots, 1, \phi_{\alpha}+c,1,\ldots,1) - (1,\ldots,1, c,1,\ldots,1),
\]
which shows the assertion.
Since
\[
(\phi_{\alpha}) + D_{\alpha} \geq 0 \quad\Longleftrightarrow\quad (\phi_{\alpha}) + \lfloor D_{\alpha} \rfloor \geq 0,
\]
we have
$H^0(X, D) = H^0(X, \lfloor D\rfloor)$.

In the case where $X$ is not necessarily algebraic,
the ring of meromorphic functions on $X$ is denoted by $\mathcal{M}(X)$.
By using $\mathcal{M}(X)$ instead of $\Rat(X)$, we can define $H^0_{\mathcal{M}}(X, D)$ in the same way as above,
that is, if $X$ is connected, then
\[
H_{\mathcal{M}}^0(X, D) := \{ \phi \in \mathcal{M}(X)^{\times} \mid (\phi) + D \geq 0 \} \cup \{ 0 \}.
\]
If $X$ is a proper smooth algebraic scheme over $\CC$,
then $\Rat(X) = \mathcal{M}(X)$ by GAGA, and hence $H^0(X, D) = H_{\mathcal{M}}^0(X, D)$.

\subsection{Definition of Green functions for $\RR$-divisors}
\setcounter{Theorem}{0}
\label{subsec:Green:function:P}
Let $X$ be a $d$-equidimensional complex manifold.
Let $\mathcal{L}^1_{\rm loc}$ 
be the sheaf consisting of locally integrable functions,
that is,
\[
\mathcal{L}^1_{\rm loc}(U) := \{ g : U \to \RR  \cup \{ \pm\infty \} \mid \text{$g$ is locally integrable} \} \\
\]
for an open set $U$ of $X$.
Let $\TT$ be a subsheaf  of $\mathcal{L}^1_{\rm loc}$ and
let $S$ be a subset of $\RR \cup \{ \pm \infty \}$. 
Then $\TT_S$, $\TT^b$ and $-\TT$ are defined as follows: 
\begin{align*}
\TT_S(U) &:= \{ g \in \TT(U) \mid \text{$g(x) \in S$ for all $x \in U$}\}, \\
\TT^b(U) &:= \{ g \in \TT(U) \mid \text{$g$ is locally bounded on $U$}\}, \\
-\TT(U) &:=  \{ -g  \in \mathcal{L}^1_{\rm loc}(U) \mid g \in \TT(U)\}.
\end{align*}
Let $\TT'$ be another subsheaf of $\mathcal{L}^1_{\rm loc}$.
We assume that $u + u'$ is well-defined as functions for any open set $U$,
$u \in \TT(U)$ and $u' \in \TT'(U)$.
Then $\TT + \TT'$ is  defined to be
\[
(\TT + \TT')(U) := \left\{
g \in \mathcal{L}^1_{\rm loc}(U)  \left|   \hspace{-0.2em}
\begin{array}{l} 
\text{For any $x \in U$, we can find an open } \\
\text{neighborhood $V_x$, $u \in \TT(V_x)$ and} \\
\text{$u' \in \TT'(V_x)$ such that $\rest{g}{V_x} = u + u'$.}
\end{array}\hspace{-0.2em} \right\}\right..
\]
Similarly, if $u - u'$ is well-defined as functions for any open set $U$,
$u \in \TT(U)$ and $u' \in \TT'(U)$, then $\TT - \TT'$ is  defined to be
\[
(\TT - \TT')(U)  := \left\{
g \in \mathcal{L}^1_{\rm loc}(U)  \left|   \hspace{-0.2em}
\begin{array}{l} 
\text{For any $x \in U$, we can find an open } \\
\text{neighborhood $V_x$, $u \in \TT(V_x)$ and} \\
\text{$u' \in \TT'(V_x)$ such that $\rest{g}{V_x} = u - u'$.}
\end{array}\hspace{-0.2em} \right\}\right..
\]
Note that $\TT - \TT' = \TT + (-\TT')$.
A subsheaf $\TT$ of $\mathcal{L}^1_{\rm loc}$
is called a {\em type for Green functions} on $X$ if the following conditions are satisfied
(in the following (1), (2) and (3), $U$ is an arbitrary open set of $X$):
\begin{enumerate}
\renewcommand{\labelenumi}{(\arabic{enumi})}
\item
If $u, v \in \TT(U)$ and $a \in \RR_{\geq 0}$,
then $u + v \in \TT(U)$ and $a u \in \TT(U)$.

\item
If $u, v \in \TT(U)$ and $u \leq v\ \aew$, then $u \leq v$.

\item
If $\phi \in \OO_X^{\times}(U)$
(i.e., $\phi$ is a  nowhere vanishing holomorphic function on $U$),
then 
$\log \vert \phi  \vert^2 \in \TT(U)$.
\end{enumerate}
Note that, for $u, v \in \TT(U)$,  $u = v$ if $u = v\ \aew$.
If $\TT = \TT_{\RR}$, that is,
$u(x) \in \RR$ for any open set $U$, $u \in \TT(U)$ and $x \in U$,
then $\TT$ is called a {\em real valued type}.
As examples of  types for Green functions on $X$,
we have the following 
$C^0$, $C^{\infty}$ and $\Tpsh$:  
\begin{list}{}{\leftmargin=5em\labelwidth=3.7em}
\renewcommand{\makelabel}{}
\item[$C^{0}$ :]   the sheaf consisting of continuous functions. 

\item[$C^{\infty}$ :]  the sheaf consisting of $C^{\infty}$-functions. 

\item[$\Tpsh$ :]   the sheaf consisting of  plurisubharmonic functions. 
\end{list}
Note that
\[
\Tpsh_{\RR}(U) = \{ g \in \Tpsh(U) \mid \text{$g(x) \not= -\infty$ for all $x \in U$} \}
\]
for an open set $U$ of $X$.
Let $\TT$ and $\TT'$ be types for Green functions on $X$.
We say $\TT'$ is a {\em subjacent type} of $\TT$ if the following property holds for
any open set $U$ of $X$: 
\[
\text{$u' \leq u\ \aew$ on $U$ for $u' \in \TT'(U)$ and $u \in \TT(U)$ $\quad\Longrightarrow\quad$
$u' \leq u$ on $U$.}
\]

\begin{Lemma}
\label{lem:fqpssh:ineq:ae}
Let $\TT$ be either $C^0 + \Tpsh$ or $C^0 + \Tpsh_{\RR} -\Tpsh_{\RR}$.
Then $\TT$ is a type for Green functions on $X$.
Moreover, $\Tpsh$ is a subjacent type of $\TT$.
\end{Lemma}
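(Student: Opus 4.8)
The plan is to verify the three defining conditions of a type for Green functions directly for $\TT = C^0 + \Tpsh$ (and likewise for $\TT = C^0 + \Tpsh_{\RR} - \Tpsh_{\RR}$), and then to establish the subjacency of $\Tpsh$. First I would observe that $\TT$ is indeed a subsheaf of $\mathcal{L}^1_{\rm loc}$: continuous functions are locally integrable, plurisubharmonic functions are locally integrable (they are locally bounded above and, being subharmonic in each variable, locally $L^1$), and a local sum of a continuous and a plurisubharmonic (resp. a continuous plus psh minus psh) function is again locally integrable; the sheaf axioms are immediate since the defining property of membership in $(\TT + \TT')(U)$ is local on $U$.

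For condition (1), closure under addition is clear since $(C^0 + \Tpsh) + (C^0 + \Tpsh) \subseteq C^0 + \Tpsh$ by regrouping the local representatives and using that $C^0$ and $\Tpsh$ are each closed under addition; closure under multiplication by $a \in \RR_{\geq 0}$ uses that $a \cdot u$ is continuous when $u$ is and plurisubharmonic when $u$ is (here $a \geq 0$ is essential for the psh part). The case $C^0 + \Tpsh_{\RR} - \Tpsh_{\RR}$ is the same, noting that $a(-v) = -(av)$ keeps the representative in the required form and that real-valuedness is preserved. For condition (3), if $\phi \in \OO_X^{\times}(U)$ then $\log|\phi|^2 = \log|\phi|^2 + 0$, with $0 \in C^0$ (or $0 \in \Tpsh_{\RR}$, since constants are plurisubharmonic), so it lies in $\TT(U)$; alternatively $\log|\phi|^2$ is itself real-valued and plurisubharmonic off the (empty) zero set of $\phi$.

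The substantive point is condition (2): if $u, v \in \TT(U)$ and $u \leq v$ almost everywhere, then $u \leq v$ everywhere. This is where I expect the main work. The idea is local: near a point $x$ write $u = c_1 + p_1$ and $v = c_2 + p_2$ with $c_i$ continuous and $p_i$ plurisubharmonic on a neighborhood $V_x$ (in the second case $p_i = p_i' - p_i''$ with $p_i', p_i''$ real-valued psh, and one rearranges to compare $c_1 + p_1' + p_2''$ with $c_2 + p_2' + p_1''$, both of the form continuous plus psh). So it suffices to show: if $f = c + p$ and $g = c' + q$ with $c, c'$ continuous and $p, q$ psh, and $f \leq g$ a.e., then $f \leq g$ everywhere. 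Equivalently $p - q \leq c' - c$ a.e. with $c' - c$ continuous. The key fact is that a psh function, being upper semicontinuous, equals its own upper semicontinuous regularization and therefore at every point equals the $\limsup$ of its values over any neighborhood; more precisely, for a psh function $w$, $w(x) = \limsup_{y \to x} w(y)$, and since the set where an a.e. inequality fails is null, $\limsup_{y\to x} w(y)$ can be computed avoiding that null set. Applying this to the psh pieces: at any $x \in U$ one has $p(x) = \limsup_{y\to x, \, y \in E} p(y)$ for $E$ the full-measure set on which $f \leq g$ holds; combining with the continuity of $c, c'$ and the sub-mean-value characterization of $q$ (so that $q(x) \leq \limsup_{y \to x, y\in E} q(y)$ is not automatic — rather one uses that $q$ is u.s.c. so $q(x) \geq \limsup$, which goes the wrong way). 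The cleaner route, which I would actually take, is: $p - q$ need not be psh, so instead use that for $y$ in the full-measure set, $c(y) + p(y) \leq c'(y) + q(y)$, hence $p(y) - q(y) \leq c'(y) - c(y)$; now integrate against an approximate-identity / use the sub-mean-value inequality for $p$ at $x$ over a small ball $B(x,r) \subseteq V_x$: $p(x) \leq \fint_{B(x,r)} p \leq \fint_{B(x,r)} (q + c' - c)$ almost everywhere, and let $r \to 0$ using continuity of $c' - c$ and upper semicontinuity (hence the reverse mean-value bound $\fint_{B(x,r)} q \to$ something $\leq q(x)$ fails) — so one must instead invoke that $q$ is locally bounded above and apply the sub-mean-value inequality to $p$ together with Fatou, concluding $p(x) \leq q(x) + (c'(x) - c(x))$, i.e. $f(x) \leq g(x)$. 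I would streamline this by citing that $\Tpsh$ satisfies property (2) (a psh function dominated a.e. by a psh function is dominated everywhere — a standard consequence of the sub-mean-value property), reduce the general case to that statement, and handle the continuous parts trivially.

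Finally, for the assertion that $\Tpsh$ is a subjacent type of $\TT$: let $u' \in \Tpsh(U)$ and $u \in \TT(U)$ with $u' \leq u$ a.e.; write locally $u = c + p$ with $c \in C^0$, $p \in \Tpsh$. Then $u' - c \leq p$ a.e., but $u' - c$ need not be psh. Instead one argues directly with the sub-mean-value inequality for $u'$ at each point $x$: $u'(x) \leq \fint_{B(x,r)} u' \leq \fint_{B(x,r)} u$ (using $u' \leq u$ a.e.), and then $\fint_{B(x,r)} u = \fint_{B(x,r)} c + \fint_{B(x,r)} p \to c(x) + p(x) = u(x)$ as $r \to 0$, by continuity of $c$ and upper semicontinuity of $p$ controlling $\limsup_r \fint_{B(x,r)} p \leq p(x)$ — here one does need $\fint_{B(x,r)} p \to p(x)$, which holds because $p$ is psh (the spherical/ball means decrease to $p(x)$ as $r \to 0$). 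Hence $u'(x) \leq u(x)$ for all $x$, as required. The main obstacle throughout is the careful handling of almost-everywhere versus pointwise values for plurisubharmonic summands, which is exactly where the monotonicity of ball averages of psh functions and their upper-semicontinuity must be combined correctly.
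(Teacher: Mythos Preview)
Your approach is correct and essentially the same as the paper's: the paper uses convolution with standard smoothing kernels $\chi_\epsilon$ where you use ball averages $\fint_{B(x,r)}$, but both arguments rest on the same fact that for continuous and plurisubharmonic functions the pointwise value is recovered as the limit of such local averages, so that an almost-everywhere inequality passes to a pointwise one. Your rearrangement trick for the $C^0 + \Tpsh_{\RR} - \Tpsh_{\RR}$ case (moving the subtracted real-valued psh piece to the other side to reduce to the $C^0 + \Tpsh$ case) is a minor reorganization; the paper instead treats both cases uniformly by observing that $(v_{ij}\ast\chi_\epsilon)(a)\to v_{ij}(a)$ for every piece of the local decomposition and summing.
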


\begin{proof}
The conditions (1) and (3) are obvious for $\TT$.
Let us see (2).
For $z = (z_1, \ldots, z_d) \in \CC^d$, we set $\Vert z \Vert = \sqrt{\vert z_1 \vert^2 + \cdots + \vert z_d \vert^2}$.
Moreover, for $a \in \CC^d$ and $r > 0$,  
\[
\{ z \in \CC^d \mid \Vert z - a \Vert < r \}
\]
is denoted by $B^d(a; r)$.

The assertion of (2) is local, so that we may assume that $X = B^d((0,\ldots,0); 1)$.
It is sufficient to see that, for $u_1, u_2 \in \TT(X)$,
if $u_1 \leq u_2\ \aew$, then $u_1 \leq u_2$.
Let us fix $a \in B^d((0,\ldots,0); 1)$. There are a sufficiently small $r > 0$ and 
$v_{ij} \in \mathcal{L}^1_{\rm loc}(B^d(a;r))$ ($i=1,2$ and $j=1,2,3$) with the following properties:
\begin{list}{}{\leftmargin=4em\labelwidth=1.5em}
\renewcommand{\makelabel}{}
\item[(a)]
$u_1 = v_{11} + v_{12} - v_{13}$ and $u_2 = v_{21} + v_{22} - v_{23}$.

\item[(b)]
$v_{11}, v_{21}   \in C^0(B^d(a;r))$.

\item[(c)]
$v_{12}, v_{22} \in \Tpsh(B^d(a;r))$ in the case $\TT = C^0 + \Tpsh$.

\item[(c)']
$v_{12}, v_{22} \in \Tpsh_{\RR}(B^d(a;r))$ in the  case $\TT = C^0 + \Tpsh_{\RR} -\Tpsh_{\RR}$.

\item[(d)]
$v_{13} = v_{23} = 0$ in the case $\TT = C^0+ \Tpsh$.

\item[(d)']
$v_{13}, v_{23} \in \Tpsh_{\RR}(B^d(a;r))$ in the  case $\TT = C^0 + \Tpsh_{\RR} -\Tpsh_{\RR}$.
\end{list}
Let $\chi_{\epsilon}$ ($\epsilon > 0$) be the standard smoothing kernels on $\CC^d$
(cf. \cite[Section~2.5]{MK}).
It is well known that $v_{ij}(a) = \lim_{\epsilon \to 0} (v_{ij} \ast \chi_{\epsilon})(a)$ for $i=1,2$ and $j=1,2,3$
(cf. \cite[Proposition~2.5.2 and Theorem~2.9.2]{MK}).
In the case $\TT = C^0 + \Tpsh$,
since $v_{11}(a), v_{21}(a) \in \RR$, $v_{12}(a), v_{22}(a) \in \RR \cup \{-\infty\}$ and $v_{13} = v_{23} = 0$,
\begin{align*}
\lim_{\epsilon \to 0} (u_i * \chi_{\epsilon})(a) & = \lim_{\epsilon\to 0} \left(  (v_{i1} * \chi_{\epsilon})(a) + (v_{i2} * \chi_{\epsilon})(a) - (v_{i3} * \chi_{\epsilon})(a)\right) \\
& = \lim_{\epsilon\to 0} (v_{i1} * \chi_{\epsilon})(a) + \lim_{\epsilon\to 0} (v_{i2} * \chi_{\epsilon})(a) - \lim_{\epsilon\to 0} (v_{i3} * \chi_{\epsilon})(a) \\
& = v_{i1}(a) + v_{i2}(a) - v_{i3}(a) = u_i(a).
\end{align*}
If $\TT = C^0 + \Tpsh_{\RR} -\Tpsh_{\RR}$, then, in the same way as above, we can also see
$u_i(a) = \lim_{\epsilon \to 0} (u_i * \chi_{\epsilon})(a)$ for $i=1,2$
because $v_{ij}(a) \in \RR$ for $i=1,2$ and $j=1,2,3$.
Therefore, (2) follows from inequalities $(u_1 \ast \chi_{\epsilon})(a) \leq (u_2 \ast \chi_{\epsilon})(a)$ ($\forall \epsilon > 0$).
The last assertion can be checked similarly.
\end{proof}

Let $\TT$ be a type for Green functions on $X$.
Let $g$ be a locally integrable function on $X$ and let $D = \sum_{i=1}^l a_i D_i$ be an $\RR$-divisor on $X$,
where $D_i$'s are reduced and irreducible divisors on $X$.
We say $g$ is a {\em $D$-Green function of $\TT$-type} (or a {\em Green function of $\TT$-type for $D$}) if,
for each point $x \in X$,
$g$ has a local expression 
\[
g = u + \sum_{i=1}^l (-a_i) \log \vert f_i \vert^2\quad\aew
\]
over an open neighborhood $U_x$ of $x$
such that $u \in \TT(U_x)$, 
where $f_1, \ldots, f_l$ are local equations of $D_1, \ldots, D_l$ on $U_x$ respectively.
Note that this definition does not depend on the choice of local equations
$f_1, \ldots, f_l$ on $U_x$  by the properties (1) and (3) of $\TT$.
The set of all $D$-Green functions of $\TT$-type is denoted by $G_{\TT}(X;D)$.

Let $g$ be a $D$-Green function of $\TT$-type.
We say $g$ is {\em of upper bounded type} (resp. {\em of lower bounded type})
if, in the above local expression $g = u + \sum_{i=1}^l (-a_i) \log \vert f_i \vert^2\ \aew$ around each point of $X$, 
$u$ is locally bounded above (resp. locally bounded below).
If $g$ is  of upper and lower bounded type,
then $g$ is said to be {\em of bounded type}.
These definitions also do not depend on the choice of local equations.
Note that the set of all $D$-Green functions of $\TT$-type and of bounded type is nothing more than $G_{\TT^b}(X; D)$.

We assume $x \not\in \Supp(D)$. Let $g$ be a $D$-Green function of $\TT$-type.
Let $f_1, \ldots, f_l$ and $f'_1, \ldots, f'_l$ be two sets of local equations of $D_1, \ldots, D_l$ on an open neighborhood $U_x$ of $x$.
Let
\[
g = u + \sum_{i=1}^l (-a_i) \log \vert f_i \vert^2\ \aew\quad\text{and}\quad
g = u' + \sum_{i=1}^l (-a_i) \log \vert f'_i \vert^2\ \aew
\]
be two local expressions of $g$ over $U_x$, where $u, u' \in \TT(U_x)$.
Since $x \not\in \Supp(D)$, there is an open neighborhood $V_x$ of $x$ such that
$V_x \subseteq U_x$ and $f_1, \ldots, f_l, f'_1, \ldots, f'_l \in \OO^{\times}_X(V_x)$.
Thus, by the properties (1) and (3) of $\TT$,
\[
u + \sum_{i=1}^l (-a_i) \log \vert f_i \vert^2, \quad u' + \sum_{i=1}^l (-a_i) \log \vert f'_i \vert^2 \in \TT(V_x),
\]
and hence
\[
u + \sum_{i=1}^l (-a_i) \log \vert f_i \vert^2 = u' + \sum_{i=1}^l (-a_i) \log \vert f'_i \vert^2 \in \TT(V_x)
\]
over $V_x$ by the second property of $\TT$.
This observation shows that
\[
u(x) + \sum_{i=1}^l (-a_i) \log \vert f_i(x) \vert^2
\]
does not depend on the choice of the local expression of $g$.
In this sense, the value 
\[
u(x) + \sum_{i=1}^l (-a_i) \log \vert f_i(x) \vert^2
\]
is called the {\em canonical value of $g$ at $x$} and
it is denoted by $g_{\rm can}(x)$. Note that $g_{\rm can} \in \TT(X \setminus \Supp(D))$ and
$g = g_{\rm can}\ \aew$ on $X \setminus \Supp(D)$.
Moreover, if $\TT$ is real valued, then $g_{\rm can}(x) \in \RR$.
It is easy to see the following propositions.

\begin{Proposition}
\label{prop:lelong:formula}
Let $g$ be a $D$-Green function of $C^{\infty}$-type.
Then the current $dd^c([g]) + \delta_D$ is represented by a unique $C^{\infty}$-form $\alpha$,
that is,  $dd^c([g]) + \delta_D = [\alpha]$.
We often identifies the current $dd^c([g]) + \delta_D$ with $\alpha$, and denote it by $c_1(D,g)$.
\query{$\delta_A$ $\Longrightarrow$ $\delta_D$. add ``, and denote it by $c_1(D,g)$''
(24/Sep/2010)}
\end{Proposition}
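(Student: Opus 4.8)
The plan is to reduce the statement to a purely local computation together with a routine gluing argument. Note first that $dd^c([g])+\delta_D$ is a well-defined global current, since $g$ is locally integrable (so $[g]$ is a well-defined current) and each $D_i$ is a divisor (so $\delta_D$ makes sense). Fix $x\in X$ and choose, as in the definition of a $D$-Green function of $C^{\infty}$-type, an open neighbourhood $U_x$, local equations $f_1,\dots,f_l$ of $D_1,\dots,D_l$ on $U_x$, and $u\in C^{\infty}(U_x)$ with $g=u+\sum_{i=1}^l(-a_i)\log|f_i|^2$ a.e. on $U_x$. Then, as currents on $U_x$, $[g]=[u]+\sum_{i=1}^l(-a_i)[\log|f_i|^2]$.

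The key step is the Poincar\'e--Lelong formula applied to each factor: in the usual normalisation of $dd^c$ one has $dd^c\bigl([\log|f_i|^2]\bigr)=\delta_{D_i}$ on $U_x$. Since $D=\sum_i a_iD_i$ forces $\delta_D=\sum_i a_i\delta_{D_i}$ on $U_x$, the divisorial contributions cancel and
\[
dd^c([g])+\delta_D \;=\; dd^c([u]) \;=\; [dd^c u]\qquad\text{on }U_x,
\]
where the last equality is the standard fact that $dd^c([u])=[dd^c u]$ for $u\in C^{\infty}$ (indeed $C^2$), obtained by integration by parts against compactly supported test forms. Hence $\alpha_x:=dd^c u$ is a $C^{\infty}$-form on $U_x$ representing the current there.

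It remains to glue and to argue uniqueness. If $u,u'$ are two local potentials on $U_x\cap U_y$ arising from local equations $f_i$ and $f'_i$, then $u-u'=\sum_i(-a_i)\log|f'_i/f_i|^2$ a.e., and each $f'_i/f_i$ lies in $\OO_X^{\times}(U_x\cap U_y)$; thus $u-u'$ agrees a.e., hence (both sides being continuous) everywhere, with the pluriharmonic $C^{\infty}$-function $\sum_i(-a_i)\log|f'_i/f_i|^2$, so $dd^c u=dd^c u'$ on the overlap. The forms $\alpha_x$ therefore patch to a global $C^{\infty}$-form $\alpha$ with $dd^c([g])+\delta_D=[\alpha]$, which we call $c_1(D,g)$. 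Uniqueness is immediate, since a $C^{\infty}$-form representing the zero current vanishes identically (pairing it with bump forms recovers its pointwise values). The only point needing any care is the gluing step, where one must upgrade the a.e.\ identity of potentials to an honest equality of smooth functions — precisely the content of property~(2) in the definition of a type for Green functions, applied to the smooth (hence continuous) type $C^{\infty}$.
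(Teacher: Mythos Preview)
Your proof is correct and is precisely the standard argument via the Poincar\'e--Lelong formula. The paper itself does not give a proof of this proposition: it simply states ``It is easy to see the following propositions'' and moves on, treating the result as routine. Your write-up supplies exactly the details the paper suppresses, and there is nothing to compare approach-wise.
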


\begin{Proposition}
\label{prop:intersec:two:type}
Let $\TT'$ and $\TT''$ be two types for Green functions on $X$ such that
$\TT', \TT'' \subseteq \TT$.
Then $G_{\TT' \cap \TT''}(X;D) = G_{\TT'}(X;D) \cap G_{\TT''}(X;D)$.
\end{Proposition}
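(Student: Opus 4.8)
The plan is to reduce the identity to a purely local comparison of the ``potential'' terms appearing in the Green-function expansions, the only real input being axiom (2) for the ambient type $\TT$.

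First I would record that $\TT' \cap \TT''$, defined by $(\TT' \cap \TT'')(U) := \TT'(U) \cap \TT''(U)$, is again a type for Green functions on $X$: conditions (1), (2), (3) of the definition are inherited termwise from $\TT'$ and $\TT''$. So the left-hand side $G_{\TT' \cap \TT''}(X;D)$ is meaningful. The inclusion $G_{\TT' \cap \TT''}(X;D) \subseteq G_{\TT'}(X;D) \cap G_{\TT''}(X;D)$ is then immediate: if, around each $x$, $g$ has a local expression $g = u + \sum_{i} (-a_i) \log \vert f_i \vert^2$ almost everywhere with $u \in (\TT' \cap \TT'')(U_x)$, then $u \in \TT'(U_x)$ and $u \in \TT''(U_x)$, which simultaneously witness $g \in G_{\TT'}(X;D)$ and $g \in G_{\TT''}(X;D)$.

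The substance is the reverse inclusion. Fix $g \in G_{\TT'}(X;D) \cap G_{\TT''}(X;D)$ and a point $x \in X$. From the $\TT'$-condition I get a neighbourhood $U'_x$, local equations $f'_1,\dots,f'_l$ of $D_1,\dots,D_l$, and $u' \in \TT'(U'_x)$ with $g = u' + \sum_i (-a_i)\log\vert f'_i\vert^2$ almost everywhere; from the $\TT''$-condition, analogously $U''_x$, equations $f''_1,\dots,f''_l$, and $u'' \in \TT''(U''_x)$. On $V_x := U'_x \cap U''_x$ the ratios $f'_i/f''_i$ lie in $\OO_X^{\times}(V_x)$, so by the independence of the Green-function expansion from the choice of local equations (properties (1) and (3) of the type), I may rewrite the $\TT'$-expansion on $V_x$ using the equations $f''_i$ and a new potential still in $\TT'(V_x)$; thus, after shrinking and relabelling, both expansions on $V_x$ use the common equations $f''_i$, with $u' \in \TT'(V_x)$, $u'' \in \TT''(V_x)$, and $u' = u''$ almost everywhere on $V_x$. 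Since $\TT', \TT'' \subseteq \TT$, both $u'$ and $u''$ lie in $\TT(V_x)$; applying property (2) of $\TT$ to $u' \le u''$ a.e. and to $u'' \le u'$ a.e. gives $u' = u''$ as functions on $V_x$. Hence $u' = u'' \in \TT'(V_x) \cap \TT''(V_x) = (\TT' \cap \TT'')(V_x)$, which provides a local expansion of $g$ of $(\TT' \cap \TT'')$-type around $x$; as $x$ is arbitrary, $g \in G_{\TT' \cap \TT''}(X;D)$.

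The one step that genuinely needs the hypothesis $\TT', \TT'' \subseteq \TT$ is the upgrade from ``$u' = u''$ almost everywhere'' to ``$u' = u''$ everywhere'': the sheaves $\TT'$ and $\TT''$ are not assumed comparable with each other, so property (2) cannot be invoked within either of them, but it can be invoked for the common overtype $\TT$. Everything else is routine bookkeeping about intersecting neighbourhoods and changing local defining equations, so I expect no serious obstacle beyond getting that normalisation stated cleanly.
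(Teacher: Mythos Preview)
Your proof is correct, and the paper itself omits the argument entirely, stating only that the proposition is ``easy to see''. Your write-up is exactly the natural unpacking of that claim, with the key observation being that property~(2) of the common overtype $\TT$ upgrades the a.e.\ equality $u' = u''$ to pointwise equality, which is precisely what is needed and precisely what the hypothesis $\TT',\TT'' \subseteq \TT$ is there for.
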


\begin{Proposition}
\label{prop:scalar:sum:Green:function}
\begin{enumerate}
\renewcommand{\labelenumi}{(\arabic{enumi})}
\item
If $g$ is a $D$-Green function of $\TT$-type and $a \in \RR_{\geq 0}$,
then $a g$ is an $(aD)$-Green function of $\TT$-type.
Moreover, if $x \not\in \Supp(D)$, then $(ag)_{\rm can}(x) = ag_{\rm can}(x)$.

\item
If $g_1$ \rom{(}resp. $g_2$\rom{)} is a $D_1$-Green function of $\TT$-type \rom{(}resp. $D_2$-Green function of $\TT$-type\rom{)},
then $g_1 + g_2$ is a $(D_1 + D_2)$-Green function of $\TT$-type.
Moreover, if $x \not\in \Supp(D_1) \cup \Supp(D_2)$,
then $(g_1 + g_2)_{\rm can}(x) = (g_1)_{\rm can}(x) + (g_2)_{\rm can}(x)$.

\item
We assume that $-\TT \subseteq \TT$.
If $g$ is a $D$-Green function of $\TT$-type,
then $-g$ is a $(-D)$-Green function of $\TT$-type.
Moreover, if $x \not\in \Supp(D)$, then $(-g)_{\rm can}(x) = -g_{\rm can}(x)$.

\item Let $g$ be a $D$-Green function of $\TT$-type.
If  $g \geq 0\ \aew$ and $x \not\in \Supp(D)$,
then $g_{\rm can}(x) \geq 0$.
\end{enumerate}
\end{Proposition}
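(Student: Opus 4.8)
The plan is to treat each of the four assertions by unwinding, at an arbitrary point $x \in X$, the defining local expression $g = u + \sum_i (-a_i) \log \vert f_i \vert^2$ a.e.\ of a $D$-Green function of $\TT$-type (with $u \in \TT(U_x)$ and $f_i$ local equations of the $D_i$), and then applying the three defining properties (1), (2), (3) of a type for Green functions. For (1), I multiply such a local expression by $a \in \RR_{\geq 0}$ to get $ag = (au) + \sum_i (-aa_i) \log \vert f_i \vert^2$ a.e.; since $aD = \sum_i (aa_i) D_i$ and $au \in \TT(U_x)$ by property (1), this exhibits $ag$ as an $(aD)$-Green function of $\TT$-type, and evaluating at $x \notin \Supp(D)$ (where each $f_i(x) \neq 0$) gives $(ag)_{\rm can}(x) = a\, g_{\rm can}(x)$ at once (the case $a = 0$ being trivial). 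Part (3) is parallel: the hypothesis $-\TT \subseteq \TT$ gives $-u \in \TT(U_x)$, and $-g = (-u) + \sum_i (-(-a_i)) \log \vert f_i \vert^2$ a.e.\ is then a local expression realizing $-g$ as a $(-D)$-Green function of $\TT$-type, with $(-g)_{\rm can}(x) = -g_{\rm can}(x)$ for $x \notin \Supp(D) = \Supp(-D)$.

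For (2), I first replace the two irreducible decompositions by a common one: choose a finite list of prime divisors $\Gamma_1, \dots, \Gamma_m$ with $\Supp(D_1) \cup \Supp(D_2) \subseteq \Gamma_1 \cup \cdots \cup \Gamma_m$, write $D_1 = \sum_i a_i \Gamma_i$ and $D_2 = \sum_i b_i \Gamma_i$, and near $x$ fix local equations $f_i$ of $\Gamma_i$. Adding local expressions $g_1 = u_1 + \sum_i (-a_i) \log \vert f_i \vert^2$ and $g_2 = u_2 + \sum_i (-b_i) \log \vert f_i \vert^2$ yields $g_1 + g_2 = (u_1 + u_2) + \sum_i (-(a_i+b_i)) \log \vert f_i \vert^2$ a.e., with $u_1 + u_2 \in \TT(U_x)$ by property (1); since the indices with $a_i + b_i = 0$ contribute no log term, this is genuinely a local expression for $D_1 + D_2 = \sum_i (a_i+b_i)\Gamma_i$, so $g_1 + g_2$ is a $(D_1+D_2)$-Green function of $\TT$-type. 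Evaluating at $x \notin \Supp(D_1) \cup \Supp(D_2)$ (hence $x \notin \Supp(D_1+D_2)$, so each $\log \vert f_i(x) \vert^2$ is finite and no $\infty - \infty$ arises) gives $(g_1 + g_2)_{\rm can}(x) = (g_1)_{\rm can}(x) + (g_2)_{\rm can}(x)$.

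Finally, for (4) I use that $g_{\rm can} \in \TT(X \setminus \Supp(D))$ and $g = g_{\rm can}$ a.e.\ on $X \setminus \Supp(D)$, as recorded above in the excerpt; the constant function $0$ also lies in $\TT(X \setminus \Supp(D))$ (e.g.\ $0 = \log \vert 1 \vert^2$ by property (3), or $0 \cdot g_{\rm can}$ by property (1)). From $g \geq 0$ a.e.\ we get $0 \leq g_{\rm can}$ a.e.\ on $X \setminus \Supp(D)$, so property (2) forces $0 \leq g_{\rm can}$ on $X \setminus \Supp(D)$, in particular $g_{\rm can}(x) \geq 0$. None of this presents a real obstacle; the only mildly delicate point is the bookkeeping in (2), where one must pass to a common set of prime divisors and allow log terms to cancel when a coefficient $a_i + b_i$ vanishes, so I will spell that step out and leave the rest to the properties of $\TT$.
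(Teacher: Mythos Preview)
Your proof is correct and is exactly the kind of routine verification the paper has in mind: the paper states this proposition (together with the two preceding ones) under the sentence ``It is easy to see the following propositions'' and gives no further argument. Your treatment of each part via the defining local expression and properties (1)--(3) of $\TT$, including the bookkeeping in (2) with a common list of prime divisors and the use of property (2) of $\TT$ for the pointwise inequality in (4), is the natural and intended justification.
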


Finally let us consider the following three propositions.
\query{two\quad
$\Longrightarrow$\quad
three
(12/July/2010)}

\begin{Proposition}
Let $D = b_1 E_1 + \cdots + b_r E_r$  be an $\RR$-divisor on $X$ such that
$b_1, \ldots, b_r \in \RR$ and $E_i$'s are Cartier divisors on $X$.
Let $g$ be a $D$-Green function of $\TT$-type on $X$.
Let $U$ be an open set of $X$ and let
$\phi_1, \ldots, \phi_r$ be local equations of $E_1, \ldots, E_r$ over $U$ respectively.
Then there is a unique expression
\[
g = u + \sum_{i=1}^r (-b_i) \log \vert \phi_i \vert^2\quad\aew\qquad(u \in \TT(U))
\]
on $U$ modulo null functions. 
This expression is called {\em the local expression of $g$ over $U$ with respect to $\phi_1, \ldots, \phi_r$}.
\end{Proposition}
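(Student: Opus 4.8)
The plan is to reduce to the definition of a $D$-Green function, which is phrased in terms of the irreducible decomposition $D=\sum_j a_j D_j$ (with $D_j$ prime), and to check that replacing the local equations $h_j$ of the $D_j$ by the chosen local equations $\phi_1,\dots,\phi_r$ of the Cartier divisors $E_1,\dots,E_r$ only alters the $\TT$-part by an element of $\TT$. Existence is then local plus a gluing argument, and uniqueness is immediate from property (2) of $\TT$.

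First I would argue locally. Fix $x\in U$. Since $X$ is smooth, each $E_i$ is a Weil divisor, $E_i=\sum_j \mult_{D_j}(E_i)D_j$ as a locally finite sum, and on a small enough neighbourhood $V_x\subseteq U$ only finitely many $D_j$ meet $V_x$. Writing $m_{ij}=\mult_{D_j}(E_i)$ and comparing divisors, $\psi_i:=\phi_i\prod_j h_j^{-m_{ij}}$ has trivial divisor on $V_x$, hence (after shrinking $V_x$) is holomorphic and nowhere vanishing there. Since $D=\sum_i b_iE_i=\sum_j a_jD_j$ forces $a_j=\sum_i b_i m_{ij}$, on $V_x$ one obtains
\[
\sum_{i=1}^r(-b_i)\log\vert\phi_i\vert^2=\sum_j(-a_j)\log\vert h_j\vert^2+\sum_{i=1}^r(-b_i)\log\vert\psi_i\vert^2 .
\]
By the definition of a $D$-Green function there is a local expression $g=v+\sum_j(-a_j)\log\vert h_j\vert^2\ \aew$ on $V_x$ with $v\in\TT(V_x)$; subtracting the identity above gives $g=u_x+\sum_i(-b_i)\log\vert\phi_i\vert^2\ \aew$ on $V_x$ with $u_x:=v-\sum_i(-b_i)\log\vert\psi_i\vert^2$. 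Each term $(-b_i)\log\vert\psi_i\vert^2$ lies in $\TT(V_x)$: if $b_i\ge 0$ it equals $b_i\log\vert\psi_i^{-1}\vert^2$, and if $b_i<0$ it equals $(-b_i)\log\vert\psi_i\vert^2$, so in either case it is a nonnegative multiple of the $\log$ of the square modulus of a nowhere vanishing holomorphic function, hence in $\TT$ by properties (1) and (3). Thus $u_x\in\TT(V_x)$.

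Next I would glue. On an overlap $V_x\cap V_{x'}$ the two representatives satisfy $u_x=u_{x'}$ almost everywhere, because the terms $\sum_i(-b_i)\log\vert\phi_i\vert^2$ coincide there and are finite a.e.; then $u_x=u_{x'}$ on $V_x\cap V_{x'}$ by property (2) of $\TT$. Since $\TT$ is a subsheaf of $\mathcal{L}^1_{\rm loc}$, the $u_x$ patch to a global $u\in\TT(U)$, and $g=u+\sum_i(-b_i)\log\vert\phi_i\vert^2\ \aew$ on $U$ (the identity holds a.e. on each member of an open cover of $U$, and a locally null set is null since $X$ is second countable). For uniqueness, if $u,u'\in\TT(U)$ both give such an expression, cancelling the locally integrable, a.e. finite terms $\sum_i(-b_i)\log\vert\phi_i\vert^2$ yields $u=u'$ a.e. on $U$, hence $u=u'$ by property (2); this is the assertion that the expression is unique modulo null functions (the "modulo null" referring to the a.e. ambiguity of $g$ itself).

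The only genuinely delicate point is the local bookkeeping in the first step: one must verify that $\phi_i$ differs from $\prod_j h_j^{m_{ij}}$ by a unit even when some $m_{ij}$ are negative, so that the comparison is between meromorphic rather than holomorphic functions, and one must watch the signs of the $b_i$ when invoking property (1), which permits only multiplication by nonnegative reals (this is why $\log\vert\psi_i^{-1}\vert^2$ is used for $b_i\ge0$). Everything else is routine manipulation of the axioms for a type for Green functions together with the sheaf property of $\TT$.
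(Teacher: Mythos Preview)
Your proof is correct and follows essentially the same route as the paper: pass to the prime decomposition $D=\sum_j a_jD_j$, compare $\phi_i$ with $\prod_j h_j^{m_{ij}}$ via a unit, and then glue using property~(2) of $\TT$ and the sheaf condition. One cosmetic slip: since $u_x=v+\sum_i b_i\log\vert\psi_i\vert^2$, you should be checking that each $b_i\log\vert\psi_i\vert^2$ (not $(-b_i)\log\vert\psi_i\vert^2$) lies in $\TT(V_x)$, as $\TT$ is not assumed closed under subtraction; of course your case analysis works verbatim with the signs swapped, so this is purely a matter of presentation.
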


\begin{proof}
Let us choose reduced and irreducible divisors $D_1, \ldots, D_l$ and $\alpha_{ij} \in \ZZ$ such that
$E_i = \sum_{j=1}^l \alpha_{ij} D_j$ for each $i$.
If we set $a_j = \sum_{i=1}^r b_i \alpha_{ij}$, then $D = \sum_{j=1}^l a_j D_j$.
For each point $x \in U$, there are an open neighborhood $U_x$ of $x$,
local equations $f_{1,x}, \ldots, f_{l,x}$ of $D_1, \ldots, D_l$ on $U_x$ and
$u_x \in \TT(U_x)$ such that
$U_x \subseteq U$ and
\[
g = u_x + \sum_{j=1}^l (-a_j) \log \vert f_{j,x} \vert^2\quad \aew
\]
on $U_x$. 
Note that 
\[
g = u_x + \sum_{i=1}^r (-b_i) \log \left| \prod_{j=1}^l f_{j,x}^{\alpha_{ij} }\right|^2\quad \aew
\]
and $\prod_{j=1}^l f_{j,x}^{\alpha_{ij}}$ is a local equation of $E_i$ over $U_x$, so that
we can find nowhere vanishing holomorphic functions
$e_{1,x}, \ldots, e_{r,x}$ on $U_x$ such that
$\prod_{j=1}^l f_{j,x}^{\alpha_{ij}} = e_{i,x} \phi_i$ on $U_x$ for all $i =1, \ldots, r$.
Then
\[
g = u_x + \sum_{i=1}^r (-b_i) \log \vert e_{i,x} \vert^2 + \sum_{i=1}^r (-b_i) \log \vert \phi_i \vert^2 \quad\aew
\]
on $U_x$. Thus, for $x, x' \in U$,
\[
u_x + \sum_{i=1}^r (-b_i) \log \vert e_{i,x} \vert^2 = u_{x'} + \sum_{i=1}^r (-b_i) \log \vert e_{i,x'} \vert^2\quad\aew
\]
on $U_x \cap U_{x'}$, and hence
\[
u_x + \sum_{i=1}^r (-b_i) \log \vert e_{i,x} \vert^2 = u_{x'} + \sum_{i=1}^r(-b_i) \log \vert e_{i,x'} \vert^2
\]
on $U_x \cap U_{x'}$. This means that there is $u \in \TT(U)$ such that
$u$ is locally given by $u_x + \sum_{i=1}^r (-b_i) \log \vert e_{i,x} \vert^2$.
Therefore, $g = u + \sum_{i=1}^r (-b_i) \log \vert \phi_i \vert^2\ \aew$ on $U$.
The uniqueness of the expression modulo null functions is obvious by the second property of $\TT$.
\end{proof}

\begin{Proposition}
\label{prop:upper:bounded:Green:C:infty}
Let $g$ be a $D$-Green function of $\TT$-type.
Then we have the following:
\begin{enumerate}
\renewcommand{\labelenumi}{(\arabic{enumi})}
\item
If $g$ is of lower bounded type,
then locally $\vert \phi \vert \exp(-g/2)$ is  essentially bounded above for $\phi \in H_{\mathcal{M}}^0(X, D)$.

\item
If $g$ is of upper bounded type,
then there is a $D$-Green function $g'$ of $C^{\infty}$-type
such that $g \leq g'\ \aew$.
\end{enumerate}
\end{Proposition}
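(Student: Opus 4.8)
The plan is to prove the two assertions separately; both are local and reduce to elementary estimates once the local structure of $g$ is unwound.

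For (1), I would fix $x\in X$ and an open neighborhood $U_x$ on which $g$ has a local expression $g=u+\sum_{i=1}^{l}(-a_i)\log\vert f_i\vert^2\quad\aew$ with $u\in\TT(U_x)$ locally bounded below and $f_i$ a local equation of the prime component $D_i$ of $D=\sum_{i=1}^l a_i D_i$. For a non-zero $\phi\in H_{\mathcal{M}}^0(X,D)$ the global condition $(\phi)+D\ge 0$ forces $\mult_{D_i}((\phi))\ge -a_i$ for all $i$ and $\mult_{\Gamma}((\phi))\ge 0$ for every prime divisor $\Gamma\ne D_1,\dots,D_l$ meeting $U_x$; hence, after shrinking $U_x$, one can write $\phi=w\prod_{i=1}^{l}f_i^{m_i}$ with $m_i:=\mult_{D_i}((\phi))\ge -a_i$ and $w$ holomorphic on $U_x$ (a meromorphic function whose local divisor is effective on a smooth, hence normal, space is holomorphic). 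Then
\[
\vert\phi\vert^2\exp(-g)=\vert w\vert^2\exp(-u)\prod_{i=1}^{l}\vert f_i\vert^{2(m_i+a_i)}\quad\aew
\]
on $U_x$, and each factor on the right is locally bounded above: $\vert w\vert^2$ because $w$ is holomorphic, $\exp(-u)$ because $u$ is locally bounded below, and $\vert f_i\vert^{2(m_i+a_i)}$ because $m_i+a_i\ge 0$ and $\vert f_i\vert$ is continuous. Hence $\vert\phi\vert\exp(-g/2)$ is locally essentially bounded above.

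For (2), I would first fix a $D$-Green function $g_0$ of $C^{\infty}$-type: for each $i$ let $g_{0,i}$ be a $D_i$-Green function of $C^{\infty}$-type, say the one coming from a $C^{\infty}$ Hermitian metric on the line bundle $\OO_X(D_i)$, and set $g_0:=\sum_i a_i g_{0,i}$, which is a $D$-Green function of $C^{\infty}$-type by Proposition~\ref{prop:scalar:sum:Green:function}. In a common local expression $g=u_\alpha+\sum_i(-a_i)\log\vert f_i\vert^2\quad\aew$ and $g_0=v_\alpha+\sum_i(-a_i)\log\vert f_i\vert^2$ one has $g-g_0=u_\alpha-v_\alpha\quad\aew$ with $u_\alpha$ locally bounded above (as $g$ is of upper bounded type) and $v_\alpha$ continuous; thus $g-g_0\in\mathcal{L}^1_{\rm loc}(X)$ is locally essentially bounded above.

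It then remains to dominate $g-g_0$ by a $C^{\infty}$ function. I would take a locally finite open cover $\{V_\alpha\}$ of $X$ by relatively compact sets together with constants $c_\alpha$ such that $g-g_0\le c_\alpha\quad\aew$ on $V_\alpha$, choose a $C^{\infty}$ partition of unity $\{\rho_\alpha\}$ subordinate to $\{V_\alpha\}$, and set $\psi':=\sum_\alpha c_\alpha\rho_\alpha$. Then $\psi'$ is $C^{\infty}$, and for almost every $x$ one has $\psi'(x)=\sum_\alpha c_\alpha\rho_\alpha(x)\ge\sum_\alpha (g-g_0)(x)\rho_\alpha(x)=(g-g_0)(x)$, using $\sum_\alpha\rho_\alpha\equiv 1$ and that $\rho_\alpha(x)>0$ forces $x\in V_\alpha$. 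Finally $g':=g_0+\psi'$ is a $D$-Green function of $C^{\infty}$-type with $g'\ge g\quad\aew$, as required. The only points that require genuine care are the factorization of $\phi$ in (1) and the global essential boundedness above of $g-g_0$ modulo null functions in (2); I do not expect a serious obstacle beyond bookkeeping.
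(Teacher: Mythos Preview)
Your proof is correct. Part~(1) is the paper's argument verbatim: local expression of $g$, factor $\phi$ through the local equations $f_i$, and read off boundedness from $a_i+m_i\ge 0$. For part~(2) the paper takes a slightly more direct route: rather than first fixing a global $C^{\infty}$ Green function $g_0$ and then dominating the locally bounded difference $g-g_0$ by a $C^{\infty}$ function, it writes down local $C^{\infty}$ majorants $C_\lambda-\sum_i a_i\log\vert f_{\lambda,i}\vert^2$ of $g$ on a locally finite cover and glues them directly via a partition of unity, invoking Lemma~\ref{lem:green:partition:unity} to see that the result is a $D$-Green function of $C^{\infty}$-type. Your detour through $g_0$ is equally valid and arguably a touch cleaner, since it replaces the appeal to Lemma~\ref{lem:green:partition:unity} by the trivial observation that $g_0+\psi'$ is of $C^{\infty}$-type whenever $g_0$ is and $\psi'\in C^{\infty}(X)$.
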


\begin{proof}
We set $D = \sum_{i=1}^l a_i D_i$ such that
$a_1, \ldots, a_l \in \RR$ and $D_i$'s are reduced and irreducible divisors on $X$.

(1)
Clearly we may assume that $X$ is connected.
For $x \in X$, let 
\[
g = u + \sum_{i=1}^l (-a_i) \log \vert f_i \vert^2\quad\aew
\]
be 
a local expression of $g$ around $x$, where $f_1, \ldots, f_l$ are local equations of $D_1, \ldots, D_l$.
For $\phi \in H^0_{\mathcal{M}}(X, D)$, we set $\phi = f_1^{b_1} \cdots f_l^{b_l} \cdot v$ around $x$ such that
$v$ has no factors of $f_1, \ldots, f_l$.
Then, as $(\phi) + D \geq 0$, we can see that $a_i + b_i \geq 0$ for all $i$, and that
$v$ is a holomorphic function around $x$. On the other hand,
\[
\exp(-g/2) \vert \phi \vert = \exp(-u/2) \vert f_1 \vert^{a_1 + b_1} \cdots \vert f_n \vert^{a_n + b_n} \vert v \vert\quad\aew,
\]
as required.

\medskip
(2) By our assumption, there is a locally finite open covering $\{ U_{\lambda} \}_{\lambda \in \Lambda}$ with the following properties:
\begin{enumerate}
\renewcommand{\labelenumi}{(\alph{enumi})}
\item
There are local equations $f_{\lambda, 1}, \ldots, f_{\lambda, n}$ of
$D_1, \ldots, D_n$ on $U_{\lambda}$.

\item
There is a constant $C_{\lambda}$ such that
$g \leq C_{\lambda} - \sum a_i \log \vert f_{\lambda, i} \vert^2\ \aew$ on $U_{\lambda}$.
\end{enumerate}
Let $\{ \rho_{\lambda} \}_{\lambda \in \Lambda}$ be a partition of unity subordinate to the covering $\{ U_{\lambda} \}_{\lambda \in \Lambda}$.
We set 
\[
g' = \sum_{\lambda \in \lambda} \rho_{\lambda}
\left(C_{\lambda} - \sum a_i \log \vert f_{\lambda, i} \vert^2\right).
\]
Clearly $g \leq g'\ \aew$.
Moreover, by Lemma~\ref{lem:green:partition:unity},
$g'$ is a $D$-Green function of $C^{\infty}$-type.
\end{proof}

\query{Add Proposition~\ref{prop:psh:plus:positive}
(12/July/2010)}
\begin{Proposition}
\label{prop:psh:plus:positive}
Let $g$ be a $D$-Green function of $(\Tpsh + C^{\infty})$-type. % \rom{(}resp. $(C^0 \cap \Tpsh + C^{\infty})$-type\rom{)}.
Let $A$ be an $\RR$-divisor on $X$, and let $h$ be an $A$-Green function of $C^{\infty}$-type.
Let $\alpha = c_1(A,h)$, that is,
$\alpha$ is a $C^{\infty}$ $(1,1)$-form on $X$ such that $dd^c([h]) + \delta_A = [\alpha]$ \rom{(}cf. Proposition~\rom{\ref{prop:lelong:formula}}\rom{)}.
\query{add ``$\alpha = c_1(A,h)$'' (24/Sep/2010)}
If $X$ is compact and $\alpha$ is positive, then there is a positive number $t_0$ such that
$g + th$ is a $(D + tA)$-Green function of $\Tpsh$-type  
%\rom{(}resp. $(C^0 \cap \Tpsh)$-type\rom{)} 
for all $t \in \RR_{\geq t_0}$.
\end{Proposition}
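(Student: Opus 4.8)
The plan is to verify the defining local property of a $\Tpsh$-type Green function, with the threshold $t_0$ chosen uniformly by a compactness argument. Write $D + tA = \sum_i (b_i + t b'_i) E_i$, where the $E_i$ are the finitely many prime divisors occurring in $D$ or in $A$, $b_i = \mult_{E_i}(D)$ and $b'_i = \mult_{E_i}(A)$. Using that $X$ is compact, fix a finite open cover $\{U_\lambda\}_{\lambda=1}^m$, fine enough that on some open $U'_\lambda$ with $\overline{U_\lambda} \subseteq U'_\lambda$ there are local equations $e_{\lambda,i}$ of the $E_i$ together with expressions, valid almost everywhere on $U'_\lambda$,
\[
g = p_\lambda + q_\lambda + \sum_i (-b_i)\log|e_{\lambda,i}|^2, \qquad h = w_\lambda + \sum_i (-b'_i)\log|e_{\lambda,i}|^2,
\]
with $p_\lambda$ plurisubharmonic on $U'_\lambda$ and $q_\lambda, w_\lambda$ of class $C^\infty$ on $U'_\lambda$ (this is exactly the definition of $(\Tpsh + C^\infty)$-type and of $C^\infty$-type, logarithms of nowhere-vanishing holomorphic units being absorbed into the smooth parts; cf. Proposition~\ref{prop:scalar:sum:Green:function}). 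Adding the two expressions gives, almost everywhere on $U'_\lambda$, $g + th = P_\lambda + \sum_i \bigl(-(b_i + t b'_i)\bigr)\log|e_{\lambda,i}|^2$ with $P_\lambda := p_\lambda + q_\lambda + t w_\lambda$. Hence it suffices to produce a positive $t_0$ (independent of $\lambda$) so that $P_\lambda$ is plurisubharmonic on $U_\lambda$ for all $t \geq t_0$.

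Next I compute the associated current. By the Lelong formula $dd^c([\log|e_{\lambda,i}|^2]) = \delta_{E_i}$, so from $h = w_\lambda + \sum_i(-b'_i)\log|e_{\lambda,i}|^2$ and Proposition~\ref{prop:lelong:formula} one gets $dd^c([w_\lambda]) = dd^c([h]) + \delta_A = [\alpha]$ on $U'_\lambda$, and since $w_\lambda$ is smooth this means $dd^c w_\lambda = \alpha$ on $U'_\lambda$. Put $\beta_\lambda := dd^c q_\lambda$, a smooth real $(1,1)$-form on $U'_\lambda$. Then, as currents on $U_\lambda$,
\[
dd^c([P_\lambda]) = dd^c([p_\lambda]) + \beta_\lambda + t\,\alpha \geq \beta_\lambda + t\,\alpha.
\]
Because $\alpha$ is a positive $(1,1)$-form and $\overline{U_\lambda}$ is compact, there is $C_\lambda > 0$ with $\beta_\lambda + C_\lambda\,\alpha \geq 0$ on $\overline{U_\lambda}$; set $t_0 := \max\{C_1, \dots, C_m, 1\}$, a positive number. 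For every $t \geq t_0$ and every $\lambda$ we then have, on $U_\lambda$,
\[
dd^c([P_\lambda]) \geq dd^c([p_\lambda]) + (\beta_\lambda + C_\lambda\,\alpha) + (t - C_\lambda)\,\alpha \geq 0 .
\]

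Finally I must upgrade $dd^c([P_\lambda]) \geq 0$ to genuine plurisubharmonicity of $P_\lambda$, and this is the only delicate step; it is precisely here that one uses that $P_\lambda$ differs from the plurisubharmonic function $p_\lambda$ by a $C^\infty$ (in particular continuous) function. Indeed $P_\lambda$ is upper semicontinuous and locally integrable with $dd^c([P_\lambda]) \geq 0$, and the sub-mean-value inequality for $P_\lambda$ along every complex line follows by applying the classical one-variable mean-value identity separately to the subharmonic part $p_\lambda$ and to the smooth part $q_\lambda + t w_\lambda$ and combining with the positivity of $dd^c([P_\lambda])$; equivalently, the smoothings $P_\lambda * \chi_\epsilon$ are plurisubharmonic and converge pointwise to $P_\lambda$, the convergence being controlled since $p_\lambda * \chi_\epsilon$ decreases to $p_\lambda$ and $(q_\lambda + t w_\lambda)*\chi_\epsilon$ tends to $q_\lambda + t w_\lambda$ uniformly on compacta (cf. Subsection~\ref{subsec:pluri:subharmonic}). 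Granting this standard fact, each $P_\lambda$ is plurisubharmonic on $U_\lambda$, and since the $U_\lambda$ cover $X$ this exhibits $g + th$ as a $(D + tA)$-Green function of $\Tpsh$-type for all $t \geq t_0$, as required. The main obstacle is exactly this last passage from a positivity statement about currents to plurisubharmonicity of the chosen potential, and its correct formulation is why the hypothesis is imposed for $(\Tpsh + C^\infty)$-type rather than merely "plurisubharmonic up to an $L^1$ error".
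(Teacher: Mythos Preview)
Your proof is correct and follows essentially the same route as the paper: choose a finite cover by compactness, take local expressions splitting the potential as plurisubharmonic plus smooth, and use positivity of $\alpha$ on each compact piece to find a uniform threshold $t_0$.

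The only difference is your final paragraph, which is unnecessarily elaborate. You argue that $P_\lambda$ is plurisubharmonic by invoking upper semicontinuity, positivity of $dd^c([P_\lambda])$, and a smoothing-convergence argument. The paper instead observes directly that $q_\lambda + t w_\lambda$ is a \emph{smooth} function with $dd^c(q_\lambda + t w_\lambda) = \beta_\lambda + t\alpha \geq 0$, hence is itself plurisubharmonic; since a sum of plurisubharmonic functions is plurisubharmonic, $P_\lambda = p_\lambda + (q_\lambda + t w_\lambda)$ is psh immediately. The decomposition you already wrote down does all the work, so the ``delicate step'' you flag is in fact trivial once you group the terms this way.
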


\begin{proof}
For each $x \in X$, let
\[
g = u_x + \sum_{i} (-a_i) \log \vert f_i \vert^2\quad\aew,\qquad
h = v_x +  \sum_{i} (-b_i) \log \vert f_i \vert^2\quad\aew
\]
be local expressions of $g$ and $h$ respectively over an open neighborhood $U_x$ of $x$.
By our assumption, shrinking $U_x$ if necessarily, there are a plurisubharmonic function $p_x$ and
a $C^{\infty}$-function $q_x$ such that $u_x = p_x + q_x$.
Moreover, since $\alpha$ is positive, shrinking $U_x$ if necessarily, we can find a positive number $t_x$ such that
$dd^c(q_x) + t \alpha$ is positive for all $t \geq t_x$.
Because of the compactness of $X$,
we can choose finitely many $x_1, \ldots, x_r \in X$ such that
$X = U_{x_1} \cup \cdots \cup U_{x_r}$. If we set $t_0 = \max \{ t_{x_1}, \ldots, t_{x_r} \}$, then, for $t \geq t_0$,
\[
g + th 
= p_{x_j} + (q_{x_j} + t v_{x_{j}}) + \sum_i -(a_i + t b_i) \log \vert f_i \vert^2\quad\aew
\]
over $U_{x_j}$. Note that $dd^c(q_{x_j} + t v_{x_{j}}) = dd^c(q_{x_j}) + t \alpha$ is positive, which means that
$q_{x_j} + t v_{x_{j}}$ is a $C^{\infty}$-plurisubharmonic function. Thus
$g + th$ is of $\Tpsh$-type.
\end{proof}

\subsection{Partitions of Green functions}
\setcounter{Theorem}{0}
\label{subsec:Green:function:decomp}
Let $X$ be a $d$-equidimensional complex manifold.
Let $\TT$ be a type for Green functions. Besides the properties (1), (2) and (3) as in Subsection~\ref{subsec:Green:function:P},
we assume the following additional property (4):
\begin{enumerate}
\renewcommand{\labelenumi}{(\arabic{enumi})}
\setcounter{enumi}{3}
\item
For an open set $U$,
if $u \in \TT(U)$ and $v \in C^{\infty}(U)$, then
$vu \in \TT(U)$.
\end{enumerate}
As examples, $C^0$ and $C^{\infty}$ satisfy the property (4).

\begin{Lemma}
\label{lem:green:partition:unity}
Let $D$ be an $\RR$-divisor on $X$.
Let $\{ U_{\lambda} \}$ be a locally finite covering of $X$ and let
$\{ \rho_{\lambda} \}_{\lambda \in \Lambda}$ be a partition of unity subordinate to the covering $\{ U_{\lambda} \}_{\lambda \in \Lambda}$.
Let $g_{\lambda}$ be a $(\rest{D}{U_{\lambda}})$-Green function of $\TT$-type
on $U_{\lambda}$ for each $\lambda$.
Then $g := \sum_{\lambda} \rho_{\lambda} g_{\lambda}$ is a $D$-Green function of $\TT$-type
on $X$.
\end{Lemma}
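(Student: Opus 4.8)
The plan is to verify, point by point on $X$, that $g=\sum_{\lambda}\rho_{\lambda}g_{\lambda}$ admits the local expression required of a $D$-Green function of $\TT$-type. Write $D=\sum_{i=1}^{l}a_{i}D_{i}$ with the $D_{i}$ reduced and irreducible, and fix $x\in X$. Put $\Lambda_{x}=\{\lambda\in\Lambda\mid x\in\Supp(\rho_{\lambda})\}$; this is a finite set, being contained in $\{\lambda\mid x\in U_{\lambda}\}$, which is finite by local finiteness of $\{U_{\lambda}\}$. Using local finiteness together with the closedness of each $\Supp(\rho_{\lambda})$, I would choose an open neighbourhood $W$ of $x$ so small that: (i) $\rho_{\lambda}\equiv 0$ on $W$ for every $\lambda\notin\Lambda_{x}$; (ii) $W\subseteq U_{\lambda}$ for every $\lambda\in\Lambda_{x}$ (possible since $x\in\Supp(\rho_{\lambda})\subseteq U_{\lambda}$ and $\Lambda_{x}$ is finite); and (iii) there are local equations $f_{1},\dots,f_{l}$ of $D_{1},\dots,D_{l}$ on $W$. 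By (i), on $W$ we have $g=\sum_{\lambda\in\Lambda_{x}}\rho_{\lambda}g_{\lambda}$ (extending each $\rho_{\lambda}g_{\lambda}$ by $0$), a finite sum; in particular $g$ is locally integrable.

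Next I would normalise all local expressions to the single system $f_{1},\dots,f_{l}$. For $\lambda\in\Lambda_{x}$, after shrinking $W$ if necessary, $g_{\lambda}$ has a local expression $g_{\lambda}=u_{\lambda}+\sum_{i}(-a_{i})\log\vert f_{i}^{(\lambda)}\vert^{2}$ $\aew$ on $W$ with $u_{\lambda}\in\TT(W)$ and $f_{i}^{(\lambda)}$ local equations of $D_{i}$. Since $f_{i}^{(\lambda)}$ and $f_{i}$ are both local equations of $D_{i}$ on $W$, their ratio is a nowhere vanishing holomorphic function on $W$, so by properties (1) and (3) of $\TT$ the function $\tilde{u}_{\lambda}:=u_{\lambda}+\sum_{i}(-a_{i})\log\vert f_{i}^{(\lambda)}/f_{i}\vert^{2}$ lies in $\TT(W)$, and $g_{\lambda}=\tilde{u}_{\lambda}+\sum_{i}(-a_{i})\log\vert f_{i}\vert^{2}$ $\aew$ on $W$.

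Finally I would recombine. Each $\rho_{\lambda}$ restricts to a $C^{\infty}$ function on $W$, so property (4) gives $\rho_{\lambda}\tilde{u}_{\lambda}\in\TT(W)$, and summing over the finite set $\Lambda_{x}$ and invoking property (1) yields $u:=\sum_{\lambda\in\Lambda_{x}}\rho_{\lambda}\tilde{u}_{\lambda}\in\TT(W)$. By (i), $\sum_{\lambda\in\Lambda_{x}}\rho_{\lambda}=\sum_{\lambda}\rho_{\lambda}=1$ on $W$, hence
\[
g=\sum_{\lambda\in\Lambda_{x}}\rho_{\lambda}\Bigl(\tilde{u}_{\lambda}+\sum_{i}(-a_{i})\log\vert f_{i}\vert^{2}\Bigr)=u+\sum_{i}(-a_{i})\log\vert f_{i}\vert^{2}\quad\aew
\]
on $W$. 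Since $x$ was arbitrary, this exhibits $g$ as a $D$-Green function of $\TT$-type.

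The computations are all routine; the only point needing care is arranging, at a given $x$, a single neighbourhood $W$ on which (a) a common system $f_{1},\dots,f_{l}$ of local equations is available, (b) every $\rho_{\lambda}$ not supported near $x$ has already vanished, and (c) every surviving $\rho_{\lambda}$ has its $g_{\lambda}$ defined — so that the identity $\sum_{\lambda}\rho_{\lambda}=1$ may legitimately be used to reassemble the $\log\vert f_{i}\vert^{2}$ terms. The passage between different systems of local equations is handled exactly as in the preceding proposition, via properties (1) and (3) of $\TT$; property (4) is precisely what is needed to keep $\rho_{\lambda}\tilde{u}_{\lambda}$ inside $\TT$.
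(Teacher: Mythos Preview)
Your proof is correct and follows essentially the same approach as the paper's: fix $x$, choose a common system of local equations near $x$, express each $g_{\lambda}$ in that system, and use $\sum_{\lambda}\rho_{\lambda}=1$ together with property~(4) to recombine. The paper's version is terser---it writes the local expression of $g_{\lambda}$ directly with respect to the chosen $f_{i,x}$ (implicitly invoking independence of the choice of local equations) and omits the explicit bookkeeping you carry out for $\Lambda_{x}$ and $W$---but the argument is the same.
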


\begin{proof}
We set $D = a_1 D_1 + \cdots + a_r D_r$.
Let $f_{i,x}$ be a local equation of $D_i$ on an open neighborhood $U_x$ of $x$.
As $g_{\lambda}$ is a $(\rest{D}{U_{\lambda}})$-Green function  of $\TT$-type
on $U_\lambda$,
for $\lambda$ with $x \in U_{\lambda}$, 
\[
g_{\lambda} = v_{\lambda,x} - \sum a_i \log \vert f_{i,x} \vert^2\quad\aew
\]
around $x$,
where $v_{\lambda,x} \in \TT(U_{\lambda} \cap U_x)$.
Thus
\begin{align*}
g & = \sum_{\lambda} \rho_{\lambda} ( v_{\lambda,x} - \sum a_i \log \vert f_{i,x} \vert^2) \quad \aew\\
& = \left( \sum_{\lambda} \rho_{\lambda} v_{\lambda,x} \right) -  \sum a_i \log \vert f_{i,x} \vert^2
\end{align*}
around $x$, as required.
\end{proof}

The main result of this subsection is the following proposition.

\begin{Proposition}
\label{prop:green:irreducible:decomp}
Let $g$ be a $D$-Green function of $\TT$-type
on $X$ and let
\[
D = b_1 E_1 + \cdots + b_r E_r
\]
be a decomposition such that
$E_1, \ldots, E_r \in \Div(X)$ and $b_1, \ldots, b_r \in \RR$.
Note that $E_i$ is not necessarily a prime divisor. Then we have the following:
\begin{enumerate}
\renewcommand{\labelenumi}{(\arabic{enumi})}
\item
There are locally integrable functions $g_1, \ldots, g_r$ 
such that $g_i$ is an $E_i$-Green function of $\TT$-type for each $i$ and
$g = b_1 g_1 + \cdots + b_r g_r\ \aew$.

\item
If $E_1, \ldots, E_r$ are effective, $b_1, \ldots, b_r \in \RR_{\geq 0}$, $g \geq 0\ \aew$ and $g$ is of lower bounded type, 
then
there are locally integrable functions $g_1, \ldots, g_r$
such that $g_i$ is a non-negative  $E_i$-Green function of $\TT$-type for each $i$ and
$g = b_1 g_1 + \cdots + b_r g_r\ \aew$.
\end{enumerate}
\end{Proposition}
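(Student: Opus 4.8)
The plan is to construct the Green functions $g_i$ locally, near each point of $X$, and then patch the local pieces with a partition of unity by means of Lemma~\ref{lem:green:partition:unity} (this is where the extra property~(4) of $\TT$ enters, and is why the statement sits in the present subsection). Fix the irreducible decomposition $D = \sum_j a_j \Gamma_j$ into distinct prime divisors and write $E_i = \sum_j \alpha_{ij}\Gamma_j$ with $\alpha_{ij}\in\ZZ$, so that $a_j = \sum_i b_i \alpha_{ij}$. Note that property~(3) gives $0\in\TT(U)$ and that all constant functions lie in $\TT(U)$, while combining~(3) and~(4) shows $C^\infty(U)\subseteq\TT(U)$ and $au\in\TT(U)$ for every real number $a$ and every $u\in\TT(U)$.

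For part~(1): pick a locally finite cover $\{U_\lambda\}$ carrying local equations $f_{j,\lambda}$ of the $\Gamma_j$, a subordinate partition of unity $\{\rho_\lambda\}$, and local expressions $g = u_\lambda - \sum_j a_j\log|f_{j,\lambda}|^2$ a.e.\ with $u_\lambda\in\TT(U_\lambda)$. Choose an index $\mu$ with $b_\mu\ne 0$ (the indices with $b_i=0$ contribute nothing to $D$ and may be given any $E_i$-Green function of $\TT$-type). On $U_\lambda$ set $g_{i,\lambda} := -\sum_j\alpha_{ij}\log|f_{j,\lambda}|^2$ for $i\ne\mu$ and $g_{\mu,\lambda} := \tfrac{1}{b_\mu}u_\lambda - \sum_j\alpha_{\mu j}\log|f_{j,\lambda}|^2$; each is a $(\rest{E_i}{U_\lambda})$-Green function of $\TT$-type (its $\TT$-part is either $0$ or $\tfrac{1}{b_\mu}u_\lambda\in\TT(U_\lambda)$). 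Then $g_i := \sum_\lambda\rho_\lambda g_{i,\lambda}$ is an $E_i$-Green function of $\TT$-type by Lemma~\ref{lem:green:partition:unity}, and $\sum_i b_i g_i = \sum_\lambda\rho_\lambda\big(u_\lambda - \sum_j a_j\log|f_{j,\lambda}|^2\big) = g$ a.e.

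For part~(2): now all $b_i\ge 0$ and $E_i\ge 0$; discarding the zero coefficients we may assume $b_i>0$ for all $i$, whence $\Supp(E_i)\subseteq\Supp(D)$ and each $a_j>0$. It suffices to write $g = \sum_j a_j\rho_j$ with each $\rho_j$ a \emph{non-negative} $\Gamma_j$-Green function of $\TT$-type, for then $g_i := \sum_j\alpha_{ij}\rho_j$ is a non-negative $E_i$-Green function of $\TT$-type and $\sum_i b_i g_i = \sum_j(\sum_i b_i\alpha_{ij})\rho_j = g$. Because a positive partition-of-unity combination of non-negative local Green functions is again a non-negative Green function, Lemma~\ref{lem:green:partition:unity} reduces this to a local statement on a relatively compact $U$: from $g = u - \sum_j a_j\log|f_j|^2$ a.e.\ with $u\in\TT(U)$ locally bounded below and $g\ge 0$ a.e.\ (so $u\ge\sum_j a_j\log|f_j|^2$ a.e.), produce $w_j\in\TT(U)$ with $w_j\ge\log|f_j|^2$ a.e.\ and $\sum_j a_j w_j = u$ a.e., and set $\rho_j := w_j - \log|f_j|^2$. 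The idea is to take $w_j := \max\{\log|f_j|^2,\, t\}$, where $t = t(x)$ is determined pointwise by $\sum_j a_j\max\{\log|f_j(x)|^2,\, t\} = u(x)$; such a $t$ exists in $\RR\cup\{-\infty\}$ since the left-hand side is continuous and non-decreasing in $t$, equals $\sum_j a_j\log|f_j(x)|^2$ for $t$ small, tends to $+\infty$ as $t\to+\infty$, and $u(x)\ge\sum_j a_j\log|f_j(x)|^2$. By construction $\sum_j a_j w_j = u$ and $w_j\ge\log|f_j|^2$; near $\Gamma_j$ the term $\log|f_j|^2$ is suppressed by the maximum, so $w_j$ agrees there with a $\TT$-function, and one checks $w_j\in\TT(U)$ on all of $U$.

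The hard part is precisely this last step — verifying that the functions $w_j$ lie in the prescribed sheaf $\TT$. For $\TT = C^0$ it is immediate, since $w_j$ is built by continuous operations from continuous data. For a finer type, such as $C^\infty$, the naive maximum has to be replaced by a smooth regularisation of it that still satisfies $w_j\ge\log|f_j|^2$ and $\sum_j a_j w_j = u$; here one uses property~(4) (so that $\TT$ is stable under multiplication by $C^\infty$-functions, hence under such regularisations) together with the rigidity property~(2) of $\TT$ (an a.e.\ inequality of $\TT$-functions is a genuine inequality), which lets one pass freely between a.e.\ and pointwise statements while gluing. A further routine check, handled by Lemma~\ref{lem:green:partition:unity}, is that the local $\rho_j$'s — which depend on the chosen local equations and need not agree on overlaps — nevertheless assemble, via the partition of unity, into honest $\Gamma_j$-Green functions whose $a_j$-weighted sum is still $g$ and which are still non-negative.
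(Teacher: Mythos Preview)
Your argument for part~(1) is correct but more elaborate than necessary. The paper does~(1) globally in two lines: pick any $C^{\infty}$ Green functions $g'_i$ for the $E_i$ (these exist globally), note that $g-\sum_i b_i g'_i$ is a $\TT$-Green function for the zero divisor and hence agrees a.e.\ with some $f\in\TT(X)$, and set $g_1=g'_1+f/b_1$, $g_i=g'_i$ for $i\ge 2$. No local patching is needed here.

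For part~(2) there is a genuine gap. Your construction $w_j=\max\{\log|f_j|^2,\,t(x)\}$, with $t(x)$ determined implicitly by $\sum_j a_j\max\{\log|f_j(x)|^2,t\}=u(x)$, does not land in $\TT$ for a general type satisfying (1)--(4). Already for $\TT=C^{\infty}$ it fails: the maximum is not smooth, and the implicitly defined $t(x)$ is at best continuous. You acknowledge this (``the hard part is precisely this last step'') and suggest a ``smooth regularisation'' for $C^{\infty}$, but you do not explain how to regularise while simultaneously preserving both $w_j\ge\log|f_j|^2$ and the exact identity $\sum_j a_j w_j=u$; regularising each $w_j$ separately destroys the sum, and you give no construction that does both. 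The appeal to properties~(2) and~(4) at the end is not substantiated.

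The paper's local construction avoids all of this. Near a point $x$ it works directly with the $E_i$ (no reduction to prime divisors): write $g=v_x-\sum_i b_i\log|f_{i,x}|^2$ a.e., split the indices into $I=\{i:f_{i,x}(x)=0\}$ and $J=\{i:f_{i,x}(x)\ne 0\}$, absorb the (now harmless) $J$-terms into $v_x$ to form $f:=v_x-\sum_{j\in J}b_j\log|f_{j,x}|^2\in\TT$, bounded below, and set
\[
g_{i,x}=\frac{f}{b_i\,\#(I)}-\log|f_{i,x}|^2\ (i\in I),\qquad g_{i,x}=0\ (i\in J).
\]
Then $\sum_i b_i g_{i,x}=g$ exactly, each $g_{i,x}$ is an $E_i$-Green function of $\TT$-type by properties (1) and~(4) alone, and after shrinking $U_x$ one has $g_{i,x}\ge 0$ for $i\in I$ since $-\log|f_{i,x}|^2\to+\infty$. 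The global $g_i$ are then obtained by Lemma~\ref{lem:green:partition:unity} exactly as you propose.
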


\begin{proof}
(1)
Clearly we may assume that $b_i \not= 0$ for all $i$.
Let $g'_i$ be an $E_i$-Green function of $C^{\infty}$-type.
Then there is $f \in \TT(X)$ such that 
$f = g - (b_1 g'_1 + \cdots + b_r g'_r)\ \aew$.
Thus
\[
g = b_1(g'_1 + f/b_1) + b_2 g'_2 + \cdots + b_r g'_r \quad\aew.
\]

\medskip
(2) Clearly we may assume that $b_i > 0$ for all $i$.
First let us see the following claim:

\begin{Claim}
For each $x \in X$, there are locally integrable functions $g_{1,x}, \ldots, g_{r,x}$ and an open neighborhood $U_x$ of $x$ 
such that $g_{i,x}$ is a non-negative $E_i$-Green function  of $\TT$-type
on $U_x$ for every $i$, and that
$g = b_1 g_{1,x} + \cdots + b_r g_{r,x}\ \aew$ on $U_x$.
\end{Claim}

\begin{proof}
Let $U_x$ be a sufficiently small open neighborhood of $x$ and let $f_{i,x}$ be a local equation of $E_i$ on $U_x$ for every $i$.
Let  $g = v_x + \sum_{i=1}^r (-b_i) \log \vert f_{i,x} \vert^2\ \aew$ be the local expression of $g$ on $U_x$ with respect to $f_{1,x},
\ldots, f_{r,x}$.
We set $I = \{ i \mid f_{i,x}(x) = 0\}$ and $J = \{ i \mid f_{i,x}(x) \not= 0\}$.

First we assume $I = \emptyset$. Then, shrinking $U_x$ if necessarily,  we may assume that
\[
v_x + \sum_{i=1}^r (-b_i) \log \vert f_{i,x} \vert^2 \in \TT(U_x)
\]
and $E_i = 0$ on $U_x$ for all $i$. Thus if  we set 
\[
g_{i,x} = (1/rb_i) \left( v_x + \sum_{i=1}^r (-b_i) \log \vert f_{i,x} \vert^2\right)
\]
for each $i$, then we have our assertion.

Next we consider the case where $I \not= \emptyset$.
We put $f = v_x + \sum_{j \in J} (-b_j) \log \vert f_{j,x} \vert^2$. Then, shrinking $U_x$ if necessarily,  we may assume that
$f \in \TT(U_x)$ and is bounded below.
We set
\[
g_{i,x} = \begin{cases}
 f/(b_i \#(I)) - \log \vert f_{i,x} \vert^2 & \text{if $i \in I$}, \\
 0 & \text{if $i \in J$}.
\end{cases}
\]
Note that
$g = \sum_{i = 1}^r b_i g_{i,x}\ \aew$
and that $g_{i,x} \geq 0$ around $x$ for $i \in I$.
Thus, shrinking $U_x$ if necessarily, we have our assertion.
\end{proof}

By using the above claim, we can construct an open covering $\{ U_{\lambda} \}_{\lambda \in \Lambda}$ and
locally integrable functions $g_{1,\lambda}, \ldots, g_{r,\lambda}$ on $U_{\lambda}$ with the following properties:
\begin{enumerate}
\renewcommand{\labelenumi}{(\roman{enumi})}
\item
$\{ U_{\lambda} \}_{\lambda \in \Lambda}$ is locally finite and the closure of $U_{\lambda}$ is compact for every $\lambda$.

\item
$g_{i,\lambda}$ is a non-negative $E_i$-Green function  of $\TT$-type
on $U_\lambda$ for every $i$.

\item
$g = b_1 g_{1,\lambda} + \cdots + b_r g_{r,\lambda}\ \aew$ on $U_\lambda$.
\end{enumerate}
Let $\{ \rho_{\lambda} \}_{\lambda \in \Lambda}$ be a partition of unity subordinate to the covering $\{ U_{\lambda} \}_{\lambda \in \Lambda}$.
We set $g_i = \sum_{\lambda} \rho_{\lambda} g_{i,\lambda}$.
Clearly $g_i \geq 0$ and
\[
g =  \sum_{\lambda} \rho_{\lambda} \rest{g}{U_{\lambda}} \overset{\aew}{=} \sum_{\lambda} \rho_{\lambda} \sum_{i=1}^r b_i g_{i,\lambda} = \sum_{i=1}^r b_i g_i.
\]
Moreover, 
by Lemma~\ref{lem:green:partition:unity},
$g_i$ is an $E_i$-Green function of $\TT$-type.
\end{proof}

\subsection{Norms arising from Green functions}
\setcounter{Theorem}{0}
\label{subsec:Green:function:norm}
Let $X$ be a $d$-equidimensional complex manifold.
Let $g$ be a locally integral function on $X$.
For $\phi \in \mathcal{M}(X)$,
we define $\vert \phi \vert_{g}$ to be
\[
\vert \phi \vert_{g} := \exp(-g/2) \vert \phi \vert.
\]
Moreover,
the essential supremum of $\vert \phi \vert_{g}$ is denoted by $\Vert \phi \Vert_{g}$, that is,
\query{${\operatorname{ess\ sup}}$ $\Longrightarrow$ $\esssup$ (2010/12/06)}
\[
\Vert \phi \Vert_g :=\esssup \left\{ \vert \phi \vert_g(x) \mid x \in X \right\}.
\]

\begin{Lemma}
\label{lem:norm:locally:integrable}
\begin{enumerate}
\renewcommand{\labelenumi}{(\arabic{enumi})}
\item $\Vert \cdot \Vert_g$ satisfies the following properties:
\begin{enumerate}
\renewcommand{\labelenumii}{(\arabic{enumi}.\arabic{enumii})}
\item
$\Vert \lambda \phi \Vert_g  = \vert \lambda \vert \Vert \phi \Vert_g$ for all $\lambda \in \CC$ and $\phi \in \mathcal{M}(X)$.

\item
$\Vert \phi + \psi \Vert_g \leq \Vert \phi \Vert_g + \Vert \psi \Vert_g$ for all $\phi,\psi \in \mathcal{M}(X)$.

\item
For $\phi \in \mathcal{M}(X)$, $\Vert \phi \Vert_g = 0$ if and only if $\phi = 0$.
\end{enumerate}
\item Let $V$ be a  vector subspace of $\mathcal{M}(X)$ over $\CC$.
If $\Vert \phi \Vert_g < \infty$ for all $\phi \in V$, then
$\Vert\cdot\Vert_g$ yields a norm on $V$.
In particular, if $D$ is an $\RR$-divisor, $g$ is a $D$-Green function of $\TT$-type and
$g$ is of lower bounded type, then
$\Vert\cdot\Vert_g$ is a norm of $H^0_{\mathcal{M}}(X, D)$ \rom{(}cf. Proposition~\rom{\ref{prop:upper:bounded:Green:C:infty}}\rom{)},
where $\TT$ is a type for Green functions.
\end{enumerate}
\end{Lemma}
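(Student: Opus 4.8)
The plan is to check the three properties in (1) directly from the defining formula $\vert \phi \vert_g = \exp(-g/2)\vert \phi \vert$, and then to obtain (2) as a formal consequence together with Proposition~\ref{prop:upper:bounded:Green:C:infty}. For (1.1), I note that $\vert \lambda \phi \vert_g = \vert \lambda \vert \, \vert \phi \vert_g$ holds at every point where $g$ is finite, hence almost everywhere; taking essential suprema gives $\Vert \lambda \phi \Vert_g = \vert \lambda \vert \Vert \phi \Vert_g$, the case $\lambda = 0$ being trivial. For (1.2), from $\vert \phi + \psi \vert \leq \vert \phi \vert + \vert \psi \vert$ (as meromorphic functions, away from the polar loci) and $\exp(-g/2) \geq 0$ one gets $\vert \phi + \psi \vert_g \leq \vert \phi \vert_g + \vert \psi \vert_g \leq \Vert \phi \Vert_g + \Vert \psi \Vert_g$ almost everywhere, so the essential supremum of the left-hand side is at most $\Vert \phi \Vert_g + \Vert \psi \Vert_g$.

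For (1.3) the implication $\phi = 0 \Rightarrow \Vert \phi \Vert_g = 0$ is immediate. Conversely, suppose $\phi \neq 0$. Since $\phi$ is a non-zero meromorphic function on the complex manifold $X$, its zero locus and its polar locus are proper analytic subsets and therefore Lebesgue-null, so $\vert \phi \vert > 0$ almost everywhere; and since $g$ is locally integrable it is finite almost everywhere, so $\exp(-g/2) > 0$ almost everywhere. Hence $\vert \phi \vert_g > 0$ outside a null set, which forces $\Vert \phi \Vert_g = \esssup \vert \phi \vert_g > 0$ (possibly $+\infty$, but in any case non-zero).

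Finally, for (2): once $\Vert \cdot \Vert_g$ is known to take only finite values on a $\CC$-subspace $V \subseteq \mathcal{M}(X)$, properties (1.1)--(1.3) are precisely the axioms of a norm, so there is nothing further to verify. For the concluding assertion about $H^0_{\mathcal{M}}(X, D)$, I would invoke Proposition~\ref{prop:upper:bounded:Green:C:infty}(1): when $g$ is a $D$-Green function of $\TT$-type of lower bounded type, $\vert \phi \vert_g$ is essentially bounded above on a neighborhood of each point for every $\phi \in H^0_{\mathcal{M}}(X, D)$; passing to a finite subcover (using that $X$ is compact, which is the relevant case, $X$ being the analytification of a projective variety) yields $\Vert \phi \Vert_g < \infty$, and the previous sentence applies. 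I do not expect a genuine obstacle; the only step needing slight care is (1.3), where one must exploit that $\phi$, being a non-zero meromorphic function rather than a mere $L^1$-function, vanishes only on a null set, and that local integrability of $g$ makes $\exp(-g/2)$ strictly positive almost everywhere — together these keep $\vert \phi \vert_g$ positive off a null set, which is what prevents the essential supremum from collapsing to zero.
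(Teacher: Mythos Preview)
Your proof is correct and follows essentially the same approach as the paper: (1.1) and (1.2) are declared obvious, (1.3) uses that local integrability of $g$ forces $g$ to be finite (hence $\exp(-g/2) > 0$) almost everywhere together with the fact that a nonzero meromorphic function is nonzero almost everywhere, and (2) is recorded as a formal consequence of (1). Your explicit invocation of compactness to pass from local to global boundedness in the final clause of (2) is a detail the paper leaves implicit behind its reference to Proposition~\ref{prop:upper:bounded:Green:C:infty}.
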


\begin{proof}
(1) (1.1) and (1.2) are obvious.
If $\Vert \phi \Vert_g = 0$, then $\vert \phi \vert_g = 0\ \aew$.
Moreover, as $g$ is integrable, the measure of $\{ x \in X \mid g(x) = \infty \}$ is zero.
Thus $\vert \phi \vert = 0\ \aew$, 
and hence $\phi = 0$.

(2) follows from (1).
\end{proof}

Let $\Phi$ be a continuous volume form on $X$.
For $\phi, \psi \in \mathcal{M}(X)$,
if $\phi \bar{\psi} \exp(-g)$ is integrable, then we denote its integral
\[
\int_{X} \phi \bar{\psi} \exp(-g) \Phi
\]
by $\langle \phi, \psi \rangle_{g}$.

We assume that $g$ is a $D$-Green function of $C^0$-type.
We set 
\[
D = a_1 D_1 + \cdots + a_l D_l,
\]
where
$D_i$'s are reduced and irreducible divisors on $X$ and $a_1, \ldots, a_r \in \RR$.
Let us fix $x \in X$. Let $f_1, \ldots, f_l$ be local equations of $D_1, \ldots, D_l$ around $x$, and let
\[
g = u + \sum_{i=1}^l (-a_i) \log \vert f_i \vert^2\quad\aew
\]
be the local expression of $g$ around $x$ with respect to $f_1, \ldots, f_l$.
For $\phi \in H^0_{\mathcal{M}}(X, D)$, we set $\phi = f_1^{b_1} \cdots f_l^{b_l} v$ around $x$,
where $v$ has no factors of $f_1, \ldots, f_l$.
Note that $b_1, \ldots, b_l$ do not depend on the choice of $f_1, \ldots, f_l$.
Since $(\phi) + D \geq 0$, we have $a_i + b_i \geq 0$ for all $i$ and
$v$ is holomorphic around $x$. Then
\[
\vert \phi \vert_g = \vert f_1 \vert^{a_1 + b_1} \cdots \vert f_l \vert^{a_l + b_l} \vert v \vert \exp(-u/2)\quad\aew.
\]
Let us choose another local equations $f'_1, \ldots, f'_l$ of $D_1, \ldots, D_l$ around $x$, and let
\[
g = u' + \sum_{i=1}^l (-a_i) \log \vert f'_i \vert^2\quad\aew
\]
be the local expression of $g$ around $x$ with respect to $f'_1, \ldots, f'_l$.
Moreover, we set $\phi = {f'_1}^{b_1} \cdots {f'_l}^{b_l} v'$ around $x$ as before.
Then
\[
\vert \phi \vert_g = \vert f'_1 \vert^{a_1 + b_1} \cdots \vert f'_l \vert^{a_l + b_l} \vert v' \vert \exp(-u'/2)\quad\aew.
\]
Note that 
\[
\vert f_1 \vert^{a_1 + b_1} \cdots \vert f_l \vert^{a_l + b_l} \vert v \vert \exp(-u/2)\quad\text{and}\quad
\vert f'_1 \vert^{a_1 + b_1} \cdots \vert f'_l \vert^{a_l + b_l} \vert v' \vert \exp(-u'/2)
\]
are continuous, so that
\[
\vert f_1 \vert^{a_1 + b_1} \cdots \vert f_l \vert^{a_l + b_l} \vert v \vert \exp(-u/2) =
\vert f'_1 \vert^{a_1 + b_1} \cdots \vert f'_l \vert^{a_l + b_l} \vert v' \vert \exp(-u'/2)
\]
around $x$. This observation shows that there is a unique continuous function $h$ on $X$ such that
$\vert \phi \vert_g = h\ \aew$. In this sense, in the case where $g$ is of $C^0$-type,
we always assume that
$\vert \phi \vert_g$ means the above continuous function $h$.
Then we have the following proposition.

\begin{Proposition}
Let $g$ be a $D$-Green function of $C^0$-type.
\begin{enumerate}
\renewcommand{\labelenumi}{(\arabic{enumi})}
\item
For $\phi \in H^0_{\mathcal{M}}(X, D)$, $\vert \phi \vert_g$ is locally bounded above.

\item
If $X$ is compact,
then $\langle \phi, \psi \rangle_{g}$ exists for $\phi, \psi \in H^0_{\mathcal{M}}(X, D)$.
Moreover, $\langle\ , \ \rangle_{g}$ yields a hermitian inner product on $H^0_{\mathcal{M}}(X, D)$.
\end{enumerate}
\end{Proposition}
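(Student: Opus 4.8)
The plan is to deduce everything from the local description of $|\phi|_g$ obtained just above the proposition, together with compactness of $X$ in part (2); nothing deep is needed beyond that discussion and Lemma~\ref{lem:norm:locally:integrable}.

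First I would treat (1). Fix $\phi \in H^0_{\mathcal{M}}(X,D)$ and a point $x \in X$, and take a local expression $g = u + \sum_{i=1}^l (-a_i)\log|f_i|^2$ a.e.\ on a neighbourhood of $x$ with $u$ continuous and $f_i$ a local equation of $D_i$. Writing $\phi = f_1^{b_1}\cdots f_l^{b_l}\,v$ with $v$ free of the factors $f_i$, the condition $(\phi)+D \geq 0$ forces $a_i + b_i \geq 0$ for all $i$ and $v$ holomorphic near $x$, so $|\phi|_g = |f_1|^{a_1+b_1}\cdots|f_l|^{a_l+b_l}\,|v|\,\exp(-u/2)$ a.e. Since $u \in C^0$ and all exponents are non-negative, the right-hand side is continuous; under the convention fixed just before the proposition, $|\phi|_g$ \emph{is} this continuous function, hence locally bounded above. (So (1) is essentially a restatement of the preceding paragraph.)

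For (2) I would first record the pointwise identity $|\phi\bar\psi\exp(-g)| = |\phi|_g\,|\psi|_g$ a.e. By (1) the functions $|\phi|_g$ and $|\psi|_g$ are continuous, hence bounded since $X$ is compact, and the continuous volume form $\Phi$ has finite total mass. Therefore $\phi\bar\psi\exp(-g)$ is absolutely integrable against $\Phi$, so $\langle \phi,\psi\rangle_g$ exists. Sesquilinearity in $\phi$, conjugate-linearity in $\psi$, and conjugate symmetry $\langle\psi,\phi\rangle_g = \overline{\langle\phi,\psi\rangle_g}$ are then immediate from the definition and linearity of the integral. For positive-definiteness, $\langle\phi,\phi\rangle_g = \int_X |\phi|_g^2\,\Phi \geq 0$, and vanishing of this integral forces the continuous non-negative function $|\phi|_g^2$ to vanish identically, i.e.\ $\Vert\phi\Vert_g = 0$, whence $\phi = 0$ by Lemma~\ref{lem:norm:locally:integrable}~(1.3). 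Thus $\langle\ ,\ \rangle_g$ is a hermitian inner product on $H^0_{\mathcal{M}}(X,D)$.

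I do not expect a real obstacle. The only point that needs a little care is the passage from ``essentially bounded'' to genuine boundedness — precisely what the convention identifying $|\phi|_g$ with its continuous representative is for — and on a compact $X$ that representative is automatically bounded; positive-definiteness simply recycles Lemma~\ref{lem:norm:locally:integrable}.
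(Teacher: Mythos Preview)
Your proof is correct. The paper does not actually supply a proof of this proposition: it is stated immediately after the discussion showing that $|\phi|_g$ admits a unique continuous representative, and is left to the reader as an evident consequence. Your argument makes those implicit details explicit, using exactly the ingredients the paper has set up --- the local expression of $|\phi|_g$ as a continuous function, compactness, and Lemma~\ref{lem:norm:locally:integrable} for positive-definiteness --- so there is nothing to compare beyond noting that you have written out what the paper regards as routine.
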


\section{Gromov's inequality and distorsion functions for $\RR$-divisors}
Let $X$ be a $d$-equidimensional compact complex manifold.
Let $D$ be an $\RR$-divisor on $X$ and let $g$ be a $D$-Green function of $C^0$-type.
Let us fix a continuous volume form $\Phi$ on $X$.
Recall that $\vert \phi \vert_g$, $\Vert \phi \Vert_g$ and $\langle \phi, \psi \rangle_g$ for
$\phi, \psi \in H^0_{\mathcal{M}}(X, D)$ are given by
\query{${\operatorname{ess\ sup}}$ $\Longrightarrow$ $\esssup$ (2010/12/06)}
\[
\begin{cases}
\vert \phi \vert_g := \vert \phi \vert \exp(-g/2), \\
\Vert \phi \Vert_g := \esssup \{ \vert \phi \vert_g(x) \mid x \in X \}, \\
{\displaystyle \langle \phi, \psi \rangle_g = \int_X \phi \bar{\psi} \exp(-g) \Phi}.
\end{cases}
\]
As described in Subsection~\ref{subsec:Green:function:norm},
we can view $\vert \phi \vert_g$ as a continuous function, so that
$\vert \phi \vert_g$ is always assumed to be continuous.

In this section, let us consider Gromov's inequality and distorsion functions for $\RR$-divisors.

\subsection{Gromov's inequality for $\RR$-divisors}
\setcounter{Theorem}{0}

Here we observe Gromov's inequality for $\RR$-divisors.

\begin{Proposition}[Gromov's inequality for an $\RR$-divisor]
\label{prop:Gromov:ineq}
Let $D_1, \ldots, D_l$ be $\RR$-divisors on $X$ and let $g_1, \ldots, g_l$ be locally integrable functions on $X$ such that
$g_i$ is a $D_i$-Green function of $C^{\infty}$-type for each $i$.
Then there is a positive constant $C$ such that
\[
\Vert \phi \Vert^2_{a_1 g_1 + \cdots + a_l g_l } \leq C (1 + \vert a_1 \vert + \cdots + \vert a_l \vert)^{2d}  \langle \phi, \phi  \rangle_{a_1 g_1 + \cdots + a_l g_l }
\]
holds for all $\phi \in H^0_{\mathcal{M}}(X, a_1D_1 + \cdots + a_l D_l)$ and $a_1, \ldots, a_l \in \RR$.
\end{Proposition}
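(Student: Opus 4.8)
The inequality is the standard Gromov inequality, but uniform in the real coefficients $a_1, \ldots, a_l$; the point is to track how the constant depends on the $a_i$. First I would reduce to a local, finite-covering statement. Since $X$ is compact, choose a finite covering $\{U_\alpha\}_{\alpha=1}^m$ such that on each $U_\alpha$ every $D_i$ has a local equation $f_{\alpha,i}$ and every $g_i$ has a $C^\infty$ local expression $g_i = u_{\alpha,i} + \sum_j (-\mathrm{mult}_{D_{j}}(D_i))\log|f_{\alpha,j}|^2$; also choose a slightly smaller covering $\{U'_\alpha\}$ still covering $X$ with $\overline{U'_\alpha}$ compact in $U_\alpha$. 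For $\phi \in H^0_{\mathcal M}(X, \sum a_i D_i)$ set $g = \sum a_i g_i$; on $U_\alpha$ one has $|\phi|_g^2 = e^{-u_\alpha}|s_\alpha|^2$ where $u_\alpha = \sum_i a_i u_{\alpha,i}$ is $C^\infty$ and $s_\alpha$ is a holomorphic function (the local "frame" of $\phi$), as explained in Subsection~\ref{subsec:Green:function:norm}.

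**Key steps.** Fix $\alpha$ and a point $x_0 \in U'_\alpha$. Work in a coordinate polydisc $B = B^d(x_0; 2r) \subseteq U_\alpha$ with $B^d(x_0;r)$ still covering a neighbourhood of $x_0$ as $x_0$ ranges over $U'_\alpha$ (compactness again). On $B$, the function $|\phi|_g^2(z) = e^{-u_\alpha(z)}|s_\alpha(z)|^2$ with $s_\alpha$ holomorphic. The mean value property for the plurisubharmonic function $|s_\alpha|^2$ gives
\[
|s_\alpha(x_0)|^2 \leq \frac{1}{\mathrm{vol}(B^d(x_0;r))} \int_{B^d(x_0;r)} |s_\alpha|^2 \, dV.
\]
Then $|\phi|_g^2(x_0) = e^{-u_\alpha(x_0)}|s_\alpha(x_0)|^2 \leq e^{-u_\alpha(x_0)} \cdot c_r \int_{B^d(x_0;r)} e^{u_\alpha} |\phi|_g^2 e^{-u_\alpha}\,dV$, and bounding $e^{u_\alpha(z) - u_\alpha(x_0)}$ on $B$ by $\exp\big(\sup_B |u_\alpha(z)-u_\alpha(x_0)|\big)$ we need a uniform bound on the oscillation of $u_\alpha = \sum_i a_i u_{\alpha,i}$. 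Since each $u_{\alpha,i}$ is $C^\infty$ on the compact $\overline{U'_\alpha}$, its oscillation on $B^d(x_0;2r)$ is at most $C_{\alpha,i}\, r$ for a constant depending only on $\|u_{\alpha,i}\|_{C^1}$; hence $\mathrm{osc}_B(u_\alpha) \leq \big(\sum_i |a_i|\, C_{\alpha,i}\big) r \leq C'(\sum_i |a_i|)$ with $C' = r\max_{\alpha,i} C_{\alpha,i}$. Therefore $|\phi|_g^2(x_0) \leq C'' \exp\!\big(C'\sum_i|a_i|\big) \int_{B^d(x_0;r)} |\phi|_g^2\, e^{-u_\alpha}e^{u_\alpha}\,dV$; comparing the Lebesgue measure $dV$ with the fixed volume form $\Phi$ (bounded ratio on the compact $\overline{U'_\alpha}$) and with $e^{-u_\alpha}$-weighted integral absorbed the other way gives
\[
|\phi|_g^2(x_0) \leq C'' \exp\!\Big(2C'\textstyle\sum_i|a_i|\Big)\, \langle \phi,\phi\rangle_g.
\]
Taking the supremum over $x_0 \in U'_\alpha$ and then over the finitely many $\alpha$ yields $\|\phi\|_g^2 \leq C''' \exp(2C'\sum|a_i|)\langle\phi,\phi\rangle_g$.

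**Finishing.** The exponential in $\sum|a_i|$ is weaker than the claimed polynomial bound $C(1+\sum|a_i|)^{2d}$, so a naive application of the argument above is not quite enough — this is the one genuine subtlety. The fix is the standard trick of exploiting homogeneity in $\phi$ together with a rescaling in the coordinates: on the polydisc, replace the radius $r$ by $r/(1+\sum|a_i|)$. Then $\mathrm{osc}_B(u_\alpha) \leq C'\sum|a_i| \cdot \tfrac{1}{1+\sum|a_i|} \leq C'$ is now uniformly bounded, so $e^{\mathrm{osc}}$ contributes only a fixed constant; the loss instead appears through $\mathrm{vol}(B^d(x_0; r/(1+\sum|a_i|)))^{-1} = c_r (1+\sum|a_i|)^{2d}$, which is exactly the claimed factor. (One must check that the shrunk polydiscs still cover $X$ via the compactness argument, uniformly in the $a_i$ — they do, since one takes a Lebesgue-number-type covering at the smallest relevant scale, or equivalently re-runs the covering argument with the smaller radius, which only increases the number of charts by a bounded-independent-of-$a_i$ factor after noting the count does not enter the estimate.) The main obstacle, then, is precisely this bookkeeping: arranging the rescaling so that the coefficient-dependence lands entirely in the volume factor and produces the sharp exponent $2d$ rather than $2d\cdot(\text{something})$, and making sure the covering can be chosen uniformly. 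Everything else — the mean value inequality for $|s_\alpha|^2$, comparison of $dV$ with $\Phi$, and the passage from the local $e^{-u_\alpha}$-weighted integral to the global $\langle\phi,\phi\rangle_g$ — is routine and uses only compactness of $X$ and smoothness of the $u_{\alpha,i}$.
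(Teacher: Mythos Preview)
Your overall strategy---reduce to local charts, apply a sub-mean-value inequality, and control the oscillation of the smooth weight $u_\alpha = \sum_i a_i u_{\alpha,i}$---is the same as the paper's. There is, however, a genuine gap in the way you set up the local picture.

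You write $|\phi|_g^2 = e^{-u_\alpha}|s_\alpha|^2$ with $s_\alpha$ \emph{holomorphic}. This is false when the $a_i$ are not integers. With $D_1,\ldots,D_r$ prime and local equations $f_1,\ldots,f_r$, a section $\phi\in H^0_{\mathcal M}(X,\sum a_iD_i)$ has $\phi=f_1^{b_1}\cdots f_r^{b_r}v$ with $b_i\in\ZZ$, $v$ a holomorphic unit, and $a_i+b_i\geq 0$; then
\[
|\phi|_g^2 \;=\; e^{-u_\alpha}\,\bigl(|f_1|^{2(a_1+b_1)}\cdots|f_r|^{2(a_r+b_r)}|v|^2\bigr).
\]
The bracketed factor is not $|s_\alpha|^2$ for any holomorphic $s_\alpha$ once some $a_i\notin\ZZ$. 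What rescues the argument is that this factor is nevertheless \emph{plurisubharmonic}: its logarithm $\sum_i(a_i+b_i)\log|f_i|^2+\log|v|^2$ is plurisubharmonic (here $a_i+b_i\geq 0$ is essential), and $\exp$ of a plurisubharmonic function is plurisubharmonic. So the sub-mean-value inequality you invoke still applies, but to this PSH function, not to the modulus-square of a holomorphic one. This is exactly how the paper proceeds (Lemma~\ref{lem:Gromov:ineq:R:div:local}). You should also note that the reduction to prime $D_i$ is needed to make the exponents $a_i+b_i$ well defined and non-negative; the paper carries this out explicitly at the start of the proof.

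On the implementation side, your rescaling trick (radius $r/(1+\sum|a_i|)$, so that $\mathrm{osc}(u_\alpha)$ becomes $O(1)$ and the $(1+\sum|a_i|)^{2d}$ appears from the inverse ball volume) is a clean variant of the paper's computation, which instead keeps the radius fixed, uses the Lipschitz bound $e^{-u_\alpha(x)}\geq e^{-u_\alpha(x_0)}(1-D|x-x_0|)^{\sum|a_i|}$, and then evaluates the integral $\int_{[0,1]^d}t_1\cdots t_d\,(1-(t_1+\cdots+t_d)/d)^{\sum|a_i|}\,dt$ to extract the same polynomial factor. Both yield the exponent $2d$. Incidentally, your covering worry is unnecessary: the shrunk balls do not need to cover $X$; each $x_0\in U'_\alpha$ only needs its ball of radius $r/(1+\sum|a_i|)\leq r$ to sit inside $U_\alpha$, which is automatic once $r$ is chosen small enough independently of the $a_i$.
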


\begin{proof}
We can find distinct prime divisors $\Gamma_{1}, \ldots, \Gamma_r$ on $X$, 
locally integrable functions $\gamma_{1}, \ldots, \gamma_{r}$ on $X$,
$C^{\infty}$-functions $f_1, \ldots, f_l$ and 
real numbers $\alpha_{ij}$ such that
$\gamma_{j}$ is a $\Gamma_{j}$-Green functions of $C^{\infty}$-type for each $j=1, \ldots, r$,
\[
D_i = \sum_{j=1}^r \alpha_{ij} \Gamma_{j}\quad\text{and}\quad
g_i = f_i + \sum_{j=1}^r \alpha_{ij} \gamma_{j}\ \aew.
\]
Then
\begin{align*}
a_1 D_1 + \cdots + a_l D_l & = \sum_{j=1}^r \left( \sum_{i=1}^l a_i \alpha_{ij}\right) \Gamma_{j} + \sum_{i=1}^l a_i (\text{the zero divisor}), \\
a_1 g_1 + \cdots + a_l g_l & =  \sum_{j=1}^r \left(\sum_{i=1}^l a_i \alpha_{ij}\right) \gamma_{j} + \sum_{i=1}^l a_i f_i\quad\aew.
\end{align*}
Moreover, if we set $A = \max \{ \vert \alpha_{ij} \vert \}$, then
\[
1 + \sum_{i=1}^l \vert a_i \vert + \sum_{j=1}^r \left\vert \sum_{i=1}^l a_i \alpha_{ij} \right\vert \leq 1 + (A r+1) \sum_{i=1}^l \vert a_i \vert \leq
(Ar+1) \left( 1 + \sum_{i=1}^l \vert a_i \vert\right).
\]
Thus we may assume that $D_1, \ldots D_r$ are distinct prime divisors and 
\[
D_{r+1} = \cdots = D_l = 0.
\]

Let $U$ be an open set of $X$ over which there are local equations $f_1, \ldots, f_r$ of $D_1, \ldots, D_r$ respectively.

\begin{Claim}
For all $\phi \in H^0_{\mathcal{M}}(X, a_1D_1 + \cdots + a_l D_l)$ and $a_1, \ldots, a_l \in \RR$,
\[
\phi f_1^{\lfloor a_1 \rfloor} \cdots f_r^{\lfloor a_r \rfloor}
\]
is holomorphic over $U$, that is,
there are $b_1, \ldots, b_r \in \ZZ$ and a holomorphic function $f$ on $U$ such that
$\phi = f_1^{b_1} \cdots f_r^{b_r} f$ and $b_1 + a_1 \geq 0, \ldots, b_r + a_r \geq 0$.
\end{Claim}

\begin{proof}
Fix $x \in U$. Let $f_i = e_{i} f_{i1} \cdots f_{ic_i}$ be the prime decomposition of $f_i$ in $\OO_{X,x}$,
where $e_i \in \OO_{X,x}^{\times}$ and $f_{ij}$'s are distinct prime elements of $\OO_{X,x}$.
Let $D_{ij}$ be the prime divisor given by $f_{ij}$ around $x$.
Since $\phi \in H^0_{\mathcal{M}}(X, a_1D_1 + \cdots + a_l D_l)$, we have 
\ifpxfont
\[
(\phi) + a_1D_1 + \cdots + a_l D_l
= (\phi) + a_1 D_{11} + \cdots + a_1 D_{1c_1} + \cdots
+ a_r D_{r1} + \cdots + a_r D_{rc_r} \geq 0
\]
\else
\begin{multline*}
(\phi) + a_1D_1 + \cdots + a_l D_l \\
= (\phi) + a_1 D_{11} + \cdots + a_1 D_{1c_1} + \cdots
+ a_r D_{r1} + \cdots + a_r D_{rc_r} \geq 0
\end{multline*}
\fi
around $x$. 
Note that $ D_{11}, \ldots,  D_{1c_1}, \ldots, D_{r1}, \ldots,  D_{rc_r}$ are distinct prime divisors around $x$.
Thus $\phi  f_{11}^{\lfloor a_1 \rfloor} \cdots  f_{1c_1}^{\lfloor a_1 \rfloor} \cdots f_{r1}^{\lfloor a_r \rfloor} \cdots f_{rc_r}^{\lfloor a_r \rfloor}$
is holomorphic around $x$. Therefore, as
\[
f_1^{\lfloor a_1 \rfloor} \cdots f_r^{\lfloor a_r \rfloor} = e_1^{\lfloor a_1 \rfloor} \cdots e_r^{\lfloor a_r \rfloor} 
f_{11}^{\lfloor a_1 \rfloor} \cdots  f_{1c_1}^{\lfloor a_1 \rfloor} \cdots f_{r1}^{\lfloor a_r \rfloor} \cdots f_{rc_r}^{\lfloor a_r \rfloor},
\]
$\phi f_1^{\lfloor a_1 \rfloor} \cdots f_r^{\lfloor a_r \rfloor}$ is holomorphic around $x$.
\end{proof}

By the above observation, the assertion of the proposition follows from the following local version.
\end{proof}

\begin{Lemma}
\label{lem:Gromov:ineq:R:div:local}
Let $a, b, c$ be real numbers with $a > b > c > 0$.
We set 
\[
U = \{ z \in \CC^d \mid \vert z \vert < a \},\ 
V = \{ z \in \CC^d \mid \vert z \vert < b \}\ \text{and}\ 
W = \{ z \in \CC^d \mid \vert z \vert < c \}.
\]
Let $\Phi$ be a continuous volume form on $U$,
$f_1, \ldots, f_l \in \OO_U(U)$,
$v_1, \ldots, v_l \in C^{\infty}(U)$ and
\[
g_i = v_i - \log \vert f_i \vert^2
\]
for $i=1, \ldots, l$.
For $a_1, \ldots, a_l \in \RR$, we set
\[
V(a_1, \ldots, a_l) = \left\{ f_1^{b_1} \cdots f_l^{b_l} f \left|
\begin{array}{ll}
\text{$f \in \OO_U(U)$ and
$b_1, \ldots, b_l \in \ZZ$ with} \\
\text{$b_1 + a_1 \geq 0, \ldots,  b_l + a_l \geq 0$}
\end{array}
\right\}\right..
\]
\rom{(}Note that $V(a_1, \ldots, a_l)$ is a complex vector space.\rom{)}
Then there is a positive constant $C$ such that
\begin{multline*}
\max_{ z \in \overline{W}} \{ \vert \phi \vert^2 \exp(-a_1 g_1 - \cdots - a_l g_l)(z) \} \\
\leq C (\vert a_1 \vert + \cdots + \vert a_l \vert + 1)^{2d} \int_V \vert \phi \vert^2 \exp(-a_1 g_1 - \cdots - a_l g_l)\Phi
\end{multline*}
holds for all $\phi \in V(a_1, \ldots, a_l)$ and all $a_1, \ldots, a_l \in \RR$.
\end{Lemma}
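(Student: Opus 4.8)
The plan is to absorb the smooth part of the weight into the squared modulus of an auxiliary holomorphic function and then apply the sub--mean value inequality for plurisubharmonic functions on a ball whose radius is calibrated to the size of $a_1, \ldots, a_l$. Set $v := a_1 v_1 + \cdots + a_l v_l \in C^{\infty}(U)$ and $t := |a_1| + \cdots + |a_l|$, so that $\sup_{\overline V}|\partial^{\alpha} v| \leq t \max_i \sup_{\overline V}|\partial^{\alpha} v_i|$ for every multi-index $\alpha$. For $\phi \in V(a_1,\ldots,a_l)$, write $\phi = f_1^{b_1}\cdots f_l^{b_l} f$ with $f \in \OO_U(U)$, $b_i \in \ZZ$ and $b_i + a_i \geq 0$; then
\[
|\phi|^2 \exp(-a_1 g_1 - \cdots - a_l g_l) \;=\; w \exp(-v), \qquad w := |f|^2 \prod_{i=1}^l |f_i|^{2(b_i + a_i)}.
\]
The key elementary observation is that $\log w = \log|f|^2 + \sum_i (b_i + a_i)\log|f_i|^2$ is a finite sum of plurisubharmonic functions on $U$ --- this is where the hypothesis $b_i + a_i \geq 0$ is used --- so $w$ is plurisubharmonic on $U$, and likewise $w|H|^2$ is plurisubharmonic for every holomorphic $H$, since $\log(w|H|^2) = \log|fH|^2 + \sum_i (b_i+a_i)\log|f_i|^2$.

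Next I would fix $z_0 \in \overline W$ and carry out the Taylor trick. Splitting the second--order Taylor polynomial of $v$ at $z_0$ into its holomorphic part and its mixed term, one obtains a holomorphic polynomial $Q_{z_0}$ of degree $\leq 2$ with $Q_{z_0}(0) = v(z_0) \in \RR$ and
\[
\bigl| v(z) - \operatorname{Re} Q_{z_0}(z - z_0) \bigr| \;\leq\; t\,M\,|z - z_0|^2 \qquad\text{whenever}\ |z - z_0| \leq b - c,
\]
where $M$ depends only on $b - c$ and on the $C^{3}$--norms of $v_1, \ldots, v_l$ on $\overline V$ (the left-hand side is the mixed second--order term, bounded by a multiple of $t|z-z_0|^2$, plus the order--three Taylor remainder, bounded by a multiple of $t|z-z_0|^3 \leq t(b-c)|z-z_0|^2$). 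Put $G(z) := \exp\bigl(-\tfrac12 Q_{z_0}(z - z_0)\bigr)$, an entire function with $|G(z_0)|^2 = \exp(-\operatorname{Re}Q_{z_0}(0)) = \exp(-v(z_0))$, and set $r := \min\{\, b - c,\ t^{-1/2}\,\}$ (with $r := b - c$ if $t = 0$). Then $tMr^2 \leq M$, and for $B^d(z_0; r) := \{z : |z - z_0| < r\}$ one has $\overline{B^d(z_0;r)} \subseteq U$ and $B^d(z_0;r) \subseteq V$.

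To conclude: on $B^d(z_0;r)$ the displayed estimate gives $e^{-M} w\exp(-v) \leq w|G|^2 \leq e^{M} w\exp(-v)$, while $w|G|^2$ is plurisubharmonic by the first paragraph; hence the sub--mean value inequality over $B^d(z_0;r)$ followed by the inclusion $B^d(z_0;r) \subseteq V$ yields
\[
w(z_0)\exp(-v(z_0)) = w(z_0)|G(z_0)|^2 \leq \frac{1}{c_d\, r^{2d}} \int_{B^d(z_0;r)} w|G|^2\, dV \leq \frac{e^{M}}{c_d\, r^{2d}} \int_V w\exp(-v)\, dV,
\]
where $c_d$ is the Euclidean volume of the unit ball of $\CC^d$. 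Since $1/r^{2d} = \max\{(b-c)^{-2d},\, t^d\} \leq C''(t+1)^{2d}$ with $C''$ depending only on $d$ and $b - c$, and since the continuous volume form $\Phi$ is bounded above and below by positive multiples of $dV$ on the relatively compact set $V$, this is precisely the asserted inequality for $|\phi|^2 \exp(-a_1g_1 - \cdots - a_lg_l)$ at $z_0$, with a constant $C$ independent of $z_0 \in \overline W$, of $\phi$, and of $(a_1,\ldots,a_l)$; taking the supremum over $z_0 \in \overline W$ (the case $\phi = 0$ being trivial) finishes the argument.

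The step I expect to be the main obstacle is the second displayed estimate. One must not bound $\exp(-v)$ by its supremum over $\overline V$, which would cost a multiplicative factor growing like $e^{O(t)}$ and destroy the polynomial dependence on $|a_1| + \cdots + |a_l|$; instead the smooth weight has to be traded, up to a bounded multiplicative error, for $|G|^2$ with $G$ holomorphic, and the radius $r \sim t^{-1/2}$ must be tuned so that the quadratic Taylor error $tMr^2$ stays bounded while $1/r^{2d}$ grows only like $t^d$. The remaining points --- the plurisubharmonicity bookkeeping, which goes through even across the zero sets of $f$ and of the $f_i$ precisely because the exponents $b_i + a_i$ are non-negative, and the comparison between $dV$ and $\Phi$ on $V$ --- are routine.
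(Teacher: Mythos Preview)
Your proof is correct, and it takes a genuinely different route from the paper's.

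The paper does not use the second-order Taylor trick. It works only with first-order information: setting $u_i = \exp(\pm v_i)$, it invokes a Lipschitz-type lower bound $u_i(x) \geq u_i(x_0)(1 - D|x-x_0|')$ in the $\ell^1$-norm $|\cdot|'$, taken from \cite[Lemma~1.1.1]{MoCont}. This produces a factor $(1 - D|x-x_0|')^{|a_1|+\cdots+|a_l|}$ under the integral; the sub-mean-value step is then done coordinatewise over a polydisc (via \cite[Theorem~4.1.3]{Hor}), and the leftover integral $\int_{[0,1]^d} t_1\cdots t_d\,(1-(t_1+\cdots+t_d)/d)^{t}\,dt_1\cdots dt_d$ is bounded by a separate lemma (\cite[Claim~1.1.1.1]{MoCont}), which is where the exponent $2d$ comes from.

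Your approach---splitting off the holomorphic part $Q_{z_0}$ of the degree-two Taylor polynomial of $v$, absorbing it into the entire factor $G$, and applying the sub-mean-value inequality to the plurisubharmonic function $w|G|^2$ on a Euclidean ball of radius $r \sim t^{-1/2}$---is the familiar ``holomorphic peak function'' device from Bergman-kernel and $L^2$-extension arguments. It is self-contained (no appeal to external lemmas), and since the weight correction is bounded by $e^{\pm M}$ on a ball of radius $t^{-1/2}$ rather than decaying like $(1-D|x-x_0|')^t$ on a scale $\sim t^{-1}$, the factor $1/r^{2d}$ actually grows like $t^d$. So your argument in fact gives the sharper bound $C(t+1)^d$; you have simply weakened it to $(t+1)^{2d}$ to match the stated lemma. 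The paper's first-order method genuinely loses this extra power of $t$.
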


\begin{proof}
We set
\[
u_1 = \exp(-v_1), \ldots, u_l = \exp(-v_l), u_{l+1} = \exp(v_1), \ldots, u_{2l} = \exp(v_l).
\]
Then in the same way as \cite[Lemma~1.1.1]{MoCont},
we can find a positive constant $D$ with the following properties:
\begin{enumerate}
\renewcommand{\labelenumi}{(\alph{enumi})}
\item
For $x_0, x \in \overline{V}$,
$u_i(x) \geq u_i(x_0)(1 - D \vert x - x_0 \vert')$ for all $i=1, \ldots, 2l$,
where $\vert z \vert' = \vert z_1 \vert + \cdots + \vert z_d \vert$ for $z = (z_1, \ldots, z_d) \in \CC^d$.

\item
If $x_0 \in \overline{W}$, then $B(x_0, 1/D) \subseteq \overline{V}$, where
\[
B(x_0, 1/D) = \{ x \in \CC^d \mid \vert x - x_0 \vert' \leq 1/D\}.
\]

\end{enumerate}
We set
\[
\Phi_{can} = \left(\frac{\sqrt{-1}}{2}\right)^d dz_1 \wedge d\bar{z}_1 \wedge \cdots \wedge dz_d \wedge d\bar{z}_d.
\]
\query{$\left(\frac{\sqrt{-1}}{2}\right)$ $\Longrightarrow$ $\left(\frac{\sqrt{-1}}{2}\right)^d$
(10/September/2010)}
Then we can choose a positive constant $e$ with $\Phi \geq e \Phi_{can}$.
For 
\[
\phi = f_1^{b_1} \cdots f_l^{b_l} f \in V(a_1, \ldots, a_l),
\]
we assume that the continuous function
\ifpxfont
\[
\vert \phi \vert^2 \exp(-a_1 g_1 - \cdots - a_l g_l)
= \vert f_1 \vert^{2(b_1 + a_1)} \cdots \vert f_l \vert^{2(b_l + a_l)} \vert f \vert^2  \exp(-a_1 v_1 - \cdots - a_l v_l)
\]
\else
\begin{multline*}
\vert \phi \vert^2 \exp(-a_1 g_1 - \cdots - a_l g_l) \\
= \vert f_1 \vert^{2(b_1 + a_1)} \cdots \vert f_l \vert^{2(b_l + a_l)} \vert f \vert^2  \exp(-a_1 v_1 - \cdots - a_l v_l)
\end{multline*}
\fi
on $\overline{W}$ takes the maximal value at $x_0 \in \overline{W}$.
Let us choose $\epsilon_i \in \{\pm 1\}$ such that $a_i = \epsilon_i \vert a_i \vert$.
Note that
\begin{multline*}
\exp(-a_1 v_1(x) - \cdots - a_l v_l(x)) = \prod_{i=1}^l \exp(-\epsilon_i v_i(x))^{\vert a_i \vert} \\
\geq \left( \prod_{i=1}^l \exp(-\epsilon_i v_i(x_0))^{\vert a_i \vert}\right) (1 -D\vert x - x_0\vert')^{\vert a_1 \vert + \cdots
+ \vert a_l \vert} \\
= \exp(-a_1 v_1(x_0) - \cdots - a_l v_l(x_0)) (1 -D\vert x - x_0\vert')^{\vert a_1 \vert + \cdots
+ \vert a_l \vert}
\end{multline*}
on $B(x_0,1/D)$.
Therefore,
\begin{multline*}
\int_V \vert \phi \vert^2 \exp(-a_1 g_1 - \cdots - a_l g_l) \Phi 
\geq
e \exp(-a_1 v_1(x_0) - \cdots - a_l v_l(x_0))  \times \\
\int_{B(x_0, 1/D)} \vert f_1 \vert^{2(b_1 + a_1)} \cdots \vert f_l \vert^{2(b_l + a_l)} \vert f \vert^2 (1 -D\vert x - x_0\vert')^{\vert a_1 \vert + \cdots
+ \vert a_l \vert} \Phi_{can}.
\end{multline*}
If we set
$x - x_0 = (r_1 \exp(\sqrt{-1}\theta_1), \ldots, r_d \exp(\sqrt{-1}\theta_d))$,
then, by using \cite[Theorem~4.1.3]{Hor} and the pluriharmonicity of $\vert f_1 \vert^{2(b_1 + a_1)} \cdots \vert f_l \vert^{2(b_l + a_l)} \vert f \vert^2$,
\begin{multline*}
 \int_{B(x_0, 1/D)}
 \vert f_1 \vert^{2(b_1 + a_1)} \cdots \vert f_l \vert^{2(b_l + a_l)} \vert f \vert^2 (1 -D\vert x - x_0\vert')^{\vert a_1 \vert + \cdots
+ \vert a_l \vert} \Phi_{can} \\
=
 \int_{\substack{r_1 + \cdots + r_d \leq 1/D \\ r_1 \geq 0, \ldots, r_d \geq 0}} 
\left( \int_0^{2\pi} \cdots \int_0^{2\pi}  \vert f_1 \vert^{2(b_1 + a_1)} \cdots \vert f_l \vert^{2(b_l + a_l)} \vert f \vert^2 d \theta_1 \cdots d\theta_d \right) \\
\times
r_1 \cdots r_d  (1 - D(r_1 + \cdots + r_d) )^{\vert a_1 \vert + \cdots
+ \vert a_l \vert} dr_1 \cdots dr_d \\
\hspace{-7em}\geq (2\pi)^d
\vert f_1(x_0) \vert^{2(b_1 + a_1)} \cdots \vert f_l(x_0)\vert^{2(b_l + a_l)} \vert f(x_0) \vert^2 \\
\hspace{5em}\times  \int_{[0, 1/(dD)]^d} 
r_1 \cdots r_d  (1 - D(r_1 + \cdots + r_d) )^{\vert a_1 \vert + \cdots
+ \vert a_l \vert} dr_1 \cdots dr_d.
\end{multline*}
Therefore, we have
\begin{multline*}
\int_V \vert \phi \vert^2 \exp(-a_1 g_1 - \cdots - a_l g_l) \Phi  \\
\hspace{-7em}\geq
\frac{e (2\pi)^d}{(dD)^{2d}} \max_{z \in \overline{W}} \{ \vert \phi \vert^2 \exp(-a_1 g_1 - \cdots - a_l g_l)(z) \} \\
\times
 \int_{\substack{[0,1]^d}} 
t_1 \cdots t_d  (1 - (1/d)(t_1 + \cdots + t_d) )^{\vert a_1 \vert + \cdots
+ \vert a_l \vert} dt_1 \cdots dt_d.
\end{multline*}
Hence our assertion follows from \cite[Claim~1.1.1.1 in Lemma~1.1.1]{MoCont}.
\end{proof}

\subsection{Distorsion functions for $\RR$-divisors}
\label{subsec:distorsion:R:div}
\setcounter{Theorem}{0}

Let 
$D$ be an $\RR$-divisor on $X$ and let $g$ be a $D$-Green function of $C^0$-type.
Let $V$ be a complex vector subspace of $H^0_{\mathcal{M}}(X, D)$.
Let $\phi_1, \ldots, \phi_{l}$ be an orthonormal basis of $V$ with respect to $\langle\ ,\rangle_{g}$.
It is easy to see that
\[
\vert \phi_1 \vert_{g}^2 + \cdots + \vert \phi_{l} \vert_{g}^2
\]
does not depend on the choice of the orthonormal basis $\phi_1, \ldots, \phi_{l}$ of $V$, so that
it is denoted by $\dist(V;g)$ and it is called the {\em distorsion function} of $V$ with respect to $g$.

\begin{Proposition}
\label{prop:decomposition:hermitian:metric}
Let $V$ be a complex vector subspace of $H^0(X, D)$. Then an inequality
\[
\vert s \vert^2_g(x) \leq  \langle s, s \rangle_g  \dist(V; g)(x)\quad(\forall x \in X)
\]
holds for all $s \in V$.
In particular,
\[
\vert s \vert_g(x) \leq  \left(\int_X \Phi\right)^{1/2}\Vert s \Vert_{g} \sqrt{\dist(V;g)(x) }\quad(\forall x \in X).
\]
\end{Proposition}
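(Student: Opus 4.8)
The plan is to reduce the asserted pointwise inequality to the ordinary Cauchy--Schwarz inequality in $\CC^l$, exploiting the fact that, for each fixed point, evaluation of $\vert\cdot\vert_g$ behaves like a seminorm on sections.

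First I would fix a point $x \in X$ and recall, following the discussion in Subsection~\ref{subsec:Green:function:norm}, that for a $D$-Green function $g$ of $C^0$-type the quantity $\vert\phi\vert_g$ (for $\phi \in H^0(X,D)$) is to be read as its continuous representative. The relations $\vert\lambda\phi\vert_g = \vert\lambda\vert\,\vert\phi\vert_g$ and $\vert\phi+\psi\vert_g \leq \vert\phi\vert_g + \vert\psi\vert_g$ hold $\aew$ by Lemma~\ref{lem:norm:locally:integrable}~(1), hence \emph{everywhere}, since two continuous functions on $X$ that agree outside a set of measure zero (the measure having full support) agree identically. Consequently $\phi \mapsto \vert\phi\vert_g(x)$ is a finite-valued seminorm on $H^0(X,D)$.

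Next, let $\phi_1, \ldots, \phi_l$ be an orthonormal basis of $V$ with respect to $\langle\ ,\ \rangle_g$ (which exists since $\langle\ ,\ \rangle_g$ is a hermitian inner product on $V$ when $X$ is compact), and write $s = \sum_{i=1}^l c_i \phi_i$ with $c_i \in \CC$; then $\langle s, s\rangle_g = \sum_{i=1}^l \vert c_i\vert^2$. Applying the seminorm property and then the Cauchy--Schwarz inequality in $\CC^l$ to the non-negative reals $\vert\phi_i\vert_g(x)$, I obtain
\[
\vert s\vert_g(x) \leq \sum_{i=1}^l \vert c_i\vert\,\vert\phi_i\vert_g(x) \leq \left( \sum_{i=1}^l \vert c_i\vert^2 \right)^{1/2} \left( \sum_{i=1}^l \vert\phi_i\vert_g(x)^2 \right)^{1/2};
\]
squaring and using $\dist(V;g)(x) = \sum_{i=1}^l \vert\phi_i\vert_g(x)^2$ yields $\vert s\vert^2_g(x) \leq \langle s, s\rangle_g\,\dist(V;g)(x)$, which is the first assertion.

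Finally, for the displayed consequence I would invoke the compactness of $X$: since $\vert s\vert_g$ is continuous and $\Phi$ has full support, $\Vert s\Vert_g = \esssup_X \vert s\vert_g = \sup_X \vert s\vert_g$, whence $\langle s, s\rangle_g = \int_X \vert s\vert_g^2\,\Phi \leq \Vert s\Vert_g^2 \int_X \Phi$; substituting this into the inequality just proved and taking square roots gives $\vert s\vert_g(x) \leq \left( \int_X \Phi \right)^{1/2}\Vert s\Vert_g\,\sqrt{\dist(V;g)(x)}$. The whole argument is formal; the only point that demands a little care is the passage from ``almost everywhere'' to ``everywhere'', i.e.\ carrying out all pointwise manipulations with the continuous representatives of $\vert\phi_i\vert_g$ and $\vert s\vert_g$ rather than with the raw functions $\vert\phi\vert\exp(-g/2)$, which may take the value $0$ or $+\infty$ on $\Supp(D)$.
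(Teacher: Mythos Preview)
Your proof is correct and follows essentially the same route as the paper's: expand $s$ in an orthonormal basis, apply the triangle inequality for $\vert\cdot\vert_g(x)$, and then Cauchy--Schwarz in $\CC^l$. You are simply more explicit than the paper about why the triangle inequality holds pointwise (via continuous representatives) and about the trivial bound $\langle s,s\rangle_g \leq \Vert s\Vert_g^2\int_X \Phi$ used for the ``in particular'' clause, which the paper leaves implicit.
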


\begin{proof}
Let $e_1, \ldots, e_{N}$ be an orthonormal basis of $V$ with respect to $\langle\ , \ \rangle_g$.
If we set $s = a_1 e_1 + \cdots + a_{N} e_{N}$ for $s \in V$,
then 
\[
\langle s, s \rangle_g  = \vert a_1 \vert^2 + \cdots + \vert a_{N} \vert^2.
\]
Therefore, by the Cauchy-Schwarz inequality,
\begin{multline*}
\vert s \vert_g(x) \leq \vert a_1 \vert \vert e_1 \vert_g (x) + \cdots +
 \vert a_{N} \vert \vert e_{N} \vert_g (x) \\
 \leq
 \sqrt{\vert a_1 \vert^2  + \cdots +
 \vert a_{N} \vert^2 } \sqrt{\vert e_1 \vert_g^2(x) + \cdots +
 \vert e_{N} \vert_g^2 (x) } \\
= \sqrt{\langle s, s \rangle_g} \sqrt{\dist(V; \overline{L})(x) }.
 \end{multline*}
\end{proof}

\begin{Lemma}
\label{lem:dist:comp}
Let $g'$ be another $D$-Green function of $C^0$-type such that $g \leq g'\ \aew$.
Let $V$ be a complex vector subspace of $H^0(X, D)$. Then 
$\dist(V; g) \leq \exp(g'-g)\dist(V; g')$.
\end{Lemma}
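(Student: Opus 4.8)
The plan is to reduce the inequality, away from $\Supp(D)$, to the description of the distorsion function as a supremum over the unit ball of $(V,\langle\ ,\ \rangle_g)$, and then to extend it to all of $X$ by continuity. Write $\psi := g' - g$. Since $g$ and $g'$ are $D$-Green functions of $C^0$-type for the \emph{same} $\RR$-divisor $D$, the polar contributions $\sum_i (-a_i)\log\vert f_i\vert^2$ cancel in any local expression, so $\psi$ coincides almost everywhere with a continuous function on $X$, which I continue to denote by $\psi$; moreover $\psi \geq 0$ because $g \leq g'\ \aew$. Two elementary facts will be used: (i) for $s \in H^0(X,D)$ one has $\vert s\vert_g^2 = \exp(\psi)\,\vert s\vert_{g'}^2$ as continuous functions on $X$ — this holds $\aew$ and both sides are continuous (recall from Subsection~\ref{subsec:Green:function:norm} that $\vert s\vert_g$ and $\vert s\vert_{g'}$ are taken to be continuous); (ii) since $\exp(-g) \geq \exp(-g')\ \aew$, we have $\langle s,s\rangle_g \geq \langle s,s\rangle_{g'}$ for all $s\in H^0(X,D)$, so $\langle s,s\rangle_g \leq 1$ forces $\langle s,s\rangle_{g'} \leq 1$.

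First I would prove that, for every $x \in X\setminus\Supp(D)$,
\[
\dist(V;g)(x) = \sup\{\vert s\vert_g^2(x) \mid s\in V,\ \langle s,s\rangle_g\leq 1\}.
\]
The inequality "$\geq$" here is precisely Proposition~\ref{prop:decomposition:hermitian:metric}. For "$\leq$", fix an orthonormal basis $\phi_1,\dots,\phi_l$ of $V$ with respect to $\langle\ ,\ \rangle_g$, so that $\dist(V;g) = \sum_i\vert\phi_i\vert_g^2$; as $x\notin\Supp(D)$ the values $\phi_i(x)$ and $g_{\rm can}(x)$ are finite and $\vert\phi_i\vert_g(x) = \exp(-g_{\rm can}(x)/2)\vert\phi_i(x)\vert$, and a direct computation shows that $s := \bigl(\sum_i\vert\phi_i(x)\vert^2\bigr)^{-1/2}\sum_i\overline{\phi_i(x)}\,\phi_i$ — and $s:=0$ if every $\phi_i(x)=0$ — satisfies $\langle s,s\rangle_g\leq 1$ and $\vert s\vert_g^2(x) = \sum_i\vert\phi_i\vert_g^2(x) = \dist(V;g)(x)$.

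Granting this, for $x\in X\setminus\Supp(D)$ I would estimate, using (i), then (ii), then Proposition~\ref{prop:decomposition:hermitian:metric} applied to $g'$,
\[
\dist(V;g)(x) = \sup_{\langle s,s\rangle_g\leq 1}\vert s\vert_g^2(x) = \exp(\psi(x))\sup_{\langle s,s\rangle_g\leq 1}\vert s\vert_{g'}^2(x) \leq \exp(\psi(x))\sup_{\langle s,s\rangle_{g'}\leq 1}\vert s\vert_{g'}^2(x) \leq \exp(\psi(x))\,\dist(V;g')(x).
\]
Both $\dist(V;g)$ and $\exp(\psi)\,\dist(V;g')$ are continuous on $X$ — a distorsion function is a finite sum of squares of continuous functions and $\psi$ is continuous — and $X\setminus\Supp(D)$ is dense because $\Supp(D)$ is a nowhere dense closed subset of $X$; hence the displayed inequality propagates to all of $X$, giving $\dist(V;g) \leq \exp(g'-g)\,\dist(V;g')$.

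I expect the only genuinely delicate point to be that the "supremum over the unit ball" formula for the distorsion function is transparent only where $g$ is finite-valued, i.e. on $X\setminus\Supp(D)$; everything else is bookkeeping with continuous representatives, and the extension across $\Supp(D)$ is handled by density together with the continuity of both sides. The normalization identity (i) along $\Supp(D)$ and the orthonormal-basis computation in the second step are the places calling for a little care, but both are routine.
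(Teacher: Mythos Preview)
Your proof is correct, but it takes a different route from the paper's. The paper argues by simultaneous diagonalization: it picks an orthonormal basis $\phi_1,\ldots,\phi_l$ of $V$ for $\langle\ ,\ \rangle_{g'}$ that is simultaneously orthogonal for $\langle\ ,\ \rangle_g$, so that $\phi_i/\sqrt{\langle\phi_i,\phi_i\rangle_g}$ is orthonormal for $g$; since $\langle\phi_i,\phi_i\rangle_g = \int \vert\phi_i\vert_{g'}^2\exp(u)\,\Phi \geq 1$ (with $u = g'-g \geq 0$), one gets $\dist(V;g) = \sum_i \vert\phi_i\vert_g^2/\langle\phi_i,\phi_i\rangle_g \leq \sum_i \vert\phi_i\vert_g^2 = \exp(u)\sum_i \vert\phi_i\vert_{g'}^2 = \exp(u)\,\dist(V;g')$, a pointwise identity of continuous functions valid on all of $X$ without any density argument. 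Your approach instead identifies $\dist(V;g)(x)$ as the supremum of $\vert s\vert_g^2(x)$ over the $\langle\ ,\ \rangle_g$-unit ball and uses that this ball sits inside the $\langle\ ,\ \rangle_{g'}$-unit ball; this is conceptually clean and makes the mechanism (inclusion of unit balls) transparent, at the modest cost of having to work off $\Supp(D)$ and extend by continuity. Either argument is short; the paper's avoids the extension step, while yours isolates the sup-characterization of $\dist$ as a lemma of independent interest.
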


\begin{proof}
We can find a continuous function $u$ on $X$
such that $u \geq 0$ on $X$ and
$g' = g + u \ \aew$.
Let $\phi_1, \ldots, \phi_{l}$ be an orthonormal basis of $V$ with respect to $\langle\ ,\ \rangle_{g'}$ such that
$\phi_1, \ldots, \phi_{l}$ are orthogonal with respect to $\langle\ ,\ \rangle_{g}$.
This is possible because any hermitian matrix can be diagonalizable by a unitary matrix.
Then
\[
\frac{\phi_1}{\sqrt{\langle\phi_1,\phi_1 \rangle_{g}}}, \ldots, \frac{\phi_{l}}{\sqrt{\langle\phi_{l},\phi_{l} \rangle_{g}}}
\]
form an orthonormal basis of $V$ with respect to $\langle\ ,\ \rangle_{g}$.
Thus
\[
\dist(V;  g) = \frac{\vert \phi_1\vert^2_{g}}{\langle\phi_1,\phi_1 \rangle_{g}} + \cdots + \frac{\vert \phi_{l}\vert^2_{g} }{\langle\phi_{l},\phi_{l} \rangle_{g}}. 
\]
On the other hand, as  $\vert \phi_i \vert^2_{g'}\exp(u) = \vert \phi_i \vert^2_g$,
\[
\langle\phi_i,\phi_i \rangle_{g} = \int_{X} \vert \phi_i \vert^2_{g'} \exp(u)\Phi \geq  \int_{X} \vert \phi_i \vert^2_{g'} \Phi = 1
\]
Therefore the lemma follows.
\end{proof}

Let us consider the following fundamental estimate.

\begin{Theorem}
\label{thm:dist:dist:dist:ineq}
Let $R = \bigoplus_{n\geq 0} R_n$ be a graded subring of $\bigoplus_{n\geq 0} H^0_{\mathcal{M}}(X, nD)$.
If $g$ is a $D$-Green function of $C^{\infty}$-type, then
there is a positive constant $C$ with the following properties:
\begin{enumerate}
\renewcommand{\labelenumi}{(\arabic{enumi})}
\item $\dist(R_n;ng) \leq C(n+1)^{3d}$ for all $n \geq 0$.

\item
${\displaystyle \frac{\dist(R_n;ng)}{C(n+1)^{3d}} \cdot \frac{\dist(R_m;mg)}{C(m+1)^{3d}} \leq \frac{\dist(R_{n+m};(n+m)g)}{C(n+m+1)^{3d}}}$
for all $n, m \geq 0$.
\end{enumerate}
\end{Theorem}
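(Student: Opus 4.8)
The plan is to combine two facts: Gromov's inequality (Proposition~\ref{prop:Gromov:ineq}), which compares the supremum norm and the $L^2$ norm on $H^0_{\mathcal{M}}(X,nD)$ up to a factor $(1+n)^{2d}$, and the extremal description of the distorsion function. For the latter, fix $x\notin\Supp(D)$: every $s\in H^0_{\mathcal{M}}(X,nD)$ is holomorphic near $x$, so $s\mapsto s(x)$ is a $\CC$--linear functional on any finite dimensional subspace $V$, and writing $ng=u_n-\sum(\cdots)\log|f_i|^2$ near $x$ with $u_n$ of $C^{\infty}$--type and $\phi_1,\dots,\phi_l$ an orthonormal basis of $V$ for $\langle\ ,\ \rangle_{ng}$, one has
\[
\dist(V;ng)(x)=\exp(-u_n(x))\sum_i|\phi_i(x)|^2=\sup\left\{\,\frac{|s|^2_{ng}(x)}{\langle s,s\rangle_{ng}}:s\in V\setminus\{0\}\right\},
\]
since $\sum_i|\phi_i(x)|^2$ is the squared operator norm of the functional $s\mapsto s(x)$. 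Because each $\dist(R_k;kg)$ is continuous on $X$ (its summands are, by Subsection~\ref{subsec:Green:function:norm}) and $X\setminus\Supp(D)$ is dense, I would prove (1) and (2) only at points $x\notin\Supp(D)$. I will also use that $R$ is a graded ring, so $R_nR_m\subseteq R_{n+m}$, together with $|st|^2_{(n+m)g}=|s|^2_{ng}|t|^2_{mg}$ for $s\in R_n$, $t\in R_m$.

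First I would fix the constant $C_0>0$ of Proposition~\ref{prop:Gromov:ineq}, taken with $l=1$, $D_1=D$, $g_1=g$, so that $\Vert s\Vert^2_{ag}\le C_0(1+|a|)^{2d}\langle s,s\rangle_{ag}$ for all $a\in\RR$ and $s\in H^0_{\mathcal{M}}(X,aD)$. Part (1) is then immediate from the extremal formula applied to $R_n\subseteq H^0_{\mathcal{M}}(X,nD)$: since $|s|_{ng}(x)\le\Vert s\Vert_{ng}$,
\[
\dist(R_n;ng)(x)\le\sup_{s\in R_n\setminus\{0\}}\frac{\Vert s\Vert^2_{ng}}{\langle s,s\rangle_{ng}}\le C_0(1+n)^{2d}\le C_0(n+1)^{3d},
\]
so (1) holds for every $C\ge C_0$.

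For (2), fix $x\notin\Supp(D)$, and (discarding the trivial case) pick $s\in R_n$ and $t\in R_m$ with $\langle s,s\rangle_{ng}=\langle t,t\rangle_{mg}=1$ realizing the maxima in the extremal formula, so $|s|^2_{ng}(x)=\dist(R_n;ng)(x)$ and $|t|^2_{mg}(x)=\dist(R_m;mg)(x)$; the relevant unit spheres are compact, so such $s,t$ exist. Then $st\in R_{n+m}$ and $|st|^2_{(n+m)g}(x)=\dist(R_n;ng)(x)\dist(R_m;mg)(x)$, so Proposition~\ref{prop:decomposition:hermitian:metric} applied to $st\in R_{n+m}$ gives
\[
\dist(R_n;ng)(x)\,\dist(R_m;mg)(x)\le\langle st,st\rangle_{(n+m)g}\,\dist(R_{n+m};(n+m)g)(x).
\]
Bounding $\langle st,st\rangle_{(n+m)g}=\int_X|s|^2_{ng}|t|^2_{mg}\,\Phi\le\Vert t\Vert^2_{mg}\langle s,s\rangle_{ng}\le C_0(m+1)^{2d}$ and, symmetrically, $\le C_0(n+1)^{2d}$, I obtain
\[
\dist(R_n;ng)(x)\,\dist(R_m;mg)(x)\le C_0\min(n+1,m+1)^{2d}\,\dist(R_{n+m};(n+m)g)(x).
\]

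Finally I would take $C=C_0\,2^{3d}$; then (1) holds, and for (2) it remains to check $C_0\min(n+1,m+1)^{2d}\le C\,(n+1)^{3d}(m+1)^{3d}/(n+m+1)^{3d}$. Assuming $n\le m$ (the other case being symmetric), the left side is $C_0(n+1)^{2d}$, while $n+m+1\le 2(m+1)$ gives $(n+m+1)^{3d}\le 2^{3d}(m+1)^{3d}$, so the right side is at least $C(n+1)^{3d}/2^{3d}=C_0(n+1)^{3d}\ge C_0(n+1)^{2d}$, as needed. The main obstacle, I expect, is the extremal characterization of $\dist$ together with the clean reduction to $x\notin\Supp(D)$; once that is in place, the product trick and the comparison $n+m+1\le 2\max(n+1,m+1)$ — which is exactly where the slack between $2d$ and $3d$ is used — finish the argument.
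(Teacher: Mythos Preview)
Your proof is correct and takes a genuinely different route from the paper's. The key difference is your use of the extremal (reproducing-kernel) identity
\[
\dist(R_n;ng)(x)=\sup_{0\ne s\in R_n}\frac{|s|^2_{ng}(x)}{\langle s,s\rangle_{ng}},
\]
whereas the paper works directly with the sum $\dist=\sum_i|\phi_i|^2_{ng}$ over an orthonormal basis. For part~(1) the paper bounds each summand by Gromov and picks up a factor $\dim R_n$, landing at $C'(n+1)^{2d}\dim R_n$; the exponent $3d$ then comes from $\dim R_n=O((n+1)^d)$. Your extremal formula gives the sharper bound $C_0(n+1)^{2d}$ directly, with no dimension count. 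For part~(2) the paper fixes $t_j$ in an orthonormal basis of $R_m$ and then chooses an orthonormal basis $s_1,\dots,s_r$ of $R_n$ so that the products $s_it_j$ are \emph{orthogonal} in $R_{n+m}$; this trick lets them bound $\dist(R_n;ng)\,|t_j|^2_{mg}$ by $C'(m+1)^{2d}\dist(R_{n+m};(n+m)g)$, and summing over $j$ again introduces $\dim R_m$. Your single-section argument (taking $st$ with $s,t$ maximizers at $x$) bypasses that basis manipulation and yields the sharper factor $C_0\min(n+1,m+1)^{2d}$. Both proofs finish with the same elementary estimate $n+m+1\le 2\max(n+1,m+1)$ to pass from exponent $2d$ to $3d$. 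Your approach is cleaner and avoids appealing to the growth of $\dim R_n$; the paper's has the minor advantage of using only Proposition~\ref{prop:decomposition:hermitian:metric} as stated, without needing the (easy but unstated) equality case.
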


\begin{proof}
Let us begin with the following claim:

\begin{Claim}
There is a positive constant $C_1$ such that
$\dist(R_n;ng) \leq C_1 (n+1)^{3d}$ for all $n \geq 0$
\end{Claim}

\begin{proof}
First of all, by Gromov's inequality for an $\RR$-divisor (cf. Proposition~\ref{prop:Gromov:ineq}), there is a positive constant $C'$ such that
\[
\Vert \phi \Vert_{ng}^2 \leq C' (n+1)^{2d} \langle \phi, \phi \rangle_{ng}
\]
for all $\phi \in H^0_{\mathcal{M}}(X, nD)$ and
$n \geq 0$.
Let $\phi_1, \ldots, \phi_{l_n}$ be an orthonormal basis of $R_n$. Then
\begin{multline*}
\dist(R_n;ng) \leq \Vert \phi_1 \Vert_{ng}^2 + \cdots + \Vert \phi_{l_n} \Vert_{ng}^2 \\
\leq C' (n+1)^{2d} ( \langle \phi_1, \phi_1 \rangle_{ng} + \cdots +  \langle \phi_{l_g}, \phi_{l_g} \rangle_{ng})
\leq C' (n+1)^{2d} \dim R_n,
\end{multline*}
as required.
\end{proof}

\begin{Claim}
There is a positive constant $C_2$ such that
\[
\dist(R_n;ng) \cdot \dist(R_m;mg) \leq C_2(m+1)^{3d} \dist(R_{n+m};(n+m)g)
\]
for $n \geq m \geq 0$.
\end{Claim}

\begin{proof}
Let $t_1, \ldots, t_l$ be an orthonormal basis of $R_m$.
For each $j= 1, \ldots, l$,
we choose an orthonormal basis $s_1, \ldots, s_r$ of $R_n$ such that
$s_1 t_j, \ldots, s_r t_j$ are orthogonal in $R_{n+m}$.
We set $I = \{ 1 \leq i \leq r \mid s_i t_j \not= 0 \}$.
As 
\[
\left\{ \frac{s_i t_j}{\sqrt{\langle s_i t_j, s_i t_j\rangle_{(n+m)g}}} \right\}_{i \in I}
\]
can be extended to an orthonormal basis of $R_{n+m}$, we have
\[
\sum_{i \in I} \frac{\vert s_i t_j\vert_{(n+m)g}^2}{\langle s_i t_j, s_i t_j\rangle_{(n+m)g}} 
\leq \dist(R_{n+m};(n+m)g).
\]
By using Gromov's inequality as in the previous claim,
\[
 \langle s_i t_j, s_i t_j \rangle_{(n+m)g} \leq \langle s_i, s_i \rangle_{ng} \Vert t_j \Vert^2_{mg} \leq C' (m+1)^{2d}  \langle t_j, t_j  \rangle_{mg} = C' (m+1)^{2d}.
\]
Hence
\begin{multline*}
\dist(R_n;ng) \vert t_j \vert_{mg}^2 = \sum_{i=1}^r \vert s_i t_j \vert_{(n+m)g} = \sum_{i \in I} \vert s_i t_j \vert_{(n+m)g} \\ 
\leq \sum_{i \in I}
\frac{C' (m+1)^{2d}}{\langle s_i t_j, s_i t_j\rangle_{(n+m)g}} \vert s_i t_j\vert_{(n+m)g}^2 \\
\leq C' (m+1)^{2d} \dist(R_{n+m};(n+m)g),
\end{multline*}
which implies 
\[
\dist(R_n;ng) \cdot \dist(R_m;mg) \leq \dim (R_m) C'  (m+1)^{2d} \dist(R_{n+m};(n+m)g),
\]
as required.
\end{proof}

We set $C = \max\{ C_1, 8^d C_2\}$. Then, for $n \geq m \geq 0$,
\begin{align*}
\frac{C(n+1)^{3d} C(m+1)^{3d}}{C(n+m+1)^{3d}} & \geq C_2(m+1)^{3d} 8^d \left( \frac{n+1}{n+m+1}\right)^{3d} \\
& \geq  C_2(m+1)^{3d} 8^d \left( \frac{n+1}{2n+1}\right)^{3d} \\
& >  C_2(m+1)^{3d} 8^d \left( \frac{1}{2}\right)^{3d} = C_2(m+1)^{3d}.
\end{align*}
Thus the proposition follows from the above claims.
\end{proof}

\renewcommand{\theTheorem}{\arabic{section}.\arabic{Theorem}}
\renewcommand{\theClaim}{\arabic{section}.\arabic{Theorem}.\arabic{Claim}}
\renewcommand{\theequation}{\arabic{section}.\arabic{Theorem}.\arabic{Claim}}

\section{Plurisubharmonic upper envelopes}
The main result of this section is the continuity of the upper envelope
of a family of Green functions of $\Tpsh_{\RR}$-type bounded above by
a Green function of $C^0$-type.
This will give the continuity of the positive part of the Zariski decomposition.

Throughout this section, let $X$ be a $d$-equidimensional complex manifold.
Let us begin with the following fundamental estimate.

\begin{Lemma}
\label{lem:subharmonic:local:bound}
Let 
$f_1, \ldots, f_r$ be holomorphic functions on $X$ such that $f_1, \ldots, f_r$ are not zero on each connected component of $X$. Let $a_1, \ldots, a_r \in \RR_{\geq 0}$ and $M \in \RR$.
We denote by $\Tpsh(X; f_1, \ldots, f_r, a_1, \ldots, a_r, M)$ the set of
all
plurisubharmonic functions $u$  on $X$ such that
\[
u \leq M - \sum_{i=1}^r a_i \log \vert f_i \vert^2 \quad\aew
\]
holds over $X$. Then,
for each point $x \in X$, there are an open neighborhood $U_x$ of $x$ and a constant $M'_x$ depending only on $f_1, \ldots, f_r$ and $x$ such that
\[
u \leq M + M'_x(a_1 + \cdots + a_r)
\]
on $U_x$ for any $u \in \Tpsh(X; f_1, \ldots, f_r, a_1, \ldots, a_r, M)$.
\end{Lemma}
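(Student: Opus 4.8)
The statement is local and will follow from the sub-mean value inequality for plurisubharmonic functions together with the local integrability of $\log|f_i|$. Fix $x\in X$. Choose a holomorphic coordinate chart identifying an open neighbourhood of $x$ with an open subset of $\CC^d$ in which $x$ corresponds to $0$, and pick $\rho>0$ so small that the closed ball $\overline{B}:=\{z\in\CC^d:\vert z\vert\le 2\rho\}$ is contained in this chart. Set $U_x:=\{z\in\CC^d:\vert z\vert<\rho\}$ and $\omega:=\lambda(U_x)$, where $\lambda$ denotes Lebesgue measure on $\CC^d\cong\RR^{2d}$. Then for every $y\in\overline{U_x}$ the ball $B(y;\rho):=\{z:\vert z-y\vert<\rho\}$ is a translate of $U_x$, so $\lambda(B(y;\rho))=\omega$, and its closure is contained in $\overline{B}$, hence in $X$.

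Given $u\in\Tpsh(X;f_1,\dots,f_r,a_1,\dots,a_r,M)$ and $y\in U_x$, the function $u$ is plurisubharmonic near $y$, so the sub-mean value inequality over $B(y;\rho)$ (obtained by integrating over $\{\vert\xi\vert<\rho\}$ the circle inequality recalled in Subsection~\ref{subsec:pluri:subharmonic}; cf.\ \cite{MK}) gives
\[
u(y)\ \le\ \frac{1}{\omega}\int_{B(y;\rho)}u\,d\lambda .
\]
Since $u\le M-\sum_{i=1}^r a_i\log\vert f_i\vert^2$ almost everywhere on $X$ and each $a_i\ge 0$, integrating this inequality over $B(y;\rho)$ (note $u\in L^1_{\mathrm{loc}}$, being plurisubharmonic) yields
\[
u(y)\ \le\ M-\sum_{i=1}^r a_i\,\Psi_i(y),\qquad \Psi_i(y):=\frac{1}{\omega}\int_{B(y;\rho)}\log\vert f_i\vert^2\,d\lambda .
\]

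The key point is that $\Psi_i$ is bounded below on $\overline{U_x}$ by a constant depending only on $f_i$ and $x$. Because $f_i$ is not identically zero on the connected component of $X$ containing $x$, the function $\log\vert f_i\vert^2$ is locally integrable; in particular $\Psi_i(y)\in\RR$ for every $y$, and $\log\vert f_i\vert^2\in L^1(\overline{B})$. For $y,y'\in\overline{U_x}$ one has
\[
\vert\Psi_i(y)-\Psi_i(y')\vert\ \le\ \frac{1}{\omega}\int_{B(y;\rho)\,\triangle\,B(y';\rho)}\bigl\vert\log\vert f_i\vert^2\bigr\vert\,d\lambda ,
\]
and $\lambda\bigl(B(y;\rho)\,\triangle\,B(y';\rho)\bigr)\to 0$ as $y'\to y$; by absolute continuity of the integral of $\log\vert f_i\vert^2$ over $\overline{B}$ this shows $\Psi_i$ is continuous on the compact set $\overline{U_x}$, hence $m_i:=\min_{\,\overline{U_x}}\Psi_i$ is a finite real number depending only on $f_i$ and $x$. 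Setting $M'_x:=\max_{1\le i\le r}\max\{0,-m_i\}$, we obtain for all $y\in U_x$
\[
u(y)\ \le\ M-\sum_{i=1}^r a_i m_i\ \le\ M+\sum_{i=1}^r a_i(-m_i)\ \le\ M+M'_x(a_1+\cdots+a_r),
\]
which is the assertion.

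\textbf{Main obstacle.} The only delicate step is the continuity and finiteness of the averages $\Psi_i$. The pointwise bound $u\le M-\sum a_i\log\vert f_i\vert^2$ degenerates to $-\infty$ along the zero loci of the $f_i$, so no single constant can work globally on $X$; it is precisely the fact that $\log\vert f_i\vert\in L^1_{\mathrm{loc}}$ — so that the fixed-radius ball averages stay finite and vary continuously — that makes the \emph{local} statement true and pins down the dependence of $M'_x$ on $f_1,\dots,f_r$ and $x$ alone.
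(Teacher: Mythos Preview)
Your argument is correct and is considerably more direct than the paper's own proof. The paper first upgrades the a.e.\ bound to a pointwise one, then treats the model case $f_i=z_i$ on a polydisc by an explicit iterated circle--average estimate, extends this to the case where $\{f_1\cdots f_r=0\}$ is a normal crossing divisor by factoring each $f_i$ as a monomial times a unit, and finally reduces the general case to the normal crossing case via a proper bimeromorphic map $\pi:X'\to X$ (i.e.\ a log resolution) together with compactness of $\pi^{-1}(x)$. Your approach bypasses all of this: you use the ball sub--mean--value inequality for plurisubharmonic functions and the single fact that $\log|f_i|^2\in L^1_{\mathrm{loc}}$, so that the fixed--radius averages $\Psi_i$ are finite and continuous on a compact neighbourhood of $x$. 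This gives the required constant $M'_x=\max_i\max\{0,-m_i\}$ without ever analysing the geometry of the zero loci.

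What each approach buys: the paper's route yields an explicit constant in the normal crossing case (essentially $-2\log(\epsilon/4)$ times a combinatorial factor), at the price of invoking resolution of singularities for the general case. Your route is more elementary and works uniformly, but the constant $M'_x$ is less explicit, being the minimum of a ball--average of $\log|f_i|^2$. Since the lemma is only ever used qualitatively later in the paper (in Lemma~\ref{lem:Green:A:B:} and Theorem~\ref{thm:limit:nef:div:surface}), your shorter argument would serve equally well. One small remark: the ball mean--value inequality you invoke follows from the fact that a plurisubharmonic function on an open set of $\CC^d$ is subharmonic as a function on $\RR^{2d}$; this is standard (and is in \cite{MK}), but is not quite the same as ``integrating the circle inequality'' from Subsection~\ref{subsec:pluri:subharmonic}, which would more naturally give a polydisc average.
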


\begin{proof}
Let us begin with the following claim:

\begin{Claim}
For any $u \in \Tpsh(X; f_1, \ldots, f_r, a_1, \ldots, a_r, M)$,
\[
u \leq M - \sum_{i=1}^r a_i \log \vert f_i \vert^2
\]
holds over $X$ .
\end{Claim}

\begin{proof}
Clearly we may assume that $a_i > 0$ for all $i$.
Let us fix $x \in X$. If $f_i(x) = 0$ for some $i$, then
the right hand side is $\infty$, so that the assertion is obvious.
We assume that $f_i(x) \not= 0$ for all $i$. Then the right hand side is continuous around $x$.
Thus it follows from Lemma~\ref{lem:fqpssh:ineq:ae}.
\end{proof}

\begin{Claim}
Let $\epsilon \in \RR_{>0}$, $a_1, \ldots, a_d \in \RR_{\geq 0}$, and
$M \in \RR$. Then
\[
u \leq M - 2 \log(\epsilon/4) (a_1 + \cdots + a_d)
\]
holds on $\Delta_{\epsilon/4}^d$ for any $u \in \Tpsh(\Delta_{\epsilon}^d; z_1, \ldots, z_d, a_1, \ldots, a_d, M)$,
where $(z_1, \ldots, z_d)$ is the coordinate of $\CC^d$ and 
\[
\Delta_{t}^d = \{ (z_1,\ldots,z_d) \in \CC^d \mid \vert z_1 \vert < t, \ldots,  \vert z_d \vert < t \}.
\]
for $t \in \RR_{>0}$.
\end{Claim}

\begin{proof}
Note that if $(z_1, \ldots, z_d) \in \Delta_{\epsilon/4}^d$, then
\[
\{ (z_1 + (\epsilon/2)e^{2\pi i \theta_1}, \ldots, z_d + (\epsilon/2)e^{2\pi i \theta_d}) \mid \theta_1, \ldots, \theta_d \in [0, 1] \} \subseteq \Delta_{\epsilon}^d.
\]
Moreover, as
\begin{multline*}
\epsilon/2 = \vert (\epsilon/2)e^{2\pi i \theta_j} \vert = \vert z_j + (\epsilon/2)e^{2\pi i \theta_j} - z_j \vert \\
\leq \vert z_j + (\epsilon/2)e^{2\pi i \theta_j} \vert + \vert z_j \vert <
 \vert z_j + (\epsilon/2)e^{2\pi i \theta_j} \vert + \epsilon/4,
 \end{multline*}
we have $\vert z_j + (\epsilon/2)e^{2\pi i \theta_j} \vert >  \epsilon/4$ for $j=1,\ldots, d$.
Thus, by \cite[Theorem~4.1.3]{Hor},
\begin{align*}
u(z_1, \ldots, z_d) & \leq \int_{0}^1 \cdots \int_{0}^1u(z_1 + (\epsilon/2)e^{2\pi i \theta_1}, \ldots, z_d + (\epsilon/2)e^{2\pi i \theta_d}) d\theta_1 \cdots d\theta_d  \\
& \leq
\int_{0}^1 \cdots \int_{0}^1 \left(  M - \sum_{j=1}^n a_j \log \vert  z_j + (\epsilon/2)e^{2\pi i \theta_j} \vert^2 \right)d\theta_1 \cdots d\theta_d \\
& = M - \sum_{j=1}^d a_j \int_{0}^1\log \vert  z_j + (\epsilon/2)e^{2\pi i \theta_j} \vert^2 d\theta_j \\
& \leq M - \sum_{j=1}^d a_j  \int_{0}^1\log (\epsilon/4)^2 d\theta_j = M - 2\log (\epsilon/4) \sum_{j=1}^d a_j.
\end{align*}
\end{proof}

Next we observe the following claim:

\begin{Claim}
If $\Supp \{ x \in X \mid f_1(x) \cdots f_r(x) = 0 \}$ is a normal crossing divisor on $X$, then
the lemma holds.
\end{Claim}

\begin{proof}
We choose an open neighborhood $V_x$ such that $V_x = \Delta_1^d$ and
\[
\Supp \{ x \in X \mid f_1(x) \cdots f_r(x) = 0 \}
\]
is given by $\{ z_1 \cdots z_l = 0 \}$.
Then there are $b_{ij} \in \ZZ_{\geq 0}$ and nowhere vanishing holomorphic functions
$v_1, \ldots, v_r$ on $\Delta_1^d$ such that
\[
f_1 = z_1^{b_{11}} \cdots z_l^{b_{1l}} v_1, \ldots, f_r = z_1^{b_{r1}} \cdots z_l ^{b_{rl}} v_r.
\]
Thus
\[
M - \sum_{i=1}^r a_i \log \vert f_i \vert^2 = M - \sum_{i=1}^r a_i \log \vert v_i \vert^2 - \sum_{j=1}^l \left( \sum_{i=1}^r a_i b_{ij} \right) \log \vert z_j \vert^2.
\]
We choose $M_1, M_2\in \RR$ such that $M_1 = \max \{ b_{ij} \mid i=1, \ldots, r, j = 1, \ldots, l \}$ and
$M_2 \geq \max_{z \in \Delta_{1/2}^d} \{ - \log \vert v_i(z) \vert^2 \}$ for all $i$.
Then
\[
M - \sum_{i=1}^r a_i \log \vert f_i \vert^2 \leq M + M_2(a_1 + \cdots + a_r) - \sum_{j=1}^l M_1(a_1 + \cdots + a_r) \log \vert z_j \vert^2
\]
on $\Delta_{1/2}^d$. Thus, by the previous claim, for any
$u \in \Tpsh(X; f_1, \ldots, f_r, a_1, \ldots, a_r, M)$,
\[
u \leq M + (M_2 - 2 \log(1/8) lM_1)(a_1 + \cdots + a_r)
\]
on $\Delta_{1/8}^d$.
\end{proof}

Let us start a general case.
Let $\pi : X' \to X$ be a proper bimeromorphic map such that
$\Supp ( \{ \pi^*(f_1) \cdots \pi^*(f_r) = 0 \})$ is a normal crossing divisor on $X'$.
Note that if $u$ is a plurisubharmonic function on $X$, then
$\pi^*(u)$ is also plurisubharmonic on $X'$ (cf. \cite[Corollary~2.9.5]{MK}).
By the above claim, for each point $y \in \pi^{-1}(x)$, there is an open neighborhood $U_y$ of $y$ and
a constant $M'_y$ depending only on $f_1, \ldots, f_r$ and $y$ such that, for any
$u \in \Tpsh(X; f_1, \ldots, f_r, a_1, \ldots, a_r, M)$,
\[
f^*(u) \leq M + M'_y (a_1 + \cdots + a_r)
\]
on $U_y$.
As $\pi^{-1}(x) \subseteq \bigcup_{y \in \pi^{-1}(x)} U_y$ and $\pi^{-1}(x)$ is compact,
there are $y_1, \ldots, y_s$ such that $\pi^{-1}(x) \subseteq  U_{y_1} \cup \cdots \cup U_{y_s}$.
We can choose an open neighborhood $U_x$ of $x$ such that $\pi^{-1}(U_x) \subseteq U_{y_1} \cup \cdots \cup U_{y_s}$.
Thus, if we set $M'_x = \max \{ M'_{y_1}, \ldots, M'_{y_s} \}$, then
\[
f^*(u) \leq M + M'_x (a_1 + \cdots + a_r)
\]
on $\pi^{-1}(U_x)$, and hence the lemma follows.
\end{proof}

Let $\alpha$ be a continuous $(1,1)$-form on $X$. We set
\[
\Tpsh(X; \alpha): =
\left\{ \phi \ \left|\ \begin{array}{cl} \text{(i)} & \text{$\phi : X \to \{-\infty\} \cup \RR$.} \\
\text{(ii)} & \text{$\phi \in (C^{\infty} + \Tpsh)(X)$.} \\ 
\text{(iii)} & \text{$[\alpha] + dd^c([\phi]) \geq 0$.}
\end{array}
\right\}\right..
\]
First we observe the following lemma.

\begin{Lemma}
\label{lem:approximation:plurisub:lower}
We assume that $X$ is compact and that $\alpha + dd^c(\psi_0)$ is either positive or zero for some
$C^{\infty}$-function $\psi_0$ on $X$.
If $\phi \in \Tpsh(X; \alpha) \cap C^0(X)$, then
there are  sequences $\{ \phi_n \}_{n=1}^{\infty}$ and $\{ \varphi_n \}_{n=1}^{\infty}$ in 
\[
\Tpsh(X; \alpha) \cap C^{\infty}(X)
\]
such that $\phi_n \leq \phi \leq \varphi_n$ on $X$ for all $n \geq 1$ and that
\[
\lim_{n\to\infty} \Vert \phi - \phi_n \Vert_{\sup} = \lim_{n\to\infty} \Vert \varphi_n - \phi \Vert_{\sup}= 0.
\]
\end{Lemma}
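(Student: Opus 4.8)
The plan is to reduce to the case where the background form is positive or zero, and then invoke Richberg's regularization theorem together with a one‑parameter rescaling. First I would set $\phi' := \phi - \psi_0$ and $\alpha' := \alpha + dd^c(\psi_0)$; since $\psi_0 \in C^{\infty}(X)$ one has $[\alpha'] + dd^c([\phi']) = [\alpha] + dd^c([\phi]) \geq 0$, so $\phi' \in \Tpsh(X;\alpha')$, and conversely adding $\psi_0$ back to a $C^{\infty}$ element of $\Tpsh(X;\alpha')$ produces a $C^{\infty}$ element of $\Tpsh(X;\alpha)$ without changing sup‑norms of differences. Hence it suffices to prove the lemma for $\phi'$ and $\alpha'$; that is, we may assume $\alpha$ is itself either positive or the zero form.

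If $\alpha = 0$ (the ``zero'' alternative of the hypothesis, after the reduction), then condition (iii) says $dd^c([\phi]) \geq 0$, so $\phi$ agrees almost everywhere with a plurisubharmonic function; being continuous, $\phi$ is plurisubharmonic, and since $X$ is compact it is constant on each connected component, hence $\phi \in C^{\infty}(X)$. In this case one simply takes $\phi_n = \varphi_n = \phi$ for all $n$.

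Now assume $\alpha$ is a continuous positive $(1,1)$-form. Put $M := \Vert \phi \Vert_{\sup} < \infty$. The key observation is that for every $t \in (0,1)$ the function $t\phi$ is \emph{strictly} $\alpha$-plurisubharmonic: from $[\alpha] + dd^c([\phi]) \geq 0$ one gets $dd^c([t\phi]) = t\, dd^c([\phi]) \geq -t[\alpha]$, hence $[\alpha] + dd^c([t\phi]) \geq (1-t)[\alpha]$, and $(1-t)\alpha$ is a continuous positive form. By Richberg's approximation theorem (in the version allowing a continuous reference form), for each $n \geq 1$ we may take $t_n := 1 - 1/n$ and choose $\psi_n \in C^{\infty}(X)$ with $t_n\phi \leq \psi_n \leq t_n\phi + 1/n$ and $[\alpha] + dd^c([\psi_n]) \geq 0$; thus $\psi_n \in \Tpsh(X;\alpha) \cap C^{\infty}(X)$. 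Writing $\eta_n := \Vert \psi_n - \phi \Vert_{\sup}$, we have $\eta_n \leq \Vert \psi_n - t_n\phi \Vert_{\sup} + \Vert t_n\phi - \phi \Vert_{\sup} \leq 1/n + (1-t_n)M \to 0$. Finally set $\phi_n := \psi_n - \eta_n$ and $\varphi_n := \psi_n + \eta_n$; these lie in $\Tpsh(X;\alpha) \cap C^{\infty}(X)$ (adding a constant preserves conditions (ii) and (iii)), satisfy $\phi_n \leq \phi \leq \varphi_n$ on $X$ by the definition of $\eta_n$, and $\Vert \phi - \phi_n \Vert_{\sup} \leq 2\eta_n$, $\Vert \varphi_n - \phi \Vert_{\sup} \leq 2\eta_n$, both tending to $0$. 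Undoing the reduction (adding $\psi_0$) gives the required sequences.

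The main obstacle is the regularization step: turning the merely continuous, strictly $\alpha$-plurisubharmonic function $t\phi$ into a genuinely $C^{\infty}$ function that is still $\alpha$-plurisubharmonic and uniformly close to it. This is exactly Richberg's theorem; if one wants to avoid citing it, this is the part that must be carried out by hand, via the usual local‑convolution‑and‑patching argument (convolve local plurisubharmonic potentials of $t\phi$, use the strict positivity $(1-t)\alpha$ to absorb the convolution error so that the local smoothings remain $\alpha$-plurisubharmonic, then glue them over a finite cover of the compact $X$ by a regularized maximum). Everything else above is bookkeeping with sup‑norms and the defining conditions of $\Tpsh(X;\alpha)$.
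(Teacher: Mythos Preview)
Your proof is correct and follows the same overall architecture as the paper's: reduce via $\phi \mapsto \phi - \psi_0$ to the case where the background form is itself positive or zero, dispose of the zero case by the maximum principle, and in the positive case produce a single smooth approximant and then shift by $\pm$ the sup-error to sandwich $\phi$.

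The genuine difference is in the positive case. The paper invokes the Blocki--Kolodziej regularization \cite{BK} directly: for a positive closed form $\alpha$, any $\alpha$-psh function is the pointwise limit of a \emph{decreasing} sequence of smooth $\alpha$-psh functions; since $\phi$ is continuous and $X$ is compact, Dini's theorem upgrades this to uniform convergence, giving the $\varphi_n$, and one sets $\phi_n = \varphi_n - \Vert \varphi_n - \phi\Vert_{\sup}$. Your route instead exploits the scaling $t\phi$ (for $t<1$) to manufacture \emph{strict} $\alpha$-plurisubharmonicity with a margin $(1-t)\alpha$, and then appeals to the older Richberg approximation to absorb the smoothing error into that margin. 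What your approach buys is that it rests on the more classical Richberg theorem rather than the 2007 result; what the paper's approach buys is that it is shorter (no scaling step) and yields a genuinely decreasing upper approximation. Both implicitly need $\alpha$ to be closed (so that local potentials exist), which is satisfied in all the applications in the paper, where $\alpha = c_1(A,g_A)$.
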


\begin{proof}
First we assume that $\alpha = dd^c(-\psi_0)$ for some $C^{\infty}$-function $\psi_0$ on $X$.
Then 
\[
\Tpsh(X; \alpha) = \{ \psi_0 + c \mid c \in \RR \cup \{ -\infty \}\}
\]
because $X$ is compact.
Thus the assertion of the lemma is obvious.

Next we assume that $\alpha$ is positive.
By \cite[Theorem~1]{BK},
there is a sequence of $\{ \varphi_n \}_{n=1}^{\infty}$ in $\Tpsh(X; \alpha) \cap C^{\infty}(X)$ such that
\[
\varphi_1(x) \geq \varphi_2(x) \geq \cdots \geq \varphi_n(x) \geq \varphi_{n+1}(x) \geq \cdots \geq \phi(x)
\]
and 
$\phi(x) = \lim_{n\to\infty} \varphi_n(x)$ for all $x \in X$.
Since $X$ is compact and $\phi$ is continuous, it is easy to see that $\lim_{n\to\infty} \Vert \varphi_n - \phi \Vert_{\sup} = 0$.
We set $\phi_n = \varphi_n - \Vert \varphi_n - \phi \Vert_{\sup}$ for all $n \geq 1$.
Then $\phi_n \in \Tpsh(X; \alpha) \cap C^{\infty}(X)$ and $\phi_n \leq \phi$.
Note that $\Vert \phi - \phi_n \Vert_{\sup} \leq 2 \Vert \varphi_n - \phi \Vert_{\sup}$.
Thus $\lim_{n\to\infty} \Vert \phi - \phi_n \Vert_{\sup} = 0$.

Finally we assume that $\alpha' = \alpha + dd^c(\psi_0)$ is positive for some $C^{\infty}$-function $\psi_0$ on $X$.
Then 
\[
\phi' := \phi - \psi_0 \in \Tpsh(X; \alpha') \cap C^0(X).
\]
Thus, by the previous observation,
there are sequences $\{ \phi'_n \}_{n=1}^{\infty}$ and $\{ \varphi'_n \}_{n=1}^{\infty}$in 
\[
\Tpsh(X; \alpha') \cap C^{\infty}(X)
\]
such that $\phi'_n \leq \phi' \leq \varphi'_n$ on $X$ for all $n \geq 1$ and that
\[
\lim_{n\to\infty} \Vert \phi' - \phi'_n \Vert_{\sup} = \lim_{n\to\infty} \Vert \varphi'_n - \phi' \Vert_{\sup} = 0.
\]
We set $\phi_n := \phi'_n + \psi_0$ and $\varphi_n := \varphi'_n + \psi_0$ for every $n \geq 1$.
Then 
\[
\phi_n, \varphi_n  \in \Tpsh(X; \alpha) \cap C^{\infty}(X)\quad\text{and}\quad 
\phi_n \leq \phi \leq \varphi_n
\]
for all $n \geq 1$.
Moreover, $\lim_{n\to\infty}\Vert \phi_n - \phi \Vert_{\sup} = \lim_{n\to\infty}\Vert \varphi_n - \phi \Vert_{\sup} = 0$.
\end{proof}

Let $A$ be an $\RR$-divisor and let $g_A$ be an $A$-Green function of $C^{\infty}$-type on $X$.
Let $\alpha = c_1(A, g_A)$, that is, $\alpha$ is a $C^{\infty}$-form such that
\[
[\alpha] = dd^c([g_A]) + \delta_A
\]
\rom{(}cf. Proposition~\rom{\ref{prop:lelong:formula}}\rom{)}.
\query{add ``$\alpha = c_1(A, g_A)$'' (24/Sep/2010)}
Here let us consider the natural correspondence between $G_{\Tpsh}(X;A)$ and
$\Tpsh(X; \alpha)$ in terms of $g_A$.

\begin{Proposition}
\label{prop:psh:Green:isom}
If $\phi \in \Tpsh(X; \alpha)$, then $\phi + g_A \in G_{\Tpsh}(X;A)$.
Moreover, we have the following:
\begin{enumerate}
\renewcommand{\labelenumi}{(\arabic{enumi})}
\item
For $u \in G_{\Tpsh}(X;A)$, there is $\phi \in \Tpsh(X; \alpha)$ such that $\phi+g_A = u\ \aew$.

\item
For $\phi_1, \phi_2 \in  \Tpsh(X; \alpha)$,
\[
\phi_1 \leq \phi_2\quad\Longleftrightarrow\quad\phi_1+g_A \leq \phi_2 + g_A\ \aew.
\]

\item
For $\phi \in \Tpsh(X; \alpha)$, 
\[
\phi(x) \not= -\infty\ (\forall x \in X)
\quad\Longleftrightarrow\quad
\phi+g_A \in G_{\Tpsh_{\RR}}(X;A).
\]

\item
For $\phi \in \Tpsh(X; \alpha)$,   
\[
\phi \in C^{\infty}(X)\quad\Longleftrightarrow\quad
\phi+g_A \in G_{C^{\infty}}(X;A).
\]

\item
For $\phi \in \Tpsh(X; \alpha)$, 
\[
\phi \in C^{0}(X)\quad\Longleftrightarrow\quad
\phi+g_A \in G_{C^{0}}(X;A).
\]
\end{enumerate}
\end{Proposition}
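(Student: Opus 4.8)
The plan is to reduce everything to a local computation and one local regularity fact. Fix a decomposition $A = a_1 A_1 + \cdots + a_l A_l$ into prime divisors. Around each point $x$ choose local equations $f_1,\dots,f_l$ of $A_1,\dots,A_l$ on a neighbourhood $U_x$, together with a local expression $g_A = v_x + \sum_{i}(-a_i)\log|f_i|^2\ \aew$ with $v_x\in C^\infty(U_x)$. By the Poincar\'{e}--Lelong equation $dd^c(\log|f_i|^2)=\delta_{A_i}$, the defining identity $[\alpha]=dd^c([g_A])+\delta_A$ of Proposition~\ref{prop:lelong:formula} forces $\alpha|_{U_x}=dd^c(v_x)$; that is, $v_x$ is a smooth local potential for $\alpha$. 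With this bookkeeping, for $\phi\in\Tpsh(X;\alpha)$ one has on $U_x$
\[
\phi + g_A = (\phi+v_x) + \sum_{i=1}^l(-a_i)\log|f_i|^2 \qquad\aew,
\]
where $\phi+v_x\in (C^\infty+\Tpsh)(U_x)$ by condition (ii) and $dd^c([\phi+v_x])=[\alpha]+dd^c([\phi])\geq 0$ by condition (iii).

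The decisive step is a local lemma: \emph{if $w\in (C^\infty+\Tpsh)(U)$ and $dd^c([w])\geq 0$ as a current, then $w\in\Tpsh(U)$.} I would argue as follows. Since $dd^c w\geq 0$, there is a plurisubharmonic $\tilde w$ with $w=\tilde w\ \aew$ (standard; e.g.\ via the decreasing mollifications $w*\chi_\epsilon$, cf.\ \cite{MK}). Write $w=Q+p$ locally with $Q$ continuous and $p$ plurisubharmonic. Using that a plurisubharmonic function does not see null sets under $\limsup$ (cf.\ \cite{MK}), for every $x$ one gets $\tilde w(x)=\limsup_{y\to x,\,\aew}\tilde w(y)=\limsup_{y\to x,\,\aew}w(y)$, and on the other hand $w(x)=Q(x)+p(x)=Q(x)+\limsup_{y\to x,\,\aew}p(y)=\limsup_{y\to x,\,\aew}w(y)$ because $Q$ is continuous; hence $w=\tilde w$ everywhere and $w$ is plurisubharmonic. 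Applying this to $w=\phi+v_x$ shows $\phi+v_x\in\Tpsh(U_x)$, which is exactly the assertion $\phi+g_A\in G_{\Tpsh}(X;A)$. Moreover it identifies $\phi+v_x$ as a plurisubharmonic local potential of $\phi+g_A$, and one records that for a fixed choice of local equations this potential is unique (two plurisubharmonic functions equal almost everywhere are equal; transition units contribute only pluriharmonic terms).

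For (1) one reverses the construction: given $u\in G_{\Tpsh}(X;A)$, write $u=p_x+\sum_i(-a_i)\log|f_i|^2\ \aew$ on $U_x$ with $p_x$ plurisubharmonic, and set $\phi:=p_x-v_x$ on $U_x$. A check on overlaps (using that $\log|e_i|^2$ is pluriharmonic for the transition units $e_i$, so $p_x$ and $v_x$ transform compatibly) shows $\phi$ is a well-defined function $X\to\{-\infty\}\cup\RR$ lying in $(C^\infty+\Tpsh)(X)$, with $dd^c([\phi])+[\alpha]=dd^c([p_x])\geq 0$ locally and $\phi+g_A=u\ \aew$; thus $\phi\in\Tpsh(X;\alpha)$. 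For (2) the implication $\Rightarrow$ is immediate, and $\Leftarrow$ follows since $\phi_1+g_A\leq\phi_2+g_A\ \aew$ gives $\phi_1+v_x\leq\phi_2+v_x\ \aew$ on $U_x$, hence everywhere by property (2) of the type $\Tpsh$ (cf.\ Lemma~\ref{lem:fqpssh:ineq:ae}), hence $\phi_1\leq\phi_2$. Items (3), (4), (5) all reduce to the uniqueness of the plurisubharmonic local potential: $\phi+g_A$ has the canonical local potential $\phi+v_x$, so it is of $\Tpsh_\RR$-type (resp.\ $C^\infty$-type, $C^0$-type) exactly when $\phi+v_x$ — equivalently $\phi$, since $v_x$ is finite and smooth — is real valued (resp.\ $C^\infty$, $C^0$); for the nontrivial direction in (4) and (5) one again invokes that a plurisubharmonic function equal almost everywhere to a smooth (resp.\ continuous) function is equal to it everywhere, by the same $\limsup$ argument as above.

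The main obstacle is the local lemma of the second paragraph: establishing that $(C^\infty+\Tpsh)$-membership together with positivity of the $dd^c$ current yields genuine, everywhere plurisubharmonicity, and hence an exact (not merely almost-everywhere) correspondence of potentials. Everything else — well-definedness on overlaps, the monotonicity statement (2), and the refinements (3)–(5) — then follows formally from the resulting uniqueness of the plurisubharmonic local potential.
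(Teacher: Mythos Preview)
Your proof is correct and follows the same local-expression strategy as the paper: write $g_A$ locally as $h-\sum a_i\log|f_i|^2$ with $h\in C^\infty$, observe that $\phi+g_A$ then has local potential $h+\phi$ with $dd^c([h+\phi])=[\alpha]+dd^c([\phi])\geq 0$, and reverse the construction for (1). The one substantive difference is that you explicitly prove the local lemma that a function in $C^\infty+\Tpsh$ with nonnegative $dd^c$-current is genuinely plurisubharmonic (via the mollification/essential-limsup argument, which is the mechanism behind Lemma~\ref{lem:fqpssh:ineq:ae}), whereas the paper simply asserts $g_A+\phi\in G_{\Tpsh}(X;A)$ at that point; your justification is the right one and also underlies the nontrivial directions of (4) and (5), which the paper again declares ``obvious.''
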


\begin{proof}
We set $A = a_1 D_1 + \cdots + a_l D_l$, where
$D_i$'s are reduced and irreducible divisors on $X$ and $a_1, \ldots, a_l \in \RR$.
Let $U$ be an open set of $X$ and let $f_1, \ldots, f_l$ be local equations of $D_1, \ldots, D_l$ on $U$
respectively. Let
\[
g_A = h - \sum_{i=1}^l a_i \log \vert f_i \vert^2 \quad\aew
\]
be the local expression of $g_A$ with respect to $f_1, \ldots, f_l$,
where $h \in C^{\infty}(U)$.
Then
\[
g_A + \phi = (h+\phi) - \sum_{i=1}^l a_i \log \vert f_i \vert^2 \quad\aew.
\]
Since $\alpha = dd^c(h)$ on $U$, we have
\[
dd^c([h + \phi]) = [\alpha] + dd^c([\phi]) \geq 0.
\]
Thus $g_A + \phi \in G_{\Tpsh}(X;A)$ and
\[
g_A + \phi = (h+\phi) - \sum_{i=1}^l a_i \log \vert f_i \vert^2 \quad\aew.
\]
is the local expression of $g_A + \phi$ with respect to $f_1, \ldots, f_l$.

(1) For $u \in G_{\Tpsh}(X;A)$, let 
\[
u = p - \sum_{i=1}^l a_i \log \vert f_i \vert^2\ \aew
\]
be the local expression of $u$ with respect to $f_1, \ldots, f_l$, where $p$ is plurisubharmonic.
It is easy to see that $p - h$ does not depend on the choice of the local equations $f_1, \ldots, f_l$.
Thus there is a function $\phi : X \to \{-\infty\} \cup \RR$ such that
$\phi$ is locally given by $p - h$. Moreover
\[
dd^c([p-h]) + [\alpha] = dd^c([p]) \geq 0.
\]
Hence $\phi \in  \Tpsh(X; \alpha)$ and $\phi + g_A = u\ \aew$.

(2) Clearly
\[
\phi_1 \leq \phi_2\ \aew \quad\Longleftrightarrow\quad \phi_1 + g_A \leq \phi_2 + g_A\ \aew.
\]
On the other hand, by Lemma~\ref{lem:fqpssh:ineq:ae},
\[
\phi_1 \leq \phi_2 \quad\Longleftrightarrow\quad\phi_1 \leq \phi_2\ \aew.
\]

(3), (4) and (5) are obvious because
\[
\phi+g_A  = (h + \phi) - \sum_{i=1}^{l} a_i \log \vert f_i \vert^2\ \aew
\]
 is a local expression of $\phi+g_A$ and $h$ is $C^{\infty}$.
\end{proof}

Let $\TT$ be a type for Green functions on $X$ such that 
$\Tpsh$ is a subjacent type of $\TT$, that is,
the following property holds for an arbitrary open set $U$ of $X$:
if $u \leq v\ \aew$ on $U$ for $u \in \Tpsh(U)$ and $v \in \TT(U)$, then $u \leq v$ on $U$.

\begin{Proposition}
\label{prop:limit:Green:function:PSH}
Let $A$ and $B$ be $\RR$-divisors on $X$ with $A \leq B$.
Let $h$ be a $B$-Green function of $\TT$-type on $X$ such that $h$ is of upper bounded type.
Let $\{ g_{\lambda} \}_{\lambda \in \Lambda}$ be a family of $A$-Green functions of $\Tpsh$-type on $X$.
We assume that 
$g_{\lambda} \leq h\ \aew$ for all $\lambda \in \Lambda$.
Then there is an $A$-Green function $g$ of $\Tpsh$-type on $X$ with the following properties:
\begin{enumerate}
\renewcommand{\labelenumi}{(\alph{enumi})}
\item
Let us fix an $A$-Green function $g_A$ of $C^{\infty}$-type. Let $\alpha$ be a unique $C^{\infty}$-form with
$[\alpha] = dd^c([g_A]) + \delta_A$. If we choose $\phi \in \Tpsh(X;\alpha)$ and $\phi_{\lambda} \in \Tpsh(X;\alpha)$ for each $\lambda \in \Lambda$
such that $g = g_A + \phi \ \aew$ and $g_{\lambda} = g_A + \phi_{\lambda} \ \aew$ \rom{(}cf. Proposition~\rom{\ref{prop:psh:Green:isom}}\rom{)},
then $\phi$ is the upper semicontinuous regularization of
the function given by
\[
x \mapsto \sup_{\lambda \in \Lambda} \{ \phi_{\lambda}(x) \}.
\]
In particular, $g_{\rm can}$ is  the upper semicontinuous regularization of
the function given by
\[
x \mapsto \sup_{\lambda \in \Lambda} \{ (g_{\lambda})_{\rm can}(x) \}
\]
over $X \setminus \Supp(A)$.

\item $g \leq h\ \aew$.

\item
If there is $g_{\lambda}$
such that $g_{\lambda} \in G_{\Tpsh_{\RR}}(X;A)$, then $g \in G_{\Tpsh_{\RR}}(X;A)$.
\end{enumerate}
\end{Proposition}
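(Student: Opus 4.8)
The plan is to transport the whole problem to the space $\Tpsh(X;\alpha)$ by means of Proposition~\ref{prop:psh:Green:isom}, apply the upper‑envelope facts for plurisubharmonic functions recalled in Subsection~\ref{subsec:pluri:subharmonic}, and transport the result back. First I would fix an $A$-Green function $g_A$ of $C^{\infty}$-type (it exists: patch the local models $-\sum_i a_i\log|f_i|^2$ with a partition of unity, cf.\ Lemma~\ref{lem:green:partition:unity}) and set $\alpha = c_1(A,g_A)$. By Proposition~\ref{prop:psh:Green:isom}(1), for each $\lambda$ there is $\phi_\lambda\in\Tpsh(X;\alpha)$ with $g_\lambda = g_A+\phi_\lambda$ a.e.; this $\phi_\lambda$ is uniquely determined, since two such choices agree a.e.\ and hence everywhere by Proposition~\ref{prop:psh:Green:isom}(2). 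So the $\phi_\lambda$ in the statement are unambiguous.

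The first substantive step is to show that $\{\phi_\lambda\}_{\lambda\in\Lambda}$ is locally uniformly bounded above. Write $A=\sum_i a_iD_i$ and $B=\sum_i b_iD_i$ over a common list of prime divisors, so $b_i-a_i\ge 0$ because $B-A\ge 0$. On a small coordinate ball $U$ carrying local equations $f_i$ of $D_i$, write $g_\lambda = u_\lambda-\sum_i a_i\log|f_i|^2$ a.e.\ with $u_\lambda$ plurisubharmonic, $g_A = h_A-\sum_i a_i\log|f_i|^2$ a.e.\ with $h_A\in C^{\infty}(U)$, and $h = w-\sum_i b_i\log|f_i|^2$ a.e.\ with $w\in\TT(U)$; since $h$ is of upper bounded type we may shrink $U$ so that $w\le M$ for a constant $M$. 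Then $g_\lambda\le h$ a.e.\ forces
\[
u_\lambda \le M-\sum_i(b_i-a_i)\log|f_i|^2\quad\aew
\]
on $U$, i.e.\ $u_\lambda\in\Tpsh(U;f_1,\dots,f_r,b_1-a_1,\dots,b_r-a_r,M)$; Lemma~\ref{lem:subharmonic:local:bound} then provides, near each point of $U$, a bound $u_\lambda\le M+M'_x\sum_i(b_i-a_i)$ independent of $\lambda$. Since $\phi_\lambda$ is locally $u_\lambda-h_A$ with $h_A$ smooth, $\{\phi_\lambda\}$ is locally uniformly bounded above.

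Next I would define $\phi$ to be the upper semicontinuous regularization of $x\mapsto\sup_\lambda\phi_\lambda(x)$; this is a well-defined global function, regularization being a local operation. On each ball $U$ as above, $h_A+\sup_\lambda\phi_\lambda=\sup_\lambda u_\lambda$ is a supremum of plurisubharmonic functions locally uniformly bounded above, so by the facts recalled in Subsection~\ref{subsec:pluri:subharmonic} its regularization is plurisubharmonic and equals it a.e.; subtracting $h_A$, we get $\phi\in(C^{\infty}+\Tpsh)(U)$ and $dd^c([\phi])+[\alpha]\ge 0$ on $U$. As these are local conditions, $\phi\in\Tpsh(X;\alpha)$, so by Proposition~\ref{prop:psh:Green:isom} the function $g:=g_A+\phi$ is an $A$-Green function of $\Tpsh$-type. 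Property~(a) then holds by construction together with the uniqueness above; for its ``in particular'' clause one restricts to $X\setminus\Supp(A)$, where $(g_A)_{\rm can}$ is continuous, so $g_{\rm can}=(g_A)_{\rm can}+\phi$ and $(g_\lambda)_{\rm can}=(g_A)_{\rm can}+\phi_\lambda$ and regularization commutes with adding $(g_A)_{\rm can}$. Property~(c) is likewise immediate: if some $\phi_{\lambda_0}$ is real valued (Proposition~\ref{prop:psh:Green:isom}(3)), then $\phi\ge\sup_\lambda\phi_\lambda\ge\phi_{\lambda_0}>-\infty$ everywhere, so $g\in G_{\Tpsh_{\RR}}(X;A)$.

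The delicate point is property~(b), $g\le h$ a.e.\ On $U$ this requires $\sup_\lambda u_\lambda\le w-\sum_i(b_i-a_i)\log|f_i|^2$ a.e.; each $u_\lambda$ satisfies this a.e., but $\Lambda$ may be uncountable, so one cannot pass to the supremum directly. I would invoke Choquet's lemma — equivalently the form of the envelope statement in \cite{MK} producing a countable subfamily $\{u_{\lambda_n}\}$ with $(\sup_n u_{\lambda_n})^*=(\sup_\lambda u_\lambda)^*$ — and then combine $\sup_n u_{\lambda_n}\le w-\sum_i(b_i-a_i)\log|f_i|^2$ a.e.\ (now a countable union of null sets) with $\sup_n u_{\lambda_n}=(\sup_n u_{\lambda_n})^*$ a.e.\ to conclude $g\le h$ a.e.\ on $U$, hence on $X$ by covering. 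This reduction to a countable subfamily is the step needing care; the rest is bookkeeping through Proposition~\ref{prop:psh:Green:isom}.
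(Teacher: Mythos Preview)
Your argument is correct and tracks the paper's proof closely: both use Lemma~\ref{lem:subharmonic:local:bound} to get local uniform upper bounds on the family, take the upper semicontinuous regularization of the pointwise supremum, and transport back via Proposition~\ref{prop:psh:Green:isom}. The one substantive difference is in part~(b). You invoke Choquet's lemma to extract a countable subfamily with the same regularized envelope, then pass to the supremum through a countable union of null sets. The paper instead uses the standing hypothesis that $\Tpsh$ is a subjacent type of $\TT$ (stated just before the proposition): from $g_\lambda \le h$ a.e.\ one gets the \emph{pointwise} inequality $(g_\lambda)_{\rm can} \le h_{\rm can}$ on $X\setminus(\Supp(A)\cup\Supp(B))$, which is preserved under an arbitrary supremum; since the supremum equals its regularization a.e., $g\le h$ a.e.\ follows. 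The paper's route is shorter and uses exactly the hypothesis built into the section, while your Choquet argument works just as well and in fact does not need the subjacent-type assumption for this step.
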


\begin{proof}
Let $A = a_1 D_1 + \cdots + a_l D_l$ and $B = b_1 D_1 + \cdots + b_l D_l$ be the decompositions of $A$ and $B$ such that
$D_i$'s are reduced and irreducible divisors, $a_1, \ldots, a_l, b_1, \ldots, b_l \in \RR$ and
$D_1 \cup \cdots \cup D_l = \Supp(A) \cup \Supp(B)$.
Let $U$ be an open set of $X$ and let $f_1, \ldots, f_l$ be local equations of $D_1, \ldots, D_l$ over $U$ respectively.
Let 
\[
h = v + \sum_{i=1}^l (-b_i) \log \vert f_i \vert^2\quad\aew
\]
be the local expression of $h$ with respect to $f_1, \ldots, f_l$.
Moreover, let 
\[
g_{\lambda} = u_{\lambda} + \sum_{i=1}^l (-a_i) \log \vert f_i \vert^2\quad\aew
\]
be the local expression of $g_{\lambda}$ with respect to $f_1, \ldots, f_l$.
Then 
\[
u_{\lambda} \leq v - \sum_{i=1}^l (b_i - a_i) \log \vert f_i \vert^2\quad\aew
\]
holds for every $\lambda \in \Lambda$. Note that $v$ is locally bounded above.
Thus $\{ u_{\lambda} \}_{\lambda \in \Lambda}$ is uniformly locally bounded above by Lemma~\ref{lem:subharmonic:local:bound}.
Let $u$ be the function on $U$ given by 
\[
u(x) = \sup \{ u_{\lambda}(x) \mid \lambda \in \Lambda  \}.
\]
Let $\tilde{u}$ be the upper semicontinuous regularization of $u$.
Then $\tilde{u}$ is plurisubharmonic on $U$ (cf. Subsection~\ref{subsec:pluri:subharmonic}). 
Let $f'_1, \ldots, f'_l$ be another local equations of $D_1, \ldots, D_l$.
Then there are $e_1, \ldots, e_n \in \OO_{U}^{\times}(U)$
such that $f'_i = e_i f_i$ for all $i$, so that
\[
g_{\lambda} = \left(u_{\lambda} + \sum_{i=1} a_i \log \vert e_i \vert^2\right) + \sum_{i=1}^l (-a_i) \log \vert f'_i \vert^2\quad\aew
\]
is the local expression of $g_{\lambda}$ with respect to $f'_1, \ldots, f'_l$.
Thus, if we denote the plurisubharmonic function arising from $f'_1, \ldots, f'_l$ by $\tilde{u}'$,
then, by Lemma~\ref{lem:fqpssh:ineq:ae},
\[
\tilde{u}' = \tilde{u} + \sum_{i=1}^l a_i \log \vert e_i \vert^2.
\]
This means that
\[
\tilde{u} + \sum_{i=1}^l (-a_i) \log \vert f_i \vert^2
\]
does not depend on the choice of $f_1, \ldots, f_l$ over $U \setminus \Supp(A)$.
Thus there is $g \in G_{\Tpsh}(X;A)$ such that
\[
\rest{g}{U} = \tilde{u} + \sum_{i=1}^l (-a_i) \log \vert f_i \vert^2\quad\aew.
\]

Let
$g_{A} = u_{A} + \sum_{i=1}^l (-a_i) \log \vert f_i \vert^2\ \aew$
be the local expression of $g_{A}$ with respect to $f_1, \ldots, f_l$.
Then $\phi_{\lambda} = u_{\lambda} - u_A$ and $\phi = \tilde{u} - u_A$.
Thus (a) follows.

By (a), $g_{\rm can}$ is the upper semicontinuous regularization of
the function $g'$ given by $g'(x) = \sup_{\lambda \in \Lambda} \{ (g_{\lambda})_{\rm can}(x) \}$ over $X \setminus \Supp(A)$.
As $\Tpsh$ is a subjacent type of $\TT$, we have 
$(g_{\lambda})_{\rm can} \leq h_{\rm can}$ on $X \setminus (\Supp(A) \cup \Supp(B))$ for all $\lambda \in \Lambda$.
Note that $g = g' \ \aew$ (cf. Subsection~\ref{subsec:pluri:subharmonic}).  
Thus we have $g \leq h\ \aew$.

Finally we assume that $g_{\lambda} \in G_{\Tpsh_{\RR}}(X;A)$ for some $\lambda$.
Then $u_{\lambda} \leq \tilde{u}\ \aew$, so that $u_{\lambda} \leq \tilde{u}$ by Lemma~\ref{lem:fqpssh:ineq:ae}.
Thus $\tilde{u}(x) \not= -\infty$.
Therefore, $g \in G_{\Tpsh_{\RR}}(X;A)$.
\end{proof}

Let $A$ be an $\RR$-divisor on $X$ and let $g$ be
a locally integrable function on $X$. We set
\[
G_{\TT}(X;A)_{\leq g} :=\{ u \in G_{\TT}(X;A) \mid u \leq g\ \aew \},
\]
where $G_{\TT}(X;A)$ is the set of all $A$-Green functions of $\TT$-type on $X$.

\begin{Lemma}
\label{lem:Green:A:B:}
Let $A$ and $B$ be
$\RR$-divisors on $X$ with
$A \leq B$.
Let $g_B$ be a $B$-Green function of $C^{\infty}$-type \rom{(}resp. $C^{0}$-type\rom{)}.
There is an $A$-Green function $g_A$ of $C^{\infty}$-type \rom{(}resp. $C^{0}$-type\rom{)} such that
\[
g_A \leq g_B\ \aew\quad\text{and}\quad
G_{\Tpsh}(X;A)_{\leq g_A} = G_{\Tpsh}(X;A)_{\leq g_B}.
\]
\end{Lemma}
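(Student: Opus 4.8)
The plan is to reduce the statement to an interposition problem for the largest admissible Green function. Put $C := B - A$, which is effective since $A \leq B$. If $G_{\Tpsh}(X;A)_{\leq g_B} = \emptyset$ there is nothing subtle: a $C$-Green function of $C^{\infty}$-type is of lower bounded type because $C$ is effective, so by adding to it a $C^{\infty}$ function assembled from a partition of unity we obtain a $C$-Green function $g_C$ of $C^{\infty}$-type with $g_C \geq 0$ on $X$, and then $g_A := g_B - g_C$ is of the required type, satisfies $g_A \leq g_B$ \aew, and $G_{\Tpsh}(X;A)_{\leq g_A} \subseteq G_{\Tpsh}(X;A)_{\leq g_B} = \emptyset$. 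So assume $G_{\Tpsh}(X;A)_{\leq g_B} \neq \emptyset$. I claim it then suffices to find an $A$-Green function $g_A$ of $C^{\infty}$- (resp. $C^{0}$-) type with $g \leq g_A \leq g_B$ \aew, where $g$ is a greatest element of $G_{\Tpsh}(X;A)_{\leq g_B}$: the inequality $g_A \leq g_B$ \aew\ gives $G_{\Tpsh}(X;A)_{\leq g_A} \subseteq G_{\Tpsh}(X;A)_{\leq g_B}$, and conversely any $u$ in the latter set satisfies $u \leq g \leq g_A$ \aew.

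First I would construct $g$ by applying Proposition~\ref{prop:limit:Green:function:PSH} with $h = g_B$, which is of upper bounded type, and with the family $\{g_{\lambda}\}$ taken to be all of $G_{\Tpsh}(X;A)_{\leq g_B}$ (recall $\Tpsh$ is a subjacent type of $C^{0}$, hence of $C^{\infty}$, by Lemma~\ref{lem:fqpssh:ineq:ae}). This produces an $A$-Green function $g$ of $\Tpsh$-type with $g \leq g_B$ \aew\ whose local plurisubharmonic parts are the upper semicontinuous regularizations of the suprema of the plurisubharmonic parts of the $g_{\lambda}$; since $g$ itself then lies in $G_{\Tpsh}(X;A)_{\leq g_B}$, it is a greatest element. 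Because each $g_{\lambda} \leq g_B$ \aew\ and $g_B$ carries the logarithmic singularity of a $C$-Green function along $\Supp C$, Lemma~\ref{lem:subharmonic:local:bound} shows these plurisubharmonic parts are uniformly locally bounded above, so $g$ is of upper bounded type.

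For the $C^{0}$-type case the rest is short. By Proposition~\ref{prop:upper:bounded:Green:C:infty}(2) applied to $g$ there is an $A$-Green function $g'$ of $C^{\infty}$-type with $g \leq g'$ \aew; set $g_A := \min\{g', g_B\}$. In a local chart with equations $f_1,\dots,f_l$ for the primes in $\Supp A \cup \Supp B$, writing $g' = w' - \sum_i a_i\log|f_i|^2$ and $g_B = V - \sum_i a_i\log|f_i|^2$ with $w'$ of $C^{\infty}$-type and $V$ continuous with values in $\RR \cup \{+\infty\}$, equal to $+\infty$ exactly along $\Supp C$, one has $g_A = \min\{w', V\} - \sum_i a_i\log|f_i|^2$; since $\min\{w', V\} = w'$ near $\Supp C$, the function $\min\{w', V\}$ is finite and continuous, so $g_A$ is an $A$-Green function of $C^{0}$-type. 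It clearly satisfies $g_A \leq g_B$ and $g_A \geq g$ \aew, which finishes this case.

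The $C^{\infty}$-type case requires more care and is the point I expect to be the main obstacle. Now $g_B$ is of $C^{\infty}$-type, so in the local description $g = P - \sum_i a_i\log|f_i|^2$ \aew, $g_B = V - \sum_i a_i\log|f_i|^2$ \aew, with $P$ plurisubharmonic and locally bounded above and $V$ continuous into $\RR\cup\{+\infty\}$, equal to $+\infty$ exactly along $\Supp C$ and $C^{\infty}$ off it, and $P \leq V$. I would cover $X$ by charts and on each small neighbourhood $U$ produce a $C^{\infty}$ function $w$ with $P \leq w \leq V$ \aew\ on $U$: if $U$ meets $\Supp C$, shrink it so that $V \geq \sup_U P$ on $U$ (possible since $V \to +\infty$ near $\Supp C$ while $\sup_U P < \infty$ by Lemma~\ref{lem:subharmonic:local:bound}) and take $w$ constant equal to $\sup_U P$; if $U$ lies off $\Supp C$ and $P = V$ at its centre, take $w = V$; and if $P < V$ at the centre, take $w = V - \epsilon$ for small $\epsilon > 0$, using lower semicontinuity of $V - P$ to ensure $P \leq V - \epsilon$ on $U$. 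Patching the resulting local $A$-Green functions $w - \sum_i a_i\log|f_i|^2$ of $C^{\infty}$-type by a partition of unity yields, via Lemma~\ref{lem:green:partition:unity}, an $A$-Green function $g_A$ of $C^{\infty}$-type, and since a convex combination of functions lying between $g$ and $g_B$ \aew\ again lies between them, $g \leq g_A \leq g_B$ \aew, completing the proof. The only genuinely delicate point is the behaviour near $\Supp C$, where the logarithmic blow-up of $g_B$ must be exploited to separate it from the bounded-above envelope $g$; everything else is routine manipulation of local expressions.
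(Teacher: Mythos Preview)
Your argument is correct, but it takes a different route from the paper's. You first manufacture the greatest element $g$ of $G_{\Tpsh}(X;A)_{\leq g_B}$ via Proposition~\ref{prop:limit:Green:function:PSH}, verify it is of upper bounded type, and then interpose a function of the required regularity between $g$ and $g_B$; the $C^0$ case you handle with the neat $\min\{g',g_B\}$ trick, and the $C^{\infty}$ case with a three-case local construction patched by partition of unity. The paper, by contrast, never forms the envelope. It works directly with the whole family $G_{\Tpsh}(X;A)_{\leq g_B}$: writing $g_B = h_x - \sum_i b_i\log|f_i|^2$ locally with $|h_x|\le M_x$, it uses Lemma~\ref{lem:subharmonic:local:bound} to get a uniform bound $p_x(u)\le M'_x$ on the plurisubharmonic parts of \emph{all} $u$ in the family, sets $C_x = M'_x + M_x$ so that $p_x(u)\le h_x + C_x$, and near $\Supp(B-A)$ shrinks the neighbourhood until $C_x \le -\sum_i (b_i-a_i)\log|f_i|^2$, which forces $h_x + C_x - \sum_i a_i\log|f_i|^2 \le g_B$; patching these local pieces by a partition of unity via Lemma~\ref{lem:green:partition:unity} gives $g_A$ in one stroke. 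The paper's argument is thus uniform across the $C^0$ and $C^{\infty}$ cases and avoids both the appeal to Proposition~\ref{prop:limit:Green:function:PSH} and your case split, at the cost of being slightly less conceptual about \emph{why} such a $g_A$ should exist. Your approach makes the role of the envelope transparent and the $\min$ construction in the $C^0$ case is pleasant, but your $C^{\infty}$ local analysis ends up rediscovering essentially the same ``constant near $\Supp(B-A)$, close to $V$ elsewhere'' idea that the paper encodes more compactly in the single constant $C_x$.
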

\query{
$\Tpsh_{\RR}$ $\Longrightarrow$ $\Tpsh$ in Lemma~\ref{lem:Green:A:B:}
(21/September/2010)
}
\begin{proof}
We set $A = a_1 D_1 + \cdots + a_n D_n$ and $B = b_1 D_1 + \cdots + b_n D_n$,
where
$D_i$'s are reduced and irreducible divisors on $X$ and  
$a_1, \ldots, a_n, b_1, \ldots, b_n \in \RR$.
For $x \in X$,
let $U_x$ be a small open neighborhood of $x$ and let
$f_1, \ldots, f_n$ be local equations of $D_1, \ldots, D_n$ on $U_x$ respectively.
Note that if $x \not\in D_i$, then we take $f_i$ as the constant function $1$.
Let $g_B = h_x - \sum_i b_i \log \vert f_i \vert^2\ \aew$ be the local expression of $g_B$ on $U_x$ with respect to $f_1, \ldots, f_n$.
Shrinking $U_x$ if necessarily, we may assume that there is a constant $M_x$ such that
$\vert h_x \vert \leq M_x$ on $U_x$.

\begin{Claim}
\label{claim:lem:Green:A:B:1}
There are an open neighborhood $V_x$ of $x$ and a positive constant $C_x$ such that $V_x \subseteq U_x$,
\[
h_x + C_x - \sum_i a_i \log \vert f_i \vert^2 \leq g_B\quad\aew
\]
on $V_x$ and that
\[
u \leq h_x + C_x - \sum_i a_i \log \vert f_i \vert^2\quad\aew
\]
on $V_x$ for all $u \in G_{\Tpsh}(X;A)_{\leq g_B}$.
\end{Claim}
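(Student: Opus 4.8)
The plan is to establish the second (upper bound) inequality first, with a definite constant $C_x$ extracted from Lemma~\ref{lem:subharmonic:local:bound}, and then to shrink $V_x$ until the logarithmic poles of $g_B$ along the components of $\Supp(B)$ through $x$ dominate that $C_x$, which yields the first inequality. Recall that $b_i - a_i \geq 0$ for every $i$, since $A \leq B$, and that we may assume $M_x > 0$.

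For the second inequality, take any $u \in G_{\Tpsh}(X;A)_{\leq g_B}$ and write $u = u' - \sum_i a_i \log|f_i|^2$ almost everywhere near $x$, with $u'$ plurisubharmonic. From $u \leq g_B$ a.e. and $h_x \leq M_x$ on $U_x$ one gets $u' \leq M_x - \sum_i (b_i - a_i)\log|f_i|^2$ a.e. on $U_x$. After shrinking $U_x$ so that it is connected and meets each $D_i$ in a proper analytic subset — so that every $f_i$, including the ones equal to the constant $1$, is not identically zero on $U_x$ — the function $u'$ belongs to $\Tpsh(U_x;f_1,\ldots,f_n,b_1-a_1,\ldots,b_n-a_n,M_x)$ in the sense of Lemma~\ref{lem:subharmonic:local:bound}. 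That lemma produces an open neighborhood $W_x \subseteq U_x$ of $x$ and a constant $M'_x$ depending only on $f_1,\ldots,f_n$ and $x$ — in particular independent of $u$ — with $u' \leq M_x + M'_x\sum_i(b_i-a_i)$ on $W_x$. Setting $C_x := 2M_x + M'_x\sum_i(b_i-a_i)$, which is positive, and using $h_x \geq -M_x$, we obtain
\[
u' \;\leq\; M_x + M'_x\sum_i(b_i-a_i) \;=\; C_x - M_x \;\leq\; h_x + C_x
\]
on $W_x$, hence $u = u' - \sum_i a_i\log|f_i|^2 \leq h_x + C_x - \sum_i a_i\log|f_i|^2$ a.e. on $W_x$. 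As $C_x$ does not depend on $u$, the second inequality holds on any $V_x \subseteq W_x$, uniformly in $u$.

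For the first inequality, compute $g_B - (h_x + C_x - \sum_i a_i\log|f_i|^2) = -C_x + \sum_i(b_i-a_i)(-\log|f_i|^2)$ a.e. Once $|f_j| \leq 1$ on $V_x$, every summand is nonnegative; and for any index $i$ with $x \in D_i$ and $b_i > a_i$ the summand $(b_i-a_i)(-\log|f_i|^2)$ tends to $+\infty$ as one approaches $x$, because $f_i(x) = 0$. Hence, provided such an index exists, one may shrink $V_x$ inside $W_x$ so that $\sum_i(b_i-a_i)(-\log|f_i|^2) \geq C_x$ on $V_x$, which is precisely the first inequality. If instead no component through $x$ has a strictly larger coefficient in $B$ than in $A$, then $A$ and $B$ have the same germ at $x$ and $g_B$ is already a local $A$-Green function of the required form near $x$ (the boundary case $C_x = 0$), so nothing is to be proved there.

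The step I expect to carry the weight is the uniform control of the plurisubharmonic local parts $u'$, that is, the fact that the constant $M'_x$ supplied by Lemma~\ref{lem:subharmonic:local:bound} is independent of $u$ and grows only linearly in $\sum_i(b_i - a_i)$; granting that, the two demands on $C_x$ — ``not too large'' for the first inequality and ``large enough'' for the second — are compatible, because the first can be forced for an arbitrary prescribed $C_x$ by shrinking $V_x$ (a logarithmic pole outgrows any fixed constant), while the second only asks that $C_x$ clear a finite, $u$-independent bound.
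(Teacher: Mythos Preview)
Your argument is correct and follows the paper's proof essentially line for line: write $u$ locally as $u' - \sum_i a_i\log|f_i|^2$, use Lemma~\ref{lem:subharmonic:local:bound} to bound $u'$ uniformly in $u$, set $C_x$ so that $h_x + C_x$ dominates this bound (your $C_x = 2M_x + M'_x\sum_i(b_i-a_i)$ is exactly the paper's $M'_x + M_x$, once one unwinds that the paper's $M'_x$ already absorbs the lemma's full output), and then shrink $V_x$ until the logarithmic pole $\sum_i(b_i-a_i)(-\log|f_i|^2)$ exceeds $C_x$. Your treatment of the degenerate case (no $i$ with $x\in D_i$ and $b_i>a_i$, hence $C_x=0$) also matches the paper's, which disposes of that case first rather than last.
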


\begin{proof}
For $u \in  G_{\Tpsh}(X;A)_{\leq g_B}$, let $u = p_x(u) - \sum_i a_i \log \vert f_i \vert^2\ \aew$ be the local expression of $u$ on $U_x$ with respect to $f_1, \ldots, f_n$.
Then $u \leq g_B\ \aew$ is nothing more than 
\[
p_x(u) \leq h_x - \sum_i (b_i - a_i) \log \vert f_i \vert^2\quad\aew.
\]
If either $a_i = b_i$ or $x \not\in D_i$ for all $i$,
then $\sum_i (b_i - a_i) \log \vert f_i \vert^2 = 0$ on $U_x$.
Thus our assertion is obvious by taking $C_x = 0$, so that we may assume that 
$a_i < b_i$  and $x \in D_i$ for some $i$.
By Lemma~\ref{lem:subharmonic:local:bound},
there are an open neighborhood $U'_x$ of $x$ and
a positive constant $M'_x$ such that $U'_x \subseteq U_x$ and
$p_x(u) \leq M'_x$ on $U'_x$ for all $u \in G_{\Tpsh}(X;A)_{\leq g_B}$.
Note that 
\[
M_x' = -M_x + (M'_x + M_x) \leq h_x + (M'_x + M_x)
\]
on $U_x$.
Thus if we set $C_x = M'_x + M_x$, then
$p_x(u) \leq h_x + C_x$ on $U'_x$ for all $u \in G_{\Tpsh}(X;A)_{\leq g_B}$.
As $\lim_{y \to x} \sum_{i} (b_i - a_i) \log \vert f_i \vert^2(y) = -\infty$, 
we can find an open neighborhood $V_x$ of $x$ such that
$V_x \subseteq U'_x$ and $C_x \leq -\sum_i (b_i - a_i) \log \vert f_i \vert^2$ on $V_x$.
Therefore,
\[
p_x(u) \leq h_x + C_x \leq h_x - \sum_i (b_i - a_i) \log \vert f_i \vert^2
\]
on $V_x$ for all $u \in G_{\Tpsh}(X;A)_{\leq g_B}$, as required. 
\end{proof}

By using Claim~\ref{claim:lem:Green:A:B:1}, we can find an open covering $\{ V_{\lambda} \}_{\lambda \in \Lambda}$ of $X$ and
a family of constants $\{ C_{\lambda} \}_{\lambda \in \Lambda}$ with the following properties:
\begin{enumerate}
\renewcommand{\labelenumi}{(\arabic{enumi})}
\item
$\{ V_{\lambda} \}_{\lambda \in \Lambda}$ is a locally finite covering.

\item
There are local equations $f_{\lambda,1}, \ldots, f_{\lambda,n}$ of
$D_1, \ldots, D_n$ on $V_{\lambda}$ respectively.

\item
Let $g_B = h_{\lambda} - \sum_i b_{i} \log \vert f_{\lambda,i} \vert^2\ \aew$ be the local expression of $g_B$ on $V_{\lambda}$
with respect to$f_{\lambda,1}, \ldots, f_{\lambda,n}$.
Then 
\[
h_{\lambda} + C_{\lambda} - \sum a_{i} \log \vert f_{\lambda,i} \vert^2 \leq g_B\quad\aew
\]
on $V_{\lambda}$ and that
\[
u \leq h_{\lambda} + C_{\lambda} - \sum_i a_{i} \log \vert f_{\lambda,i} \vert^2\quad\aew
\]
on $V_{\lambda}$ for all $u \in G_{\Tpsh}(X;A)_{\leq  g_B}$.
\end{enumerate}
Let $\{ \rho_{\lambda} \}_{\lambda \in \Lambda}$ be a partition of unity subordinate to the covering $\{ V_{\lambda} \}_{\lambda \in \Lambda}$.
We set 
\[
g_A = \sum_{\lambda} \rho_{\lambda} \left( h_{\lambda} + C_{\lambda} - \sum_i a_{i} \log \vert f_{\lambda,i} \vert^2\right).
\]
By Lemma~\ref{lem:green:partition:unity}, $g_A$ is an $A$-Green function of $C^{\infty}$-type (resp. $C^{0}$-type).
Moreover, 
$g_A \leq g_B\ \aew$ and $u \leq g_A\ \aew$ for all $u \in G_{\Tpsh}(X;A)_{\leq g_B}$.
Therefore the lemma follows.
\end{proof}

The following theorem is the main result of this section.

\begin{Theorem}
\label{thm:cont:upper:envelope}
Let $A$  be an $\RR$-divisor on $X$.
If $X$ is projective and there is an $A$-Green function $h$ of $C^{\infty}$-type
such that $dd^c([h]) + \delta_A$ is represented by either a positive $C^{\infty}$-form or the zero form,
then we have the following:
\begin{enumerate}
\renewcommand{\labelenumi}{(\arabic{enumi})}
\item
Let $B$ be an
$\RR$-divisor on $X$ with
$A \leq B$.
Let $g_B$ be a $B$-Green function of $C^{0}$-type.
Then there is $g \in G_{C^0 \cap \Tpsh}(X;A)$ such that
$g \leq g_B\ \aew$ and 
\[
u \leq g\ \aew \quad (\forall u \in G_{\Tpsh}(X;A)_{\leq g_B}).
\]

\item
If $u \in G_{C^0 \cap \Tpsh}(X;A)$,
then there are sequences $\{ u_n \}_{n=1}^{\infty}$ and $\{ v_n \}_{n=1}^{\infty}$ of continuous functions on $X$ with the following properties:
\begin{enumerate}
\renewcommand{\labelenumii}{(\arabic{enumi}.\arabic{enumii})}
\item $u_n \geq 0$ and $v_n \geq 0$ for all $n \geq 1$.

\item $\lim_{n\to\infty} \Vert u_n \Vert_{\sup} = \lim_{n\to\infty} \Vert v_n \Vert_{\sup} = 0$.

\item
$u - u_n, u + v_n  \in G_{C^{\infty} \cap \Tpsh}(X;A)$ all $n \geq 1$.
\end{enumerate}
\end{enumerate}
\end{Theorem}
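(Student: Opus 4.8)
The plan is to prove the two parts separately; all the difficulty lies in the continuity claim inside (1), where I would invoke the Berman--Demailly regularity theorem, whereas (2) is a direct translation of Lemma~\ref{lem:approximation:plurisub:lower} through the dictionary of Proposition~\ref{prop:psh:Green:isom}.

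For (1), I would first apply Lemma~\ref{lem:Green:A:B:} (in its $C^{0}$-type version) to replace $g_B$ by an $A$-Green function $g_A^0$ of $C^{0}$-type with $g_A^0 \leq g_B$ a.e.\ and $G_{\Tpsh}(X;A)_{\leq g_A^0} = G_{\Tpsh}(X;A)_{\leq g_B}$; this reduces the problem to producing a $g \in G_{C^0 \cap \Tpsh}(X;A)$ with $g \leq g_A^0$ a.e.\ that dominates a.e.\ every element of $G_{\Tpsh}(X;A)_{\leq g_A^0}$, the point of the reduction being that the ``obstacle'' is then an honest continuous function rather than one with poles. Next I would take $h$ to be the $A$-Green function of $C^{\infty}$-type supplied by the hypothesis and set $\alpha = c_1(A,h)$, so that $[\alpha] = dd^c([h]) + \delta_A$ (cf.\ Proposition~\ref{prop:lelong:formula}); by assumption $\alpha$ is either a positive $C^{\infty}$-form — necessarily closed, hence a K\"ahler form, since $X$ is projective — or the zero form. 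Through the correspondence $\phi \mapsto h+\phi$ of Proposition~\ref{prop:psh:Green:isom}, $G_{\Tpsh}(X;A)$ is identified with $\Tpsh(X;\alpha)$; moreover $\psi := g_A^0 - h$ agrees a.e.\ with a continuous function on $X$ (the polar parts cancel), and $G_{\Tpsh}(X;A)_{\leq g_A^0}$ corresponds to $\{\phi \in \Tpsh(X;\alpha) \mid \phi \leq \psi\}$, which is non-empty because $\psi$ is bounded below and constants lie in $\Tpsh(X;\alpha)$ as $\alpha \geq 0$. Then Proposition~\ref{prop:limit:Green:function:PSH}, applied with $B = A$ and $\TT = C^0$ (for which $\Tpsh$ is a subjacent type, by Lemma~\ref{lem:fqpssh:ineq:ae}), produces the desired $g \in G_{\Tpsh}(X;A)$: writing $g = h + \phi$, the function $\phi$ is the upper semicontinuous regularization of $x \mapsto \sup\{\,\phi'(x) \mid \phi' \in \Tpsh(X;\alpha),\ \phi' \leq \psi\,\}$, one has $g \leq g_A^0 \leq g_B$ a.e., and $g$ dominates a.e.\ every $u \in G_{\Tpsh}(X;A)_{\leq g_B}$. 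What then remains is to show that $g$ — equivalently $\phi$ — is continuous.

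This last step is the main obstacle, and it is where I would use the regularity result of Berman--Demailly \cite{BD}. When $\alpha = 0$, a plurisubharmonic function on the compact $X$ is constant on each connected component, so $\phi$ is the minimum of $\psi$ on each component and is trivially continuous. When $\alpha$ is K\"ahler, the envelope $\phi$ of the continuous obstacle $\psi$ is continuous: for $\psi$ of class $C^{\infty}$, \cite{BD} gives that $\phi$ has locally bounded Laplacian, hence is of class $C^{1,1}$ and in particular continuous; for a general continuous $\psi$ one writes $\psi = \lim_n \psi_n$ with $\psi_n$ smooth and $\Vert \psi_n - \psi \Vert_{\sup} \to 0$, notes that the envelope operation is $1$-Lipschitz for $\Vert \cdot \Vert_{\sup}$, and concludes that $\phi$ is a uniform limit of continuous functions. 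Hence $\phi \in C^0(X)$, so $g = h + \phi \in G_{C^0}(X;A)$ by Proposition~\ref{prop:psh:Green:isom}(5), and therefore $g \in G_{C^0 \cap \Tpsh}(X;A)$ by Proposition~\ref{prop:intersec:two:type}; this gives (1). One should not expect more regularity: in general $\phi$ is no smoother than $C^{1,1}$, cf.\ \cite{MoBig}.

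For (2), given $u \in G_{C^0 \cap \Tpsh}(X;A)$, I would use Proposition~\ref{prop:psh:Green:isom} to write $u = h + \phi$ a.e.\ with $\phi \in \Tpsh(X;\alpha) \cap C^0(X)$. Since $X$ is compact and $\alpha$ is positive or zero, Lemma~\ref{lem:approximation:plurisub:lower} (with $\psi_0 = 0$) yields sequences $\{\phi_n\}$ and $\{\varphi_n\}$ in $\Tpsh(X;\alpha) \cap C^{\infty}(X)$ with $\phi_n \leq \phi \leq \varphi_n$ on $X$ and $\Vert \phi - \phi_n \Vert_{\sup} \to 0$, $\Vert \varphi_n - \phi \Vert_{\sup} \to 0$. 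Setting $u_n := \phi - \phi_n$ and $v_n := \varphi_n - \phi$ gives non-negative continuous functions with sup-norms tending to $0$, which is (2.1) and (2.2); and since $u - u_n = h + \phi_n$ and $u + v_n = h + \varphi_n$ a.e.\ with $\phi_n, \varphi_n \in \Tpsh(X;\alpha) \cap C^{\infty}(X)$, Proposition~\ref{prop:psh:Green:isom}(4) together with Proposition~\ref{prop:intersec:two:type} gives $u - u_n,\ u + v_n \in G_{C^{\infty} \cap \Tpsh}(X;A)$, which is (2.3).
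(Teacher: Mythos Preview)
Your proposal is correct and follows essentially the same route as the paper: reduce via Lemma~\ref{lem:Green:A:B:}, pass through the correspondence of Proposition~\ref{prop:psh:Green:isom}, build the envelope with Proposition~\ref{prop:limit:Green:function:PSH}, invoke Berman--Demailly for the smooth obstacle, and handle the continuous obstacle by uniform approximation and the $1$-Lipschitz property of the envelope; part~(2) is identical to the paper's argument via Lemma~\ref{lem:approximation:plurisub:lower}. The only cosmetic difference is that you take $h$ itself as the reference $C^{\infty}$ Green function (so $\alpha=c_1(A,h)$ is already positive or zero), whereas the paper first works with the obstacle $g_A$ as reference and then shifts by a $C^{\infty}$ function $\psi_0$ to reach the same situation.
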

\query{
$\Tpsh_{\RR}$ $\Longrightarrow$ $\Tpsh$ in Theorem~\ref{thm:cont:upper:envelope} except
$\Tpsh_{\RR}$ in Claim~\ref{claim:thm:cont:upper:envelope:1}
(21/September/2010)
}

\begin{proof}
(1) Let us begin with the following claim:

\begin{Claim}
\label{claim:thm:cont:upper:envelope:1}
There is $g \in G_{\Tpsh_{\RR}}(X;A)$ such that
$g \leq g_B\ \aew$ and 
\[
u \leq g\quad \aew \quad (\forall u \in G_{\Tpsh}(X;A)_{\leq g_B}).
\]
We say $g$ is the greatest element of $G_{\Tpsh}(X;A)_{\leq g_B}$ modulo null functions.
\end{Claim}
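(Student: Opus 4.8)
The plan is to produce $g$ as the plurisubharmonic upper envelope of the \emph{entire} family $G_{\Tpsh}(X;A)_{\leq g_B}$, after first normalizing the problem. The first step is to apply Lemma~\ref{lem:Green:A:B:} to the pair $A\leq B$ and the $B$-Green function $g_B$ of $C^{0}$-type: this yields an $A$-Green function $g_0$ of $C^{0}$-type with $g_0\leq g_B\ \aew$ and $G_{\Tpsh}(X;A)_{\leq g_0}=G_{\Tpsh}(X;A)_{\leq g_B}$. Hence it suffices to find a greatest element (modulo null functions) of $G_{\Tpsh}(X;A)_{\leq g_0}$; any such $g$ then satisfies $g\leq g_0\leq g_B\ \aew$, as required.

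The next step, and the only place the positivity hypothesis on $h$ is used, is to check that $G_{\Tpsh}(X;A)_{\leq g_0}$ is nonempty. Since $dd^{c}([h])+\delta_A$ is represented by a positive $C^{\infty}$-form or by the zero form, the local $C^{\infty}$-potentials of $h$ are plurisubharmonic, i.e. $h\in G_{C^{\infty}\cap\Tpsh}(X;A)$ (equivalently $0\in\Tpsh(X;\alpha)$ for $\alpha=c_1(A,h)$, cf. Proposition~\ref{prop:psh:Green:isom}). The difference $h-g_0$ is a continuous function on the compact space $X$, hence bounded above by some constant $C_0$, so $h-C_0\leq g_0\ \aew$; and a constant shift of $h$ still lies in $G_{\Tpsh_{\RR}}(X;A)$. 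Thus $h-C_0\in G_{\Tpsh_{\RR}}(X;A)\cap G_{\Tpsh}(X;A)_{\leq g_0}$.

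Then I would invoke Proposition~\ref{prop:limit:Green:function:PSH} with the type $\TT=C^{0}$ (for which $\Tpsh$ is a subjacent type, since a plurisubharmonic function that is $\leq$ a continuous function a.e. is $\leq$ it everywhere, cf. Lemma~\ref{lem:fqpssh:ineq:ae}), with both $\RR$-divisors equal to $A$, with the ``$h$'' of that proposition taken to be $g_0$ (of $C^{0}$-type, hence of upper bounded type), and with $\{g_\lambda\}_{\lambda\in\Lambda}$ an enumeration of all of $G_{\Tpsh}(X;A)_{\leq g_0}$ (nonempty by the previous step, and with $g_\lambda\leq g_0\ \aew$ for every $\lambda$ by definition). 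This produces $g\in G_{\Tpsh}(X;A)$ with $g\leq g_0\ \aew$ (property (b)). Writing $g=h+\phi$ and $g_\lambda=h+\phi_\lambda$ with $\phi,\phi_\lambda\in\Tpsh(X;\alpha)$ (Proposition~\ref{prop:psh:Green:isom}), property (a) says $\phi$ is the upper semicontinuous regularization of $x\mapsto\sup_\lambda\phi_\lambda(x)$, so $\phi_\lambda\leq\phi$ on $X$ and hence $g_\lambda\leq g\ \aew$ for every $\lambda$; that is, $u\leq g\ \aew$ for all $u\in G_{\Tpsh}(X;A)_{\leq g_0}=G_{\Tpsh}(X;A)_{\leq g_B}$. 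Finally, since $h-C_0\in G_{\Tpsh_{\RR}}(X;A)$ occurs among the $g_\lambda$, property (c) of Proposition~\ref{prop:limit:Green:function:PSH} gives $g\in G_{\Tpsh_{\RR}}(X;A)$, and together with $g\leq g_0\leq g_B\ \aew$ this is precisely the claim.

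The only genuinely substantive point is the nonemptiness step: the positivity assumption on $h$ is exactly what supplies a $\Tpsh$-type Green function below $g_B$, and the attendant worry that the envelope could collapse to $-\infty$ somewhere is disposed of automatically by property (c) of Proposition~\ref{prop:limit:Green:function:PSH}; everything else is a direct assembly of results already proved. Note that this claim asserts only the \emph{existence of a greatest $\Tpsh_{\RR}$-type element}; the continuity of $g$, i.e. that $g\in G_{C^{0}\cap\Tpsh}(X;A)$ as in part (1) of the theorem, is a separate matter requiring the regularity result of Berman--Demailly and is not addressed here.
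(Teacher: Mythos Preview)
Your proof is correct and follows essentially the same approach as the paper: both invoke Proposition~\ref{prop:limit:Green:function:PSH} with the family $\{g_\lambda\} = G_{\Tpsh}(X;A)_{\leq g_B}$, use the positivity hypothesis on $h$ to exhibit $h-c\in G_{\Tpsh_{\RR}}(X;A)_{\leq g_B}$ (so that property~(c) gives real-valuedness), and note that $\Tpsh$ is subjacent to $C^0$ via Lemma~\ref{lem:fqpssh:ineq:ae}. The only difference is cosmetic: you preemptively reduce $g_B$ to an $A$-Green function $g_0$ via Lemma~\ref{lem:Green:A:B:} to make the bound $h-C_0\leq g_0$ transparent, whereas the paper applies Proposition~\ref{prop:limit:Green:function:PSH} directly with the $B$-Green function $g_B$ (the needed bound $h-c\leq g_B$ holds because $g_B-h$ is a $(B-A)$-Green function of $C^0$-type with $B-A\geq 0$, hence bounded below on the compact $X$) and defers Lemma~\ref{lem:Green:A:B:} to the next claim.
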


\begin{proof}
Note that $\Tpsh$ is a subjacent type of $C^0$ by Lemma~\ref{lem:fqpssh:ineq:ae}, and
that $h - c \in G_{\Tpsh_{\RR}}(X;A)_{\leq g_B}$ for some constant $c$. Thus the assertion follows from
Proposition~\ref{prop:limit:Green:function:PSH}.
\end{proof}

\begin{Claim}
If $g_B$ is of $C^{\infty}$-type, then the assertion of \rom{(1)} holds.
\end{Claim}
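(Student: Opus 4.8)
The plan is to transport everything to the language of Proposition~\ref{prop:psh:Green:isom} and reduce the continuity of the greatest element to a regularity statement for a plurisubharmonic envelope. First I would apply Lemma~\ref{lem:Green:A:B:}: since $g_B$ is of $C^{\infty}$-type, choose an $A$-Green function $g_A$ of $C^{\infty}$-type with $g_A \leq g_B\ \aew$ and $G_{\Tpsh}(X;A)_{\leq g_A} = G_{\Tpsh}(X;A)_{\leq g_B}$. It then suffices to produce $g \in G_{C^0\cap\Tpsh}(X;A)$ with $g \leq g_A\ \aew$ and $u \leq g\ \aew$ for every $u \in G_{\Tpsh}(X;A)_{\leq g_A}$, because $g_A \leq g_B\ \aew$ forces $g \leq g_B\ \aew$.

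Next, put $\alpha := dd^c([h]) + \delta_A$, the $C^{\infty}$ $(1,1)$-form of Proposition~\ref{prop:lelong:formula}, which by hypothesis is positive or zero, and put $\psi := g_A - h$. Being the difference of two $A$-Green functions of $C^{\infty}$-type, $\psi$ is a global $C^{\infty}$ function on $X$ (the $\log\vert f_i\vert^2$ terms cancel). By Proposition~\ref{prop:psh:Green:isom}, $\phi \mapsto \phi + h$ is an order-preserving bijection modulo null functions from $\Tpsh(X;\alpha)$ onto $G_{\Tpsh}(X;A)$, and $\phi + h \leq g_A\ \aew \Leftrightarrow \phi \leq \psi\ \aew \Leftrightarrow \phi \leq \psi$ on $X$ (the last step by Lemma~\ref{lem:fqpssh:ineq:ae}, since $\psi \in C^0(X)$ and $\phi \in (C^{\infty}+\Tpsh)(X)$). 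Let $\Phi$ be the pointwise supremum of all $\phi \in \Tpsh(X;\alpha)$ with $\phi \leq \psi$, and $\Phi^{*}$ its upper semicontinuous regularization; the family is nonempty (the constant $\inf_X\psi$ belongs to it) and uniformly bounded above by $\psi$, so exactly as in Claim~\ref{claim:thm:cont:upper:envelope:1} (via Proposition~\ref{prop:limit:Green:function:PSH}) one has $\Phi^{*} \in \Tpsh(X;\alpha)$, $\Phi = \Phi^{*}\ \aew$, $\Phi^{*} \leq \psi$ on $X$, and $g := \Phi^{*} + h$ is the greatest element of $G_{\Tpsh}(X;A)_{\leq g_A}$ modulo null functions.

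The remaining — and only substantive — point is that $\Phi^{*}$ is continuous. If $\alpha$ is the zero form, then on each connected component of the compact manifold $X$ every member of $\Tpsh(X;\alpha)$ agrees almost everywhere with a global plurisubharmonic function, hence (by the maximum principle and Lemma~\ref{lem:fqpssh:ineq:ae}) is constant; thus $\Phi^{*}$ is the locally constant function whose value on a component $X_j$ is $\min_{X_j}\psi$, which is continuous. If $\alpha$ is positive, then $\Phi^{*}$ is precisely the $\alpha$-plurisubharmonic envelope $P_{\alpha}(\psi)$ of the $C^{\infty}$ function $\psi$ on the compact projective manifold $X$ carrying the positive form $\alpha$, and by the regularity theorem of Berman--Demailly \cite{BD} this envelope is of class $C^{1,1}$, a fortiori continuous. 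This is where the hard input lies: without \cite{BD} one only recovers $\Phi^{*}\in\Tpsh_{\RR}(X)$ as in Claim~\ref{claim:thm:cont:upper:envelope:1}, and the continuity is genuinely nontrivial.

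Finally I would assemble the conclusion. Since $\Phi^{*}\in C^0(X)$, Proposition~\ref{prop:psh:Green:isom}(5) gives $g = \Phi^{*}+h \in G_{C^0}(X;A)$, and together with Proposition~\ref{prop:intersec:two:type} we get $g \in G_{C^0}(X;A)\cap G_{\Tpsh}(X;A) = G_{C^0\cap\Tpsh}(X;A)$. Moreover $g = \Phi^{*}+h \leq \psi + h = g_A \leq g_B\ \aew$, and for any $u \in G_{\Tpsh}(X;A)_{\leq g_B} = G_{\Tpsh}(X;A)_{\leq g_A}$, writing $u = \phi + h\ \aew$ with $\phi\in\Tpsh(X;\alpha)$ and $\phi\leq\psi$, we have $\phi \leq \Phi \leq \Phi^{*}$, hence $u \leq g\ \aew$. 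This is exactly the assertion of (1) in this case.
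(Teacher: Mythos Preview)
Your proof is correct and follows essentially the same route as the paper: reduce via Lemma~\ref{lem:Green:A:B:} to a $C^{\infty}$ upper barrier, translate through Proposition~\ref{prop:psh:Green:isom} to a quasi-plurisubharmonic envelope, and invoke Berman--Demailly \cite{BD} for continuity in the positive case (with the degenerate case handled by the maximum principle). The only cosmetic difference is your choice of reference: you take $h$ itself as the $C^{\infty}$ $A$-Green function in Proposition~\ref{prop:psh:Green:isom}, so that $\alpha=c_1(A,h)$ is directly positive or zero and the envelope is of functions $\leq\psi=g_A-h$; the paper instead takes $g_A$ as reference, so its $\alpha$ need not be semipositive but $\alpha+dd^c(\psi_0)$ is, and the envelope is of functions $\leq 0$. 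These are the same computation up to the additive shift by $\psi_0$, and both appeal to \cite[Theorem~1.4]{BD} at the key step.
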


\begin{proof}
By Lemma~\ref{lem:Green:A:B:}, 
we may assume that $A = B$.
Let $\alpha = c_1(A,g_A)$, that is, $\alpha$ is a $C^{\infty}$-form such that
$[\alpha] = dd^c([g_A]) + \delta_A$.
\query{add ``$\alpha = c_1(A,g_A)$'' (24/Sep/2010)}
We set
\[
\Tpsh(X; \alpha)_{\leq 0} = \{ \psi \in \Tpsh(X; \alpha) \mid \psi \leq 0 \}.
\]
By our assumption, we can find a $C^{\infty}$-function $\psi_0$ such that $g_A + \psi_0 = h\ \aew$.
Note that $[\alpha + dd^c(\psi_0)] = dd^c([h]) + \delta_A$. Thus
$\alpha + dd^c(\psi_0)$ is either positive or zero.

\smallskip
First we assume that $\alpha + dd^c(\psi_0)$ is positive.
Let $g$ be the greatest element of 
\[
G_{\Tpsh}(X;A)_{\leq g_A}
\]
modulo null functions (cf. Claim~\ref{claim:thm:cont:upper:envelope:1}).
We choose $\phi \in \Tpsh(X;\alpha)$ and $\psi_u \in \Tpsh(X;\alpha)$ for each $u \in G_{\Tpsh}(X;A)_{\leq g_A}$ such that
$g = g_A + \phi \ \aew$ and $u = g_A + \psi_u \ \aew$ (cf. Proposition~\ref{prop:psh:Green:isom}).
Then 
\[
\{ \psi_u \mid u \in G_{\Tpsh}(X;A)_{\leq g_A} \} = \Tpsh(X; \alpha)_{\leq 0}.
\]
Moreover, by our construction of $g$ (cf. Proposition~\ref{prop:limit:Green:function:PSH} and Claim~\ref{claim:thm:cont:upper:envelope:1}),
$\phi$ is the upper semicontinuous regularization of the function $\phi'$ given by 
\[
\phi'(x) = \sup \{ \psi_u(x) \mid u \in G_{\Tpsh}(X;A)_{\leq g_A}  \} (= \sup \{ \psi(x) \mid \psi \in  \Tpsh(X; \alpha)_{\leq 0} \})
\]
for $x \in X$. On the other hand, by \cite[Theorem~1.4]{BD},
$\phi'$ is continuous. Thus $\phi = \phi'$ and $\phi$ is continuous.
Therefore the claim follows by Proposition~\ref{prop:psh:Green:isom}.

\smallskip
Next we assume that $\alpha + dd^c(\psi_0) = 0$,
that is,
$\alpha = dd^c(-\psi_0)$. 
Then
\[
\Tpsh(X;\alpha) = \left\{ \psi_0 + c \mid c \in \RR \cup \{-\infty\} \right\}.
\]
Let $g$ be the greatest element of $G_{\Tpsh}(X;A)_{\leq g_A}$ modulo null functions.
Then, by Proposition~\ref{prop:psh:Green:isom}, there is $c \in \RR$ such that $g = g_A + (\psi_0 + c) \ \aew$.
Thus the claim follows in this case.
\end{proof}

Finally, let us consider a general case.
First of all, we may assume $A = B$ as before.
We can take a continuous function $f$ on $X$ such that $g_A = h + f\ \aew$.
By using the Stone-Weierstrass theorem,
there is a sequence $\{ u_n \}_{n=1}^{\infty}$ of continuous functions on $X$ such that $\lim_{n\to\infty} \Vert u_n \Vert_{\sup} = 0$ and
$f + u_n$ is $C^{\infty}$ for every $n$. Then, as 
$g_A + u_n = h + (f + u_n)\ \aew$, 
$g_A + u_n$ is of $C^{\infty}$-type for all all $n$.
Let $g$ (resp. $g_n$) be the greatest element of $G_{\Tpsh}(X;A)_{\leq g_A}$ (resp. $G_{\Tpsh}(X;A)_{\leq g_A+u_n}$)
modulo null functions.
Note that the greatest element of $G_{\Tpsh}(X;A)_{\leq g_A\pm\Vert u_n \Vert_{\sup}}$ modulo null functions
is given by
$g \pm \Vert u_n \Vert_{\sup}$.
By the previous claim, $g_n \in G_{C^0 \cap \Tpsh}(X;A)$.
Moreover, since 
\[
g_A - \Vert u_n \Vert_{\sup} \leq g_A + u_n \leq g_A + \Vert u_n \Vert_{\sup}\quad \aew,
\]
we have 
\[
g -  \Vert u_n \Vert_{\sup} \leq g_n \leq g + \Vert u_n \Vert_{\sup}\ \aew
\]
for all $n$.
Let $g = v + \sum_{i=1}^l (-a_i) \log \vert f_i \vert^2\ \aew$ and $g_n = v_n + \sum_{i=1}^l (-a_i) \vert f_i \vert^2\ \aew$ be
local expression of $g$ and $g_n$. Note that $v_n$  is continuous for every $n$.
By Lemma~\ref{lem:fqpssh:ineq:ae},
$v -  \Vert u_n \Vert_{\sup} \leq v_n \leq v + \Vert u_n \Vert_{\sup}$ holds for all $n$.
Thus $v_n$ converges to $v$ uniformly, which implies that $v$ is continuous.

\medskip
(2) 
Let $\alpha'$ be a $C^{\infty}$-form such that $[\alpha'] = dd^c([h]) + \delta_{A}$.
By our assumption, $\alpha'$ is either positive or zero.
By Proposition~\ref{prop:psh:Green:isom}, there is $\psi \in \Tpsh(X; \alpha')$ such that
$\psi$ is continuous and
$\psi + h = u\ \aew$.
Thus, by Lemma~\ref{lem:approximation:plurisub:lower},
there are sequences $\{ u_n \}_{n=1}^{\infty}$ and $\{ v_n \}_{n=1}^{\infty}$ of continuous functions on $X$ with the following properties:
\begin{enumerate}
\renewcommand{\labelenumi}{(\alph{enumi})}
\item $u_n \geq 0$ and $v_n \geq 0$ for all $n \geq 1$.

\item $\lim_{n\to\infty} \Vert u_n \Vert_{\sup} = \lim_{n\to\infty} \Vert v_n \Vert_{\sup} = 0$.

\item
$\psi - u_n, \psi + v_n \in \Tpsh(X; \alpha') \cap C^{\infty}(X)$ for every $n \geq 1$.
\end{enumerate}
Note that $u - u_n = (\psi - u_n) + h\ \aew$ and $u + v_n = (\psi + v_n) + h\ \aew$. Therefore, by Proposition~\ref{prop:psh:Green:isom},
$u-u_n, u + v_n \in G_{C^{\infty} \cap \Tpsh}(X;A)$.
\end{proof}

\renewcommand{\theTheorem}{\arabic{section}.\arabic{subsection}.\arabic{Theorem}}
\renewcommand{\theClaim}{\arabic{section}.\arabic{subsection}.\arabic{Theorem}.\arabic{Claim}}
\renewcommand{\theequation}{\arabic{section}.\arabic{subsection}.\arabic{Theorem}.\arabic{Claim}}

\section{Arithmetic $\RR$-divisors}
\label{sec:arithmetic:R:divisor}
Throughout this section, let $X$ be a $d$-dimensional generically smooth and normal arithmetic variety, that is,
$X$ is a flat and quasi-projective integral scheme over $\ZZ$ such that
$X$ is normal, $X$ is smooth over $\QQ$ and the Krull dimension of $X$ is $d$.

\subsection{Definition of arithmetic $\RR$-divisor}
\label{subsec:def:arithmetic:R:divisor}
Let $\Div(X)$ 
be the group of Cartier divisors  
on $X$.
An element of 
\[
\Div(X)_{\RR} := \Div(X) \otimes_{\ZZ} \RR\quad(\text{resp. } \Div(X)_{\QQ} := \Div(X) \otimes_{\ZZ} \QQ)
\]
is called an {\em $\RR$-divisor} (resp. {\em $\QQ$-divisor}) on $X$.
Let $D$ be an $\RR$-divisor on $X$ and 
let $D = a_1 D_1 + \cdots + a_l D_l$ be the unique decomposition of $D$ such that
$D_i$'s are prime divisors on $X$ and $a_1, \ldots, a_l \in \RR$.
Note that $D_i$'s are not necessarily Cartier divisors on $X$.
The support $\Supp(D)$ of $D$ is defined by $\bigcup_{i \in \{ i \mid a_i \not= 0\}} D_i$.
If $a_i \geq 0$ for all $i$, then $D$ is said to be {\em effective} and it is denoted by $D \geq 0$.
More generally, for $D, E \in \Div(X)_{\RR}$, if $D - E \geq 0$, then it is denoted by $D \geq E$ or $E \leq D$.
We define $H^0(X, D)$ to be
\[
H^0(X, D) = \{ \phi \in \Rat(X)^{\times} \mid (\phi) + D \geq 0 \} \cup \{ 0 \},
\]
where $\Rat(X)$ is the field of rational functions on $X$.
Let $F_{\infty} : X(\CC) \to X(\CC)$ be the complex conjugation map on $X(\CC)$.
Let $g$ be a locally integrable function on $X(\CC)$.
We say $g$ is {\em $F_{\infty}$-invariant} if $F_{\infty}^*(g) = g\ \aew$ on $X(\CC)$.
Note that we do not require that $F_{\infty}^*(g)$ is identically equal to  $g$ on $X(\CC)$.
A pair $\overline{D} = (D, g)$ is called an {\em arithmetic $\RR$-divisor on $X$} if
$g$ is $F_{\infty}$-invariant.
If $D \in \Div(X)$ (resp. $D \in \Div(X)_{\QQ}$),
then $\overline{D}$ is called an {\rm arithmetic divisor on $X$} (resp. {\em arithmetic $\QQ$-divisor on $X$}).
For arithmetic $\RR$-divisors $\overline{D}_1 = (D_1, g_1)$ and $\overline{D}_2 = (D_2, g_2)$,
$\overline{D}_1 = \overline{D}_2$ and $\overline{D}_1 \leq \overline{D}_2$ (or $\overline{D}_2 \geq \overline{D}_1$)
are defined as follows:
\[
\begin{cases}
\overline{D}_1 = \overline{D}_2\quad\overset{\text{def}}{\Longleftrightarrow}\quad\text{$D_1 = D_2$ and $g_1 = g_2\ \aew$},\\
\overline{D}_1 \leq \overline{D}_2\quad\overset{\text{def}}{\Longleftrightarrow}\quad\text{$D_1 \leq D_2$ and $g_1 \leq g_2\ \aew$}.
\end{cases}
\]
If $\overline{D} \geq (0,0)$, then $\overline{D}$ is said to be {\em arithmetically effective} (or {\em effective} for simplicity).
For arithmetic $\RR$-divisors $\overline{D}$ and $\overline{E}$ on $X$, we set $(-\infty, \overline{D}]$, $[\overline{D}, \infty)$ and
$[\overline{D},\overline{E}]$ as follows:
\[
\begin{cases}
(-\infty, \overline{D}] := \{ \overline{M} \mid \text{$\overline{M}$ is an arithmetic $\RR$-divisor on $X$ and $\overline{M} \leq \overline{D}$} \}, \\
[\overline{D}, \infty)  := \{ \overline{M} \mid \text{$\overline{M}$ is an arithmetic $\RR$-divisor on $X$ and $\overline{D} \leq \overline{M}$} \}, \\
[\overline{D}, \overline{E}]  := \{ \overline{M} \mid \text{$\overline{M}$ is an arithmetic $\RR$-divisor on $X$ and $\overline{D} \leq \overline{M} \leq \overline{E}$} \}.
\end{cases}
\]

Let $\TT$ be a type for Green functions on $X$, that is,
$\TT$ 
\query{add space after ``$\TT$''
(18/May/2010)}
is a type for Green functions on $X(\CC)$ together with the following extra $F_{\infty}$-compatibility condition:
if $u\in \TT(U)$ for an open set $U$ of $X(\CC)$, then $F_{\infty}^*(u) \in \TT(F_{\infty}^{-1}(U))$.
On arithmetic varieties, we always assume the above $F_{\infty}$-compatibility condition for a type for Green functions.
We denote 
\[
\{ u \in \TT(X(\CC)) \mid u = F_{\infty}^*(u) \}
\]
by $\TT(X)$. Note that $\TT(X)$ is different from $\TT(X(\CC))$.
Clearly $C^{0}$ and $C^{\infty}$ have $F_{\infty}$-compatibility. Moreover, by the following lemma,
$\Tpsh$ and $\Tpsh_{\RR}$ have also $F_{\infty}$-compatibility.
If two types $\TT$ and $\TT'$  for Green functions have $F_{\infty}$-compatibility, then
$\TT + \TT'$ and $\TT - \TT'$ have also $F_{\infty}$-compatibility.

\begin{Lemma}
\label{lem:plurisubharmonic:complex:conjugation}
Let $f_1, \ldots, f_r \in \RR[X_1, \ldots, X_N]$ and 
\[
V = \Spec(\CC[X_1, \ldots, X_N]/(f_1, \ldots, f_r)).
\]
We assume that $V$ is $e$-equidimensional and smooth over $\CC$.
Let $F_{\infty} : V \to V$ be the complex conjugation map.
If $u$ is a plurisubharmonic function on an open set $U$ of $V$,
then $F_{\infty}^*(u)$ is also a plurisubharmonic function on $F_{\infty}^{-1}(U)$.
\end{Lemma}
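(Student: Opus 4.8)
The plan is to reduce the statement to the disc-characterization of plurisubharmonicity recalled in Subsection~\ref{subsec:pluri:subharmonic}, using the fact that $F_{\infty}$ is an \emph{anti-holomorphic} involution of $V$. First I would record the basic properties of $F_{\infty}$: in the affine coordinates $X_1,\ldots,X_N$ the map $F_{\infty}$ is simply $(z_1,\ldots,z_N)\mapsto(\bar z_1,\ldots,\bar z_N)$, and since $f_1,\ldots,f_r$ have real coefficients this does carry $V$ into $V$; it is a continuous involution, hence a homeomorphism of $V$. In particular $F_{\infty}^{-1}(U)$ is open, and $F_{\infty}^*(u)=u\circ F_{\infty}$ is upper semicontinuous on $F_{\infty}^{-1}(U)$ whenever $u$ is upper semicontinuous on $U$. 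So the content of the lemma is the sub-mean-value property along analytic discs.

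Next I would take an arbitrary analytic map $\phi:\{z\in\CC\mid\vert z\vert<1\}\to F_{\infty}^{-1}(U)$; by the definition of plurisubharmonicity it suffices to show that $F_{\infty}^*(u)\circ\phi=u\circ(F_{\infty}\circ\phi)$ is subharmonic on the unit disc. Let $c$ denote the conjugation $z\mapsto\bar z$ of the unit disc onto itself. Writing $\phi=(\phi_1,\ldots,\phi_N)$ with each $\phi_j$ holomorphic, the composite $F_{\infty}\circ\phi\circ c$ is given in coordinates by $z\mapsto(\overline{\phi_1(\bar z)},\ldots,\overline{\phi_N(\bar z)})$; since each $z\mapsto\overline{\phi_j(\bar z)}$ is holomorphic and the image lands in $U$, the map $F_{\infty}\circ\phi\circ c$ is an analytic disc in $U$. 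Hence $u\circ(F_{\infty}\circ\phi\circ c)$ is subharmonic on the unit disc because $u$ is plurisubharmonic on $U$. Finally I would invoke that subharmonicity is invariant under $c$: if $v$ is subharmonic then so is $v\circ c$, because $c$ is a homeomorphism carrying each closed disc $\{\vert z-a\vert\le r\}$ onto $\{\vert w-\bar a\vert\le r\}$ and preserving normalized arc length on the boundary circle, so the sub-mean-value inequality and upper semicontinuity are preserved. Applying this to $v=u\circ(F_{\infty}\circ\phi\circ c)$ and using $c\circ c=\mathrm{id}$ gives that $u\circ(F_{\infty}\circ\phi)$ is subharmonic, which finishes the argument.

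I do not expect a serious obstacle here: the proof is essentially bookkeeping around the two standard facts that $z\mapsto\overline{\phi_j(\bar z)}$ is holomorphic and that precomposition with $z\mapsto\bar z$ preserves subharmonicity. The only point that deserves a careful sentence is to make sure one really is checking \emph{all} analytic discs in $F_{\infty}^{-1}(U)$ (not just, say, those passing through real points), which is exactly why working with an arbitrary $\phi$ and reducing to $F_{\infty}\circ\phi\circ c$ is the clean route; the potential pitfall of trying instead to choose holomorphic coordinates adapted to $F_{\infty}$ near a non-real point is thereby avoided.
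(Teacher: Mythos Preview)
Your proof is correct. It differs from the paper's argument in one organizational respect: the paper first reduces to the case $V=\CC^e$ by choosing a projection $p:V\to\CC^e$ that is \'etale at the point in question (and commutes with $F_\infty$), and then checks the sub-mean-value inequality directly along complex lines $\theta\mapsto \bar y+\bar\xi e^{\sqrt{-1}\theta}$ using the substitution $\theta\mapsto -\theta$. You instead stay on $V$ and work with the analytic-disc definition of plurisubharmonicity from Subsection~\ref{subsec:pluri:subharmonic}, turning an arbitrary disc $\phi$ in $F_\infty^{-1}(U)$ into the holomorphic disc $F_\infty\circ\phi\circ c$ in $U$ and then undoing $c$. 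The core mechanism is the same (the pair of facts that $z\mapsto\overline{\phi_j(\bar z)}$ is holomorphic and that conjugation preserves the sub-mean-value inequality), but your route avoids the \'etale-chart reduction and the implicit passage between the disc and complex-line characterizations of plurisubharmonicity on $\CC^e$; the paper's route is more hands-on but requires that extra localization step.
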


\begin{proof}
Fix $x \in U$ and choose $i_1 < \cdots < i_e$ such that the projection $p : V \to \CC^e$ given by
$(x_1, \ldots, x_N) \mapsto (x_{i_1}, \ldots, x_{i_e})$ is \'{e}tale at $x$.
Note that the following diagram is commutative:
\[
\begin{CD}
V @>{F_{\infty}}>> V \\
@V{p}VV @VV{p}V \\
\CC^e @>{F_{\infty}}>> \CC^e
\end{CD}
\]
Let $U_x$ be an open neighborhood of $x$ such that $\rest{p}{U_x} : U_x \to W_x = p(U_x)$
is an isomorphism as complex manifolds.
Then $\rest{p}{F_{\infty}^{-1}(U_x)} : F_{\infty}^{-1}(U_x) \to F_{\infty}^{-1}(W_x)$ is also an isomorphism
as complex manifolds. This observation indicates that we may assume $V = \CC^e$ in order to see our assertion.

Let $y \in F_{\infty}^{-1}(U) \subseteq \CC^e$ and $\xi \in \CC^e$ such that $y + \xi \exp({\sqrt{-1}\theta}) \in F_{\infty}^{-1}(U)$ for
all $0 \leq \theta \leq 2\pi$. Then
\begin{align*}
F_{\infty}^*(u)(y) & = u(\bar{y}) \leq \frac{1}{2\pi} \int_{0}^{2\pi} u(\bar{y} + \bar{\xi} \exp({\sqrt{-1}\theta})) d\theta \\
& =
\frac{1}{2\pi} \int_{0}^{2\pi} u(\bar{y} + \bar{\xi} \exp({-\sqrt{-1}\theta})) d\theta
\\ & =
 \frac{1}{2\pi} \int_{0}^{2\pi} u\left(\overline{y + \xi \exp({\sqrt{-1}\theta}})\right) d\theta \\
 & =
  \frac{1}{2\pi} \int_{0}^{2\pi} F_{\infty}^*(u)\left(y + \xi \exp({\sqrt{-1}\theta})\right) d\theta,
\end{align*}
which shows that $F_{\infty}^*(u)$ is plurisubharmonic on $F_{\infty}^{-1}(U)$.
\end{proof}

Let $D$ be an $\RR$-divisor on $X$ and let $g$ be a  $D$-Green function on $X(\CC)$.
By the following lemma,
$\frac{1}{2}( g + F_{\infty}^*(g))$ is an $F_{\infty}$-invariant $D$-Green function of $\TT$-type on $X(\CC)$.

\begin{Lemma}
\label{lem:D:Green:F:infty}
If $g$ is a $D$-Green function of $\TT$-type, then
$F_{\infty}^*(g)$ is also a $D$-Green function of $\TT$-type.
\end{Lemma}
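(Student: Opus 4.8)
The plan is to verify the defining condition for $F_\infty^*(g)$ to be a $D$-Green function of $\TT$-type point by point, by transporting, via $F_\infty$, a local expression of $g$ near a point $y\in X(\CC)$ to a local expression of $F_\infty^*(g)$ near $x:=F_\infty^{-1}(y)$. Three inputs will be needed. First, since $D$ is a divisor on the $\ZZ$-scheme $X$, each prime component $D_i$ of $D$ gives an $F_\infty$-stable subset $(D_i)_\CC\subseteq X(\CC)$; this is immediate from the description of $F_\infty$ as the complex conjugation map, because a $\CC$-point $p$ of $X$ factors through the closed subscheme $D_i$ if and only if $p\circ(\text{conj})$ does. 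Second, $F_\infty$ is an anti-holomorphic real-analytic diffeomorphism of $X(\CC)$, so $F_\infty^*$ preserves local integrability and equality ``$\aew$'', and if $f$ is a holomorphic local equation of $(D_i)_\CC$ near $y$ then $\overline{f\circ F_\infty}$ is a holomorphic local equation of $(D_i)_\CC$ near $x$, with $\vert f\circ F_\infty\vert=\bigl\vert\overline{f\circ F_\infty}\bigr\vert$. Third, a type for Green functions on an arithmetic variety is by definition $F_\infty$-compatible: $u\in\TT(U)\Rightarrow F_\infty^*(u)\in\TT(F_\infty^{-1}(U))$ (for $\TT=\Tpsh$ or $\Tpsh_{\RR}$ this is Lemma~\ref{lem:plurisubharmonic:complex:conjugation}).

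Granting these, the computation is short. Write $D=a_1D_1+\cdots+a_lD_l$ with the $D_i$ prime. Fix $x\in X(\CC)$, put $y=F_\infty(x)$, and choose an open neighbourhood $U_y$ of $y$, local equations $f_1,\dots,f_l$ of $D_1,\dots,D_l$ over $U_y$, and $u\in\TT(U_y)$ with
\[
g=u+\sum_{i=1}^l(-a_i)\log\vert f_i\vert^2\quad\aew
\]
on $U_y$. Applying $F_\infty^*$ on the neighbourhood $F_\infty^{-1}(U_y)$ of $x$ and using $\vert f_i\circ F_\infty\vert=\bigl\vert\overline{f_i\circ F_\infty}\bigr\vert$ gives
\[
F_\infty^*(g)=F_\infty^*(u)+\sum_{i=1}^l(-a_i)\log\bigl\vert\overline{f_i\circ F_\infty}\bigr\vert^2\quad\aew.
\]
By the three inputs above, $F_\infty^*(u)\in\TT(F_\infty^{-1}(U_y))$ and each $\overline{f_i\circ F_\infty}$ is a holomorphic local equation of $D_i$ near $x$, so this is a valid local expression of $\TT$-type, whence $F_\infty^*(g)$ is a $D$-Green function of $\TT$-type in a neighbourhood of $x$. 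As $x$ is arbitrary, this proves the lemma. (Combining it with property~(1) of $\TT$ and $F_\infty^2=\mathrm{id}$ then also shows that $\frac12(g+F_\infty^*(g))$ is an $F_\infty$-invariant $D$-Green function of $\TT$-type, as used immediately after the statement.)

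The only part that is not purely formal — hence the ``hard'' step, though it is still routine — is establishing the two geometric facts in the first input: that conjugation on $\Spec\CC$ induces a map fixing every divisor pulled back from $X$, and that $\overline{f\circ F_\infty}$ really is a holomorphic local equation of the (conjugate, hence the same) divisor with the same absolute value as $f\circ F_\infty$. Both can be checked in the local étale coordinates used in the proof of Lemma~\ref{lem:plurisubharmonic:complex:conjugation}, where $F_\infty$ becomes coordinatewise complex conjugation $(z_1,\dots,z_d)\mapsto(\bar z_1,\dots,\bar z_d)$ and the claims are transparent.
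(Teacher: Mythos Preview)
Your proof is correct and follows essentially the same approach as the paper: pull back a local expression of $g$ through $F_\infty$ and invoke the $F_\infty$-compatibility of $\TT$. The only cosmetic difference is that the paper chooses the local equations $\phi_i$ to be rational functions on a Zariski open $U\subseteq X$, so that $U(\CC)$ is automatically $F_\infty$-stable and $F_\infty^*(\phi_i)=\bar\phi_i$ with $\vert\bar\phi_i\vert=\vert\phi_i\vert$; this lets the same $\phi_i$ serve on both sides and avoids your (correct but slightly longer) discussion of why $\overline{f_i\circ F_\infty}$ is a holomorphic local equation of $D_i$.
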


\begin{proof}
Let $D = a_1 D_1 + \cdots + a_l D_l$ be a decomposition of $D$ such that $a_1, \ldots, a_l \in \RR$ and
$D_i$'s are Cartier divisors on $X$.
Let $U$ be a Zariski open set of $X$ over which $D_i$ can be written by a local equation $\phi_i$ for each $i$.
Let $g = u + \sum_{i=1}^l (-a_i) \log \vert \phi_i \vert^2\ \aew$ be the local expression of $g$ with respect to $\phi_1, \ldots, \phi_l$
over $U(\CC)$.
Note that $F_{\infty}^*(\phi_i) = \bar{\phi}_i$ as a function over $U(\CC)$.
Thus $F_{\infty}^*(g) = F_{\infty}^*(u) + \sum_{i=1}^l (-a_i) \log \vert \phi_i \vert^2\ \aew$ is a local expression of $F_{\infty}^*(g)$,
as required.
\end{proof}

We define $\aDiv_{\TT}(X)$, $\aDiv_{\TT}(X)_{\QQ}$ and $\aDiv_{\TT}(X)_{\RR}$ as follows:
\[
\begin{cases}
\aDiv_{\TT}(X) := \left\{ (D, g) \ \left|\  \begin{array}{l} \text{$D \in \Div(X)$ and $g$ is an $F_{\infty}$-invariant} \\
\text{$D$-Green function of $\TT$-type on $X(\CC)$.}
\end{array} \right\}\right., \\
 \\
 \aDiv_{\TT}(X)_{\QQ} := \left\{ (D, g) \ \left|\  \begin{array}{l} \text{$D \in \Div(X)_{\QQ}$ and $g$ is an $F_{\infty}$-invariant} \\
\text{$D$-Green function of $\TT$-type on $X(\CC)$.}
\end{array} \right\}\right., \\
 \\
\aDiv_{\TT}(X)_{\RR} := \left\{ (D, g) \ \left|\  \begin{array}{l} \text{$D \in \Div(X)_{\RR}$ and $g$ is an $F_{\infty}$-invariant}\\
\text{$D$-Green function of $\TT$-type on $X(\CC)$.}
\end{array} \right\}\right.. \\
\end{cases}
\]
An element of $\aDiv_{\TT}(X)_{\RR}$ (resp. $\aDiv_{\TT}(X)_{\QQ}$, $\aDiv_{\TT}(X)$) is called an {\em arithmetic $\RR$-divisor of $\TT$-type on $X$}
(resp. {\em arithmetic $\QQ$-divisor of $\TT$-type on $X$}, {\em arithmetic divisor of $\TT$-type on $X$}).
Let $\overline{D} = (D, g)$ be an arithmetic $\RR$-divisor of $\TT$-type.
Then, as $F_{\infty}^*(g) = g \ \aew$, we can see that $F_{\infty}^*(g_{\rm can}) = g_{\rm can}$ holds $X(\CC) \setminus \Supp(D)(\CC)$.

Here we recall $\aPic_{C^0}(X)$, $\aPic_{C^0}(X)_{\QQ}$ and $\aPic_{C^0}(X)_{\RR}$
(for details, see \cite{MoContExt}). First of all,
let $\aPic_{C^0}(X)$ be the group of isomorphism classes of $F_{\infty}$-invariant continuous hermitian invertible sheaves on $X$ and let
$\aPic_{C^0}(X)_{\QQ} := \aPic_{C^0}(X) \otimes_{\ZZ} \QQ$.
\query{$\otimes \QQ$\quad
$\Longrightarrow$\quad $\otimes_{\ZZ} \QQ$
(17/October/2010)}
For an $F_{\infty}$-invariant  continuous function $f$ on $X(\CC)$, $\overline{\OO}(f)$ is given by $(\OO_X, \exp(-f)\vert \cdot\vert_{\rm can})$.
Then $\aPic_{C^0}(X)_{\RR}$ is defined to be
\[
\aPic_{C^0}(X)_{\RR} 
:= \frac{\aPic_{C^0}(X) \otimes_{\ZZ} \RR}{
\left\{ \sum_i \overline{\OO}(f_i) \otimes a_i  \left| 
\begin{array}{l}
\text{$f_1, \ldots, f_r \in C^0(X)$ and} \\
\text{$a_1, \ldots, a_r \in \RR$ with $\sum_i a_i f_i  = 0$ }
\end{array} \hspace{-0.5em}\right\}\right.},
\]
\query{$\otimes \RR$\quad
$\Longrightarrow$\quad $\otimes_{\ZZ} \RR$
(17/October/2010)}
where $C^0(X) = \{ f \in C^0(X(\CC)) \mid F_{\infty}^*(f) = f \}$ as before.
Note that there is a natural surjective homomorphism $\overline{\OO} : \aDiv_{C^0}(X) \to \aPic_{C^0}(X)$
given by 
\[
\overline{\OO}(D,g) = (\OO_X(D), \vert\cdot\vert_g),
\]
where $\vert 1 \vert_g = \exp(-g/2)$.

\subsection{Volume function for arithmetic $\RR$-divisors}
\setcounter{Theorem}{0}
We assume that $X$ is projective.
Let $\overline{D} = (D, g)$ be an arithmetic $\RR$-divisor on $X$.
We set
\[
\aH(X, \overline{D}) = \{ \phi \in H^0(X, D) \mid \Vert \phi \Vert_g \leq 1 \}
\]
and 
\[
\ah(X,\overline{D}) =
\begin{cases} 
\log \# \aH(X, \overline{D}) & \text{if $\aH(X, \overline{D})$ is finite}, \\
\infty & \text{otherwise},
\end{cases}
\]
where $\Vert \phi \Vert_g$ is the essential supremum of $\vert \phi \vert_g = \vert \phi \vert \exp(-g/2)$.
Note that
\[
\aH(X, \overline{D}) = \{ \phi \in \Rat(X)^{\times} \mid \widehat{(\phi)} + \overline{D} \geq 0 \} \cup \{ 0 \}.
\]
The volume $\avol(\overline{D})$ of $\overline{D}$ is defined to be
\[
\avol(\overline{D}) = \limsup_{n\to\infty}\frac{\ah(X, n\overline{D})}{n^d/d!}.
\]
For arithmetic $\RR$-divisors $\overline{D}$ and $\overline{D}'$ on $X$, if $\overline{D} \leq \overline{D}'$, then
$\aH(X, \overline{D}) \subseteq \aH(X, \overline{D}')$ and $\avol(\overline{D}) \leq \avol(\overline{D}')$ hold.

\begin{Proposition}
Let $\TT$ be a type for Green functions on $X$ and let $\overline{D} = (D, g)$ be an arithmetic $\RR$-divisor of $\TT$-type on $X$.
If $g$ is either of upper bounded type or of lower bounded type,
then $\aH(X, \overline{D})$ is finite. Moreover, if $g$ is of upper bounded type,
then $\avol(\overline{D}) < \infty$.
\end{Proposition}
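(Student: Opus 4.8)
The plan is to dispose of the two boundedness hypotheses in turn, reducing the upper bounded case to the lower bounded one, and then to derive the finiteness of $\avol$ by first replacing $g$ with a Green function of $C^{\infty}$-type dominating it. Throughout I would use that $X(\CC)$ is compact (as $X$ is projective), that $H^0(X,D)$ is a finitely generated free $\ZZ$-module (it lies in the field $\Rat(X)$ and equals $H^0$ of the reflexive sheaf attached to $\lfloor D\rfloor$, which is finitely generated since $X$ is projective over $\ZZ$), and the results of Section~\ref{sec:Green:function:R:divisor}.

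First, suppose $g$ is of lower bounded type. By Proposition~\ref{prop:upper:bounded:Green:C:infty}(1) the function $|\phi|_g=|\phi|\exp(-g/2)$ is, locally on $X(\CC)$, essentially bounded above for every $\phi\in H^0(X,D)$; covering the compact space $X(\CC)$ by finitely many such charts gives $\Vert\phi\Vert_g<\infty$. Hence, by Lemma~\ref{lem:norm:locally:integrable}(2), $\Vert\cdot\Vert_g$ is a norm on the finite-dimensional real vector space $H^0(X,D)\otimes_{\ZZ}\RR$, so $\aH(X,\overline D)=\{\phi\in H^0(X,D)\mid\Vert\phi\Vert_g\le 1\}$ is the intersection of the lattice $H^0(X,D)$ with the compact unit ball of that norm, hence finite. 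If instead $g$ is of upper bounded type, then by Proposition~\ref{prop:upper:bounded:Green:C:infty}(2) there is a $D$-Green function $g'$ of $C^{\infty}$-type with $g\le g'$ almost everywhere; then $\Vert\phi\Vert_g\ge\Vert\phi\Vert_{g'}$, so $\aH(X,\overline D)\subseteq\{\phi\in H^0(X,D)\mid\Vert\phi\Vert_{g'}\le 1\}$, and the latter is finite by the lower bounded case applied to $g'$ (the argument just given never used $F_{\infty}$-invariance of the Green function).

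For the second assertion assume $g$ is of upper bounded type; then so is $ng$ for every $n\ge 1$, so $\ah(X,n\overline D)<\infty$. Averaging the $g'$ above with $F_{\infty}^{*}(g')$ (Lemma~\ref{lem:D:Green:F:infty}) I may take $g'$ to be $F_{\infty}$-invariant and still $\ge g$ almost everywhere, so $\overline D':=(D,g')\in\aDiv_{C^{\infty}}(X)_{\RR}$ satisfies $\overline D\le\overline D'$ and hence $\avol(\overline D)\le\avol(\overline D')$; it therefore suffices to bound $\ah(X,n\overline D')$. Fixing a continuous volume form $\Phi$ on $X(\CC)$ with $\int_{X(\CC)}\Phi=V$, every $\phi\in\aH(X,n\overline D')$ satisfies $|\phi|^{2}\exp(-ng')\le\Vert\phi\Vert_{ng'}^{2}\le 1$ almost everywhere, whence $\langle\phi,\phi\rangle_{ng'}\le V$. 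Thus $\aH(X,n\overline D')$ is contained in the set of points of the lattice $H^0(X,nD)$ lying in the $\langle\,,\,\rangle_{ng'}$-ball of radius $\sqrt V$, and by a standard lattice-point estimate this number is at most $(1+2\sqrt V/\mu_n)^{r_n}$, where $r_n=\dim_{\QQ}\bigl(H^0(X,nD)\otimes_{\ZZ}\QQ\bigr)=O(n^{d-1})$ (since $X_{\QQ}$ has dimension $d-1$) and $\mu_n$ is the length of a shortest nonzero vector of $H^0(X,nD)$ for $\langle\,,\,\rangle_{ng'}$.

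It remains to prove that $\log(1/\mu_n)=O(n)$; combined with the above this gives $\ah(X,n\overline D')\le r_n\log(1+2\sqrt V/\mu_n)=O(n^{d})$, hence $\avol(\overline D')<\infty$. This is the crux, and the only place the integral structure of $X$ really enters: a nonzero $\phi\in H^0(X,nD)$ becomes, after multiplication by the relevant local equations, a nonzero integral global section of an integral model of a line bundle on $X_{\QQ}$ whose divisor is $\le nH+O(1)$ for a fixed arithmetically ample integral divisor $\overline H$, and using the $C^{\infty}$ metric on $\overline H$ one bounds $\sup_{X(\CC)}|\phi|_{ng'}$ — and a fortiori $\langle\phi,\phi\rangle_{ng'}$ — below by $e^{-Cn}$ with $C$ independent of $n$ and $\phi$; here one uses that the exponents $na_i-\lceil na_i\rceil$ remain in $(-2,0]$, so the discrepancy between $ng'$ and the metric pulled back from $n\overline H$ stays controlled uniformly in $n$. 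The soft ingredients — the two reductions, the norm/compactness argument, and the lattice count — are routine; the uniform lower bound on $\mu_n$ (equivalently, the archimedean non-degeneracy of a nonzero integral section) together with the bookkeeping needed to pass from the $\RR$-divisor $D$ to integral models of $nD$ is where the real work lies.
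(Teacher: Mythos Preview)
Your argument for the finiteness of $\aH(X,\overline{D})$ is correct. The lower bounded case matches the paper exactly, and for the upper bounded case your reduction (pass to a dominating $C^\infty$ Green function $g'$ and apply the lower bounded case to $(D,g')$) is valid and in fact a bit more direct than what the paper writes at that point.

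For $\avol(\overline{D})<\infty$, however, the paper takes a much simpler route that bypasses the ``real work'' you flag. Instead of staying with the $\RR$-divisor $(D,g')$ and estimating the first minimum $\mu_n$, the paper dominates $(D,g')$ by an \emph{integral} arithmetic divisor $\overline{D}''$ of $C^\infty$-type: write $(D,g')=\sum_i a_i\overline{D}_i$ with $\overline{D}_i\in\aDiv_{C^\infty}(X)$ (Proposition~\ref{prop:green:irreducible:decomp}), split each $\overline{D}_i=\overline{A}_i-\overline{B}_i$ as a difference of effective $C^\infty$ arithmetic divisors (Lemmas~\ref{lem:Weil:leq:Cartier} and~\ref{lem:diff:effective:arith:div}), and set $\overline{D}''=\sum_i\lceil a_i\rceil\,\overline{A}_i+\sum_i\lceil -a_i\rceil\,\overline{B}_i$. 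Then $(D,g')\le\overline{D}''$, so for every $n$ one has $\aH(X,n\overline{D})\subseteq\aH(X,n\overline{D}'')=\aH(X,\overline{\OO}(\overline{D}'')^{\otimes n})$, and the bound $\ah(X,n\overline{D}'')=O(n^d)$ is the standard polynomial growth for an honest hermitian line bundle (\cite[Lemma~3.3]{MoCont}). No first-minimum estimate is needed at all.

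Your lattice-count route would require $\log(1/\mu_n)=O(n)$, and the sketch you give does not establish this. A uniform lower bound $\Vert\phi\Vert\ge e^{-Cn}$ for all nonzero integral $\phi\in H^0(X,nH)$ is not a consequence of the product formula (which concerns elements of $\Rat(X)^\times$, not sections of line bundles on a higher-dimensional $X$), and the phrase ``archimedean non-degeneracy of a nonzero integral section'' names the desired conclusion rather than a tool that delivers it. Proving such a bound from scratch is a genuine arithmetic input, essentially of the same depth as the growth result you are after. The idea you are missing is precisely the rounding trick that produces the dominating integral divisor $\overline{D}''$ and reduces everything to the classical case.
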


\begin{proof}
First we assume that $g$ is of lower bounded type.
Then, by Lemma~\ref{lem:norm:locally:integrable},
$\Vert\cdot\Vert_g$ yields a norm of $H^0(X, D)$, and hence the assertion follows.

Next we assume that $g$ is of upper bounded type.
Then, by Proposition~\ref{prop:upper:bounded:Green:C:infty},
there is an $F_{\infty}$-invariant $D$-Green function $g'$ of $C^{\infty}$-type such that $g \leq g'\ \aew$.
By Proposition~\ref{prop:green:irreducible:decomp}, we can 
choose $a_1, \ldots, a_l \in \RR$ and $\overline{D}_1, \ldots, \overline{D}_l \in \aDiv_{C^{\infty}}(X)$ such that
$(D, g') = a_1 \overline{D}_1 + \cdots + a_l \overline{D}_l$. For each $i$,
by using Lemma~\ref{lem:Weil:leq:Cartier} and Lemma~\ref{lem:diff:effective:arith:div}, 
we can find effective arithmetic divisors $\overline{A}_i$ and $\overline{B}_i$ of $C^{\infty}$-type such that
$\overline{D}_i = \overline{A}_i - \overline{B}_i$.
As 
\[
(D, g') = a_1 \overline{A}_1 + \cdots + a_l \overline{A}_l + (-a_1) \overline{B}_1 + \cdots + (-a_l) \overline{B}_l,
\]
if we set $\overline{D}'' = \lceil a_1 \rceil \overline{A}_1 + \cdots + \lceil a_l \rceil  \overline{A}_l + \lceil (-a_1) \rceil  \overline{B}_1 + \cdots + \lceil (-a_l) \rceil  \overline{B}_l$,
then $(D, g') \leq \overline{D}''$ and $\overline{D}'' \in \aDiv_{C^{\infty}}(X)$.
Note that 
\[
\aH(X, n\overline{D}) \subseteq \aH(X, n(D, g')) \subseteq \aH(X, n\overline{D}'') = \aH(X, \overline{\OO}(\overline{D}'')^{\otimes n})
\] 
for all $n \geq 1$.
Thus our assertion follows from \cite[Lemma~3.3]{MoCont}.
\end{proof}

Here we consider the fundamental properties of $\avol$ on $\aDiv_{C^0}(X)_{\RR}$.

\begin{Theorem}
\label{thm:aDiv:aPic:R}
There is a natural surjective  homomorphism
\[
\overline{\OO}_{\RR} : \aDiv_{C^0}(X)_{\RR} \to \aPic_{C^0}(X)_{\RR}
\]
such that the following diagram is commutative:
\[
\begin{CD}
\aDiv_{C^0}(X) \otimes_{\ZZ} \RR @>{\overline{\OO} \otimes \operatorname{id}} >> \aPic_{C^0}(X) \otimes_{\ZZ} \RR \\
@VVV @VVV \\
\aDiv_{C^0}(X)_{\RR} @>{\overline{\OO}_{\RR}}>> \aPic_{C^0}(X)_{\RR}. \\
\end{CD}
\]
Moreover, we have the following:
\begin{enumerate}
\renewcommand{\labelenumi}{(\arabic{enumi})}
\item
For all $\overline{D} \in \aDiv_{C^0}(X)_{\RR}$,
\[
\avol(\overline{D}) = \lim_{t\to\infty} \frac{\ah(t\overline{D})}{t^d/d!} =\avol(\overline{\OO}_{\RR}(\overline{D})),
\]
where $t \in \RR_{>0}$ and $\avol(\overline{\OO}_{\RR}(\overline{D}))$ is the volume defined in \cite[Section~4]{MoContExt}.

\item
$\avol(a\overline{D}) = a^d \avol(\overline{D})$ for all $a \in \RR_{\geq 0}$ and $\overline{D} \in \aDiv_{C^0}(X)_{\RR}$.

\item
\rom{(}Continuity of $\avol$\rom{)}
\query{Change the statement of (3)
(11/November/2010)}
Let $\overline{D}_1, \ldots, \overline{D}_r, \overline{A}_1, \ldots, \overline{A}_{r'} \in \aDiv_{C^0}(X)_{\RR}$.
For a compact set $B$ in $\RR^r$ and a positive number $\epsilon$, there are positive numbers $\delta$ and $\delta'$ such that,
for all $a_1, \ldots, a_r, \delta_1, \ldots, \delta_{r'} \in \RR$ and $\phi \in C^0(X)$
with $(a_1, \ldots, a_r) \in B$, $\sum_{j=1}^{r'} \vert \delta_j \vert \leq \delta$ and $\Vert \phi \Vert_{\sup} \leq \delta'$,
we have
\[
\hskip3em
\left\vert \avol\left(\sum_{i=1}^r a_i \overline{D}_i + \sum_{j=1}^{r'} \delta_j \overline{A}_j + (0, \phi) \right) -
\avol\left(\sum_{i=1}^r a_i \overline{D}_i \right) \right\vert
\leq \epsilon.
\]
Moreover, if  $\overline{D}_1, \ldots, \overline{D}_r$, $\overline{A}_1, \ldots, \overline{A}_{r'}$ are $C^{\infty}$, then
there is a positive constant $C$ depending only on
$X$ and $\overline{D}_1, \ldots, \overline{D}_r, \overline{A}_1, \ldots, \overline{A}_{r'}$ such that
\begin{multline*}
\hskip3em
\left\vert \avol\left(\sum_{i=1}^r a_i \overline{D}_i + \sum_{j=1}^{r'} \delta_j \overline{A}_j + (0, \phi) \right) -
\avol\left(\sum_{i=1}^r a_i \overline{D}_i \right) \right\vert \\
\leq C\left( \sum_{i=1}^r \vert a_i \vert + \sum_{j=1}^{r'} \vert \delta_j \vert \right)^{d-1} \left(
\Vert \phi \Vert_{\sup} + \sum_{j=1}^{r'} \vert \delta_j \vert \right)
\end{multline*}
for all $a_1, \ldots, a_r, \delta_1, \ldots, \delta_{r'} \in \RR$ and $\phi \in C^0(X)$.

\item
Let $\overline{D}_1$ and $\overline{D}_2$ be arithmetic $\RR$-divisors of $C^0$-type.
If $\overline{D}_1$ and $\overline{D}_2$ are pseudo-effective 
\rom{(}for the definition of pseudo-effectivity, see SubSection~\rom{\ref{subsec:def:positivity:arith:div}}\rom{)},%
\query{$\ah(X, n_1\overline{D}_1) \not= 0$ and $\ah(X, n_2\overline{D}_2) \not= 0$ for some $n_1, n_2 \in \ZZ_{>0}$
\quad$\Longrightarrow$\quad
$\overline{D}_1$ and $\overline{D}_2$ are pseudo-effective 
(for the definition of pseudo-effectivity, see SubSection~\ref{subsec:def:positivity:arith:div})\\
(28/February/2010)}
then
\[
\avol(\overline{D}_1 + \overline{D}_2)^{1/d} \geq \avol(\overline{D}_1)^{1/d} + \avol(\overline{D}_2)^{1/d}.
\]

\item \rom{(}Fujita's approximation theorem for arithmetic $\RR$-divisors\rom{)}
If  $\overline{D}$ is an arithmetic $\RR$-divisor of $C^0$-type and $\avol(\overline{D}) > 0$, then, for any positive number $\epsilon$, there are a birational morphism $\mu : Y \to X$ of
generically smooth and normal projective arithmetic varieties and an ample arithmetic $\QQ$-divisor $\overline{A}$ of $C^{\infty}$-type on $Y$ \rom{(}cf. Section~\rom{\ref{sec:positivity:arith:div}}\rom{)}
such that $\overline{A} \leq \mu^*(\overline{D})$ and $\avol(\overline{A}) \geq \avol(\overline{D}) - \epsilon$.
\end{enumerate}
\end{Theorem}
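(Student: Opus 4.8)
The plan is to obtain all six assertions by transporting, through the homomorphism $\overline{\OO}_{\RR}$, results that are already available for arithmetic $\QQ$-divisors and for the space $\aPic_{C^0}(X)_{\RR}$ of \cite{MoContExt}. First I would construct $\overline{\OO}_{\RR}$. The composite
$\aDiv_{C^0}(X)\otimes_{\ZZ}\RR \xrightarrow{\overline{\OO}\otimes\operatorname{id}} \aPic_{C^0}(X)\otimes_{\ZZ}\RR \to \aPic_{C^0}(X)_{\RR}$
must be checked to kill the kernel of the surjection $\aDiv_{C^0}(X)\otimes_{\ZZ}\RR \to \aDiv_{C^0}(X)_{\RR}$: an element of that kernel is $\sum_i (D_i,g_i)\otimes a_i$ with $\sum_i a_iD_i=0$ in $\Div(X)_{\RR}$ and $\sum_i a_ig_i=0$ a.e.; choosing rational functions witnessing the linear equivalences one sees that its image is a sum of relations of the form $\overline{\OO}(f)\otimes a$ with $\sum af=0$, hence zero in $\aPic_{C^0}(X)_{\RR}$. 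Thus the composite descends to $\overline{\OO}_{\RR}:\aDiv_{C^0}(X)_{\RR}\to\aPic_{C^0}(X)_{\RR}$, it is surjective because $\overline{\OO}$ is, and the square commutes by construction.

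The heart is assertion (1). For an arithmetic $\QQ$-divisor $\overline{D}$ of $C^0$-type, $\avol(\overline{D})=\avol(\overline{\OO}_{\RR}(\overline{D}))$ holds by the definitions and is a genuine limit by \cite{MoCont} and \cite{MoContExt}. For general $\overline{D}=(D,g)\in\aDiv_{C^0}(X)_{\RR}$ I would sandwich it between arithmetic $\QQ$-divisors of $C^0$-type: perturb the (Cartier) coefficients of $D$ to nearby rationals and perturb $g$ by small constants to produce $\overline{D}',\overline{D}''\in\aDiv_{C^0}(X)_{\QQ}$ with $\overline{D}'\leq\overline{D}\leq\overline{D}''$, and by the continuity of $\avol$ on $\aPic_{C^0}(X)_{\RR}$ proved in \cite{MoContExt} one can make $\avol(\overline{D}'')-\avol(\overline{D}')$ as small as desired; Gromov's inequality for $\RR$-divisors (Proposition~\ref{prop:Gromov:ineq}) controls the norms in the intermediate regime. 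Since $\overline{D}'\leq\overline{D}\leq\overline{D}''$ forces $\aH(X,t\overline{D}')\subseteq\aH(X,t\overline{D})\subseteq\aH(X,t\overline{D}'')$ for $t\in\RR_{>0}$, the quotients $\ah(X,t\overline{D})/(t^d/d!)$ are squeezed, so their $\liminf$ and $\limsup$ coincide and equal $\avol(\overline{\OO}_{\RR}(\overline{D}))$; this is exactly the "$\lim$'' form. Assertion (2) is then formal: $\avol(a\overline{D})=\lim_{t\to\infty}\ah(X,ta\overline{D})/(t^d/d!)=a^d\lim_{t\to\infty}\ah(X,(ta)\overline{D})/((ta)^d/d!)=a^d\avol(\overline{D})$ for $a\in\RR_{>0}$, and the case $a=0$ is trivial.

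For (3), the qualitative statement on compact sets follows from (1) and the commutative diagram together with the continuity of $\avol:\aPic_{C^0}(X)_{\RR}\to\RR$ of \cite{MoContExt}, after absorbing $(0,\phi)$ and the $\overline{A}_j$'s into the perturbation argument. The quantitative refinement for $C^{\infty}$ data is obtained from the arithmetic Hilbert–Samuel estimates and the volume-comparison inequalities of \cite{MoCont} and \cite{MoContExt}: the factor $(\sum_i|a_i|+\sum_j|\delta_j|)^{d-1}$ reflects homogeneity of degree $d$, and the factor $\Vert\phi\Vert_{\sup}+\sum_j|\delta_j|$ reflects the essentially linear dependence of $\ah$ on a uniform change of metric. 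Assertion (4) follows by transporting, via $\overline{\OO}_{\RR}$ and (1), the Brunn–Minkowski-type inequality for arithmetic volumes of \cite{YuanVol} together with its extension to $\aPic_{C^0}(X)_{\RR}$ in \cite{MoContExt}, the pseudo-effectivity hypothesis keeping the argument inside the cone where that inequality is valid. For (5), given $\overline{D}$ of $C^0$-type with $\avol(\overline{D})>0$, I would first use (3) to choose an arithmetic $\QQ$-divisor $\overline{D}_0\leq\overline{D}$ of $C^0$-type with $\avol(\overline{D}_0)\geq\avol(\overline{D})-\epsilon/2$, then apply the Fujita approximation theorem for arithmetic $\QQ$-divisors of Chen \cite{HChenFujita} and Yuan \cite{YuanVol} to $\overline{D}_0$, producing $\mu:Y\to X$ and an ample arithmetic $\QQ$-divisor $\overline{A}$ of $C^{\infty}$-type with $\overline{A}\leq\mu^*(\overline{D}_0)\leq\mu^*(\overline{D})$ and $\avol(\overline{A})\geq\avol(\overline{D}_0)-\epsilon/2\geq\avol(\overline{D})-\epsilon$.

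The step I expect to be the main obstacle is assertion (1): producing two-sided rational approximants $\overline{D}'\leq\overline{D}\leq\overline{D}''$ within $C^0$-type whose volumes pinch, one must perturb both the divisorial part (which is harmless since its components are Cartier) and the Green function (adding a small constant, which remains of $C^0$-type), and then invoke precisely the right continuity statement from \cite{MoContExt} on the $\QQ$-side, with Gromov's inequality for $\RR$-divisors bridging the intermediate norms; once the $\limsup$ is known to be a limit, assertions (2)–(5) follow essentially formally from the cited $\QQ$- and $\aPic_{C^0}(X)_{\RR}$-results.
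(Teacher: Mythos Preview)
Your overall architecture matches the paper's: construct $\overline{\OO}_{\RR}$ by checking the kernel, prove (1) by a squeezing argument, and derive (2)--(5) from (1) together with the results of \cite{MoCont}, \cite{MoContExt}, \cite{YuanVol}, \cite{HChenFujita}, \cite{MoArLin}. Parts (2)--(5) are essentially what the paper does (in (5) the paper goes all the way to a $C^{\infty}$ $\QQ$-divisor via Stone--Weierstrass before invoking Fujita approximation, but your $C^0$ variant is harmless). Two points deserve correction.

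\textbf{Kernel of $\aDiv_{C^0}(X)\otimes_{\ZZ}\RR\to\aDiv_{C^0}(X)_{\RR}$.} There are no rational functions here: $\sum_i a_iD_i=0$ means the $\RR$-linear combination is the zero divisor, not that it is principal. The correct statement (Lemma~\ref{lem:kernel:Div:R}) is that any kernel element can be rewritten as $\sum_i(0,\phi_i)\otimes a_i$ with $\phi_i\in C^0(X)$ and $\sum_i a_i\phi_i=0$; its image $\sum_i\overline{\OO}(\phi_i)\otimes a_i$ then dies in $\aPic_{C^0}(X)_{\RR}$ \emph{by the very definition} of that quotient. Your conclusion is right, but the mechanism is continuous functions, not rational ones.

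\textbf{The genuine gap is in (1).} Fixing $\QQ$-divisors $\overline{D}'\leq\overline{D}\leq\overline{D}''$ does give $\ah(X,t\overline{D}')\leq\ah(X,t\overline{D})\leq\ah(X,t\overline{D}'')$, but to squeeze you must know that $\lim_{t\to\infty}\ah(X,t\overline{D}')/(t^d/d!)$ and the analogous limit for $\overline{D}''$ exist \emph{over real $t$} and equal the volumes. The references you cite only give the limit over integers $n$; for real $t$ the quantity $t\overline{D}'$ is again an $\RR$-divisor and you are back to the statement you are trying to prove. Gromov's inequality does not bridge this; it is irrelevant here. The paper's remedy is not to approximate $\overline{D}$ by fixed rational divisors but to decompose it once and for all as $\overline{D}=\sum_i a_i\overline{D}_i$ with each $\overline{D}_i$ an \emph{effective} arithmetic $\ZZ$-divisor of $C^0$-type (Lemmas~\ref{lem:Weil:leq:Cartier} and \ref{lem:diff:effective:arith:div}), and then to build, for each $n$, the $\ZZ$-divisors
\[
\overline{A}_n=\sum_{a_i\geq 0}\lfloor na_i\rfloor\,\overline{D}_i+\sum_{a_i<0}\lfloor (n+1)a_i\rfloor\,\overline{D}_i,\qquad
\overline{B}_n=\sum_{a_i\geq 0}\lceil na_i\rceil\,\overline{D}_i+\sum_{a_i<0}\lceil (n-1)a_i\rceil\,\overline{D}_i,
\]
so that $\overline{A}_{\lfloor t\rfloor}\leq t\overline{D}\leq\overline{B}_{\lceil t\rceil}$ for all real $t\geq 1$ (using effectivity to control the floor/ceiling in each sign). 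Since $\overline{A}_n/n\to\overline{D}$ and $\overline{B}_n/n\to\overline{D}$, \cite[Theorem~5.1]{MoContExt} gives $\ah(X,\overline{A}_n)/(n^d/d!)\to\avol(\overline{\OO}_{\RR}(\overline{D}))$ and likewise for $\overline{B}_n$, and the squeeze now works for real $t$. If you want to rescue your fixed-$\overline{D}',\overline{D}''$ approach, you would still need exactly this effective decomposition plus floor/ceiling step applied to $\overline{D}'$ and $\overline{D}''$ separately; the extra layer of rational approximation buys nothing.
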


Let us begin with the following lemmas.

\begin{Lemma}
\label{lem:Weil:leq:Cartier}
Let $Y$ be a normal projective arithmetic variety. Then we have the following:
\begin{enumerate}
\renewcommand{\labelenumi}{(\arabic{enumi})}
\item
Let $Z$ be a Weil divisor on $Y$.
Then there is an effective Cartier divisor $A$ on $Y$ such that $Z \leq A$.

\item
Let $D$ be a Cartier divisor on $Y$.
Then there are effective Cartier divisors $A$ and $B$ on $Y$ such that $D = A - B$.

\item
Let $x_1, \ldots, x_l$ be points of $Y$ and let $D$ be a Cartier divisor on $Y$.
Then there are effective Cartier divisors $A$ and $B$, and a non-zero rational function $\phi$ on $Y$ such that
$D + (\phi) = A - B$ and $x_1, \ldots, x_l \not\in \Supp(A) \cup \Supp(B)$.
\end{enumerate}
\end{Lemma}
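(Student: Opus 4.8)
The plan is to prove (1) directly, deduce (2) from it, and obtain (3) by a moving argument. Throughout I write $(f)$ for the principal Cartier divisor of a rational function, and for an invertible sheaf $\mathcal{L}$ on $Y$ with a nonzero global section $s$ I write $\zeros(s)$ for the effective Cartier divisor defined by $s$; since $Y$ is projective over $\ZZ$, fix an ample Cartier divisor $H$ on $Y$.

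For (1) I would first reduce to an effective Weil divisor: writing $Z = Z_{+} - Z_{-}$ with $Z_{\pm}$ effective and disjoint, any effective Cartier $A$ with $A \geq Z_{+}$ automatically satisfies $A \geq Z$, so one may assume $Z \geq 0$. Then $\OO_Y(-Z) \otimes \OO_Y(nH) = \OO_Y(nH-Z)$ is a nonzero coherent sheaf, so Serre's theorem gives $H^{0}(Y, \OO_Y(nH-Z)) \neq 0$ for $n \gg 0$; picking a nonzero element $f$ there, i.e. a rational function with $(f) + nH - Z \geq 0$, the divisor $A := (f) + nH$ is Cartier (principal plus Cartier), it is effective since $A = \big((f)+nH-Z\big) + Z \geq 0$, and it satisfies $A \geq Z$. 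Part (2) then follows at once: apply (1) to the Weil divisor underlying $D$ to get an effective Cartier $A \geq D$, and put $B := A - D$, which is Cartier and effective because $A \geq D$.

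For (3) the idea is to move $D + nH$ and $nH$ separately, for $n \gg 0$, to effective Cartier divisors whose supports avoid the prescribed points. First I would reduce to the case of closed points: a divisor's support contains no generic point, and for a non-closed, non-generic $x_{i}$ one has $x_{i} \notin \Supp(E)$ as soon as $y \notin \Supp(E)$ for some closed point $y \in \overline{\{x_{i}\}}$ (because $\Supp(E)$ is closed), so replacing such an $x_{i}$ by such a $y$ is harmless; one may also assume the $x_{i}$ pairwise distinct. Let then $W \subset Y$ be the reduced finite closed subscheme with support $\{x_{1},\ldots,x_{l}\}$, with ideal sheaf $\mathcal{I}_{W}$. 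By Serre vanishing, for $n \gg 0$ one has $H^{1}(Y, \mathcal{I}_{W} \otimes \OO_Y(D+nH)) = H^{1}(Y, \mathcal{I}_{W} \otimes \OO_Y(nH)) = 0$, whence for $\mathcal{L} \in \{\OO_Y(D+nH),\, \OO_Y(nH)\}$ the evaluation map $H^{0}(Y,\mathcal{L}) \to \bigoplus_{i=1}^{l} \mathcal{L} \otimes \kappa(x_{i})$ is surjective. Since each $\kappa(x_{i})$ is a nonzero field, I can choose $s \in H^{0}(Y, \OO_Y(D+nH))$ and $t \in H^{0}(Y, \OO_Y(nH))$ vanishing at none of the $x_{i}$; then $A := \zeros(s)$ and $B := \zeros(t)$ are effective Cartier divisors with $x_{1},\ldots,x_{l} \notin \Supp(A) \cup \Supp(B)$ and $A \sim D + nH$, $B \sim nH$, so $A - B \sim D$, i.e. $A - B = D + (\phi)$ for a suitable $\phi \in \Rat(Y)^{\times}$.

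The one genuinely delicate point is the support condition in (3): over a base with finite residue fields a globally generated linear system need not contain a member avoiding a prescribed finite set of points (already general pencils on the projective line over a finite field fail this), so mere global generation of $\OO_Y(D+nH)$ is not enough. The remedy is the strictly stronger input furnished by Serre vanishing, namely the eventual vanishing of $H^{1}(Y, \mathcal{I}_{W} \otimes \OO_Y(D+nH))$, which upgrades global generation to surjectivity of the evaluation map onto $\bigoplus_{i}\kappa(x_{i})$; once this is available, avoiding the points $x_{i}$ is automatic since the $\kappa(x_{i})$ are nonzero. The remaining steps — the reductions to closed, pairwise distinct points and the verification that $(f)+nH$, $\zeros(s)$ and $\zeros(t)$ are the divisors claimed — are routine.
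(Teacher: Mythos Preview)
Your proof is correct and follows essentially the same approach as the paper: for (1) and (2) the arguments are virtually identical (ample twist plus a section), and for (3) both arguments move $D$ into a very ample system and pick sections avoiding the $x_i$. Your version of (3) is in fact a bit more careful than the paper's, which simply asserts the existence of $s_1$ with $s_1(x_i)\neq 0$ without justification; your use of Serre vanishing for $H^1(\mathcal{I}_W\otimes\cdot)$ to force surjectivity onto $\bigoplus_i\kappa(x_i)$ is exactly the point that makes this work over $\ZZ$, and your symmetric choice of $A\in|D+nH|$, $B\in|nH|$ is slightly cleaner than the paper's two-step construction $A=n_2\,\zeros(s_1)$, $B\in|n_2\,\zeros(s_1)-D|$.
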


\begin{proof}
(1) Let $Z = a_1 \Gamma_1 + \cdots + a_l \Gamma_l$ be the decomposition such that
$\Gamma_i$'s are prime divisors on $Y$ and $a_1, \ldots, a_l \in \ZZ$.
Let $L$ be an ample invertible sheaf on $Y$.
Then we can choose a positive integer $n$ and a non-zero section $s \in H^0(Y, L^{\otimes n})$
such that $\mult_{\Gamma_i}(s) \geq a_i$ for all $i$.
Thus, if we set $A = \zeros(s)$, then $A$ is a Cartier divisor and $Z \leq A$.

(2) First of all, we can find effective Weil divisors $A'$ and $B'$ on $Y$ such that $D = A' - B'$.
By the previous (1), there is an effective Cartier divisor $A$ such that $A' \leq A$.
We set $B = B' + (A - A')$. Then $B$ is effective and $D = A - B$.
Moreover, since $B = A - D$, $B$ is a Cartier divisor.

(3) Let $L$ be an ample invertible sheaf on $Y$ as before. Then there are a positive integer $n_1$ and
a non-zero $s_1 \in H^0(Y, L^{\otimes n_1})$ such that $s_1(x_i) \not= 0$ for all $i$.
We set $A' = \zeros(s_1)$. Similarly we can find a positive integer $n_2$ and  a non-zero $s_2 \in H^0(Y, \OO_Y(n_2A' - D))$
such that $s_2(x_i) \not= 0$ for all $i$. Therefore, if we set $A = n_2A'$ and $B = \zeros(s_2)$, then there is a non-zero rational function $\phi$ on $Y$
such that $A - D  = B + (\phi)$, as required.
\end{proof}

\begin{Lemma}
\label{lem:diff:effective:arith:div}
Let $\TT$ be either $C^0$ or $C^{\infty}$.
Let $A'$ and $A''$ be effective $\RR$-divisors on $X$ and $A = A' - A''$.
Let $g_A$ be an $F_{\infty}$-invariant $A$-Green function of $\TT$-type on $X(\CC)$.
Then there are effective arithmetic $\RR$-divisors $(A', g_{A'})$ and $(A'', g_{A''})$ of $\TT$-type
such that $(A, g_A) = (A', g_{A'}) - (A'', g_{A''})$.
\end{Lemma}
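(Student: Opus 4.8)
The plan is to obtain the splitting in two steps: first transport a Green function for $A''$ across the relation $A'=A+A''$, and then shift both Green functions by a single constant so as to make them nonnegative.

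First I would fix an $F_{\infty}$-invariant $A''$-Green function $g^{0}_{A''}$ of $C^{\infty}$-type; such a function exists (take any $C^{\infty}$-Green function for $A''$ and replace it by its average with its $F_{\infty}$-pullback, which is again of $C^{\infty}$-type by Lemma~\ref{lem:D:Green:F:infty}), and since $C^{\infty}\subseteq\TT$ it is in particular of $\TT$-type. Because $A'=A+A''$, Proposition~\ref{prop:scalar:sum:Green:function} shows that
\[
g^{0}_{A'}:=g_{A}+g^{0}_{A''}
\]
is an $F_{\infty}$-invariant $A'$-Green function of $\TT$-type, and by construction $g^{0}_{A'}-g^{0}_{A''}=g_{A}$ almost everywhere.

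Next I would use that $X$ is projective, so $X(\CC)$ is compact, together with the following observation: a Green function of $\TT$-type (hence of $C^{0}$-type) attached to an \emph{effective} $\RR$-divisor is bounded below on $X(\CC)$. Indeed, writing $A'=\sum_{i}a_{i}D_{i}$ with all $a_{i}\ge 0$ and taking a local expression $g^{0}_{A'}=u-\sum_{i}a_{i}\log|f_{i}|^{2}$ over a relatively compact open set $U$ with $u\in\TT(U)\subseteq C^{0}(U)$, one gets $g^{0}_{A'}\ge \inf_{U}u-\sum_{i}a_{i}\log\bigl(\sup_{U}|f_{i}|^{2}\bigr)>-\infty$ on $U$, where $a_{i}\ge 0$ is used to see that each $-\log|f_{i}|^{2}$ is bounded below; covering $X(\CC)$ by finitely many such $U$ gives a global lower bound, and the same applies to $g^{0}_{A''}$. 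Choose $C\in\RR_{\ge 0}$ with $g^{0}_{A'}\ge -C$ and $g^{0}_{A''}\ge -C$ almost everywhere, and set $g_{A'}:=g^{0}_{A'}+C$ and $g_{A''}:=g^{0}_{A''}+C$. Since $C$ is a constant (an $F_{\infty}$-invariant $0$-Green function of $\TT$-type), Proposition~\ref{prop:scalar:sum:Green:function} gives that $g_{A'}$ and $g_{A''}$ are $F_{\infty}$-invariant Green functions of $\TT$-type for $A'$ and $A''$; they are $\ge 0$ almost everywhere, and $g_{A'}-g_{A''}=g_{A}$ almost everywhere. As $A',A''\ge 0$, the pairs $(A',g_{A'})$ and $(A'',g_{A''})$ are then effective arithmetic $\RR$-divisors of $\TT$-type with $(A,g_{A})=(A',g_{A'})-(A'',g_{A''})$, as required.

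The only genuinely substantive point is the lower-boundedness step, which uses both the effectivity of $A'$ and $A''$ and the compactness of $X(\CC)$; everything else is formal manipulation with the elementary properties of Green functions from Subsection~\ref{subsec:Green:function:P}. It is worth stressing that effectivity is really needed here: for a non-effective divisor (or on a non-compact manifold) there need be no nonnegative Green function at all, so the statement does rely on working in the projective setting of the present subsection.
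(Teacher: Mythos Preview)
Your proof is correct and follows essentially the same approach as the paper's: pick a Green function for $A''$, transport it to $A'$ via $g_{A'}=g_A+g_{A''}$, then shift by a constant to achieve nonnegativity. The paper is slightly terser---it starts directly with a nonnegative $g_{A''}$ (which is possible precisely by the lower-boundedness argument you spell out) and then only needs to adjust once for $g_{A'}$---but your more symmetric treatment and your explicit justification of the lower bound via compactness and effectivity are exactly the content the paper leaves implicit.
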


\begin{proof}
Let $g_{A''}$ be an $F_{\infty}$-invariant $A''$-Green function of $\TT$-type such that $g_{A''} \geq 0 \ \aew$.
We put $g_{A'} = g_A + g_{A''}$. Then $g_{A'}$ is an $F_{\infty}$-invariant $A'$-Green function of $\TT$-type.
Replacing $g_{A''}$ with $g_{A''} + (\text{positive constant})$ if necessarily,
we have $g_{A'} \geq 0\ \aew$.
\end{proof}

\begin{Lemma}
\label{lem:kernel:Div:R}
Let $\TT$ be a type for Green functions such that $-\TT \subseteq \TT$ and $C^{\infty} \subseteq \TT$.
Then the kernel of the natural homomorphism $\aDiv_{\TT}(X) \otimes_{\ZZ} \RR \to \aDiv_{\TT}(X)_{\RR}$
\query{$\otimes \RR$\quad
$\Longrightarrow$\quad $\otimes_{\ZZ} \RR$
(17/October/2010)}
coincides with
\[
\left\{ \sum_{i=1}^l (0, \phi_i) \otimes a_i \left| 
\begin{array}{l}
\text{$a_1, \ldots, a_l \in \RR$, $\phi_1, \ldots, \phi_l \in \TT(X)$} \\
\text{and $a_1 \phi_1 + \cdots + a_l \phi_l = 0$}
\end{array}
\right\}\right..
\]
\end{Lemma}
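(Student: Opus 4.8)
The plan is to trivialise the divisor part by choosing an integral basis for the group generated by the divisors involved, and then to absorb everything else into $\TT(X)$ by means of a fixed system of auxiliary $C^{\infty}$-Green functions; the two standing hypotheses $C^{\infty}\subseteq\TT$ and $-\TT\subseteq\TT$ enter only in the Green-function bookkeeping. The inclusion ``$\supseteq$'' is immediate: if $a_1,\dots,a_l\in\RR$ and $\phi_1,\dots,\phi_l\in\TT(X)$ satisfy $\sum_i a_i\phi_i=0$, then $\sum_i(0,\phi_i)\otimes a_i$ is sent to $(0,\sum_i a_i\phi_i)=(0,0)$. For ``$\subseteq$'', I would start from an element $\xi=\sum_{i=1}^l(D_i,g_i)\otimes a_i$ of the kernel. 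Composing with the divisor projection $\aDiv_{\TT}(X)\otimes_{\ZZ}\RR\to\Div(X)\otimes_{\ZZ}\RR=\Div(X)_{\RR}$ shows $\sum_i a_iD_i=0$ in $\Div(X)_{\RR}$. Since $X$ is normal, $\Div(X)$ is torsion-free, so the subgroup $N\subseteq\Div(X)$ generated by $D_1,\dots,D_l$ is free of finite rank; I would fix a $\ZZ$-basis $E_1,\dots,E_r$ of $N$, write $D_i=\sum_k c_{ik}E_k$ with $c_{ik}\in\ZZ$, and invoke flatness of $\RR$ over $\ZZ$ (so that $N\otimes_{\ZZ}\RR=\bigoplus_k\RR E_k$ injects into $\Div(X)_{\RR}$) to conclude that $\sum_i a_ic_{ik}=0$ for every $k$.

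Next, for each $k$ I would fix an $F_{\infty}$-invariant $E_k$-Green function $h_k$ of $C^{\infty}$-type, obtained from any $C^{\infty}$-Green function of $E_k$ by averaging with $F_{\infty}^{*}$ (still $C^{\infty}$ by Lemma~\ref{lem:D:Green:F:infty}). Because $C^{\infty}\subseteq\TT$, the pairs $\overline{E}_k:=(E_k,h_k)$ and hence $\overline{D}'_i:=\sum_k c_{ik}\overline{E}_k=(D_i,\sum_k c_{ik}h_k)$ lie in $\aDiv_{\TT}(X)$, and $(D_i,g_i)-\overline{D}'_i=(0,\phi_i)$ where, comparing local expressions of $g_i$ and of $\sum_k c_{ik}h_k$ relative to local equations of $D_i$ (absorbing a unit factor), one finds $\phi_i=u-v$ with $u\in\TT$ and $v\in C^{\infty}$; as $-v\in C^{\infty}\subseteq\TT$, condition (1) for $\TT$ gives $\phi_i\in\TT$, and $F_{\infty}$-invariance lets us take $\phi_i\in\TT(X)$. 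Then
\begin{multline*}
\xi=\sum_i\overline{D}'_i\otimes a_i+\sum_i(0,\phi_i)\otimes a_i\\
=\sum_k\overline{E}_k\otimes\Bigl(\sum_i a_ic_{ik}\Bigr)+\sum_i(0,\phi_i)\otimes a_i=\sum_i(0,\phi_i)\otimes a_i,
\end{multline*}
the middle sum vanishing because $\sum_i a_ic_{ik}=0$.

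Finally, the image of $\xi=\sum_i(0,\phi_i)\otimes a_i$ in $\aDiv_{\TT}(X)_{\RR}$ is $(0,\sum_i a_i\phi_i)=(0,0)$, so $\sum_i a_i\phi_i=0$ almost everywhere; since $\TT(X)$ is stable under $\RR$-scaling (using $-\TT\subseteq\TT$ together with condition (1)) and under addition, $\sum_i a_i\phi_i\in\TT(X)$, and as $0=\log\vert1\vert^2\in\TT(X)$, condition (2) for $\TT$ promotes the almost-everywhere vanishing to $\sum_i a_i\phi_i=0$. This exhibits $\xi$ in the asserted form, completing the reverse inclusion. The only genuinely delicate points — and exactly where the two hypotheses on $\TT$ are used — are that subtracting the auxiliary $C^{\infty}$-Green functions keeps us inside $\aDiv_{\TT}(X)$ (this needs $C^{\infty}\subseteq\TT$, and $-\TT\subseteq\TT$ for the group structure) and the passage from ``$=0$ almost everywhere'' to ``$=0$'' via condition (2); the rest is routine linear algebra over $\Div(X)$.
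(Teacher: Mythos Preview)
Your proof is correct and follows essentially the same route as the paper's: choose a free $\ZZ$-basis $E_1,\dots,E_r$ for the subgroup generated by the $D_i$, fix $F_\infty$-invariant $C^\infty$-Green functions $h_k$ for the $E_k$, subtract to produce $\phi_i\in\TT(X)$, and use the resulting linear relations $\sum_i a_i c_{ik}=0$ to kill the $\overline{E}_k$-terms in $\xi$. Your write-up is somewhat more explicit than the paper's on ancillary points (torsion-freeness of $\Div(X)$, flatness for the coefficient identities, how conditions (1) and (2) on $\TT$ together with $-\TT\subseteq\TT$ and $C^\infty\subseteq\TT$ ensure $\phi_i\in\TT(X)$ and upgrade $\sum_i a_i\phi_i=0$ a.e.\ to genuine equality), but the argument is the same.
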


\begin{proof}
It is sufficient to show that, for $\sum_{i=1}^l (D_i, g_i) \otimes a_i \in \aDiv_{\TT}(X) \otimes_{\ZZ} \RR$,
if
\[
\sum_{i=1}^l a_i D_i = 0\quad\text{and}\quad
\sum_{i=1}^l a_i g_i = 0\ \aew,
\]
then there are $\phi_1, \ldots, \phi_l \in \TT(X)$ such that $\sum_{i=1}^l (D_i, g_i) \otimes a_i = \sum_{i=1}^l (0, \phi_i) \otimes a_i$ and
$a_1 \phi_1 + \cdots + a_l \phi_l = 0$.
Let $E_1, \ldots, E_r$ be a free basis of the $\ZZ$-submodule of $\Div(X)$ generated by $D_1, \ldots, D_l$.
We set $D_i = \sum_{j=1}^r b_{ij} E_j$. Since
\[
0 = \sum_{i=1}^l a_i D_i = \sum_{j=1}^r \left(\sum_{i=1}^l a_i b_{ij}\right) E_j,
\]
we have $\sum_{i=1}^l a_i b_{ij} = 0$ for all $j=1, \ldots, r$.
Let $h_j$ be an $F_{\infty}$-invariant $E_j$-Green function of $C^{\infty}$-type. 
Note that $\sum_{j=1}^r b_{ij}h_j$ is an $F_{\infty}$-invariant $D_i$-Green function of $\TT$-type.
Thus we can find $\phi_1, \ldots, \phi_l \in \TT(X)$ such that
\[
g_i = \sum_{j=1}^r b_{ij} h_j +\phi_{i}\quad\aew
\]
for each $i$.
Then
\[
0 \overset{\aew}{=} \sum_{i=1}^l a_i g_i \overset{\aew}{=} \sum_{j=1}^r \left( \sum_{i=1}^l a_i b_{ij} \right) h_j + \sum_{i=1}^l a_i \phi_i  =  \sum_{i=1}^l a_i \phi_{i}.
\]
Note that $\sum_{i} a_{i} \phi_{i} \in \TT(X)$, so that $\sum_{i} a_{i} \phi_{i} = 0$ over $X(\CC)$.
On the other hand,
\begin{multline*}
\sum_{i=1}^l (D_i, g_i) \otimes a_i = \sum_{i=1}^l \sum_{j=1}^r(E_j, h_j) \otimes a_i b_{ij} + \sum_{i=1}^l (0, \phi_i) \otimes a_i  \\
=
\sum_{j=1}^r (E_j, h_j) \otimes \left( \sum_{i=1}^l a_i b_{ij} \right)  + \sum_{i=1}^l (0, \phi_{i}) \otimes a_{i} 
= \sum_{i=1}^l (0, \phi_{i}) \otimes a_{i},
\end{multline*}
as required.
\end{proof}

\begin{proof}[The proof of Theorem~\ref{thm:aDiv:aPic:R}]
By Proposition~\ref{prop:green:irreducible:decomp},  the natural homomorphism 
\[
\aDiv_{C^0}(X) \otimes_{\ZZ} \RR \to \aDiv_{C^0}(X)_{\RR}
\]
\query{$\otimes \RR$\quad
$\Longrightarrow$\quad $\otimes_{\ZZ} \RR$
(17/October/2010)}
is surjective.
Thus the first assertion follows from Lemma~\ref{lem:kernel:Div:R}.

\medskip
(1) We set $\overline{D} = a_1 \overline{D}_1+ \cdots + a_l\overline{D}_l$,
where 
$a_1, \ldots, a_l \in \RR$ and $\overline{D}_1, \ldots, \overline{D}_l \in \aDiv_{C^0}(X)$.
For each $\overline{D}_i$, by using Lemma~\ref{lem:Weil:leq:Cartier} and Lemma~\ref{lem:diff:effective:arith:div},
we can find effective arithmetic divisors $\overline{D}'_i$ and $\overline{D}''_i$ of $C^0$-type such that
$\overline{D}_i = \overline{D}'_i - \overline{D}''_i$.
Then
\[
\overline{D} = a_1 \overline{D}'_1 + \cdots + a_l \overline{D}'_l + (-a_1) \overline{D}''_1 + \cdots + (-a_l) \overline{D}''_l.
\]
Thus, in order to see our assertion, we may assume that $\overline{D}_i$ is effective for every $i$.
We set $I = \{ i \mid a_i \geq 0 \}$ and $J = \{ i \mid a_i < 0 \}$.
Moreover, we set
\[
\begin{cases}
\overline{A}_n = \sum_{i \in I} \lfloor na_i \rfloor \overline{D}_i  + \sum_{j \in J} \lfloor (n+1) a_j \rfloor \overline{D}_j, \\
\overline{B}_n = \sum_{i \in I} \lceil na_i \rceil \overline{D}_i + \sum_{j \in J}  \lceil (n-1) a_j \rceil  \overline{D}_j
\end{cases}
\]
for $n \in \ZZ_{\geq 1}$. Then,  as $\lim_{n\to\infty} \overline{A}_n/n = \lim_{n\to\infty} \overline{B}_n/n = \overline{D}$,
by virtue of \cite[Theorem~5.1]{MoContExt},
\[
\lim_{n\to\infty} \frac{\ah(X, \overline{A}_n)}{n^d/d!} = \lim_{n\to\infty}\frac{\ah(X, \overline{B}_n)}{n^d/d!} = \avol(\overline{\OO}_{\RR}(\overline{D})).
\]
Note that
\[
\begin{cases}
\lfloor \lfloor t \rfloor a \rfloor \leq t  a \leq \lceil \lceil t \rceil a \rceil & \text{if $a \geq 0$}, \\
\lfloor  (\lfloor t \rfloor +1 ) a \rfloor \leq t  a \leq  \lceil  ( \lceil t \rceil - 1)a \rceil & \text{if $a < 0$}
\end{cases}
\]
for $a \in \RR$ and $t \in \RR_{\geq 1}$, which yields
$\overline{A}_{\lfloor t \rfloor} \leq t \overline{D} \leq \overline{B}_{\lceil t \rceil}$
for $t \in \RR_{\geq 1}$. Therefore,
\[
\frac{(\lfloor t \rfloor)^d}{t^d}\cdot \frac{h^0(X, \overline{A}_{\lfloor t \rfloor})}{(\lfloor t \rfloor)^d/d!} \leq \frac{h^0(X, t \overline{D})}{t^d/d!} 
\leq \frac{(\lceil t \rceil)^d}{t^d}\cdot \frac{h^0(X,  \overline{B}_{\lceil t \rceil})}{(\lceil t \rceil)^d/d!},
\]
and hence (1) follows.

\medskip
(2) follows from (1).

\medskip
\query{Change the proof of (3).
(11/November/2010)}
(3) The first assertion follows from \cite[(4) in Proposition~4.6]{MoContExt}.
Let us see the second assertion.
We choose $\overline{E}_1, \ldots, \overline{E}_{m}, \overline{B}_1, \ldots, \overline{B}_{m'} \in \aDiv_{C^{\infty}}(X)$
such that $\overline{D}_i = \sum_{k=1}^{m} \alpha_{ik} \overline{E}_k$ and
$\overline{A}_j = \sum_{l=1}^{m'} \beta_{jl} \overline{B}_l$ for some
$\alpha_{ik}, \beta_{jl} \in \RR$.
Then
\[
\sum_{i=1}^r a_i \overline{D}_i = \sum_{k=1}^m \left( \sum_{i=1}^r a_i \alpha_{ik} \right) \overline{E}_k
\quad\text{and}\quad
\sum_{j=1}^{r'} \delta_j \overline{A}_j = \sum_{l=1}^{m'} \left( \sum_{j=1}^{r'} \delta_j \beta_{jl} \right) \overline{B}_l.
\]
Moreover, if we set $C' = \max \left( \{ \alpha_{ik} \} \cup \{ \beta_{jl}\} \right)$,
then
\[
\left| \sum_{i=1}^r a_i \alpha_{ik} \right| \leq C' \sum_{i=1}^r \vert a_i \vert\quad\text{and}\quad
\left| \sum_{j=1}^{r'} \delta_j \beta_{jl} \right| \leq C' \sum_{j=1}^{r'} \vert \delta_j \vert.
\]
Thus we may assume that $\overline{D}_1, \ldots, \overline{D}_r, \overline{A}_1, \ldots, \overline{A}_{r'} \in \aDiv_{C^{\infty}}(X)$.
Therefore, the second assertion of (3) follows from \cite[Lemma~3.1, Theorem~4.4 and Proposition~4.6]{MoContExt}. 

\medskip
(4) If $\avol(\overline{D}_1) > 0$ and $\avol(\overline{D}_2) >0$, then
(4) follows from (3) and \cite[Theorem~B]{YuanVol} (or \cite[Theorem~6.2]{MoArLin}).
Let us fix an ample arithmetic divisor $\overline{A}$ (for the definition of ampleness, see SubSection~\ref{subsec:def:positivity:arith:div}). 
Then $\avol(\overline{D}_1 + \epsilon \overline{A}) > 0$ and
$\avol(\overline{D}_2 + \epsilon \overline{A}) > 0$ for all $\epsilon > 0$ by Proposition~\ref{prop:equiv:pseudo:effective}.
\query{an arithmetic divisor $\overline{A}$ of $C^{\infty}$-type such that
$\avol(\overline{A}) > 0$. Then $\avol(\overline{D}_1 + \epsilon \overline{A}) > 0$ and
$\avol(\overline{D}_2 + \epsilon \overline{A}) > 0$ for all $\epsilon > 0$
\quad$\Longrightarrow$\quad
an ample arithmetic divisor $\overline{A}$ (for the definition of ampleness, see SubSection~\ref{subsec:def:positivity:arith:div}). 
Then $\avol(\overline{D}_1 + \epsilon \overline{A}) > 0$ and
$\avol(\overline{D}_2 + \epsilon \overline{A}) > 0$ for all $\epsilon > 0$ by Proposition~\ref{prop:equiv:pseudo:effective}\\
(28/February/2010)}
Thus, by using (3) and the previous observation, we obtain (4).

\medskip
(5) By using the continuity of $\avol$ and the Stone-Weierstrass theorem,
we can find an arithmetic $\QQ$-divisor $\overline{D}'$ of $C^{\infty}$-type such that
$\overline{D}' \leq \overline{D}$ and 
\[
\avol(\overline{D}') > \max \{ \avol(\overline{D}) - \epsilon/2, 0 \}.
\]
Then, by virtue of \cite{HChenFujita}, \cite{YuanVol} or \cite{MoArLin},
there are a birational morphism $\mu : Y \to X$ of
generically smooth and normal projective arithmetic varieties and an ample arithmetic $\QQ$-divisor $\overline{A}$ of $C^{\infty}$-type on $Y$
such that $\overline{A} \leq \mu^*(\overline{D}')$ and $\avol(\overline{A}) \geq \avol(\overline{D}') - \epsilon/2$.
Thus (5) follows.
\end{proof}

\subsection{Intersection number of arithmetic $\RR$-divisors with a $1$-dimensional subscheme}
\label{subsec:intersection:arithmetic:R:divisor:curve}
\setcounter{Theorem}{0}
We assume that $X$ is projective.
Let $C$ be a $1$-dimensional closed integral subscheme of $X$.
Let $\overline{\mathcal{L}} = (\mathcal{L}, h)$ be an $F_{\infty}$-invariant  continuous hermitian invertible sheaf on $X$.
Then it is well-known that $\adeg(\rest{\overline{\mathcal{L}}}{C})$ is defined and it has the following property:
if $s$ is not zero element  of $H^0(X, \mathcal{L})$ with $\rest{s}{C} \not= 0$,
then
\[
\adeg(\rest{\overline{\mathcal{L}}}{C}) = \log \# \left( \frac{\rest{\mathcal{L}}{C}}{ \OO_C \cdot s} \right) - \frac{1}{2} \sum_{x \in C(\CC)} \log (h( s,s)(x)).
\]
In addition, the map
\[
\aPic_{C^0}(X) \to \RR \quad (\overline{\mathcal{L}} \mapsto \adeg(\rest{\overline{\mathcal{L}}}{C}))
\]
is a homomorphism of abelian groups, so that it extends to a homomorphism
\[
\adeg(\rest{-}{C}) : \aPic_{C^0}(X) \otimes_{\ZZ} \RR \to \RR
\]
\query{$\otimes \RR$\quad
$\Longrightarrow$\quad $\otimes_{\ZZ} \RR$
(17/October/2010)}
given by 
\[
\adeg(\rest{(\overline{\mathcal{L}}_1 \otimes a_1 + \cdots + \overline{\mathcal{L}}_r \otimes a_r)}{C}) =
a_1 \adeg(\rest{\overline{\mathcal{L}}_1}{C}) + \cdots + a_r \adeg(\rest{\overline{\mathcal{L}}_r}{C}).
\]
If $f_1, \ldots, f_r \in C^0(X)$,  $a_1, \ldots, a_r \in \RR$ and $a_1 f_1 + \cdots + a_r f_r = 0$, then
\begin{multline*}
\adeg(\rest{(\overline{\OO}(f_1) \otimes a_1 + \cdots + \overline{\OO}(f_r) \otimes a_r)}{C}) \\
=
a_1 \adeg(\rest{\overline{\OO}(f_1)}{C}) + \cdots + a_r \adeg(\rest{\overline{\OO}(f_r)}{C}) \\
 =
\sum_{i=1}^r a_i \left( \sum_{x \in C(\CC)} f_i(x) \right)= 0.
\end{multline*}
Therefore, $\adeg(\rest{-}{C}) :  \aPic_{C^0}(X) \otimes_{\ZZ} \RR \to \RR$
\query{$\otimes \RR$\quad
$\Longrightarrow$\quad $\otimes_{\ZZ} \RR$
(17/October/2010)}
descents to a homomorphism
$\aPic_{C^0}(X)_{\RR} \to \RR$.
By abuse of notation, we use the same symbol $\adeg(\rest{-}{C})$ to denote
the homomorphism $\aPic_{C^0}(X)_{\RR} \to \RR$.
Using this homomorphism, we define 
\[
\adeg(\rest{-}{C}) : \aDiv_{C^0}(X)_{\RR} \to \RR
\]
to be $\adeg(\rest{\overline{D}}{C}) := \adeg(\rest{\overline{\OO}_{\RR}(\overline{D})}{C})$ for $\overline{D}  \in \aDiv_{C^0}(X)_{\RR}$.
If there are effective Cartier divisors $D_1, \ldots, D_l$ and $a_1, \ldots, a_l \in \RR$ such that $D =a_1 D_1 + \cdots + a_l D_l$ and
$C \not\subseteq \Supp(D_i)$ for all $i$, then we can see that
\[
\adeg(\rest{\overline{D}}{C}) = \sum_{i=1}^l a_i \log \#(\OO_C(D_i)/\OO_C)  + \frac{1}{2} \sum_{x \in C(\CC)} g_{\rm can}(x).
\]

Let $\TT$ be a type for Green functions on $X$ such that $C^0 \subseteq \TT$, $\TT$ is real valued and $-\TT \subseteq \TT$.
Let $\overline{D} = (D, g)$ be an arithmetic $\RR$-divisor of $\TT$-type on $X$.
There is $h \in \TT(X)$ such that $g - h$ is an $F_{\infty}$-invariant  $D$-Green function of $C^0$-type.
We would like to define $\adeg(\rest{\overline{D}}{C})$ by the following quantity:
\[
\adeg\left(\rest{(D, g - h)}{C}\right) +\frac{1}{2} \sum_{x \in C(\CC)} h(x).
\]
Indeed, it does not depend on the choice of $h$.
Let $h'$ be another element of $\TT(X)$ such that 
$g - h'$ is  an $F_{\infty}$-invariant  $D$-Green function of $C^0$-type.
We can find $u \in C^0(X)$ such that $g - h = g - h' + u\ \aew$, so that
$h' = h+ u$ over $X(\CC)$. Therefore,
\begin{multline*}
\adeg\left(\rest{(D, g - h')}{C}\right) +\frac{1}{2} \sum_{x \in C(\CC)} h'(x)  \\
\hspace{-4em}= \adeg(\rest{(D, (g -h) - u)}{C}) +\frac{1}{2} \sum_{x \in C(\CC)} (h+u)(x) \\
\hspace{6em} = \adeg(\rest{(D, (g -h))}{C}) - \frac{1}{2} \sum_{x \in C(\CC)} u(x) + \frac{1}{2} \sum_{x \in C(\CC)} (h+u)(x) \\
 = \adeg(\rest{(D, g - h)}{C}) +\frac{1}{2} \sum_{x \in C(\CC)} h(x).
\end{multline*}
Note that if there are effective Cartier divisors $D_1, \ldots, D_l$ and $a_1, \ldots, a_l \in \RR$ such that
$D =a_1 D_1 + \cdots + a_l D_l$ and 
$C \not\subseteq \Supp(D_i)$ for all $i$, then
\[
\adeg\left(\rest{\overline{D}}{C}\right) = \sum_{i=1}^l a_i \log \#(\OO_C(D_i)/\OO_C)  + \frac{1}{2} \sum_{x \in C(\CC)} g_{\rm can}(x).
\]
Moreover, $\adeg(\rest{-}{C}) : \aDiv_{\TT}(X)_{\RR} \to \RR$ is a homomorphism.

\bigskip
Let $Z_1(X)$ be the group of $1$-cycles on $X$ and $Z_1(X)_{\RR} = Z_1(X) \otimes_{\ZZ} \RR$.
\query{$\otimes \RR$\quad
$\Longrightarrow$\quad $\otimes_{\ZZ} \RR$
(17/October/2010)}
Let $Z$ be an element of $Z_1(X)_{\RR}$.
There is a unique expression 
$Z = a_1 C_1 + \cdots + a_l C_l$ such that $a_1, \ldots, a_l \in \RR$ and $C_1, \ldots, C_l$ are $1$-dimensional closed integral schemes on $X$.
For $\overline{D} \in \aDiv_{\TT}(X)_{\RR}$, we define $\adeg \left( \overline{D} \mid Z \right)$ to be
\[
\adeg \left( \overline{D} \mid Z \right) := \sum_{i=1}^l a_i \adeg \left( \rest{\overline{D}}{C_i} \right).
\]
Note that $\adeg \left( \overline{D} \mid C \right) = \adeg \left( \rest{\overline{D}}{C} \right)$ for a $1$-dimensional closed integral scheme $C$ on $X$.

\section{Positivity of arithmetic $\RR$-divisors}
\label{sec:positivity:arith:div}
In this section, we will introduce a lot of kinds of positivity for arithmetic $\RR$-divisors and
investigate their properties.
Throughout this section, let $X$ be a generically smooth projective and normal arithmetic variety. 
 
\subsection{Definitions}
\label{subsec:def:positivity:arith:div}
\setcounter{Theorem}{0}
Let $\overline{D} = (D, g)$ be an arithmetic $\RR$-divisor on $X$, that is,
$D \in \Div(X)_{\RR}$ and $g$ is an $F_{\infty}$-invariant  locally integrable function on $X(\CC)$.
The ampleness, adequateness, nefness, bigness and pseudo-effectivity of $\overline{D}$ are defined as follows:

\medskip
\noindent
$\bullet$ {\bf ample} : 
We say $\overline{D}$ is {\em ample} if there are $a_1, \ldots, a_r \in \RR_{>0}$ and
ample arithmetic $\QQ$-divisors $\overline{A}_1, \ldots, \overline{A}_r$ of $C^{\infty}$-type
(i.e., $\overline{\OO}(n_i \overline{A}_i)$ is an ample $C^{\infty}$-hermitian invertible sheaf for some $n_i \in \ZZ_{>0}$ in
the sense of \cite{MoHeight})
such that 
\[
\overline{D} = a_1 \overline{A}_1 + \cdots + a_r \overline{A}_r.
\]
Note that an ample arithmetic $\RR$-divisor is of $C^{\infty}$-type.
The set of all ample arithmetic $\RR$-divisors on $X$ is denoted by $\aAmp(X)_{\RR}$.
By applying  \cite[Lemma~1.1.3]{MoArLin} to the case where $P = \aDiv_{C^{\infty}}(X)_{\QQ}$, $m=1$, $b_1 = 0$,
$A = {}^t(0, \ldots, 0)$ and $x_1 = \overline{A}_1, \ldots, x_r = \overline{A}_r$, we can see that
\[
\aAmp(X)_{\RR}\cap \aDiv_{C^{\infty}}(X)_{\QQ} =
\left\{ \overline{D}  
\left| 
\begin{array}{l}
\text{$\overline{\OO}(n\overline{D})$ is an ample $C^{\infty}$-hermitian} \\
\text{invertible sheaf on $X$ for some $n \in \ZZ_{>0}$} 
\end{array}
\right\}\right..
\]

\medskip
\noindent
$\bullet$ {\bf adequate} : 
$\overline{D}$ is said to be {\em adequate} if there are an ample arithmetic $\RR$-divisor $\overline{A}$
and a non-negative $F_{\infty}$-invariant  continuous function $f$ on $X(\CC)$ such that $\overline{D} = \overline{A} + (0, f)$.
Note that an adequate arithmetic $\RR$-divisor is of $C^0$-type.

\medskip 
\noindent
$\bullet$ {\bf nef} :
We say 
 $\overline{D}$ is {\em nef} if the following properties holds:
\begin{enumerate}
\renewcommand{\labelenumi}{(\alph{enumi})}
\item
$\overline{D}$ is of $\Tpsh_{\RR}$-type.

\item
$\adeg(\rest{\overline{D}}{C}) \geq 0$ for
all $1$-dimensional closed integral subschemes $C$ of $X$.
\end{enumerate}
The cone of all nef arithmetic $\RR$-divisors on $X$ is denoted by $\aNef(X)_{\RR}$.
Moreover, the cone of all nef arithmetic $\RR$-divisors of $C^{\infty}$-type (resp. $C^0$-type) on $X$ is denoted by $\aNef_{C^{\infty}}(X)_{\RR}$
(resp. $\aNef_{C^{0}}(X)_{\RR}$).

\medskip
\noindent
$\bullet$ {\bf big} :
Let us fix a type $\TT$ for Green functions.
We say $\overline{D}$ is a {\em big arithmetic $\RR$-divisor of $\TT$-type} if
$\overline{D} \in \aDiv_{\TT^b}(X)_{\RR}$ (i.e. $\overline{D} \in \aDiv_{\TT}(X)_{\RR}$ and
$g$ is of bounded type) and $\avol(\overline{D}) > 0$.

\medskip
\noindent
$\bullet$ {\bf pseudo-effective} :
$\overline{D}$ is said to be {\em pseudo-effective} if $\overline{D}$ is of $C^0$-type and
there are arithmetic $\RR$-divisors $\overline{D}_1, \ldots, \overline{D}_r$ of $C^0$-type and
sequences $\{ a_{n1} \}_{n=1}^{\infty}, \ldots, \{ a_{nr} \}_{n=1}^{\infty}$ in $\RR$ such that
$\lim_{n\to\infty} a_{ni} = 0$ for all $i =1, \ldots, r$ and $\avol(\overline{D} + a_{n1} \overline{D}_1 + \cdots + a_{nr} \overline{D}_r) > 0$
for all $n \gg 1$.

\subsection{Properties of ample arithmetic $\RR$-divisors}
\setcounter{Theorem}{0}
In this subsection, we consider several properties of ample arithmetic $\RR$-divisors.
Let us begin with the following proposition.

\begin{Proposition}
\label{prop:ample:RR:div}
\begin{enumerate}
\renewcommand{\labelenumi}{(\arabic{enumi})}
\item
If $\overline{A}$ and $\overline{B}$ are ample \rom{(}resp. adequate\rom{)} arithmetic $\RR$-divisors  and
$a \in \RR_{>0}$,
then $\overline{A} + \overline{B}$ and $a \overline{A}$ are also ample  \rom{(}resp. adequate\rom{)}.

\item
If $\overline{A}$ is an ample arithmetic $\RR$-divisor, then
there are an ample arithmetic $\QQ$-divisor $\overline{A}'$  and an
ample arithmetic $\RR$-divisor $\overline{A}''$ 
such that $\overline{A} = \overline{A}' + \overline{A}''$.

\item
Let $\overline{A}$ be an ample  \rom{(}resp. adequate\rom{)} arithmetic $\RR$-divisor  and let
$\overline{L}_1, \ldots, \overline{L}_n$ be arithmetic $\RR$-divisors of $C^{\infty}$-type  \rom{(}resp. of $C^{0}$-type\rom{)}.
Then there is $\delta \in \RR_{>0}$ such that
$\overline{A} + \delta_1 \overline{L}_1 + \cdots + \delta_n \overline{L}_n$ is ample  \rom{(}resp. adequate\rom{)}
for $\delta_1, \ldots, \delta_n \in \RR$ with
$\vert \delta_1 \vert + \cdots + \vert \delta_n \vert \leq \delta$.

\item
If $\overline{A}$ is an adequate arithmetic $\RR$-divisor, then $\avol(\overline{A}) > 0$.
\end{enumerate}
\end{Proposition}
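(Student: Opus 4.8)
The plan is to prove the four assertions in turn, reducing each to the case of ample arithmetic $\QQ$-divisors and ultimately to an openness property of arithmetic ampleness for $C^{\infty}$-hermitian invertible sheaves. For (1) in the ample case, I would write $\overline{A} = \sum_i a_i\overline{A}_i$ and $\overline{B} = \sum_j b_j\overline{B}_j$ as positive $\RR$-linear combinations of ample arithmetic $\QQ$-divisors of $C^{\infty}$-type; then $\overline{A}+\overline{B}$ and $a\overline{A}$ ($a\in\RR_{>0}$) are again such combinations, hence ample. In the adequate case, writing $\overline{A}=\overline{A}_0+(0,f)$ and $\overline{B}=\overline{B}_0+(0,h)$ with $\overline{A}_0,\overline{B}_0$ ample arithmetic $\RR$-divisors and $f,h$ non-negative continuous $F_{\infty}$-invariant functions, one has $\overline{A}+\overline{B}=(\overline{A}_0+\overline{B}_0)+(0,f+h)$ and $a\overline{A}=a\overline{A}_0+(0,af)$, and the ample case just proved handles the divisor parts while $f+h,af\geq 0$. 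For (2), given $\overline{A}=\sum_{i=1}^r a_i\overline{A}_i$ with $a_i\in\RR_{>0}$, $r\geq 1$ and $\overline{A}_i$ ample arithmetic $\QQ$-divisors, I would pick a rational $q$ with $0<q<a_1$ and set $\overline{A}'=q\overline{A}_1$, $\overline{A}''=(a_1-q)\overline{A}_1+\sum_{i=2}^r a_i\overline{A}_i$; then $\overline{A}'$ is an ample arithmetic $\QQ$-divisor, $\overline{A}''$ is ample by (1), and $\overline{A}=\overline{A}'+\overline{A}''$.

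The heart of the matter is the ample case of (3). Writing $\overline{A}=\sum_k a_k\overline{A}_k$ and distributing the perturbation via
\[
\overline{A}+\sum_i\delta_i\overline{L}_i=\sum_k a_k\Bigl(\overline{A}_k+\bigl(\textstyle\sum_l a_l\bigr)^{-1}\sum_i\delta_i\overline{L}_i\Bigr),
\]
part (1) reduces us to the case where $\overline{A}$ is a single ample arithmetic $\QQ$-divisor; then, using Proposition~\ref{prop:green:irreducible:decomp} to write each $\overline{L}_i$ of $C^{\infty}$-type as an $\RR$-combination $\overline{L}_i=\sum_j c_{ij}\overline{E}_j$ for a fixed finite family $\overline{E}_1,\dots,\overline{E}_s\in\aDiv_{C^{\infty}}(X)$, it suffices to show that $\overline{A}+\sum_j t_j\overline{E}_j$ is ample whenever $\sum_j|t_j|$ is small. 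For this I would fix $N\in\ZZ_{>0}$ so large that $N\overline{A}+\overline{E}_j$ and $N\overline{A}-\overline{E}_j$ define ample $C^{\infty}$-hermitian invertible sheaves for every $j$ (the arithmetic analogue of the fact that a large multiple of an ample class absorbs any fixed $C^{\infty}$-class, relying on \cite[Lemma~1.1.3]{MoArLin} and the properties of arithmetic ampleness recalled in Subsection~\ref{subsec:def:positivity:arith:div}), and then use the identity
\[
\overline{A}+\sum_j t_j\overline{E}_j=\Bigl(1-N\textstyle\sum_j|t_j|\Bigr)\overline{A}+\sum_j N|t_j|\Bigl(\overline{A}+\tfrac{\operatorname{sgn}(t_j)}{N}\overline{E}_j\Bigr),
\]
valid for $\sum_j|t_j|<1/N$, which exhibits the left-hand side as a positive $\RR$-combination of ample arithmetic $\QQ$-divisors, hence ample by (1).

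For the adequate case of (3) I would write $\overline{A}=\overline{A}_0+(0,f)$ with $\overline{A}_0$ ample and $f\geq 0$ continuous, decompose each $\overline{L}_i$ of $C^0$-type as $\overline{L}_i=\overline{L}_i'+(0,f_i)$ with $\overline{L}_i'$ of $C^{\infty}$-type and $f_i$ continuous, and set $c_\delta=\Vert\sum_i\delta_i f_i\Vert_{\sup}$; then
\[
\overline{A}+\sum_i\delta_i\overline{L}_i=\Bigl(\overline{A}_0+\sum_i\delta_i\overline{L}_i'-(0,c_\delta)\Bigr)+\Bigl(0,\ f+\sum_i\delta_i f_i+c_\delta\Bigr),
\]
where the second summand is $(0,\cdot)$ with a non-negative continuous $F_{\infty}$-invariant function (since $f+\sum_i\delta_i f_i+c_\delta\geq f\geq 0$) and the first is ample for $\sum_i|\delta_i|$ small by the ample case of (3) applied to $\overline{L}_1',\dots,\overline{L}_n'$ together with the extra $C^{\infty}$-divisor $(0,1)$ (which absorbs $-(0,c_\delta)$ as $c_\delta\to 0$); hence $\overline{A}+\sum_i\delta_i\overline{L}_i$ is adequate. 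Finally, for (4), writing $\overline{A}=\overline{A}_0+(0,f)$ with $\overline{A}_0$ ample and $f\geq 0$ gives $\overline{A}\geq\overline{A}_0$, so $\avol(\overline{A})\geq\avol(\overline{A}_0)$; and for $\overline{A}_0=\sum_i a_i\overline{A}_i$ ($a_i>0$, $\overline{A}_i$ ample arithmetic $\QQ$-divisors) the arithmetic Hilbert--Samuel theorem gives $\avol(\overline{A}_i)>0$, so each $a_i\overline{A}_i$ is big, hence pseudo-effective, and the concavity of $\avol(-)^{1/d}$ together with the homogeneity of $\avol$ (Theorem~\ref{thm:aDiv:aPic:R}) yield inductively
\[
\avol(\overline{A}_0)^{1/d}\geq\sum_i\avol(a_i\overline{A}_i)^{1/d}=\sum_i a_i\avol(\overline{A}_i)^{1/d}>0.
\]
I expect the main obstacle to be the ample case of (3): everything else is essentially bookkeeping with the definitions, whereas the openness of the ample cone genuinely depends on the absorption input cited above.
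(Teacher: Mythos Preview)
Your proof is correct and follows essentially the same approach as the paper. Parts (1) and (2) the paper dismisses as ``obvious''; your arguments for them are fine. For the ample case of (3), the paper does exactly your convex-combination trick: after reducing to $\overline{A}$ an ample arithmetic $\QQ$-divisor and the $\overline{L}_i$ arithmetic $\QQ$-divisors of $C^{\infty}$-type, it chooses $\delta\in\QQ_{>0}$ with $\overline{A}\pm\delta\overline{L}_i$ ample and writes
\[
\overline{A}+\sum_i\delta_i\overline{L}_i=\Bigl(1-\tfrac{1}{\delta}\textstyle\sum_i|\delta_i|\Bigr)\overline{A}+\sum_i\tfrac{|\delta_i|}{\delta}\bigl(\overline{A}+\operatorname{sign}(\delta_i)\,\delta\,\overline{L}_i\bigr),
\]
which is your identity with $\delta=1/N$. (The paper reduces $\overline{A}$ via (2) rather than by distributing the perturbation across the summands $\overline{A}_k$, but this is cosmetic.) For (4) the paper also just invokes homogeneity and concavity of $\avol$ from Theorem~\ref{thm:aDiv:aPic:R}.

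The only place your argument differs in substance is the adequate case of (3). The paper first replaces $\overline{A}_0$ by $\overline{A}_0-(0,\epsilon)$ so that $f\geq\epsilon>0$, then uses Stone--Weierstrass to choose non-negative $v_i$ with $\sum v_i\leq\epsilon$ and $\overline{L}_i+(0,v_i)$ of $C^{\infty}$-type; this guarantees $f-\sum_i\delta_i v_i\geq 0$ directly when $\sum|\delta_i|<1$. Your sup-norm trick, absorbing $-(0,c_\delta)$ into the ample part by treating $(0,1)$ as an extra $\overline{L}'$, is a clean alternative and works just as well. One small remark: the citation of \cite[Lemma~1.1.3]{MoArLin} for the absorption step is not quite on target --- that lemma is what the paper uses to identify $\aAmp(X)_{\RR}\cap\aDiv_{C^{\infty}}(X)_{\QQ}$, not the openness of ampleness; the absorption fact itself (that $\overline{A}\pm\delta\overline{L}_i$ is ample for small rational $\delta$ when $\overline{A}$ is an ample $\QQ$-divisor and $\overline{L}_i$ a $C^{\infty}$ $\QQ$-divisor) is standard and taken for granted in the paper.
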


\begin{proof}
(1) and (2) are obvious. 

\medskip
(3) First we assume that $\overline{A}$ is ample and that $\overline{L}_1, \ldots, \overline{L}_n$ are of $C^{\infty}$-type.
We set $\overline{L}_i = \sum_{j=1}^l b_{ij} \overline{M}_j$ such that 
$\overline{M}_1, \ldots, \overline{M}_l$ are arithmetic $\QQ$-divisors of $C^{\infty}$-type and
$b_{ij} \in \RR$.
Then, as
\[
\overline{A} + \sum_{i=1}^n \delta_i \overline{L}_i =
\overline{A} + \sum_{j=1}^l \left( \sum_{i=1}^n \delta_i b_{ij} \right) \overline{M}_j,
\]
we may assume that $\overline{L}_1, \ldots, \overline{L}_n$ are arithmetic $\QQ$-divisors of $C^{\infty}$-type.
Moreover, by (1) and (2), we may further assume that $\overline{A}$ is
an ample arithmetic $\QQ$-divisor.

Let us choose $\delta \in \QQ_{>0}$ such that $\overline{A} \pm \delta \overline{L}_i$ is ample for every $i=1, \ldots, n$.
Note that
\[
\sum_{i=1}^n \frac{\vert \delta_i \vert }{\delta}( \overline{A} + \operatorname{sign}(\delta_i) \delta\overline{L}_i ) = \left( \sum_{i=1}^n  \frac{\vert \delta_i \vert }{\delta} \right) \overline{A}
+ \sum_{i=1}^n \delta_i \overline{L}_i,
\]
where $\operatorname{sign}(a)$ for $a \in \RR$ is given by
\[
\operatorname{sign}(a) = \begin{cases}
1 & \text{if $a \geq 0$}, \\
-1 & \text{if $a < 0$}.
\end{cases}
\]
Hence, if $\sum_{i=1} \vert \delta_i \vert \leq \delta$, then $\overline{A} + \sum_{i=1}^n \delta_i \overline{L}_i$ is ample.

Next we assume that $\overline{A}$ is adequate and that $\overline{L}_1, \ldots, \overline{L}_n$ are of $C^{0}$-type.
Then there are an ample arithmetic $\RR$-divisor $\overline{A}'$ and $u \in C^0(X)$ such that
$u \geq 0$ and $\overline{A} = \overline{A}' + (0,u)$.
As $\overline{A}' - (0, \epsilon)$ is ample for $0 < \epsilon \ll 1$ by the previous observation, we may assume that $u \geq \epsilon$ for some
positive number $\epsilon$.
By virtue of  the Stone-Weierstrass theorem, we can find $v_1, \ldots, v_n \in C^0(X)$ such that $v_i \geq 0\ (\forall i)$,
$\epsilon \geq v_1 + \cdots + v_n$ and
$\overline{L}'_i := \overline{L}_i + (0, v_i)$ is of $C^{\infty}$-type for all $i$.
By the previous case, we can find $0 < \delta < 1$ such that
$\overline{A}' + \delta_1 \overline{L}'_1 + \cdots + \delta_n \overline{L}'_n$ is ample
for $\delta_1, \ldots, \delta_n \in \RR$ with
$\vert \delta_1 \vert + \cdots + \vert \delta_n \vert \leq \delta$.
Note that
\[
\overline{A} + \delta_1 \overline{L}_1 + \cdots + \delta_n \overline{L}_n =
\overline{A}' + \delta_1 \overline{L}'_1 + \cdots + \delta_n \overline{L}'_n + (0, u - \delta_1 v_1 - \cdots - \delta_n v_n)
\]
and
\[
u - \delta_1 v_1 - \cdots - \delta_n v_n \geq u - v_1 - \cdots -  v_n \geq 0,
\]
as required.

\medskip
(4) Clearly we may assume that $\overline{A}$ is ample, so that the assertion follows from (2) and (4) in Theorem~\ref{thm:aDiv:aPic:R}.
\end{proof}

Next we consider the following proposition.

\begin{Proposition}
\label{prop:ample:plus:nef:ample}
\begin{enumerate}
\renewcommand{\labelenumi}{(\arabic{enumi})}
\item
If $\overline{A}$ is an ample arithmetic $\RR$-divisor and
$\overline{B}$ is a nef arithmetic $\RR$-divisor of $C^{\infty}$-type, then
$\overline{A} + \overline{B}$ is ample.

\item
If $\overline{A}$ is an adequate arithmetic $\RR$-divisor and
$\overline{B}$ is a nef arithmetic $\RR$-divisor of $C^{0}$-type, then
$\overline{A} + \overline{B}$ is adequate.
\end{enumerate}
\end{Proposition}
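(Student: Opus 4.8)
For (1), the plan is to exhibit enough arithmetic positivity to apply an arithmetic Nakai--Moishezon criterion. First, $\overline{A}+\overline{B}$ is of $C^{\infty}$-type (ample arithmetic $\RR$-divisors are of $C^{\infty}$-type, and $\overline{B}$ is of $C^{\infty}$-type by hypothesis), and its curvature form $c_1(\overline{A}+\overline{B})=c_1(\overline{A})+c_1(\overline{B})$ is a positive $C^{\infty}$ $(1,1)$-form, being the sum of the positive form $c_1(\overline{A})$ and the semipositive form $c_1(\overline{B})$ ($\overline{B}$ being of $\Tpsh_{\RR}$-type). Next, for every closed integral subscheme $Y\subseteq X$ with $\dim Y\geq 1$ I expand
\[
\adeg\bigl((\overline{A}+\overline{B})^{\dim Y}\mid Y\bigr)=\sum_{k=0}^{\dim Y}\binom{\dim Y}{k}\,\adeg\bigl(\overline{A}^{\,k}\,\overline{B}^{\,\dim Y-k}\mid Y\bigr);
\]
the term $k=\dim Y$ equals $\adeg(\overline{A}^{\dim Y}\mid Y)>0$ by ampleness of $\overline{A}$, while every other term is $\geq 0$ because mixed intersection numbers of nef arithmetic $\RR$-divisors on a subvariety are nonnegative ($\overline{A}$ is nef, being ample, and $\overline{B}$ is nef). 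Hence the whole sum is $>0$. Combining this with the positivity of $c_1(\overline{A}+\overline{B})$, the arithmetic Nakai--Moishezon criterion for arithmetic $\RR$-divisors of $C^{\infty}$-type (S.~Zhang; Moriwaki, cf.\ \cite{MoHeight},\ \cite{MoArLin}) gives that $\overline{A}+\overline{B}$ is ample.

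For (2), write $\overline{A}=\overline{A}_0+(0,f)$ with $\overline{A}_0$ an ample arithmetic $\RR$-divisor and $f\geq 0$ an $F_{\infty}$-invariant continuous function, so $\overline{A}+\overline{B}=(\overline{A}_0+\overline{B})+(0,f)$; since adding $(0,f)$ with $f\geq 0$ preserves adequateness, it suffices to show $\overline{A}_0+\overline{B}$ is adequate. Split $\overline{A}_0=\overline{A}_0'+\overline{A}_0''$ into two ample arithmetic $\RR$-divisors of $C^{\infty}$-type (Proposition~\ref{prop:ample:RR:div}), and write $\overline{A}_0'=(A_0',g_{A_0'})$, $\overline{B}=(B,g_B)$. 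Then $g_{A_0'}+g_B$ is an $(A_0'+B)$-Green function of $(C^{0}\cap\Tpsh)$-type, and the de Rham class of $c_1(A_0'+B)$ on the projective manifold $X(\CC)$ is K\"ahler, being the sum of the K\"ahler class $[c_1(\overline{A}_0')]$ and the nef class $[c_1(\overline{B})]$ (the $C^{0}$ semipositive form $c_1(\overline{B})$ integrates nonnegatively over every curve in $X(\CC)$), so the hypothesis of Theorem~\ref{thm:cont:upper:envelope} holds for $A=A_0'+B$. Applying Theorem~\ref{thm:cont:upper:envelope}(2) (in its $F_{\infty}$-equivariant form) to $u=g_{A_0'}+g_B$ produces nonnegative $F_{\infty}$-invariant continuous functions $u_n$ with $\Vert u_n\Vert_{\sup}\to 0$ such that $\overline{C}_n:=(A_0'+B,\,(g_{A_0'}+g_B)-u_n)$ is an arithmetic $\RR$-divisor of $(C^{\infty}\cap\Tpsh_{\RR})$-type with $\overline{A}_0'+\overline{B}-(0,\Vert u_n\Vert_{\sup})\leq\overline{C}_n\leq\overline{A}_0'+\overline{B}$. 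For $n$ large $\overline{C}_n$ is nef: it is of $\Tpsh_{\RR}$-type, and for each $1$-dimensional closed integral subscheme $C$ one has $\adeg(\overline{C}_n\mid C)\geq\adeg(\overline{A}_0'\mid C)-\tfrac12\#(C(\CC))\,\Vert u_n\Vert_{\sup}$, which is positive once $\Vert u_n\Vert_{\sup}$ is small, since $\adeg(\overline{A}_0'\mid C)$ dominates a fixed positive multiple of $\adeg(\overline{H}\mid C)$ for a fixed ample arithmetic divisor $\overline{H}$ whereas $\#(C(\CC))$ is dominated by a fixed multiple of $\adeg(\overline{H}\mid C)$ (and vanishes for vertical $C$). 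Then part (1) gives that $\overline{A}_0''+\overline{C}_n$ is ample, and $\overline{A}+\overline{B}=(\overline{A}_0''+\overline{C}_n)+(0,\,f+u_n)$ is adequate.

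The hard part is exactly the two ingredients I have isolated: for (1), the arithmetic Nakai--Moishezon criterion in the $\RR$-divisor, $C^{\infty}$-type generality, together with the (standard) nonnegativity of mixed intersection numbers of nef arithmetic divisors; for (2), the observation that the ample summand $\overline{A}_0'$ forces $[c_1(A_0'+B)]$ to be K\"ahler so that Theorem~\ref{thm:cont:upper:envelope} applies, and the uniform estimate making the smooth approximants $\overline{C}_n$ nef for $n\gg 0$. Everything else -- splitting ample divisors, absorbing nonnegative continuous functions, and the passage to $F_{\infty}$-invariant data -- is routine given Proposition~\ref{prop:ample:RR:div} and Theorem~\ref{thm:aDiv:aPic:R}.
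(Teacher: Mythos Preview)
Your approach to (1) has a real gap. You invoke an arithmetic Nakai--Moishezon criterion for $\RR$-divisors of $C^{\infty}$-type, but in this paper ``ample'' is \emph{defined} as a positive $\RR$-linear combination of ample arithmetic $\QQ$-divisors (Section~\ref{subsec:def:positivity:arith:div}), and no numerical criterion in that generality is established here or in the references you cite. Showing that numerical positivity on all subvarieties forces such a decomposition is itself a substantial theorem (the geometric analogue is Campana--Peternell), not something one can pull from \cite{MoHeight} or \cite{MoArLin}. Worse, your claim that the mixed terms $\adeg(\overline{A}^{\,k}\overline{B}^{\,\dim Y-k}\mid Y)\geq 0$ for nef arithmetic $\RR$-divisors is not available at this stage either: such nonnegativity is normally obtained by approximating nef $\RR$-divisors by ample $\QQ$-divisors, which is exactly what (1) is meant to supply. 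The paper sidesteps all of this with a perturbation argument: write $\overline{B}=\sum b_i\overline{B}_i$ with the $\overline{B}_i$ arithmetic $\QQ$-divisors, split $\overline{A}=\overline{A}_1+\overline{A}_2$ with $\overline{A}_1$ ample $\QQ$ and $\overline{A}_2$ ample $\RR$, choose small $\delta_i>0$ with $b_i+\delta_i\in\QQ$ so that $\overline{A}_1+\sum(b_i+\delta_i)\overline{B}_i$ is a nef arithmetic $\QQ$-divisor of $C^{\infty}$-type, and then invoke the known $\QQ$-case \cite[Lemma~5.6]{MoCont}.

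For (2) your strategy is close to the paper's but takes the harder road. You use the ``subtracting'' sequence $u-u_n$ from Theorem~\ref{thm:cont:upper:envelope}(2) and then must argue separately that the approximant $\overline{C}_n$ is nef, via a uniform height lower bound $\adeg(\overline{A}_0'\mid C)\geq c\cdot\#C(\CC)$. The paper instead uses the ``adding'' sequence: $\tfrac12\overline{A}+\overline{B}+(0,v_n)$ dominates the already nef $\tfrac12\overline{A}+\overline{B}$, so nefness is automatic by Lemma~\ref{lem:nef:D1:leq:D2}, and (1) applies at once to $\bigl(\tfrac12\overline{A}-(0,\delta)\bigr)+\bigl(\tfrac12\overline{A}+\overline{B}+(0,v_n)\bigr)$. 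Also, to verify the hypothesis of Theorem~\ref{thm:cont:upper:envelope} the paper shows that the underlying $\RR$-divisor $\tfrac12 A+B$ is ample on $X$ by the same perturbation trick as in (1), which directly yields a $C^{\infty}$ Green function with positive curvature; your route via ``the semipositive current $c_1(\overline{B})$ integrates nonnegatively over every curve, so $[c_1(\overline{B})]$ is nef'' is shakier, since closed positive $(1,1)$-currents in general represent pseudo-effective, not nef, classes.
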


\begin{proof}
(1)
We set $\overline{B} = b_1 \overline{B}_1 + \cdots + b_n \overline{B}_n$ such that
$b_1, \ldots, b_n \in \RR$ and
$\overline{B}_1, \ldots, \overline{B}_n$ are arithmetic $\QQ$-divisors of $C^{\infty}$-type.
We choose an ample arithmetic $\QQ$-divisor $\overline{A}_1$  and an
ample arithmetic $\RR$-divisor $\overline{A}_2$ 
such that $\overline{A} = \overline{A}_1+ \overline{A}_2$.
Then, by (3) in Proposition~\ref{prop:ample:RR:div},
there are $\delta_1, \ldots, \delta_n \in \RR_{>0}$ such that
\[
\overline{A}_1 + \sum_{i=1}^n \delta_i \overline{B}_i\quad\text{and}\quad
\overline{A}_2 - \sum_{i=1}^n \delta_i \overline{B}_i
\]
are ample and $b_i + \delta_i \in \QQ$ for all $i$.
Moreover, we can take
an ample arithmetic $\QQ$-divisor $\overline{A}_3$  and an
ample arithmetic $\RR$-divisor $\overline{A}_4$ 
such that 
\[
\overline{A}_2 - \sum_{i=1}^n \delta_i \overline{B}_i = \overline{A}_3+ \overline{A}_4.
\]
Then, since
\[
\overline{A}_1 + \sum_{i=1}^n \delta_i \overline{B}_i + \overline{B} = \overline{A}_1 + \sum_{i=1}^n (b_i + \delta_i) \overline{B}_i
\]
is a nef arithmetic $\QQ$-divisor of $C^{\infty}$-type,
$\overline{A}_3 + \overline{A}_1 + \sum_{i=1}^n \delta_i \overline{B}_i + \overline{B}$ is an ample arithmetic $\QQ$-divisor
by \cite[Lemma~5.6]{MoCont}.
Therefore,
\[
\overline{A} + \overline{B} = \overline{A}_4 + \overline{A}_3 + \overline{A}_1 + \sum_{i=1}^n \delta_i \overline{B}_i + \overline{B}
\]
is an ample arithmetic $\RR$-divisor.

(2)
Clearly we may assume that $\overline{A}$ is ample.
By (3) in Proposition~\ref{prop:ample:RR:div}, there is a positive real number $\delta$ such that
$\frac{1}{2}\overline{A} - (0, \delta)$ is ample.
Note that $\frac{1}{2} A +  B$ is ample, that is, $\frac{1}{2} A + B$ is a linear combination of ample divisors with positive coefficients,
which can be checked in the same way as above.
Thus, by (2) in Theorem~\ref{thm:cont:upper:envelope},
there is $u \in C^0(X)$ (i.e., $u$ is an $F_{\infty}$-invariant continuous function in $X(\CC)$)
such that $0 \leq u < \delta$ on $X(\CC)$ and $\frac{1}{2}\overline{A} + \overline{B} + (0,u)$ is a nef
$\RR$-divisor of $C^{\infty}$-type. Then, by (1), 
\[
\frac{1}{2} \overline{A} - (0, \delta) + \frac{1}{2}\overline{A} + \overline{B} + (0, u)
\]
is ample. Thus
\[
\overline{A} + \overline{B} = \frac{1}{2} \overline{A} - (0, \delta) + \frac{1}{2}\overline{A} + \overline{B} + (0, u) + (0, \delta - u)
\]
is adequate.
\end{proof}

Finally let us observe the following lemma.

\begin{Lemma}
\label{lem:nef:D1:leq:D2}
Let $\overline{D}_1 = (D_1, g_1)$ and $\overline{D}_2 = (D_2, g_2)$ be arithmetic $\RR$-divisors of $\Tpsh_{\RR}$-type on $X$.
If $D_1 = D_2$, $g_1 \leq g_2\ \aew$ and $\overline{D}_1$ is nef,
then $\overline{D}_2$ is also nef.
\end{Lemma}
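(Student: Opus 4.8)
The plan is to reduce the nefness of $\overline{D}_2$ to a monotonicity statement for the archimedean part of the arithmetic intersection number. Since $D_1=D_2$ and $\overline{D}_2$ is assumed to be of $\Tpsh_{\RR}$-type, condition (a) in the definition of nefness already holds for $\overline{D}_2$, so the only thing to check is that $\adeg(\rest{\overline{D}_2}{C})\ge 0$ for every $1$-dimensional closed integral subscheme $C$ of $X$. I would prove the stronger inequality
\[
\adeg(\rest{\overline{D}_2}{C})\ \ge\ \adeg(\rest{\overline{D}_1}{C}),
\]
which together with the nefness of $\overline{D}_1$ gives $\adeg(\rest{\overline{D}_2}{C})\ge 0$, hence the lemma.

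To establish this inequality I would work with the type $\TT:=C^0+\Tpsh_{\RR}-\Tpsh_{\RR}$, which by Lemma~\ref{lem:fqpssh:ineq:ae} is a real valued type for Green functions satisfying $C^0\subseteq\TT$ and $-\TT\subseteq\TT$; for such a type the pairing $\adeg(\rest{-}{C})\colon\aDiv_{\TT}(X)_{\RR}\to\RR$ is defined and is a homomorphism. As $\Tpsh_{\RR}\subseteq\TT$ and $\Tpsh_{\RR}-\Tpsh_{\RR}\subseteq\TT$ as subsheaves, both $\overline{D}_1,\overline{D}_2$ and their difference $\overline{D}_2-\overline{D}_1=(0,\,g_2-g_1)$ lie in $\aDiv_{\TT}(X)_{\RR}$, so the homomorphism property yields
\[
\adeg(\rest{\overline{D}_2}{C})=\adeg(\rest{\overline{D}_1}{C})+\adeg\bigl(\rest{(0,\,g_2-g_1)}{C}\bigr).
\]
Now I would evaluate the last term from the definition of $\adeg(\rest{-}{C})$ on $\TT$-type divisors, using the admissible choice $h:=g_2-g_1\in\TT(X)$ (it is $F_\infty$-invariant since $g_1,g_2$ are, and $(g_2-g_1)-h=0$ is a $0$-Green function of $C^0$-type): this gives
\[
\adeg\bigl(\rest{(0,\,g_2-g_1)}{C}\bigr)=\adeg\bigl(\rest{(0,0)}{C}\bigr)+\frac12\sum_{x\in C(\CC)}(g_2-g_1)(x)=\frac12\sum_{x\in C(\CC)}(g_2-g_1)(x),
\]
because $(0,0)$ is the identity of $\aDiv_{\TT}(X)_{\RR}$ and so has arithmetic degree $0$ on $C$.

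It remains to see that $g_2-g_1\ge 0$ holds pointwise (not just almost everywhere): in a local expression $g_j=u_j+\sum_i(-a_i)\log|f_i|^2\ \aew$ the singular parts coincide because $D_1=D_2$, so $g_1\le g_2\ \aew$ forces $u_1\le u_2\ \aew$, and then $u_1\le u_2$ everywhere by property (2) of the type $\Tpsh$ (equivalently $0\le g_2-g_1$ on any $U$, since $0,\,g_2-g_1\in\TT(U)$ and $0\le g_2-g_1\ \aew$). Since $C(\CC)$ is a finite set of points of $X(\CC)$ (empty when $C$ is vertical), the sum above is $\ge 0$, and the desired inequality follows. The only subtle point in this argument is the bookkeeping in the two displayed formulas for $\adeg(\rest{(0,g_2-g_1)}{C})$ — one must note that the archimedean term is computed from the $\TT(X)$-representative, so that "non-negative almost everywhere" has to be upgraded to "non-negative pointwise"; this is precisely what property (2) of a type for Green functions provides, and there is no analytic difficulty.
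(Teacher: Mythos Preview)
Your proof is correct and follows essentially the same route as the paper: write $\overline{D}_2-\overline{D}_1=(0,\phi)$ with $\phi\in(\Tpsh_{\RR}-\Tpsh_{\RR})(X(\CC))$, upgrade $\phi\ge 0\ \aew$ to $\phi\ge 0$ pointwise via Lemma~\ref{lem:fqpssh:ineq:ae}, and conclude $\adeg(\rest{\overline{D}_2}{C})=\adeg(\rest{\overline{D}_1}{C})+\tfrac12\sum_{y\in C(\CC)}\phi(y)\ge 0$. The paper's version is terser and does not spell out the ambient type $\TT$, but the content is the same.
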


\begin{proof}
Since $D_1 = D_2$, there is a $\phi \in (\Tpsh_{\RR} -\Tpsh_{\RR})(X(\CC))$ 
such that
$g_2 = g_1 + \phi\ \aew$ and $\phi \geq 0 \ \aew$.
Note that $\phi \geq 0$ by Lemma~\ref{lem:fqpssh:ineq:ae}.
Let $C$ be a $1$-dimensional closed integral  subscheme of $X$.
Then
\[
\adeg(\rest{\overline{D}_2}{C}) = \deg(\rest{\overline{D}_1}{C}) + \frac{1}{2} \sum_{y \in C(\CC)} \phi(y) \geq  \deg(\rest{\overline{D}_1}{C})  \geq 0.
\]
\end{proof}

\subsection{Criterions of bigness and pseudo-effectivity}
\setcounter{Theorem}{0}

The purpose of this subsection is to prove the following propositions.

\begin{Proposition}
\label{prop:equiv:big:regular}
For $\overline{D} = (D, g)\in \aDiv_{C^0}(X)_{\RR}$, the following are equivalent:
\begin{enumerate}
\renewcommand{\labelenumi}{(\arabic{enumi})}
\item $\overline{D}$ is big, that is, $\avol(\overline{D}) > 0$.

\item
For any $\overline{A} \in \aDiv_{C^0}(X)_{\RR}$, there are a positive integer $n$ and
a non-zero rational function $\phi$ such that $\overline{A} \leq n \overline{D} + \widehat{(\phi)}$.
\end{enumerate}
\end{Proposition}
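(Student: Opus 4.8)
The plan is to prove the two implications separately; $(2)\Rightarrow(1)$ is immediate, and for $(1)\Rightarrow(2)$ the key idea is to reduce everything to the strict positivity $\avol(N\overline{D}-\overline{A})>0$ for a suitable $N$, which can then be deduced from the continuity and homogeneity of $\avol$ already established in Theorem~\ref{thm:aDiv:aPic:R}.

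For $(2)\Rightarrow(1)$ I would fix an ample arithmetic $\QQ$-divisor $\overline{\mathcal{A}}$ of $C^{\infty}$-type on $X$ (which exists since $X$ is projective); it is adequate, so $\avol(\overline{\mathcal{A}})>0$ by Proposition~\ref{prop:ample:RR:div}~(4). Feeding $\overline{\mathcal{A}}$ into (2) produces $n>0$ and a non-zero rational function $\phi$ with $\overline{\mathcal{A}}\le n\overline{D}+\widehat{(\phi)}$. Since multiplication by $\phi$ identifies $\aH(X,n\overline{D})$ with $\aH(X,n\overline{D}+\widehat{(\phi)})$, one gets $\avol(n\overline{D})=\avol(n\overline{D}+\widehat{(\phi)})\ge\avol(\overline{\mathcal{A}})>0$, whence $\avol(\overline{D})>0$ by the degree-$d$ homogeneity of $\avol$ (Theorem~\ref{thm:aDiv:aPic:R}~(2)).

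For $(1)\Rightarrow(2)$, given $\overline{A}=(A,g)\in\aDiv_{C^0}(X)_{\RR}$, I would proceed in three steps. First, reduce to $\overline{A}$ effective: writing $A$ as an $\RR$-linear combination of Cartier divisors and applying Lemma~\ref{lem:Weil:leq:Cartier}~(2) to each, one writes $A=A_1-A_2$ with $A_1,A_2$ effective, and then Lemma~\ref{lem:diff:effective:arith:div} (with $\TT=C^0$) upgrades this to $\overline{A}=\overline{A}_1-\overline{A}_2$ with $\overline{A}_1,\overline{A}_2$ effective of $C^0$-type; since $\overline{A}\le\overline{A}_1$, it suffices to dominate $\overline{A}_1$. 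Second, produce the strict inequality: by the continuity of $\avol$ on $\aDiv_{C^0}(X)_{\RR}$ (Theorem~\ref{thm:aDiv:aPic:R}~(3), applied to the pair $(\overline{D},\overline{A}_1)$, the compact set $\{1\}$, and $\epsilon=\avol(\overline{D})/2$) one gets $\avol(\overline{D}-t\overline{A}_1)>0$ for all small $t>0$, and then for a large integer $N$ the homogeneity gives
\[
\avol(N\overline{D}-\overline{A}_1)=N^{d}\,\avol\!\left(\overline{D}-\tfrac1N\overline{A}_1\right)>0 .
\]
Third, extract a section: since $\avol(N\overline{D}-\overline{A}_1)>0$, there is $k\ge1$ with $\aH(X,k(N\overline{D}-\overline{A}_1))\ne\{0\}$, and as $k\overline{A}_1\ge\overline{A}_1$ this forces $\aH(X,kN\overline{D}-\overline{A}_1)\ne\{0\}$; any non-zero element $\phi$ of it satisfies $\overline{A}\le\overline{A}_1\le(kN)\overline{D}+\widehat{(\phi)}$, which is exactly (2).

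The implication $(2)\Rightarrow(1)$ and the two reduction steps of $(1)\Rightarrow(2)$ are routine; the step I expect to carry the weight is the second one, where continuity of the arithmetic volume at $\overline{D}$ in the direction $-\overline{A}_1$, combined with degree-$d$ homogeneity, is what converts the hypothesis $\avol(\overline{D})>0$ into $\avol(N\overline{D}-\overline{A}_1)>0$ for $N\gg0$; this is the only place where bigness is used essentially, and the one thing to verify is that Theorem~\ref{thm:aDiv:aPic:R}~(3) genuinely applies to the perturbations $\overline{D}-t\overline{A}_1$, which it does because $\overline{A}_1\in\aDiv_{C^0}(X)_{\RR}$.
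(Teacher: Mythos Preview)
Your proof is correct and follows essentially the same approach as the paper's: the paper also reduces to effective $\overline{A}$ via Lemma~\ref{lem:Weil:leq:Cartier} and Lemma~\ref{lem:diff:effective:arith:div}, invokes continuity of $\avol$ to get $\avol(m\overline{D}-\overline{A})>0$ for some $m$, and then extracts a small section to conclude. The only difference is that the paper dismisses $(2)\Rightarrow(1)$ as ``obvious'' while you spell it out by feeding in an ample divisor.
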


\begin{proof}
``(2) $\Longrightarrow$ (1)'' is obvious.

Let us consider ``(1) $\Longrightarrow$ (2)''.
By using Lemma~\ref{lem:Weil:leq:Cartier} and Lemma~\ref{lem:diff:effective:arith:div},
we can find effective arithmetic $\RR$-divisors $\overline{A}'$ and $\overline{A}''$ of $C^0$-type such that $\overline{A} = \overline{A}' - \overline{A}''$.
Note that $\overline{A} \leq \overline{A}'$.
Thus we may assume $\overline{A}$ is effective in order to see our assertion.
By virtue of the continuity of $\avol$ (cf. Theorem~\ref{thm:aDiv:aPic:R}), there is a positive integer $m$ such that
\[
\avol(\overline{D} - (1/m)\overline{A}) > 0,
\] that is, $\avol(m \overline{D} - \overline{A}) > 0$, so that
there is a positive integer $n$ and a non-zero rational function $\phi$ such that
\[
n(m\overline{D} - \overline{A}) + \widehat{(\phi)} \geq 0.
\]
Thus $mn\overline{D} + \widehat{(\phi)} \geq n \overline{A} \geq \overline{A}$.
\end{proof}

\begin{Proposition}
\label{prop:equiv:pseudo:effective}
For $\overline{D} = (D, g)\in \aDiv_{C^0}(X)_{\RR}$, the following are equivalent:
\begin{enumerate}
\renewcommand{\labelenumi}{(\arabic{enumi})}
\item $\overline{D}$ is pseudo-effective.

\item For any big arithmetic $\RR$-divisor $\overline{A}$ of $C^0$-type, 
$\avol(\overline{D} + \overline{A}) > 0$.
%For any ample arithmetic $\RR$-divisor $\overline{A}$, $\avol(\overline{D} + (1/n)\overline{A}) > 0$ for 
%all $n \geq 1$.
%%
\query{ample $\Longrightarrow$ big\\ (19/July/2010)}

\item
There is a big arithmetic $\RR$-divisor $\overline{A}$ of $C^0$-type such that
$\avol(\overline{D} + (1/n)\overline{A}) > 0$ for 
all $n \geq 1$.
%There is an  ample arithmetic $\RR$-divisor $\overline{A}$ such that
%$\avol(\overline{D} + (1/n)\overline{A}) > 0$ for all $n \geq 1$.
\end{enumerate}
\end{Proposition}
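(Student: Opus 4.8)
The plan is to establish the cycle of implications $(2)\Rightarrow(3)\Rightarrow(1)\Rightarrow(2)$, with essentially all the work in the last step. For $(2)\Rightarrow(3)$ I would fix any big arithmetic $\RR$-divisor $\overline{A}$ of $C^0$-type — for instance an ample one, which is big by Proposition~\ref{prop:ample:RR:div}; since $\avol$ is positively homogeneous of degree $d$ (Theorem~\ref{thm:aDiv:aPic:R}(2)), each $(1/n)\overline{A}$ is again a big arithmetic $\RR$-divisor of $C^0$-type, so applying $(2)$ to $(1/n)\overline{A}$ gives $\avol(\overline{D}+(1/n)\overline{A})>0$ for all $n\ge 1$, which is $(3)$. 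For $(3)\Rightarrow(1)$ I would simply take $r=1$, $\overline{D}_1=\overline{A}$ and $a_{n1}=1/n$ in the definition of pseudo-effectivity: $\overline{D}$ is of $C^0$-type by hypothesis, $a_{n1}\to 0$, and $\avol(\overline{D}+a_{n1}\overline{D}_1)>0$ for all $n\ge 1$, so $\overline{D}$ is pseudo-effective.

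For $(1)\Rightarrow(2)$, assume $\overline{D}$ is pseudo-effective, witnessed by $\overline{D}_1,\dots,\overline{D}_r\in\aDiv_{C^0}(X)_{\RR}$ and real sequences $a_{ni}\to 0$ with $\avol(\overline{D}+\sum_i a_{ni}\overline{D}_i)>0$ for $n\gg 1$, and let $\overline{A}$ be an arbitrary big arithmetic $\RR$-divisor of $C^0$-type. First I would apply the implication $(1)\Rightarrow(2)$ of Proposition~\ref{prop:equiv:big:regular} to $\overline{A}$, once with the divisor $\overline{D}_i$ and once with $-\overline{D}_i$, obtaining positive integers $m_i^{\pm}$ and nonzero rational functions $\phi_i^{\pm}$ with $\pm\overline{D}_i\le m_i^{\pm}\overline{A}+\widehat{(\phi_i^{\pm})}$. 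Multiplying, for each $i$, the one of these inequalities matching the sign of $a_{ni}$ by $|a_{ni}|$ and summing over $i$ yields
\[
\sum_{i=1}^{r}a_{ni}\overline{D}_i\;\le\;t_n\,\overline{A}+\overline{E}_n,
\]
where $t_n:=\sum_i |a_{ni}|\,m_i^{\pm}\ge 0$ and $\overline{E}_n$ is an $\RR_{\ge 0}$-linear combination of the $\widehat{(\phi_i^{\pm})}$. Since the $m_i^{\pm}$ are fixed we have $t_n\to 0$, and $\overline{E}_n$ is $\avol$-neutral because each $\widehat{(\phi_i^{\pm})}$ maps to $0$ in $\aPic_{C^0}(X)_{\RR}$ and $\avol=\avol\circ\overline{\OO}_{\RR}$ (Theorem~\ref{thm:aDiv:aPic:R}(1)), so $\overline{\OO}_{\RR}(\overline{E}_n)=0$.

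Now fix $n$ large enough that $t_n<1$ and $\avol(\overline{D}+\sum_i a_{ni}\overline{D}_i)>0$, and split
\[
\overline{D}+\overline{A}=\Bigl(\overline{D}+\sum_i a_{ni}\overline{D}_i\Bigr)+\Bigl(\overline{A}-\sum_i a_{ni}\overline{D}_i\Bigr).
\]
The first summand is big by the choice of $n$. For the second, the displayed inequality gives $\overline{A}-\sum_i a_{ni}\overline{D}_i\ge(1-t_n)\overline{A}-\overline{E}_n$, so by monotonicity of $\avol$, $\avol$-neutrality of $\overline{E}_n$, and homogeneity,
\[
\avol\!\Bigl(\overline{A}-\sum_i a_{ni}\overline{D}_i\Bigr)\;\ge\;\avol\bigl((1-t_n)\overline{A}-\overline{E}_n\bigr)\;=\;(1-t_n)^d\avol(\overline{A})\;>\;0,
\]
so the second summand is big as well. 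Applying the superadditivity $\avol(\overline{F}_1+\overline{F}_2)^{1/d}\ge\avol(\overline{F}_1)^{1/d}+\avol(\overline{F}_2)^{1/d}$ in the case where both divisors have positive volume — this is the part of Theorem~\ref{thm:aDiv:aPic:R}(4) resting directly on \cite{YuanVol}, hence not circular here — to these two big divisors gives $\avol(\overline{D}+\overline{A})>0$, which is $(2)$.

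The main obstacle is the bookkeeping in the middle two paragraphs: one must absorb the sign-indefinite, irrationally-scaled perturbation $\sum_i a_{ni}\overline{D}_i$ into an arbitrarily small multiple of $\overline{A}$ modulo (real multiples of) principal divisors, which is exactly the role of Proposition~\ref{prop:equiv:big:regular}, and one must be careful to invoke only the positive-volume case of the superadditivity of $\avol^{1/d}$, since the full strength of Theorem~\ref{thm:aDiv:aPic:R}(4) is itself deduced using the present proposition.
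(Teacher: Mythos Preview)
Your proof is correct and follows the paper's strategy: write $\overline{D}+\overline{A}=(\overline{D}+\sum_i a_{ni}\overline{D}_i)+(\overline{A}-\sum_i a_{ni}\overline{D}_i)$, show both summands are big for suitable $n$, and conclude via the positive-volume case of the concavity inequality (Theorem~\ref{thm:aDiv:aPic:R}(4)). Your care about circularity is exactly right.

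The one genuine difference is in how you establish that the second summand $\overline{A}-\sum_i a_{ni}\overline{D}_i$ is big. You go through Proposition~\ref{prop:equiv:big:regular} to produce explicit bounds $\pm\overline{D}_i\le m_i^{\pm}\overline{A}+\widehat{(\phi_i^{\pm})}$ and then use monotonicity and $\avol$-neutrality of principal divisors. The paper instead appeals directly to the continuity of $\avol$ (Theorem~\ref{thm:aDiv:aPic:R}(3)): since $\avol(\overline{A})>0$ and $a_{mi}\to 0$, continuity gives $\avol(\overline{A}-\sum_i a_{mi}\overline{D}_i)>0$ for $m\gg 1$ in one line. The paper's route is shorter; yours is more elementary in that it avoids the continuity theorem and works purely with small sections and monotonicity.
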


\begin{proof}
It is sufficient to see that (1) implies (2).
As $\overline{D}$ is pseudo-effective,
there are arithmetic $\RR$-divisors $\overline{D}_1, \ldots, \overline{D}_r$ of $C^0$-type and
sequences $\{ a_{m1} \}_{m=1}^{\infty}, \ldots, \{ a_{mr} \}_{m=1}^{\infty}$ in $\RR$ such that
$\lim_{m\to\infty} a_{mi} = 0$ for all $i =1, \ldots, r$ and $\avol(\overline{D} + a_{m1} \overline{D}_1 + \cdots + a_{mr} \overline{D}_r) > 0$
for all $m \gg 1$.
By the continuity of $\avol$,
there is a sufficiently large positive integer $m$ such that
$\overline{A} - (a_{m1} \overline{D}_1 + \cdots + a_{mr} \overline{D}_r)$ is big.
Thus
\[
\avol(\overline{D} + \overline{A}) \geq \avol(\overline{D} + a_{m1} \overline{D}_1 + \cdots + a_{mr} \overline{D}_r) > 0.
\]
%For $n \geq 1$, by (3) in Proposition~\ref{prop:ample:RR:div},
%there is a sufficiently large positive integer $m$ such that
%$(1/n) \overline{A} - (a_{m1} \overline{D}_1 + \cdots + a_{mr} \overline{D}_r)$ is ample.
%Thus
%\[
%\avol(\overline{D} + (1/n)\overline{A}) \geq \avol(\overline{D} + a_{m1} \overline{D}_1 + \cdots + a_{mr} \overline{D}_r) > 0.
%\]
%%
\query{Rewrite the proof\\ (19/July/2010)}
\end{proof}

\query{Add Proposition~\ref{prop:pseudo:effective:big:on:generic}\\ (21/June/2010)}
\begin{Proposition}
\label{prop:pseudo:effective:big:on:generic}
If $\overline{D} = (D, g)$ is a pseudo-effective arithmetic $\RR$-divisor of $C^0$-type
such that $D$ is big on the generic fiber $X_{\QQ}$ 
\rom{(}i.e., $\operatorname{vol}(D_{\QQ}) > 0$ on $X_{\QQ}$\rom{)},
then $\overline{D} + (0, \epsilon)$ is big for all $\epsilon \in \RR_{>0}$.
\end{Proposition}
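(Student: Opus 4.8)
The plan is to combine two facts: (i) the bigness of $D$ on the generic fibre allows one to make $\overline{D}$ big after adding a sufficiently large constant function, and (ii) pseudo-effectivity, via Proposition~\ref{prop:equiv:pseudo:effective}, allows one to add any big arithmetic $\RR$-divisor of $C^0$-type without destroying positivity of $\avol$; a convex-combination argument then interpolates between these. Concretely, the first step is to produce $c_0 \in \RR_{>0}$ with $\avol(\overline{D} + (0, c_0)) > 0$. Since $D$ is big on $X_{\QQ}$, flat base change together with the continuity and homogeneity of the geometric volume gives, for $n \gg 0$, $\operatorname{rk}_{\ZZ} H^0(X, nD) = \dim_{\QQ} H^0(X_{\QQ}, nD_{\QQ}) \ge \alpha_0 n^{d-1}$ for a fixed $\alpha_0 > 0$. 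Because $g$ is of $C^0$-type there is a uniform estimate: a constant $C$, independent of $n$, and a $\ZZ$-basis $s_1, \dots, s_{r_n}$ of $H^0(X, nD)$ with $\Vert s_i \Vert_{ng} \le C^n$ for all $i$ and $n$ (a standard Arakelov-theoretic bound, extractable from \cite{MoCont} or re-derivable from the Gromov inequality of Proposition~\ref{prop:Gromov:ineq}). Choosing $c_0$ so that $\exp(-c_0/2)\,C \le 1/2$, each $s_i$ satisfies $\Vert s_i \Vert_{n(g+c_0)} \le 2^{-n}$, so all integral combinations $\sum_i a_i s_i$ with $\sum_i \vert a_i \vert \le 2^{n}$ lie in $\aH(X, n(\overline{D}+(0,c_0)))$; a lattice-point count then yields $\ah(X, n(\overline{D}+(0,c_0))) \ge c_1\, r_n\, n$ for a fixed $c_1 > 0$ and $n \gg 0$, and dividing by $n^d/d!$ gives $\avol(\overline{D}+(0,c_0)) \ge c_1\,\alpha_0\, d! > 0$.

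Now fix $\epsilon \in \RR_{>0}$. If $\epsilon \ge c_0$, then $(0,\epsilon) - (0,c_0) = (0, \epsilon - c_0)$ is effective, hence $\overline{D}+(0,\epsilon) \ge \overline{D}+(0,c_0)$ and monotonicity of $\avol$ gives $\avol(\overline{D}+(0,\epsilon)) \ge \avol(\overline{D}+(0,c_0)) > 0$. Otherwise set $t = \epsilon/c_0 \in (0,1)$ and use the identity
\[
\overline{D} + (0,\epsilon) = (1-t)\,\overline{D} \;+\; t\,\bigl(\overline{D} + (0,c_0)\bigr).
\]
The divisor $(1-t)\overline{D}$ is again pseudo-effective of $C^0$-type: taking the same $\overline{D}_1, \dots, \overline{D}_r$ and sequences occurring in the definition of pseudo-effectivity for $\overline{D}$, and replacing $a_{ni}$ by $(1-t)a_{ni} \to 0$, the homogeneity of $\avol$ (Theorem~\ref{thm:aDiv:aPic:R}) gives $\avol\bigl((1-t)\overline{D} + \sum_i (1-t)a_{ni}\overline{D}_i\bigr) = (1-t)^d \avol\bigl(\overline{D} + \sum_i a_{ni}\overline{D}_i\bigr) > 0$ for $n \gg 1$. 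Likewise $t(\overline{D}+(0,c_0))$ is a big arithmetic $\RR$-divisor of $C^0$-type, since $\avol(t(\overline{D}+(0,c_0))) = t^d\avol(\overline{D}+(0,c_0)) > 0$. Applying the implication $(1)\Rightarrow(2)$ of Proposition~\ref{prop:equiv:pseudo:effective} to the pseudo-effective divisor $(1-t)\overline{D}$ and the big divisor $t(\overline{D}+(0,c_0))$ yields $\avol\bigl((1-t)\overline{D} + t(\overline{D}+(0,c_0))\bigr) > 0$, that is, $\overline{D}+(0,\epsilon)$ is big.

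The only non-formal ingredient is Step~1 --- the passage from bigness of $D$ on $X_{\QQ}$ to arithmetic bigness of $\overline{D}+(0,c_0)$. The generic-fibre rank estimate is routine, but it rests on the uniform bound $\Vert s_i \Vert_{ng}\le C^n$ for a $\ZZ$-basis of $H^0(X,nD)$, which is where the real work lies (or the appeal to \cite{MoCont}, resp. to Proposition~\ref{prop:Gromov:ineq}). Once Step~1 is granted, Step~2 is a purely formal interpolation relying only on the monotonicity and homogeneity of $\avol$ and on the characterization of pseudo-effectivity already proved in Proposition~\ref{prop:equiv:pseudo:effective}.
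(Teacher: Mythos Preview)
Your Step~2 is correct and is, in spirit, exactly what the paper does: once one has a single big perturbation of $\overline{D}$, Proposition~\ref{prop:equiv:pseudo:effective} together with a rescaling/interpolation finishes the argument. The paper's algebraic manipulation is slightly different (it bounds $(1+m/n)(\overline{D}+(0,\epsilon))$ from below by $\overline{D}+\tfrac1n(\text{ample})$ rather than writing a convex combination), but the logic is the same.

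The genuine difference is in Step~1, and here the paper's route is considerably lighter than the lattice-point count you sketch. Instead of producing a $\ZZ$-basis of $H^0(X,nD)$ with uniformly controlled sup-norms, the paper argues geometrically: since $D_{\QQ}$ is big, there exist $m\in\ZZ_{>0}$ and $\phi\in\Rat(X)^{\times}$ with $mD-A+(\phi)\ge 0$ for a fixed ample arithmetic divisor $\overline{A}$. The Green function of $m\overline{D}-\overline{A}+\widehat{(\phi)}$ is then a $C^0$-Green function for an \emph{effective} divisor, hence bounded below by some constant $-\lambda$; this uses nothing beyond continuity and compactness. One obtains $m\overline{D}+(0,\lambda)\ge \overline{A}-\widehat{(\phi)}$, which is ample, and the rest is Proposition~\ref{prop:equiv:pseudo:effective}. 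In particular your Step~1 claim (that $\overline{D}+(0,c_0)$ is big for some $c_0$) drops out immediately with $c_0=\lambda/m$, without any Gromov inequality, covolume estimate, or basis-norm bound. The input you cite is true but is a heavier hammer than the problem requires.
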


\begin{proof}
Let $\overline{A}$ be an ample arithmetic divisor on $X$.
Since $D$ is big on $X_{\QQ}$, 
by using the continuity of the volume function over $X_{\QQ}$ (cf. \cite[I, Corollary~2.2.45]{LazPositivity}),
we can see that
there are a positive integer $m$ and
a non-zero rational function $\phi$ such that
\[
m D - A + (\phi) \geq 0.
\]
If we set $(L, h) = m \overline{D} - \overline{A} + \widehat{(\phi)}$,
then $h$ is an $L$-Green function of $C^{0}$-type and $L$ is effective.
Thus there is a positive number $\lambda$ such that
\[
m \overline{D} - \overline{A} + \widehat{(\phi)} \geq (0, -\lambda),
\]
that is, $m \overline{D} + (0, \lambda) \geq \overline{A} - \widehat{(\phi)}$.
We choose a sufficiently large positive integer $n$ such that
\[
\frac{\lambda}{n + m} \leq \epsilon.
\]
Then
\begin{align*}
\overline{D} + \frac{1}{n}(\overline{A} - \widehat{(\phi)}) & \leq \overline{D} + \frac{1}{n}(m\overline{D} + (0,\lambda)) \\
& = \left( 1 + \frac{m}{n} \right) \left( \overline{D} + \left( 0, \frac{\lambda}{n+m} \right) \right) \\
& \leq \left( 1 + \frac{m}{n} \right) \left( \overline{D} + ( 0, \epsilon ) \right).
\end{align*}
Note that $\overline{A} - \widehat{(\phi)}$ is ample, so that
$\overline{D} + (1/n)(\overline{A} - \widehat{(\phi)})$ is big by Proposition~\ref{prop:equiv:pseudo:effective}, and
hence $\overline{D} + ( 0, \epsilon )$ is also big.
\end{proof}

\query{Add Remark~\ref{remark:pseudo:effective:non:big}\\ (21/June/2010)}
\begin{Remark}
\label{remark:pseudo:effective:non:big}
It is very natural to ask whether
$\aH(X, n(\overline{D} + (0, \epsilon))) \not= \{ 0 \}$ for some $n \in \ZZ_{>0}$
in the case where $D$ is not necessarily big on $X_{\QQ}$.
This does not hold in general. For example,
let $\PP^1_{\ZZ} = \Proj(\ZZ[T_0, T_1])$ be the projective line over $\ZZ$ and
$\overline{D} = a\widehat{(T_1/T_0)}$ for $a \in \RR \setminus \QQ$.
It is easy to see that $\overline{D}$ is pseudo-effective and
$H^0(\PP^1_{\ZZ}, nD) = \{ 0 \}$ for all $n \in \ZZ_{>0}$.
Thus
$\aH(\PP^1_{\ZZ}, n(\overline{D} + (0, \epsilon))) = \{ 0 \}$ for $\epsilon \in \RR_{>0}$ and $n \in \ZZ_{>0}$.
\end{Remark}

\subsection{Intersection number of arithmetic $\RR$-divisors of $C^0$-type}
\setcounter{Theorem}{0}
\label{subsec:intersection:nef:C:0}
Let $\aDiv_{C^{\infty}}(X) \times \cdots \times \aDiv_{C^{\infty}}(X) \to \RR$ be a symmetric multi-linear map over $\ZZ$ given by
\[
(\overline{D}_1, \ldots, \overline{D}_d) \mapsto \adeg(\overline{D}_1 \cdots \overline{D}_d) := \adeg(\acherncl_1(\overline{\OO}(\overline{D}_1)) \cdots
\acherncl_1(\overline{\OO}(\overline{D}_d))),
\]
which extends  to the symmetric multi-linear map 
\[
(\aDiv_{C^{\infty}}(X) \otimes_{\ZZ} \RR) \times \cdots \times (\aDiv_{C^{\infty}}(X) \otimes_{\ZZ} \RR) \to \RR
\]
\query{$\otimes \RR$\quad
$\Longrightarrow$\quad $\otimes_{\ZZ} \RR$
(17/October/2010)}
over $\RR$.

\begin{PropDef}
\label{propdef:intersection:Div:C:infty}
The above  multi-linear map 
\[
(\aDiv_{C^{\infty}}(X) \otimes_{\ZZ} \RR) \times \cdots \times (\aDiv_{C^{\infty}}(X) \otimes_{\ZZ} \RR) \to \RR
\]
\query{$\otimes \RR$\quad
$\Longrightarrow$\quad $\otimes_{\ZZ} \RR$
(17/October/2010)}
descents to
the symmetric multi-linear map
\[
\aDiv_{C^{\infty}}(X)_{\RR} \times \cdots \times \aDiv_{C^{\infty}}(X)_{\RR} \to \RR
\]
over $\RR$, whose value at $(\overline{D}_1, \ldots, \overline{D}_d) \in \aDiv_{C^{\infty}}(X)_{\RR} \times \cdots \times \aDiv_{C^{\infty}}(X)_{\RR}$ is also
denoted by $\adeg(\overline{D}_1 \cdots \overline{D}_d)$ by abuse of notation.
\end{PropDef}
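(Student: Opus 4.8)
The plan is to check that the $\RR$-multilinear map
\[
(\aDiv_{C^{\infty}}(X) \otimes_{\ZZ} \RR) \times \cdots \times (\aDiv_{C^{\infty}}(X) \otimes_{\ZZ} \RR) \to \RR
\]
vanishes whenever one of its arguments lies in the kernel $K$ of the natural surjection $\aDiv_{C^{\infty}}(X) \otimes_{\ZZ} \RR \to \aDiv_{C^{\infty}}(X)_{\RR}$; being symmetric, it then vanishes whenever \emph{any} single argument lies in $K$, hence factors through $\aDiv_{C^{\infty}}(X)_{\RR} \times \cdots \times \aDiv_{C^{\infty}}(X)_{\RR}$, and the resulting map is automatically symmetric and $\RR$-multilinear. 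Since the type $C^{\infty}$ satisfies $-C^{\infty} \subseteq C^{\infty}$ and $C^{\infty} \subseteq C^{\infty}$, Lemma~\ref{lem:kernel:Div:R} identifies $K$ with the set of all $\sum_{i=1}^l (0, \phi_i) \otimes a_i$ with $a_1, \ldots, a_l \in \RR$, $\phi_1, \ldots, \phi_l \in C^{\infty}(X)$ and $a_1 \phi_1 + \cdots + a_l \phi_l = 0$.

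First I would fix such an element $\kappa = \sum_{i=1}^l (0, \phi_i) \otimes a_i \in K$ together with arbitrary $\overline{D}_2, \ldots, \overline{D}_d \in \aDiv_{C^{\infty}}(X) \otimes_{\ZZ} \RR$, and show $\adeg(\kappa \cdot \overline{D}_2 \cdots \overline{D}_d) = 0$. By $\RR$-multilinearity in the last $d-1$ slots, and since $\aDiv_{C^{\infty}}(X) \otimes_{\ZZ} \RR$ is spanned over $\RR$ by classes $\overline{D} \otimes 1$ with $\overline{D} \in \aDiv_{C^{\infty}}(X)$, it suffices to treat the case $\overline{D}_2, \ldots, \overline{D}_d \in \aDiv_{C^{\infty}}(X)$. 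Expanding by linearity in the first slot,
\[
\adeg(\kappa \cdot \overline{D}_2 \cdots \overline{D}_d) = \sum_{i=1}^l a_i \, \adeg\bigl((0,\phi_i) \cdot \overline{D}_2 \cdots \overline{D}_d\bigr).
\]
Now the key input is the standard computation of an arithmetic intersection number one of whose factors is a purely archimedean arithmetic divisor: because $\overline{\OO}(0,\phi_i) = \overline{\OO}(\phi_i) = (\OO_X, \exp(-\phi_i)\vert \cdot \vert_{\can})$, the Gillet--Soulé push-forward/projection formula gives
\[
\adeg\bigl((0,\phi_i) \cdot \overline{D}_2 \cdots \overline{D}_d\bigr) = \frac{1}{2} \int_{X(\CC)} \phi_i \, c_1(\overline{D}_2) \wedge \cdots \wedge c_1(\overline{D}_d),
\]
where $c_1(\overline{D}_j)$ is the $C^{\infty}$ $(1,1)$-form attached to $\overline{D}_j$ by Proposition~\ref{prop:lelong:formula} (the integral is finite since $X$ is projective, so $X(\CC)$ is compact, and the integrand is smooth). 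This right-hand side is genuinely $\RR$-linear in $\phi_i$, whence
\[
\sum_{i=1}^l a_i \, \adeg\bigl((0,\phi_i) \cdot \overline{D}_2 \cdots \overline{D}_d\bigr) = \frac{1}{2} \int_{X(\CC)} \Bigl(\sum_{i=1}^l a_i \phi_i\Bigr) c_1(\overline{D}_2) \wedge \cdots \wedge c_1(\overline{D}_d) = 0
\]
by the relation $\sum_i a_i \phi_i = 0$. This yields the vanishing on $K$, hence the descent, and the induced map inherits symmetry and $\RR$-multilinearity.

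The step I expect to be the main obstacle is not the bookkeeping but the invocation of the integral formula in the last display: one must make sure that in the present generality (arbitrary real coefficients, $C^{\infty}$-type Green functions on a merely normal projective arithmetic variety) the arithmetic intersection pairing $\adeg(\acherncl_1(\overline{\OO}(0,\phi)) \cdot \acherncl_1(\overline{\OO}(\overline{D}_2)) \cdots \acherncl_1(\overline{\OO}(\overline{D}_d)))$ is still computed by $\tfrac12\int_{X(\CC)} \phi\,(\cdots)$, and in particular one should note that the $\ZZ$-bilinearity underlying the definition only gives $\QQ$-linearity of $\phi \mapsto \adeg((0,\phi)\cdot \overline{D}_2 \cdots \overline{D}_d)$ on the $\QQ$-vector space of smooth functions, whereas the real coefficients $a_i$ appearing in $\kappa$ force one to use the honest $\RR$-linearity supplied by the integral. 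Once this identification is granted, the remainder is the short reduction via multilinearity combined with the description of $K$ from Lemma~\ref{lem:kernel:Div:R}.
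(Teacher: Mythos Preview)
Your proof is correct and follows essentially the same route as the paper: identify the kernel via Lemma~\ref{lem:kernel:Div:R}, then show that $\adeg\bigl((0,\phi_i)\cdot\overline{D}_2\cdots\overline{D}_d\bigr)$ is given by an expression that is genuinely $\RR$-linear in $\phi_i$, so that $\sum_i a_i\phi_i=0$ forces the vanishing. The only cosmetic difference is the formula used for this last step: the paper represents $\overline{D}_2\cdots\overline{D}_d$ by an arithmetic cycle $(c_1C_1+\cdots+c_rC_r,T)$ and writes the pairing as $\sum_j c_j\sum_{y\in C_j(\CC)}\phi_i(y)+\tfrac12\int_{X(\CC)}dd^c(\phi_i)\wedge T$, whereas you invoke the equivalent top-form integral $\tfrac12\int_{X(\CC)}\phi_i\,c_1(\overline{D}_2)\wedge\cdots\wedge c_1(\overline{D}_d)$; both are standard and both make the needed $\RR$-linearity manifest.
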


\begin{proof}
Let $a_1, \ldots, a_l \in \RR$ and $\phi_1, \ldots, \phi_l \in C^{\infty}(X)$ such that
$a_1 \phi_1 + \cdots + a_l \phi_l = 0$.
By Lemma~\ref{lem:kernel:Div:R}, it is sufficient to show that
\[
\adeg \left( ((0, \phi_1) \otimes a_1 + \cdots + (0, \phi_l) \otimes a_l ) \cdot \overline{D}_2 \cdots \overline{D}_d \right) = 0
\]
for all $\overline{D}_2, \ldots, \overline{D}_d \in \aDiv_{C^{\infty}}(X)$.
First of all, note that there are $1$-dimensional closed integral subschemes $C_1, \ldots, C_r$, $c_1, \ldots, c_r \in \ZZ$ and
a current $T$ of $(d-2, d-2)$-type such that
\[
\overline{D}_2 \cdots \overline{D}_d \sim (c_1 C_1 + \cdots + c_r C_r, T).
\]
Then
\begin{multline*}
\adeg \left( ((0, \phi_1) \otimes a_1 + \cdots + (0, \phi_l) \otimes a_l ) \cdot \overline{D}_2 \cdots \overline{D}_d \right)  \\
\hspace{-9em}= \sum_{i=1}^l a_i \adeg\left( (0, \phi_i) \cdot (c_1 C_1 + \cdots + c_r C_r, T) \right) \\
= 
\sum_{i=1}^l a_i \left( \sum_{j=1}^r c_j \sum_{y \in C_j(\CC)} \phi_i(y) + (1/2) \int_{X(\CC)} dd^c(\phi_i) \wedge T\right) \\
= 
\sum_{j=1}^r c_j \sum_{y \in C_j(\CC)} \sum_{i=1}^l a_i  \phi_i(y) + (1/2) \int_{X(\CC)} dd^c\left(\sum_{i=1}^l a_i  \phi_i\right) \wedge T = 0,
\end{multline*}
as required.
\end{proof}

Let $\aDiv_{C^0}^{\Nef}(X)_{\RR}$ be the vector subspace of $\aDiv_{C^0}(X)_{\RR}$ generated by
$\aNef_{C^0}(X)_{\RR}$. 
The purpose of this subsection is to show the following proposition,
which gives a generalization of \cite[Lemma~6.5]{ZhPos} for $\RR$-divisors.

\begin{Proposition}
\label{prop:intersection:Div:nef:C:0}
\begin{enumerate}
\renewcommand{\labelenumi}{(\arabic{enumi})}
\item
\query{Update the statement of (1).
The original is ``$\aDiv_{C^{\infty}}(X)_{\RR} + \aDiv_{C^0 \cap \Tpsh}(X)_{\RR}
\subseteq \aDiv_{C^0}^{\Nef}(X)_{\RR}$''.
(12/July/2010)}
\[
\aDiv_{C^0 \cap \Tpsh + C^{\infty}}(X)_{\RR} \subseteq \aDiv_{C^0}^{\Nef}(X)_{\RR}
\subseteq \aDiv_{C^0 \cap \Tpsh - C^0 \cap \Tpsh}(X)_{\RR}.
\]

\item
The above symmetric multi-linear map 
\[
\aDiv_{C^{\infty}}(X)_{\RR} \times \cdots \times  \aDiv_{C^{\infty}}(X)_{\RR} \to \RR
\]
given in
Proposition-Definition~\rom{\ref{propdef:intersection:Div:C:infty}}
extends to a unique symmetric multi-linear map 
\[
\aDiv_{C^0}^{\Nef}(X)_{\RR} \times \cdots \times  \aDiv_{C^0}^{\Nef}(X)_{\RR} \to \RR
\]
such that
$(\overline{D}, \ldots, \overline{D}) \mapsto \avol(\overline{D})$ for $\overline{D} \in \aNef_{C^0}(X)_{\RR}$.
By abuse of notation, for 
\[
(\overline{D}_1, \ldots, \overline{D}_d) \in \aDiv_{C^0}^{\Nef}(X)_{\RR} \times \cdots \times  \aDiv_{C^0}^{\Nef}(X)_{\RR},
\]
the image of  $(\overline{D}_1, \ldots, \overline{D}_d)$ by the above extension is also denoted by 
\[
\adeg(\overline{D}_1 \cdots \overline{D}_d).
\]
\end{enumerate}
\end{Proposition}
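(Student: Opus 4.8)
The second inclusion is essentially formal: a nef arithmetic $\RR$-divisor of $C^{0}$-type is of $(C^{0}\cap\Tpsh_{\RR})$-type, hence of $(C^{0}\cap\Tpsh)$-type, so every element of $\aDiv_{C^0}^{\Nef}(X)_{\RR}$ is a finite $\RR$-linear combination $\sum a_{i}\overline{N}_{i}$ of such; grouping the positive and negative coefficients rewrites it as a difference $\overline{N}_{+}-\overline{N}_{-}$ of nef $C^{0}$-divisors, which is of $(C^{0}\cap\Tpsh-C^{0}\cap\Tpsh)$-type. For the first inclusion, given $\overline{D}=(D,g)\in\aDiv_{C^{0}\cap\Tpsh+C^{\infty}}(X)_{\RR}$, I would fix a $D$-Green function $g_{0}$ of $C^{\infty}$-type (taken $F_{\infty}$-invariant) and split $\overline{D}=(D,g_{0})+(0,f)$ with $f:=g-g_{0}$ an $F_{\infty}$-invariant continuous $0$-Green function of $(\Tpsh+C^{\infty})$-type. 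Fixing an ample arithmetic $\RR$-divisor $\overline{A}=(A,g_{A})$ of $C^{\infty}$-type with $\alpha:=c_{1}(\overline{A})$ positive, Proposition~\ref{prop:ample:RR:div}(3) makes $\overline{A}+\delta(D,g_{0})$ ample for some small $\delta>0$, so $(D,g_{0})=\delta^{-1}\bigl((\overline{A}+\delta(D,g_{0}))-\overline{A}\bigr)$ is a difference of ample $C^{\infty}$-divisors, and ample $C^{\infty}$-divisors are nef; thus $(D,g_{0})\in\aDiv_{C^0}^{\Nef}(X)_{\RR}$.

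It then remains to place $(0,f)$ in $\aDiv_{C^0}^{\Nef}(X)_{\RR}$. By Proposition~\ref{prop:psh:plus:positive} applied to the zero divisor and $f$, there is $t_{0}$ with $f+tg_{A}$ a $(tA)$-Green function of $\Tpsh$-type for all $t\geq t_{0}$; being continuous, $(tA,f+tg_{A})=(0,f)+t\overline{A}$ is of $(C^{0}\cap\Tpsh_{\RR})$-type. To see it is nef for $t$ large I must bound $\adeg\bigl(((0,f)+t\overline{A})|_{C}\bigr)=\tfrac12\sum_{x\in C(\CC)}f(x)+t\adeg(\overline{A}|_{C})$ from below over all $1$-dimensional closed integral subschemes $C$. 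On a curve $C$ contained in a fibre over $p$ this equals $(\log p)\bigl(t(A\cdot C)\bigr)>0$ for $t>0$ because $A$ is ample on $X$. On a horizontal $C$ the first term has absolute value at most $\tfrac12\Vert f\Vert_{\sup}\#C(\CC)$, so it suffices to know $\adeg(\overline{A}|_{C})\geq c\,\#C(\CC)$ for a constant $c>0$ independent of $C$; this I would get from the fact that $\overline{A}-(0,\epsilon)$ is still ample for some $\epsilon>0$ (Proposition~\ref{prop:ample:RR:div}(3)) together with the observation that an ample arithmetic $\RR$-divisor has non-negative degree on every horizontal curve — which follows from the defining property that high enough multiples of an ample $C^{\infty}$-hermitian invertible sheaf are generated by sections of supremum norm $<1$. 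Taking $t\geq\max(t_{0},\Vert f\Vert_{\sup}/\epsilon)$ then makes $(0,f)+t\overline{A}$ nef of $C^{0}$-type, so $(0,f)=\bigl((0,f)+t\overline{A}\bigr)-t\overline{A}\in\aDiv_{C^0}^{\Nef}(X)_{\RR}$, and hence so is $\overline{D}$.

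\textbf{Part (2).} Uniqueness is formal: a symmetric multilinear form is determined by its diagonal, and on $\aDiv_{C^0}^{\Nef}(X)_{\RR}$ the diagonal value at $\sum a_{i}\overline{N}_{i}$ (with $\overline{N}_{i}$ nef of $C^{0}$-type) expands polynomially in the $a_{i}$, the coefficients being expressed through the prescribed numbers $\avol(\overline{N})$, $\overline{N}\in\aNef_{C^{0}}(X)_{\RR}$, by polarization. For existence I would first extend the given $C^{\infty}$-pairing to all of $\aDiv_{C^{0}}(X)_{\RR}^{\times d}$ by continuity. With the divisor classes kept fixed, perturbing one Green function by $\phi$ changes $\adeg(\overline{E}_{1}\cdots\overline{E}_{d})$ by $\tfrac12\int_{X(\CC)}\phi\,c_{1}(\overline{E}_{2})\wedge\cdots\wedge c_{1}(\overline{E}_{d})$, and when the $\overline{E}_{j}$ are nef the integral of this product of curvature forms is the cup product $(E_{2}\cdots E_{d})$ on $X(\CC)$, depending only on the classes; hence $\adeg$ is uniformly continuous in the supremum norm of the Green functions over families with bounded divisor classes and semipositive curvature. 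Since every $C^{0}$-Green function is a uniform limit of $C^{\infty}$-ones (Stone–Weierstrass), this produces a unique continuous symmetric multilinear extension to $\aDiv_{C^{0}}(X)_{\RR}^{\times d}$, in particular to $\aDiv_{C^0}^{\Nef}(X)_{\RR}^{\times d}$.

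Finally I would check the normalization on the diagonal. For $\overline{D}$ nef of $C^{0}$-type, $\overline{D}+\epsilon\overline{A}$ is nef of $C^{0}$-type with $D+\epsilon A$ ample on $X(\CC)$, so by Theorem~\ref{thm:cont:upper:envelope}(2) and Lemma~\ref{lem:nef:D1:leq:D2} there are nef arithmetic $\RR$-divisors $\overline{D}^{(\epsilon,n)}$ of $C^{\infty}$-type with divisor $D+\epsilon A$ whose Green functions converge uniformly, as $n\to\infty$, to that of $\overline{D}+\epsilon\overline{A}$. For these, $\adeg\bigl((\overline{D}^{(\epsilon,n)})^{d}\bigr)=\avol(\overline{D}^{(\epsilon,n)})$ by the arithmetic Hilbert–Samuel theorem, and letting $n\to\infty$, using the continuity of the extension and of $\avol$ (Theorem~\ref{thm:aDiv:aPic:R}(3)), gives $\adeg\bigl((\overline{D}+\epsilon\overline{A})^{d}\bigr)=\avol(\overline{D}+\epsilon\overline{A})$; letting $\epsilon\to0$, the left side is a polynomial in $\epsilon$ tending to $\adeg(\overline{D}^{d})$ and the right side tends to $\avol(\overline{D})$, so $\adeg(\overline{D}^{d})=\avol(\overline{D})$. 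I expect the genuine difficulty to be concentrated in this last step — keeping the $C^{\infty}$-approximants nef under the smoothing (where Lemma~\ref{lem:nef:D1:leq:D2} enters), legitimately handling the iterated limits in $n$ and $\epsilon$, and invoking arithmetic Hilbert–Samuel in the stated generality — together with the uniform lower bound $\adeg(\overline{A}|_{C})\geq c\,\#C(\CC)$ for ample $\overline{A}$ used in Part (1), which is the one non-formal input there.
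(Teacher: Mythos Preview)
Your Part~(1) works but differs from the paper. The paper does not split off $(0,f)$ and estimate $\adeg(\overline{A}|_{C})$ on horizontal curves; instead, after using Proposition~\ref{prop:psh:plus:positive} to reduce to $\overline{D}\in\aDiv_{C^{0}\cap\Tpsh}(X)_{\RR}$, it proves directly (Claim~\ref{claim:prop:intersection:Div:nef:C:0:0}) that some ample $\overline{B}$ makes $\overline{D}+\overline{B}$ nef: Stone--Weierstrass gives a nonnegative continuous $u$ with $\overline{D}-(0,u)$ of $C^{\infty}$-type, Proposition~\ref{prop:ample:RR:div}(3) then makes $\overline{D}-(0,u)+\overline{B}$ ample, and adding $(0,u)$ back leaves $\overline{D}+\overline{B}$ adequate. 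Your degree bound $\adeg(\overline{A}|_{C})\geq\tfrac{\epsilon}{2}\#C(\CC)$ is correct, but the paper's route avoids any explicit estimate on curves.

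Your Part~(2) follows a genuinely different strategy and has a gap. The paper does not extend the $C^{\infty}$ pairing by continuity; it instead \emph{defines} the value on $d$-tuples of nef $C^{0}$-divisors by the polarization formula
\[
\alpha(\overline{D}_{1},\ldots,\overline{D}_{d}):=\frac{1}{d!}\sum_{I\subseteq\{1,\ldots,d\}}(-1)^{d-\#I}\,\avol\Bigl(\sum_{i\in I}\overline{D}_{i}\Bigr),
\]
so that the diagonal condition is built in, and then proves $\RR_{\geq 0}$-multilinearity on the sub-cone with \emph{ample} underlying divisor by approximating each $\overline{D}_{i}$ by a nef $C^{\infty}$-divisor via Theorem~\ref{thm:cont:upper:envelope}(2) and invoking Claim~\ref{claim:prop:intersection:Div:nef:C:0:1}(c); a separate consistency check then shows the multilinear extension to differences is well defined. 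Your route---extend $\adeg$ by continuity, then verify $\adeg(\overline{D}^{d})=\avol(\overline{D})$---is in principle viable, but the extension step is incomplete: your Lipschitz bound requires the curvature forms in slots $2,\ldots,d$ to be semipositive, yet you produce the $C^{\infty}$ approximants by plain Stone--Weierstrass, which yields no positivity. To make the limit exist you must approximate by \emph{nef} $C^{\infty}$-divisors, and for that you need Theorem~\ref{thm:cont:upper:envelope}(2), whose hypothesis is that the underlying divisor admit a $C^{\infty}$ Green function of semipositive (or zero) curvature---which is exactly why the paper first restricts to $\overline{D}_{i}$ with ample underlying divisor before passing to differences. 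Once you impose that restriction, your argument and the paper's become essentially the same computation read in two directions.
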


\begin{proof}
\query{Change the proof of (1).
(12/July/2010)}
(1) 
It is obvious that
\[
\aDiv_{C^0}^{\Nef}(X)_{\RR}
\subseteq \aDiv_{C^0 \cap \Tpsh - C^0 \cap \Tpsh}(X)_{\RR}.
\]
Let $\overline{D} \in \aDiv_{C^0 \cap \Tpsh + C^{\infty}}(X)_{\RR}$.
By  Proposition~\ref{prop:psh:plus:positive}, there is an ample arithmetic divisor $\overline{A}$ with
$\overline{D} + \overline{A} \in \aDiv_{C^0 \cap \Tpsh}(X)_{\RR}$. Thus 
%By Proposition~\ref{prop:ample:RR:div},
%it is easy to see that $\aDiv_{C^{\infty}}(X)_{\RR} \subseteq \aDiv_{C^0}^{\Nef}(X)_{\RR}$.
%In order to see $\aDiv_{C^0 \cap \Tpsh}(X)_{\RR} \subseteq \aDiv_{C^0}^{\Nef}(X)_{\RR}$,
it is sufficient to show the following claim:

\begin{Claim}
\label{claim:prop:intersection:Div:nef:C:0:0}
For $\overline{D} \in \aDiv_{C^0 \cap \Tpsh}(X)_{\RR}$, there is an ample arithmetic divisor $\overline{B}$ such that
$\overline{D} + \overline{B} \in \aNef_{C^0}(X)_{\RR}$ and $D + B$ is ample.
\end{Claim}

\begin{proof}
By virtue of  the Stone-Weierstrass theorem,
there is an $F_{\infty}$-invariant non-negative continuous function $u$ on $X(\CC)$ such that
$\overline{D} - (0, u) \in \aDiv_{C^{\infty}}(X)_{\RR}$.
Thus, by Proposition~\ref{prop:ample:RR:div},
we can find an ample arithmetic divisor $\overline{B}$ such that 
\[
\overline{D} - (0, u) + \overline{B}
\]
is ample.
In particular, $\overline{D} + \overline{B} \in \aNef_{C^0}(X)_{\RR}$ and $D + B$ is ample.
\end{proof}

\medskip
(2) Let us begin with the following claim.

\begin{Claim}
\label{claim:prop:intersection:Div:nef:C:0:1}
\begin{enumerate}
\renewcommand{\labelenumi}{(\alph{enumi})}
\item
For $\overline{D} \in \aNef_{C^{\infty}}(X)_{\RR}$, $\adeg(\overline{D}^d) = \avol(\overline{D})$.

\item
${\displaystyle d! X_1 \cdots X_d = \sum_{I \subseteq \{ 1, \ldots, d\}} (-1)^{d-\#(I)} \left(\sum_{i \in I} X_i \right)^d}$ in $\ZZ[X_1, \ldots, X_d]$.

\item
For $\overline{D}_1, \ldots, \overline{D}_d \in \aNef_{C^{\infty}}(X)_{\RR}$,
\[
\adeg(\overline{D}_1 \cdots \overline{D}_d) = \frac{1}{d!} \sum_{I \subseteq \{ 1, \ldots, d\}} (-1)^{d - \#(I)} \avol\left(\sum_{i \in I} \overline{D}_i \right).
\]
\end{enumerate}
\end{Claim}

\begin{proof}
(a) First we assume that $\overline{D}$ is ample.
We set $\overline{D} = a_1 \overline{A}_1 + \cdots + a_l \overline{A}_l$ such that
$a_1, \ldots, a_l \in \RR_{>0}$ and $\overline{A}_i$'s are ample arithmetic divisors.
Let us choose sufficient small positive numbers $\delta_1, \ldots, \delta_l$ such that $a_i + \delta_i \in \QQ$ for all $i$.
Then, by \cite[Corollary~5.5]{MoCont}, 
\[
\adeg(((a_1+ \delta_1) \overline{A}_1 + \cdots + (a_l + \delta_l)\overline{A}_l)^d) = \avol((a_1+ \delta_1) \overline{A}_1 + \cdots + (a_l + \delta_l)\overline{A}_l).
\]
Thus, using the continuity of $\avol$, the assertion follows.

Next we consider a general case. Let $\overline{A}$ be an ample arithmetic divisor of $C^{\infty}$-type.
Then, by Proposition~\ref{prop:ample:plus:nef:ample}, $\overline{D} + \epsilon \overline{A}$ is ample for all $\epsilon > 0$.
Thus the assertion follows from the previous observation and the continuity of $\avol$.

\medskip
(b)
In general, let us see that
\[
\sum_{I \subseteq \{ 1, \ldots, d\}} (-1)^{\#(I)} \left(\sum_{i \in I} X_i \right)^l = \begin{cases}
0 & \text{if $l < d$}, \\
(-1)^d d! X_1 \cdots X_d & \text{if $l = d$}
\end{cases}
\]
holds for integers $d$ and $l$ with $1 \leq l \leq d$.
This assertion for $d$ and $l$ is denoted by $A(d,l)$.
$A(1,1)$ is obvious. Moreover, it is easy to see $A(d,1)$. Note that
\begin{multline*}
\int_{0}^{X_d} \left( \sum_{I \subseteq \{ 1, \ldots, d\}} (-1)^{\#(I)} \left(\sum_{i \in I} X_i \right)^{l-1} \right) dX_{d} \\
=
\frac{1}{l} \sum_{I \subseteq \{ 1, \ldots, d\}} (-1)^{\#(I)} \left(\sum_{i \in I} X_i \right)^{l} + X_d 
\sum_{J \subseteq \{ 1, \ldots, d-1\}} (-1)^{\#(J)} \left(\sum_{j \in J} X_j \right)^{l-1},
\end{multline*}
which shows that $A(d-1, l-1)$ and $A(d, l-1)$ imply $A(d,l)$.
Thus (b) follows by double induction on $d$ and $l$.

\medskip
(c) follows from (a) and (b). 
\end{proof}

The uniqueness of the symmetric multi-linear map follows from (b) in the previous claim.
For $(\overline{D}_1, \ldots, \overline{D}_d) \in \aNef_{C^0}(X)_{\RR} \times \cdots \times \aNef_{C^0}(X)_{\RR}$, we define
$\alpha(\overline{D}_1, \ldots, \overline{D}_d)$ to be
\addtocounter{Claim}{1}
\begin{equation}
\label{eqn:prop:intersection:Div:nef:C:0:1}
\alpha(\overline{D}_1, \ldots, \overline{D}_d) := \frac{1}{d!} \sum_{I \subseteq \{ 1, \ldots, d\}} (-1)^{d - \#(I)} \avol\left(\sum_{i \in I} \overline{D}_i \right).
\end{equation}

\begin{Claim}
\begin{enumerate}
\renewcommand{\labelenumi}{(\roman{enumi})}
\item
$\alpha$ is symmetric and
\[
\alpha(a \overline{D}_1 + b \overline{D}'_1, \overline{D}_2, \ldots, \overline{D}_d) = a \alpha(\overline{D}_1,  \overline{D}_2, \ldots, \overline{D}_d)
+ b \alpha(\overline{D}'_1, \overline{D}_2, \ldots, \overline{D}_d)
\]
holds for $a, b \in \RR_{\geq 0}$ and $\overline{D}_1, \overline{D}'_1, \overline{D}_2, \ldots, \overline{D}_d \in \aNef_{C^0}(X)_{\RR}$
with 
\[
D_1, D'_1, D_2, \ldots, D_d
\]
ample.

\item
If $\overline{A}_{1,1},\overline{A}_{1,-1}, \ldots, \overline{A}_{d,1},\overline{A}_{d,-1},\overline{B}_{1,1},\overline{B}_{1,-1},\ldots,\overline{B}_{d,1},\overline{B}_{d,-1}
\in \aNef_{C^0}(X)_{\RR}$,
\[
A_{1,1}, A_{1,-1}, \ldots, A_{d,1}, A_{d,-1}, B_{1,1}, B_{1,-1},\ldots,B_{d,1},B_{d,-1}
\]
are ample
and $\overline{A}_{i,1} - \overline{A}_{i,-1} = \overline{B}_{i,1} - \overline{B}_{i,-1}$ for all $i=1,\ldots,d$, then
\[
\sum_{\epsilon_1, \ldots, \epsilon_d \in \{ \pm 1 \}} \epsilon_1 \cdots \epsilon_d \alpha(\overline{A}_{1,\epsilon_1}, \ldots, \overline{A}_{d,\epsilon_d}) =
\sum_{\epsilon_1, \ldots, \epsilon_d \in \{ \pm 1 \}} \epsilon_1 \cdots \epsilon_d \alpha(\overline{B}_{1,\epsilon_1}, \ldots, \overline{B}_{d,\epsilon_d}).
\]
\end{enumerate}
\end{Claim}

\begin{proof}
(i) Clearly $\alpha$ is symmetric.
By Theorem~\ref{thm:cont:upper:envelope}, for any $\epsilon > 0$,
there are non-negative $F_{\infty}$-invariant continuous functions $u_1, u'_1, u_2, \ldots, u_d$ such that
\[
\Vert u_1 \Vert_{\sup} \leq \epsilon, \Vert u'_1 \Vert_{\sup} \leq \epsilon,  \Vert u_2 \Vert_{\sup} \leq \epsilon, \ldots, \Vert u_d \Vert_{\sup} \leq \epsilon
\]
and that 
$\overline{D}_1(\epsilon) := \overline{D}_1 + (0, u_1),$ $\overline{D}'_1(\epsilon)  := \overline{D}'_1 + (0, u'_1),$ $\overline{D}_2(\epsilon)  := \overline{D}_2 + (0, u_2),$ 
$\ldots,$ $\overline{D}_d(\epsilon)  := \overline{D}_d + (0, u_d)$
are elements of $\aNef_{C^{\infty}}(X)_{\RR}$.
Then, by virtue of Claim~\ref{claim:prop:intersection:Div:nef:C:0:1},
\begin{multline*}
\alpha(a \overline{D}_1(\epsilon)+ b \overline{D}'_1(\epsilon), \overline{D}_2(\epsilon), \ldots, \overline{D}_d(\epsilon)) \\
= a \alpha(\overline{D}_1(\epsilon),  \overline{D}_2(\epsilon), \ldots, \overline{D}_d(\epsilon))
+ b \alpha(\overline{D}'_1(\epsilon), \overline{D}_2(\epsilon), \ldots, \overline{D}_d(\epsilon)).
\end{multline*}
Thus, using the continuity of $\avol$, we have the assertion of (i).

\medskip
(ii) We would like to show the following assertion by induction on $l$:
if $\overline{A}_{1,1},$  $\overline{A}_{1,-1},$  $\ldots,$ $\overline{A}_{l,1},$ $\overline{A}_{l,-1},$ $\overline{B}_{1,1},$ $\overline{B}_{1,-1},$ $\ldots,$ $\overline{B}_{l,1},$ 
$\overline{B}_{l,-1},$ $\overline{D}_{l+1},$ $\ldots,$ $\overline{D}_d
\in \aNef_{C^0}(X)_{\RR}$,
\[
A_{1,1}, A_{1,-1}, \ldots, A_{l,1}, A_{l,-1}, B_{1,1}, B_{1,-1},\ldots,B_{l,1},B_{l,-1}, D_{l+1},\ldots, D_d
\]
are ample
and $\overline{A}_{i,1} - \overline{A}_{i,-1} = \overline{B}_{i,1} - \overline{B}_{i,-1}$ for all $i=1,\ldots,l$, then
\begin{multline*}
\sum_{\epsilon_1, \ldots, \epsilon_l \in \{ \pm 1 \}} \epsilon_1 \cdots \epsilon_l \alpha(\overline{A}_{1,\epsilon_1}, \ldots, \overline{A}_{l,\epsilon_l}, \overline{D}_{l+1}, \ldots, \overline{D}_{d}) = \\
\sum_{\epsilon_1, \ldots, \epsilon_l \in \{ \pm 1 \}} \epsilon_1 \cdots \epsilon_l \alpha(\overline{B}_{1,\epsilon_1}, \ldots, \overline{B}_{l,\epsilon_l},\overline{D}_{l+1}, \ldots, \overline{D}_{d}).
\end{multline*}
First we consider the case where $l=1$.
As $\overline{A}_{1,1} + \overline{B}_{1,-1}  = \overline{A}_{1,-1}  + \overline{B}_{1,1}$, by (i),
we have
\begin{multline*}
\alpha(\overline{A}_{1,1},\overline{D}_{2}, \ldots, \overline{D}_{d}) + \alpha(\overline{B}_{1,-1},\overline{D}_{2}, \ldots, \overline{D}_{d}) \\
=
\alpha(\overline{A}_{1,-1},\overline{D}_{2}, \ldots, \overline{D}_{d}) + \alpha(\overline{B}_{1,1},\overline{D}_{2}, \ldots, \overline{D}_{d}),
\end{multline*}
as required. In general, by using the case $l=1$,
\begin{multline*}
\alpha(\overline{A}_{1,\epsilon_1},\ldots,\overline{A}_{l,\epsilon_l}, \overline{A}_{l+1,1}, \overline{D}_{l+2}, \ldots, \overline{D}_{d}) \\
\hspace{-5em} - \alpha(\overline{A}_{1,\epsilon_1},\ldots,\overline{A}_{l,\epsilon_l}, \overline{A}_{l+1,-1},\overline{D}_{l+2}, \ldots, \overline{D}_{d}) \\
\hspace{5em} =
\alpha(\overline{A}_{1,\epsilon_1},\ldots,\overline{A}_{l,\epsilon_l}, \overline{B}_{l+1,1}, \overline{D}_{l+2}, \ldots, \overline{D}_{d}) \\
- \alpha(\overline{A}_{1,\epsilon_1},\ldots,\overline{A}_{l,\epsilon_l}, \overline{B}_{l+1,-1},\overline{D}_{l+2}, \ldots, \overline{D}_{d}).
\end{multline*}
Thus, by the hypothesis of induction, if we set
\[
\begin{cases}
A = \sum_{\epsilon_1, \ldots, \epsilon_{l+1} \in \{ \pm 1 \}} \epsilon_1 \cdots \epsilon_{l+1} \alpha(\overline{A}_{1,\epsilon_1}, \ldots, \overline{A}_{l+1,\epsilon_{l+1}}, \overline{D}_{l+2}, \ldots, \overline{D}_{d}), \\
B = \sum_{\epsilon_1, \ldots, \epsilon_{l+1} \in \{ \pm 1 \}} \epsilon_1 \cdots \epsilon_{l+1} \alpha(\overline{B}_{1,\epsilon_1}, \ldots, \overline{B}_{l+1,\epsilon_{l+1}},\overline{D}_{l+2}, \ldots, \overline{D}_{d}),
\end{cases}
\]
then
{\footnotesize
\begin{align*}
A & =  \sum \epsilon_1 \cdots \epsilon_{l} \left(\alpha(\overline{A}_{1,\epsilon_1}, \ldots, \overline{A}_{l+1,1}, \overline{D}_{l+2}, \ldots, \overline{D}_{d}) 
- \alpha(\overline{A}_{1,\epsilon_1}, \ldots, \overline{A}_{l+1,-1}, \overline{D}_{l+2}, \ldots, \overline{D}_{d}) \right) \\
& = \sum \epsilon_1 \cdots \epsilon_{l} \left(\alpha(\overline{A}_{1,\epsilon_1}, \ldots, \overline{B}_{l+1,1}, \overline{D}_{l+2}, \ldots, \overline{D}_{d}) 
- \alpha(\overline{A}_{1,\epsilon_1}, \ldots, \overline{B}_{l+1,-1}, \overline{D}_{l+2}, \ldots, \overline{D}_{d}) \right) \\
& = \sum \epsilon_1 \cdots \epsilon_{l} \left(\alpha(\overline{B}_{1,\epsilon_1}, \ldots, \overline{B}_{l+1,1}, \overline{D}_{l+2}, \ldots, \overline{D}_{d}) 
- \alpha(\overline{B}_{1,\epsilon_1}, \ldots, \overline{B}_{l+1,-1}, \overline{D}_{l+2}, \ldots, \overline{D}_{d}) \right) \\
& = B.
\end{align*}
}
\end{proof}

Note that any element of $\aDiv^{\Nef}_{C^0}(X)_{\RR}$ can be written by a form $\overline{A} - \overline{B}$ such that
$\overline{A}, \overline{B} \in \aNef_{C^0}(X)_{\RR}$ and $A, B$ are ample. Thus
the existence of the symmetric multi-linear map follows from the above claim.
\end{proof}

\query{Add Remark~6.4.3\\ (21/April/2010)}
\begin{Remark}
\label{rem:nef:and:big:self:intersection}
By our construction,
$\avol(\overline{D}) = \adeg(\overline{D}^{d})$ for $\overline{D} \in \aNef_{C^0}(X)_{\RR}$.
In particular, $\overline{D}$ is big if and only if
$\adeg(\overline{D}^{d}) > 0$.
This is however a non-trivial fact for $\overline{D} \in \aNef_{C^{\infty}}(X)_{\RR}$
(cf. \cite[Corollary~5.5]{MoCont} and Claim~\ref{claim:prop:intersection:Div:nef:C:0:1}).
\end{Remark}

\subsection{Asymptotic multiplicity}
\setcounter{Theorem}{0}
\label{subsec:asym:mult}
\query{Add explanations of the  multiplicity of Cartier divisors (28/April/2010)}%
First we recall the multiplicity of Cartier divisors.
Let $(R,\mathfrak{m})$ be a $d$-dimensional noetherian local domain with $d \geq 1$.
For a non-zero element $a$ of $R$, we denote 
the multiplicity of a local ring $(R/aR, \mathfrak{m}(R/aR))$ by $\mult_{\mathfrak{m}}(a)$,
that is,
\[
\mult_{\mathfrak{m}}(a) := 
\begin{cases} {\displaystyle \lim_{n\to\infty} \frac{ \length_R((R/aR)/\mathfrak{m}^{n+1}(R/aR))}{n^{d-1}/(d-1)!}} & \text{if $a \not\in R^{\times}$}, \\
0 & \text{if $a \in R^{\times}$}.
\end{cases}
\]
Note that $\mult_\mathfrak{m}(a) \in \ZZ_{\geq 0}$.
Moreover, if $R$ is regular, then 
\[
\mult_\mathfrak{m}(a) = \max \{ i \in \ZZ_{\geq 0} \mid a \in \mathfrak{m}^i \}.
\]

Let $a$ and $b$ be non-zero elements of $R$.
By applying  \cite[Theorem~14.6]{MatsumuraComRingJ} to the following exact sequence:
\[
0 \longrightarrow R/aR \overset{\times b}{\longrightarrow}  R/abR \longrightarrow R/bR \longrightarrow 0,
\]
we can see that
\[
\mult_\mathfrak{m}(ab) = \mult_\mathfrak{m}(a) +\mult_\mathfrak{m}(b).
\]

Let $K$ be the quotient field of $R$. For $\alpha \in K^{\times}$, we set
$\alpha = a/b$ ($a, b \in R \setminus \{ 0 \}$).
Then $\mult_\mathfrak{m}(a) - \mult_\mathfrak{m}(b)$ does not depend on the expression $\alpha = a/b$.
Indeed, if $\alpha = a/b = a'/b'$, then, by the previous formula,
\[
\mult_\mathfrak{m}(a) + \mult_\mathfrak{m}(b') = \mult_\mathfrak{m}(ab') = \mult_\mathfrak{m}(a'b) = \mult_\mathfrak{m}(a') + \mult_\mathfrak{m}(b).
\]
Thus we define $\mult_\mathfrak{m}(\alpha)$ to be 
\[
\mult_\mathfrak{m}(\alpha) := \mult_\mathfrak{m}(a) - \mult_\mathfrak{m}(b).
\]
Note that the map 
\[
\mult_\mathfrak{m} : K^{\times} \to \ZZ
\]
is a homomorphism, that is,
$\mult_\mathfrak{m}(\alpha\beta) = \mult_\mathfrak{m}(\alpha) + \mult_\mathfrak{m}(\beta)$ for $\alpha, \beta \in K^{\times}$.

\bigskip
For $x \in X$, we define a homomorphism
\[
\mult_x : \Div(X) \to \ZZ
\]
to be $\mult_x(D) := \mult_{\mathfrak{m}_x}(f_x)$,
where $\mathfrak{m}_x$ is the maximal ideal of $\OO_{X,x}$ and $f_x$ is a local equation of $D$ at $x$.
Note that this definition does not depend on the choice of the local equation $f_x$.
By abuse of notation,
the natural extension
\[
\mult_x \otimes \operatorname{id}_{\RR} : \Div(X)_{\RR} \to \RR
\]
is also denoted by $\mult_x$.

Let $\overline{D}$ be an arithmetic $\RR$-divisor of $C^0$-type.
For $x \in X$, we define $\nu_x(\overline{D})$
to be
\[
\nu_x(\overline{D}) := \begin{cases}
\inf \{ \mult_x(D + (\phi)) \mid \phi \in \aH(X, \overline{D}) \setminus \{ 0 \} \} & \text{if $\aH(X, \overline{D}) \not= \{ 0 \}$}, \\
\infty & \text{if $\aH(X, \overline{D}) = \{ 0 \}$}
\end{cases}
\]
%
%\query{Add ``(for the definition of $\mult_x$, see Appendix)'' (22/April/2010)}%
%
%(for the definition of $\mult_x$, see Appendix).
We call $\nu_x(\overline{D})$ the {\em multiplicity at $x$} of the complete arithmetic linear series of
$\overline{D}$.
First let us see the following lemma.

\begin{Lemma}
\label{lem:sum:mu:arith:div}
Let $\overline{D}$ and $\overline{E}$ be arithmetic $\RR$-divisors of $C^0$-type.
Then we have the following:
\begin{enumerate}
\renewcommand{\labelenumi}{(\arabic{enumi})}
\item
If $\overline{D}$ is effective, then $\nu_x(\overline{D}) \leq \mult_x(D)$.

\item
$\nu_x(\overline{D} + \overline{E}) \leq \nu_x(\overline{D}) + \nu_x(\overline{E})$.

\item
If $\overline{D} \leq \overline{E}$, then $\nu_x(\overline{E}) \leq \nu_x(\overline{D}) + \mult_{x}(E - D)$.

\item
For $\phi \in \Rat(X)^{\times}$, $\nu_x(\overline{D} + \widehat{(\phi)}) = \nu_x(\overline{D})$.
\end{enumerate}
\end{Lemma}

\begin{proof}
(1) is obvious.

(2) If either $\aH(X, \overline{D}) = \{ 0 \}$ or $\aH(X, \overline{E}) = \{ 0 \}$,
then the assertion is obvious, so that we may assume that
$\aH(X, \overline{D}) \not= \{ 0 \}$ and  $\aH(X, \overline{E}) \not= \{ 0 \}$.
Let $\phi \in \aH(X, \overline{D}) \setminus \{ 0 \}$ and $\psi \in \aH(X, \overline{E}) \setminus \{ 0 \}$.
Then, as
\[
\widehat{(\phi\psi)} + \overline{E} + \overline{D} = \widehat{(\phi)} + \overline{D} + \widehat{(\psi)} + \overline{E} \geq 0,
\]
we have $\phi\psi \in \aH(X, \overline{D} + \overline{E}) \setminus \{ 0 \}$.
Thus
\[
\nu_x(\overline{D} + \overline{E}) \leq \mult_{x} ((\phi\psi) + D + E) = \mult_x((\phi) + D) + \mult_x((\psi) + E),
\]
which implies (2).

(3)  If we set $\overline{F} = \overline{E} - \overline{D}$, then, by (1) and (2),
\[
\nu_x(\overline{E}) = \nu_x(\overline{D} + \overline{F}) \leq \nu_x(\overline{D}) + \nu_x(\overline{F}) \leq \nu_x(\overline{D}) + \mult_x(F).
\]

(4) Let 
$\alpha : H^0(X, D + (\phi)) \to H^0(X, D)$
be the natural isomorphism 
given by $\alpha(\psi) = (\phi\psi)$. 
Note that $(\overline{D} + \widehat{(\phi)}) + \widehat{(\psi)} = \overline{D} + \widehat{(\alpha(\psi))}$.
Thus we have (4).
\end{proof}

\bigskip
\query{Rewrite several parts from here to the end of this subsection (06/June/2010)}%
We set
\[
N(\overline{D}) = \left\{ n \in \ZZ_{>0} \mid \aH(X, n \overline{D}) \not= \{ 0 \} \right\}.
\]
Note that $N(\overline{D})$ is a sub-semigroup of $\ZZ_{>0}$, that is,
if $n, m \in N(\overline{D})$, then $n + m \in N(\overline{D})$.
We assume that $N(\overline{D}) \not= \emptyset$.
For $x \in X$, we define $\mu_x(\overline{D})$ to be 
\[
\mu_x(\overline{D}) := \inf \left\{ \mult_x(D + (1/n)(\phi)) \mid n \in N(\overline{D}), \ \phi \in
\aH(X, n\overline{D}) \setminus \{ 0 \} \right\},
\] 
which is called the {\em asymptotic multiplicity at $x$}
of the complete arithmetic $\QQ$-linear series of $\overline{D}$.

We can see that
\[
\mu_x(\overline{D}) = \inf \left\{\left.  \frac{\nu_x(n\overline{D})}{n} \ \right|\  n \in N(\overline{D}) \right\}.
\]
Indeed, an inequality
$\mu_x(\overline{D}) \leq \nu_x(n\overline{D})/n$ for $n \in N(\overline{D})$ is obvious, so that
$\mu_x(\overline{D}) \leq \inf \left\{ \nu_x(n\overline{D})/n \mid  n \in N(\overline{D}) \right\}$. 
Moreover, for $n \in N(\overline{D})$ and $\phi \in \aH(X, n\overline{D}) \setminus \{ 0 \}$,
\[
\inf \left\{\left.  \frac{\nu_x(n\overline{D})}{n} \ \right|\  n \in N(\overline{D}) \right\}
\leq \frac{\nu_x(n\overline{D})}{n} \leq \mult_x(D + (1/n)(\phi))
\]
holds, and hence we have the converse inequality.

By the above lemma, 
\[
\nu_x((n+m)\overline{D}) \leq \nu_x(n\overline{D}) + \nu_x(m\overline{D})
\]
for all $n, m \in N(\overline{D})$.
Thus, if $\ah(\overline{D}) \not= \{ 0 \}$ (i.e., $N(\overline{D}) = \ZZ_{>0}$), then
\[
\lim_{n\to\infty} \frac{\nu_x(n\overline{D})}{n} = \inf \left\{\left.  \frac{\nu_x(n\overline{D})}{n} \ \right|\  n > 0 \right\}.
\]

\begin{Proposition}
\label{prop:mu:basic}
Let $\overline{D}$ and $\overline{E}$ be arithmetic $\RR$-divisors of $C^0$-type such that
$N(\overline{D}) \not= \emptyset$ and $N(\overline{E}) \not= \emptyset$.
Then we have the following:
\begin{enumerate}
\renewcommand{\labelenumi}{(\arabic{enumi})}
\item
$\mu_x(\overline{D} + \overline{E}) \leq \mu_x(\overline{D}) + \mu_x(\overline{E})$.

\item
If $\overline{D} \leq \overline{E}$, then $\mu_x(\overline{E}) \leq \mu_x(\overline{D}) + \mult_{x}(E - D)$.

\item
$\mu_x(\overline{D} + \widehat{(\phi)}) = \mu_x(\overline{D})$ for $\phi \in \Rat(X)^{\times}$.

\item
$\mu_x(a\overline{D}) = a \mu_x(\overline{D})$ for $a \in \QQ_{>0}$.
\end{enumerate}
\end{Proposition}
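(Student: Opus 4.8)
The plan is to argue directly from the description
\[
\mu_x(\overline{D}) = \inf \left\{ \mult_x(D + (1/n)(\phi)) \mid n \in N(\overline{D}),\ \phi \in \aH(X, n\overline{D}) \setminus \{ 0 \} \right\}
\]
obtained just above the statement, rather than from $\mu_x(\overline{D}) = \inf_{n\in N(\overline{D})} \nu_x(n\overline{D})/n$; the point is that an infimum of a sum of two families is in general only $\ge$ the sum of the two infima, so (1) does not formally reduce to Lemma~\ref{lem:sum:mu:arith:div}(2), whereas the section-level description bypasses this. The only ingredients needed are: $\mult_x : \Div(X)_{\RR}\to\RR$ is a homomorphism; $N(\overline{D})$ is a sub-semigroup of $\ZZ_{>0}$; $\widehat{(\phi^k)} = k\widehat{(\phi)}$ and $\widehat{(\phi\psi)} = \widehat{(\phi)} + \widehat{(\psi)}$; and, exactly as already used in the proof of Lemma~\ref{lem:sum:mu:arith:div}, $\phi \in \aH(X, n\overline{D})$ forces $\phi^k \in \aH(X, kn\overline{D})$, and $\phi \in \aH(X, n\overline{D})$, $\psi \in \aH(X, n\overline{E})$ force $\phi\psi \in \aH(X, n(\overline{D}+\overline{E}))$ (the divisor parts are clear and the norm parts are submultiplicativity of $\Vert\cdot\Vert$ under powers and products).

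For (2), fix $\epsilon>0$ and choose $n\in N(\overline{D})$, $\phi\in\aH(X,n\overline{D})\setminus\{0\}$ with $\mult_x(D+(1/n)(\phi)) < \mu_x(\overline{D})+\epsilon$. From $\overline{D}\le\overline{E}$ we get $n\overline{D}\le n\overline{E}$, hence $\aH(X,n\overline{D})\subseteq\aH(X,n\overline{E})$, so $\phi\in\aH(X,n\overline{E})\setminus\{0\}$ and $n\in N(\overline{E})$; writing $E+(1/n)(\phi) = (E-D) + (D+(1/n)(\phi))$ and using additivity of $\mult_x$,
\[
\mu_x(\overline{E}) \le \mult_x\left(E+(1/n)(\phi)\right) = \mult_x(E-D) + \mult_x\left(D+(1/n)(\phi)\right) < \mu_x(\overline{D}) + \mult_x(E-D) + \epsilon ,
\]
and letting $\epsilon\to 0$ gives (2). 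For (1), pick near-optimal data $(n,\phi)$ for $\overline{D}$ and $(m,\psi)$ for $\overline{E}$ in the same way. Then $\phi^m\psi^n\in\aH(X, nm(\overline{D}+\overline{E}))\setminus\{0\}$, so $nm\in N(\overline{D}+\overline{E})$ (in particular $N(\overline{D}+\overline{E})\ne\emptyset$), and since $(1/nm)(\phi^m\psi^n) = (1/n)(\phi) + (1/m)(\psi)$ we have $(D+E)+(1/nm)(\phi^m\psi^n) = \big(D+(1/n)(\phi)\big) + \big(E+(1/m)(\psi)\big)$; applying $\mult_x$ yields $\mu_x(\overline{D}+\overline{E}) < \mu_x(\overline{D}) + \mu_x(\overline{E}) + 2\epsilon$, whence (1).

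For (3), $n(\overline{D}+\widehat{(\phi)}) = n\overline{D} + \widehat{(\phi^n)}$, so Lemma~\ref{lem:sum:mu:arith:div}(4) applied to $n\overline{D}$ and $\phi^n$ gives $\nu_x(n(\overline{D}+\widehat{(\phi)})) = \nu_x(n\overline{D})$ for each $n$, while the bijection $\psi\mapsto\phi^n\psi$ shows $N(\overline{D}+\widehat{(\phi)}) = N(\overline{D})$; hence the defining infima coincide. For (4), write $a = p/q$ with $p,q\in\ZZ_{>0}$. First, using $\phi\mapsto\phi^q$ one checks that the infimum defining $\mu_x(\overline{D})$ is unchanged if $n$ is allowed to range over all positive rationals $n$ with $\aH(X, n\overline{D})\ne\{0\}$ (the new terms are bounded below by $\mu_x(\overline{D})$, and the integers are among the admissible values). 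Then the identity $\mult_x\!\left(aD + (1/n)(\phi)\right) = a\,\mult_x\!\left(D + (1/na)(\phi)\right)$ for $\phi\in\aH(X, na\overline{D})\setminus\{0\}$ identifies $\mu_x(a\overline{D})$ with $a\mu_x(\overline{D})$: the inequality $\ge$ is immediate from this identity together with the extended description of $\mu_x(\overline{D})$, and the inequality $\le$ follows by taking, for near-optimal $(m,\phi)$ for $\overline{D}$, the datum $(qm,\phi^p)$ for $a\overline{D}$, using $aD + (1/qm)(\phi^p) = a\big(D + (1/m)(\phi)\big)$.

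The computations are routine. The only place needing attention is checking that the auxiliary sections produced in (1) and (4) — namely $\phi^m\psi^n$ and $\phi^p$ — genuinely lie in the prescribed $\aH$-groups, that is, that the divisor inequalities and the sup-norm bounds $\Vert\phi^k\Vert \le \Vert\phi\Vert^k$ and $\Vert\phi\psi\Vert_{g_1+g_2}\le\Vert\phi\Vert_{g_1}\Vert\psi\Vert_{g_2}$ both hold, and that in (4) the passage through positive rationals correctly relates $N(a\overline{D})$ to $N(\overline{D})$; I expect this bookkeeping, rather than any conceptual difficulty, to take the most care.
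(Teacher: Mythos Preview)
Your proof is correct. The approach, however, differs from the paper's in its overall architecture.

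The paper proves (4) first, handling the integer case by the same $\phi\mapsto\phi^a$ trick you use, and then deducing the rational case from the integer one via $q\,\mu_x(a\overline{D}) = \mu_x(qa\,\overline{D}) = qa\,\mu_x(\overline{D})$ (no need for your auxiliary ``rational $n$'' extension of the infimum). With (4) in hand, the paper replaces $\overline{D},\overline{E}$ by suitable integer multiples so that $\ah(\overline{D})\ne 0$ and $\ah(\overline{E})\ne 0$, i.e.\ $N(\overline{D})=N(\overline{E})=\ZZ_{>0}$. In that regime $\mu_x$ is the \emph{limit} of $\nu_x(n\,\cdot\,)/n$, not merely the infimum, and then (1)--(3) are immediate consequences of parts (2), (3), (4) of Lemma~\ref{lem:sum:mu:arith:div} applied to $n\overline{D}$, $n\overline{E}$ and passed to the limit. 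So your remark that ``(1) does not formally reduce to Lemma~\ref{lem:sum:mu:arith:div}(2)'' is only accurate for the naive attempt via infima; the paper's route through (4) and the limit description is precisely how the reduction is carried out.

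What your approach buys is that (1)--(3) are established directly from the section-level infimum without any preliminary reduction: the $\phi^m\psi^n$ device in (1) synchronizes the denominators and avoids the inf-of-sums issue entirely. What the paper's approach buys is a cleaner (4) and a uniform treatment of (1)--(3) as one-line consequences of the lemma. Either way the bookkeeping you flag (norm submultiplicativity, divisor inequalities for $\phi^m\psi^n$ and $\phi^p$) is routine and goes through as you indicate.
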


\begin{proof}
First let us see (4).
We assume that $a \in \ZZ_{>0}$.
Let $n \in N(\overline{D})$ and $\phi \in \aH(n\overline{D}) \setminus \{ 0 \}$.
Then $\phi^a \in \aH(n(a \overline{D})) \setminus \{ 0 \}$. Thus
\[
\mu_{x}(a\overline{D}) \leq \mult_{x} (aD + (1/n)(\phi^a)) = a \mult_{x} (D + (1/n)(\phi)),
\]
which yields $\mu_{x}(a\overline{D}) \leq a \mu_{x}(\overline{D})$.
Conversely let $n \in N(a\overline{D})$ and $\psi \in \aH(n(a\overline{D})) \setminus \{ 0 \}$.
Then
\[
\mu_x(\overline{D}) \leq \mult_{x} (D + (1/na)(\psi)) = (1/a) \mult_{x} (a D + (1/n)(\psi)),
\]
and hence $\mu_x(\overline{D}) \leq (1/a) \mu_{x}(a \overline{D})$.
Thus (4) follows in the case where $a \in \ZZ_{>0}$.

In general, we choose a positive integer $m$ such that $ma \in \ZZ_{>0}$. Then, by the previous observation,
\[
m \mu_{x}(a \overline{D}) = \mu_x(ma\overline{D}) = ma \mu_x(\overline{D}),
\]
as required.

By (4), we may assume that $\ah(\overline{D}) \not= 0$ and $\ah(\overline{E}) \not= 0$ in order to see
(1), (2) and (3), so that 
(1), (2) and (3) follow from (2), (3) and (4) in Lemma~\ref{lem:sum:mu:arith:div} respectively.
\end{proof}

Finally we consider the vanishing result of the asymptotic multiplicity for a nef and big arithmetic $\RR$-divisor.

\begin{Proposition}
\label{prop:vanish:mu:nef:big}
If $\overline{D}$ is a nef and big arithmetic $\RR$-divisor of $C^{0}$-type, 
then $\mu_x(\overline{D}) = 0$ for all $x \in X$.
\end{Proposition}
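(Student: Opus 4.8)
The plan is to establish $\mu_x(\overline{D}) = 0$ by a three-step reduction, passing from ample to adequate to general nef and big $\overline{D}$ of $C^{0}$-type. Note first that $\mu_x(\overline{D}) \geq 0$ always, since $\mult_x$ is non-negative on effective divisors, and that $N(\overline{D}) \neq \emptyset$ because $\avol(\overline{D}) > 0$; so in each step it is enough to produce upper bounds for $\mu_x(\overline{D})$ tending to $0$. The whole argument is a formal manipulation of the sub-additivity and monotonicity properties of $\mu_x$ recorded in Proposition~\ref{prop:mu:basic}, combined with the positivity results of Section~\ref{sec:positivity:arith:div}.

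\textbf{Step 1 (the ample case).} If $\overline{D}$ is an ample arithmetic $\QQ$-divisor, then for $n$ sufficiently divisible $\overline{\OO}(n\overline{D})$ is an ample $C^{\infty}$-hermitian invertible sheaf generated by its global sections of norm $\leq 1$; hence for every $x$ there is $\phi \in \aH(X, n\overline{D}) \setminus \{0\}$ with $\mult_x(nD + (\phi)) = 0$, so $\nu_x(n\overline{D}) = 0$ and $\mu_x(\overline{D}) = 0$. For a general ample arithmetic $\RR$-divisor $\overline{D}$, I would use Lemma~\ref{lem:Weil:leq:Cartier} and Lemma~\ref{lem:diff:effective:arith:div} to write $\overline{D} = a_1 \overline{A}_1 + \cdots + a_l \overline{A}_l - a_1 \overline{B}_1 - \cdots - a_l \overline{B}_l$ with $\overline{A}_i, \overline{B}_i$ effective arithmetic $\QQ$-divisors of $C^{\infty}$-type, and then, for small positive rationals $\delta_i, \delta'_i$, put $\overline{D}' = \sum_i (a_i - \delta_i)\overline{A}_i - \sum_i (a_i + \delta'_i)\overline{B}_i$; by Proposition~\ref{prop:ample:RR:div} one can choose the $\delta_i, \delta'_i$ so that $\overline{D}'$ is an ample arithmetic $\QQ$-divisor with $\overline{D}' \leq \overline{D}$. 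Since $\mu_x(\overline{D}') = 0$ by the rational case, Proposition~\ref{prop:mu:basic}(2) gives $0 \leq \mu_x(\overline{D}) \leq \mu_x(\overline{D}') + \mult_x(D - D') = \sum_i \bigl(\delta_i \mult_x(A_i) + \delta'_i \mult_x(B_i)\bigr)$, and letting $\delta_i, \delta'_i \to 0$ yields $\mu_x(\overline{D}) = 0$.

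\textbf{Step 2 (the adequate case).} Write $\overline{D} = \overline{A} + (0, f)$ with $\overline{A}$ an ample arithmetic $\RR$-divisor and $f \geq 0$ an $F_{\infty}$-invariant continuous function; then $\overline{A} \leq \overline{D}$ and $A = D$ as $\RR$-divisors. Since an ample arithmetic $\RR$-divisor is big (Proposition~\ref{prop:ample:RR:div}(4)), $\mu_x(\overline{A})$ is defined and equals $0$ by Step 1, so Proposition~\ref{prop:mu:basic}(2) gives $0 \leq \mu_x(\overline{D}) \leq \mu_x(\overline{A}) + \mult_x(D - A) = 0$. \textbf{Step 3 (the general case).} Fix an effective ample arithmetic $\QQ$-divisor $\overline{A}$. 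Since $\overline{D}$ is big, Proposition~\ref{prop:equiv:big:regular} provides a positive integer $m$ and $\phi \in \Rat(X)^{\times}$ with $\overline{A} \leq m\overline{D} + \widehat{(\phi)} =: \overline{E}$; then $\overline{E}$ is nef, of $C^{0}$-type, and effective (as $\overline{E} \geq \overline{A} \geq (0,0)$). For each $\delta \in (0,1]$, Proposition~\ref{prop:ample:plus:nef:ample}(2) shows $\delta\overline{A} + (1-\delta)\overline{E}$ is adequate, and it satisfies $\delta\overline{A} + (1-\delta)\overline{E} \leq \overline{E}$; hence by Step 2 and Proposition~\ref{prop:mu:basic}(2), $\mu_x(\overline{E}) \leq \mu_x\bigl(\delta\overline{A} + (1-\delta)\overline{E}\bigr) + \delta\,\mult_x(E - A) = \delta\,\mult_x(E - A)$, so $\mu_x(\overline{E}) = 0$. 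Finally, Proposition~\ref{prop:mu:basic}(3) and (4) give $\mu_x(\overline{D}) = \tfrac{1}{m}\mu_x(m\overline{D}) = \tfrac{1}{m}\mu_x(m\overline{D} + \widehat{(\phi)}) = \tfrac{1}{m}\mu_x(\overline{E}) = 0$.

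The main obstacle I expect is Step 1: one must simultaneously keep the perturbed divisor $\overline{D}'$ ample, rational, and below $\overline{D}$, while controlling the error term $\mult_x(D - D')$, which is exactly where the convexity-type statements about ample arithmetic $\RR$-divisors in Proposition~\ref{prop:ample:RR:div} are needed, and the ample $\QQ$-divisor base case quietly uses global generation of ample arithmetic line bundles by small sections. Once Step 1 is in place, Steps 2 and 3 are routine, relying only on the bigness criterion Proposition~\ref{prop:equiv:big:regular}, the stability of ampleness (resp.\ adequateness) under adding nef divisors Proposition~\ref{prop:ample:plus:nef:ample}, and the elementary properties of $\mu_x$ in Proposition~\ref{prop:mu:basic}.
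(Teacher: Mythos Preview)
Your proof is correct and follows essentially the same three-step reduction (ample $\Rightarrow$ adequate $\Rightarrow$ general nef and big) as the paper, using the same lemmas and propositions at each stage. One tiny slip: in Step~1 the perturbations $\delta_i, \delta'_i$ should be small positive \emph{reals} chosen so that $a_i - \delta_i, a_i + \delta'_i \in \QQ$ (not ``positive rationals'', since if $a_i \notin \QQ$ a rational $\delta_i$ would not make $a_i - \delta_i$ rational).
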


\begin{proof}
Step 1 (the case where $\overline{D}$ is an ample arithmetic $\RR$-divisor) : 
Note that if $\overline{D}$ is an ample arithmetic $\QQ$-divisor,
then the assertion is obvious.
By using Lemma~\ref{lem:Weil:leq:Cartier} and Lemma~\ref{lem:diff:effective:arith:div},
there are $a_1, \ldots, a_l \in \RR$ and effective arithmetic $\QQ$-divisors
\[
\overline{A}_1, \ldots, \overline{A}_l, \overline{B}_1, \ldots, \overline{B}_l
\]
of $C^{\infty}$-type
such that
\[
\overline{D} = a_1 \overline{A}_1 + \cdots + a_l \overline{A}_l - a_1 \overline{B}_1 - \cdots - a_l \overline{B}_l.
\] 
Let us choose sufficiently small arbitrary positive numbers $\delta_1, \ldots, \delta_l, \delta'_1, \ldots, \delta'_l$ such that
$a_i - \delta_i, a_i + \delta'_i \in \QQ$ for all $i$. We set
\[
\overline{D}' = (a_1 - \delta_1) \overline{A}_1 + \cdots + (a_l - \delta_l) \overline{A}_l - (a_1+\delta'_1)  \overline{B}_1 - \cdots - (a_l + \delta'_l)\overline{B}_l.
\]
Then, $\overline{D}' \leq \overline{D}$ and $\overline{D}'$ is an ample arithmetic $\QQ$-divisor by Proposition~\ref{prop:ample:RR:div}.
By (2) in Proposition~\ref{prop:mu:basic},
\[
0 \leq \mu_x(\overline{D}) \leq \mu_x(\overline{D}') +  \mult_x(D - D') =  \sum (\delta_i\mult_x(A_i) + \delta'_i\mult_x(B_i))
\]
because $\mu_x(\overline{D}') = 0$.
Therefore, 
\[
0 \leq \mu_x(\overline{D}) \leq  \sum (\delta_i\mult_x(A_i) + \delta'_i\mult_x(B_i)),
\]
and hence $\mu_x(\overline{D}) = 0$.

Step 2  (the case where $\overline{D}$ is an adequate arithmetic $\RR$-divisor) : 
In this case, there is an ample arithmetic $\RR$-divisor $\overline{A}$ and a non-negative $F_{\infty}$-invariant continuous function $\phi$ on $X(\CC)$
such that $\overline{D} = \overline{A} + (0,\phi)$. 
By (2) in Proposition~\ref{prop:mu:basic},
\[
0 \leq \mu_x(\overline{D}) \leq \mu_x(\overline{A}) =0,
\]
as required.

Step 3 (general case) : 
Let $\overline{A}$ be an ample arithmetic $\QQ$-divisor.
Since $\overline{D}$ is big, by Proposition~\ref{prop:equiv:big:regular},
there are a positive integer $m$ and $\phi \in \Rat(X)^{\times}$ such that
$\overline{A} \leq m\overline{D} + \widehat{(\phi)}$. We set $\overline{E} = m \overline{D} +  \widehat{(\phi)}$.
Then $\overline{E}$ is nef.
Moreover, for $\delta \in (0,1]$, by Proposition~\ref{prop:ample:plus:nef:ample},
$\delta \overline{A} + (1 - \delta) \overline{E}$ is adequate 
and  $\delta \overline{A} + (1 - \delta) \overline{E} \leq \overline{E}$.
Hence
\[
\mu_x(\overline{E}) \leq \mu_x(\delta \overline{A} + (1 - \delta) \overline{E}) + \delta \mult_x(E - A) \leq \delta \mult_x(E - A),
\]
which implies that $\mu_x(\overline{E}) = 0$. Therefore, using (3) and (4) in Proposition~\ref{prop:mu:basic},
\[
\mu_x(\overline{D}) = \frac{1}{m} \mu_x(m\overline{D}) =  \frac{1}{m} \mu_x(\overline{E}) = 0.
\] 
\end{proof}

\subsection{Generalized Hodge index theorem for an arithmetic $\RR$-divisor}
\setcounter{Theorem}{0}

In this subsection, let us consider the following theorem, which is an $\RR$-divisor version of \cite[Corollary~6.4]{MoCont}:

\begin{Theorem}
\label{thm:gen:Hodge:index}
Let $\overline{D}$ be an arithmetic $\RR$-divisor of $(C^0 \cap \Tpsh)$-type on $X$.
If $D$ is nef on every fiber of  $X \to \Spec(\ZZ)$ 
\rom{(}i.e., $\deg(\rest{D}{C}) \geq 0$ for all $1$-dimensional closed vertical integral subschemes $C$ on $X$\rom{)},
then $\avol(\overline{D}) \geq \adeg (\overline{D}^d)$.
\end{Theorem}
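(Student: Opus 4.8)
The plan is to reduce the statement to the arithmetic generalized Hodge index theorem for $C^{\infty}$-hermitian $\QQ$-divisors, namely \cite[Corollary~6.4]{MoCont}, through a chain of approximations in which $\avol$ is controlled by its continuity (Theorem~\ref{thm:aDiv:aPic:R}) and the self-intersection $\adeg(\overline{D}^d)$ by the multilinearity and continuity of the intersection pairing on $\aDiv_{C^0}^{\Nef}(X)_{\RR}$ (Proposition~\ref{prop:intersection:Div:nef:C:0}); note that $\adeg(\overline{D}^d)$ makes sense because $\aDiv_{C^0 \cap \Tpsh}(X)_{\RR} \subseteq \aDiv_{C^0}^{\Nef}(X)_{\RR}$. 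Since $\avol(\overline{D}) \geq 0$ always, we may assume $\adeg(\overline{D}^d) > 0$.

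First I would reduce to the case in which $D$ is ample on the generic fiber. Fix an ample arithmetic divisor $\overline{A}$ of $C^{\infty}$-type whose curvature form $c_1(\overline{A})$ is positive. Because $\avol$ and the self-intersection both vary continuously along the segment $\overline{D} + \epsilon \overline{A}$ as $\epsilon \to 0$, it suffices to prove the inequality for $\overline{D}_{\epsilon} := \overline{D} + \epsilon \overline{A}$ for each $\epsilon \in \QQ_{>0}$. Now $D_{\epsilon} = D + \epsilon A$ is nef on every fiber and ample on the generic fiber (nef plus ample is ample), hence ample on $X(\CC)$, while $\overline{D}_{\epsilon}$ is again of $(C^0 \cap \Tpsh)$-type, since a local plurisubharmonic model of $g$ plus a local $C^{\infty}$ plurisubharmonic model of $\epsilon g_A$ is plurisubharmonic. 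So we may assume $D$ is ample on the generic fiber and nef on every fiber.

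Next I would reduce to $(C^{\infty} \cap \Tpsh)$-type and then to $\QQ$-divisors. Since $D$ is ample on $X(\CC)$, it carries a $D$-Green function of $C^{\infty}$-type with positive associated form, so Theorem~\ref{thm:cont:upper:envelope}(2) applies to the Green function $g$ of $\overline{D}$: there are continuous functions $v_n \geq 0$ with $\Vert v_n \Vert_{\sup} \to 0$ such that $\overline{D} + (0, v_n)$ is of $(C^{\infty} \cap \Tpsh)$-type. As $\avol(\overline{D} + (0, v_n)) \to \avol(\overline{D})$ and $\adeg((\overline{D} + (0, v_n))^d) \to \adeg(\overline{D}^d)$, it is enough to treat $\overline{D}$ of $(C^{\infty} \cap \Tpsh)$-type with $D$ ample on the generic fiber and nef on every fiber. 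Writing $\overline{D} = \sum_i a_i \overline{D}_i$ with $\overline{D}_i$ of $C^{\infty}$-type on Cartier divisors $D_i$ (Proposition~\ref{prop:green:irreducible:decomp}), I would pick $a_i^{(k)} \in \QQ$ with $a_i^{(k)} \to a_i$ and $\delta_k \in \QQ_{>0}$ with $\delta_k \to 0$ so that $\overline{E}_k := \sum_i a_i^{(k)} \overline{D}_i + \delta_k \overline{A}$ has $c_1(\overline{E}_k) \geq 0$ (possible since $c_1\bigl(\sum_i a_i^{(k)} \overline{D}_i\bigr) \to c_1(\overline{D}) \geq 0$ and $c_1(\overline{A})$ is a fixed positive form) and $E_k$ nef on every fiber (using that $D$, being ample on the generic fiber, is ample on all but finitely many closed fibers, and correcting on the remaining finitely many fibers by $\delta_k A$). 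Each $\overline{E}_k$ is then an arithmetic $\QQ$-divisor of $(C^{\infty} \cap \Tpsh)$-type with $E_k$ nef on every fiber, so \cite[Corollary~6.4]{MoCont} gives $\avol(\overline{E}_k) \geq \adeg(\overline{E}_k^d)$. Letting $k \to \infty$ and invoking the continuity of $\avol$ together with the multilinearity and continuity of the intersection pairing yields $\avol(\overline{D}) \geq \adeg(\overline{D}^d)$, and reversing the reduction of the first step (i.e.\ letting $\epsilon \to 0$) finishes the proof.

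The hard part is not the formal skeleton but the two technical points buried in the last step: arranging the rational approximation $\overline{E}_k$ so that its curvature form stays semipositive \emph{and} $E_k$ stays nef on every fiber --- in particular on the finitely many fibers where $D$ is only nef, which must be handled one at a time --- and checking that $\adeg(\overline{E}_k^d) \to \adeg(\overline{D}^d)$ under these perturbations, which one reads off from the explicit formula for the pairing against a $(0, \phi)$-term in Proposition-Definition~\ref{propdef:intersection:Div:C:infty} and Proposition~\ref{prop:intersection:Div:nef:C:0}. The genuine mathematical content, however, is entirely carried by \cite[Corollary~6.4]{MoCont}, which in turn rests on the arithmetic Hilbert--Samuel formula.
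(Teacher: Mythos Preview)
Your outline is correct and follows the same three–stage reduction as the paper: pass from $(C^0\cap\Tpsh)$-type to $(C^{\infty}\cap\Tpsh)$-type via Theorem~\ref{thm:cont:upper:envelope}(2), then to the strictly positive/ample situation by adding a small multiple of an ample $\overline{A}$, then to $\QQ$-coefficients and \cite[Corollary~6.4]{MoCont}, with $\avol$ and $\adeg(\,\cdot\,^d)$ controlled by continuity throughout.

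There is one organizational difference worth noting. For the passage from $C^0$ to $C^{\infty}$ you first add $\epsilon\overline{A}$, so that $D_\epsilon$ is ample on $X(\CC)$ and Theorem~\ref{thm:cont:upper:envelope} applies \emph{directly} to $\overline{D}_\epsilon$; the regularization then lands in $(C^{\infty}\cap\Tpsh)$-type and the intermediate case applies verbatim. The paper instead shifts by a fixed ample $\overline{B}$, regularizes $\overline{A}:=\overline{D}+\overline{B}$ to $\overline{A}'\in\aNef_{C^{\infty}}(X)_\RR$, and sets $\overline{D}'=\overline{A}'-\overline{B}$; the resulting $\overline{D}'$ is $C^{\infty}$ but not visibly of $\Tpsh$-type, so the paper controls $\adeg(\overline{D}'^d)$ and $\adeg(\overline{D}^d)$ via the binomial expansion in the nef pieces $\overline{A},\overline{A}',\overline{B}$ and the alternating-sum-of-volumes formula~\eqref{eqn:prop:intersection:Div:nef:C:0:1}. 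Your route keeps the $\Tpsh$ condition throughout, which is conceptually tidier; the paper's route avoids the extra $\epsilon\to 0$ limit but needs the detour through the nef decomposition.

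One simplification you are missing in your last step: once you have reduced to $(C^{\infty}\cap\Tpsh)$-type with $D$ ample on $X$ (nef on every fiber plus $\epsilon A$), the paper's Claim gives a much cleaner $\QQ$-perturbation than your fiber-by-fiber discussion. Write $D=\sum a_iA_i$ with $a_i>0$ and $A_i$ ample Cartier divisors on $X$, choose $h_i$ with $g=\sum a_ih_i$, and bump each $a_i$ to a nearby $a_i+\delta_i\in\QQ_{>0}$: then $\sum(a_i+\delta_i)A_i$ is automatically ample on $X$ (hence nef on every fiber) with no need to argue ``ample on all but finitely many closed fibers'', and strict positivity of $c_1$ (obtained by adding one more $\epsilon'\overline{A}$) survives small perturbations. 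This replaces the two ``hard technical points'' you flag at the end by a single line.
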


\begin{proof}
Let us begin with the following claim:

\begin{Claim}
We set $\overline{D} = (D, g)$.
If $\overline{D}$ is of $C^{\infty}$-type,
$D$ is ample \rom{(}that is, there are $a_1, \ldots, a_l \in \RR_{>0}$ and ample Cartier divisors $A_1, \ldots, A_l$ such that
$D = a_1 A_1 + \cdots + a_l A_l$\rom{)} and $dd^c([g]) + \delta_D$ is positive, then the assertion of the theorem holds.
\end{Claim}

\begin{proof}
By virtue of Proposition~\ref{prop:green:irreducible:decomp}, we can find  $F_{\infty}$-invariant locally integrable functions $h_1, \ldots, h_l$ such that
$h_i$ is an $A_i$-Green function $h_i$ of $C^{\infty}$-type for each $i$ and
$g = a_1 h_1 + \cdots + a_l h_l\ \aew$.
Let $\delta_1, \ldots, \delta_l$ be sufficiently small positive real numbers such that $a_1 + \delta_1, \ldots, a_l + \delta_l \in \QQ$.
We set 
\[
(D', g') = (a_1 + \delta_1)(A_1, h_1) + \cdots + (a_l + \delta_l)(A_l, h_l).
\]
Then  $D'$ is an ample $\QQ$-divisor and
\[
dd^c([g']) + \delta_{D'} = dd^c([g]) + \delta_{D} + \sum_{i=1}^l \delta_i (dd^c([h_i]) + \delta_{A_i}).
\]
is positive because 
$\delta_1, \ldots, \delta_l$ are sufficiently small. Therefore,
by \cite[Corollary~6.4]{MoCont}, we have $\avol(\overline{D}') \geq \adeg({\overline{D}'}^d)$, which implies the claim by using
the continuity of $\avol$ (cf. Theorem~\ref{thm:aDiv:aPic:R}).
\end{proof}

First we assume that $\overline{D}$ is of $C^{\infty}$-type.
Let $\overline{A} = (A, h)$ be an arithmetic divisor of $C^{\infty}$-type such that $A$ is ample and
$dd^c([h]) + \delta_A$ is positive.
Then, by using the same idea as in the proofs of Proposition~\ref{prop:ample:RR:div} and Proposition~\ref{prop:ample:plus:nef:ample},
we can see that $D + \epsilon A$ is ample for all $\epsilon > 0$,
Thus, by the above claim, $\avol(\overline{D} + \epsilon(A, h)) \geq \adeg ((\overline{D} + \epsilon(A, h))^d)$, and hence
the assertion follows by taking $\epsilon \to 0$.

\medskip
Finally we consider a general case. By Claim~\ref{claim:prop:intersection:Div:nef:C:0:0}, there is an ample arithmetic divisor $\overline{B}$ such that
$\overline{A} := \overline{D} + \overline{B} \in \aNef_{C^0}(X)_{\RR}$ and $A$ is ample.
Let $\epsilon$ be an arbitrary positive number. Then, by virtue of Theorem~\ref{thm:cont:upper:envelope},
we can find an $F_{\infty}$-invariant continuous function $u$ on $X(\CC)$ such that
$0 \leq u(x) \leq \epsilon$ for all $x \in X(\CC)$ and $\overline{A}'  := \overline{A} + (0, u) \in \aDiv_{C^{\infty} \cap \Tpsh}(X)_{\RR}$,
which means that
$\overline{A}' \in \aNef_{C^{\infty}}(X)_{\RR}$.
Note that
\[
\begin{cases}
{\displaystyle \adeg(\overline{D}^d) = \sum_{i=0}^d (-1)^{d-i} \binom{d}{i} \adeg(\overline{A}^i\cdot\overline{B}^{d-i})},\\
{\displaystyle \adeg({\overline{D}'}^d) = \sum_{i=0}^d (-1)^{d-i} \binom{d}{i} \adeg({\overline{A}'}^i\cdot\overline{B}^{d-i})},
\end{cases}
\]
where $\overline{D}'  := \overline{D} + (0, u)$.
By \eqref{eqn:prop:intersection:Div:nef:C:0:1},
$\adeg(\overline{A}^i\cdot\overline{B}^{d-i})$ and $\adeg({\overline{A}'}^i\cdot\overline{B}^{d-i})$ are given by an alternative sum of
volumes, so that, by the continuity of $\avol$, there is a constant $C$ such that $C$ does not depend on $\epsilon$ and that
\[
\left\vert \adeg({\overline{A}'}^i\cdot\overline{B}^{d-i}) - \adeg(\overline{A}^i\cdot\overline{B}^{d-i}) \right\vert \leq C\epsilon
\]
for all $i=0, \ldots, d$, and hence
\[
\left\vert \adeg({\overline{D}'}^d) - \adeg(\overline{D}^d) \right\vert \leq 2^d C\epsilon.
\]
On the other hand, by the continuity of $\avol$ again,
there is a constant $C'$ such that $C'$ does not depend on $\epsilon$ and that
\[
\left\vert \avol(\overline{D}') - \avol(\overline{D}) \right\vert \leq C'\epsilon.
\]
Therefore, by using the previous case,
\begin{align*}
\avol(\overline{D}) - \adeg(\overline{D}^d) & \geq \left(\avol(\overline{D}') - C' \epsilon\right) - \left(\adeg({\overline{D}'}^d) + 2^d C \epsilon\right) \\
& = \left(\avol(\overline{D}')  - \adeg({\overline{D}'}^d)\right) - (C' + 2^dC)\epsilon \geq - (C' + 2^dC)\epsilon.
\end{align*}
Thus the theorem follows because $\epsilon$ is an arbitrary positive number.
\end{proof}

\renewcommand{\theTheorem}{\arabic{section}.\arabic{Theorem}}
\renewcommand{\theClaim}{\arabic{section}.\arabic{Theorem}.\arabic{Claim}}
\renewcommand{\theequation}{\arabic{section}.\arabic{Theorem}.\arabic{Claim}}

\section{Limit of nef arithmetic $\RR$-divisors on arithmetic surfaces}
Let $X$ be a regular projective arithmetic surface and let $\TT$ be a type for Green functions on $X$ such that
$\Tpsh$ is a subjacent type of $\TT$.
The purpose of this section is to prove the following theorem.

\begin{Theorem}
\label{thm:limit:nef:div:surface}
Let $\{ \overline{M}_n = (M_n, h_n) \}_{n=0}^{\infty}$ be a sequence of
nef arithmetic $\RR$-divisors on $X$ with the following properties:
\begin{enumerate}
\renewcommand{\labelenumi}{(\alph{enumi})}
\item
There is an arithmetic divisor $\overline{D} = (D, g)$ of $\TT$-type such that $g$ is of upper bounded type and that
$\overline{M}_n \leq \overline{D}$ for all $n \geq 1$.

\item
There is a proper closed subset $E$ of $X$ such that $\Supp(D) \subseteq E$ and
$\Supp(M_n) \subseteq E$ for all $n \geq 1$.

\item
$\lim_{n\to\infty} \mult_{C}(M_n)$ exists for all $1$-dimensional closed integral  subschemes $C$ on $X$.

\item
$\limsup_{n\to\infty} (h_n)_{\rm can}(x)$ exists in $\RR$ for all $x \in X(\CC) \setminus E(\CC)$.
\end{enumerate}
Then there is a nef arithmetic $\RR$-divisor $\overline{M} = (M, h)$ on $X$ such that $\overline{M} \leq \overline{D}$,
\[
M = \sum_{C}\left( \lim_{n\to\infty} \mult_{C}(M_n)\right) C
\]
and that $\rest{h_{\rm can}}{X(\CC) \setminus E(\CC)}$ is the upper semicontinuous regularization of the function
given by 
$x \mapsto  \limsup_{n\to\infty}(h_n)_{\rm can}(x)$
over $X(\CC) \setminus E(\CC)$.
Moreover,
\[
\limsup_{n\to\infty} \adeg\left( \rest{\overline{M}_n}{C} \right) \leq \adeg\left( \rest{\overline{M}}{C} \right)
\]
holds for all $1$-dimensional closed integral subschemes $C$ on $X$.
\end{Theorem}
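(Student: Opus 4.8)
\emph{Overall approach.} The plan is to peel off from each $\overline{M}_n$ the poles coming from the finitely many one-dimensional components of $E$, thereby reducing everything to the behaviour of plurisubharmonic functions on the Riemann surface $X(\CC)$, and then to take an upper semicontinuous regularization of a $\limsup$. To set up: write the one-dimensional part of $E$ as $C_1,\dots,C_r$; since $\Supp(M_n)$ and $\Supp(D)$ lie in $E$ we have $M_n=\sum_i\mu_{n,i}C_i$ and $D=\sum_i d_iC_i$ with $\mu_{n,i}=\mult_{C_i}(M_n)\le d_i=\mult_{C_i}(D)$, and by (c) $m_i:=\lim_n\mu_{n,i}$ exists; put $M:=\sum_i m_iC_i$ (a Cartier divisor, $X$ being regular), so $M\le D$. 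Fix $F_\infty$-invariant $C^\infty$ Green functions $\gamma_i$ of the $C_i$ and set $\phi_n:=h_n-\sum_i\mu_{n,i}\gamma_i$, an everywhere-defined, real-valued, $F_\infty$-invariant function on $X(\CC)$ which is a Green function of $(\Tpsh_\RR+C^\infty)$-type for the zero divisor; locally $\phi_n=u_n-\sum_i\mu_{n,i}\psi_i$, where $u_n\in\Tpsh_\RR$ is the local plurisubharmonic part of $h_n$ with respect to local equations $f_i$ of the $C_i$ and $\psi_i$ is the local $C^\infty$ part of $\gamma_i$.

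\emph{Uniform estimates.} From $\overline{M}_n\le\overline{D}$, together with $\Tpsh$ being a subjacent type of $\TT$, one gets locally $u_n\le v-\sum_i(d_i-\mu_{n,i})\log|f_i|^2$ a.e., where $v$ is the (locally upper bounded) local part of $g$ and $d_i-\mu_{n,i}\ge0$ is bounded in $n$; Lemma~\ref{lem:subharmonic:local:bound} then gives that $\{u_n\}_n$, hence $\{\phi_n\}_n$, is locally uniformly bounded above on $X(\CC)$. For a lower bound at a point $y\in C_{i_0}(\CC)$ I would expand, using nefness of $\overline{M}_n$ and linearity of $\adeg(\rest{-}{C_{i_0}})$,
\[
0\le\adeg(\rest{\overline{M}_n}{C_{i_0}})=\sum_i\mu_{n,i}\adeg(\rest{(C_i,\gamma_i)}{C_{i_0}})+(1/2)\sum_{x\in C_{i_0}(\CC)}\phi_n(x);
\]
since the first sum is bounded in $n$ (coefficients converge) and $\phi_n$ is, by the previous step, uniformly bounded above at the finitely many points of $C_{i_0}(\CC)$, this forces $\phi_n(y)\ge -C$ with $C$ independent of $n$. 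Thus $\{\phi_n(y)\}_n$ is bounded for every $y\in E(\CC)$, while $\limsup_n\phi_n(y)$ is finite for $y\notin E(\CC)$ by (d).

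\emph{Construction and verification.} Locally, $\{u_n\}$ is a locally uniformly bounded-above family of plurisubharmonic functions, so $\tilde u:=(\limsup_n u_n)^*$ is plurisubharmonic and equals $\limsup_n u_n$ a.e.\ (Subsection~\ref{subsec:pluri:subharmonic}); by the two bounds above $\tilde u$ is real-valued everywhere. Setting $\phi:=\tilde u-\sum_i m_i\psi_i$ locally yields a global real-valued Green function of the zero divisor equal to $(\limsup_n\phi_n)^*$, and $h:=\phi+\sum_i m_i\gamma_i$ is then an $F_\infty$-invariant $M$-Green function of $\Tpsh_\RR$-type with $\overline{M}:=(M,h)\le\overline{D}$ (pass to the $\limsup$ in the a.e.\ inequality for $u_n$). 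On $X(\CC)\setminus E(\CC)$ the divisors $M_n,M$ restrict to $0$, so there $(h_n)_{\rm can}=u_n$ and $h_{\rm can}=\tilde u$, which is exactly the claimed description of $h_{\rm can}$, and $M=\sum_i m_iC_i$ by construction. Finally, for any one-dimensional closed integral $C$, writing $\overline{M}_n=\sum_i\mu_{n,i}(C_i,\gamma_i)+(0,\phi_n)$ and $\overline{M}=\sum_i m_i(C_i,\gamma_i)+(0,\phi)$ and using linearity of $\adeg(\rest{-}{C})$, the $(C_i,\gamma_i)$-terms converge, while $\adeg(\rest{(0,\phi_n)}{C})=(1/2)\sum_{x\in C(\CC)}\phi_n(x)$ and the pointwise inequality $\phi\ge\limsup_n\phi_n$ give $\limsup_n\adeg(\rest{\overline{M}_n}{C})\le\adeg(\rest{\overline{M}}{C})$; in particular $\adeg(\rest{\overline{M}}{C})\ge\limsup_n\adeg(\rest{\overline{M}_n}{C})\ge0$, so $\overline{M}$ is nef.

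\emph{Main obstacle.} The one genuinely non-formal point is the assertion that the limit Green function is of $\Tpsh_\RR$-type — equivalently, that $\tilde u$ acquires no $-\infty$ pole, so that the divisor part is exactly $M$ — at the points of $E(\CC)$; this fails without nefness and rests on the lower bound for $\phi_n(y)$ extracted from $\adeg(\rest{\overline{M}_n}{C_{i_0}})\ge0$ above, with the bookkeeping at a crossing point $y\in C_i\cap C_j$ requiring a few more terms but no new idea. A secondary point to check carefully is the compatibility of the various definitions of $\adeg(\rest{-}{C})$ when $C$ is or is not a component of $E$, realizing the $\Tpsh_\RR$-type divisors inside the type $C^0+\Tpsh_\RR-\Tpsh_\RR$ so that Section~\ref{subsec:intersection:arithmetic:R:divisor:curve} applies.
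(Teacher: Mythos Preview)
Your proof is correct and follows essentially the same approach as the paper's: both localize to subharmonic parts $u_n$, use Lemma~\ref{lem:subharmonic:local:bound} for the uniform upper bound, extract the crucial lower bound at points of $E(\CC)$ from nefness via $\adeg(\rest{\overline{M}_n}{C_i})\ge 0$ (this is the paper's Claim~\ref{claim:thm:limit:nef:div:surface:2}), and then pass to the upper semicontinuous regularization of the $\limsup$. Your use of global $C^\infty$ Green functions $\gamma_i$ in place of the paper's local equations and its moving lemma (Lemma~\ref{lem:Weil:leq:Cartier}) is a cosmetic repackaging; incidentally, on an arithmetic surface $C_i(\CC)\cap C_j(\CC)=\emptyset$ for $i\ne j$, so your ``crossing point'' caveat is unnecessary.
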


\begin{proof}
Let $C_1, \ldots, C_l$ be $1$-dimensional irreducible components of $E$.
Then there are $a_1, \ldots, a_l, a_{n1}, \ldots, a_{nl} \in \RR$ such that
\[
D = a_1 C_1 + \cdots + a_l C_l\quad\text{and}\quad M_n = a_{n1} C_1 + \cdots + a_{nl}C_l.
\]
We set $p_i = \lim_{n\to\infty} a_{ni}$ for $i=1, \ldots, l$ and
$M = p_1 C_1 + \cdots + p_l C_l$.

Let $U$ be a Zariski open set of $X$ over which we can find local equations $\phi_1, \ldots, \phi_l$ of
$C_1, \ldots, C_l$ respectively. Let
\[
h_n = u_n + \sum_{i=1}^l (-a_{ni}) \log  \vert \phi_i \vert^2 \ \aew\quad\text{and}\quad
g = v +  \sum_{i=1}^l (-a_{i}) \log  \vert \phi_i \vert^2\ \aew
\]
be the local expressions of $h_n$ and $g$ with respect to $\phi_1, \ldots, \phi_l$,
where $u_n \in \Tpsh_{\RR}$ and $v$ is locally bounded above.

\begin{Claim}
$\{ u_n \}_{n=0}^{\infty}$ is locally uniformly bounded above, that is, for each point $x \in U(\CC)$,
there are an open neighborhood $V_x$ of $x$ and a constant $M_x$ such that
$u_n(y) \leq M_x$ for all $y \in V_x$ and $n \geq 0$.
\end{Claim}

\begin{proof}
Since $h_n \leq g\ \aew$,  
we have 
\[
u_n \leq v - \sum_{i=1}^n (a_i - a_{ni}) \log \vert \phi_i \vert^2\ \aew
\]
over $U(\CC)$.
If $x \not\in C_1(\CC) \cup \cdots \cup C_l(\CC)$, then
$\phi_i(x) \not= 0$ for all $i$. Thus, as 
\[
v- \sum_{i=1}^n (a_i - a_{ni}) \log \vert \phi_i \vert^2
\]
is locally bounded  above, 
the assertion follows from
Lemma~\ref{lem:fqpssh:ineq:ae}.

Next we assume that $x \in C_1(\CC) \cup \cdots \cup C_l(\CC)$.
Clearly we may assume $x \in C_1(\CC)$.
Note that $C_i(\CC) \cap C_j(\CC) = \emptyset$ for $i \not= j$.
Thus $\phi_1(x) = 0$ and $\phi_i(x) \not= 0$ for all $i \geq 2$.
Therefore, we can find an open neighborhood $V_x$ of $x$ and
a constant $M'_x$ such that $\vert \phi_1 \vert < 1$ on $V_x$ and
\[
u_n \leq M'_x - (a_1 - a_{n1}) \log \vert \phi_1 \vert^2\quad\aew
\]
over $V_x$ for all $n \geq 1$. 
Moreover, we can also find a positive constant $M''$ such that
$a_1 - a_{n1} \leq M''$ for all $n \geq 1$, so that
\[
u_n \leq M'_x - M'' \log \vert \phi_1 \vert^2\quad\aew
\]
holds over $V_x$.
Thus the claim follows from
Lemma~\ref{lem:subharmonic:local:bound}. 
\end{proof}

We set $u(x) := \limsup_{n\to\infty} u_n(x)$ for $x \in U(\CC)$. Note that $u(x) \in \{-\infty\} \cup \RR$.
Let $\tilde{u}$ be the upper semicontinuous regularization of $u$.
Then, as $u_n$ is subharmonic for all $n \geq 1$, by the above claim,
$\tilde{u}$ is also subharmonic on $U(\CC)$  (cf. Subsection~\ref{subsec:pluri:subharmonic}). 

\begin{Claim}
\label{claim:thm:limit:nef:div:surface:2}
$\tilde{u}(x) \not= -\infty$ for all $x \in U(\CC)$.
\end{Claim}

\begin{proof}
If $x \not\in C_1(\CC) \cup \cdots \cup C_l(\CC) = E(\CC)$,
then $\phi_i(x) \not= 0$ for all $i$.
Note that  $\limsup_{n\to\infty} (h_n)_{\rm can}(x)$ exists in $\RR$ and that
\[
(h_n)_{\rm can}(x) =  u_n(x) + \sum_{i=1}^l (-a_{ni}) \log  \vert \phi_i(x) \vert^2.
\]
Thus $\limsup_{n\to\infty} u_n(x)$ exists in $\RR$ and 
\[
\limsup_{n\to\infty} u_n(x) = \limsup_{n\to\infty} (h_n)_{\rm can}(x) + \sum_{i=1}^l p_i \log  \vert \phi_i(x) \vert^2.
\]
Hence the assertion follows in this case.

Next we assume that $x \in C_1(\CC) \cup \cdots \cup C_l(\CC)$.
We may assume $x \in C_1(\CC)$.
As before, $\phi_1(x) = 0$ and $\phi_i(x) \not= 0$ for $i\geq 2$.
By using Lemma~\ref{lem:Weil:leq:Cartier},
let us choose a rational function $\psi$ and effective divisors $A$ and $B$ such that
$C_1 + (\psi) = A - B$ and $C_1 \not\subseteq \Supp(A) \cup \Supp(B)$.
We set 
\[
M'_n = M_n + a_{n1} (\psi), \quad h'_n = h_n - a_{n1} \log \vert \psi \vert^2\quad\text{and}\quad
\overline{M}'_n =(M'_n, h'_n).
\]
Then $\overline{M}'_n = \overline{M}_n + a_{n1}\widehat{(\psi)}$ and
\begin{multline*}
0 \leq \adeg(\rest{\overline{M}_n}{C_1}) = \adeg(\rest{\overline{M}'_n}{C_1}) \\
= a_{n1}\left(
\log \#(\OO_{C_1}(A)/\OO_{C_1}) - \log \# (\OO_{C_1}(B)/\OO_{C_1}) \right) \\
+
\sum_{i=2}^l a_{ni} \log \#(\OO_{C_1}(C_i)/\OO_{C_1} ) + \frac{1}{2} \sum_{y \in C_1(\CC)} (h'_n)_{\rm can}(y).
\end{multline*}
Thus we can find a constant $T$ such that
\[
\sum_{y \in C_1(\CC)} (h'_n)_{\rm can}(y) \geq T
\]
for all $n \geq 1$, which yields
\[
\sum_{y \in C_1(\CC)} \limsup_{n\to\infty} (h'_n)_{\rm can}(y) \geq \limsup_{n\to \infty} \left( \sum_{y \in C_1(\CC)} (h'_n)_{\rm can} (y)\right) \geq T.
\]
In particular, $\limsup_{n\to\infty} (h'_n)_{\rm can}(x) \not= -\infty$.
On the other hand,
\[
h'_n = u_n - a_{n1} \log \vert \phi_1 \psi \vert^2 - \sum_{i=2}^l a_{ni} \log \vert \phi_i \vert^2 \quad\aew.
\]
Note that $(\phi_1 \psi)(x) \in \CC^{\times}$. Thus 
\[
 \limsup_{n\to\infty} (u_n(x))  = \limsup_{n\to\infty} (h'_n)_{\rm can}(x) + p_{1} \log \vert (\phi_1 \psi)(x) \vert^2 + \sum_{i=2}^l p_{i} \log \vert \phi_i(x) \vert^2.
\]
Therefore we have the assertion of the claim in this case.
\end{proof}

\begin{Claim}
$\tilde{u} + \sum_{i=1}^l (-p_i) \log \vert \phi_i \vert^2$ does not depend on the choice of $\phi_1, \ldots, \phi_l$.
\end{Claim}

\begin{proof}
Let $\phi'_1, \ldots, \phi'_l$ be another local equations of $C_1, \ldots, C_l$.
Then there are $e_1, \ldots, e_l \in \OO_U^{\times}(U)$ such that
$\phi'_i = e_i \phi_i$ for all $i$.
Let $g_n = u'_n - \sum_{i=1}^l a_{ni} \log \vert \phi'_i \vert^2\ \aew$ be the local expression of $g_n$ with respect to $\phi'_1, \ldots, \phi'_l$.
Then $u'_n = u_n + \sum_{i=1}^l a_{ni} \log \vert e_i \vert^2$ by Lemma~\ref{lem:fqpssh:ineq:ae}.
Thus 
\[
\tilde{u}' = \tilde{u} + \sum_{i=1}^l p_i \log \vert e_i \vert^2,
\]
which implies that
\[
\tilde{u} + \sum_{i=1}^l (-p_i) \log \vert \phi_i \vert^2 = \tilde{u}' + \sum_{i=1}^l (-p_i) \log \vert \phi'_i \vert^2.
\]
\end{proof}

By the above claim, there is an $M$-Green function $h$ of $\Tpsh_{\RR}$-type on $X(\CC)$
such that 
\[
\rest{h}{U(\CC)} = \tilde{u} + \sum_{i=1}^l (-p_i) \log \vert \phi_i \vert^2.
\]
By our construction, $\rest{h_{\rm can}}{X(\CC) \setminus E(\CC)}$ is the upper semicontinuous regularization of the function
given by  $h^{\sharp}(x) = \limsup_{n\to\infty}(h_n)_{\rm can}(x)$
over $X(\CC) \setminus E(\CC)$.

\begin{Claim}
 $h$ is $F_{\infty}$-invariant and $h \leq g\ \aew$. 
\end{Claim}

\begin{proof}
As $\Tpsh$ is a subjacent type of $\TT$, we have $(h_n)_{\rm can} \leq g_{\rm can}$
over $X(\CC) \setminus E(\CC)$, so that
$h^{\sharp} \leq g_{\rm can}$
over $X(\CC) \setminus E(\CC)$.
Note that $h^{\sharp} = h\ \aew$ (cf. Subsection~\ref{subsec:pluri:subharmonic}). 
Thus the claim follows because $h^{\sharp}$ is $F_{\infty}$-invariant.
\end{proof}

Finally let us check that
\[
\adeg\left( \rest{\overline{M}}{C} \right) \geq \limsup_{n\to\infty} \adeg\left( \rest{\overline{M}_n}{C} \right) \geq 0
\]
holds for all $1$-dimensional closed integral subschemes $C$ on $X$.

By Lemma~\ref{lem:Weil:leq:Cartier} again,
we can choose non-zero rational functions $\psi_1, \ldots, \psi_l$ on $X$ and
effective divisors 
\[
A_1, \ldots, A_l, B_1, \ldots, B_l
\]
such that
$C_i + (\psi_i) = A_i - B_i$ for all $i$ and $C \not\subseteq \Supp(A_i) \cup \Supp(B_i)$ for all $i$.
We set 
\[
\left\{\hspace{-0.5em}
\begin{array}{lll}
M''_n = M_n + \sum_{i=1}^l a_{ni}(\psi_i),&
h''_n = h_n + \sum_{i=1}^l (-a_{ni}) \log \vert \psi_i \vert^2, &
\overline{M}''_n = (M''_n, h''_n) \\
M'' = M + \sum_{i=1}^l p_{i}(\psi_i),&
h'' = h+ \sum_{i=1}^l (-p_{i}) \log \vert \psi_i \vert^2, &
\overline{M}'' = (M'', h'') 
\end{array}\right.
\]

First we assume that $C$ is not flat over $\ZZ$.
Then
\[
\adeg(\rest{\overline{M}_n}{C}) = \adeg(\rest{\overline{M}''_n}{C})
=
\sum_{i=1}^l a_{in} \left( \log \#(\OO_C(A_i)/\OO_C) - \log \#(\OO_C(B_i)/\OO_C)\right) 
\]
and
\[
\adeg(\rest{\overline{M}}{C}) = \adeg(\rest{\overline{M}''}{C})
=
\sum_{i=1}^l p_{i} \left( \log \#(\OO_C(A_i)/\OO_C) - \log \#(\OO_C(B_i)/\OO_C)\right).
\]
Thus
\[
\adeg(\rest{\overline{M}}{C}) = \lim_{n\to\infty} \adeg(\rest{\overline{M}_n}{C}) \geq 0
\]

Next we assume that $C$ is  flat over $\ZZ$.
Then
\begin{multline*}
\adeg(\rest{\overline{M}_n}{C}) = \adeg(\rest{\overline{M}''_n}{C}) \\
=
\sum_{i=1}^l a_{in} \left( \log \#(\OO_C(A_i)/\OO_C) - \log \#(\OO_C(B_i)/\OO_C)\right) + \frac{1}{2} \sum_{y \in C(\CC)} (h''_n)_{\rm can}(y)
\end{multline*}
and
\begin{multline*}
\adeg(\rest{\overline{M}}{C}) = \adeg(\rest{\overline{M}''}{C}) \\
=
\sum_{i=1}^l p_{i} \left( \log \#(\OO_C(A_i)/\OO_C) - \log \#(\OO_C(B_i)/\OO_C)\right) + \frac{1}{2} \sum_{y \in C(\CC)} (h'')_{\rm can}(y).
\end{multline*}
Let us consider a Zariski open set $U$ of $X$ with $C \cap U \not= \emptyset$.
Let 
\[
h_n = u_n + \sum (-a_{ni}) \log \vert \phi_i \vert^2\ \aew\quad\text{and}\quad
h = \tilde{u} + \sum (-p_i) \log \vert \phi_i \vert^2\ \aew
\]
be
the local expressions of $h_n$ and $h$ as before. Then
\[
h''_n = u_n + \sum (-a_{ni}) \log \vert \phi_i \psi_i \vert^2\ \aew\quad\text{and}\quad
h'' = \tilde{u} + \sum (-p_i) \log \vert \phi_i \psi_i\vert^2\ \aew.
\]
Moreover, $(\phi_i \psi_i)(y) \in \CC^{\times}$ for all $y \in C(\CC)$ and $i$.
Thus
\[
\limsup_{n\to\infty} (h''_n)_{\rm can}(y) \leq (h'')_{\rm can}(y).
\]
Therefore,
\[
\limsup_{n\to\infty} \sum_{y \in C(\CC)} (h''_n)_{\rm can}(y) \leq \sum_{y \in C(\CC)} \limsup_{n\to\infty} (h''_n)_{\rm can}(y) \leq \sum_{y \in C(\CC)} (h'')_{\rm can}(y),
\]
which yields
\[
0 \leq \limsup_{n\to\infty} \adeg(\rest{\overline{M}_n}{C}) \leq \adeg(\rest{\overline{M}}{C}).
\]
\end{proof}

\renewcommand{\theTheorem}{\arabic{section}.\arabic{Theorem}}
\renewcommand{\theClaim}{\arabic{section}.\arabic{Theorem}.\arabic{Claim}}
\renewcommand{\theequation}{\arabic{section}.\arabic{Theorem}.\arabic{Claim}}

\section{$\sigma$-decompositions on arithmetic surfaces}
\label{sec:sigma:decomp}
Let $X$ be a regular projective arithmetic surface.
We fix an $F_{\infty}$-invariant  continuous volume form $\Phi$ on $X(\CC)$ with $\int_{X(\CC)} \Phi = 1$.
Let $\overline{D} = (D, g)$ be an effective arithmetic $\RR$-divisor of $C^0$-type on $X$. 
For a $1$-dimensional closed integral  subscheme $C$ on $X$, we set
\[
\nu_{C}(\overline{D}) := \min \left\{ \mult_{C}(D + (\phi)) \mid
\phi \in \aH(X, \overline{D}) \setminus \{ 0 \} \right\}
\]
as in Subsection~\ref{subsec:asym:mult}.
Moreover, we set
\[
F(\overline{D}) = \Fx(\overline{D}) = \sum_{C} \nu_{C}(\overline{D})C \quad\text{and}\quad
M(\overline{D}) =\Mv(\overline{D}) = D - \Fx(\overline{D}).
\]
Let $V(\overline{D})$ be the complex vector space generated by $\aH(X, \overline{D})$ in $H^0(X, D) \otimes_{\ZZ} \CC$,
that is, $V(\overline{D}) := \langle \aH(X, \overline{D}) \rangle_{\CC}$.

\begin{Lemma}
$\dist(V(\overline{D}); g)$ is $F_{\infty}$-invariant.
\end{Lemma}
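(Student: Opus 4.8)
The plan is to exploit the conjugate-linear map
\[
\sigma\colon v \longmapsto \overline{v \circ F_{\infty}},
\]
which is a well-defined involution of $H^0_{\mathcal{M}}(X(\CC), D)$: since $F_{\infty}$ is anti-holomorphic, $\overline{v\circ F_{\infty}}$ is again meromorphic and has the same divisor as $v\circ F_{\infty}$, and $(v\circ F_{\infty}) \geq -D$ because the analytification of the divisor $D$ on $X/\ZZ$ is $F_{\infty}$-invariant. The lemma will follow once I check two things: (i) $\sigma$ maps $V(\overline{D})$ onto itself, and (ii) $\sigma$ interacts with $F_{\infty}$ through the formulas $\langle \sigma(v),\sigma(w)\rangle_g = \overline{\langle v,w\rangle_g}$ and $\vert\sigma(v)\vert_g = F_{\infty}^*(\vert v\vert_g)$. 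Indeed, granting (i) and (ii), if $\phi_1,\dots,\phi_l$ is an orthonormal basis of $V(\overline{D})$ for $\langle\cdot,\cdot\rangle_g$, then $\sigma(\phi_1),\dots,\sigma(\phi_l)$ is again an orthonormal basis of $V(\overline{D})$ (being the image of a basis under a $\CC$-antilinear bijection, and $\langle\sigma(\phi_i),\sigma(\phi_j)\rangle_g = \overline{\delta_{ij}} = \delta_{ij}$), whence
\[
\dist(V(\overline{D}); g) = \sum_{i=1}^{l} \vert \sigma(\phi_i)\vert_g^{2} = \sum_{i=1}^{l} F_{\infty}^*\!\left(\vert \phi_i\vert_g^{2}\right) = F_{\infty}^*\!\left(\dist(V(\overline{D}); g)\right),
\]
which is exactly the claimed $F_{\infty}$-invariance.

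For (i), the key input is that every $\phi\in\Rat(X)$, viewed as a meromorphic function on $X(\CC)$, satisfies $\phi\circ F_{\infty} = \overline{\phi}$: a $\CC$-point $x$ of $X$ and its conjugate $F_{\infty}(x)$ differ by post-composition with the conjugation $\Spec(\CC)\to\Spec(\CC)$, so $\phi(F_{\infty}(x)) = \overline{\phi(x)}$ wherever $\phi$ is regular. Hence $\sigma(\phi) = \overline{\overline{\phi}} = \phi$ for every $\phi\in\aH(X,\overline{D})\subseteq\Rat(X)$; since $\sigma$ is conjugate-linear and $V(\overline{D}) = \langle\aH(X,\overline{D})\rangle_{\CC}$, we get $\sigma(V(\overline{D})) = V(\overline{D})$.

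For (ii), I would use that $\Phi$ is $F_{\infty}$-invariant (as fixed at the start of this section) and that $g$ is $F_{\infty}$-invariant, so $\exp(-g) = F_{\infty}^*(\exp(-g))$ almost everywhere. Together with $\overline{v\circ F_{\infty}} = \overline{v}\circ F_{\infty}$ and $(\overline{v}\,w)\circ F_{\infty} = (\overline{v\circ F_{\infty}})\cdot(w\circ F_{\infty})$, a change of variables by $F_{\infty}$ gives
\[
\langle \sigma(v),\sigma(w)\rangle_g = \int_{X(\CC)} F_{\infty}^*\!\left(\overline{v}\,w\exp(-g)\right)\Phi = \int_{X(\CC)} \overline{v}\,w\exp(-g)\,\Phi = \overline{\langle v,w\rangle_g},
\]
and likewise $\vert\sigma(v)\vert = \vert v\vert\circ F_{\infty}$ yields $\vert\sigma(v)\vert_g = F_{\infty}^*(\vert v\vert_g)$, the latter as an identity of continuous functions since $g$ is of $C^0$-type and $\vert\cdot\vert_g$ is taken to be its canonical continuous representative.

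The only genuinely delicate point I anticipate is the identity $\phi\circ F_{\infty} = \overline{\phi}$ for $\phi\in\Rat(X)$ and the accompanying bookkeeping of how complex conjugation commutes with the anti-holomorphic pullback $F_{\infty}^*$ (the identities that make $\sigma$ a conjugate-linear operator compatible with $\langle\cdot,\cdot\rangle_g$ and with $\vert\cdot\vert_g$). Everything after that is a routine change of variables in the integral defining $\langle\cdot,\cdot\rangle_g$ together with the elementary invariance of distortion functions under change of orthonormal basis.
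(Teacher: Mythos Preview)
Your proof is correct and rests on the same key ingredients as the paper's: the identity $F_{\infty}^*(\phi)=\overline{\phi}$ for $\phi\in\Rat(X)$ and the change-of-variables computation for $\langle\cdot,\cdot\rangle_g$. The only difference is packaging: the paper uses that computation to show $\langle\cdot,\cdot\rangle_g$ is real-valued on $\langle\aH(X,\overline{D})\rangle_{\RR}$ and then chooses an orthonormal basis of this real subspace (which is automatically an orthonormal basis of $V(\overline{D})$ over $\CC$, and for which each $\vert\phi_i\vert_g$ is directly $F_{\infty}$-invariant), whereas you start from an arbitrary complex orthonormal basis and transport it by the conjugate-linear involution~$\sigma$.
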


\begin{proof}
First of all, note that, for $\phi \in \Rat(X)$, $F_{\infty}^*(\phi) = \overline{\phi}$ as a function on $X(\CC)$.
Let us see $\langle \phi, \psi \rangle_{g} \in \RR$ for all $\phi, \psi \in  \langle \aH(X, \overline{D}) \rangle_{\RR}$.
Indeed,
\begin{align*}
\langle \phi, \psi \rangle_{g} & = \int_{X(\CC)} \phi \bar{\psi} \exp(-g) \Phi = -\int_{X(\CC)} F_{\infty}^*(\phi \bar{\psi} \exp(-g) \Phi) \\
& = -\int_{X(\CC)} F_{\infty}^*(\phi)  F_{\infty}^*(\bar{\psi}) F_{\infty}^*(\exp(-g))F_{\infty}^*(\Phi) \\
& = \int_{X(\CC)} \bar{\phi} \psi \exp(-g) \Phi =
\langle \psi, \phi \rangle_{g} = \overline{\langle \phi, \psi \rangle_{g}}.
\end{align*}
Thus $\langle \phi, \psi \rangle_{g}$ yields an inner product of  $\langle \aH(X, \overline{D}) \rangle_{\RR}$, so that
let $\phi_1, \ldots, \phi_N$ be an orthonormal basis of  $\langle \aH(X, \overline{D}) \rangle_{\RR}$ over $\RR$.
These give rise to an orthonormal basis of $\langle \aH(X, \overline{D}) \rangle_{\CC}$. Therefore,
\[
\dist(V(\overline{D}); g) = \vert \phi_1 \vert^2_g + \cdots + \vert \phi_N \vert^2_g.
\]
Note that $F_{\infty}^*(\vert \phi_i \vert_g) = \vert \bar{\phi}_i \vert_g = \vert \phi_i \vert_g$, and hence the lemma follows.
\end{proof}

Here we define $g_{F(\overline{D})}$, $g_{M(\overline{D})}$, $\overline{M}(\overline{D})$ and
$\overline{F}(\overline{D})$ as follows:
\[
\begin{cases}
g_{F(\overline{D})} = - \log \dist\left( V(\overline{D}); g\right),&
g_{M(\overline{D})} = g - g_{F(\overline{D})} = g +  \log \dist\left( V(\overline{D}); g\right), \\
\overline{M}(\overline{D}) = \left(M(\overline{D}), g_{M(\overline{D})}\right),&
\overline{F}(\overline{D}) = \left(F(\overline{D}), g_{F(\overline{D})}\right).
\end{cases}
\]
Let us check the following proposition:

\begin{Proposition}
\label{prop:decomp:surface:no:limit}
\begin{enumerate}
\renewcommand{\labelenumi}{(\arabic{enumi})}
\item
$\aH(X, \overline{D}) \subseteq \aH(X, \overline{M}(\overline{D}))$.

\item
$g_{M(\overline{D})}$ is an  $M(\overline{D})$-Green function of $(C^{\infty} \cap \Tpsh)$-type on $X(\CC)$.

\item
$g_{F(\overline{D})}$ is an $F(\overline{D})$-Green function of $(C^0 - C^{\infty} \cap \Tpsh)$-type  over $X(\CC)$.

\item
$\overline{M}(\overline{D})$ is nef.
\end{enumerate}
\end{Proposition}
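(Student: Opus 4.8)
The plan is to prove (2) first, deduce (3) at once, then (1), and finally (4). Write $D=\sum_{i=1}^{l}a_iC_i$ where $C_1,\dots,C_l$ exhaust the prime divisors occurring in $D$ or in $F(\overline{D})$, so that $M(\overline{D})$ is supported on $C_1\cup\cdots\cup C_l$, and fix an orthonormal basis $\phi_1,\dots,\phi_N$ of $V(\overline{D})$ for $\langle\ ,\ \rangle_g$. Since $\dist(V(\overline{D});g)=\sum_k|\phi_k|^2\exp(-g)$, we get $g_{M(\overline{D})}=g+\log\dist(V(\overline{D});g)=\log\!\bigl(\sum_k|\phi_k|^2\bigr)$ a.e. Over a Zariski open $U$ with local equations $z_i$ of $C_i$, the definition $\nu_{C_i}(\overline{D})=\min_\phi\mult_{C_i}((\phi)+D)$ gives $\operatorname{ord}_{C_i}(\phi)\ge m_i:=\nu_{C_i}(\overline{D})-\mult_{C_i}(D)=-\mult_{C_i}(M(\overline{D}))$ for every $\phi\in V(\overline{D})$; as $(\phi)+D\ge0$ also forces $\phi$ to be regular off $C_1\cup\cdots\cup C_l$, the function $\psi_k:=\phi_k\prod_iz_i^{-m_i}$ is regular on $U$, so $\sum_k|\phi_k|^2=\prod_i|z_i|^{2m_i}\sum_k|\psi_k|^2$ and
\[
g_{M(\overline{D})}=\sum_{i=1}^{l}\bigl(-\mult_{C_i}(M(\overline{D}))\bigr)\log|z_i|^2+\log\!\Bigl(\sum_{k=1}^{N}|\psi_k|^2\Bigr)\qquad\text{a.e. on }U(\CC).
\]
Hence (2) reduces to $\log(\sum_k|\psi_k|^2)\in (C^{\infty}\cap\Tpsh)(U(\CC))$, which is the crux: $F(\overline{D})$ was defined precisely to remove the codimension-one base locus of $V(\overline{D})$ relative to $M(\overline{D})$ — for every prime $C$ some $\phi\in V(\overline{D})$ has $\operatorname{ord}_C(\phi)=m_C$ — so the base scheme of $V(\overline{D})$ has codimension $\ge2$ in $X$, i.e.\ is a finite set of closed points, all vertical since a closed point of a finite-type $\ZZ$-scheme has finite residue field. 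Such points carry no $\CC$-points, so the $\psi_k$ have no common zero on $U(\CC)$; thus $\sum_k|\psi_k|^2$ is positive and real-analytic and its logarithm is $C^{\infty}$, real-valued, and subharmonic ($=$ plurisubharmonic, as $\dim_{\CC}X(\CC)=1$). This proves (2); in particular $g_{M(\overline{D})}$ is of $\Tpsh_{\RR}$-type, which is condition (a) of nefness.

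Part (3) is then immediate: $g_{F(\overline{D})}=g-g_{M(\overline{D})}$ with $g$ a $D$-Green function of $C^0$-type, $g_{M(\overline{D})}$ an $M(\overline{D})$-Green function of $(C^{\infty}\cap\Tpsh)$-type, and $F(\overline{D})=D-M(\overline{D})$, so Proposition~\ref{prop:scalar:sum:Green:function} gives that $g_{F(\overline{D})}$ is an $F(\overline{D})$-Green function of $(C^0-C^{\infty}\cap\Tpsh)$-type. For (1), take $\phi\in\aH(X,\overline{D})\subseteq V(\overline{D})$. Then $\operatorname{ord}_C(\phi)\ge m_C=-\mult_C(M(\overline{D}))$ for all primes $C$ gives $(\phi)+M(\overline{D})\ge0$, i.e.\ $\phi\in H^0(X,M(\overline{D}))$. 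Moreover $|\phi|_{g_{M(\overline{D})}}^2=|\phi|_g^2/\dist(V(\overline{D});g)$, so Proposition~\ref{prop:decomposition:hermitian:metric} applied to $V(\overline{D})$ yields $|\phi|_{g_{M(\overline{D})}}^2(x)\le\langle\phi,\phi\rangle_g$ wherever $\dist(V(\overline{D});g)>0$, hence a.e., hence $\|\phi\|_{g_{M(\overline{D})}}^2\le\langle\phi,\phi\rangle_g$ (using that $|\phi|_{g_{M(\overline{D})}}$ is continuous, by (2)). Finally $\langle\phi,\phi\rangle_g=\int_{X(\CC)}|\phi|_g^2\,\Phi\le\|\phi\|_g^2\int_{X(\CC)}\Phi=\|\phi\|_g^2\le1$, so $\phi\in\aH(X,\overline{M}(\overline{D}))$.

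For condition (b) of nefness, fix a $1$-dimensional closed integral subscheme $C$. Since $\min_{\phi\in\aH(X,\overline{D})\setminus\{0\}}\operatorname{ord}_C(\phi)=\nu_C(\overline{D})-\mult_C(D)=m_C=-\mult_C(M(\overline{D}))$, pick $\phi\in\aH(X,\overline{D})\setminus\{0\}$ with $\operatorname{ord}_C(\phi)=m_C$; then $\mult_C(M(\overline{D})+(\phi))=0$, so $C\not\subseteq\Supp(M(\overline{D})+(\phi))$ and $\adeg(\rest{\overline{M}(\overline{D})}{C})=\adeg(\rest{(\overline{M}(\overline{D})+\widehat{(\phi)})}{C})$ can be computed by the formula of Subsection~\ref{subsec:intersection:arithmetic:R:divisor:curve} applied to the effective $\RR$-divisor $M(\overline{D})+(\phi)$. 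The finite-place term is nonnegative (a nonnegative combination of lengths $\log\#(\OO_C(E_j)/\OO_C)$ over the prime components $E_j$ of $M(\overline{D})+(\phi)$); and the horizontal components of $M(\overline{D})+(\phi)$, being distinct from $C$, meet $C$ only at closed points of $X$, which are vertical and carry no $\CC$-points, so $\phi$ is nowhere zero on $C(\CC)$ and the archimedean term equals $\frac{1}{2}\sum_{x\in C(\CC)}\bigl(-\log|\phi|_{g_{M(\overline{D})}}^2(x)\bigr)\ge0$, because $|\phi|_{g_{M(\overline{D})}}(x)\le\|\phi\|_{g_{M(\overline{D})}}\le1$ by (1). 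Hence $\adeg(\rest{\overline{M}(\overline{D})}{C})\ge0$, and $\overline{M}(\overline{D})$ is nef.

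The main obstacle is (2): the real content is that passing to the movable part $M(\overline{D})$ kills every base point visible on $X(\CC)$, which comes down to the fact that on an arithmetic surface the residual (codimension $\ge2$) base locus is purely vertical. The same phenomenon — that the chosen section $\phi$ does not vanish on $C(\CC)$ — is the delicate point in (4)(b); once it is isolated, the remainder of (4) and all of (1) and (3) are formal.
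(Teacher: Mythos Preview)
Your proof is correct, and parts (1), (3), and (4) match the paper's argument essentially line for line (the paper orders them (1), (2)--(3), (4), but the content is the same).

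For (2) you and the paper argue the same underlying fact---that after stripping off $F(\overline{D})$ the linear system $V(\overline{D})$ has no base points on $X(\CC)$---but you package it differently. The paper works pointwise on $X(\CC)$: at each $x$ it introduces $\nu_x=\min_{\phi}\mult_x(D+(\phi))$ and proves the claim $\nu_x=\mult_x(F(\overline{D}))$ by passing to the unique horizontal curve $C$ through $x$ and identifying $\mult_x$ with $\mult_C$; once this is known, some $\phi_k$ realizes the minimum and the corresponding local unit $v_k$ makes $\sum_k|z|^{2(a_k+a-\nu_x)}|v_k|^2$ strictly positive. You instead stay on the arithmetic scheme $X$: the $\psi_k=\phi_k\prod z_i^{-m_i}$ are regular on $U$, their common zero locus has no codimension-one component (by the very definition of the $m_i$), hence is a finite set of closed points of the $2$-dimensional scheme $X$; since closed points of a projective $\ZZ$-scheme lie over finite primes, this locus misses $X(\CC)$ entirely. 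Your formulation makes transparent \emph{why} the argument is specific to arithmetic surfaces---it is exactly the fact that codimension-$2$ loci on $X$ carry no $\CC$-points---whereas the paper's pointwise translation $\mult_x\leftrightarrow\mult_C$ encodes the same thing one point at a time. Either way the remaining work (the log of a positive sum of squared moduli of holomorphic functions being $C^\infty\cap\Tpsh$) is identical.

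One cosmetic remark on (4): the phrase ``$\phi$ is nowhere zero on $C(\CC)$'' is slightly loose---what you need and use is that $|\phi|_{g_{M(\overline{D})}}$ is nowhere zero on $C(\CC)$, i.e.\ $C(\CC)\cap\Supp(M(\overline{D})+(\phi))(\CC)=\emptyset$, which is exactly what your horizontal-components-meet-only-at-closed-points argument establishes. The paper handles this step with the terse ``$0<|\psi|_{g_{M(\overline{D})}}(x)\le 1$ for all $x\in C(\CC)$ as before,'' leaving the same disjointness implicit.
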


\begin{proof}
(1)  If $\phi \in \aH(X, \overline{D}) \setminus \{ 0 \}$,
then 
$(\phi) + D \geq F(\overline{D})$,
and hence $(\phi) + M(\overline{D}) \geq 0$.
Note that $\vert \phi \vert_g^2 = \dist(V(\overline{D}); g) \vert \phi\vert^2_{g_{M(\overline{D})}}$ for $\phi \in \aH(X, M(\overline{D}))$.
Thus, as $\Vert \phi \Vert_g \leq 1$, by Proposition~\ref{prop:decomposition:hermitian:metric},
\[
\vert \phi \vert^2_{g_{M(\overline{D})}} = \vert \phi \vert^2_g/\dist(V(\overline{D}); g) \leq \Vert \phi \Vert^2_{g} \leq 1. 
\]
Therefore, $\phi \in \aH(X, \overline{M}(\overline{D}))$.

(2), (3) 
Let us fix $x \in X(\CC)$. 
We set 
\[
\nu_x := \min \{ \mult_x(D + (\phi)) \mid \phi \in V(\overline{D}) \setminus \{ 0 \} \}.
\]
Note that $ \mult_x(D + (\phi)) =  \mult_x(D) + \operatorname{ord}_x(\phi)$.
First let us see the following claim:

\begin{Claim}
\begin{enumerate}
\renewcommand{\labelenumi}{(\alph{enumi})}
\item
If $\phi_1, \ldots, \phi_n \in V(\overline{D}) \setminus \{ 0 \}$ and $V(\overline{D})$ is generated by $\phi_1, \ldots, \phi_n$,
then
$\nu_x = \min \{ \mult_x(D + (\phi_1)), \ldots, \mult_x(D + (\phi_n)) \}$.

\item
$\nu_x = \mult_x(F(\overline{D}))$.
\end{enumerate}
\end{Claim}

\begin{proof}
(a) is obvious. 
Let us consider the natural homomorphism
\[
\langle \aH(X, \overline{D}) \rangle_{\ZZ} \otimes_{\ZZ} \OO_X \to \OO_{X}(\lfloor D \rfloor),
\]
which is surjective on $X \setminus \Supp(D)$ because $0 \leq \lfloor D \rfloor \leq D$. In particular,
\[
V(\overline{D}) \otimes_{\CC} \OO_{X(\CC)} \to \OO_{X(\CC)}(\lfloor D \rfloor),
\]
is surjective on $X(\CC) \setminus \Supp(D)(\CC)$, so that if $x \in X(\CC) \setminus \Supp(D)(\CC)$,
then $\nu_x = 0$. On the other hand, if $x \in X(\CC) \setminus \Supp(D)(\CC)$, then
$\mult_x(F(\overline{D})) = 0$ because $0 \leq F(\overline{D}) \leq D$.
Therefore, we may assume that $x \in \Supp(D)(\CC)$, so that
there is a $1$-dimensional closed integral  subscheme $C$ of $X$ with $x \in C(\CC)$.
Let $\psi_1, \ldots, \psi_n$ be all elements of $\aH(X, \overline{D})\setminus \{ 0 \}$.
Let $\eta$ be the generic point of $C$. 
Then
\[
\mult_C(F(\overline{D})) =  \min \{ \mult_C(D) + \operatorname{ord}_{\eta}(\psi_1), \ldots, \mult_C(D) + \operatorname{ord}_{\eta}(\psi_n) \}.
\]
Thus, by using (a),
\begin{align*}
\mult_x(F(\overline{D}))  & = \mult_C(F(\overline{D})) \\
& =  
\min \{ \mult_C(D) + \operatorname{ord}_{\eta}(\psi_1), \ldots, \mult_C(D) + \operatorname{ord}_{\eta}(\psi_n) \} \\
& =   \min \{ \mult_x(D + (\psi_1)), \ldots, \mult_x(D + (\psi_n)) \} = \nu_x.
\end{align*}
\end{proof}

Let $\phi_1, \ldots, \phi_N$ be an orthonormal basis
of $V(\overline{D})$ with respect to $\langle\ ,\ \rangle_g$.
Let $g = u_x + (-a) \log \vert z \vert^2\ \aew$ be a local expression of $g$ around $x$,
where $z$ is a local chart around $x$ with $z(x) = 0$.
For every $i$,  we set $\phi_i = z^{a_i} v_i$ around $x$ with
$v_i \in \OO_{X(\CC),x}^{\times}$.
Then $\vert \phi_i \vert_g^2 = \vert z \vert^{2(a_i + a)} \exp(-u_x) \vert v_i \vert^2$.
By the above claim,
\[
\nu_x = \min \{ a_1 + a,\ldots, a_N + a\} = \mult_x(F(\overline{D})).
\]
Thus
\[
\dist(V(\overline{D}); g) = \vert z \vert^{2\nu_x}\exp(-u_x)
\sum_{i=1}^N \vert z \vert^{2(a_i + a - \nu_x)} \vert v_i \vert^2.
\]
Therefore,
\[
\begin{cases}
g_{F(\overline{D})} = u_x -\log \left( \sum_{i=1}^N \vert z \vert^{2(a_i + a - \nu_x)} \vert v_i \vert^2 \right) - \nu_x \log \vert z \vert^2, \\
g_{M(\overline{D})} = \log \left( \sum_{i=1}^N \vert z \vert^{2(a_i + a - \nu_x)} \vert v_i \vert^2 \right) - (a - \nu_x)\log \vert z \vert^2.
\end{cases}
\]
Note that $\log \left( \sum_{i=1}^N \vert z \vert^{2(a_i + a - \nu_x)} \vert v_i \vert^2 \right)$ is
a  subharmonic $C^{\infty}$-function.
Thus we get (2) and (3).

(4) For $\phi \in \aH(X, \overline{D}) \setminus \{ 0 \}$ and a $1$-dimensional closed integral subscheme $C$ on $X$,
as 
\[
\mult_{C}(M(\overline{D}) + (\phi)) = \mult_{C}(D + (\phi)) - \nu_{C}(\overline{D}),
\]
there is a $\psi \in \aH(X, \overline{D}) \setminus \{ 0 \}$ such that $\mult_{C}(M(\overline{D}) + (\psi)) = 0$.
This means that 
\[
C \not\subset \Supp(M(\overline{D}) + (\psi)).
\]
Then, by Proposition~\ref{prop:decomposition:hermitian:metric},
$0 < \vert \psi \vert_{g_{M(\overline{D})}}(x) \leq 1$ for all $x \in C(\CC)$ as before. Hence
\[
\adeg \left( \rest{\overline{M}(\overline{D})}{C} \right) = \log \# \OO_C((\psi) + M(\overline{D}))/\OO_C - \sum_{x \in C(\CC)} \log \vert \psi \vert_{g_{M(\overline{D})}}(x) \geq 0.
\]
\end{proof}

For $n \geq 1$, we set
\[
\begin{cases}
{\displaystyle M_n(\overline{D}) := \frac{1}{n} M(n\overline{D})}, & {\displaystyle g_{M_n(\overline{D})} := \frac{1}{n} g_{M(n\overline{D})}}, \\
\\
{\displaystyle F_n(\overline{D}) := \frac{1}{n} F(n\overline{D})}, & {\displaystyle g_{F_n(\overline{D})} := \frac{1}{n} g_{F(n\overline{D})}}.
\end{cases}
\]
In addition,
\[
\overline{M}_n(\overline{D}) := \left(M_n(\overline{D}), g_{M_n(\overline{D})}\right)\quad\text{and}\quad
\overline{F}_n(\overline{D}) := \left(F_n(\overline{D}),g_{F_n(\overline{D})}\right).
\]
Then we have the following proposition, which guarantees a decomposition
\[
\overline{D} = \overline{M}_{\infty}(\overline{D}) + \overline{F}_{\infty}(\overline{D})
\]
as described in the proposition.
This decomposition $\overline{D} = \overline{M}_{\infty}(\overline{D}) + \overline{F}_{\infty}(\overline{D})$ is called
the {\em $\sigma$-decomposition of $\overline{D}$}. Moreover,   $\overline{M}_{\infty}(\overline{D})$ \rom{(}resp. $\overline{F}_{\infty}(\overline{D})$\rom{)}
is called the {\em asymptotic movable part} \rom{(}resp. the {\em asymptotic fixed part}\rom{)} of $\overline{D}$.

\begin{Proposition}
\label{prop:sigma:decomp:surface}
There is a nef arithmetic $\RR$-divisor $\overline{M}_{\infty}(\overline{D}) = \left(M_{\infty}(\overline{D}), g_{M_{\infty}(\overline{D})}\right)$ on $X$
with the following properties:
\begin{enumerate}
\renewcommand{\labelenumi}{(\arabic{enumi})}
\item
$\mult_{C} (M_{\infty}(\overline{D})) = \lim_{n\to\infty} \mult_{C}(M_{n}(\overline{D}))$
for all $1$-dimensional closed integral  subschemes $C$ on $X$.

\item
$\left(g_{M_{\infty}(\overline{D})}\right)_{\rm can}$ is the upper semicontinuous regularization of the function given by
\[
x \mapsto \limsup_{n\to\infty} \left(g_{M_{n}(\overline{D})}\right)_{\rm can}(x)
\]
over $X(\CC) \setminus \Supp(D)(\CC)$. 
In particular,
\[
\left(g_{M_{\infty}(\overline{D})}\right)_{\rm can}(x) = \limsup_{n\to\infty} \left( \left(g_{M_{n}(\overline{D})}\right)_{\rm can}(x) \right) \quad \aew.
\]
Moreover, if $\overline{D}$ is of $C^{\infty}$-type, then
$\lim_{n\to\infty} \left( \left(g_{M_{n}(\overline{D})}\right)_{\rm can}(x) \right)$ exists.

\item
$\adeg\left( \rest{\overline{M}_{\infty}(\overline{D})}{C} \right) \geq \limsup_{n\to\infty} \adeg\left( \rest{\overline{M}_n(\overline{D})}{C} \right)$
holds for all $1$-dimensional closed integral subschemes $C$ on $X$.

\item
If we set $F_{\infty}(\overline{D})$, $g_{F_{\infty}(\overline{D})}$ and $\overline{F}_{\infty}(\overline{D})$ as follows:
\[
\begin{cases}
F_{\infty}(\overline{D}) := D -M_{\infty}(\overline{D}),\\
g_{F_{\infty}(\overline{D})} := g - g_{M_{\infty}(\overline{D})},\\
\overline{F}_{\infty}(\overline{D}) := \left(F_{\infty}(\overline{D}), g_{F_{\infty}(\overline{D})}\right) \ (=\overline{D}- \overline{M}_{\infty}(\overline{D})),
\end{cases}
\]
then $\mu_C(\overline{D}) = \mult_C(F_{\infty}(\overline{D}))$ for all $1$-dimensional closed integral subschemes $C$ on $X$ and
$\overline{F}_{\infty}(\overline{D})$ is an effective arithmetic $\RR$-divisor 
of $(C^{0} -\Tpsh_{\RR})$-type.
In addition, if $\overline{D}$ is of $C^{\infty}$-type, then there is a constant $e$ such that
\[
n g_{F_{\infty}(\overline{D})} \leq g_{F(n\overline{D})} + 3 \log(n+1) + e\quad\aew
\]
for all $n \geq 1$.

\item
If $\overline{D}$ is of $C^{\infty}$-type, then there is a constant $e'$ such that
\[
\ah(X, n \overline{M}_{\infty}(\overline{D})) \leq \ah(X, n\overline{D}) \leq
\ah(X, n \overline{M}_{\infty}(\overline{D})) + e' n \log(n+1)
\]
for all $n \geq 1$.
\end{enumerate}
\end{Proposition}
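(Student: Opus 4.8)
The plan is to obtain $\overline{M}_\infty(\overline{D})$ by applying Theorem~\ref{thm:limit:nef:div:surface} to the sequence $\{\overline{M}_n(\overline{D})\}_{n\ge 1}$, each term of which is a nef arithmetic $\RR$-divisor by Proposition~\ref{prop:decomp:surface:no:limit}(4) together with Proposition~\ref{prop:scalar:sum:Green:function}(1) (a positive multiple of a nef arithmetic $\RR$-divisor is again nef). A preliminary observation used repeatedly is that $R:=\bigoplus_{n\ge 0}V(n\overline{D})$ is a graded subring of $\bigoplus_{n\ge 0}H^0_{\mathcal{M}}(X,nD)$: since $\Vert\phi\psi\Vert_{(n+m)g}\le\Vert\phi\Vert_{ng}\Vert\psi\Vert_{mg}$ one has $\aH(X,n\overline{D})\cdot\aH(X,m\overline{D})\subseteq\aH(X,(n+m)\overline{D})$, hence $V(n\overline{D})V(m\overline{D})\subseteq V((n+m)\overline{D})$; and since $\overline{D}$ is effective, $1\in\aH(X,n\overline{D})$ for every $n$, so all $V(n\overline{D})\neq\{0\}$.

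First I would verify hypotheses (a)--(d) of Theorem~\ref{thm:limit:nef:div:surface}. Fix, via Stone--Weierstrass, a $D$-Green function $g_1$ of $C^{\infty}$-type with $g\le g_1$ a.e. Then Lemma~\ref{lem:dist:comp} and Theorem~\ref{thm:dist:dist:dist:ineq}(1) (applied to $R$ with the metric $g_1$, noting $\dim=1$) give $g_{M_n(\overline{D})}=g+\tfrac1n\log\dist(V(n\overline{D});ng)\le g_1+\tfrac1n(\log C+3\log(n+1))\le g_1+K$ a.e. for a constant $K$; since also $0\le M_n(\overline{D})\le D$, this gives (a) with $(D,g_1+K)$ and (b) with $E=\Supp(D)$. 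For (c), $\mult_C(M_n(\overline{D}))=\mult_C(D)-\tfrac1n\nu_C(n\overline{D})$, and $\nu_C$ is subadditive in $\overline{D}$ (same argument as Lemma~\ref{lem:sum:mu:arith:div}(2)), so by Fekete's lemma $\tfrac1n\nu_C(n\overline{D})\to\mu_C(\overline{D})$; for $C\not\subseteq E$ the limit is trivially $0$. For (d), the upper bound above bounds $(g_{M_n(\overline{D})})_{\rm can}(x)$ from above, while $1\in V(n\overline{D})$ with Proposition~\ref{prop:decomposition:hermitian:metric} and $\int\Phi=1$, $g\ge 0$ a.e.\ gives $\tfrac1n\log\dist(V(n\overline{D});ng)(x)\ge -g_{\rm can}(x)$, so $\limsup_n(g_{M_n(\overline{D})})_{\rm can}(x)\in\RR$. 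Theorem~\ref{thm:limit:nef:div:surface} then produces a nef $\overline{M}=(M,h)$ with $M=\sum_C(\lim_n\mult_C(M_n(\overline{D})))C$, with $h_{\rm can}$ the upper semicontinuous regularization of $x\mapsto\limsup_n(g_{M_n(\overline{D})})_{\rm can}(x)$ off $\Supp(D)(\CC)$, and $\limsup_n\adeg(\overline{M}_n(\overline{D})\vert_C)\le\adeg(\overline{M}\vert_C)$. Setting $\overline{M}_\infty(\overline{D}):=\overline{M}$ yields (1), the first parts of (2), and (3); the a.e.\ identity in (2) holds because an upper semicontinuous regularization agrees a.e.\ with the original function (Subsection~\ref{subsec:pluri:subharmonic}).

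Next, for the $C^{\infty}$ refinements, apply Theorem~\ref{thm:dist:dist:dist:ineq}(2) to $R$ with the metric $g$ itself: the sequence $a_n(x):=\log\dist(R_n;ng)(x)-\log C-3\log(n+1)$ is superadditive and bounded above (by part (1)), so $\lim_n a_n(x)/n=\sup_n a_n(x)/n$ exists; since $a_n(x)/n$ differs from $\tfrac1n\log\dist(R_n;ng)(x)$ by $o(1)$, $\lim_n(g_{M_n(\overline{D})})_{\rm can}(x)$ exists, giving the last sentence of (2). Multiplying the inequality $a_n(x)/n\le\lim$ by $n$ and recalling $g_{M(n\overline{D})}=n g_{M_n(\overline{D})}$ unwinds to $g_{M(n\overline{D})}\le n g_{M_\infty(\overline{D})}+3\log(n+1)+e$ a.e.\ with $e=\log C$, hence $n g_{F_\infty(\overline{D})}\le g_{F(n\overline{D})}+3\log(n+1)+e$, the displayed estimate of (4). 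For (5): from $\overline{M}_\infty(\overline{D})\le\overline{D}$ (shown below) we get $\aH(X,n\overline{M}_\infty(\overline{D}))\subseteq\aH(X,n\overline{D})$, the left inequality; conversely Proposition~\ref{prop:decomp:surface:no:limit}(1) gives $\aH(X,n\overline{D})\subseteq\aH(X,\overline{M}(n\overline{D}))$, and since $M(n\overline{D})=n M_n(\overline{D})\le n M_\infty(\overline{D})$ and $g_{M(n\overline{D})}\le n g_{M_\infty(\overline{D})}+3\log(n+1)+e$, this lies in $\aH(X,n\overline{M}_\infty(\overline{D})+(0,3\log(n+1)+e))$; a standard lattice-point estimate (cf.\ \cite[Lemma~3.3]{MoCont}), using $\rank_{\ZZ}H^0(X,\lfloor nM_\infty(\overline{D})\rfloor)=O(n)$, bounds $\ah$ of the latter by $\ah(X,n\overline{M}_\infty(\overline{D}))+O(n\log(n+1))$.

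It remains to complete (4). Effectivity of $F_\infty(\overline{D})=D-M_\infty(\overline{D})$ as a divisor follows from $0\le M_n(\overline{D})\le D$; its type $(C^{0}-\Tpsh_{\RR})$ is clear since $g$ is of $C^0$-type and $g_{M_\infty(\overline{D})}$ of $\Tpsh_{\RR}$-type; and $\mu_C(\overline{D})=\mult_C(F_\infty(\overline{D}))$ is the computation $\mult_C(F_\infty(\overline{D}))=\mult_C(D)-\lim_n\mult_C(M_n(\overline{D}))=\lim_n\tfrac1n\nu_C(n\overline{D})=\mu_C(\overline{D})$ (Subsection~\ref{subsec:asym:mult}). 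The one genuinely delicate point is $g_{M_\infty(\overline{D})}\le g$ a.e.\ in the general $C^0$ case: by Stone--Weierstrass choose, for each $\epsilon>0$, a $D$-Green function $g_\epsilon$ of $C^{\infty}$-type with $g\le g_\epsilon\le g+\epsilon$ a.e.; then Lemma~\ref{lem:dist:comp} and Theorem~\ref{thm:dist:dist:dist:ineq}(1) give $(g_{M_n(\overline{D})})_{\rm can}(x)\le g_{\rm can}(x)+\epsilon+\tfrac1n(\log C_\epsilon+3\log(n+1))$ for $x\notin\Supp(D)(\CC)$, so $\limsup_n(g_{M_n(\overline{D})})_{\rm can}(x)\le g_{\rm can}(x)+\epsilon$ there, and passing to upper semicontinuous regularizations and then $\epsilon\to 0$ yields $(g_{M_\infty(\overline{D})})_{\rm can}\le g_{\rm can}$ off $\Supp(D)(\CC)$, i.e.\ $g_{M_\infty(\overline{D})}\le g$ a.e. I expect this last step --- transporting the $C^{\infty}$ distortion estimates down to the $C^0$ setting through an $\epsilon$-sandwich and the regularization --- to be the main obstacle; the remaining verifications are routine applications of Fekete's lemma and lattice-point counting.
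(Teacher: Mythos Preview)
Your proof is correct and follows essentially the same route as the paper: verify the hypotheses of Theorem~\ref{thm:limit:nef:div:surface} for $\{\overline{M}_n(\overline{D})\}$ using Theorem~\ref{thm:dist:dist:dist:ineq} and Lemma~\ref{lem:dist:comp}, then read off (1)--(3); use the superadditivity in Theorem~\ref{thm:dist:dist:dist:ineq}(2) for the $C^\infty$ refinements in (2), (4), (5). Two small remarks: your lower bound for hypothesis~(d) via $1\in V(n\overline{D})$ and Proposition~\ref{prop:decomposition:hermitian:metric} is a clean shortcut (the paper instead sandwiches with a \emph{second} $C^\infty$ Green function $h'\le g$ and reuses the Claim); and for the lattice-point estimate in (5) the relevant references are \cite[(3) in Proposition~2.1]{MoCont} and \cite[Lemma~1.2.2]{MoArLin} rather than \cite[Lemma~3.3]{MoCont}.
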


\begin{proof}
It is easy to see that 
\[
\mult_{C}(F((n+m)\overline{D})) \leq \mult_{C}(F(n\overline{D}))  + \mult_{C}(F(m\overline{D}))
\]
for
all $n, m \geq 1$ and $1$-dimensional closed  integral subschemes $C$. Thus 
\[
\lim_{n\to\infty} \mult_{C}(F_n(\overline{D}))
\]
exists
and  
\[
\lim_{n\to\infty} \mult_{C}(F_n(\overline{D})) = \inf_{n \geq 1} \{ \mult_{C}(F_n(\overline{D})) \}.
\]
Therefore $\lim_{n\to\infty} \mult_{C}(M_n(\overline{D}))$ exists because
$M_n(\overline{D}) = D - F_n(\overline{D})$.
Note that $\mu_C(\overline{D}) = \lim_{n\to\infty} \mult_C(F_n(\overline{D}))$ as
$\mult_C(F_n(\overline{D})) = \nu_C(\overline{D})/n$ (cf. Subsection~\ref{subsec:asym:mult}).

\begin{Claim}
Let $h$ be a $D$-Green function of $C^{\infty}$-type.
Then there is a positive constant $A$ such that, for $x \in X(\CC) \setminus \Supp(D)(\CC)$,
\[
\lim_{n\to\infty} \frac{ \log \left( \dist(V(n\overline{D}); nh)(x) \right)}{n}
\]
exists in $\RR_{\leq 0}$ and
\begin{multline*}
\lim_{n\to\infty} \frac{\log \left( \dist(V(n\overline{D}); nh)(x) \right) - \log(A(n+1)^3)}{n} \\
=
\sup_{n \geq 1} \left\{  \frac{ \log \left( \dist(V(n\overline{D}); nh)(x) \right) - \log(A(n+1)^3) }{n} \right\}.
\end{multline*}
\end{Claim}

\begin{proof}
First of all, note that $\bigoplus_{n=0}^{\infty} V(n\overline{D})$ is a graded subring of
$\bigoplus_{n=0}^{\infty} H^0(X, nD)$.
By Theorem~\ref{thm:dist:dist:dist:ineq}, there is a positive constant $A$ such that
\[
\dist(V(n\overline{D}); nh) \leq A (n+1)^3
\]
and 
\[
\frac{\dist(V(n\overline{D}); nh)}{A(n+1)^3} \cdot \frac{\dist(V(m\overline{D}); mh)}{A(m+1)^3} \leq
\frac{\dist(V((n+m)\overline{D}); (n+m)h)}{A(n+m+1)^3} 
\]
for all $n, m \geq 1$. Moreover, $\dist(V(n\overline{D}); nh)(x) \not= 0$ for $x \in X(\CC) \setminus \Supp(D)(\CC)$.
Thus the claim follows.
\end{proof}
By using the Stone-Weierstrass theorem, for a positive number $\epsilon$, we can find continuous functions
$u$ and $v$ with the following properties:
\[
\begin{cases}
u \geq 0, \ \Vert u \Vert_{\sup} \leq \epsilon, \ \text{$h := g + u$ is of $C^{\infty}$-type}, \\
v \geq 0, \ \Vert v \Vert_{\sup} \leq \epsilon, \ \text{$h' := g - v$ is of $C^{\infty}$-type}.
\end{cases}
\]
By Lemma~\ref{lem:dist:comp},
\[
\exp(-n\epsilon) \dist(V(n\overline{D}); nh') \leq \dist(V(n\overline{D}); ng) \leq \exp(n\epsilon) \dist(V(n\overline{D}); nh).
\]
Thus, by the above claim,  for $x \in X(\CC) \setminus \Supp(D)(\CC)$,
\[
\limsup_{n\to\infty} \frac{ \log \left( \dist(V(n\overline{D}); ng)(x) \right)}{n}
\]
exists in $\{ a \in \RR \mid a \leq \epsilon \}$.  Since $\epsilon$ is arbitrary positive number, we actually have 
\addtocounter{Claim}{1}
\begin{equation}
\label{eqn:prop:sigma:decomp:surface:1}
\limsup_{n\to\infty} \frac{ \log \left( \dist(V(n\overline{D}); ng)(x) \right)}{n} \leq 0.
\end{equation}
This observation shows that $\limsup_{n\to\infty} \left(g_{M_n(\overline{D})}\right)_{\rm can}(x)$ exists in $\RR$ for $x \in X(\CC) \setminus \Supp(D)(\CC)$.
Therefore, by Theorem~\ref{thm:limit:nef:div:surface}, there is a nef arithmetic $\RR$-divisor 
\[
\overline{M}_{\infty}(\overline{D})
=\left(M_{\infty}(\overline{D}), g_{M_{\infty}(\overline{D})}\right)
\]
satisfying (1), (2) and (3). 
Further the last assertion of (2) is a consequence of the above claim.

Let us see (4). 
Obviously $\mu_C(\overline{D}) = \mult_C(F_{\infty}(\overline{D}))$ because 
\[
\mu_C(\overline{D}) = \lim_{n\to\infty} \mult_C(F_n(\overline{D})).
\]
Note that
\[
\left(g_{F_{\infty}(\overline{D})}\right)_{\rm can}(x) = - \limsup_{n\to\infty} \frac{ \log \left( \dist(V(n\overline{D}); ng)(x) \right)}{n} \quad\aew.
\]
on $X(\CC) \setminus \Supp(D)(\CC)$.
Thus \eqref{eqn:prop:sigma:decomp:surface:1} yields
$\left(g_{F_{\infty}(\overline{D})}\right)_{\rm can}(x)  \geq 0\ \aew$.
Hence $\overline{F}_{\infty}(\overline{D})$ is effective.
Moreover, it is obvious that $g_{F_{\infty}(\overline{D})}$ is of $(C^{0} -\Tpsh_{\RR})$-type because
$g$ is of $C^{0}$-type and $g_{M_{\infty}(\overline{D})}$ is of $\Tpsh_{\RR}$-type.

We assume that $\overline{D}$ is of $C^{\infty}$-type.
By the above claim, there is a positive constant $A'$ such that
\begin{multline*}
 - \lim_{n\to\infty} \frac{ \log \left( \dist(V(n\overline{D}); ng)(x) \right)}{n}\\
  =
  \lim_{n\to\infty} \frac{ - \log \left( \dist(V(n\overline{D}); ng)(x)\right)  + \log (A'(n+1)^3)}{n} \\
  = \inf_{n \geq 1} \left\{  \frac{ - \log \left( \dist(V(n\overline{D}); ng)(x) \right) + \log(A'(n+1)^3) }{n} \right\}
\end{multline*}
on  $X(\CC) \setminus \Supp(D)(\CC)$.
Thus, for $n \geq 1$,
\[
g_{F_{\infty}(\overline{D})} \leq  \frac{ - \log \left( \dist(V(n\overline{D}); ng)(x) \right) + \log(A'(n+1)^3) }{n}\quad\aew.
\]
which implies the last assertion of (4).

Finally let us check (5). By (4), we have $\overline{M}_{\infty}(\overline{D}) \leq \overline{D}$, so that
\[
\ah(X, n \overline{M}_{\infty}(\overline{D}) ) \leq \ah(X, n\overline{D})
\]
holds for $n \geq 1$. Moreover, by (4) again,
\[
n\overline{M}_{\infty}(\overline{D}) + (0, 3\log(n+1) + \log(A')) \geq
\overline{M}(n\overline{D})
\]
for all $n \geq 1$. Thus, by using (1) in Proposition~\ref{prop:decomp:surface:no:limit},
\ifpxfont
\[
\ah\left(X, n\overline{D}\right) \leq \ah\left(X, \overline{M}(n\overline{D})\right)
 \leq \ah\left(X,n\overline{M}_{\infty}(\overline{D}) + (0, 3\log(n+1) + \log(A'))\right).
\]
\else
\begin{multline*}
\hspace{2em}\ah\left(X, n\overline{D}\right) \leq \ah\left(X, \overline{M}(n\overline{D})\right) \\
 \leq \ah\left(X,n\overline{M}_{\infty}(\overline{D}) + (0, 3\log(n+1) + \log(A'))\right).
\end{multline*}
\fi
Note that there is a positive constant $e'$ such that
\[
\ah\left(X,n\overline{M}_{\infty}(\overline{D}) + (0, 3\log(n+1) + \log(A'))\right) \leq \ah(X,n\overline{M}_{\infty}(\overline{D})) + e'n\log(n+1)
\]
for all $n \geq 1$ (cf. \cite[(3) in Proposition~2,1]{MoCont} and \cite[Lemma~1.2.2]{MoArLin}). Thus (5) follows.
\end{proof}

\renewcommand{\theTheorem}{\arabic{section}.\arabic{subsection}.\arabic{Theorem}}
\renewcommand{\theClaim}{\arabic{section}.\arabic{subsection}.\arabic{Theorem}.\arabic{Claim}}
\renewcommand{\theequation}{\arabic{section}.\arabic{subsection}.\arabic{Theorem}.\arabic{Claim}}

\section{Zariski decompositions and their properties on arithmetic surfaces}

Throughout this section, let $X$ be a regular projective arithmetic surface and
let $\TT$ be a type for Green functions on $X$.
We always assume that $\Tpsh$ is a subjacent type of $\TT$.

\subsection{Preliminaries}
\setcounter{Theorem}{0}
In this subsection, we prepare several lemmas for the proof of Theorem~\ref{thm:Zariski:decomp:arith:surface}.

\begin{Lemma}
\label{lem:subharmonic:max:Riemann:surface}
We assume that $\TT$ is either $C^0$ or $\Tpsh_{\RR}$.
Let $M$ be a $1$-equidimensional complex manifold and
let $D_1, \ldots, D_n$ be $\RR$-divisors on $M$.
Let $g_1, \ldots, g_n$ be locally integrable functions on $M$
such that $g_i$ is a $D_i$-Green functions of $\TT$-type for each $i$.
We set 
\[
g(x) = \max \{ g_1(x), \ldots, g_n(x) \}\quad(x \in M)
\]
and
\[
D = \sum_{x \in M} \max \{ \mult_{x}(D_1), \ldots, \mult_{x}(D_n) \} x.
\]
Then $g$ is a $D$-Green function of $\TT$-type.
\end{Lemma}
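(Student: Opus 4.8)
The plan is to reduce to a purely local statement on a coordinate disc, since being a $D$-Green function of $\TT$-type is a local condition. Fix $x_0 \in M$. As each $\Supp(D_i)$ is a closed discrete subset of the Riemann surface $M$, so is their union, and I may choose a coordinate neighbourhood $(U_{x_0}, z)$ with $z(x_0) = 0$ such that $U_{x_0}$ meets $\bigcup_{i=1}^n \Supp(D_i)$ at most in $\{x_0\}$. Writing $a_i := \mult_{x_0}(D_i)$, on $U_{x_0}$ we have $D_i = a_i x_0$ with $z$ a local equation of the point $x_0$, so $g_i$ has a local expression $g_i = u_i + (-a_i)\log|z|^2$ almost everywhere on $U_{x_0}$ for some $u_i \in \TT(U_{x_0})$. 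Set $a := \max_i a_i$; note $a = \mult_{x_0}(D)$ and $I := \{ i : a_i = a \} \neq \emptyset$. For each $i$ put $w_i := u_i + (a - a_i)\log|z|^2$; since $a - a_i \geq 0$, the function $w_i$ behaves well except possibly at $x_0$, where $w_i(x_0) = -\infty$ when $i \notin I$. I will show that $v := \max_{1\le i\le n} w_i$ lies in $\TT(U_{x_0})$ and that $g = v + (-a)\log|z|^2$ almost everywhere on $U_{x_0}$; since $z$ is a local equation of $x_0$ and $\mult_{x_0}(D) = a$, this is exactly the assertion that $g$ is a $D$-Green function of $\TT$-type near $x_0$.

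The identity $g = v + (-a)\log|z|^2$ almost everywhere is immediate: intersecting the $n$ full-measure sets on which the local expressions hold, $g = \max_i g_i = \max_i\big(u_i + (-a_i)\log|z|^2\big)$ almost everywhere, and away from the single point $x_0$ the finite real number $(-a)\log|z|^2$ may be factored out of the maximum, giving $\max_i g_i = v + (-a)\log|z|^2$.

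For $v \in \TT(U_{x_0})$ I treat the two cases separately. If $\TT = \Tpsh_{\RR}$: on a $1$-equidimensional complex manifold plurisubharmonic means subharmonic, $\log|z|^2$ is subharmonic on $U_{x_0}$, and $a - a_i \geq 0$, so each $w_i$ is subharmonic, hence so is the finite maximum $v$; moreover for any $i_0 \in I$ we have $w_{i_0} = u_{i_0}$, which is real valued, so $v \geq u_{i_0} > -\infty$ everywhere and $v$ is real valued, i.e. $v \in \Tpsh_{\RR}(U_{x_0})$. If $\TT = C^0$: each $w_i$ is continuous on $U_{x_0}\setminus\{x_0\}$, so $v$ is continuous there; and as $x \to x_0$ one has, for $i \notin I$, that $u_i$ stays bounded while $(a - a_i)\log|z(x)|^2 \to -\infty$, so $w_i(x) \to -\infty$, whence on a small enough neighbourhood $V$ of $x_0$ the maximum is attained by indices in $I$ and $v = \max_{i\in I} u_i$ on $V$, a continuous real valued function. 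Thus $v$ extends continuously across $x_0$ and $v \in C^0(U_{x_0})$.

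Finally I would observe that $D$ is a genuine $\RR$-divisor on $M$ (its coefficient $\max_i \mult_x(D_i)$ is nonzero only on the locally finite set $\bigcup_i \Supp(D_i)$), so the local statements furnish $g \in G_{\TT}(M; D)$. The only delicate point — and the step I would treat most carefully — is the behaviour of $v$ at the points of $\bigcup_i \Supp(D_i)$: one must check that after subtracting the largest singular term $(-a)\log|z|^2$, the indices $i$ with $a_i < a$ neither destroy continuity (for $C^0$) nor force the value $-\infty$ (for $\Tpsh_{\RR}$). This is precisely where the hypotheses $\TT \in \{C^0, \Tpsh_{\RR}\}$ and $I \neq \emptyset$ are used; for a type such as $C^\infty$ the maximum already fails to preserve the type even away from the support.
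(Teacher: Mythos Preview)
Your proof is correct and follows essentially the same route as the paper: localize at a point, write each $g_i$ in the form $u_i - a_i\log|z|^2$, factor out the largest $a$, and verify that the residual maximum lies in $\TT$ by using subharmonicity of $\log|z|^2$ (for $\Tpsh_{\RR}$) or by discarding the indices with $a_i < a$ near the base point (for $C^0$). Your treatment is slightly more explicit than the paper's---for instance, you spell out why $v$ is real valued in the $\Tpsh_{\RR}$ case via $v \ge u_{i_0}$ for $i_0 \in I$---but the argument is the same.
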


\begin{proof}
For $x \in M$, let $z$ be a local chart of an open neighborhood $U_x$ of $x$ with $z(x) = 0$, and
let 
\[
g_1 = u_1 - a_1 \log \vert z \vert^2\ \aew,\ \ldots, \ g_n = u_n - a_n \log \vert z \vert^2\ \aew
\]
be local expressions of $g_1, \ldots, g_n$ respectively over $U_x$,
where $a_i = \mult_{x}(D_i)$ and $u_i \in \TT(U_x)$ for $i=1, \ldots, n$.
Clearly we may assume that $a_1 = \max \{ a_1, \ldots, a_n \}$.
First of all, we have
\[
g = \max \{ u_i + (a_1 - a_i) \log \vert z \vert^2 \mid i=1, \ldots, n \} - a_1 \log \vert z \vert^2\quad\aew
\]
over $U_x$.
In addition, the value of
\[
u := \max \{ u_i + (a_1 - a_i) \log \vert z \vert^2 \mid i=1, \ldots, n \}
\]
at $y \in U_x$ is finite

First we consider the case where $\TT = \Tpsh_{\RR}$.
Then $u_1, \ldots, u_n$ are subharmonic over $U_x$, so that 
$u_i + (a_1 - a_i) \log \vert z \vert^2$ is also subharmonic over $U_x$ for every $i$.
Therefore,
$u$ is subharmonic over $U_x$.

Next let us see the case where $\TT = C^0$.
We set $I = \{ i \mid a_i = a_1\}$.
Then, shrinking $U_x$ if necessarily,
we may assume that $u_1 > u_j + (a_1 - a_j) \log \vert z \vert^2$ on $U_x$ for all $j \not\in I$.
Thus $u = \max \{ u_i \mid i \in I \}$, and hence $u$ is continuous.
\end{proof}

\begin{Lemma}
\label{lem:nef:max:arith:surface}
We assume that $\TT$ is either $C^0$ or $\Tpsh_{\RR}$.
Let 
\[
\overline{D}_1 = (D_1, g_1), \ldots, \overline{D}_n = (D_n, g_n)
\]
be arithmetic $\RR$-divisors of $\TT$-type on $X$. 
We set 
\[
\begin{cases}
\max\{ D_1, \ldots, D_n \} := \sum_{C} \max \{ \mult_{C}(D_1), \ldots, \mult_{C}(D_n) \} C, \\
\max\{ \overline{D}_1, \ldots, \overline{D}_n \} := (\max\{ D_1, \ldots, D_n \}, \max \{ g_1, \ldots, g_n \}).
\end{cases}
\]
Then we have the following:
\begin{enumerate}
\renewcommand{\labelenumi}{(\arabic{enumi})}
\item
$\max\{ \overline{D}_1, \ldots, \overline{D}_n \}$ is an arithmetic $\RR$-divisor of $\TT$-type for $D$.

\item
If $\TT = \Tpsh_{\RR}$ and
$\overline{D}_1, \ldots, \overline{D}_n$ are nef, then
$\max\{ \overline{D}_1, \ldots, \overline{D}_n \}$ is nef.
\end{enumerate}
\end{Lemma}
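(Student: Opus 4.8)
The plan is to reduce statement (1) to the local result of Lemma~\ref{lem:subharmonic:max:Riemann:surface} and statement (2) to the intersection-number computation together with the analytic observation that a pointwise maximum of $\Tpsh_{\RR}$-Green functions dominates each of them. For (1), I would first note that the Green function $\max\{g_1,\dots,g_n\}$ is manifestly $F_{\infty}$-invariant, since each $g_i$ is $F_{\infty}$-invariant almost everywhere and $F_{\infty}$ commutes with taking pointwise maxima (more precisely, $F_{\infty}^*(\max\{g_1,\dots,g_n\}) = \max\{F_{\infty}^*(g_1),\dots,F_{\infty}^*(g_n)\} = \max\{g_1,\dots,g_n\}$ a.e.). Then, applying Lemma~\ref{lem:subharmonic:max:Riemann:surface} to the complex manifold $M = X(\CC)$ with the $\RR$-divisors $D_1(\CC),\dots,D_n(\CC)$ and Green functions $g_1,\dots,g_n$, we get that $\max\{g_1,\dots,g_n\}$ is a Green function of $\TT$-type for the divisor $\sum_{x\in X(\CC)}\max_i\{\mult_x(D_i)\}x$, which is exactly $(\max\{D_1,\dots,D_n\})(\CC)$. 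Care must be taken that the arithmetic divisor $\max\{D_1,\dots,D_n\} := \sum_C \max_i\{\mult_C(D_i)\}C$ is a Cartier divisor on $X$: on an arithmetic surface $X$, regularity guarantees every Weil divisor is Cartier (or one can invoke Lemma~\ref{lem:Weil:leq:Cartier}-type reasoning locally since $X$ is regular of dimension $2$). This settles (1).

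For (2), assume $\TT = \Tpsh_{\RR}$ and that each $\overline{D}_i$ is nef. We must verify the two defining conditions of nefness for $\overline{M} := \max\{\overline{D}_1,\dots,\overline{D}_n\}$: that it is of $\Tpsh_{\RR}$-type, and that $\adeg(\rest{\overline{M}}{C})\ge 0$ for every $1$-dimensional closed integral subscheme $C$ of $X$. The first is already part (1). For the second, fix such a $C$. The key observation is that, by construction, $\overline{M} \geq \overline{D}_i$ in the sense that $M \geq D_i$ as divisors and $\max\{g_1,\dots,g_n\}\geq g_i$ pointwise (hence a.e.) for every $i$. I would then pick an index $i_0$ realizing the multiplicity of $M$ along $C$, i.e.\ $\mult_C(M) = \mult_C(D_{i_0})$ — but one must be slightly careful, because even with $\mult_C(M)=\mult_C(D_{i_0})$ the Green functions need not agree near $C(\CC)$. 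The cleaner route is: since $M - D_{i} \geq 0$ is effective for each $i$ and $\max\{g_1,\dots,g_n\} - g_i \geq 0$ a.e., the arithmetic divisor $\overline{M} - \overline{D}_i$ is effective, and $\overline{D}_i$ is nef; so it suffices to show $\adeg(\rest{(\overline{M}-\overline{D}_i)}{C}) \geq 0$ for a suitably chosen $i$, together with $\adeg(\rest{\overline{D}_i}{C})\ge 0$ which holds by hypothesis.

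To make the choice of $i$ work along the fixed $C$, I would argue as follows. If $C \not\subseteq \Supp(M - D_i)$ for some $i$, then $\overline{M}-\overline{D}_i$ restricted to $C$ is represented by an effective arithmetic divisor with a Green function that is bounded below (it is a sum of a plurisubharmonic difference which is $\geq 0$ a.e.\ hence $\geq 0$ by Lemma~\ref{lem:fqpssh:ineq:ae}), and its arithmetic degree is a sum of $\log\#(\OO_C(\cdot)/\OO_C)\ge 0$ terms plus $\tfrac12\sum_{x\in C(\CC)}(\text{nonnegative canonical values})\ge 0$; hence $\adeg(\rest{\overline{M}}{C}) = \adeg(\rest{\overline{D}_i}{C}) + \adeg(\rest{(\overline{M}-\overline{D}_i)}{C}) \geq 0$. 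So the real content is to produce, for each $C$, an index $i$ with $\mult_C(M) = \mult_C(D_i)$ — which exists by the very definition of $\mult_C(M) = \max_i \mult_C(D_i)$ — and then to observe that although $\overline{M}-\overline{D}_i$ may still be supported on $C$ via the archimedean part, the monotonicity $\max\{g_1,\dots,g_n\}\geq g_i$ a.e.\ forces $(g_{M}-g_{D_i})_{\mathrm{can}} \geq 0$ on $C(\CC)\setminus\Supp(M-D_i)(\CC)$ by the subjacency of $\Tpsh$; combined with $\mult_C(M-D_i)=0$ this gives $\adeg(\rest{(\overline{M}-\overline{D}_i)}{C})\geq 0$ directly via the formula $\adeg(\rest{(\overline{M}-\overline{D}_i)}{C}) = \tfrac12\sum_{x\in C(\CC)}(g_{M}-g_{D_i})_{\mathrm{can}}(x)$. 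The main obstacle is precisely this bookkeeping at the archimedean places: one needs to match the chosen index $i$ to $C$ so that both the finite multiplicity and the canonical Green-function value behave monotonically, and to invoke Lemma~\ref{lem:fqpssh:ineq:ae} to upgrade the almost-everywhere inequality $g_M \geq g_i$ to a genuine pointwise inequality of the plurisubharmonic representatives, which is what legitimizes the degree computation on $C(\CC)$.
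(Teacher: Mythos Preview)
Your proposal is correct and follows essentially the same approach as the paper: for (1) you reduce to Lemma~\ref{lem:subharmonic:max:Riemann:surface} after noting $F_\infty$-invariance, and for (2) you pick $i$ with $\mult_C(M)=\mult_C(D_i)$ (the paper phrases this as $\gamma\notin\Supp(D-D_i)$ via the codimension-$\geq 2$ observation) and then use effectivity of $\overline{M}-\overline{D}_i$ together with the type axiom (via Lemma~\ref{lem:fqpssh:ineq:ae}/Proposition~\ref{prop:scalar:sum:Green:function}(4)) to get $\adeg(\rest{(\overline{M}-\overline{D}_i)}{C})\geq 0$. One small slip: your final displayed formula $\adeg(\rest{(\overline{M}-\overline{D}_i)}{C}) = \tfrac12\sum_{x\in C(\CC)}(g_{M}-g_{i})_{\mathrm{can}}(x)$ omits the (nonnegative) finite contributions from components of $M-D_i$ meeting $C$; you already handled this correctly two sentences earlier, so the conclusion is unaffected.
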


\begin{proof}
(1) It is obvious that $\max \{ g_1, \ldots, g_n \}$ is $F_{\infty}$-invariant, so that
(1) follows from Lemma~\ref{lem:subharmonic:max:Riemann:surface}.

(2) For simplicity, we set $D = \max \{ D_1, \ldots, D_n \}$,
$g = \max \{ g_1, \ldots, g_n \}$ and $\overline{D} = \max \{ \overline{D}_1, \ldots, \overline{D}_n \}$.
Let $C$ be a $1$-dimensional closed integral subscheme of $X$.
Let $\gamma$ be the generic point of $C$. Since the codimension of
\[
\Supp(D - D_1) \cap \cdots \cap \Supp(D - D_n)
\]
is greater than or equal to $2$, there is $i$ such that $\gamma \not\in \Supp(D - D_i)$.
By Proposition~\ref{prop:scalar:sum:Green:function},
$g- g_i$ is a $(D - D_i)$-Green function of $(\Tpsh_{\RR} -\Tpsh_{\RR})$-type and $g - g_i \geq 0\ \aew$.
Moreover, as $x \not\in \Supp(D-D_i)$ for $x \in C(\CC)$,
by Proposition~\ref{prop:scalar:sum:Green:function}, 
\[
(g-g_i)_{\rm can}(x) \geq 0.
\]
Therefore, $\adeg \left( \rest{\overline{D} - \overline{D}_i}{C} \right) \geq 0$, and hence
\[
\adeg \left( \rest{\overline{D}}{C}  \right) \geq \adeg \left( \rest{\overline{D}_i}{C} \right) \geq 0.
\]
\end{proof}

\begin{Lemma}
\label{lem:good:psh:arith:surface}
Let $(D, g)$ be an effective arithmetic $\RR$-divisor of $C^0$-type on $X$ and
let $E$ be an $\RR$-divisor on $X$ with $0 \leq E \leq D$.
Then there is an $F_{\infty}$-invariant $E$-Green function $h$ of $(C^0 \cap \Tpsh)$-type such that
\[
0 \leq (E, h) \leq (D, g).
\]
\end{Lemma}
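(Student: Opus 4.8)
The plan is to realize $h$ as the greatest element of $G_{\Tpsh}(X(\CC);E)_{\le g}$ modulo null functions, produced by Theorem~\ref{thm:cont:upper:envelope}, and to deduce the lower bound $h\ge 0$ from the elementary fact that the zero function already lies in that family.

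First I would observe that the function identically equal to $0$ on $X(\CC)$ is an $F_\infty$-invariant $E$-Green function of $\Tpsh$-type. Writing $E=e_1E_1+\cdots+e_rE_r$ with $e_i\ge 0$ and $E_i$ prime --- hence Cartier, since $X$ is regular --- and choosing local equations $f_1,\dots,f_r$ of $E_1,\dots,E_r$ on an open set $U$ of $X(\CC)$, we have $0=\bigl(\sum_{i}e_i\log|f_i|^2\bigr)+\sum_{i}(-e_i)\log|f_i|^2$ on $U$, and $\sum_i e_i\log|f_i|^2\in\Tpsh(U)$ because each $\log|f_i|^2$ is plurisubharmonic and $\Tpsh$ is stable under sums and under multiplication by non-negative reals. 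Since $(D,g)$ is effective we have $g\ge 0$ a.e.\ on $X(\CC)$, so $0\le g$ a.e.; hence $0\in G_{\Tpsh}(X(\CC);E)_{\le g}$, and in particular this set is non-empty.

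Next I would check that Theorem~\ref{thm:cont:upper:envelope} applies with $A=E$, $B=D$ (so $A\le B$ since $0\le E\le D$) and $g_B=g$: for this I need a $C^\infty$ $E$-Green function whose associated form $dd^c([\,\cdot\,])+\delta_E$ is, on each connected component of $X(\CC)$, either positive or the zero form. On a component $\Sigma$ on which $E$ restricts to a divisor of positive degree, standard potential theory on the compact Riemann surface $\Sigma$ furnishes a $C^\infty$ $E|_\Sigma$-Green function with curvature a positive multiple of $\Phi|_\Sigma$; on a component on which $E$ restricts to the zero divisor the zero function works, with curvature zero. Applying Theorem~\ref{thm:cont:upper:envelope}(1) on each connected component and assembling, I obtain $h'\in G_{C^0\cap\Tpsh}(X(\CC);E)$ with $h'\le g$ a.e.\ and $u\le h'$ a.e.\ for every $u\in G_{\Tpsh}(X(\CC);E)_{\le g}$; taking $u=0$ gives $h'\ge 0$ a.e. Finally, to obtain $F_\infty$-invariance: by Lemma~\ref{lem:D:Green:F:infty} applied to the type $C^0\cap\Tpsh$ (which is $F_\infty$-compatible, $\Tpsh$ by Lemma~\ref{lem:plurisubharmonic:complex:conjugation} and $C^0$ trivially), $F_\infty^*(h')$ is again an $E$-Green function of $(C^0\cap\Tpsh)$-type, and $F_\infty^*(h')\le F_\infty^*(g)=g$ a.e., so $F_\infty^*(h')\in G_{\Tpsh}(X(\CC);E)_{\le g}$ as well; by the maximality of $h'$ we get $F_\infty^*(h')\le h'$ a.e., and applying $F_\infty^*$ once more and using $F_\infty\circ F_\infty=\mathrm{id}$ yields the reverse inequality, so $h'=F_\infty^*(h')$ a.e. Thus $h:=h'$ is an $F_\infty$-invariant $E$-Green function of $(C^0\cap\Tpsh)$-type with $0\le h\le g$ a.e., i.e.\ $0\le(E,h)\le(D,g)$.

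The argument is essentially an assembly of earlier results, so there is no deep obstacle. The two points needing a little attention are the construction of the reference $C^\infty$ Green function on components where $E$ has degree zero --- which is precisely why Theorem~\ref{thm:cont:upper:envelope} is phrased so as to allow the zero form --- and the observation that the $\Tpsh$-type envelope bounded above by $g$ is automatically non-negative, for which the fact that $0$ is an $E$-Green function of $\Tpsh$-type is the decisive input.
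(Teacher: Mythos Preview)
Your proof is correct, but the paper takes a considerably more elementary route. Rather than invoking Theorem~\ref{thm:cont:upper:envelope} (which ultimately rests on the Berman--Demailly regularity result), the paper simply picks an $F_\infty$-invariant $E$-Green function $h_1$ of $(C^\infty\cap\Tpsh)$-type, observes that since $E\le D$ and $X(\CC)$ is compact there is a constant $C_1$ with $h_1+C_1\le g$ a.e., and then sets $h=\max\{h_1+C_1,0\}$. Lemma~\ref{lem:subharmonic:max:Riemann:surface} (the max of two Green functions of $(C^0\cap\Tpsh)$-type for divisors $E$ and $0$ is again of that type, for the divisor $\max\{E,0\}=E$) finishes the argument in one line.

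What your approach buys is a canonical $h$ --- the largest possible --- whereas the paper's $h$ depends on the auxiliary choice of $h_1$ and the constant $C_1$. What the paper's approach buys is that it avoids the deep continuity theorem entirely: the max lemma is a local, elementary statement about subharmonic functions on Riemann surfaces. Since the lemma is only used as input to later constructions and no extremality of $h$ is needed, the paper's lighter argument is preferable here.
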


\begin{proof}
Let $h_1$ be an $F_{\infty}$-invariant $E$-Green function of $(C^{\infty} \cap \Tpsh)$-type.
There is a constant $C_1$ such that $h_1 + C_1 \leq g \ \aew$.
We set $h = \max \{ h_1 + C_1, 0 \}$.
Then, by Lemma~\ref{lem:subharmonic:max:Riemann:surface}, $h$ is an $F_{\infty}$-invariant  $E$-Green function of $(C^0 \cap \Tpsh)$-type and
$0 \leq h \leq g \ \aew$.
\end{proof}

\subsection{The existence of Zariski decompositions}
\setcounter{Theorem}{0}

Let  $\overline{D} = (D, g)$ be an arithmetic $\RR$-divisor of $\TT$-type on $X$
such that $g$ is of upper bounded type. Let us consider 
\[
(-\infty, \overline{D}] \cap \aNef(X)_{\RR} = \{ \overline{M} \mid \text{$\overline{M}$ is nef and
$\overline{M} \leq \overline{D}$} \}.
\]
The following theorem is one of the main theorems of this paper, which guarantees the greatest element $\overline{P}$ of $(-\infty, \overline{D}] \cap \aNef(X)_{\RR}$
under the assumption $(-\infty, \overline{D}] \cap \aNef(X)_{\RR} \not= \emptyset$.
If we set $\overline{N} = \overline{D} - \overline{P}$, then we have 
a decomposition $\overline{D} = \overline{P} + \overline{N}$.
It is called
the {\em Zariski decomposition of $\overline{D}$}, and $\overline{P}$ \rom{(}resp. $\overline{N}$\rom{)}
is called the {\em positive part} \rom{(}resp. {\em negative part}\rom{)} of $\overline{D}$.

\begin{Theorem}[Zariski decomposition on an arithmetic surface]
\label{thm:Zariski:decomp:arith:surface}
If \[
(-\infty, \overline{D}] \cap \aNef(X)_{\RR} \not= \emptyset,
\]
 then
there is $\overline{P}  = (P,p) \in (-\infty, \overline{D}] \cap \aNef(X)_{\RR}$ such that
$\overline{P}$ is greatest in $(-\infty, \overline{D}] \cap \aNef(X)_{\RR}$, that is,
$\overline{M} \leq \overline{P}$ for all $\overline{M} \in (-\infty, \overline{D}] \cap \aNef(X)_{\RR}$.
Moreover, if $\overline{D}$ is of $C^0$-type, then $\overline{P}$ is also of $C^0$-type.
\end{Theorem}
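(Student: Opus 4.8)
The plan is to produce $\overline{P}$ as the greatest element by a maximization-and-limit argument, combining the linear-algebra picture of Section~1 (which controls the ``horizontal'' coefficients along irreducible components of a fixed closed set) with the plurisubharmonic upper-envelope machinery of Section~4 and the limit theorem for nef arithmetic $\RR$-divisors, Theorem~\ref{thm:limit:nef:div:surface}. First I would fix one element $\overline{M}_0 = (M_0, h_0) \in (-\infty, \overline{D}] \cap \aNef(X)_{\RR}$ and observe that we may restrict attention to nef $\overline{M} \leq \overline{D}$ with $\overline{M} \geq \overline{M}_0$ in the coefficient sense on a proper closed set: writing $D = a_1 C_1 + \cdots + a_l C_l + (\text{Green data})$, every candidate $\overline{M}$ has $M$ supported (horizontally) on the $C_i$'s together with the components of $M_0$, so we set $E$ to be the union of $\Supp(D)$, $\Supp(M_0)$, and finitely many vertical fibers if needed. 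The reason this reduction is legitimate is Lemma~\ref{lem:nef:max:arith:surface}: the ``max'' of finitely many nef arithmetic $\RR$-divisors of $\Tpsh_{\RR}$-type is again nef and is still $\leq \overline{D}$ (since $\overline{D}$ dominates each), so the family $(-\infty,\overline{D}]\cap\aNef(X)_{\RR}$ is directed upward and taking suprema makes sense.

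Next I would extract a sequence $\{\overline{M}_n = (M_n, h_n)\}$ from this family that is ``approximately greatest'': using Lemma~\ref{lem:nef:max:arith:surface} to take maxima, I can arrange that $\mult_C(M_n)$ is non-decreasing in $n$ and converges, for each of the finitely many $1$-dimensional components $C$ of $E$, to the supremum of $\mult_C(M')$ over all $\overline{M}' \in (-\infty,\overline{D}]\cap\aNef(X)_{\RR}$ — this supremum is finite because $M_n \leq D$ forces $\mult_C(M_n) \leq \mult_C(D)$. Simultaneously, for the Green functions, I would replace $h_n$ by $\max\{h_1,\ldots,h_n\}$ so that $(h_n)_{\mathrm{can}}(x)$ is non-decreasing and bounded above by $g_{\mathrm{can}}(x)$ for $x \in X(\CC)\setminus E(\CC)$ (here $\Tpsh$ being a subjacent type of $\TT$ and $g$ of upper bounded type gives the uniform upper bound via Lemma~\ref{lem:subharmonic:local:bound}); hence $\limsup_n (h_n)_{\mathrm{can}}(x)$ exists in $\RR$. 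Now all four hypotheses (a)--(d) of Theorem~\ref{thm:limit:nef:div:surface} are met, so there is a nef arithmetic $\RR$-divisor $\overline{P} = (P,p)$ with $\overline{P} \leq \overline{D}$, with $\mult_C(P) = \lim_n \mult_C(M_n)$, and with $p_{\mathrm{can}}$ the upper semicontinuous regularization of $x \mapsto \limsup_n (h_n)_{\mathrm{can}}(x)$ off $E(\CC)$.

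The step that requires care — and which I expect to be the main obstacle — is verifying that this $\overline{P}$ is genuinely the \emph{greatest} element, i.e. $\overline{M} \leq \overline{P}$ for \emph{every} $\overline{M} \in (-\infty,\overline{D}]\cap\aNef(X)_{\RR}$, not merely for the ones in our chosen sequence. For the horizontal coefficients this is where the finite-dimensional Zariski-decomposition lemma, Lemma~\ref{lem:negative:definite} (via Proposition~\ref{prop:zariski:decomp:in:vector:space}), enters: one applies it with $e_i = C_i$ and $\phi_i(\cdot) = \adeg(\rest{(\cdot)}{C_i})$ restricted to the negative part, using that the intersection pairing of the $C_i$ in the negative part is negative definite, to conclude that the coefficientwise supremum $P$ of nef candidates is itself attained and dominates every candidate; this is exactly the algebraic-surface argument transplanted to the arithmetic setting, where the intersection numbers $\adeg(\rest{C_i}{C_j})$ play the role of $(C_i\cdot C_j)$. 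For the Green-function part one uses that if $\overline{M}=(M,h)$ is any nef competitor then $M \leq P$ already (by the coefficient statement), so $h$ is an $M$-Green function with $h \leq g$; comparing with the sequence via the max-construction and Proposition~\ref{prop:limit:Green:function:PSH} shows $h \leq p\ \aew$. Finally, the last sentence — that $\overline{P}$ is of $C^0$-type when $\overline{D}$ is — follows from Theorem~\ref{thm:cont:upper:envelope}: near the generic points the local expression of $p$ is the upper envelope of $\Tpsh$-type Green functions bounded above by the $C^0$-type $g$, and the Berman--Demailly regularity result quoted there (together with Lemma~\ref{lem:Green:A:B:} to reduce to $A=B$ and the adequate/ample perturbation supplying the positivity hypothesis $dd^c([h])+\delta_A \geq 0$) forces this envelope to be continuous. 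The delicate point throughout is keeping the horizontal (coefficient) maximization and the archimedean (Green-function) maximization compatible along the same sequence so that Theorem~\ref{thm:limit:nef:div:surface} applies to one sequence simultaneously.
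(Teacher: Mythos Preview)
Your overall architecture is right and you invoke the correct tools, but there are two problems.

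First, the appeal to Lemma~\ref{lem:negative:definite} and Proposition~\ref{prop:zariski:decomp:in:vector:space} to establish that the coefficientwise supremum $P$ dominates every competitor is both unnecessary and circular. The negative definiteness of the intersection form on the support of the negative part is a \emph{consequence} of the Zariski decomposition (it is proved in Theorem~\ref{thm:Zariski:decomp:big}(4), using Lemma~\ref{lem:negative:definite}); it is not available as an input to the existence proof. The domination $M \leq P$ on coefficients is immediate from the definition $a(C) := \sup\{\mult_C(M) : \overline{M} \in (-\infty,\overline{D}]\cap\aNef(X)_{\RR}\}$, and this is all the paper uses.

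Second, and more seriously, the Green-function maximality argument has a gap. Your $\overline{P}$ is the output of Theorem~\ref{thm:limit:nef:div:surface} applied to a specific sequence chosen to maximize the \emph{coefficients}; nothing in that construction forces the resulting $p$ to dominate the Green function of an arbitrary competitor $\overline{M} = (M,h)$, since the sequence was not chosen to maximize Green functions, and since $h$ is an $M$-Green function rather than a $P$-Green function, Proposition~\ref{prop:limit:Green:function:PSH} does not apply directly to the full family. The paper repairs this with a second maximization step that you are missing: once Theorem~\ref{thm:limit:nef:div:surface} produces \emph{some} nef $(P,h) \leq \overline{D}$ with the correct divisor part, one observes (via the max-trick of Lemma~\ref{lem:nef:max:arith:surface}) that the greatest element of $[(P,h), \overline{D}]\cap\aNef(X)_{\RR}$ is automatically greatest in $(-\infty,\overline{D}]\cap\aNef(X)_{\RR}$. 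In the smaller set every element has divisor part exactly $P$, so the problem reduces to maximizing $P$-Green functions of $\Tpsh_{\RR}$-type bounded above by $g$; \emph{this} is where Proposition~\ref{prop:limit:Green:function:PSH} is applied, to the full family rather than to a sequence, and nefness of the resulting $(P,p)$ follows from Lemma~\ref{lem:nef:D1:leq:D2} since $h \leq p$. For the $C^0$-claim, the positivity hypothesis in Theorem~\ref{thm:cont:upper:envelope} is supplied not by an ample perturbation but by the fact that $\deg(P) \geq 0$ on the generic fiber (as $\overline{P}$ is nef), which lets one build a $C^{\infty}$ $P$-Green function whose curvature is a non-negative multiple of a fixed volume form on each connected component of $X(\CC)$.
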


\begin{proof}
For a  $1$-dimensional closed integral subscheme $C$ of $X$, we put 
\[
a(C) = \sup \{ \mult_C(M) \mid (M, g_M) \in (-\infty, \overline{D}] \cap \aNef(X)_{\RR} \}.\\
\]
We choose $\overline{M}_0 = (M_0, g_0) \in (-\infty, \overline{D}] \cap \aNef(X)_{\RR}$.
Then $\mult_{C}(M_0) \leq a(C) \leq \mult_C(D)$ by Lemma~\ref{lem:nef:max:arith:surface}.
Let $\{ C_1, \ldots, C_l \}$ be the set of all
$1$-dimensional closed integral subschemes in $\Supp(D) \cup \Supp(M_0)$.
Note that if $C \not\in \{ C_1, \ldots, C_l \}$, then
$a(C) = 0$. Thus we set $P = \sum_C a(C) C$.

\begin{Claim}
There is a sequence $\{ \overline{M}_n = (M_n, g_n) \}_{n=0}^{\infty}$ in $(-\infty, \overline{D}] \cap \aNef(X)_{\RR}$
such that $\overline{M}_n \leq \overline{M}_{n+1}$ for all $n \geq 0$  and that
\[
\lim_{n\to\infty} \mult_{C_i}(M_n) = a(C_i)
\]
for all $i=1, \ldots, n$.
\end{Claim}

\begin{proof}
For each $i$, let $\{ \overline{M}_{i,n} \}_{n=1}^{\infty}$ be a sequence in $(-\infty, \overline{D}] \cap \aNef(X)_{\RR}$
such that 
\[
\lim_{n\to\infty} \mult_{C_i}(M_{i,n}) = a(C_i).
\]
We set $\overline{M}_n = \max \left\{ \{ \overline{M}_0 \} \cup \{ \overline{M}_{i,j} \}_{1 \leq i \leq l, 1 \leq j \leq n} \right\}$ for $n \geq 1$.
By Lemma~\ref{lem:nef:max:arith:surface}, $\overline{M}_n \in (-\infty, \overline{D}] \cap \aNef(X)_{\RR}$. Moreover,
$\overline{M}_n \leq \overline{M}_{n+1}$
and
\[
\lim_{n\to\infty} \mult_{C_i}(M_{n}) = a(C_i)
\]
for all $i$.
\end{proof}

Since $\Tpsh$ is a subjacent type of $\TT$, by using Lemma~\ref{lem:fqpssh:ineq:ae},
\[
(g_0)_{\rm can} \leq \cdots \leq (g_n)_{\rm can} \leq (g_{n+1})_{\rm can} \leq \cdots \leq g_{\rm can}
\]
holds on $X(\CC) \setminus (\Supp(D) \cup \Supp(M_0))(\CC)$, which means that $\lim_{n\to\infty} (g_n)_{\rm can}(x)$ exists
for $x \in X(\CC) \setminus (\Supp(D) \cup \Supp(M_0))(\CC)$.
Therefore, by Theorem~\ref{thm:limit:nef:div:surface},
there is an $F_{\infty}$-invariant  $P$-Green function $h$ of $\Tpsh_{\RR}$-type on $X(\CC)$ such that
$(P, h) \leq \overline{D}$ and $(P, h)$ is nef.
Here we consider
\[
[(P,h), \overline{D}] \cap \aNef(X)_{\RR} = \{ (M, g_M) \mid \text{$(M, g_M)$ is nef and $(P, h) \leq (M, g_M) \leq \overline{D}$} \}.
\]
Note that $M = P$ for all $(M, g_M) \in [(P,h), \overline{D}] \cap \aNef(X)_{\RR}$.

\begin{Claim}
If $\overline{P} = (P, p)$ is the greatest element of $[(P,h), \overline{D}] \cap \aNef(X)_{\RR}$, then
$\overline{P}$ is also the greatest element of $(-\infty, \overline{D}] \cap \aNef(X)_{\RR}$.
\end{Claim}

\begin{proof}
For $(N, g_N) \in (-\infty, \overline{D}] \cap \aNef(X)_{\RR}$, we set $(M, g_M) = (\max\{ P, N \}, \max \{ h, g_N \})$.
Then 
\[
(M, g_M) \in [(P,h), \overline{D}] \cap \aNef(X)_{\RR}\quad\text{and}\quad(N, g_N) \leq (M, g_M).
\]
Thus the claim follows.
\end{proof}

By Proposition~\ref{prop:limit:Green:function:PSH},
there is a $P$-Green function $p$ of $\Tpsh_{\RR}$-type such that $p \leq g\ \aew$ and
$p_{\rm can}$
is the upper semicontinuous regularization of the function $p'$ given by
\[
p'(x) := \sup \{ (g_M)_{\rm can}(x) \mid \overline{M}  \in [(P,h), \overline{D}] \cap \aNef(X)_{\RR}\}
\]
over $X(\CC) \setminus \Supp(P)(\CC)$.
Since $(g_M)_{\rm can}$ is $F_{\infty}$-invariant on $X(\CC) \setminus \Supp(P)(\CC)$,
$p'$ is also $F_{\infty}$-invariant, and hence $p$ is $F_{\infty}$-invariant because
$p = p' \ \aew$ on $X(\CC) \setminus \Supp(P)(\CC)$ (cf. Subsection~\ref{subsec:pluri:subharmonic}).
We set $\overline{P} = (P, p)$. Then
$(P, h) \leq  \overline{P} \leq \overline{D}$ and hence
$\overline{P}$ is nef by Lemma~\ref{lem:nef:D1:leq:D2}.
In addition, $\overline{P}$ is the greatest element of $[(P,h), \overline{D}] \cap \aNef(X)_{\RR}$.

\medskip
Finally we assume that $\overline{D}$ is of $C^0$-type.
Let $e$ be the degree of $P$ on the generic fiber of $X \to \Spec(\ZZ)$.
As $\overline{P}$ is nef, we have $e \geq 0$.
Let $X(\CC) = X_1 \cup \cdots \cup X_r$ be the decomposition into connected components of $X(\CC)$.
We set $P = \sum_{i=1}^r \sum_{j} a_{ij} P_{ij}$ on $X(\CC)$,
where $P_{ij} \in X_i$ for all $i$ and $j$.
Note that $e = \sum_{j} a_{ij}$ for all $i$.
Let us fix a $C^{\infty}$-volume form $\omega_i$ on $X_i$ with $\int_{X_i} \omega_i = 1$.
Let $p_{ij}$ be a $P_{ij}$-Green function of $C^{\infty}$-type on $X_i$ such that
$dd^c([p_{ij}]) + \delta_{P_{ij}} = [\omega_i]$.
We set $p' = \sum_{i=1}^r \sum_j a_{ij} p_{ij}$.
Then $p'$ is a $P$-Green function of $C^{\infty}$-type and
\[
dd^c([p']) + \delta_{P} = \sum_{i=1}^r \left(\sum\nolimits_{j} a_{ij}\right) [\omega_i] = e \sum_{i=1}^r [\omega_i].
\]
Thus, if $e> 0$, then $dd^c([p']) + \delta_{P}$
is represented by a positive $C^{\infty}$-form $e \sum_{i=1}^r \omega_i$.
Moreover, if $e = 0$, then $dd^c([p']) + \delta_{P} = 0$.
Let us consider
\query{$\Tpsh_{\RR}$ $\Longrightarrow$ $\Tpsh$
(21/September/2010)}
\[
\left\{ \varphi \ \left|\  \begin{array}{l} \text{$\varphi$ is a $P$-Green function of $\Tpsh$-type} \\
\text{on $X(\CC)$ with
$\varphi \leq g\ \aew$} \end{array} \right\}\right..
\]
By Theorem~\ref{thm:cont:upper:envelope}, the above set has the greatest element $\tilde{p}$ modulo null functions
such that
$\tilde{p}$ is
a $P$-Green function of $(C^0 \cap \Tpsh)$-type.
Since $g$ is $F_{\infty}$-invariant, we have $F_{\infty}^*(\tilde{p}) \leq F_{\infty}^* (g) = g \ \aew$.
Moreover, by Lemma~\ref{lem:plurisubharmonic:complex:conjugation} and Lemma~\ref{lem:D:Green:F:infty}, 
$F_{\infty}^*(\tilde{p})$ is a $P$-Green function of $\Tpsh$-type.
Thus $F_{\infty}^*(\tilde{p}) \leq \tilde{p} \ \aew$, and hence
\[
\tilde{p} = F_{\infty}^*(F_{\infty}^*(\tilde{p})) \leq F_{\infty}^*(\tilde{p}) \quad \aew.
\]
Therefore, $\tilde{p}$ is $F_{\infty}$-invariant.
Note that $(P, \tilde{p})$ is nef because $p \leq \tilde{p}\ \aew$.
Hence $p = \tilde{p}\ \aew$.
\end{proof}

\subsection{Properties of Zariski decompositions}
\label{subsec:properties:Zariski:decomp}
\setcounter{Theorem}{0}

Let  $\overline{D} = (D, g)$ be an arithmetic $\RR$-divisor of $\TT$-type on $X$
such that $g$ is of upper bounded type. 
First of all, let us observe the following three properties of the Zariski decompositions:

\begin{Proposition}
\label{prop:easy:prop:Zariski:decomp}
We assume $(-\infty, \overline{D}] \cap \aNef(X)_{\RR} \not= \emptyset$.
Let $\overline{D} = \overline{P} + \overline{N}$ be the Zariski decomposition of $\overline{D}$.
Then we have the following:
\begin{enumerate}
\renewcommand{\labelenumi}{(\arabic{enumi})}
\item For a non zero rational function $\phi$ on $X$,
$\overline{D} +  \widehat{(\phi)}= (\overline{P} +  \widehat{(\phi)})+ \overline{N}$ is the Zariski decomposition of $\overline{D} + \widehat{(\phi)}$.

\item For $a \in \RR_{>0}$,
$a \overline{D} = a\overline{P} + a\overline{N}$ is the Zariski decomposition of $a\overline{D}$.
\end{enumerate}
\end{Proposition}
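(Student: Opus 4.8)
The plan is to verify both statements directly from the characterization of the positive part $\overline{P}$ as the greatest element of $(-\infty, \overline{D}] \cap \aNef(X)_{\RR}$, established in Theorem~\ref{thm:Zariski:decomp:arith:surface}. In both cases the strategy is the same: I exhibit a bijection between $(-\infty, \overline{D}] \cap \aNef(X)_{\RR}$ and the corresponding set for the transformed divisor, check that this bijection is order-preserving, and conclude that it sends the greatest element to the greatest element.

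For (1), first I would note that the map $\overline{M} \mapsto \overline{M} + \widehat{(\phi)}$ is a bijection from $(-\infty, \overline{D}] \cap \aNef(X)_{\RR}$ onto $(-\infty, \overline{D} + \widehat{(\phi)}] \cap \aNef(X)_{\RR}$, with inverse $\overline{M}' \mapsto \overline{M}' - \widehat{(\phi)}$. Indeed, $\widehat{(\phi)} = ((\phi), -\log|\phi|^2)$ is an arithmetic divisor whose Green function is (locally) of the form $-\log|\phi|^2$, so adding it preserves the $\Tpsh_{\RR}$-type condition and preserves nefness since $\adeg(\rest{\widehat{(\phi)}}{C}) = 0$ for every $1$-dimensional closed integral subscheme $C$ (this is the arithmetic analogue of the fact that a principal divisor has degree zero; it follows from the projection formula used in Subsection~\ref{subsec:intersection:arithmetic:R:divisor:curve}, or directly from $\adeg(\rest{\widehat{(\phi)}}{C}) = 0$). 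Moreover $\overline{M} \leq \overline{D} \iff \overline{M} + \widehat{(\phi)} \leq \overline{D} + \widehat{(\phi)}$ since adding a fixed arithmetic $\RR$-divisor preserves the order relation $\leq$ on both the divisor part and the Green function part. Hence the bijection is an order isomorphism, so it carries the greatest element $\overline{P}$ of $(-\infty, \overline{D}] \cap \aNef(X)_{\RR}$ to the greatest element of $(-\infty, \overline{D} + \widehat{(\phi)}] \cap \aNef(X)_{\RR}$; that greatest element is by definition the positive part of $\overline{D} + \widehat{(\phi)}$, and it equals $\overline{P} + \widehat{(\phi)}$. The negative part is then $(\overline{D} + \widehat{(\phi)}) - (\overline{P} + \widehat{(\phi)}) = \overline{D} - \overline{P} = \overline{N}$, which gives the claimed decomposition.

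For (2), I would argue analogously using the scaling map $\overline{M} \mapsto a\overline{M}$ for $a \in \RR_{>0}$. By Proposition~\ref{prop:scalar:sum:Green:function}(1), scaling by a non-negative real number preserves $\Tpsh_{\RR}$-type, and since $\adeg(\rest{a\overline{M}}{C}) = a\,\adeg(\rest{\overline{M}}{C})$ (multilinearity of $\adeg(\rest{-}{C})$ over $\RR$, as noted in Subsection~\ref{subsec:intersection:arithmetic:R:divisor:curve}), nefness is preserved; the inverse is $\overline{M}' \mapsto a^{-1}\overline{M}'$. Also $\overline{M} \leq \overline{D} \iff a\overline{M} \leq a\overline{D}$ since $a > 0$. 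Thus $\overline{M} \mapsto a\overline{M}$ is an order isomorphism from $(-\infty, \overline{D}] \cap \aNef(X)_{\RR}$ onto $(-\infty, a\overline{D}] \cap \aNef(X)_{\RR}$, so it sends $\overline{P}$ to the greatest element of the target, which is the positive part of $a\overline{D}$; hence the positive part of $a\overline{D}$ is $a\overline{P}$ and the negative part is $a\overline{D} - a\overline{P} = a\overline{N}$.

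I do not expect any serious obstacle here; the only point requiring a little care is confirming the two structural facts invoked above, namely that $\widehat{(\phi)}$ is nef with $\adeg(\rest{\widehat{(\phi)}}{C}) = 0$ for all $C$ (so that adding it preserves membership in $\aNef(X)_{\RR}$), and that scaling by $a \in \RR_{>0}$ preserves the $\Tpsh_{\RR}$-type together with nefness. Both are immediate consequences of results already recorded in the excerpt (the $\Tpsh_{\RR}$ preservation from Proposition~\ref{prop:scalar:sum:Green:function}, the degree statements from the construction of $\adeg(\rest{-}{C})$), so the proof reduces to the order-isomorphism bookkeeping described above.
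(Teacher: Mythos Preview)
Your proof is correct and follows essentially the same approach as the paper: the paper's proof simply notes that $\pm\widehat{(\phi)}$ is nef, that $\overline{D}_1 \leq \overline{D}_2 \iff \overline{D}_1 + \widehat{(\phi)} \leq \overline{D}_2 + \widehat{(\phi)}$, and that $\overline{D}_1 \leq \overline{D}_2 \iff a\overline{D}_1 \leq a\overline{D}_2$ for $a \in \RR_{>0}$, and declares the result obvious from there. Your version spells out explicitly the order-isomorphism bookkeeping that the paper leaves implicit, but the underlying argument is identical.
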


\begin{proof}
Note that $\pm\widehat{(\phi)}$ is nef and that
\[
\overline{D}_1 \leq \overline{D}_2 \quad\Longleftrightarrow\quad
\overline{D}_1 +  \widehat{(\phi)} \leq \overline{D}_2 +  \widehat{(\phi)}
\]
and
\[
\overline{D}_1 \leq \overline{D}_2 \quad\Longleftrightarrow\quad
a \overline{D}_1 \leq a \overline{D}_2
\]
for arithmetic $\RR$-divisors $\overline{D}_1$, $\overline{D}_2$, a non-zero rational function $\phi$ and $a \in \RR_{>0}$.
Thus the assertions of this proposition are obvious.
\end{proof}

\begin{Proposition}
\label{prop:non:empty:Zariski:decomp}
\begin{enumerate}
\renewcommand{\labelenumi}{(\arabic{enumi})}
\item 
If $\ah(X, a \overline{D}) \not= 0$ for some $a \in \RR_{>0}$,
then 
\[
(-\infty, \overline{D}] \cap \aNef(X)_{\RR} \not= \emptyset.
\]

\item
If $\overline{D}$ is of $C^0$-type and
$(-\infty, \overline{D}] \cap \aNef(X)_{\RR} \not= \emptyset$,
then $\overline{D}$ is pseudo-effective. 
\end{enumerate}
\end{Proposition}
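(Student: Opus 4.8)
The plan is to prove the two assertions separately, in each case producing an explicit witness rather than invoking heavy machinery; the second one will lean on the existence part of the Zariski decomposition proved just above.

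For (1), I would use the description $\aH(X,a\overline{D})=\{\psi\in\Rat(X)^{\times}\mid\widehat{(\psi)}+a\overline{D}\geq(0,0)\}\cup\{0\}$. Since $\ah(X,a\overline{D})\neq 0$ for some $a\in\RR_{>0}$, there is a non-zero rational function $\phi$ with $\widehat{(\phi)}+a\overline{D}\geq(0,0)$, hence $\overline{D}\geq-\frac{1}{a}\widehat{(\phi)}$ after dividing by $a>0$. I would then set $\overline{M}:=-\frac{1}{a}\widehat{(\phi)}=\frac{1}{a}\widehat{(\phi^{-1})}$ and observe that $\overline{M}$ is nef: a principal arithmetic divisor $\widehat{(\psi)}$ has pluriharmonic Green function (locally $-\log|\psi|^{2}$ differs from $\sum(-n_{i})\log|f_{i}|^{2}$ by the real part of a holomorphic function), so it is of $\Tpsh_{\RR}$-type, and $\adeg(\rest{\widehat{(\psi)}}{C})=0$ for every $1$-dimensional closed integral subscheme $C$ because $\overline{\OO}(\widehat{(\psi)})$ is trivial; thus $\pm\widehat{(\psi)}$ are nef (as already used in Proposition~\ref{prop:easy:prop:Zariski:decomp}), and so is the positive multiple $\frac{1}{a}\widehat{(\phi^{-1})}$. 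Therefore $\overline{M}\in(-\infty,\overline{D}]\cap\aNef(X)_{\RR}$, which is what we want.

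For (2), since $\overline{D}$ is of $C^{0}$-type (hence of upper bounded type) and $(-\infty,\overline{D}]\cap\aNef(X)_{\RR}\neq\emptyset$, Theorem~\ref{thm:Zariski:decomp:arith:surface} gives a nef arithmetic $\RR$-divisor $\overline{P}$ of $C^{0}$-type with $\overline{P}\leq\overline{D}$. I would then fix an ample arithmetic $\QQ$-divisor $\overline{A}$ on $X$; it is of $C^{\infty}$-type (hence of $C^{0}$-type), adequate, and therefore big by Proposition~\ref{prop:ample:RR:div}~(4). For each $n\geq 1$ the divisor $\frac{1}{n}\overline{A}$ is adequate, so by Proposition~\ref{prop:ample:plus:nef:ample}~(2) the sum $\overline{P}+\frac{1}{n}\overline{A}$ (adequate plus nef of $C^{0}$-type) is adequate, whence $\avol(\overline{P}+\frac{1}{n}\overline{A})>0$ again by Proposition~\ref{prop:ample:RR:div}~(4). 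Since $\overline{D}+\frac{1}{n}\overline{A}\geq\overline{P}+\frac{1}{n}\overline{A}$ and $\avol$ is monotone, $\avol(\overline{D}+\frac{1}{n}\overline{A})>0$ for all $n\geq 1$. This is precisely condition (3) in Proposition~\ref{prop:equiv:pseudo:effective} with the big divisor $\overline{A}$, so $\overline{D}$ is pseudo-effective.

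I do not expect a genuine obstacle here; the one step that is not pure bookkeeping is the appeal in (2) to Theorem~\ref{thm:Zariski:decomp:arith:surface} to replace the given nef divisor of $\Tpsh_{\RR}$-type by a nef divisor of $C^{0}$-type below $\overline{D}$, which is forced on us because Proposition~\ref{prop:ample:plus:nef:ample}~(2) is stated only for nef divisors of $C^{0}$-type. Everything else — that principal divisors are nef, that ample $\Rightarrow$ adequate $\Rightarrow$ big, monotonicity of $\avol$, and the equivalence in Proposition~\ref{prop:equiv:pseudo:effective} — is already available, so the main care needed is in the logical order: first establish condition (3) of Proposition~\ref{prop:equiv:pseudo:effective}, then conclude pseudo-effectivity from the equivalence, while checking that each cited result applies with the $C^{0}$/$C^{\infty}$ hypotheses as stated.
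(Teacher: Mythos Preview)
Your proof is correct and follows essentially the same approach as the paper: for (1) both produce the nef principal divisor $-\frac{1}{a}\widehat{(\phi)}$ below $\overline{D}$, and for (2) both use the Zariski decomposition (Theorem~\ref{thm:Zariski:decomp:arith:surface}) to get a nef $\overline{P}$ of $C^0$-type under $\overline{D}$, add $(1/n)\overline{A}$ with $\overline{A}$ ample, invoke Proposition~\ref{prop:ample:plus:nef:ample} to see $\overline{P}+(1/n)\overline{A}$ is adequate (hence of positive volume), and conclude by monotonicity of $\avol$. The only cosmetic difference is that you finish via the equivalence in Proposition~\ref{prop:equiv:pseudo:effective}, whereas the paper reads off pseudo-effectivity directly from the definition.
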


\begin{proof}
(1)
We choose 
$\phi \in \aH(X, a\overline{D}) \setminus \{ 0 \}$.
Then $a\overline{D} + \widehat{(\phi)} \geq 0$, which implies 
$\overline{D} \geq (-1/a)  \widehat{(\phi)}$. Note that $(-1/a)  \widehat{(\phi)}$ is nef,
so that $(-1/a)  \widehat{(\phi)} \in (-\infty, \overline{D}] \cap \aNef(X)_{\RR}$, as required.

(2) Let $\overline{D} = \overline{P} + \overline{N}$ be the Zariski decomposition of $\overline{D}$ and
let $\overline{A}$ be an ample
arithmetic $\RR$-divisor. For $n \in \ZZ_{>0}$, by Proposition~\ref{prop:ample:plus:nef:ample},
$\overline{P} + (1/n) \overline{A}$ is adequate. In particular, $\avol(\overline{P} + (1/n) \overline{A}) > 0$, and hence
\[
\avol(\overline{D} + (1/n) \overline{A}) \geq \avol(\overline{P} + (1/n)\overline{A}) > 0,
\]
which shows that $\overline{D}$ is pseudo-effective.
\end{proof}

\begin{Proposition}
\label{prop:volume:P:Zariski:decomp:C:infty}
We assume that $\overline{D}$ is of $C^{\infty}$-type and $\overline{D}$ is effective.
Let $\overline{P}$ be the positive part of the Zariski decomposition of $\overline{D}$.
Then there is a constant $e'$ such that
\[
\ah(X, n \overline{P}) \leq \ah(X, n\overline{D}) \leq
\ah(X, n \overline{P}) + e' n \log(n+1)
\]
for all $n \geq 1$.
In particular, $\avol(\overline{P}) = \avol(\overline{D})$.
\end{Proposition}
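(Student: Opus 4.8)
The plan is to squeeze $\overline{P}$ between $\overline{D}$ and the asymptotic movable part $\overline{M}_\infty(\overline{D})$ of the $\sigma$-decomposition of $\overline{D}$, and then to quote Proposition~\ref{prop:sigma:decomp:surface}. Since $\overline{P} \leq \overline{D}$, we have $\aH(X, n\overline{P}) \subseteq \aH(X, n\overline{D})$ for all $n \geq 1$, which gives the left-hand inequality $\ah(X, n\overline{P}) \leq \ah(X, n\overline{D})$ at once.

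For the right-hand inequality, I would first recall that, because $\overline{D}$ is effective and of $C^0$-type, Proposition~\ref{prop:sigma:decomp:surface} produces the nef arithmetic $\RR$-divisor $\overline{M}_\infty(\overline{D})$ together with the identity $\overline{F}_\infty(\overline{D}) = \overline{D} - \overline{M}_\infty(\overline{D})$, and by part~(4) of that proposition $\overline{F}_\infty(\overline{D})$ is effective, so $\overline{M}_\infty(\overline{D}) \leq \overline{D}$. Hence $\overline{M}_\infty(\overline{D}) \in (-\infty, \overline{D}] \cap \aNef(X)_{\RR}$, and the maximality of $\overline{P}$ established in Theorem~\ref{thm:Zariski:decomp:arith:surface} yields $\overline{M}_\infty(\overline{D}) \leq \overline{P}$. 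Consequently $\aH(X, n\overline{M}_\infty(\overline{D})) \subseteq \aH(X, n\overline{P})$, i.e. $\ah(X, n\overline{M}_\infty(\overline{D})) \leq \ah(X, n\overline{P})$ for every $n \geq 1$. Since $\overline{D}$ is assumed to be of $C^\infty$-type, Proposition~\ref{prop:sigma:decomp:surface}~(5) furnishes a constant $e'$ with $\ah(X, n\overline{D}) \leq \ah(X, n\overline{M}_\infty(\overline{D})) + e' n \log(n+1)$ for all $n \geq 1$; combining this with the previous inclusion gives $\ah(X, n\overline{D}) \leq \ah(X, n\overline{P}) + e' n \log(n+1)$, as required.

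Finally, dividing the resulting chain $\ah(X, n\overline{P}) \leq \ah(X, n\overline{D}) \leq \ah(X, n\overline{P}) + e' n\log(n+1)$ by $n^d/d! = n^2/2$ (recall $d = 2$ on an arithmetic surface) and letting $n \to \infty$, the error term $e' n\log(n+1)/(n^2/2)$ tends to $0$, so $\avol(\overline{D}) = \avol(\overline{P})$; here one may use Theorem~\ref{thm:aDiv:aPic:R}~(1) to replace $\limsup$ by an honest limit, noting that $\overline{P}$ is of $C^0$-type by Theorem~\ref{thm:Zariski:decomp:arith:surface}, though the squeeze already suffices even with $\limsup$. There is no real obstacle in this argument: essentially all the analytic work is hidden in the $\sigma$-decomposition (Proposition~\ref{prop:sigma:decomp:surface}) and in the construction of the Zariski decomposition; the only point that needs a moment's verification is that $\overline{M}_\infty(\overline{D})$ genuinely belongs to $(-\infty,\overline{D}]\cap\aNef(X)_{\RR}$ so that the maximality of $\overline{P}$ can be applied, and this is immediate from parts~(4) and the nefness statement of Proposition~\ref{prop:sigma:decomp:surface}.
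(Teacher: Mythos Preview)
Your proof is correct and follows exactly the same route as the paper: the paper's one-line proof simply says ``The assertion is a consequence of Proposition~\ref{prop:sigma:decomp:surface} because $\overline{M}_{\infty}(\overline{D}) \leq \overline{P}$,'' and you have merely unpacked this by spelling out why $\overline{M}_{\infty}(\overline{D}) \leq \overline{P}$ (nefness plus effectivity of $\overline{F}_\infty(\overline{D})$ from part~(4), then maximality of $\overline{P}$) and how part~(5) finishes the job.
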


\begin{proof}
The assertion is a consequence of Proposition~\ref{prop:sigma:decomp:surface} because $\overline{M}_{\infty}(\overline{D}) \leq \overline{P}$.
\end{proof}

The following theorem is also one of the main theorems of this paper.

\begin{Theorem}
\label{thm:Zariski:decomp:big}
We assume that $\overline{D}$ is of $C^{0}$-type and 
$(-\infty, \overline{D}] \cap \aNef(X)_{\RR} \not= \emptyset$.
Let $\overline{P}$ \rom{(}resp. $\overline{N}$\rom{)}
be the positive part \rom{(}resp. negative part\rom{)} of the Zariski decomposition of $\overline{D}$.
Then we have the following:
\begin{enumerate}
\renewcommand{\labelenumi}{(\arabic{enumi})}
\item
$\avol(\overline{P}) = \avol(\overline{D}) = \adeg(\overline{P}^2)$.

\item
$\adeg (\rest{\overline{P}}{C}) = 0$ for all $1$-dimensional  closed integral subschemes $C$ with $C \subseteq \Supp(N)$.

\item
If $\overline{M}$ is an arithmetic $\RR$-divisor of $\Tpsh_{\RR}$-type on $X$ such that
$0 \leq \overline{M} \leq \overline{N}$ and $\deg(\rest{\overline{M}}{C}) \geq 0$ 
for all $1$-dimensional closed  integral subschemes $C$ with $C \subseteq \Supp(N)$,
then $\overline{M} = 0$.

\item
We assume $N \not= 0$. Let $N = c_1 C_1 + \cdots + c_l C_l$ be the decomposition such that
$c_1, \ldots, c_l \in \RR_{>0}$ and $C_1, \ldots, C_l$ are $1$-dimensional closed integral subschemes on $X$.
Then the following hold:
\begin{enumerate}
\renewcommand{\labelenumii}{(\arabic{enumi}.\arabic{enumii})}
\item There are effective arithmetic divisors $(C_1, h_1), \ldots, (C_l, h_l)$ of $(C^0 \cap \Tpsh)$-type such that
$c_1(C_1, h_1) + \cdots + c_l(C_l, h_l) \leq \overline{N}$.

\item If $(C_1, k_1), \ldots, (C_l, k_l)$ are effective arithmetic divisors of $\Tpsh_{\RR}$-type such that
$\alpha_1 (C_1, k_1) + \cdots + \alpha_l (C_l, k_l) \leq \overline{N}$ for some $\alpha_1, \ldots, \alpha_l \in \RR_{>0}$,
then 
\[
(-1)^l \det \left(\adeg\left(\rest{(C_i, k_i)}{C_j}\right)\right) > 0.
\]
\end{enumerate}
\end{enumerate}
\end{Theorem}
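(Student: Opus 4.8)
\textbf{Proof plan for Theorem~\ref{thm:Zariski:decomp:big}.}
The plan is to treat the four assertions more or less in the order listed, using the $\sigma$-decomposition machinery of Section~\ref{sec:sigma:decomp} and the generalized Hodge index theorem (Theorem~\ref{thm:gen:Hodge:index}) as the two central tools. For (1), first I would reduce to the case that $\overline{D}$ is effective: since $\pm\widehat{(\phi)}$ is nef and the Zariski decomposition is compatible with adding $\widehat{(\phi)}$ and with positive scaling (Proposition~\ref{prop:easy:prop:Zariski:decomp}), after replacing $\overline{D}$ by $a(\overline{D}+\widehat{(\phi)})$ for a suitable $a\in\RR_{>0}$ and $\phi\in\aH(X,a'\overline{D})\setminus\{0\}$ (which exists once $(-\infty,\overline{D}]\cap\aNef(X)_{\RR}\neq\emptyset$ is combined with bigness; in the non-big case $\avol(\overline{D})=0=\avol(\overline{P})=\adeg(\overline{P}^2)$ should follow directly from $\overline{P}$ nef with $\adeg(\overline{P}^2)\le\avol(\overline{P})\le\avol(\overline{D})=0$ via Theorem~\ref{thm:gen:Hodge:index}) we may assume $\overline{D}$ effective. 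Then $\overline{M}_\infty(\overline{D})\le\overline{P}\le\overline{D}$, so $\avol(\overline{M}_\infty(\overline{D}))\le\avol(\overline{P})\le\avol(\overline{D})$. Approximating $\overline{D}$ from below by an arithmetic $\QQ$-divisor of $C^\infty$-type via continuity of $\avol$ (Theorem~\ref{thm:aDiv:aPic:R}) and applying Proposition~\ref{prop:volume:P:Zariski:decomp:C:infty} (equivalently Proposition~\ref{prop:sigma:decomp:surface}(5)) gives $\avol(\overline{M}_\infty(\overline{D}))=\avol(\overline{D})$, hence $\avol(\overline{P})=\avol(\overline{D})$. Finally $\overline{P}\in\aNef_{C^0}(X)_{\RR}$, so $\adeg(\overline{P}^2)=\avol(\overline{P})$ by Remark~\ref{rem:nef:and:big:self:intersection}.

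For (2), I would argue by contradiction: suppose $C\subseteq\Supp(N)$ but $\adeg(\rest{\overline{P}}{C})=:c>0$. The idea is to push a small positive multiple of an effective Green-equipped divisor supported on $C$ into $\overline{P}$ while staying nef and below $\overline{D}$, contradicting maximality of $\overline{P}$. Concretely, using Lemma~\ref{lem:good:psh:arith:surface} pick an effective $(C,h_C)$ of $(C^0\cap\Tpsh)$-type with $0\le(C,h_C)\le\overline{N}$; then for $0<\epsilon\ll1$, $\overline{P}+\epsilon(C,h_C)\le\overline{D}$, and one checks its nefness: on any $1$-dimensional closed integral $C'\ne C$ the intersection number only increases (since $(C,h_C)$ is effective of $\Tpsh_{\RR}$-type, its restriction to $C'$ is $\ge0$ by the effectivity discussion in Section~\ref{subsec:intersection:arithmetic:R:divisor:curve}), while on $C'=C$ we get $\adeg(\rest{\overline{P}}{C})+\epsilon\,\adeg(\rest{(C,h_C)}{C})$, which stays $\ge0$ for $\epsilon$ small because the first term is $c>0$. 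This contradicts $\overline{P}$ being greatest, forcing $c=0$; a symmetric refinement (choosing $(C,h_C)$ with $\adeg(\rest{(C,h_C)}{C})$ of controlled sign, or increasing the Green function along $C$) handles the case where the self-intersection term on $C$ is negative. This step is essentially Bauer's argument transplanted to the arithmetic surface, and I expect it to be routine given the tools above.

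Assertion (3) is the heart of the matter and the main obstacle. The plan is: write $N=c_1C_1+\cdots+c_lC_l$ with $c_i>0$; by (2) and the hypothesis, $\adeg(\rest{\overline{P}}{C_i})=0$ and $\adeg(\rest{\overline{M}}{C_i})\ge0$ for all $i$. Suppose $\overline{M}\ne0$. For $t>0$ small, $\overline{P}+t\overline{M}\le\overline{P}+\overline{M}\le\overline{P}+\overline{N}=\overline{D}$ (after shrinking $\overline{M}$ we may assume $\Supp(M)\subseteq\Supp(N)$, using Proposition~\ref{prop:green:irreducible:decomp} and Lemma~\ref{lem:good:psh:arith:surface} to split off the relevant components and then Lemma~\ref{lem:good:psh:arith:surface} again), and I claim $\overline{P}+t\overline{M}$ is nef for $0<t\ll1$: on $C'\not\subseteq\Supp(N)$, $\adeg(\rest{\overline{M}}{C'})\ge0$ again by effectivity, and on $C'=C_i$, $\adeg(\rest{(\overline{P}+t\overline{M})}{C_i})=t\,\adeg(\rest{\overline{M}}{C_i})\ge0$. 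Then $\overline{P}+t\overline{M}\in(-\infty,\overline{D}]\cap\aNef(X)_{\RR}$, so maximality of $\overline{P}$ forces $\overline{M}\le0$, hence $\overline{M}=0$. The subtlety is the reduction producing $\Supp(M)\subseteq\Supp(N)$ while keeping $0\le\overline{M}\le\overline{N}$ and the degree hypotheses; I would handle it by replacing $\overline{M}$ with $\overline{M}'=\min$-type truncation along components of $M$ not in $N$, or more carefully, by noting $0\le M\le N$ already forces $\Supp(M)\subseteq\Supp(N)$ at the level of underlying divisors so only the Green function needs adjusting via Lemma~\ref{lem:good:psh:arith:surface}.

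Finally, (4): part (4.1) is immediate from Proposition~\ref{prop:green:irreducible:decomp}(2) applied to $\overline{N}$ (which is effective, of $(C^0-\Tpsh_{\RR})$-type and of lower bounded type since $g$ and $p$ are of $C^0$/$\Tpsh_{\RR}$-type) together with Lemma~\ref{lem:good:psh:arith:surface} to upgrade each factor to $(C^0\cap\Tpsh)$-type while staying $\le\overline{N}$. For (4.2), set $e_i=[(C_i,k_i)]$ and $\phi_i=\adeg(\rest{-}{C_j})$ in the abstract framework of Section~\ref{sec:Zariski:decomp:vec:space}; I would verify the three hypotheses of Lemma~\ref{lem:negative:definite}: (a) the $C_i$ are distinct prime divisors, so no nontrivial nonnegative relation among the classes $[(C_i,k_i)]$ in the relevant vector space (using that $C_i$ flat-or-vertical distinctions are handled by the intersection theory and that a positive combination summing to zero would be effective with zero support); (b) $\adeg(\rest{(C_i,k_i)}{C_j})\ge0$ for $i\ne j$ since $C_i\not\subseteq\Supp(C_j)$ and $k_i$ is of $\Tpsh_{\RR}$-type so $(k_i)_{\rm can}\ge$ (local) contributions — more precisely $\adeg(\rest{(C_i,k_i)}{C_j})=\log\#(\OO_{C_j}(C_i)/\OO_{C_j})+\frac12\sum(k_i)_{\rm can}\ge0$ because both terms are $\ge0$ (the Green function being $\Tpsh_{\RR}$ and effective); (c) the key input: if $x=\sum a_ie_i$ with $a_i\ge0$, not all zero, and $\adeg(\rest{(\sum a_i(C_i,k_i))}{C_j})\ge0$ for all $j$, then $\sum a_i(C_i,k_i)$ is a nonzero effective arithmetic $\RR$-divisor of $\Tpsh_{\RR}$-type with support in $\Supp(N)$, nonnegative degree on every $C_j\subseteq\Supp(N)$, and $\le(\max_i\alpha_i^{-1})\cdot(\text{something})\le$ a multiple of $\overline{N}$ — after rescaling it violates (3), forcing $x=0$. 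This (c) is exactly where (3) feeds in, so it is the crux; once it holds, Lemma~\ref{lem:negative:definite}(1) with $Q=(\adeg(\rest{(C_i,k_i)}{C_j}))$ gives $(-1)^l\det Q>0$, which is (4.2). I expect (3) and the verification of hypothesis (c) of Lemma~\ref{lem:negative:definite} to be the two places requiring the most care; everything else is bookkeeping with the results already established.
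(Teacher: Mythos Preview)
Your plan is correct and matches the paper's proof in structure and in the tools invoked (Proposition~\ref{prop:intersection:Div:nef:C:0} / Remark~\ref{rem:nef:and:big:self:intersection} for $\adeg(\overline{P}^2)=\avol(\overline{P})$, Proposition~\ref{prop:volume:P:Zariski:decomp:C:infty} plus a $C^\infty$-approximation for $\avol(\overline{P})=\avol(\overline{D})$, Lemma~\ref{lem:good:psh:arith:surface} and Proposition~\ref{prop:green:irreducible:decomp} for (4.1), Lemma~\ref{lem:negative:definite} fed by (3) for (4.2)). Two places are over-engineered: in (2) the ``symmetric refinement'' is unnecessary since $\overline{P}$ is nef, so $\adeg(\rest{\overline{P}}{C})\ge 0$ automatically and ruling out $>0$ is all that is needed; in (3) no parameter $t$ and no adjustment of $\overline{M}$ are required, because $0\le M\le N$ already gives $\Supp(M)\subseteq\Supp(N)$, so $\overline{M}$ itself is nef (effective of $\Tpsh_{\RR}$-type with $\adeg(\rest{\overline{M}}{C'})\ge 0$ for $C'\not\subseteq\Supp(N)$ by effectivity and for $C'\subseteq\Supp(N)$ by hypothesis), whence $\overline{P}+\overline{M}\in(-\infty,\overline{D}]\cap\aNef(X)_{\RR}$ and maximality gives $\overline{M}=0$ directly.
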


\begin{proof}
(1) It follows from Proposition~\ref{prop:intersection:Div:nef:C:0} that
$\avol(\overline{P}) = \adeg(\overline{P}^2)$. We need to show $\avol(\overline{P}) = \avol(\overline{D})$.
If $\avol(\overline{D}) = 0$, then the assertion is obvious, so that we may assume that $\avol(\overline{D}) > 0$.

First we consider the case where $\overline{D}$ is of $C^{\infty}$-type.
We choose a positive integer $n$ and a non-zero rational function $\phi$ such that
$n \overline{D} + \widehat{(\phi)}$ is effective. By Proposition~\ref{prop:easy:prop:Zariski:decomp},
the positive part of
the Zariski decomposition $n \overline{D} + \widehat{(\phi)}$ is 
$n \overline{P} + \widehat{(\phi)}$.
Thus,
by using Proposition~\ref{prop:volume:P:Zariski:decomp:C:infty},
\[
n^2 \avol(\overline{P}) = \avol(n\overline{P}) = \avol(n \overline{P} + \widehat{(\phi)}) = \avol(n \overline{D} + \widehat{(\phi)}) =
\avol(n \overline{D}) = n^2\avol(\overline{D}),
\]
as required.

Let us consider a general case.
By the Stone-Weierstrass theorem,
there is a sequence $\{ u_n \}_{n=1}^{\infty}$ of non-negative $F_{\infty}$-invariant continuous functions such that
$\lim_{n\to\infty} \Vert u_n \Vert_{\sup} = 0$ and $\overline{D}_n := \overline{D} - (0,u_n)$ is of $C^{\infty}$-type
for every $n \geq 1$. 
By the continuity of $\avol$ (cf. Theorem~\ref{thm:aDiv:aPic:R}),
\[
\lim_{n\to\infty} \avol(\overline{D}_n) = \avol(\overline{D}).
\]
In particular, $\overline{D}_n$ is big for $n \gg 1$.
Let $\overline{P}_n$ be the positive part of the Zariski decomposition of $\overline{D}_n$.
Since $\overline{P}_n \leq \overline{D}_n \leq \overline{D}$ and $\overline{P}_n$ is nef, we have $\overline{P}_n \leq \overline{P}$, and hence
\[
\avol(\overline{D}_n) = \avol(\overline{P}_n) \leq \avol(\overline{P}) \leq \avol(\overline{D}).
\]
Thus the assertion follows by taking $n \to \infty$.

\medskip
(4.1) Before starting the proofs of (2), (3) and (4.2), let us see (4.1) first.
By Proposition~\ref{prop:green:irreducible:decomp},
there are effective arithmetic divisors $(C_1, h'_1), \ldots, (C_l, h'_l)$ of $C^0$-type
such that $c_1(C_1, h'_1) + \cdots + c_l (C_l, h'_l) = \overline{N}$.
For each $i$, by using Lemma~\ref{lem:good:psh:arith:surface}, we can find an effective arithmetic
divisor $(C_i, h_i)$ of $(C^0 \cap \Tpsh)$-type such that $(C_i, h_i) \leq (C_i, h'_i)$, as required.

\medskip
(2) We may assume $N \not= 0$.
We assume $\deg(\rest{\overline{P}}{C_i}) > 0$ for some $i$.
By (4.1), 
\[
0 \leq c_i (C_i, h_i) \leq \overline{N}.
\]
Note that if $C'$ is a $1$-dimensional closed  integral subscheme with $C' \not= C_i$, then
\[
\deg(\rest{(C_i, h_i)}{C'}) \geq 0.
\]
Thus, since  $\deg(\rest{\overline{P}}{C_i}) > 0$, we can find a sufficiently small positive number $\epsilon$ such that
$\overline{P} + \epsilon (C_i, h_i)$ is nef  and $\overline{P} + \epsilon (C_i, h_i) \leq \overline{D}$.
This is a contradiction.

\medskip
(3) Since $0 \leq \overline{M} \leq \overline{N}$, if $C'$ is a $1$-dimensional closed  integral subscheme with $C' \not\subseteq \Supp(N)$,
then $\adeg(\rest{\overline{M}}{C'}) \geq 0$. Thus $\overline{M}$ is nef, and hence $\overline{P} + \overline{M}$ is nef and
$\overline{P} + \overline{M} \leq \overline{D}$.
Therefore, $\overline{M} = 0$.

\medskip
(4.2) By Lemma~\ref{lem:negative:definite}, it is sufficient to see the following:
if $\beta_1, \ldots, \beta_l \in \RR_{\geq 0}$ and
\[
\adeg\left( \rest{(\beta_1 (C_1, k_1) + \cdots + \beta_l(C_l, k_l))}{C_i}\right) \geq 0
\]
for all $i$,
then $\beta_1 = \cdots = \beta_l = 0$.
Replacing $\beta_1, \ldots, \beta_l$ with $t \beta_1, \ldots, t \beta_l$ ($t > 0$),
we may assume that $0 \leq \beta_i \leq \alpha_i$ for all $i$.
Thus the assertion follows from (3).
\end{proof}

\begin{Theorem}[Asymptotic orthogonality of $\sigma$-decomposition]
\label{thm:asym:orthogonal}
If $\overline{D}$ is of $C^{0}$-type, effective and big, then
\[
\lim_{n\to\infty} \adeg \left( \overline{M}_n(\overline{D}) \mid F_n(\overline{D}) \right) = 0.
\]
\rom{(}For the definition of $\overline{M}_n(\overline{D})$ and $F_n(\overline{D})$, see Section~\rom{\ref{sec:sigma:decomp}}.\rom{)}
\end{Theorem}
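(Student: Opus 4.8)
The plan is to unwind the definition, reduce everything to a single identity about the asymptotic fixed part, and then read that identity off the Zariski decomposition. Since $\overline{D}$ is big, Proposition~\ref{prop:non:empty:Zariski:decomp} gives $(-\infty,\overline{D}]\cap\aNef(X)_{\RR}\ne\emptyset$, so by Theorem~\ref{thm:Zariski:decomp:arith:surface} the Zariski decomposition $\overline{D}=\overline{P}+\overline{N}$ exists with $\overline{P}$ of $C^{0}$-type, and by Theorem~\ref{thm:Zariski:decomp:big} one has $\avol(\overline{P})=\avol(\overline{D})>0$, so $\overline{P}$ is nef and big. Because $\overline{D}$ is effective, $1\in\aH(X,n\overline{D})$ for all $n$, hence $N(\overline{D})=\ZZ_{>0}$, $\nu_{C}(n\overline{D})/n\to\mu_{C}(\overline{D})$, and all of $F_{n}(\overline{D})$, $\overline{M}_{n}(\overline{D})$, $\overline{M}_{\infty}(\overline{D})$ from Section~\ref{sec:sigma:decomp} are available. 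Unwinding, $\adeg\left(\overline{M}_{n}(\overline{D})\mid F_{n}(\overline{D})\right)=\sum_{C}\mult_{C}(F_{n}(\overline{D}))\,\adeg(\rest{\overline{M}_{n}(\overline{D})}{C})$, a sum over the finitely many $1$-dimensional closed integral subschemes $C\subseteq\Supp(D)$ (using $F_{n}(\overline{D})\le D$, which holds since $1\in\aH(X,n\overline{D})$), and each summand is $\ge 0$ because $\overline{M}_{n}(\overline{D})=\frac1n\overline{M}(n\overline{D})$ is nef by Proposition~\ref{prop:decomp:surface:no:limit}. So it suffices to show every summand tends to $0$.

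The key step is the identity $\mu_{C}(\overline{D})=\mult_{C}(N)$ for every such $C$. For ``$\ge$'', if $\psi\in\aH(X,n\overline{D})\setminus\{0\}$ then $\frac1n\widehat{(\psi^{-1})}$ is a nef arithmetic $\RR$-divisor with $\frac1n\widehat{(\psi^{-1})}\le\overline{D}$, hence $\le\overline{P}$ by maximality; comparing divisor parts gives $-\frac1n\operatorname{ord}_{C}(\psi)\le\mult_{C}(P)$, so $\mult_{C}(nD+(\psi))\ge n\mult_{C}(N)$, and taking the infimum over $n$ and $\psi$ yields $\mu_{C}(\overline{D})\ge\mult_{C}(N)$. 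For ``$\le$'', since $\overline{P}$ is nef and big, Proposition~\ref{prop:vanish:mu:nef:big} gives $\mu_{C}(\overline{P})=0$; as $\overline{P}\le\overline{D}$ forces $\aH(X,m\overline{P})\subseteq\aH(X,m\overline{D})$, a section $\psi\in\aH(X,m\overline{P})$ with $\mult_{C}(mP+(\psi))\le\epsilon m$ satisfies $\mult_{C}(mD+(\psi))=\mult_{C}(mP+(\psi))+m\mult_{C}(N)\le(\epsilon+\mult_{C}(N))m$, so $\mu_{C}(\overline{D})\le\mult_{C}(N)$. Consequently $\mult_{C}(F_{n}(\overline{D}))=\nu_{C}(n\overline{D})/n\to\mult_{C}(N)$, and since $\mult_{C}(F_{\infty}(\overline{D}))=\mu_{C}(\overline{D})$ by Proposition~\ref{prop:sigma:decomp:surface}, the divisors $M_{\infty}(\overline{D})$ and $P$ coincide.

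To finish, fix $C\subseteq\Supp(D)$. First note $\adeg(\rest{\overline{M}_{n}(\overline{D})}{C})$ is bounded above uniformly in $n$: the coefficients of $M_{n}(\overline{D})$ lie in $[0,\mult_{C'}(D)]$, and $g_{M_{n}(\overline{D})}=g+\frac1n\log\dist(V(n\overline{D});ng)$ is dominated by a fixed $D$-Green function, by the Gromov-type bound of Theorem~\ref{thm:dist:dist:dist:ineq} (and Proposition~\ref{prop:Gromov:ineq}) applied after a $C^{\infty}$-sandwich $g\le g+u$ together with Lemma~\ref{lem:dist:comp}. If $C\not\subseteq\Supp(N)$, then $\mult_{C}(N)=0$, so $\mult_{C}(F_{n}(\overline{D}))\to 0$ and the summand, caught between $0$ and a constant times $\mult_{C}(F_{n}(\overline{D}))$, tends to $0$. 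If $C\subseteq\Supp(N)$, then $\overline{M}_{\infty}(\overline{D})$ is nef with $\overline{M}_{\infty}(\overline{D})\le\overline{P}$ and $M_{\infty}(\overline{D})=P$, so Proposition~\ref{prop:scalar:sum:Green:function} gives $0\le\adeg(\rest{\overline{M}_{\infty}(\overline{D})}{C})\le\adeg(\rest{\overline{P}}{C})=0$ (the last equality by Theorem~\ref{thm:Zariski:decomp:big}); combining this with $\limsup_{n}\adeg(\rest{\overline{M}_{n}(\overline{D})}{C})\le\adeg(\rest{\overline{M}_{\infty}(\overline{D})}{C})$ (Proposition~\ref{prop:sigma:decomp:surface}) and nefness of $\overline{M}_{n}(\overline{D})$ yields $\adeg(\rest{\overline{M}_{n}(\overline{D})}{C})\to 0$, and since $\mult_{C}(F_{n}(\overline{D}))$ stays bounded the summand again tends to $0$. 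Summing over the finitely many $C$ gives the theorem. The hardest points will be the identity $\mu_{C}(\overline{D})=\mult_{C}(N)$ — specifically its lower bound, where the maximality of $\overline{P}$ is essential — and the uniform upper bound on $\adeg(\rest{\overline{M}_{n}(\overline{D})}{C})$, which must be routed through a $C^{\infty}$ approximation of $g$ since $g$ itself is only continuous.
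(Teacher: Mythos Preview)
Your proof is correct and follows essentially the same route as the paper: identify $P=M_{\infty}(\overline{D})$ via $\mu_{C}(\overline{D})=\mult_{C}(N)$, deduce $\adeg(\rest{\overline{M}_{\infty}(\overline{D})}{C})=0$ for $C\subseteq\Supp(N)$ from $\overline{M}_{\infty}(\overline{D})\le\overline{P}$ with equal divisor parts together with Theorem~\ref{thm:Zariski:decomp:big}, and then pass to the limit term by term using Proposition~\ref{prop:sigma:decomp:surface}(3). The only real difference is packaging: your ``$\ge$'' half of $\mu_{C}(\overline{D})=\mult_{C}(N)$ goes through the nef principal divisor $\frac{1}{n}\widehat{(\psi^{-1})}\le\overline{P}$, whereas the paper just notes $\overline{M}_{\infty}(\overline{D})\le\overline{P}$ (hence $\overline{F}_{\infty}(\overline{D})\ge\overline{N}$); and your Gromov/Theorem~\ref{thm:dist:dist:dist:ineq} detour for uniform boundedness of $\adeg(\rest{\overline{M}_{n}(\overline{D})}{C})$ is unnecessary, since Proposition~\ref{prop:sigma:decomp:surface}(3) already bounds $\limsup_{n}\adeg(\rest{\overline{M}_{n}(\overline{D})}{C})$ by the finite quantity $\adeg(\rest{\overline{M}_{\infty}(\overline{D})}{C})$.
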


\begin{proof}
Let us begin with the following claim:

\begin{Claim}
\label{claim:thm:asym:orthogonal:1}
$P = M_{\infty}(\overline{D})$ and $N = F_{\infty}(\overline{D})$.
\end{Claim}

\begin{proof}
First of all, note that $\overline{M}_{\infty}(\overline{D}) \leq \overline{P}$ and $\overline{F}_{\infty}(\overline{D}) \geq \overline{N}$.
Since $\overline{D}$ is effective, $(0,0) \in (-\infty, \overline{D}] \cap \aNef(X)_{\RR}$, so that
$\overline{P}$ is effective. Then, by (2) of Proposition~\ref{prop:mu:basic},
\[
\mu_{C}(\overline{D}) \leq \mu_C(\overline{P}) + \mult_C(N).
\]
Moreover, by Proposition~\ref{prop:vanish:mu:nef:big}, $\mu_C(\overline{P}) = 0$ because $\overline{P}$ is nef and big.
Thus we have
\[
\mult_C(F_{\infty}(\overline{D})) = \mu_C(\overline{D}) \leq \mult_C(N),
\]
which implies $F_{\infty}(\overline{D}) \leq N$. Therefore, $N = F_{\infty}(\overline{D})$, and hence
$P = M_{\infty}(\overline{D})$.
\end{proof}

\begin{Claim}
$\adeg\left( \rest{\overline{M}_{\infty}(\overline{D})}{C} \right) = 0$ for any $1$-dimensional closed integral subscheme $C$ with $C \subseteq \Supp(N)$.
\end{Claim}

\begin{proof}
Since $\overline{M}_{\infty}(\overline{D}) \leq \overline{P}$ and $P = M_{\infty}(\overline{D})$,
there is $\phi \in (C^{0} - \Tpsh_{\RR})(X(\CC))$ such that $\phi \geq 0$ and $\overline{P} = \overline{M}_{\infty}(\overline{D}) + (0, \phi)$.
Thus, for a $1$-dimensional closed integral subscheme $C$ with $C \subseteq \Supp(N)$, by (3) in Theorem~\ref{thm:Zariski:decomp:big},
\[
0 \leq \adeg\left( \rest{\overline{M}_{\infty}(\overline{D})}{C} \right) \leq \adeg\left( \rest{\overline{P}}{C} \right) = 0,
\]
as required.
\end{proof}

Let $C_1, \ldots, C_l$ be irreducible components of $\Supp(D)$.
We set $F_n(\overline{D}) = \sum_{i=1}^l a_{ni}C_i$ and $F_{\infty}(\overline{D}) = \sum_{i=1}^l a_i C_i$.
Then $\lim_{n\to\infty} a_{ni} = a_i$. Moreover, if we set $I = \{ i \mid a_i > 0 \}$, then
$\bigcup_{i \in I} C_i = \Supp(N)$. Therefore, by the above claim and (3) in Proposition~\ref{prop:sigma:decomp:surface},
\begin{align*}
0 & \leq \liminf_{n\to \infty} \adeg \left( \overline{M}_n(\overline{D}) \mid F_n(\overline{D}) \right) \leq
\limsup_{n\to \infty} \adeg \left( \overline{M}_n(\overline{D}) \mid F_n(\overline{D}) \right) \\
& \leq \sum_{i=1}^l \limsup_{n\to \infty} a_{ni} \adeg \left( \rest{\overline{M}_n(\overline{D})}{C_i}  \right) =
\sum_{i=1}^l a_i \limsup_{n\to \infty} \adeg \left( \rest{\overline{M}_n(\overline{D})}{C_i}  \right) \\
& = \sum_{i \in I } a_i \limsup_{n\to \infty} \adeg \left( \rest{\overline{M}_n(\overline{D})}{C_i}  \right) \leq
\sum_{i \in I } a_i \adeg \left( \rest{\overline{M}_{\infty}(\overline{D})}{C_i}  \right) = 0.
\end{align*}
Hence the theorem follows.
\end{proof}

Finally let us consider Fujita's approximation theorem on an arithmetic surface.

\begin{Proposition}
\label{prop:Fujita:app}
We assume that $\overline{D}$ is $C^0$-type  and  $\avol(\overline{D}) > 0$.
Then, for any $\epsilon > 0$, there is $\overline{A} \in \aDiv_{C^{\infty}}(X)_{\RR}$ such that
\[
\text{$\overline{A}$ is nef}, \quad \overline{A} \leq \overline{D}\quad\text{and}\quad \avol(\overline{A}) \geq \avol(\overline{D}) - \epsilon.
\]
\end{Proposition}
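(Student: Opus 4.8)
The plan is to combine the Zariski decomposition of Theorem~\ref{thm:Zariski:decomp:arith:surface}--Theorem~\ref{thm:Zariski:decomp:big} with the analysis of the $\sigma$-decomposition in Section~\ref{sec:sigma:decomp}. First I would reduce to the case where $\overline{D}$ is of $C^{\infty}$-type and effective. Writing $g = g_{\infty} + f$ with $g_{\infty}$ a $D$-Green function of $C^{\infty}$-type and $f$ continuous, the Stone--Weierstrass theorem produces $C^{\infty}$-functions $f_n \le f$ with $\Vert f - f_n \Vert_{\sup}\to 0$; then $\overline{D}_n := (D, g_{\infty}+f_n) \le \overline{D}$ is of $C^{\infty}$-type and $\avol(\overline{D}_n)\to\avol(\overline{D})$ by the continuity of $\avol$ (Theorem~\ref{thm:aDiv:aPic:R}), so it suffices to treat each $\overline{D}_n$, and we may assume $\overline{D}$ is of $C^{\infty}$-type and big. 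Since $\avol(\overline{D})>0$ forces $\ah(X,a\overline{D})\ne 0$ for some $a\in\ZZ_{>0}$, choosing $\psi\in\Rat(X)^{\times}$ with $a\overline{D}+\widehat{(\psi)}\ge 0$ and replacing $\overline{D}$ by $\overline{D}+(1/a)\widehat{(\psi)}$ we may further assume $\overline{D}$ effective; this is harmless because $\widehat{(\psi)}$ is of $C^{\infty}$-type, $\avol$ is unchanged, and $\pm\widehat{(\psi)}$ is nef (of $\Tpsh_{\RR}$-type with $\adeg(\,\cdot\mid C)=0$ on every curve $C$), so an admissible answer for $\overline{D}+(1/a)\widehat{(\psi)}$ gives one for $\overline{D}$ after subtracting $(1/a)\widehat{(\psi)}$.

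Now fix $\epsilon>0$. By the continuity of $\avol$ choose $\delta>0$ with $\avol(\overline{D})-\avol(\overline{D}-(0,\delta))<\epsilon/3$ and $\avol(\overline{D}-(0,\delta))>0$, and apply the above effectivity reduction to $\overline{D}-(0,\delta)$: there are $\chi\in\Rat(X)^{\times}$ and $b\in\ZZ_{>0}$ with $\overline{D}':=\overline{D}-(0,\delta)+(1/b)\widehat{(\chi)}$ effective, of $C^{\infty}$-type, big, $\avol(\overline{D}')>\avol(\overline{D})-\epsilon/3$, and $\overline{D}'+(0,s)\le\overline{D}+(1/b)\widehat{(\chi)}$ for $0\le s\le\delta$. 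Consider the $\sigma$-decomposition $\overline{D}'=\overline{M}_n(\overline{D}')+\overline{F}_n(\overline{D}')$. By Proposition~\ref{prop:decomp:surface:no:limit} each $\overline{M}_n(\overline{D}')$ is nef and of $(C^{\infty}\cap\Tpsh)$-type with $0\le M_n(\overline{D}')\le D'$; and by Proposition~\ref{prop:sigma:decomp:surface}(4) (valid since $\overline{D}'$ is of $C^{\infty}$-type) one has $g_{M_n(\overline{D}')}=g_{D'}-g_{F_n(\overline{D}')}\le g_{D'}+\epsilon_n$ almost everywhere, where $\epsilon_n=(3\log(n+1)+e)/n\to 0$ (using $g_{F_{\infty}(\overline{D}')}\ge 0$ a.e.\ and $n\,g_{F_{\infty}(\overline{D}')}\le g_{F(n\overline{D}')}+3\log(n+1)+e$). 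Finally, by Proposition~\ref{prop:sigma:decomp:surface}(5) together with the asymptotic orthogonality of Theorem~\ref{thm:asym:orthogonal} and the inclusions $\aH(X,n\overline{D}')^{\,\cdot k}\subseteq\aH(X,kn\overline{M}_n(\overline{D}'))$ coming from Proposition~\ref{prop:decomp:surface:no:limit}(1), one obtains $\avol(\overline{M}_n(\overline{D}'))\to\avol(\overline{D}')$.

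Hence, picking $n$ with $\epsilon_n<\delta$ and $\avol(\overline{M}_n(\overline{D}'))>\avol(\overline{D}')-\epsilon/3>\avol(\overline{D})-\epsilon$, we get $\overline{M}_n(\overline{D}')\le\overline{D}'+(0,\epsilon_n)\le\overline{D}+(1/b)\widehat{(\chi)}$, so that $\overline{A}:=\overline{M}_n(\overline{D}')-(1/b)\widehat{(\chi)}\in\aDiv_{C^{\infty}}(X)_{\RR}$ satisfies $\overline{A}\le\overline{D}$, is nef (the sum of the nef divisors $\overline{M}_n(\overline{D}')$ and $-(1/b)\widehat{(\chi)}$, $\aNef(X)_{\RR}$ being a cone), and $\avol(\overline{A})=\avol(\overline{M}_n(\overline{D}'))>\avol(\overline{D})-\epsilon$, as desired. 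The main obstacle is the convergence $\avol(\overline{M}_n(\overline{D}'))\to\avol(\overline{D}')$: the upper bound is immediate from $\overline{M}_n(\overline{D}')\le\overline{D}'+(0,\epsilon_n)$ and the continuity of $\avol$, but the lower bound really needs Proposition~\ref{prop:sigma:decomp:surface}(5) (equivalently, Theorem~\ref{thm:asym:orthogonal} and $\avol(\overline{M}_{\infty}(\overline{D}'))=\avol(\overline{D}')$) to control the volume lost in passing from the positive part to its $C^{\infty}$ truncation $\overline{M}_n(\overline{D}')$. The secondary subtlety is that one cannot simply subtract a small constant from a Green function to repair nefness, since $\adeg(\,\cdot\mid C)$ of the correction scales with $\#C(\CC)$, which is unbounded over horizontal curves; this is exactly why the argument first manufactures $\delta$-room inside $\overline{D}$ and then removes it via the principal shift $\widehat{(\chi)}$, which is nef and leaves the volume unchanged.
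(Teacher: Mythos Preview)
Your route through the $\sigma$-decomposition is different from the paper's, and it has a real gap at the crucial step. You assert that $\avol(\overline{M}_n(\overline{D}'))\to\avol(\overline{D}')$, but the three ingredients you invoke do not combine to give the lower bound $\liminf_n\avol(\overline{M}_n(\overline{D}'))\geq\avol(\overline{D}')$. Proposition~\ref{prop:sigma:decomp:surface}(5) yields $\avol(\overline{M}_{\infty}(\overline{D}'))=\avol(\overline{D}')$, but $\overline{M}_{\infty}(\overline{D}')$ is only of $\Tpsh_{\RR}$-type, and from Proposition~\ref{prop:sigma:decomp:surface}(4) you only get the one-sided bound $\overline{M}_n(\overline{D}')\leq\overline{M}_{\infty}(\overline{D}')+(0,\epsilon_n)$, which controls $\avol(\overline{M}_n)$ from \emph{above}, not below. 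Theorem~\ref{thm:asym:orthogonal} gives $\adeg(\overline{M}_n\mid F_n)\to 0$, but this is the height pairing against the cycle $F_n$, not the arithmetic intersection number $\adeg(\overline{M}_n\cdot\overline{F}_n)$; the two differ by an integral $\tfrac12\int g_{F_n}\,c_1(\overline{M}_n)$ which you have not controlled. Finally, the inclusion $\aH(X,n\overline{D}')^{\cdot k}\subseteq\aH(X,kn\overline{M}_n(\overline{D}'))$ only bounds $\ah(X,kn\overline{M}_n)$ below by the number of $k$-fold products of small sections of $n\overline{D}'$; turning that into $\avol(\overline{M}_n)\geq\avol(\overline{D}')-o(1)$ is precisely the content of arithmetic Fujita approximation, which is what you are trying to prove.

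The paper's argument avoids all of this. It takes the Zariski decomposition $\overline{D}-(0,\delta)=\overline{P}_{\delta}+\overline{N}_{\delta}$ directly: by Theorem~\ref{thm:Zariski:decomp:big}(1) one already has $\avol(\overline{P}_{\delta})=\avol(\overline{D}-(0,\delta))>\avol(\overline{D})-\epsilon$, with $\overline{P}_{\delta}$ nef of $C^0$-type. The single missing step---passing from $C^0$ to $C^{\infty}$---is handled by Theorem~\ref{thm:cont:upper:envelope}(2): since $P_{\delta}$ has non-negative degree on the generic fibre, there is a $P_{\delta}$-Green function of $C^{\infty}$-type whose curvature is positive or zero, so one can find $u\in C^0(X)$ with $0\leq u<\delta$ and $\overline{P}_{\delta}+(0,u)\in\aDiv_{C^{\infty}\cap\Tpsh}(X)_{\RR}$. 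Then $\overline{A}:=\overline{P}_{\delta}+(0,u)$ is nef (Lemma~\ref{lem:nef:D1:leq:D2}), satisfies $\overline{A}\leq\overline{P}_{\delta}+(0,\delta)\leq\overline{D}$, and $\avol(\overline{A})\geq\avol(\overline{P}_{\delta})$. No reduction to effective or $C^{\infty}$-type for $\overline{D}$, no principal shifts, no $\sigma$-decomposition, and no limit of volumes is needed.
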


\begin{proof}
By using the continuity of $\avol$, we can find a sufficiently small positive number $\delta$ such that
\[
\avol(\overline{D} - (0, \delta)) > \max \{ \avol(\overline{D}) - \epsilon, 0 \}.
\]
Let $\overline{D} - (0, \delta) = \overline{P}_{\delta} + \overline{N}_{\delta}$ be the Zariski decomposition of $\overline{D} - (0, \delta)$.
Since $\overline{P}_{\delta}$ is a big arithmetic $\RR$-divisor of $C^0$-type, by Theorem~\ref{thm:cont:upper:envelope},
there is an $F_{\infty}$-invariant continuous function $u$ on $X(\CC)$ such that
$0 \leq u < \delta$ on $X(\CC)$ and $\overline{P}_{\delta} + (0, u)$ is nef and of $C^{\infty}$-type.
If we set $\overline{A} = \overline{P}_{\delta} + (0, u)$,
then $\overline{A} \leq \overline{D}$ and
\[
\avol(\overline{D}) - \epsilon < \avol(\overline{D} - (0, \delta)) \leq \avol(\overline{A}).
\]
\end{proof}

\begin{Remark}
\label{rem:Zariski:decomp:pseudo:effective}
It is expected that the converse of (2) in Proposition~\ref{prop:non:empty:Zariski:decomp} holds, that is,
if $\overline{D}$ is of $C^0$-type and $\overline{D}$ is pseudo-effective, then $(-\infty, \overline{D}] \cap \aNef(X)_{\RR} \not= \emptyset$.
\end{Remark}

\begin{Remark}
\label{rem:Zariski:decomp:pseudo:effective:bis}
We assume that $\overline{D}$ is of $C^0$-type, big and not nef.
Let $\overline{D} = \overline{P} + \overline{N}$ be the Zariski decomposition of $\overline{D}$ and
let $N = c_1 C_1 + \cdots + c_l C_l$ be the decomposition such that
$c_1, \ldots, c_l \in \RR_{>0}$ and $C_1, \ldots, C_l$ are $1$-dimensional closed integral subschemes on $X$.
Then $C_1, \ldots, C_l$ are not necessarily linearly independent in $\operatorname{Pic}(X) \otimes_{\ZZ} \QQ$
(cf. Remark~\ref{rem:C0:C:infty}).
\query{$\otimes \QQ$\quad
$\Longrightarrow$\quad $\otimes_{\ZZ} \QQ$
(17/October/2010)}
\end{Remark}

\subsection{Examples of Zariski decompositions on $\PP^1_{\ZZ}$}
\label{subsec:Zariski:decomp:P:1}
\setcounter{Theorem}{0}

Let $\PP^1_{\ZZ} = \Proj(\ZZ[x,y])$, $C_0 = \{ x = 0 \}$, $C_{\infty} = \{ y = 0 \}$ and $z = x/y$.
Let $\alpha$ and $\beta$ be positive real numbers.
We set
\[
D = C_0,\quad g = -\log \vert z \vert^2 + \log \max \{ \alpha^2 \vert z \vert^2, \beta^2\}\quad\text{and}\quad
\overline{D} = (D,  g).
\]
The purpose of this subsection is to show the following fact:

\begin{Proposition}
\label{prop:Zariski:decomp:P:1}
The Zariski decomposition of $\overline{D}$ exists if and only if
either $\alpha \geq 1$ or $\beta \geq 1$. Moreover, we have the following:
\begin{enumerate}
\renewcommand{\labelenumi}{(\arabic{enumi})}
\item
If $\alpha \geq 1$ and $\beta \geq 1$, then $\overline{D}$ is nef.

\item
If $\alpha \geq 1$ and $\beta < 1$, then
the positive part of $\overline{D}$ is given by
\[
(\theta C_0, -\theta \log \vert z \vert^2 + \log \max \{ \alpha^2 \vert z\vert^{2\theta}, 1 \}),
\]
where $\theta = \log\alpha/(\log\alpha - \log \beta)$.

\item
If $\alpha < 1$ and $\beta \geq 1$, then
the positive part of $\overline{D}$ is given by
\[
(C_0 - (1-\theta')C_{\infty}, -\log \vert z \vert^2 + \log \max \{ \vert z \vert^{2\theta'}, \beta^2\}),
\]
where $\theta' = \log\beta/(\log\beta - \log \alpha)$.
\end{enumerate}
\end{Proposition}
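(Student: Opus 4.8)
The plan is to set up the computation on $\PP^1_\ZZ$ explicitly and then apply the general machinery (Theorem~\ref{thm:Zariski:decomp:arith:surface} together with the characterizations in Theorem~\ref{thm:Zariski:decomp:big} and Proposition~\ref{prop:non:empty:Zariski:decomp}). First I would record the basic Green-function facts on $\PP^1_\ZZ$: for $c\in\RR_{>0}$ the function $-\log\vert z\vert^2+\log\max\{\vert z\vert^2, c^2\}$ (resp.\ $-\log\vert z\vert^2 + \log\max\{c^2\vert z\vert^2,1\}$) is a $C_0$-Green function (resp.\ $C_\infty$-Green function) of $(C^0\cap\Tpsh)$-type, being the maximum of two $C^\infty$ Green functions in the sense of Lemma~\ref{lem:subharmonic:max:Riemann:surface}. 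The key quantitative input is the arithmetic degree: for a nef arithmetic $\RR$-divisor of the form $\overline{M}=(aC_0+bC_\infty, h)$ one computes $\adeg(\rest{\overline M}{C_0})$, $\adeg(\rest{\overline M}{C_\infty})$ and $\adeg(\rest{\overline M}{\PP^1_\QQ\text{-fiber}})=a+b$ directly from the definition in Subsection~\ref{subsec:intersection:arithmetic:R:divisor:curve}, since $\log\#(\OO_{C_0}(C_\infty)/\OO_{C_0})=\log\#(\ZZ)=0$ and the archimedean contribution is $\tfrac12\sum_{x\in C(\CC)}h_{\rm can}(x)$ evaluated at the single point $z=1$ on each horizontal curve.

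Next I would treat the three cases. For (1), when $\alpha\geq 1$ and $\beta\geq 1$, I would check directly that $g$ is of $\Tpsh_\RR$-type (it is, being a max of two admissible Green functions) and that $\adeg(\rest{\overline D}{C})\geq 0$ for every $1$-dimensional closed integral subscheme $C$: for $C$ vertical this is degree on a fibre of $\PP^1$, for $C=C_\infty$ one gets $\tfrac12 g_{\rm can}(1)=\tfrac12\log\max\{\alpha^2,\beta^2\}\geq 0$, and for $C$ horizontal not meeting $C_0$ the value $g_{\rm can}$ is a max of two nonnegative quantities. Hence $\overline D$ is nef, so it is its own positive part. For (2) and (3) I would exhibit the claimed divisor $\overline P$ as a candidate, verify $\overline P\leq \overline D$ (i.e.\ $\theta\leq 1$ and the Green-function inequality, which reduces to comparing $\log\max\{\alpha^2\vert z\vert^{2\theta},1\}$ with $(1-\theta)(-\log\vert z\vert^2)+\log\max\{\alpha^2\vert z\vert^2,\beta^2\}$, a one-variable inequality pivoting at $\vert z\vert=1$ and at the ``corner'' $\vert z\vert=(\beta/\alpha)^{1/(1-\theta)}\cdot$const), and verify $\overline P$ is nef by the degree computation — the point of the specific value $\theta=\log\alpha/(\log\alpha-\log\beta)$ is exactly that it makes $\adeg(\rest{\overline P}{C_0})=0$ (since $C_0\subseteq\Supp(N)$, which is forced by Theorem~\ref{thm:Zariski:decomp:big}(2)), while $\adeg$ on $C_\infty$ and on horizontal curves stays $\geq 0$. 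Then I would invoke the maximality characterization: any $\overline M=(M,g_M)\in(-\infty,\overline D]\cap\aNef(X)_\RR$ satisfies $\mult_{C_0}(M)\leq\theta$ because $\adeg(\rest{\overline M}{C_0})\geq 0$ combined with $g_M\leq g$ pins down the coefficient, and then $\overline M\leq\overline P$ follows from Proposition~\ref{prop:limit:Green:function:PSH}-type comparison of Green functions; alternatively one checks $\overline P=\max$ over the explicit family and cites Theorem~\ref{thm:Zariski:decomp:arith:surface}.

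For the ``only if'' direction, I would suppose $\alpha<1$ and $\beta<1$ and show $(-\infty,\overline D]\cap\aNef(X)_\RR=\emptyset$. The idea: any nef $\overline M=(M,g_M)\leq\overline D$ would have $\mult_{C_0}(M)=:a\geq 0$; the horizontal degree condition forces $a\leq$ (something), but the condition $g_M\leq g\ \aew$ together with $g$ being dominated near $z=0$ and near $z=\infty$ by functions that are \emph{too negative} (because both $\alpha^2<1$ and $\beta^2<1$ make $g_{\rm can}<0$ on a neighbourhood of the relevant points) would force $\adeg(\rest{\overline M}{C})<0$ for some horizontal $C$, a contradiction; I would make this precise by noting that nefness plus $g_M\leq g$ would give $g_M$ bounded above by a constant that is negative, contradicting $\adeg\geq 0$ on the fibre combined with the arithmetic Hodge-index-type positivity.

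\textbf{Main obstacle.} I expect the genuinely delicate step to be the ``only if'' direction and, within the ``if'' direction, the verification that the explicitly written $\overline P$ is actually the \emph{greatest} element rather than merely a nef element below $\overline D$ — that is, ruling out a nef competitor with the same underlying $\RR$-divisor $\theta C_0$ but a strictly larger Green function. This requires knowing that among $\Tpsh_\RR$-Green functions for $\theta C_0$ bounded above by $g$ and giving a nef divisor, the maximal one is precisely the stated $-\theta\log\vert z\vert^2+\log\max\{\alpha^2\vert z\vert^{2\theta},1\}$; I would derive this from Proposition~\ref{prop:limit:Green:function:PSH} (the upper envelope of $\Tpsh$-Green functions bounded by a $C^0$ one is again a Green function) together with the explicit fibrewise normalization used in the last part of the proof of Theorem~\ref{thm:Zariski:decomp:arith:surface}, where the positive part's Green function is characterized via $dd^c + \delta$ being a positive form (here the Fubini–Study-type form on $\PP^1(\CC)$) — the corner structure of the max makes $dd^c([p])+\delta_{\theta C_0}$ a positive measure supported where the two branches meet, which is the arithmetic analogue of the curve $C_0$ being ``pushed to the boundary''.
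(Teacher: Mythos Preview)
Your plan for parts (1)--(3) is on the right track, but the two places you flag as delicate are exactly where the argument needs more than what you have written, and your ``only if'' direction has a real gap.

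\medskip
\textbf{Pinning down $\theta$ and the Green function.} Saying that ``$\adeg(\rest{\overline M}{C_0})\geq 0$ combined with $g_M\leq g$ pins down the coefficient'' is not an argument: near $z=0$ the bound $g_M\leq g$ imposes no finite constraint on the local subharmonic part of $g_M$ when $\mult_{C_0}(M)<1$, so nefness at $C_0$ alone does not bound the coefficient. The paper does \emph{not} argue this way. It first computes $\|z^{-i}\|_{ng}=\alpha^{-(1-i/n)}\beta^{-i/n}$ and uses an $L^2$--type estimate on the circle $\vert z\vert=(\beta/\alpha)^{1/n}$ to identify $\langle\aH(\PP^1_{\ZZ},n\overline D)\rangle_{\ZZ}=\bigoplus_{0\leq i\leq n\theta}\ZZ z^{-i}$; this gives $\mu_{C_0}(\overline D)=1-\theta$, and then the structural fact $N=F_\infty(\overline D)$ (Claim~\ref{claim:thm:asym:orthogonal:1}, which needs $\overline D$ big) forces $P=\theta C_0$. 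For the Green function, your references to Proposition~\ref{prop:limit:Green:function:PSH} and the ``corner structure'' do not prove maximality. The actual mechanism is the classical maximum principle: once $P=\theta C_0$, one checks that $g=p'$ on $\{\vert z\vert\geq\beta/\alpha\}$, so $u:=p-p'\geq 0$ vanishes there; on the disk $\{\vert z\vert<\beta/\alpha\}$ one has $p'=-\theta\log\vert z\vert^2$, hence $u=p+\theta\log\vert z\vert^2$ is subharmonic with boundary value $0$, and the maximum principle gives $u\equiv 0$. (One can in fact also extract $a\leq\theta$ from the sub-mean-value inequality for the subharmonic local part at $z=0$, combined with $\adeg(\rest{\overline P}{C_0})\geq 0$; but you would need to state this explicitly --- it is the missing link in your sketch.) The case $\alpha=1$ is handled by a limiting argument in $t$, not directly.

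\medskip
\textbf{The ``only if'' direction.} Your proposed argument fails: $g$ is \emph{not} bounded above by a negative constant (it tends to $+\infty$ at $z=0$ and $z=\infty$), so you cannot force a negative arithmetic degree on any horizontal curve from $g_M\leq g$ alone, and there is no obvious anchor nef divisor below $\overline D$ to run a max-trick. The paper's argument is quite different and relies on the same norm estimate used above: one shows $\aH(\PP^1_{\ZZ},\overline L)=\{0\}$ whenever both parameters are $<1$, then uses the semigroup identity $a\overline D_{t_1}+b\overline D_{t_2}=(a+b)\overline D_{(t_1^a t_2^b)^{1/(a+b)}}$ together with adequateness of $\overline D_{t_0}$ for $t_0\gg 1$. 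If a Zariski decomposition existed with positive part $\overline P$, then $\overline P+\epsilon\overline D_{t_0}$ would be adequate and hence $\avol(\overline D+\epsilon\overline D_{t_0})>0$; but this volume equals $(1+\epsilon)^2\avol(\overline D_{t_0^{\epsilon/(1+\epsilon)}})$, which vanishes for small $\epsilon$ since $t_0^{\epsilon/(1+\epsilon)}\alpha<1$ and $t_0^{\epsilon/(1+\epsilon)}\beta<1$. This is a pseudo-effectivity obstruction, not a direct emptiness argument.
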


\begin{proof}
Let us begin with the following claim:

\begin{Claim}
\label{claim:example:Zariski:decomp:1}
For $a, b, \lambda  \in \RR_{>0}$, we set 
\[
L = \lambda C_0,\quad
h = -\lambda \log \vert z \vert^2 + \log \max \{ a^2 \vert z \vert^{2\lambda}, b^2 \}\quad\text{and}\quad
\overline{L} = (L, h).
\]
Then we have the following:
\begin{enumerate}
\renewcommand{\labelenumi}{(\alph{enumi})}
\item
$\overline{L}$ is an arithmetic $\RR$-divisor of $(C^0 \cap \Tpsh)$-type.
In additions, $\overline{L}$ is effective if and only if $a \geq 1$.

\item
$H^0(\PP^1_{\ZZ}, L) = \bigoplus_{i \in \ZZ, 0 \leq i \leq \lambda} \ZZ z^{-i}$.

\item
For $i \in \ZZ$ with $0 \leq i \leq \lambda$, 
${\displaystyle
\Vert z^{-i} \Vert_h = \frac{1}{a^{1 - i/\lambda} b^{i/\lambda}}}$.

\item
For $s = \sum_{0 \leq i \leq \lambda} c_i z^{-i} \in H^0(\PP^1_{\ZZ}, L)$,
\[
\Vert s \Vert_h \geq \sqrt{\sum_{0 \leq i \leq \lambda} \left( \frac{c_i}{a^{1 - i/\lambda} b^{i/\lambda}} \right)^2}.
\]

\item
$\aH(\PP^1_{\ZZ}, \overline{L}) = \{ 0 \}$ if $a < 1$ and $b < 1$.

\item
$\overline{L}$ is nef if and only if $a \geq 1$ and $b \geq 1$.

\item
$\overline{L}$ is adequate if $a^2 > 2^{\lambda}$ and $b^2 >2^{\lambda}$.
\end{enumerate}
\end{Claim}

\begin{proof}
(a) and (b) are obvious. (c) is a straightforward calculation. (e) follows from (d).
Let us see (d) , (f) and (g).

(d) Indeed,
\begin{align*}
\Vert s \Vert_{h} & \geq \sup_{ | \zeta | = (b/a)^{\frac{1}{\lambda}}} \left\{ | s  |_{h} (\zeta) \right\}  =
\frac{1}{a} \sup_{ | \zeta | = (b/a)^{\frac{1}{\lambda}}} \left\{ \left| \sum_{0 \leq i \leq \lambda} c_i \zeta^{- i} \right| \right\} \\
& \geq \frac{1}{a}  \sqrt{ \int_0^{1} \left|  \sum_{0 \leq i \leq \lambda} c_i\left((b/a)^{\frac{1}{\lambda}} \exp(2\pi\sqrt{-1}t)\right)^{-i}   \right|^2 dt } \\
& =  \frac{1}{a} \sqrt{\sum_{0 \leq i,j \leq \lambda} \int_0^{1} c_i c_j
(b/a)^{\frac{- i-j}{\lambda}} \exp(2\pi\sqrt{-1}(j-i)t) dt} \\
& = \sqrt{\sum_{0 \leq i \leq \lambda} \left( \frac{c_i }{a^{1 - i/\lambda} b^{i/\lambda}}\right)^2}.
\end{align*}

(f)
It is easy to see that
$\adeg (\rest{\overline{L}}{C_0}) = \log(b)$ and $\adeg (\rest{\overline{L}}{C_{\infty}}) = \log(a)$.
For $\gamma \in \overline{\QQ}$,
let $C_{\gamma}$ be the $1$-dimensional closed  integral subscheme of $\PP^1_{\ZZ}$ given by the Zariski closure 
of $\{ (\gamma : 1) \}$.
Then
\[
\adeg (\rest{\overline{L}}{C_\gamma}) \geq \sum_{\sigma \in C_{\gamma}(\CC)} \left(- \lambda \log \vert \sigma(\gamma) \vert + \log \max \{ a \vert \sigma(\gamma) \vert^{\lambda}, b \}
\right).
\]
Thus (f) follows.

(g)
We choose $\delta \in \RR_{>0}$ such that $a^2 \geq (2(1+\delta))^{\lambda}$ and $b^2 \geq (2(1+\delta))^{\lambda}$.
Then, as
\ifpxfont
\[
\lambda \log((1+\delta)\vert z \vert^2 + (1 + \delta)) \leq \lambda \log \max\{ 2(1+\delta)\vert z \vert^2, 2(1+\delta) \} 
\leq \log \max \{ a^2 \vert z \vert^{2\lambda}, b^2 \},
\]
\else
\begin{multline*}
\lambda \log((1+\delta)\vert z \vert^2 + (1 + \delta)) \leq \lambda \log \max\{ 2(1+\delta)\vert z \vert^2, 2(1+\delta) \} \\
\leq \log \max \{ a^2 \vert z \vert^{2\lambda}, b^2 \},
\end{multline*}
\fi
we have
\[
\lambda(C_0, -\log \vert z \vert^2 + \log((1+\delta)\vert z \vert^2 + (1 + \delta))) \leq \overline{L}.
\]
Note that
$(C_0, -\log \vert z \vert^2 + \log((1+\delta)\vert z \vert^2 + (1 + \delta)))$ is ample.
Thus (g) follows.
\end{proof}

Next we claim the following:

\begin{Claim}
\label{claim:example:Zariski:decomp:1:bis}
If $\alpha < 1$ and $\beta < 1$, then the Zariski decomposition of $\overline{D}$ does not exists.
\end{Claim}

\begin{proof}
For $t > 0$, we set
\[
\overline{D}_t = (C_0, -\log \vert z \vert^2 + \log \max \{ t^2 \alpha^2 \vert z \vert^2, t^2\beta^2\}).
\]
It is easy to see that
\[
a \overline{D}_{t_1} + b \overline{D}_{t_2} = (a+b)\overline{D}_{(t_1^at_2^b)^{\frac{1}{a+b}}}
\]
for $t_1, t_2 \in \RR_{> 0}$ and $a, b \in \RR_{>0}$.
Moreover, by (g) in Claim~\ref{claim:example:Zariski:decomp:1},
$\overline{D}_{t_0}$ is adequate if $t_0 \gg 1$.
We assume that the Zariski decomposition of $\overline{D}$ exists.
Let $\overline{P}$ be the positive part of $\overline{D}$.
We choose $\epsilon > 0$ such that $t_0^{\frac{\epsilon}{1+\epsilon}} \alpha < 1$ and $ t_0^{\frac{\epsilon}{1+\epsilon}} \beta < 1$.
$\overline{P} + \epsilon \overline{D}_{t_0}$ is adequate by Proposition~\ref{prop:ample:plus:nef:ample}.
Thus, by Proposition~\ref{prop:ample:RR:div},
\[
\avol\left(\overline{D}_{t_0^{\frac{\epsilon}{1+\epsilon}}}\right) =
\frac{\avol\left((1+ \epsilon)\overline{D}_{(t_0^{\epsilon})^{\frac{1}{1+\epsilon}}} \right)}{(1+\epsilon)^2} = 
\frac{\avol(\overline{D} + \epsilon \overline{D}_{t_0})}{(1+\epsilon)^2} \geq \frac{\avol(\overline{P} + \epsilon \overline{D}_{t_0})}{(1+\epsilon)^2}  > 0,
\]
which yields 
a contradiction by virtue of (e) in Claim~\ref{claim:example:Zariski:decomp:1}.
\end{proof}

By the above claim,
it is sufficient to see (1), (2) and (3).
(1) follows from (f) in Claim~\ref{claim:example:Zariski:decomp:1}.

\medskip
(2) In this case, $\overline{D}$ is effective. Thus
the Zariski decomposition of $\overline{D}$ exists.
First we assume that $\alpha > 1$, so that $0 < \theta < 1$ and $\alpha^{1-\theta}\beta^{\theta} = 1$.
Let us see the following claim:

\begin{Claim}
\label{claim:example:Zariski:decomp:2}
$\langle \aH(\PP^1_{\ZZ}, n\overline{D}) \rangle_{\ZZ} = \bigoplus_{i \in \ZZ, 0 \leq i \leq n\theta} \ZZ z^{-i}$.
\end{Claim}

\begin{proof}
By (c) in Claim~\ref{claim:example:Zariski:decomp:1}, $\Vert z^{-i} \Vert_{ng} = \beta^{\frac{n\theta - i}{1-\theta}}$.
Thus $z^{-i} \in \aH(\PP^1_{\ZZ}, n\overline{D})$ for $0 \leq i \leq n\theta$.
For $s = \sum_{i=0}^n a_i z^{-i} \in H^0(\PP^1_{\ZZ}, nD)$, by (d) in Claim~\ref{claim:example:Zariski:decomp:1},
\[
\Vert s \Vert_{ng} \geq \sqrt{\sum_{i=0}^n \left( | a_i | \beta^{\frac{n\theta - i}{1-\theta}}\right)^2}
\]
Thus, if $\Vert s \Vert_{ng} \leq 1$, then $a_i = 0$ for $i > n\theta$, which means that
$s \in \bigoplus_{0 \leq i \leq n\theta} \ZZ z^{-i}$.
\end{proof}

\begin{Claim}
\label{claim:example:Zariski:decomp:3}
$\overline{D}$ is big and
\[
\mu_C(\overline{D}) =
\begin{cases}
1 - \theta & \text{if $C = C_0$}, \\
0 & \text{if $C \not= C_0$}
\end{cases}
\]
for a $1$-dimensional closed  integral subscheme $C$ of $\PP^1_{\ZZ}$.
\end{Claim}

\begin{proof}
Note that $(z^{-i}) + nD = (n-i)C_0 + i C_{\infty}$. Thus
the second assertion follows from Claim~\ref{claim:example:Zariski:decomp:2}.
Let us see that $\overline{D}$ is big.
We set 
\[
S_n = \left.\left\{ \sum_{0 \leq i \leq n\theta/3} a_i z^{-i} \ \right|\  \vert a_i \vert \leq \beta^{\frac{-i}{1-\theta}} \right\}.
\]
It is easy to see that $S_n \subseteq \aH(\PP^1_{\ZZ}, n\overline{D})$
for $n \gg 1$.
Note that, for $M \in \RR_{\geq 0}$,
\[
\# \{ a \in \ZZ \mid \vert a \vert \leq M \} = 2 \lfloor M \rfloor + 1 \geq \lfloor M \rfloor + 1 \geq M.
\]
Therefore
\[
\#(S_n) \geq \prod_{0 \leq i \leq n\theta/3} \beta^{\frac{-i}{1-\theta}} = \beta^{\frac{-1}{1-\theta}\frac{\lfloor n\theta/3 \rfloor (\lfloor n\theta/3 \rfloor + 1)}{2}},
\]
which implies
\[
\ah(X, n\overline{D}) \geq \log \#(S_n) \geq \frac{-\log \beta}{1-\theta} \frac{\lfloor n\theta/3 \rfloor (\lfloor n\theta/3 \rfloor + 1)}{2}
\]
for $n \gg 1$, and hence $\avol(\overline{D}) > 0$.
\end{proof}

We set
\[
P' = \theta C_0,\quad p' = -\theta \log \vert z \vert^2 + \log \max \{ \alpha^2 \vert z\vert^{2\theta}, 1 \}\quad\text{and}\quad
\overline{P}' = (P', p').
\]
By Claim~\ref{claim:example:Zariski:decomp:3} and Claim~\ref{claim:thm:asym:orthogonal:1} in the proof of Theorem~\ref{thm:asym:orthogonal},
if $\overline{P} = (P, p)$ is the positive part of the Zariski decomposition, then
$P = \theta C_0$. Let us see that $\overline{P}' = \overline{P}$.
First of all, $\overline{P}' \leq \overline{D}$ and $\overline{P}'$ is nef by (f) in Claim~\ref{claim:example:Zariski:decomp:1}. 
Thus $\overline{P}' \leq \overline{P}$, and hence
there is a continuous function $u$ such that $u \geq 0$ and $\overline{P} = \overline{P}' + (0, u)$.
Note that
\[
0 \leq u \leq   -(1-\theta)\log \vert z \vert^2 + \log \max \{ \alpha^2 \vert z \vert^2, \beta^2 \} - \log \max \{ \alpha^2 \vert z \vert^{2\theta}, 1 \}.
\]
In particular, if $\vert z \vert \geq \beta^{\frac{1}{1-\theta}}$, then $u(z) = 0$.
As $p = -\theta \log \vert z \vert^2+ u$ on $\{ z \mid \vert z \vert < \beta^{\frac{1}{1-\theta}} \}$, $u$ is subharmonic on $\{ z \mid \vert z \vert < \beta^{\frac{1}{1-\theta}} \}$.
Thus, by the maximal principle, 
\[
u(z) \leq \sup_{\vert \zeta \vert = \beta^{\frac{1}{1-\theta}}} \{ u(\zeta) \} = 0,
\]
which implies that $u(z) = 0$ on $\{ z \mid \vert z \vert < \beta^{\frac{1}{1-\theta}} \}$.
Therefore $\overline{P}' = \overline{P}$.

\medskip
Finally let us consider the case where $\alpha = 1$. Let $\overline{P}$ be the positive part of $\overline{D}$.
For $t \in (1, 1/\beta)$, we set 
\[
\overline{D}_t = (C_0, -\log \vert z \vert^2 + \log \max \{ t^2 \vert z \vert^2, t^2\beta^2\})
\]
as in the proof of Claim~\ref{claim:example:Zariski:decomp:1:bis}.
Then $\overline{D} \leq \overline{D}_t$ and, by the previous observation, the positive part $\overline{P}_t$ of $\overline{D}_t$
is given by
\[
\overline{P}_t = (\theta_t C_0, -\theta_t \log \vert z \vert^2 + \log \max \{ t^2 \vert z\vert^{2\theta_t}, 1 \}),
\]
where $\theta_t = \log t/(-\log \beta)$. Therefore, $(0,0) \leq \overline{P} \leq \overline{P}_t$, and hence $\overline{P} = (0,0)$
as $t \to 1$.

\medskip
(3) If we set $\overline{D}'' = \overline{D} - \widehat{(z)}$,
then $\overline{D}'' = (C_{\infty}, -\log \vert w \vert^2 + \log \max \{ \beta^2 \vert w \vert^2, \alpha^2 \})$,
where $w = y/x$.
Thus, in the same way as (2), we can see that the positive part of $\overline{D}''$ is
\[
(\theta' C_{\infty}, -\theta' \log \vert w \vert^2 + \log \max \{ \beta^2 \vert w \vert^{2\theta'}, 1 \}),
\]
where $\theta' = \log\beta/(\log\beta - \log \alpha)$, so that the positive part of $\overline{D} = \overline{D}'' +  \widehat{(z)}$ is
\[
(C_0 - (1-\theta')C_{\infty}, -\log \vert z \vert^2 + \log \max \{ \vert z \vert^{2\theta'}, \beta^2\})
\]
by Proposition~\ref{prop:easy:prop:Zariski:decomp}.
\end{proof}

\begin{Remark}
\label{rem:C0:C:infty}
Let us choose $\alpha,\alpha',\beta,\beta' \in \RR_{>0}$ such that $\alpha \geq 1$, $\alpha' \geq 1$,
$\alpha\beta' < 1$ and $\alpha'\beta < 1$.
We set 
\[
M = C_0 + C_{\infty},\quad
\varphi = -\log \vert z \vert^2 + \log \max \{ \alpha^2\vert z \vert^2, \beta^2 \} + \log \max \{ {\alpha'}^2, {\beta'}^2 \vert z \vert^2 \}
\]
and $\overline{M} = (M, \varphi)$, that is,
\query{$h$\quad
$\Longrightarrow$\quad
$\varphi$
(09/July/2010)}
\[
\varphi = \begin{cases}
-\log \vert z \vert^2 + \log (\alpha' \beta)^2 & \text{if $\vert z \vert \leq \beta/\alpha$},\\
\log (\alpha\alpha')^2 & \text{if $\beta/\alpha \leq \vert z \vert \leq \alpha'/\beta'$},\\
\log \vert z \vert^2 + \log (\alpha\beta')^2 & \text{if $\vert z \vert \geq \alpha'/\beta'$}.
\end{cases}
\]
It is easy to see that $\overline{M}$ is an effective arithmetic divisor of $(C^0 \cap \Tpsh)$-type and that
\[
\adeg(\rest{\overline{M}}{C_0}) = \log (\alpha' \beta)\quad\text{and}\quad
\adeg(\rest{\overline{M}}{C_{\infty}}) = \log (\alpha\beta').
\]
If we set
\[
\vartheta = \frac{\log \alpha + \log \alpha'}{\log \alpha - \log \beta},\quad
\vartheta' = \frac{\log \alpha + \log \alpha'}{\log \alpha' - \log \beta'}
\]
and
\[
\psi = -\vartheta \log \vert z \vert^2 + \log \max \{ \alpha^2 \vert z \vert^{2\vartheta}, {\alpha'}^{-2} \}
+ \log \max \{ {\alpha'}^2 , {\alpha}^{-2} \vert z \vert^{2\vartheta'}\},
\]
that is,
\[
\psi = \begin{cases}
-\vartheta \log \vert z \vert^2 & \text{if $\vert z \vert \leq \beta/\alpha$},\\
\log (\alpha\alpha')^2 & \text{if $\beta/\alpha \leq \vert z \vert \leq \alpha'/\beta'$},\\
\vartheta' \log \vert z \vert^2 & \text{if $\vert z \vert \geq \alpha'/\beta'$},
\end{cases}
\]
then 
the positive part of $\overline{M}$ is 
\[
(\vartheta C_0 + \vartheta'C_{\infty}, \psi).
\]
This can be checked in the similar way as Proposition~\ref{prop:Zariski:decomp:P:1}.
For details, we leave it to the readers.
In the case where $\alpha = \alpha' = 1$,
the negative part of $\overline{M}$ is $\overline{M}$ itself,
which means that the support of the negative part contains $C_0$ and $C_{\infty}$ despite
$C_0 - C_{\infty} = (z)$.
\query{$\widehat{(z)}$\quad
$\Longrightarrow$\quad
$(z)$
(09/July/2010)}
This example also show that if the positive parts of 
$\overline{D}$ and
$\overline{D}'$ are $\overline{P}$ and $\overline{P}'$ respectively,
then the positive part of $\overline{D} + \overline{D}'$ is not
necessarily $\overline{P} + \overline{P}'$.
\end{Remark}

\begin{Remark}
\label{rem:Zariski:Faltings}
Let $\lambda$ be a positive real number.
We set
\[
\phi_\lambda = -\log \vert z \vert^2 + \log(\vert z \vert^2 + \lambda)\quad\text{and}\quad
\overline{M}_\lambda = (C_0, \phi_\lambda).
\]
We denote $\overline{M}_1$ by $\overline{L}$, that is,
$\overline{L} = (C_0, -\log \vert z \vert^2 + \log(\vert z \vert^2 + 1))$.
It is easy to see that $\overline{M}_\lambda$ is an arithmetic divisor of $(C^{\infty} \cap \Tpsh)$-type,
$\adeg(\overline{M}_\lambda^2) = (\log(\lambda) + 1)/2$
and that $\overline{M}_\lambda$ is nef for $\lambda \geq 1$. 
In particular, 
$\overline{M}_\lambda$ is big for $\lambda \geq 1$. 

From now on, we fix $\lambda$ with $0 < \lambda < 1$.
By using an inequality:
\[
\log(1 + \lambda x) \geq \lambda \log(1 + x)\quad(x \in \RR_{\geq 0}),
\]
we can see that $\lambda\overline{L} \leq \overline{M}_\lambda$, which means that $\overline{M}_\lambda$ is big.
On the other hand,
\[
\adeg(\rest{\overline{M}_\lambda}{C_0}) = \log(\lambda) < 0,
\]
so that
$\overline{M}_\lambda$ is not nef.
We set 
\[
\Phi_\lambda = dd^c(\log (\vert z \vert^2 + \lambda)) = \frac{\lambda}{2\pi\sqrt{-1}(\vert z \vert^2 + \lambda)^2} dz \wedge d\bar{z},
\]
which gives rise to an $F_{\infty}$-invariant volume form on $\PP^1(\CC)$ with $\int_{\PP^1(\CC)}\Phi_\lambda = 1$.
Moreover, we set
\[
\aDiv_{\Phi_\lambda}(\PP^1_{\ZZ})_{\RR} = \left\{ (A, g_A) \left| \begin{array}{l}
\text{(1)  $A$ is an $\RR$-divisor on $\PP^1_{\ZZ}$.} \\
\text{(2) $g_A$ is an $F_{\infty}$-invariant $A$-Green function of $C^{\infty}$-type} \\
\text{\phantom{(2) }on $\PP^1(\CC)$ such that $dd^c([g_A]) + \delta_A = (\deg(A))\Phi_\lambda$}.
\end{array}\right\}\right.,
\]
which is the Arakelov Chow group consisting of admissible metrics with respect to $\Phi_{\lambda}$ due to Faltings \cite{Fal}.
Let us see that the set
\[
\{ (A, g_A) \in \aDiv_{\Phi_\lambda}(\PP^1_{\ZZ})_{\RR} \mid
\text{$(A, g_A)$ is nef and $(0,0) \leq (A, g_A) \leq \overline{M}_\lambda$} \}
\]
have only one element $(0,0)$. 

Indeed, let $\overline{A} = (A, g_A)$ be an element of the above set.
Then there are constants $a, b$ such that $0 \leq a \leq 1$ and $\overline{A} = a\overline{M}_\lambda + (0, b)$.
Since $g_A \leq \phi_\lambda$, we have $b \leq (1-a)\phi_\lambda$.
Thus $b \leq 0$ because $\phi_\lambda(\infty) = 0$.
In addition,
\[
\adeg(\rest{\overline{A}}{C_0}) = a \log(\lambda) + b \geq 0.
\]
In particular, $b \geq 0$, so that $b = 0$, and hence $a \log(\lambda) \geq 0$.
Thus $a = 0$.

This example shows that the 
Arakelov Chow group consisting of admissible metrics 
is insufficient to get
the Zariski decomposition.

Finally note that $\lambda\overline{L}$ is not necessarily the positive part of $\overline{M}_\lambda$ because
$\avol(\overline{M}_\lambda) \geq (\log(\lambda) + 1)/2$ (cf. Theorem~\ref{thm:gen:Hodge:index}),
$\avol(\lambda\overline{L}) = \lambda^2/2$ and $(\log(\lambda) + 1)/2 > \lambda^2/2$ for $0 < 1 - \lambda \ll 1$.
\end{Remark}

\begin{Remark}
Let $n$ be a positive integer and $f \in \RR[T]$ such that
$\deg(f) = 2n$ and $f(t) > 0$ for all $t \in \RR_{\geq 0}$.
It seems to be not easy to find the positive part of 
\[
\left(nC_0, -n \log \vert z \vert^2 + \log f(\vert z \vert)\right)
\]
on $\PP^1_{\ZZ}$.
\end{Remark}

\bigskip

\end{document}